\theoremstyle{plain}
\newtheorem{mainthm}{Theorem}
\newtheorem{thm}{Theorem}[chapter]
\newtheorem{cor}[thm]{Corollary}
\newtheorem{lem}[thm]{Lemma}
\newtheorem{prop}[thm]{Proposition}
\theoremstyle{definition}
\newtheorem{dfn}[thm]{Definition}
\newtheorem*{claim-nonum}{Claim}
\theoremstyle{remark}
\newtheorem{remark}[thm]{Remark}
\newtheorem{rem}[thm]{Remark}
\newtheorem*{remnonum}{Remark}
\newtheorem{ex}[thm]{Example}
\theoremstyle{plain}
\newtheorem{lemma}[thm]{Lemma}
\def\ep{\epsilon}
\def\C{\mathcal{C}}
\def\A{\mathcal{A}}
\def\K{\mathcal K}
\def\P{\mathcal P}
\def\D{\mathcal{D}}
\def\k{{\bf k}}
\def\F{\mathcal F}
\def\T{\mathcal T}
\def\G{\mathcal G}
\def\U{\mathbb U}
\def\V{\mathbb V}
\def\W{\mathbb W}
\def\Mor{{\hom}}
\DeclarePairedDelimiter\ceil{\lceil}{\rceil}
\newcommand{\Qed}{\hfill \qedsymbol \medskip}
\newcommand{\id}{\mathds{1}}
\newcommand{\lag}{\mathcal{L}}
\newcommand{\R}{\mathbb{R}}
\newcommand{\Z}{\mathbb{Z}}
\newcommand{\Q}{\mathbb{Q}}
\newcommand{\N}{\mathbb{N}}
\newcommand{\fuk}{\mathcal{F}uk}
\newcommand{\rank}{\mathrm{rank\/}}
\newcommand{\mor}{{\hom}}
\def\Hom{{\hom}}
\renewcommand{\k}{\mathbf{k}}
\newcommand{\hhat}{roof diagram}
\newcommand{\hhats}{roof diagrams}
\numberwithin{section}{chapter}
\newcommand{\pbred}[1]{#1}
\newcommand{\pbnote}[1]{#1}
 \newcommand{\pbrev}[1]{#1}
 \newcommand{\pbrrvv}[1]{#1}
\newcommand{\ocnote}[1]{#1}
\newcommand{\jznote}[1]{#1}
\newcommand{\zjnote}[1]{#1}
\newcommand{\zhnote}[1]{#1}
\newcommand{\zjr}[1]{#1}
\newcommand{\Ob}{\text{Obj}}
\newcommand{\cl}{\Sigma}
\newcommand{\tr}{\text{tr}}
\newcommand{\srf}{S}
\newcommand{\extp}{\text{ep}}
\newcommand{\clus}{\text{Clus}}
\newcommand{\pbaddress}{biran@math.ethz.ch}
\newcommand{\ocaddress}{cornea@dms.umontreal.ca}
\begin{document}

\title{Triangulation, Persistence, and Fukaya categories}

\date{\today}


\author{Paul Biran, Octav Cornea\footnote{The second author was supported by an individual NSERC
  Discovery grant. } and Jun Zhang}
\address{Paul Biran, Department of Mathematics, ETH-Z\"{u}rich,
  R\"{a}mistrasse 101, 8092 Z\"{u}rich, Switzerland}
\email{\pbaddress}
 
\address{Octav Cornea, Department of Mathematics and Statistics,
  University of Montreal, C.P. 6128 Succ.  Centre-Ville Montreal, QC
  H3C 3J7, Canada}
 \email{\ocaddress}

\address{Jun Zhang, The Institute of Geometry and Physics, University of Science and Technology of China, 96 Jinzhai Road, Hefei, Anhui, 230026, China} \email{jzhang4518@ustc.edu.cn}

\keywords{Triangulated category, Persistence module, Symplectic manifold, Lagrangian submanifold, Floer homology, Fukaya category.}
\subjclass[2020]{55N31 53D12 (Primary); 53D37 (Secondary)}

\bibliographystyle{plain}
\bibliographystyle{alphanum}

%

%

\begin{abstract}
  This paper introduces a new algebraic notion - {\em triangulated
    persistence category} (TPC) - that refines that of triangulated
  category in the same sense that a \zhnote{persistence module} is a
  refinement of the notion of \pbred{a vector space}. \zhnote{The
    spaces of morphisms} of such a TPC are persistence modules and
  this category is endowed with a class of weighted distinguished
  triangles. Under favourable conditions we show that the derived
  Fukaya category admits a TPC refinement and this is applied to
  deduce a global rigidity result for spaces of compact, exact
  Lagrangians in certain Liouville manifolds: we construct a metric on
  this space with \pbred{intrinsic} symplectic properties.
\end{abstract}

\maketitle

%
%

\tableofcontents 

\chapter{Introduction} \label{sec-intro}

The last \zhnote{40} years have seen spectacular advances in
symplectic topology. Most of them, and particularly those exhibiting
aspects of symplectic rigidity, exploit algebraic structures that
encode the behavior of moduli spaces of solutions of Cauchy-Riemann
type equations associated to \zjr{(variants of)} the symplectic action
functional. Typical examples of such structures are Floer homology,
Gromov-Witten invariants, and the Fukaya category. Given that these
structures are, in essence, associated to a functional, they can be
expected to admit refinements endowed with a finer structure
reflective of an underlying filtration.  Making this statement precise
and incorporating this filtration in the respective algebraic
structures is sometimes \zhnote{straightforward from the algebraic
  viewpoint}. For instance, in favorable cases, the Floer complex -
just like its more down to earth precursor, the Morse complex - is
filtered, and its homology is a persistence module.

\

In other cases, such as that of the derived Fukaya category, which is
the one that interests us here, this is far from immediate.  In this
paper we set up a new algebraic structure called a {\em triangulated
  persistence category} (TPC) precisely to deal with this
situation. This structure puts together persistence and triangulation
and is a refinement of the notion of triangulated category. The
construction is abstract and applicable to a variety of contexts
unrelated to symplectic topology, \zhnote{as explained} in more detail
below, in \S\ref{subsec:intro1}.

The derived Fukaya category has a triangulated structure and we show
in the paper that, under certain constraints, it does admit a TPC
refinement that is unique up to equivalence. The construction of this
refinement and its uniqueness are delicate and require some novel
geometric and algebraic steps.  We describe in more \zhnote{detail}
the results and the constructions involved in
\S\ref{subsec:intro2}. These constructions too are of independent
interest.

\

A natural application of the construction of Fukaya type triangulated
persistence categories is a rigidity result for spaces of Lagrangian
\zhnote{submanifolds}.  To fix ideas, let $(X,\omega)$ be a symplectic
manifold. It is well-known since the pioneering work of Gromov and
Floer that closed Lagrangian submanifolds $L\subset X$ subject to
certain {\em purely topological} constraints - the one used in this
paper is exactness - exhibit strong, and often surprising, rigidity
properties that are intrinsically symplectic. Generally, this form of
rigidity reflects individual properties of each of the Lagrangians in
the fixed class. Two famous examples that have structured much of the
modern work in the subject are \zhnote{the} Arnold conjecture and the
nearby Lagrangian conjecture, also due to Arnold.

In this work, we show that, in the same setting, a global form of
rigidity is in effect.  More precisely, let $(X, \omega)$ be a
Liouville \zhnote{manifold that} satisfies an algebraic finiteness
condition that will be made explicit below.  The set of closed, exact
Lagrangians in $X$ is endowed with a class of metrics, called {\em
  symplectic fragmentation} metrics with some remarkable properties
(see Corollary \ref{cor:gl-metric} for a more precise version and
details):
\begin{itemize}
\item[-] Up to a multiplicative constant, these metrics are dominated
  by the spectral metric (that itself has as upper bound the Hofer
  metric), thus they carry symplectic content.
\item[-] The non-degeneracy of these metrics can be viewed as a form
  of Gromov's non squeezing theorem in the sense that the distance
  between two Lagrangians has a lower bound that can be expressed in
  terms of a purely geometric quantity, the supremum of radii of
  standard symplectic balls that \zjr{embed} in a certain position
  relative to the two Lagrangians.
\item[-] The metrics are finite and thus they allow meaningful
  comparison of Lagrangians that are very different as smooth
  submanifolds (non-isotopic, or of different homotopy types), when
  classical metrics, such as the Hofer distance, are infinite.
\item[-] At the same time they also satisfy a property of stability of
  intersections in the sense that, given two transverse Lagrangians
  $L$, and $N$, if a third Lagrangian $L'$ is sufficiently close to
  $L$ in one of these metrics, then the number of intersections of
  $L'$ with $N$ \zjr{cannot be smaller} than the number of
  intersections of $N$ and $L$.
\end{itemize}

The relation between this statement and the notion of TPC is that if
\zhnote{a triangulated} category admits a TPC refinement, then, by the
main algebraic result in this paper, its exact triangles are endowed
with a so-called {\em persistence} triangular weight. The set of
objects of a triangulated \zhnote{category, endowed} with such a
triangular \zhnote{weight, is} easily seen to carry a family of
natural pseudo-metrics called {\em fragmentation} pseudo-metrics. The
symplectic fragmentation metric mentioned above is deduced from the
fragmentation pseudo-metrics associated to the Fukaya TPC.

\begin{rem} Precursors of the metrics introduced here have appeared in
  \cite{Bi-Co-Sh:LagrSh}, based on Lagrangian cobordism machinery.
  However, the constructions in that paper lacked the proper algebraic
  setting, with the consequence that the finiteness of the distance
  between two Lagrangians depended on the existence of certain
  Lagrangian cobordisms. This issue was addressed, in part, in
  \cite{Bi-Co:LagPict} through considerations involving immersed
  Lagrangians, which allow the construction of an abundance of
  immersed cobordisms. However, the immersed cobordism approach is
  technically very delicate, \zhnote{and less} natural than the one
  proposed here with the consequence that it leads to family of
  metrics that are extremely hard to estimate.
\end{rem}

\section{Persistence and triangulation}\label{subsec:intro1}
Persistence theory, introduced in several pioneering works
\zjnote{\cite{ELZ-top-00, Co-Ra, ZC05, C-SEH-stab-07, Ghr-bar-08,
    Car-top-09, Wei-what-11, Les-mult-15}}, is an abstract framework
that emerged from investigations in parts of \zhnote{data science} as
well as \zhnote{in topology,} formalizing the structure and properties
of a class of phenomena that are most easily seen in the homology of
\zhnote{a chain complex $(C, d)$ endowed with an increasing filtration
  $(C^{\leq \alpha}, d) \subset (C,d)$ of subcomplexes parametrized by
  $\alpha \in \R$}. The \zhnote{homologies} of \zhnote{the
  subcomplexes form} a family $\{H(C^{\leq \alpha})\}_{\alpha\in \R}$,
\pbred{whose} members are related by maps
$i_{\alpha,\beta}: H(C^{\leq \alpha}) \to H(C^{\leq \beta})$,
$\alpha\leq \beta$ subject to obvious compatibilities. \zhnote{This is
  an example of a} persistence module.  Given two filtered complexes
$(C,d)$ and $(D,d)$ that are quasi-isomorphic, it is possible to
compare them by the \zhnote{so called} interleaving distance.  Its
definition is based on the fact that the \zhnote{space of} linear maps
$v: C\to D$ \zhnote{is itself} filtered by \zhnote{the} ``shift''
\zhnote{of a map}: $v$ is of shift $\leq r$ if
$v(C^{\leq \alpha})\subset D^{\leq \alpha +r}$, for all
$\alpha \in \R$.  Using this, given two chain maps $\phi: C\to D$,
$\psi : D\to C$ such that \zhnote{$\psi \circ \phi$ is chain homotopic
  to} \pbred{$\id_C$}, there is a natural measurement for how far the
composition $\psi \circ \phi$ is from the identity, namely the infimum
of the ``shifts'' of chain homotopies $h: C \to C$ such that
\pbred{$dh+hd=\psi\circ\phi - \id_C$}.  The machinery of persistence
modules is much more developed than the few elements mentioned
here. For a survey on this topic and its applications in various
mathematical branches, see research monographs and papers from
Edelsbrunner \cite{Ede14}, Oudot \cite{Oud-quiver-15}, Chazal-de
Silva-Glisse-Oudot \cite{CdeSGO16}, Polterovich-Shelukhin
\cite{PS-pers-16}, Polterovich-Rosen-Samvelyan-Zhang \cite{PRSZ20},
and Kislev-Shelukhin \cite{AS-spectral-21}. In particular, there is a
beautiful interpretation of the bottleneck distance in terms of
\zhnote{so called} barcodes (\cite{BL15, UZ16}).

The main question that we address in the algebraic part of this paper
is independent of symplectic considerations:
 
\
 
{\em How can one use a persistence type structure on the morphisms of
  \zhnote{a} category to compare not only (quasi)-isomorphic objects
  but rather define a pseudo-metric on the set of all
  \pbred{objects?}}

\

We provide here a solution to this question based on mixing
persistence with triangulation understood in the sense of triangulated
categories as introduced by Puppe \cite{Pup62} and Verdier
\cite{Ver77} in the early \zhnote{1960's}.  Given a triangulated
category $\mathcal{D}$ there is a simple notion of {\em triangular
  weight} $w$ on $\mathcal{D}$ that we introduce in \S
\ref{subsec:triang-gen}. This associates to each exact triangle
$\Delta$ in $\mathcal{D}$ a non-negative number $w(\Delta)$ satisfying
a couple of properties.  The most relevant of them is a weighted
variant of the octahedral axiom (we will give a more precise
definition later).  A basic example of a triangular weight is the flat
one: it associates to each exact triangle the value $1$.  The interest
of triangular weights is that they naturally lead to {\em
  fragmentation pseudo-metrics} on $\mathrm{Obj}(\mathcal{D})$ (we
assume here that $\mathcal{D}$ is small) defined roughly as follows
(see \S\ref{subsec:triang-gen} for details).  Such a pseudo-metric
depends on a family of objects $\mathcal{F}$ of $\mathcal{D}$.  With
$\mathcal{F}$ fixed, and up to a certain normalization, the
pseudo-distance $d^{\mathcal{F}}(X,Y)$ between $X$,
\zhnote{$Y\in \mathrm{Obj}(\mathcal{D})$} is (the symmetrization of)
the infimum of the total weight of exact triangles needed to construct
iteratively $X$ out of $Y$ by only attaching cones over morphisms with
domain in $\mathcal{F}$.  The weighted octahedral axiom implies that
this $d^{\mathcal{F}}$ satisfies the \pbnote{triangle} inequality.
Using such \jznote{pseudo-metrics} one can analyze rigidity properties
of various categories by exploring the induced topology on
$\mathrm{Obj}(\mathcal{D})$.
 
 \
  
 The main algebraic part of the paper is contained in Chapter
 \ref{chap:Alg}
and its aim is to use persistence machinery
to produce certain non-flat triangular weights. 
The main tool, as already mentioned above, is a  refinement of triangulated
categories, called {\em triangulated persistence categories} (TPC).  A
triangulated persistence category, $\mathcal{C}$, has two main
properties. First, it is a persistence category, a natural notion we
introduce in \S\ref{subsec:pers-cat}. This is a category $\mathcal{C}$
whose morphisms $\Mor_{\mathcal{C}}(A,B)$ are persistence modules and
the composition of morphisms is compatible with the persistence
structure. (\jznote{See \cite{PRSZ20} for a general introduction of
  the persistence module theory}). The second main
structural property of TPC's is that the objects of $\mathcal{C}$
together with the \zhnote{$0$-persistence} level morphisms
$\Mor^{0}_{\mathcal{C}}(A,B)$ have the structure of a triangulated
category $\C_{0}$.  The formal definition of TPC's is given
in~\S\ref{subsec:TPC}.  

It is natural to associate to a persistence category $\mathcal{C}$ a
limit category $\mathcal{C}_{\infty}$ that has the same objects as
$\C$ and has as morphisms the $\infty$-limits of the morphisms in
$\C$.  In a different direction a natural notion in a persistence
category is that of an \zhnote{$r$-acyclic object}: $K$ is called
$r$-acyclic if its identity morphism $\id_{K}\in \Mor^{0}_{\C}(K,K)$
is $0$ in $\Mor^{r}_{C}(K,K)$. The acyclic objects for all $r\geq 0$
form a full subcategory $\mathcal{A}\C$ of $\C$ that is also a
persistence category, \zhnote{and in} case $\C$ is a TPC, it is easy
to see that $\mathcal{A}\C$ is also a TPC. In particular,
$\mathcal{A}\C_{0}$ is triangulated.

These notions are tied together by the classical construction of
Verdier localization. Indeed, assuming as above that $\C$ is a TPC,
\zhnote{we will} see that $\C_{\infty}$ coincides with the Verdier
localization of $\C_{0}$ with respect to $\mathcal{A}\C_{0}$.  In
particular, the category $\C_{\infty}$ is also triangulated.

We now can state the main result of the algebraic part of the paper
(restated more precisely in Theorem \ref{thm:main-alg}).
\begin{mainthm} If $\C$ is a triangulated persistence category, and
  with the notation above, the Verdier localization $\C_{\infty}$
  admits a non-flat triangular weight induced from the persistence
  structure of $\mathcal{C}$.
\end{mainthm}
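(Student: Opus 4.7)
The plan is to build a triangular weight $w$ on $\C_\infty$ directly from the shift parameters of the persistence morphisms in $\C$. Recall that $\C_\infty$ has already been identified with the Verdier localization of $\C_0$ along the full triangulated subcategory $\A\C_0$, and that morphisms in $\C_\infty$ are colimits of persistence morphisms, so that every arrow in $\C_\infty$ is represented by some $f\in \Mor_\C^r(A,B)$ for some $r\geq 0$. Correspondingly, an exact triangle $\Delta$ in $\C_\infty$ admits presentations as a strict exact triangle in $\C_0$
$$A\xrightarrow{u} B\xrightarrow{v} C'\xrightarrow{w} TA$$
where $C'$ is $r$-isomorphic in $\C$ to the third vertex of $\Delta$, for some finite $r\geq 0$. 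I would define $w(\Delta):= \inf r$, where the infimum is taken over all such presentations (one can equivalently phrase this in terms of lifting the structural morphisms of $\Delta$ to persistence morphisms of shift $\leq r$).

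The first task is to check the basic axioms. Rotation invariance is automatic, since rotating an exact triangle in $\C_0$ gives an exact triangle in $\C_0$ and the definition of $w$ is symmetric in the three vertices. The normalization axiom (that strict triangles get weight zero) holds by construction. The crucial axiom is the weighted octahedral one: given composable morphisms $f\co A\to B$ and $g\co B\to C$ in $\C_\infty$ whose cones define exact triangles $\Delta_f, \Delta_g$ with $w(\Delta_f)\leq r_1$ and $w(\Delta_g)\leq r_2$, one must produce an exact triangle on $g\circ f$ of weight at most $r_1+r_2$, together with the appropriate octahedral morphisms. The strategy is to lift $f$ and $g$ to persistence representatives of shifts $r_1+\epsilon$ and $r_2+\epsilon$, reindex so that both are realized as $0$-shift morphisms into suitably shifted targets, and then invoke the classical octahedral axiom for the triangulated category $\C_0$. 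Compatibility of the persistence shift functors with cones — part of the TPC axioms — lets one identify the resulting cone with a presentation of the cone of $g\circ f$ at shift $r_1+r_2+2\epsilon$; letting $\epsilon\to 0$ gives the desired bound.

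The main obstacle, which is exactly where the TPC axioms must do their work, is the octahedral step. One must verify that the reindexing needed to bring the two lifts to the same persistence level does not destroy exactness in $\C_0$, and that the cone produced in $\C_0$ genuinely represents — up to an $(r_1+r_2)$-isomorphism — the cone of $g\circ f$ in $\C_\infty$. This is where the interplay of persistence structure and triangulation, absent in either theory separately, becomes essential. Once the octahedral axiom is secured, non-flatness of $w$ is cheap: any morphism $A\to B$ in $\C_\infty$ realized only at strictly positive shift $r$ yields an exact triangle of weight at most $r$, and by choosing $r$ small (which is possible as soon as $\A\C$ is nontrivial) one obtains triangles with weight strictly less than the flat value $1$. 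In fact, by scaling the shift one obtains triangles of arbitrarily small positive weight, which is what powers the refined pseudo-metrics on $\mathrm{Obj}(\C_\infty)$ described in the introduction.
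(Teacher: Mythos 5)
Your starting idea is in the right ballpark, but there is a genuine gap at the octahedral step, and it is not cosmetic.

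What you define is essentially what the paper calls the \emph{unstable weight} $w_\infty(\Delta)$: the infimum of $r$ over presentations of $\Delta$ by a strict exact triangle in $\C$ of weight $r$ (see Definition \ref{dfn-extri-inf}). The problem is that $w_\infty$ does \emph{not} satisfy the weighted octahedral axiom; it only satisfies the weak form
$$w_\infty(\Delta_3) + w_\infty(\Delta_4) \leq 2\bigl(w_\infty(\Delta_1) + w_\infty(\Delta_2)\bigr),$$
with an unavoidable factor of~$2$ (Remark \ref{rem:weak-oct}). The difficulty appears exactly at the step you flag as the ``main obstacle.'' When you run the weighted octahedral construction (Proposition \ref{prop-w-oct}) on lifts of $\Delta_1,\Delta_2$, the triangle $\Delta_3$ that emerges is exact in $\C_0$ but \emph{after} some shifts have been applied to its vertices; as a triangle in $\C_\infty$ it need not have small unstable weight --- one can only bound $w_\infty(\Delta_3)$ by a quantity involving the shifts of the individual morphisms in the lifts of $\Delta_1,\Delta_2$, not by $0$. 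To get the correct inequality one needs to allow oneself to shift the two endpoints $A$ and $TA$ of $\Delta_3$ upward (so that $\Delta_3$ becomes presented by a genuine $\C_0$-exact triangle, hence has weight $0$). That is precisely the purpose of the second infimum in the paper's definition
$$\bar{w}(\Delta) = \inf\{w_\infty(\Sigma^{s,0,0,s}\Delta) \mid s\geq 0\},$$
which is what is actually proven to be a triangular weight (Theorem \ref{thm:main-alg}, proved via Lemma \ref{lem-oct-cinf}). Without this stabilization, your $w$ satisfies only the weaker axiom and would not yield a pseudo-metric via Proposition \ref{prop:tr-weights-gen}.

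A secondary inaccuracy: ``rotation invariance is automatic'' is not true here. Rotating a weight-$r$ strict exact triangle produces, in general, a weight-$2r$ strict exact triangle (Proposition \ref{prop-rot}), and neither $w_\infty$ nor $\bar{w}$ is rotation-invariant (see Remark \ref{rem:rot-w}). Fortunately Definition \ref{def:triang-cat-w} does not require rotation invariance --- only that the degenerate triangles $0\to X\to X\to 0$ and their rotations all receive the normalizing weight --- so this does not break the argument, but the claim as stated would mislead. Finally, the non-flatness claim needs a concrete witness: the paper exhibits one via triangles $A\to 0\to\Sigma^{-r}TA\to TA$ (Example \ref{ex-id-rigid}), showing $\bar{w}$ takes all non-negative values when $\sigma(\mathds{1}_A)=0$. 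Also missing from your sketch is the preliminary verification that every exact triangle in $\C_\infty$ has \emph{finite} unstable weight; this is nontrivial and occupies \S\ref{subsubsec:fuw}.
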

The construction of this triangular weight is based on a definition of
a class of weighted triangles in the category $\mathcal{C}$
itself. With this definition, the exact triangles in $\mathcal{C}_{0}$
have weight $0$, but there are also other triangles in $\C$ of
arbitrary positive weights.  While the category $\C$ together with the
class of finite weight triangles is not triangulated - even the formal
expression of these triangles in $\C$ does not fit the axioms of
triangulated categories - the properties of these triangles are
sufficient to induce a triangular weight on the exact triangles of
$\C_{\infty}$.

\

In summary, if a triangulated category $\mathcal{D}$ admits a TPC
refinement - that is a TPC, $\C$, such that $\C_{\infty}=\mathcal{D}$
(as triangulated categories), then $\mathcal{D}$ carries a non-flat
triangular weight induced from the persistence structure of $\C$.  As
a result, this construction provides a technique to build non-discrete
fragmentation pseudo-metrics on the objects of $\mathcal{D}$. 

\

Some classes of examples are discussed in
\S\ref{sec-example}. Triangulated persistence categories are expected
to be of use beyond the field of symplectic topology and Chapter
\ref{chap:Alg}, which is essentially self-contained, can be
\zhnote{read independently} of the symplectic considerations that
appear in Chapter \ref{s:tpfc}.

\begin{rem} \label{rem:genLS} Even the fragmentation pseudo-metrics associated to the
  flat weight are of interest.  Many qualitative questions concerned
  with numerical lower bounds for the complexity of certain geometric
  objects can be understood by means of inequalities involving such
  fragmentation pseudo-metrics. \zhnote{Classical examples are the
    Morse inequalities, the Lusternik-Schnirelmann inequality as well
    as, in symplectic topology, the inequalities predicted by the
    Arnold conjectures.} Remarkable results based on measurements
  using this flat weight and applied to the study of endofunctors have
  appeared recently in work of Orlov \cite{Or} as well as
  Dimitrov-Haiden-Katzarkov-Kontsevich \cite{Kon} and
  Fan-Filip~\cite{Fa-Si}.
\end{rem}

\section{TPC refinements of the Fukaya derived
  category.}\label{subsec:intro2}

\zhnote{Here is an overview of the geometric part of the paper
  (Chapter \ref{s:tpfc}).} The main step \zhnote{here} is to consider
a finite family $\mathcal{X}$ of closed, exact Lagrangians in
\zhnote{a symplectic manifold} $X$, assumed in \zhnote{general}
position, and construct a TPC refinement of the derived Fukaya
category \pbred{of} $\mathcal{X}$.

\zhnote{There are quite a few nuances here.} First, this requires the
construction of a filtered Fukaya type category with objects the
elements of $\mathcal{X}$, endowed with all possible primitives. A
{\em weakly} filtered such category has been constructed in
\cite{Bi-Co-Sh:LagrSh} but obtaining a \zhnote{genuinely} filtered
$A_{\infty}$-structure is more delicate. It requires careful control
of energy estimates (and the technique we use restricts us to
\zhnote{finite families} $\mathcal{X}$) but also the use of
``cluster'' type moduli spaces, that mix $J$-holomorphic polygons and
Morse trajectories. Fortunately, such moduli spaces have been studied
and used frequently since \cite{Co-La}, for instance in
\cite{Charest-thesis},\cite{Charest}.

The resulting filtered Fukaya category $\fuk(\mathcal{X})$ depends, of
course, on choices of \zhnote{auxiliary structures such as}
perturbation data that we omit from the notation here.  The next step
is to pursue the construction of the derived version. As in the
non-filtered version, this part is purely algebraic and applies to any
filtered $A_{\infty}$-category. Nonetheless, there are some
significant differences with respect to the non-filtered
case. Uniqueness up to equivalence is considerably more delicate to
achieve because several \zhnote{basic algebraic $A_{\infty}$-tools},
such as the Hochschild complex and related constructions, require
significant adjustment to adapt to the filtered setting. Moreover, at
a more conceptual level, the two natural constructions of the derived
category, one based on filtered twisted complexes and the other on the
Yoneda embedding and $A_{\infty}$ filtered modules, both lead to
useful natural notions, but not to equivalent ones. Denote by
$\C\fuk(\mathcal{X})$ the version based on filtered modules.  Let
$D\fuk(\mathcal{X})$ be the usual, \zhnote{unfiltered,} derived
\zhnote{Fukaya category} \pbred{of} $\mathcal{X}$ and assume that
$\mathcal{F}\subset \mathcal{X}$ is a family of triangular generators
for $D\fuk(\mathcal{X})$.  Fix also a second such family
$\mathcal{F}'$ with each element a being a small generic Hamiltonian
deformation of a corresponding elements in $\mathcal{F}$.

The main statement is the following - again in simplified form (the
full statement is in Theorem \ref{thm:appl-sympl}):
\begin{mainthm}\label{mthm2}
  The category $\C\fuk(\mathcal{X})$ is a TPC and it is independent of
  the defining data up to TPC equivalence. Moreover,
  $\C\fuk(\mathcal{X})_{\infty}$ is triangulated equivalent to
  $D\fuk(\mathcal{X})$. Finally, there exists a fragmentation metric
  on $\mathcal{X}$, that is independent of the choices used in the
  construction of $\C\fuk(\mathcal{X})$, and is defined by
$$D^{\mathcal{F},\mathcal{F}'}=\max \{D^{\mathcal{F}},D^{\mathcal{F}'}\}$$
where $D^{\mathcal{F}}$ are \pbred{the shift-invariant versions of
  the} fragmentation pseudo-metrics \pbred{$d^{\mathcal{F}}$}
constructed as outlined in \S\ref{subsec:intro1}.
\end{mainthm}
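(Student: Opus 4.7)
The plan is to proceed in four main stages. First, I would construct a genuinely filtered $A_{\infty}$-structure on the Fukaya-type category $\fuk(\mathcal{X})$. Working with exact Lagrangians equipped with chosen primitives, action differences of intersection points provide a canonical filtration on Floer complexes. To upgrade the \emph{weakly} filtered structure of \cite{Bi-Co-Sh:LagrSh} to a genuinely filtered $A_{\infty}$-structure, I would employ cluster moduli spaces mixing $J$-holomorphic polygons with Morse trajectories on the primitives, in the style of \cite{Co-La} and Charest's thesis. The restriction to a finite family $\mathcal{X}$ is precisely what yields the uniform energy estimates needed to ensure that the higher $\mu^k$ shift filtration by an amount that can be absorbed into the persistence grading. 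With this in hand, $\C\fuk(\mathcal{X})$ is defined as the category of filtered $A_{\infty}$-modules over $\fuk(\mathcal{X})$, with hom-spaces filtered by the usual shift on module maps.

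Second, I would verify the TPC axioms for $\C\fuk(\mathcal{X})$. The persistence structure on hom-spaces and its compatibility with composition is immediate from the definitions. For the triangulated subcategory structure on the zero level $\C\fuk(\mathcal{X})_0$, one takes filtered mapping cones of shift-$0$ module maps; standard checks of rotation and the octahedral axiom go through once cones are constructed internally in the filtered category. To identify $\C\fuk(\mathcal{X})_{\infty}$ with $D\fuk(\mathcal{X})$, I would invoke the Verdier-localization description supplied by the main algebraic theorem: the $\infty$-acyclic objects are exactly those filtered modules whose underlying unfiltered module is contractible, so inverting them reproduces the classical derived Fukaya category on $\mathcal{X}$.

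Third, and this is where I expect the main obstacle to lie, one must prove independence of $\C\fuk(\mathcal{X})$ from the auxiliary data (almost complex structures, perturbation data, primitives, cluster data) up to TPC equivalence. I would build filtered continuation $A_{\infty}$-functors between any two choices of defining data, and then compare them by filtered $A_{\infty}$-natural transformations. The difficulty is precisely the one flagged in the introduction: the Hochschild-type tools that organize such comparisons in the unfiltered setting require a non-trivial filtered refinement, since one needs the connecting homotopies themselves to have uniformly controlled shift. The core technical step is to show that any two admissible choices of data can be connected by a path of data along which the continuation data admits uniform energy bounds, so that the resulting functors are genuinely filtered and induce a TPC equivalence.

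Finally, for the metric statement: by Theorem A, the TPC structure on $\C\fuk(\mathcal{X})$ endows $D\fuk(\mathcal{X}) \simeq \C\fuk(\mathcal{X})_{\infty}$ with a non-flat triangular weight, and therefore, for each choice of generating family $\mathcal{F}$, a fragmentation pseudo-metric $d^{\mathcal{F}}$. Its shift-invariant version $D^{\mathcal{F}}$ is obtained by minimizing over all shifts, and the TPC invariance from the previous step ensures that $D^{\mathcal{F}}$ depends only on the ambient data. The pseudo-metric $D^{\mathcal{F}}$ alone may fail to separate a Lagrangian from small Hamiltonian perturbations of an element of $\mathcal{F}$ itself, and this is why one symmetrizes against a generic Hamiltonian pushoff $\mathcal{F}'$ and sets $D^{\mathcal{F},\mathcal{F}'} = \max\{D^{\mathcal{F}}, D^{\mathcal{F}'}\}$; at least one of the two families detects any pair of distinct Lagrangians in $\mathcal{X}$, yielding non-degeneracy, while finiteness follows from $\mathcal{F}$ (and $\mathcal{F}'$) being triangular generators of $D\fuk(\mathcal{X})$.
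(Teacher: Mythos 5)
Your outline follows the same broad architecture as the paper, but there are three places where the level of detail hides genuine gaps.

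First, on the filtration preservation in stage 1: it is not enough that the finiteness of $\mathcal{X}$ gives ``uniform energy estimates'' that can be ``absorbed into the persistence grading.'' The paper's argument requires the $\mu_d$-operations to \emph{strictly preserve} filtrations, i.e.~$A(u)\geq 0$ for all cluster polygons $u$. A naive ``take perturbations small'' strategy breaks because consistency with gluing/splitting requires the perturbation bounds for $(d+1)$-polygons to dominate the sum of bounds for the pieces; finiteness of $\mathcal{X}$ alone does not resolve this. The paper's fix is structural: one embeds disjoint Darboux balls near each of the finitely many intersection points, forces the almost complex structure to be standard and the perturbation $1$-forms to vanish on those balls, and uses a monotonicity-type lemma to get $E(u)\geq (d+1)CR^2$. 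Combined with the energy--area identity, this yields $A(u) > 0$ provided the curvature terms satisfy $\nu(\mathscr{P}_{d+1}) < (d+1)Cr^2$. The $(d+1)$-linearity of the lower bound is precisely what makes the recursive inequalities~\eqref{eq:ineq-alpha-d-1} solvable, hence a consistent choice possible. Without this geometric input the proof strategy does not close.

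Second, your definition of $\C\fuk(\mathcal{X})$ as ``the category of filtered $A_\infty$-modules over $\fuk(\mathcal{X})$'' is not what the paper does and would not satisfy point~(ii) of the theorem. The full filtered module category is far too large for its $\infty$-level to agree with $D\fuk(\mathcal{X})$, which is only the triangulated envelope of the Yoneda modules. The paper takes $\C\fuk(\mathcal{X})=H^0\bigl(\fuk(\mathcal{X};\mathscr{P})^\nabla\bigr)$, where $\fuk(\mathcal{X};\mathscr{P})^\nabla$ is the smallest pre-triangulated (with respect to weight-$0$ cones) full subcategory of filtered modules that contains the shifted/translated Yoneda modules and is closed under $r$-isomorphism for all $r\geq 0$. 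The closure under $r$-isomorphism is essential for the TPC structure; the restriction to the triangulated envelope of Yoneda modules is essential for part~(ii).

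Third, the independence argument in stage 3 is sketched at too coarse a level. ``Connecting by a path of data with uniform bounds'' does not directly yield the needed filtered quasi-equivalences between $\fuk(\mathcal{X};\mathscr{P}_0)$ and $\fuk(\mathcal{X};\mathscr{P}_1)$ with the necessary coherence (functors $\mathcal{F}^{j,i}$ plus compatible natural transformations $T^{i_2,i_1,i_0}$). The paper instead constructs explicit total categories $\fuk^{\text{tot}}(\mathcal{X};R,r)$ containing all $\fuk(\mathcal{X};i)$ as quasi-equivalent full subcategories, builds strictly $A_1$-unital filtered $A_2$-projections $\mathcal{P}r^i$, and extends them to $A_\infty$-functors using persistence Hochschild cohomology (Lemmas~\ref{l:PHH-invariance},~\ref{l:extending-func}). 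Moreover, there is a real obstruction (Remark~\ref{r:big-tot}): the total category can only be constructed over subspaces $\mathcal{B}(\bar{\mathcal{X}};R,R)$ of perturbation data, not over the entire base, forcing the paper to work with a system of \emph{overlapping} coherent systems and show these agree on overlaps. This subtlety does not surface in the ``path of data'' picture. Your stage 4 on the metric and the reason for the $\max\{D^{\mathcal{F}},D^{\mathcal{F}'}\}$ symmetrization is essentially correct and matches the paper.
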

One delicate point worth emphasizing here is that while we expect
$\mathcal{C}\fuk(\mathcal{X})$ to be unique up to {\em canonical}
equivalence, the machinery in this paper does not produce fully
canonical equivalences (see Theorem \ref{t:fil-fuk}).

Of course, as the set $\mathcal{X}$ is finite, this metric
$D^{\mathcal{F},\mathcal{F}'}$ might appear to be uninteresting,
however the more precise result - Theorem \ref{thm:appl-sympl} - shows
that the pseudo-metrics $D^{\mathcal{F}}$, satisfy some remarkable
properties (see also Remark \ref{rem:symplcom}).  These properties are
then used to analyze how the pseudo-metrics change when the family
$\mathcal{X}$ increases. Ultimately, this leads to the definition of
the metric on the space of all \zhnote{closed exact} Lagrangians that
was claimed earlier in the \zhnote{introduction. This is} stated more
precisely in Corollary \ref{cor:gl-metric}.

The construction of TPC's is inspired by recent constructions in
symplectic topology and, in particular, by the shadow pseudo-metrics
introduced in \cite{Bi-Co-Sh:LagrSh} and \cite{Bi-Co:LagPict} in the
study of Lagrangian cobordism. This aspect is discussed in
\S\ref{sb:lcob}.  The construction of the filtered Fukaya category and
the associated TPC are expected to be of independent interest.

\

\noindent {\bf Acknowledgments}. The third author is grateful to Mike
Usher for useful discussions. We thank Leonid Polterovich for
mentioning to us the work of Fan-Filip~\cite{Fa-Si}. We thank the
referee of the first version of this paper for pointing out the
relation between our algebraic construction and \zhnote{the} Verdier
localization. \pbrev{We are also grateful to a second referee for a
  very careful and critical reading of a later version of the paper,
  for pointing out several inaccuracies, and making very useful
  suggestions that helped to improve the exposition.} We thank a third referee for pointing out the paper \cite{Sco20} (see also Remark \ref{rem:ear}).

Part of this work was completed while the third author held a CRM-ISM
Postdoctoral Research Fellowship at the {\em Centre de recherches
  math\'ematiques} in Montr\'eal. He thanks this Institute for its
warm hospitality. The third author is partially supported by National
Key R\&D Program of China No.~2023YFA1010500, NSFC No.~12301081, NSFC
No.~12361141812, and USTC Research Funds of the Double First-Class
Initiative.


\chapter{Triangulation Persistence Categories: Algebra 101}\label{chap:Alg}
This chapter contains the main algebraic machinery introduced in the
paper and it is self-contained, except for some basic elements of
homological algebra as can be found in \cite{Weibel}\footnote{A
  version of this chapter appeared earlier as an independent preprint
  \cite{Bi-Co-Zh:101}. The only changes compared to
  \cite{Bi-Co-Zh:101}, besides minor corrections of imprecisions,
  concern the relations to Verdier localization in
  \S\ref{subsubsec:localization}.}.

In \S\ref{subsec:triang-gen} we introduce briefly the notion of
triangular weight and discuss its application to measure the
complexity of cone-decompositions in triangulated categories. In
\S\ref{subsec:pers-cat} we introduce persistence categories which are,
in short, categories enriched by persistence modules. Triangulated
persistence categories are introduced in \S\ref{subsec:TPC}. In
\S\ref{subsec:trstr-weights} we prove the main algebraic result of the
\zhnote{chapter}, namely that the $\infty$-level of a TPC carries a
specific triangular weight induced from the persistence
structure. Finally, in \S\ref{sec-example} we discuss some classes of
natural TPC examples that are not symplectic in nature \pbred{(the
  symplectic examples are deferred to Chapter~\ref{s:tpfc})}.

\section{Triangular weights} \label{subsec:triang-gen} In this
subsection we introduce triangular weights associated to a
triangulated category $\mathcal{D}$.  Using such a triangular weight
$w$ on $\mathcal{D}$ we define a class of so-called fragmentation
pseudo-metrics $d^{\mathcal{F}}_{w}$ on
$\mathrm{Obj}(\mathcal{D})$. All categories used in this paper
($\mathcal{D}$ in particular) are assumed to be small unless otherwise
indicated.


\begin{dfn}\label{def:triang-cat-w} Let $\mathcal{D}$ be a 
  triangulated category and denote by $\mathcal{T}_{\mathcal{D}}$ its
  class of exact triangles. A {\em triangular weight} $w$ on
  $\mathcal{D}$ is a function
  $$w:\mathcal{T}_{\mathcal{D}}\to [0,\infty)$$
  that satisfies properties (i) and (ii) below:

  (i) [Weighted octahedral axiom] Assume that the triangles
  $\Delta_{1}: A\to B\to C\to TA$ and $\Delta_{2}: C\to D\to E\to TC$
  are both exact. There are exact triangles:
  $\Delta_{3}: B\to D\to F\to TB$ and
  $\Delta_{4}: TA\to F\to E\to T^{2}A$ making the diagram below
  commute, except for the right-most bottom square that anti-commutes,
  \[ \xymatrix{
      A \ar[r] \ar[d] & 0 \ar[r] \ar[d] & TA \ar[d]\ar[r]& TA\ar[d] \\
      B \ar[r] \ar[d] & D \ar[r] \ar[d] & F \ar[d]\ar[r]& TB \ar[d]\\
      C \ar[r] \ar[d] & D \ar[r]\ar[d] & E\ar[r]\ar[d] & TC\ar[d] \\
      TA\ar[r] & 0 \ar[r]  & T^{2}A \ar[r]& T^{2} A }
  \]
  and such that 
  \begin{equation}\label{eq:weight:ineq}
    w(\Delta_{3})+w(\Delta_{4})\leq w(\Delta_{1})+w(\Delta_{2})~.~
  \end{equation}

  (ii) [Normalization] There is some $w_{0}\in [0,\infty)$ such that
  \pbnote{$w(\Delta)\geq w_{0}$} for all
  \pbnote{$\Delta\in \mathcal{T}_{\mathcal{D}}$} and
  \pbnote{$w(\Delta')=w_{0}$} for all triangles \pbnote{$\Delta'$} of
  the form $0\to X\xrightarrow{\mathds{1}_{X}}X\to 0$,
  $X\in\mathrm{Obj}(\mathcal{D})$, and their rotations.  Moreover, in
  the diagram at (i) if $B=0$, we may take $\Delta_{3}$ to be
  \begin{equation}\label{eq:simpl-d3}
    \Delta_{3}: 0 \to D\to D\to 0~.~
  \end{equation}
  
\end{dfn}

\begin{remark}\label{rem:gen-weights} (a) Neglecting the weights
  constraints, given the triangles $\Delta_{1}$, $\Delta_{2}$,
  $\Delta_{3}$ as at point (i), the octahedral axiom is easily seen to
  imply the existence of $\Delta_{4}$ making the diagram commutative,
  as in the definition.

  (b) The condition at point (ii), above equation (\ref{eq:simpl-d3}),
  can be reformulated as a replacement property for exact triangles in
  the following sense: if $\Delta_{2}:C\to D\to E\to TC$ is exact and
  $C$ is isomorphic to $A'\ (=TA)$, then there is an exact triangle
  $A'\to D\to E\to TA'$ of weight at most
  $w(\Delta_{2})+w(\Delta_{1})-w_{0}$ where $\Delta_{1}$ is the exact
  triangle $T^{-1}A'\to 0 \to C\to A'$.
\end{remark}

Given an exact triangle $\Delta : A\to B\xrightarrow{f} C\to TA$ in
$\mathcal{D}$ and any $X\in \mathrm{Obj}(\mathcal{D})$ there is an
associated exact triangle
$ X\oplus\Delta :A\to X\oplus B\xrightarrow{\mathds{1}_{X}\oplus f}
X\oplus C\to TA$ and a similar one, $\Delta \oplus X$.  We say that a
triangular weight $w$ on $\mathcal{D}$ is {\em subadditive} if for any
exact triangle $\Delta \in \mathcal{T}_{\mathcal{D}}$ and any object
$X$ of $\mathcal{D}$ we have $$w(X\oplus\Delta ) \leq w(\Delta )$$ and
similarly for $\Delta \oplus X$.

\

The simplest example of a triangular weight on a triangulated category
$\mathcal{D}$ is the flat one, \pbnote{$w_{fl}(\Delta)=1$,} for all
triangles \pbnote{$\Delta\in \mathcal{T}_{\mathcal{D}}$.} This weight
is obviously sub-additive. A weight that is not proportional to the
flat one is called {\em non-flat}.

\

The interest of triangular weights comes from the next definition that
provides a measure for the complexity of cone-decompositions in
$\mathcal{D}$ and this leads in turn to the definition of
corresponding pseudo-metrics on the set $\mathrm{Obj}(\mathcal{D})$.

\begin{dfn}\label{def:iterated-coneD-tr} Fix a triangulated category
  $\mathcal{D}$ together with a triangular weight $w$ on
  $\mathcal{D}$.  Let $X$ be an object of $\mathcal{D}$. An {\em
    iterated cone decomposition} $D$ of $X$ with {\em linearization}
  $\ell(D) = (X_1,X_{2}, ..., X_n)$ consists of a family of exact
  triangles in $\mathcal{D}$:
\[ 
\left\{ 
\begin{array}{ll}
\Delta_{1}: \, \, & X_{1}\to 0\to Y_{1}\to  TX_{1}\\
\Delta_2: \,\, & X_2 \to Y_1 \to Y_2 \to  TX_2\\
\Delta_3: \,\, & X_3 \to Y_2 \to Y_3 \to  TX_3\\
&\,\,\,\,\,\,\,\,\,\,\,\,\,\,\,\,\,\,\,\,\vdots\\
\Delta_n: \,\, & X_n \to Y_{n-1} \to X \to  TX_n
\end{array} \right.\]
To accommodate the case $n=1$ we set $Y_0=0$.
The weight of such a cone decomposition is defined by:
\begin{equation}\label{eq:weight-cone} 
w(D)=\sum_{i=1}^{n}w(\Delta_{i})-w_{0}~.~
\end{equation}
\end{dfn}

This weight of cone-decompositions naturally leads to a class of
pseudo-metrics on the objects of $\mathcal{D}$, as follows.

\

Let $\mathcal F \subset {\rm Obj}(\mathcal{D})$.  For two objects
$X, X'$ of $\mathcal{D}$, define
\begin{equation} \label{frag-met-0} \delta^{\mathcal F} (X, X') =
  \inf\left\{ w(D) \, \Bigg| \,
    \begin{array}{ll} \mbox{$D$ is an
      iterated cone decomposition} \\
      \mbox{of $X$ with linearization}
      \ \mbox{$(F_1, ..., T^{-1}X', ..., F_k)$,}\\
      \mbox{where $F_i \in \mathcal{F}$, $k \geq 0$}
    \end{array}
  \right\}.
\end{equation}
\pbnote{Note that we allow here $k=0$, i.e.~the linearization of $D$
  is allowed to consist of only one element, $T^{-1}X'$, without using
  any elements $F_i$ from the family $\mathcal{F}$.} Fragmentation
pseudo-metrics are obtained by symmetrizing $\delta^{\mathcal{F}}$, as
below.
 
\begin{prop}\label{prop:tr-weights-gen}
  Let $\mathcal{D}$ be a triangulated category and let $w$ be a
  triangular weight on $\mathcal{D}$.  Fix $\F \subset {\rm Obj}(\C)$
  and define
  $$d^{\F}: {\rm Obj}(\C)\times {\rm Obj}(\C)\to [0,\infty)\cup \{ +\infty\}$$ 
  by:
  \[ d^{\F}(X, X') = \max\{\delta^{\F}(X, X'), \delta^{\F}(X',
    X)\}. \]
  \begin{itemize}
  \item[(i)] The \pbnote{map} $d^{\mathcal{F}}$ is a pseudo-metric
    called {\rm the fragmentation pseudo-metric} associated to $w$
    and $\mathcal{F}$.
  \item[(ii)] If $w$ is subadditive, then
    \begin{equation}\label{eq:subad}
      d^{\mathcal{F}}(A\oplus B, A'\oplus B')\leq d^{\mathcal{F}}(A,A') +
      d^{\mathcal{F}}(B,B') + w_0.
    \end{equation} 
    In particular, if $w_{0}=0$, then $\mathrm{Obj}(\mathcal{D})$ with
    the operation given by $\oplus$ and the topology induced by
    $d^{\mathcal{F}}$ is an H-space. (Recall that a topological space
    is called an {\it H-space} if there exists a continuous map
    $\mu: X \times X \to X$ with an identity element $e$ such that
    $\mu(e,x) = \mu(x,e) = x$ for any $x \in X$.)
  \end{itemize}
\end{prop}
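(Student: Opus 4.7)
The plan is to verify the three defining properties of a pseudo-metric in (i) and then to deduce (ii) from sub-additivity of $w$ combined with the direct-sum structure of $\mathcal{D}$. Symmetry of $d^{\mathcal{F}}$ is immediate from its definition as a max. For vanishing on the diagonal I will exhibit an iterated cone decomposition of $X$ with linearization $(T^{-1}X)$, allowed because the convention $k \geq 0$ permits an empty family of $F_i$; this consists of the single triangle $T^{-1}X \to 0 \to X \to X$, which is the second rotation of $0 \to X \to X \to 0$ and therefore has weight $w_0$ by the normalization part of Definition~\ref{def:triang-cat-w}. Substituting into (\ref{eq:weight-cone}) gives $w(D) = w_0 - w_0 = 0$, so $\delta^{\mathcal{F}}(X,X) = 0$ and hence $d^{\mathcal{F}}(X,X) = 0$.

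The heart of the argument is the triangle inequality. Fix iterated cone decompositions $D$ of $X$ with linearization $(F_1,\ldots,F_j,T^{-1}Z,F_{j+1},\ldots,F_k)$ and $E$ of $Z$ with linearization $(G_1,\ldots,G_l,T^{-1}Y,G_{l+1},\ldots,G_s)$ of weights close to $\delta^{\mathcal{F}}(X,Z)$ and $\delta^{\mathcal{F}}(Z,Y)$. I plan to produce a spliced iterated cone decomposition $D \# E$ of $X$ with linearization $(F_1,\ldots,F_j,G_1,\ldots,G_l,T^{-1}Y,G_{l+1},\ldots,G_s,F_{j+1},\ldots,F_k)$ by replacing the single triangle $\Delta : T^{-1}Z \to Y^D_j \to Y^D_{j+1} \to Z$ of $D$ by $s$ new triangles building $Y^D_{j+1}$ from $Y^D_j$ via successive cones on the pieces above. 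These new triangles are produced by induction on $s$: at the inductive step I apply the weighted octahedral axiom of Definition~\ref{def:triang-cat-w}(i) to the last triangle $\Delta^E_s$ of $E$ together with a suitable rotation of $\Delta$, yielding two new exact triangles of total weight at most $w(\Delta^E_s)+w(\Delta)$; one becomes the ``cone on the last piece'' step, and the other serves as the new extension datum to which the inductive hypothesis is applied, with $E$ truncated to its first $s-1$ steps. The induction produces $\sum_{i=1}^{s}w(\text{new triangle }i) \leq w(\Delta)+\sum_{i=1}^{s}w(\Delta^E_i)-w_0$, which together with (\ref{eq:weight-cone}) yields $w(D\#E) \leq w(D)+w(E)$. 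Taking infima gives $\delta^{\mathcal{F}}(X,Y) \leq \delta^{\mathcal{F}}(X,Z)+\delta^{\mathcal{F}}(Z,Y)$, and passing to the max gives the triangle inequality for $d^{\mathcal{F}}$.

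For part (ii), given near-optimal cone decompositions $D_A$ of $A$ with special step $\Delta^A_p$ at position $p$ and $D_B$ of $B$ with special step $\Delta^B_q$ at position $q$, I will interleave them into a cone decomposition of $A \oplus B$ whose linearization contains the single special piece $T^{-1}(A'\oplus B') = T^{-1}A' \oplus T^{-1}B'$: first run the first $p-1$ steps of $D_A$; then the first $q-1$ steps of $D_B$, each direct-summed with $Y^A_{p-1}$ (sub-additivity controls the weights); then a single step whose triangle is the direct sum $\Delta^A_p \oplus \Delta^B_q$; then the remaining steps of $D_A$ direct-summed with $W^B_q$; and finally the remaining steps of $D_B$ direct-summed with $A$. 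The central weight estimate $w(\Delta^A_p \oplus \Delta^B_q) \leq w(\Delta^A_p)+w(\Delta^B_q)$ is obtained by splitting the direct-sum triangle into two successive sub-additive moves and combining them via the weighted octahedral axiom. Collecting the estimates through (\ref{eq:weight-cone}) yields $\delta^{\mathcal{F}}(A\oplus B, A'\oplus B') \leq \delta^{\mathcal{F}}(A,A')+\delta^{\mathcal{F}}(B,B')+w_0$, and symmetrizing gives (\ref{eq:subad}). When $w_0=0$, inequality (\ref{eq:subad}) makes the operation $(X,Y) \mapsto X\oplus Y$ jointly continuous in the pseudo-metric topology, and the zero object is a strict identity for $\oplus$, so $(\mathrm{Obj}(\mathcal{D}),\oplus,0)$ is an H-space.

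The main obstacle is the splicing construction underlying the triangle inequality: one must carefully orchestrate the inductive use of the weighted octahedral axiom, track the $w_0$ book-keeping forced by (\ref{eq:weight-cone}), and verify that the resulting list of pieces in $D\#E$ lies in $\mathcal{F}$ except for the single special entry $T^{-1}Y$. The combined-special-step weight bound appearing in (ii) is a smaller variant of the same technique.
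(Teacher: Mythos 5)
Your proof of part (i) follows essentially the same strategy as the paper's: establish a weighted analogue of the cone-refinement result (the paper's Proposition~\ref{prop-cone-ref}) by iterating the weighted octahedral axiom, use the $B=0$ clause of the normalization axiom in the final step, and deduce the triangle inequality; the diagonal-vanishing and symmetry arguments are also identical to the paper's. One wording to flag: you say you apply the weighted octahedral to the last triangle of $E$ ``together with a suitable rotation of $\Delta$.'' No rotation is needed. In the paper's construction the two triangles fed into the octahedral axiom already share the required object on the nose: if $\Delta^E_s : G_s \to Y^E_{s-1} \to Z' \to TG_s$ is the last triangle of the (suitably shifted) inner decomposition of the special piece, and $\Delta: Z' \to Y^D_j \to Y^D_{j+1} \to TZ'$ is the step of $D$ being refined, then the third vertex of the former equals the first vertex of the latter, and one applies the octahedral directly, peeling one step off $E$ per iteration. (There is a separate subtlety, present in both your sketch and the paper's: to replace the piece $T^{-1}X''$ one must decompose $T^{-1}X''$ rather than $X''$, i.e.\ use the $T^{-1}$-translate of $D'$, and the axioms of Definition~\ref{def:triang-cat-w} do not explicitly force $w$ to be $T$-invariant. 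The paper elides this; so do you.)

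For part (ii) you take a genuinely different route from the paper. The paper's decomposition (the triangulated analogue of~\eqref{eq:sum-dec}) first builds $A$ from $D_A$ and then attaches $B$-pieces direct-summed with $A$; the two ``special'' objects $T^{-1}A'$ and $T^{-1}B'$ then appear as separate entries in the linearization. You instead interleave $D_A$ and $D_B$ and merge the two special steps into a single triangle $\Delta^A_p \oplus \Delta^B_q$ so that the linearization contains exactly one special piece $T^{-1}(A'\oplus B')$, which is more faithful to the definition~\eqref{frag-met-0}. However, the key estimate you claim,
\[
w(\Delta^A_p \oplus \Delta^B_q) \ \leq \ w(\Delta^A_p)+w(\Delta^B_q),
\]
does not follow straightforwardly from sub-additivity plus the weighted octahedral axiom as you assert. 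Sub-additivity in the sense of the paper only bounds $w(X \oplus \Delta)$ for an \emph{object} $X$ (i.e.\ a passive summand), not the direct sum of two non-trivial triangles. When you try to realize $\Delta^A_p \oplus \Delta^B_q$ as an output of the octahedral axiom applied to $\Delta^A_p \oplus Y^B_{q-1}$ and $Y^A_p \oplus \Delta^B_q$, the two inputs do not share the required vertex and one of them must be rotated before they fit; the axioms control the weight of rotations only for trivial triangles (normalization clause (ii)), so the resulting bound involves the weight of a rotated non-trivial triangle, which is uncontrolled. In the TPC setting the analogous estimate is Lemma~\ref{claim-frag-sum} (which actually gives the stronger $\max$ rather than the sum), but that proof uses the explicit definition of the weight of strict exact triangles and does not transfer to an abstract triangular weight. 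So the central step of your argument for (ii) is a real gap. To repair it you would need either an additional hypothesis on $w$ (e.g.\ invariance under rotation/translation) or a more careful chain of octahedral applications that avoids uncontrolled rotations.
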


The proof of Proposition \ref{prop:tr-weights-gen} is based on simple
manipulations with exact triangles.  We will prove a similar statement
in \S\ref{subsubsec:frag1} in a more complicated setting and we will
then briefly discuss in \S\ref{subsubsec:proof-prop} how the arguments
given in that case also imply \ref{prop:tr-weights-gen}.

\begin{rem}\label{rem:finite-metr}
  (a) In case $\mathcal{F}$ is invariant by translation in the sense
  that $T\mathcal{F}\subset \mathcal{F}$ and, moreover, $\mathcal{F}$
  is a family of triangular generators for $\mathcal{C}$, then the
  metric $d^{\mathcal{F}}$ admits finite values. This is not difficult
  to show by first proving that $\delta^{\mathcal{F}}(0, X)$ is finite
  for all $X\in \mathrm{Obj}(\C)$ (it is immediate that
  $\delta^{\mathcal{F}}(X,0)$ is finite).
 
  (b) It is sometimes useful to view an iterated cone-decomposition as
  in Definition \ref{def:iterated-coneD-tr} as a sequence of objects
  and maps forming the successive triangles $\Delta_{i}$ below
 
  \begin{equation}\label{eq:iterated-tr}\xymatrixcolsep{1pc}  \xymatrix{
      Y_{0} \ar[rr] &  &  Y_{1}\ar@{-->}[ldd]  \ar[r] &\ldots  \ar[r]& Y_{i} \ar[rr] &  &  Y_{i+1}\ar@{-->}[ldd]  \ar[r] &\ldots \ar[r]&Y_{n-1} \ar[rr] &   &Y_{n} \ar@{-->}[ldd]  &\\
      &         \Delta_{1}                  &  & & &  \Delta_{i+1}                          & &  &  &    \Delta_{n}             \\
      & X_{1}\ar[luu] &  & & &X_{i+1}\ar[luu] &  &  & &X_{n}\ar[luu] }
  \end{equation}
\end{rem}
where the dotted arrows represent maps $Y_{i} \to TX_{i}$ and
$Y_{0}=0$, $Y_{n}=X$.

(c) The definition of fragmentation pseudo-metrics is quite flexible
and there are a number of possible variants.  One of them will be
useful later. Instead of $\delta^{\mathcal{F}}$ as given in
(\ref{frag-met-0}) we may use:
\begin{equation}\label{eq:frag-simpl}
  \underline{\delta}^{\mathcal{F}}(X,X') =
  \inf\left\{ \sum_{i=1}^{n}w(\Delta_{i}) \, \Bigg| \,
    \begin{array}{ll}
      \mbox{$\Delta_{i}$ are successive exact triangles as in
      (\ref{eq:iterated-tr})} \\ \mbox{with \zjnote{$Y_{1}=X'$},
      $X=Y_{n}$  and  $X_i \in \mathcal{F}$,   $n \in \N$}
    \end{array} \right\}~.~
\end{equation}

For this to be coherent we need to assume here $0\in \mathcal{F}$.
Comparing with the definition of $\delta^{\mathcal{F}}$ in
(\ref{frag-met-0}), $\underline{\delta}^{\mathcal{F}}$ corresponds to
only taking into account cone decompositions with linearization
$(T^{-1}X', F_{1},\ldots , F_{n})$ and with the first triangle
\zjnote{$\Delta_{1}: T^{-1}X'\to 0 \to X' \to X'$}. There are two advantages of
this expression: the first is that it is trivial to see in this case
that $\underline{\delta}^{\mathcal{F}}$ satisfies the triangle
inequality, \zjr{which does} not even require the weighted octahedral
axiom. The other advantage is that one starts the sequence of
triangles from $X'$ and thus the negative translate $T^{-1}X'$ is not
needed to define $\underline{\delta}^{\mathcal{F}}$.  There is an
associated fragmentation pseudo-metric $\underline{d}^{\mathcal{F}}$
obtained by symmetrizing $\underline{\delta}^{\mathcal{F}}$ and this
satisfies a formula similar to (\ref{eq:subad}). Of course, the
disadvantage of this fragmentation pseudo-metric is that it is larger
than $d^{\mathcal{F}}$ and thus more often infinite.

\section{Persistence categories} \label{subsec:pers-cat} We introduce
in this section the notion of persistence category - a category whose
morphisms are persistence modules and such that composition respects
the persistence structure - and then pursue with a number of related
structures and immediate properties.

\subsection{Basic definitions}\label{subsubsec:pers-cat1}

View the real axis $\R$ as a category with
${\rm Obj}(\R) = \{x \,| \, x \in \R\}$ and for any
$x, y \in {\rm Obj}(\R)$, the hom-set
\[ \hom_{\R}(x,y) = \left\{ \begin{array}{lcl} i_{x,y} &
      \mbox{if} & x \leq y \\ \emptyset & \mbox{if} & x>y \end{array}
  \right..\] By definition, for any $x \leq y \leq z$ in $\R$,
$i_{y,z} \circ i_{x,y} = i_{x,z}$. We denote this category by
$(\R, \leq)$. It admits an additive structure. Explicitly, consider
the bifunctor $\oplus: (\R, \leq) \times (\R, \leq) \to (\R, \leq)$
defined by $\oplus(r,s) := r+s$, where $0 \in \R$ is the \pbnote{zero}
object and for any two pairs
$(r,s), (r',s') \in {\rm Obj}((\R, \leq) \times (\R, \leq))$,
\[ \hom_{(\R, \leq) \times (\R, \leq)}((r,s), (r',s')) =
  \left\{ \begin{array}{lcl} (i_{r,r'}, i_{s,s'}) & \mbox{if} & r \leq
      r' \,\,\mbox{and} \,\,s \leq s' \\ \emptyset & \mbox{if} &
      \mbox{otherwise} \end{array} \right.\] and further
$\oplus(i_{r,r'}, i_{s,s'}) : = i_{r+s, r'+s'} \in \zjnote{\hom}_{(\R,
  \leq)}(r+s, r'+s')$.  Fix a ground field $\k$ and denote by
${\rm Vect}_{\k}$ the category of $\k$-vector spaces.

\begin{dfn} \label{dfn-pc} A category $\C$ is called a {\em
    persistence category} if it is endowed with the following
  additional structure. For any $A, B \in {\rm Obj}(\C)$ we are given
  a functor $E_{A,B}: (\R, \leq) \to {\rm Vect}_{\k}$ such that the
  following two conditions are satisfied:
  \begin{itemize}
  \item[(i)] The hom-set in $\C$ is
    ${\hom}_{\C}(A,B) = \{(f,r) \,| \, f \in E_{A,B}(r)\}$. We
    denote ${\hom}_{\C}^r(A,B): = E_{A,B}(r)$, \jznote{or simply
      ${\hom}^r(A,B)$ when the ambient category $\C$ is not
      emphasized}.
  \item[(ii)] The composition
    $\circ: {\hom}_{\C}^r(A,B) \times {\hom}_{\C}^s(B,C) \to
    {\hom}_{\C}^{r+s}(A,C)$ in $\C$ is a natural transformation
    from $E_{A,B} \times E_{B,C}$ to $E_{A,C} \circ \oplus$ (with
    $\oplus$ the product $(\R, \leq) \times (\R, \leq)\to (\R,\leq)$). \zjnote{Explicitly, the following diagram  \begin{equation} \label{comp-nt} \xymatrix{ \Mor_{\C}^r(A,B) \times
      \Mor_{\C}^s(B,C) \ar[r]^-{\circ_{(r,s)}} \ar[d]_-{E_{A,B}(i_{r,r'})
        \times E_{B,C}(i_{s,s'})} & \Mor_{\C}^{r+s}(A,C)
      \ar[d]^-{E_{A,C}(i_{r+s, r'+s'})} \\
      \Mor_{\C}^{r'}(A,B) \times \Mor_{\C}^{s'}(B,C) \ar[r]^-{\circ_{(r',s')}} &
      \Mor_{\C}^{r'+s'}(A,C)}
  \end{equation}
  commutes.}
  \end{itemize}
\end{dfn}

\begin{remark} Item (i) means that each hom-set ${\hom}_{\C}(A,B)$
  is a persistence $\k$-module with persistence structure morphisms
  $E_{A,B}(i_{r,s})$ for any $r \leq s$ in $\R$. \jznote{Here, we use
    the weakest possible definition of a persistence $\k$-module in
    the sense that no regularities, such as the finiteness of the
    dimension of ${\hom}_{\C}^r(A,B)$ or the semi-continuity when
    changing the parameter $r$, are required (see subsection 1.1 in
    \cite{PRSZ20})}. \end{remark}

We will often denote an element in ${\hom}_{\C}(A,B)$, by a single
symbol $\bar{f}$ instead of a pair $(f,r)$. We will use the notation
$\ceil*{\bar{f}~} = r$ to denote the real \pbnote{number} $r$ and
refer to this number as the {\em shift} (or persistence level) of
$\bar{f}$. For each $A \in {\rm Obj}(\C)$ the identity
$\bar{\mathds{1}}_A := (\mathds{1}_A, 0) \in \Mor_{\C}^0(A,A)$ is of
shift $0$. \zjnote{If one of the objects $A$ or $B$ is the zero object, then ${\hom}_{\C}(A,B)$ contains only the zero morphism denoted by $0$, and it lies in ${\hom}_{\C}^r(A,B)$ for any $r \in \R$.} For brevity, we will denote from now on the structural
morphisms $E_{A,B}(i_{r,s})$ by $i_{r,s}$. 

\

A persistence structure allows us to consider morphism\zjnote{s} that are identified up to $r$-shift and, similarly, objects that are negligible up to a shift by $r$. 

\begin{dfn} \label{def:acyclics} Fix a persistence category $\C$.
\begin{itemize}
\item[(i)]  For $f,g \in \Mor^{\alpha}_{\C}(A,B)$, we say that $f$ and $g$ are {\em  $r$-equivalent} for some $r \geq0$ if $$i_{\alpha, \alpha+r}(f-g)=0~.~$$ We write $f \simeq_r g$ if $f$ and $g$ are $r$-equivalent. 
\item[(ii)] Two morphisms, $f \in \Mor_{\C}^\alpha(A,B)$ and $g \in \Mor_{\C}^{\beta}(A,B)$,  are {\em $\infty$-equivalent}, written $f\simeq_{\infty} g$,  if there exist $r, r' \geq 0$ with $\alpha+r = \beta+r'$ such that 
$i_{\alpha, \alpha+r}(f) = i_{\beta, \beta+r'}(g)$.
 \item[(iii)] An object $K \in {\rm Obj}(\C)$ is called {\em  $r$-acyclic} for some $r \geq 0$ 
if  $\mathds{1}_K\in \Mor_{\C}^{0}(K,K)$ has the property that $\mathds{1}_K\simeq_{r} 0$.
\end{itemize}
\end{dfn}
Obviously, if $f\simeq_{r} g$ then $f\simeq_{s} g$ for all $s\geq
r$. Notice also that $\simeq_{r}$ is indeed an equivalence
relation. Indeed, for $r\not=\infty$ this follows immediately from the
fact that
$i_{\alpha,\beta}: \Mor_{\C}^{\alpha}(A,B)\to \Mor_{\C}^{\beta}(A,B)$
is a linear map and it is an easy exercise for $r=\infty$.

\

\begin{dfn} \label{dfn-c0-cinf} Given a persistence category $\C$,
  there are two categories naturally associated to it as follows:
\begin{itemize}
\item[(i)] the $0$-level of $\C$, denoted $\C_0$, which is the category with the same objects as $\C$ and, for any $A, B \in {\rm Obj}(\C)$, with $\Mor_{\C_0}(A,B) := \Mor_{\C}^0(A,B)$. 
\item[(ii)] the limit category (or $\infty$-level) of $\C$, denoted
  $\C_{\infty}$, that again has the same objects as $\C$ but for any
  $A,B \in {\rm Obj}(\C)$,
  $\Mor_{\C_{\infty}}(A,B) := \varinjlim_{\alpha \to \infty}
  \Mor^\alpha_{\C}(A,B)$, where the direct limit is taken with respect
  to the morphisms
  $i_{\alpha,\beta}: \Mor_{\C}^\alpha(A,B) \to \Mor_{\C}^\beta(A,B)$ for any
  $\alpha \leq \beta$.
\end{itemize}
\end{dfn}


\begin{rem}\label{rem:gen-pers}
  (a) In general, a persistence category is not pre-additive as the
  hom-sets ${\hom}_{\C}(A,B)$ are generally not abelian
  groups. However, it is easy to see that both $\mathcal{C}_{0}$ and
  $\C_{\infty}$ are pre-additive (the proof is immediate in the first
  case and a simple exercise in the second).

  (b) The limit category $\C_{\infty}$ can be equivalently defined as
  a quotient category $\C/ \simeq_{\infty}$ which is defined by
  ${\rm Obj}(\C/ \simeq_{\infty}) = {\rm Obj}(\C)$ and
  ${\hom}_{\C/ \simeq_{\infty}}(A,B) ={\hom}_{\C}(A,B)/
  \simeq_{\infty}$.
\end{rem}
 
Two objects $A, B \in {\rm Obj}(\C)$ are said {\em $0$-isomorphic}, we
write $ A \equiv B$, if they are isomorphic in the category
$\C_{0}$. This is obviously an equivalence relation and it preserves
$r$-acyclics in the sense that if $K \simeq_r 0$ and $K \equiv K'$,
then $K' \simeq_r 0$.

\subsection{Persistence functors}
Persistence categories come with associated notions of persistence
functors and natural transformations relating them, as described
below.

\begin{dfn} \label{dfn-per-functor} Given two persistence categories
  $\C$ and $\C'$, a {\em persistence functor $\F: \C \to \C'$} is a
  functor which is compatible with the persistence structures.
  \pbnote{More explicitly, the action of $\mathcal{F}$ on morphisms
    restricts to maps
    $(\mathcal{F}_{A,B})_r: \Mor_{\mathcal C}^{r}(A,B) \to
    \Mor_{\mathcal C'}^{r}(\F(A),\F(B))$ defined for any
    $A, B \in {\rm Obj}(\C)$ and $r \in \mathbb{R}$. Moreover, for
    every $r \leq s$ we have the following commutative diagram:}
  \begin{equation} \label{functor-com} \xymatrixcolsep{4pc} \xymatrix{
      \Mor_{\mathcal C}^{r}(A,B) \ar[r]^-{(\F_{A,B})_r}
      \ar[d]_-{i^{\C}_{r,s}}
      & \Mor_{\mathcal C'}^{r}(\F(A),\F(B)) \ar[d]^-{i^{\C'}_{r, s}}\\
      \Mor_{\mathcal C}^{s}(A,B) \ar[r]^-{(\F_{A,B})_s} &
      \Mor_{\mathcal C'}^s(\F(A),\F(B))}
  \end{equation}
  where $i_{r, s}^{\mathcal C}$ and $i_{r, s}^{\mathcal C'}$ are
  persistence structure maps in $\mathcal C$ and $\mathcal C'$,
  respectively. In particular, for each
  $\bar{f} \in {\hom}_{\C}(A,B)$ with $\ceil*{\bar{f}~} = r$, we
  have $\ceil*{\F_{A,B}(\bar{f}~)} = r$.
\end{dfn}



For any functor $E: (\R, \leq) \to {\rm Vect}_{\k}$ and
$\alpha \in \R$, we denote by
$\Sigma^{\alpha} E: (\R, \leq) \to {\rm Vect}_{\k}$ the $\alpha$-shift
of $E$ defined by $\Sigma^{\alpha}E(r) = E(r+ \alpha)$ and
$\Sigma^{\alpha}E(i_{r,s}) = E(i_{r + \alpha, s+\alpha})$ for any
$i_{r,s}: r \to s$, $r \leq s$.

\begin{dfn} \label{dfn-nt} Given two persistence functors between two
  persistence categories $\F, \G: \C \to \C'$, a {\em persistence
    natural transformation} $\eta: \F \to \G$ is a natural
  transformation for which there exists $r \in \mathbb{R}$ such that
  for any $A \in {\rm Obj}(\C)$, the morphism
  $\eta_A: \F(A) \to \G(A)$ belongs to $\Mor_{\C'}^r(\F(A), \G(A))$. We say
  that $\eta$ is a natural transformation of shift $r$.
%
%
\end{dfn}

\begin{remark} \label{rmk-pf} (a) \pbnote{The morphisms
    $\eta_A: \F(A) \to \G(A)$, $A \in \text{Obj}(\C)$, give rise to
    the following commutative diagrams for all
    $X \in \text{Obj}(\C)$} \zjnote{and for any $\alpha \leq \beta \in \R$:}
  \begin{equation} \label{nt-dia-2} \xymatrixcolsep{3pc} \xymatrix{
      {\hom}_{\C'}^{\alpha}(X, \F(A)) \ar[r]^-{\eta_A \circ}
      \ar[d]_-{i_{\alpha, \beta}} & {\hom}_{\C'}^{\alpha+r}(X,
      \G(A))
      \ar[d]^-{i_{\alpha+r, \beta+r}} \\
      {\hom}_{\C'}^{\beta}(X, \F(A)) \ar[r]^-{\eta_A \circ} & {\rm
        Mor}_{\C'}^{\beta+r}(X, \G(A))}
  \end{equation}
  and
  \begin{equation} \label{nt-dia-3} \xymatrixcolsep{3pc} \xymatrix{
      {\hom}_{\C'}^{\alpha}(\G(A), X) \ar[r]^-{\circ \eta_A}
      \ar[d]_-{i_{\alpha, \beta}}
      & {\hom}_{\C'}^{\alpha+r}(\F(A), X) \ar[d]^-{i_{\alpha+r, \beta+r}} \\
      {\hom}_{\C'}^{\beta}(\G(A), X) \ar[r]^-{\circ \eta_A} & {\hom}_{\C'}^{\beta+r}(\F(A), X)}
  \end{equation}

  (b) Given two persistence categories $\C, \C'$, the persistence
  functors themselves form a persistence category denoted by
  $\mathcal{P}{\rm Fun}(\C, \C')$, where
  \[ {\hom}_{\mathcal{P}{\rm Fun}(\C, \C')}(\F, \G) =
    \left\{\left(\eta, r \right) \,\bigg|
      \, \begin{array}{l} \mbox{$\eta$ is a natural transformation} \\
           \mbox{from $\F$ to $\G$ of shift $r$}
         \end{array}
       \right\}.\] When $\C = \C'$, simply denote
     $\mathcal{P}{\rm Fun}(\C, \C')$ by $\mathcal{P}{\rm End}(\C)$. It
     is easy to verify that $\mathcal{P}{\rm End}(\C)$ admits a strict
     monoidal structure.
\end{remark}

\zhnote{\begin{dfn}\label{def:eq-pers} Let
$\mathcal{C}', \mathcal{C}''$ be two persistence categories. A
persistence functor
$\mathcal{F}: \mathcal{C}' \longrightarrow \mathcal{C}''$ is called an
\zjr{equivalence} of persistence categories (or persistence equivalence) if
there exists a persistence functor
$\mathcal{G}: \mathcal{C}'' \longrightarrow \mathcal{C}'$ such that
$\mathcal{G} \circ \mathcal{F}$ is isomorphic to \zhnote{$\mathds{1}_{\mathcal{C}'}$}
via a persistence natural transformations of shift $0$, whose inverse
also has shift $0$, and the analogous condition holds for
$\mathcal{F} \circ \mathcal{G}$ too. We will say that $\mathcal{C}'$
and $\mathcal{C}''$ are persistence equivalent or equivalent as
persistence categories.
\end{dfn}}

\zhnote{Standard arguments show that a persistence functor
$\mathcal{F}:\mathcal{C}' \longrightarrow \mathcal{C}''$ is an
equivalence of persistence categories if and only if it is full and
faithful (in the obvious persistence sense) and for every object
$Y \in \Ob(\mathcal{C}'')$ there exists $X \in \Ob(\mathcal{C}')$ such
that $Y$ is $0$-isomorphic to $\mathcal{F}(X)$ (i.e.~the latter two
objects are isomorphic in the $0$-level subcategory $\mathcal{C}_0''$
of $\mathcal{C}''$).}

\subsection{Shift functors}
The role of shift functors, to be introduced below, is to allow
morphisms of arbitrary shift (as well as $r$-equivalences) to be
represented as morphisms of shift $0$ with the price of ``shifting''
the domain (or the target) - see Remark \ref{rem:c0repl}.  This turns
out to be very helpful in the study of triangulation for persistence
categories.

\

View the real axis $\R$ as a strict monoidal category $(\R, +)$
induced by the additive group structure of $\R$.  In other words,
${\rm Obj}(\R) = \{x \,| \, x \in \R\}$ and for any
$x, y \in {\rm Obj}(\R)$, ${\hom}_{\R}(x,y) = \{\eta_{x,y}\}$ such
that $\eta_{x,x} = \mathds{1}_x$ and, for any $x, y, z \in \R$,
$\eta_{y,z} \circ \eta_{x,y} = \eta_{x,z}$. In particular,
$\eta_{x,y} \circ \eta_{y,x} = \mathds{1}_y$ and
$\eta_{y,x} \circ \eta_{x,y} = \mathds{1}_x$, hence each morphism
$\eta_{x,y}$ is an isomorphism.
The monoidal structure is defined by $\oplus (x,y) := x+y$ on objects
and for any two morphisms $(\eta_{r,r'}, \eta_{s, s'})$ we have
$\oplus (\eta_{r,r'}, \eta_{s, s'}) := \eta_{r+s, r'+s'}$.

\begin{dfn} \label{dfn-shift-functor} Let $\C$ be a persistence
  category. A {\em shift functor} on $\C$ is a strict monoidal functor
  $\Sigma: (\R, +) \to \mathcal{P}{\rm End}(\C)$ such that
  $\Sigma(\eta_{x,y}): \Sigma(x) \to \Sigma(y)$ is a natural
  transformation of shift $\ceil*{\Sigma(\eta_{x,y})} = y-x$ for any
  $x, y \in \R$ and $\eta_{x,y} \in {\hom}_{\R}(x,y)$.
\end{dfn}

For later use, denote
$\Sigma^r := \Sigma(r) \in \mathcal{P}{\rm End}(\C)$ and, for brevity,
we denote $\Sigma(\eta_{r,s})$ by $\eta_{r,s}$ for $r, s\in \R$ and we
let $(\eta_{r,s})_{A}$ be the respective morphism
$\Sigma^rA \to \Sigma^{s}A$.

\begin{remark} \label{rem:shift} Since $\Sigma$ is a strict monoidal
  functor, it preserves the monoidal product. Therefore,
  $\Sigma^s \circ \Sigma^r = \Sigma^{r+s}$ and
  $\Sigma^{0} = \mathds{1}$. Moreover, since each $\eta_{r,s}$
  \pbnote{is an isomorphism in $(\mathbb{R}, +)$,} the corresponding
  natural transformation $\eta_{r,s}$ is a natural isomorphism. We
  also have $\Sigma^{r}(\eta_{s,s'})_{A}=(\eta_{s+r,s'+r})_{A}$ for
  each object $A$ in $\C$ and all $r,s,s'\in \R$.

  In particular, this implies that for any $Y, A \in {\rm Obj}(\C)$
  and $\alpha \in \R$, we have an isomorphism,
  \begin{equation} \label{nt-iso} \Mor^{\alpha}_{\C}(Y, A)
    \xrightarrow{(\eta_{0,r})_A\circ} \Mor^{\alpha + r}_{\C}(Y,
    \Sigma^r A).
\end{equation}
Similarly, for any $A, X \in {\rm Obj}(\C)$ and $\alpha \in \R$, we
have an isomorphism
\begin{equation} \label{nt-iso-2} \Mor^{\alpha-r}_{\C}(\Sigma^r A, X)
  \xrightarrow{\circ(\eta_{0,r})_A} \Mor^{\alpha}_{\C}(A, X).
\end{equation}
Further, for any $A, B \in {\rm Obj}(\C)$, the isomorphisms
(\ref{nt-iso}) and (\ref{nt-iso-2}) imply the existence of an
isomorphism
\begin{equation} \label{nt-iso-3} \Mor^{\alpha+s-r}_{\C}(\Sigma^r A,
  \Sigma^sB) \simeq \Mor^{\alpha}_{\C}(A, B).
\end{equation}
In particular, when $r=s$, we get a canonical isomorphism:
\begin{equation} \label{sigma-mor} \Sigma^r: \Mor^{\alpha}_{\C}(A, B)
  \to \Mor^{\alpha}_{\C}(\Sigma^rA, \Sigma^rB).
\end{equation}
Finally, diagrams (\ref{nt-dia-2}) and (\ref{nt-dia-3}) imply that the
following diagrams obtained by setting $\F = \Sigma^0$ and
$\G = \Sigma^r$
\begin{equation} \label{nt-dia-4} \xymatrixcolsep{3pc} \xymatrix{ {\hom}_{\C}^{\alpha}(X, A) \ar[r]^-{(\eta_{0,r})_A \circ}
    \ar[d]_-{i_{\alpha, \beta}} & {\hom}_{\C}^{\alpha+r}(X,
    \Sigma^r A)
    \ar[d]^-{i_{\alpha+r, \beta+r}} \\
    {\hom}_{\C}^{\beta}(X, A) \ar[r]^-{(\eta_{0,r})_A\circ} & {\rm
      Mor}_{\C}^{\beta+r}(X, \Sigma^r A)}
\end{equation}
and
\begin{equation} \label{nt-dia-5} \xymatrixcolsep{3pc} \xymatrix{ {\hom}_{\C}^{\alpha}(\Sigma^r A, X) \ar[r]^-{\circ (\eta_{0,r})_A}
    \ar[d]_-{i_{\alpha, \beta}} & {\hom}_{\C}^{\alpha+r}(A, X)
    \ar[d]^-{i_{\alpha+r, \beta+r}} \\
    {\hom}_{\C}^{\beta}(\Sigma^r A, X) \ar[r]^-{\circ
      (\eta_{0,r})_A} & {\hom}_{\C}^{\beta+r}(A, X)}
\end{equation}
are commutative for any $\alpha \leq \beta$. All the horizontal
morphisms in (\ref{nt-dia-4}) and (\ref{nt-dia-5}) are isomorphisms
but the vertical morphisms (which are the persistence structure
morphisms) are not necessarily so. \end{remark}



Assume that $\C$ is a persistence category (with persistence structure
morphisms denoted by $i_{r,s}$) endowed with a shift functor $\Sigma$.
To simplify \zjnote{the} notation, for $A \in {\rm Obj}(\C)$, \pbnote{$r \geq 0$,}
we consider $(\eta_{r,0})_A \in \Mor_{\C}^{-r}(\Sigma^rA,A)$ and
$(\eta_{0,-r})_A \in \Mor_{\C}^{-r}(A, \Sigma^{-r}A)$ and we will denote
below by $\eta^{A}_{r}$ the \jznote{following maps 
\begin{equation} \label{dfn-eta-map}
\eta^A_r = i_{-r,0}((\eta_{r,0})_A) \,\,\,\mbox{or} \,\,\,  \eta^A_r = i_{-r,0}((\eta_{0,-r})_A).
\end{equation} }
Thus $\eta^{A}_{r}\in \Mor_{\C}^{0}(\Sigma^{r} A, A)$ or
$\eta^{A}_{r}\in\Mor_{\C}^{0}(A,\Sigma^{-r}A)$, depending on the context. \jznote{Note that there is no ambiguity of the notation $\eta_r^A$ due to the canonical identification via $\Sigma^r$ in (\ref{sigma-mor}).} 
The notions discussed before, $r$-acyclicity, $r$-equivalence and so
forth, can be reformulated in terms of compositions with appropriate
shift morphisms $\eta^{A}_{r}$.

The next lemma is a characterization of $r$-equivalence that follows
easily from the diagrams (\ref{nt-dia-4}) and (\ref{nt-dia-5}).
\begin{lemma} \label{lemma-comp} Suppose that
  $f \in \Mor^{\alpha}(A,B)$. Then $i_{\alpha, \alpha + r}(f) =0$ for
  some $r \geq 0$ if and only if $f \circ \eta^A_{r}=0$ in
  $\Mor^{\alpha}(\Sigma^r A, B)$ and (equivalently) if and only if
  $\eta^B_{r} \circ f =0$ in $\Mor^{\alpha}(A,
  \Sigma^{-r}B)$.
\end{lemma}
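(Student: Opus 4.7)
The plan is to express both $f \circ \eta^A_r$ and $\eta^B_r \circ f$ as the image of $i_{\alpha,\alpha+r}(f)$ under an isomorphism of $\k$-vector spaces, so that the vanishing of each side is equivalent to the vanishing of $i_{\alpha,\alpha+r}(f)$. The isomorphisms in question will come from pre- and post-composition with the invertible natural transformations $(\eta_{0,\pm r})$ as recorded in (\ref{nt-iso}) and (\ref{nt-iso-2}), while the connection with the persistence structure map $i_{\alpha,\alpha+r}$ will be provided by the commutative diagrams (\ref{nt-dia-4}) and (\ref{nt-dia-5}).

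For the first equivalence, I would start from the defining formula $\eta^A_r = i_{-r,0}((\eta_{r,0})_A)$ and use naturality of composition (\ref{comp-nt}) to rewrite
\[
f \circ \eta^A_r \;=\; f \circ i_{-r,0}((\eta_{r,0})_A) \;=\; i_{\alpha - r,\, \alpha}\bigl(f \circ (\eta_{r,0})_A\bigr).
\]
Next I would specialize (\ref{nt-dia-5}) with $X = B$ and with the pair $(\alpha - r,\, \alpha)$ in place of $(\alpha, \beta)$, which is permissible as $r \geq 0$. Since $(\eta_{r,0})_A$ and $(\eta_{0,r})_A$ are mutually inverse natural transformations, the top row of this square sends $f \circ (\eta_{r,0})_A$ to $f$, the left column sends it to $f \circ \eta^A_r$, and the right column sends $f$ to $i_{\alpha,\alpha+r}(f)$. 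Commutativity then yields $(f \circ \eta^A_r) \circ (\eta_{0,r})_A = i_{\alpha,\alpha+r}(f)$, and since post-composition with $(\eta_{0,r})_A$ is an isomorphism by (\ref{nt-iso-2}), I would conclude that $f \circ \eta^A_r = 0$ if and only if $i_{\alpha,\alpha+r}(f) = 0$.

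For the second equivalence, the plan is to run the mirror argument using (\ref{nt-dia-4}) with $r$ replaced by $-r$: I would begin from $\eta^B_r = i_{-r,0}((\eta_{0,-r})_B)$, invoke (\ref{comp-nt}) to get $\eta^B_r \circ f = i_{\alpha - r,\, \alpha}\bigl((\eta_{0,-r})_B \circ f\bigr)$, and match this with the instance of (\ref{nt-dia-4}) at the pair $(\alpha,\alpha+r)$, taking $A$ of that diagram to be $B$ and $X$ of that diagram to be $A$. This will produce $\eta^B_r \circ f = (\eta_{0,-r})_B \circ i_{\alpha,\alpha+r}(f)$, and since $(\eta_{0,-r})_B \circ$ is an isomorphism by the negative-shift version of (\ref{nt-iso}), the equivalence follows.

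The main obstacle is essentially bookkeeping rather than any deep point: one must carefully track that $(\eta_{r,0})_A$ carries the \emph{negative} shift $-r$, so that applying naturality of composition (\ref{comp-nt}) transports the persistence structure map $i_{-r,0}$ past $f$ and turns it into $i_{\alpha - r,\alpha}$ with the correct indices. Once the two commutative squares are in place, the conclusion is immediate from the fact that composition with the $(\eta_{0,\pm r})$'s is an isomorphism of vector spaces.
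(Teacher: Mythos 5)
Your proof is correct, and it follows exactly the route the paper indicates when it says the lemma "follows easily from the diagrams (\ref{nt-dia-4}) and (\ref{nt-dia-5})": you factor $i_{\alpha,\alpha+r}$ through (pre- or post-)composition with $\eta^A_r$ or $\eta^B_r$, followed by the invertible maps $\circ (\eta_{0,r})_A$ and $(\eta_{0,-r})_B \circ$, precisely in the spirit of the factorization (\ref{com-da}) that the paper writes out for the neighboring Lemma \ref{lemma-acyclic}.
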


In particular, we easily see that for two morphisms
$f,g\in \Mor^{\alpha}(A,B)$, $f\simeq_{r} g$ if and only if
$f\circ \eta^{A}_{r}=g\circ \eta^{A}_{r}$. Moreover, $r$-equivalence
is preserved under shifts.  Further, it is immediate to check that
$f \in \Mor^\alpha(A,B)$ and $g \in \Mor^{\beta}(A,B)$ are
$\infty$-equivalent if and only if there exist $r, r' \geq 0$ with
$\alpha+r = \beta+r'$ such that
\[ f \circ \eta_r^A = g \circ \eta_{r'}^{A}\,\,\,\,\mbox{in} \,\,
  \Mor^{\alpha+r}(A,B), \] where we identify both
$\Mor^{\alpha}(\Sigma^r A, B) $ and $\Mor^{\beta}(\Sigma^{r'} A, B)$
with $\Mor^{\alpha+r}(A, B)$ through the canonical isomorphisms in
Remark \ref{rem:shift}.

Here is a similar characterization of $r$-acyclicity. 

\begin{lemma} \label{lemma-acyclic} $K \simeq_r 0$ is equivalent to
  each of the following:
  \begin{itemize}
  \item[(i)] \pbnote{$\eta_{r}^{K} = 0$}.
  \item[(ii)]
    $i_{\alpha, \alpha+r}: \Mor^{\alpha}(A,K) \to
    \Mor^{\alpha+r}(A,K)$ vanishes for any $\alpha \in \R$ and $A$.
  \item[(iii)]
    $i_{\alpha, \alpha+r}: \Mor^{\alpha}(K,A) \to \Mor^{\alpha+r}(K,
    A)$ vanishes for any $\alpha \in \R$ and $A$.
\end{itemize}
\end{lemma}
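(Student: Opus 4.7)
The plan is to show that all three conditions are characterizations of the single relation $i_{0,r}(\mathds{1}_K) = 0$, which is precisely the definition of $K \simeq_r 0$ (i.e.~$\mathds{1}_K \simeq_r 0$ in $\Mor^{0}(K,K)$).

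First, the equivalence with (i) is immediate from Lemma \ref{lemma-comp} applied to the morphism $f = \mathds{1}_K \in \Mor^{0}(K,K)$. Indeed, Lemma \ref{lemma-comp} gives $i_{0,r}(\mathds{1}_K) = 0$ iff $\mathds{1}_K \circ \eta_r^K = 0$, and the latter composite is exactly $\eta_r^K$.

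For the equivalence with (ii), the key observation is that the commutative diagram \eqref{comp-nt} (naturality of composition with respect to the persistence structure maps) implies that for any $g \in \Mor^{\alpha}(A,K)$,
\[
i_{\alpha, \alpha+r}(g) \;=\; i_{\alpha, \alpha+r}(\mathds{1}_K \circ g) \;=\; i_{0,r}(\mathds{1}_K) \circ g.
\]
If $K \simeq_r 0$, the right-hand side vanishes for every $g$, so (ii) holds. Conversely, specializing to $A = K$ and $g = \mathds{1}_K$ shows that (ii) forces $i_{0,r}(\mathds{1}_K) = 0$. The equivalence with (iii) is symmetric: for $h \in \Mor^{\alpha}(K,A)$, the same naturality yields $i_{\alpha,\alpha+r}(h) = h \circ i_{0,r}(\mathds{1}_K)$, and one argues as before, taking $A = K$ and $h = \mathds{1}_K$ for the converse direction.

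Thus the proof is entirely formal, with no serious obstacle: each equivalence reduces either to Lemma \ref{lemma-comp} (for (i)) or to the bifunctoriality of composition expressed by diagram \eqref{comp-nt} (for (ii) and (iii)), with $\mathds{1}_K$ playing the role of test morphism to extract the converse implications.
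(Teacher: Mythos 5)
Your proof is correct. The interesting point is that for the equivalences with (ii) and (iii) you take a genuinely different and more elementary route than the paper. The paper's proof of (ii) uses the shift functor: it factorizes $i_{\alpha,\alpha+r}$ as the composition $\Mor^{\alpha}(A,K) \to \Mor^{\alpha}(A,\Sigma^{-r}K) \to \Mor^{\alpha+r}(A,K)$ via diagram \eqref{nt-dia-4}, observes that the second map is an isomorphism, and then argues that the first map kills everything once one writes $f = \mathds{1}_K \circ f$ and uses $\eta_r^K = 0$. Your argument instead bypasses the shift functor entirely and works directly from the bifunctoriality of composition (diagram \eqref{comp-nt}): $i_{\alpha,\alpha+r}(g) = i_{0,r}(\mathds{1}_K)\circ g$, from which both directions of the equivalence drop out immediately. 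What this buys: your proof shows that the equivalence of $K\simeq_r 0$ with (ii) and (iii) holds in any persistence category, not just those equipped with a shift functor; only characterization (i) actually requires $\Sigma$. The common ingredient in both arguments is the choice of $\mathds{1}_K$ as the test morphism, and both are ultimately exploiting the same naturality; the paper presumably routes through the shift machinery because Lemma \ref{lemma-comp} and diagram \eqref{nt-dia-4} are the tools at hand at that point in the text, while yours is the more economical derivation.
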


\begin{proof} Point (i) is an immediate consequence of the definition of $r$-acylics in Definition \ref{def:acyclics} and of Lemma \ref{lemma-comp} applied for $A,B=K$, 
$f=\mathds{1}_{K}$. 
We now prove (ii). The proof of (iii) is similar and will be omitted.
It is obvious that (ii) implies $K\simeq_{r} 0$ by specializing to $A=K$, $\alpha=0$
 and applying $i_{0,r}$ to $\mathds{1}_K$. 
To prove the converse, we first use diagram (\ref{nt-dia-4}) to deduce that 
the map $i_{\alpha,\alpha+r}$ factors as below: 
\begin{equation} \label{com-da}  \xymatrixcolsep{5pc} \xymatrix{ 
\Mor^{\alpha}(A, K) \ar[r]^-{(i_{-r,0}(\eta_{0,-r})_K) \circ} \ar@/_2pc/[rr]^-{i_{\alpha, \alpha+r}} & \Mor^{\alpha}(A, \Sigma^{-r}K) \ar[r]^-{(\eta_{-r,0})_K\circ} & \Mor^{\alpha+r}(A, K)} ~.~
\end{equation}
Therefore, since $(\eta_{-r,0})_K\circ$ is an isomorphism, for any $f \in \Mor^{\alpha}(A, K)$
we have $i_{\alpha, \alpha+r}(f) = 0$ if and only if $i_{-r,0}(\eta_{0,-r})_K \circ f =0$. From point 
(i) we know that this relation is true for $f=\mathds{1}_{K}$. 
Now, for any $f\in \Mor^{\alpha}(A,K)$, we write $f = \mathds{1}_K \circ f$ and conclude 
 $i_{-r,0}(\eta_{0,-r})_K \circ f =i_{-r,0}(\eta_{0,-r} \circ \mathds{1}_K)\circ f=0$. \end{proof}


In particular, we see that $K$ is $r$-acyclic if and only if any of
its shifts $\Sigma^{\alpha} K$ is so.

\begin{rem} \label{rem:c0repl} (a) Assume that $\C$ is a persistence
  category endowed with a shift functor $\Sigma$ and that
  $f_{1},f_{2}\in \Mor_{\C}^{\alpha}(A,B)$, then, for all practical
  purposes, we may replace $f_{i}$ with the $\C_{0}$ morphisms
  $\tilde{f}_{i}\in \Mor_{\C}^{0}(\Sigma^{\alpha}A,B)$ where
  \zjnote{$\tilde{f}_{i}=f_{i}\circ (\eta_{\alpha,0})_A$}. The property
  $f_{1}\simeq _{r} f_{2}$ is equivalent to
  $$\tilde{f}_{1}\circ \eta_{r}^{\Sigma^{\alpha}A} \simeq_{0}  \tilde{f}_{2}
  \circ \eta_{r}^{\Sigma^{\alpha}A}$$ which is a relation in $\C_{0}$.

  (b) Shift functors are natural in many geometric
  examples. Nonetheless, for a given persistence category $\C$, the
  existence of a shift functor $\Sigma$ on $\C$ is a constraining
  additional structure.  In particular, a persistence category
  $\mathcal{C}$ endowed with a shift functor $\Sigma$ contains
  considerable redundant information. Indeed, the isomorphism
  (\ref{nt-iso-3}) implies that all the morphisms in $\mathcal{C}$ are
  determined by the morphisms in $\mathcal{C}_{0}$ together with
  $\Sigma$. In other words, given a category $\mathcal{C}_{0}$ endowed
  with a shift functor $\Sigma$ (appropriately defined) one can define
  a persistence category $\mathcal{C}$ with the same objects as
  $\C_{0}$ by using (\ref{nt-iso-3}) to define morphisms of arbitrary
  shifts out of the morphisms in $\C_{0}$.  We will see such an
  example in \S\ref{ssec-pm}.
   
  (c) There is an obvious way to formally complete any persistence
  category $\C$ to a larger persistence category $\tilde{\C}$ that is
  endowed with a canonical shift functor. This is achieved by formally
  adding objects $\Sigma^{r}X$ for each $r\in \R$ and
  $X\in \mathrm{Obj}(\C)$ and defining morphisms such that the
  relations in Remark \ref{rem:shift} are satisfied. In view of this
  and of the redundancy at point (b), one could prefer to replace the
  notion of persistence category with a structure consisting of a
  category - corresponding to $\C_{0}$ - and a shift functor.  This
  leads to an equivalent formalism. We stick in this paper with the
  formalism of persistence categories as introduced in Definition
  \ref{dfn-pc} as we found it easiest to handle in algebraic
  manipulations and because it corresponds naturally to most of our
  geometric examples.
\end{rem}

\subsection{An example of a persistence category} \label{ssec-pm}

We give an example of a persistence category that is constructed from
persistence $\k$-modules. To some extent, this is the motivation of
the definition of a persistence category. Recall that for a
persistence $\k$-module $\V$, the notation $\V[r]$ denotes another
persistence $\k$-module which comes from an $r$-shift from $\V$ in the
sense that
\[ \V[r]_t = \V_{r+t} \,\,\,\,\,\mbox{and}\,\,\,\,\,
  \iota^{\V[r]}_{s,t} = \iota^{\V}_{s+r, t+r}. \] A persistence
morphism $f: \V \to \W$ is an $\R$-family of morphisms $f = \{f_t\}$
that commutes with the persistence structure maps of $\V$ and $\W$,
i.e., $f_{t} \circ \iota^{\V}_{s,t} = \iota^{\W}_{s,t} \circ
f_s$. Similarly, one can define $r$-shifted persistence morphism
$f[r]$ where $(f[r])_t = f_{r+t}$.


\

Let $\mathcal P{\rm Mod}_{\k}$ be the category of persistence
$\k$-modules, then we claim that $\mathcal P{\rm Mod}_{\k}$ can be
enriched to be a persistence category $\C^{\mathcal P{\rm
    Mod}_{\k}}$. Indeed, let
${\rm Obj}(\C^{\mathcal P{\rm Mod}_{\k}}) = {\rm Obj}(\mathcal P{\rm
  Mod}_{\k})$, and for objects $\V, \W$ in $\mathcal P{\rm Mod}_{\k}$,
define
\begin{equation} \label{per-mor} {\hom}_{\C^{\mathcal P{\rm
        Mod}_{\k}}}(\V, \W) : = \left\{\{{\hom}_{\mathcal P{\rm
        Mod}_{\k}}(\V, \W[r])\}_{r\in \R}; \{i_{r,s}\}_{r \leq s \in
      \R} \right\}.
\end{equation}
Here,
${\hom}_{\C^{\mathcal P{\rm Mod}_{\k}}}^r(\V, \W) = {\hom}_{\mathcal P{\rm Mod}_{\k}}(\V, \W[r])$, and
${\hom}_{\mathcal P{\rm Mod}_{\k}}(\cdot, \cdot)$ consists of
persistence morphisms. For any $r \leq s$, the well-defined
persistence morphism $\iota^{\W}_{\cdot+r,\cdot +s}: \W[r] \to \W[s]$
induces structure maps
$i_{r,s} := \iota^{\W}_{\cdot+r,\cdot +s} \circ$ in
(\ref{per-mor}). Moreover, the composition
$\circ_{(r,s)}: \Mor_{\C^{\mathcal P{\rm Mod}_{\k}}}^{r}(\U, \V)
\times \Mor_{\C^{\mathcal P{\rm Mod}_{\k}}}^s(\V, \W) \to
\Mor_{\C^{\mathcal P{\rm Mod}_{\k}}}^{r+s}(\U, \W)$ is defined by
\[ (f, g) \mapsto g[r] \circ f \] where we use the identification
${\hom}_{\mathcal P{\rm Mod}_{\k}}(\V, \W)[r] ={\hom}_{\mathcal
  P{\rm Mod}_{\k}}(\V[r], \W[r])$ for any $r \in \R$. Moreover, for
the following diagram where $r \leq r'$ and $s \leq s'$,
\begin{equation} \label{ex-per-dia} \xymatrix{
    {\hom}_{\mathcal P{\rm Mod}_{\k}}(\U, \V[r]) \times {\hom}_{\mathcal P{\rm Mod}_{\k}}(\V,\W[s]) \ar[r]^-{\circ_{(r,s)}} \ar[d]_-{i_{r,r'} \times i_{s,s'}} & {\hom}_{\P}(\U,\W[r+s]) \ar[d]^-{i_{r+s, r'+s'}} \\
    {\hom}_{\mathcal P{\rm Mod}_{\k}}(\U, \V[r']) \times {\rm
      Hom}_{\mathcal P{\rm Mod}_{\k}}(\V,\W[s'])
    \ar[r]^-{\circ_{(r',s')}} & {\hom}_{\P}(\U,\W[r'+s'])}
\end{equation}
we have
\begin{align*}
(\circ_{(r',s')} \circ (i_{r,r'} \times i_{s,s'}))(f,g) & = \circ_{(r',s')}(i_{r,r'}(f), i_{s,s'}(g)) \\
& = i_{s,s'}(g)[r'] \circ i_{r,r'}(f) \\
& = (\iota^{\W}_{\cdot +s, \cdot + s'} \circ g)[r'] \circ (\iota^{\W}_{\cdot + r, \cdot + r'} \circ f) \\
& = \iota^{\W}_{\cdot + r'+s, \cdot + r' + s'} \circ g[r'] \circ \iota^{\V}_{\cdot + r, \cdot + r'} \circ f \\
& = \iota^{\W}_{\cdot + r' + s, \cdot + r'+s'} \circ \iota^{\W}_{\cdot + r + s, \cdot + r' + s} \circ g[r] \circ f\\
& = \iota^{\W}_{\cdot + r+s, \cdot + r'+s'} \circ (g[r] \circ f) = (\iota_{r+s, r'+s'} \circ \circ_{(s,t)})(f,g)
\end{align*}
where the fifth equality is due to the fact that $g$ is a persistence
morphism (so, in particular, it commutes with the persistence
structure maps). Therefore, the diagram (\ref{ex-per-dia}) is
commutative and $\C^{\mathcal P{\rm Mod}_{\k}}$ is a persistence
category in the sense of Definition \ref{dfn-pc}.

\medskip

Since
${\hom}_{\C^{\mathcal P{\rm Mod}_{\k}}}^0(\V, \W) = {\hom}_{\mathcal P{\rm Mod}_{\k}}(\V, \W)$, we have
$\C^{\mathcal P{\rm Mod}_{\k}}_0 = \mathcal P{\rm Mod}_{\k}$.  An
example of a persistence endofunctor on
$\C^{\mathcal P{\rm Mod}_{\k}}$, denoted by
$\Sigma^{\alpha}: \C^{\mathcal P{\rm Mod}_{\k}} \to \C^{\mathcal P{\rm
    Mod}_{\k}}$, is defined by
\begin{equation} \label{ex-per-fun} 
\Sigma^{\alpha}(\V) : = \V[-{\alpha}] \,\,\,\,\mbox{and}\,\,\,\, \Sigma^{\alpha}(f) = f[-\alpha]
\end{equation}
for any $\alpha \in \R$. 
It is immediate to see that  $\Sigma^{\alpha}$ is a persistence endofunctor on $\C^{\P{\rm Mod}_{\k}}$ for any $\alpha \in \R$ in the sense of Definition \ref{dfn-per-functor}. 
\medskip

We now define a shift functor on $\C^{\mathcal P{\rm Mod}_{\k}}$, denoted by $\Sigma: (\R, +) \to \mathcal P{\rm Fun}(\C^{\mathcal P{\rm Mod}_{\k}})$,  by 
\[ \Sigma(\alpha) : = \Sigma^{\alpha} \,\,\mbox{defined in (\ref{ex-per-fun})} \,\,\,\,\,\,\,\mbox{and} \,\,\,\,\,\,\, \eta_{\alpha,\beta} = \mathds{1}_{\, \cdot[-\alpha]} \]
for any $\alpha, \beta \in \R$. Indeed, evaluate $\eta_{\alpha,\beta}$ on any object $\V$, 
\begin{align*}
(\eta_{\alpha,\beta})_{\V} = \mathds{1}_{\V[-\alpha]} & \in {\hom}_{\mathcal P{\rm Mod}_{\k}}(\V[-\alpha], \V[-\alpha])\\
& = {\hom}_{\mathcal P{\rm Mod}_{\k}}(\V[-\alpha], \V[-\beta + \beta-\alpha])\\
& = {\hom}_{\C^{\mathcal P{\rm Mod}_{\k}}}^{\beta-\alpha}(\V[-\alpha], \V[-\beta])\\
& = {\hom}_{\C^{\mathcal P{\rm Mod}_{\k}}}^{\beta-\alpha}(\Sigma^\alpha \V, \Sigma^\beta \V). 
\end{align*}
In other words, $\eta_{\alpha, \beta}$ is a persistence natural transformation of shift $\beta - \alpha$ as in Definition \ref{dfn-nt}. Therefore, $\Sigma$ defines a shift functor on $\C^{\mathcal P{\rm Mod}_{\k}}$.

\medskip

Finally, for each $r \geq 0$, recall that the notation $\eta_r^\V$ in
(\ref{dfn-eta-map}) denotes the composition
$i_{-r,0} \circ (\eta_{r,0})_\V$. In particular,
$\eta_r^{\V} \in \Mor_{\C^{\mathcal P{\rm Mod}_{\k}}}^0(\Sigma^{r} \V,
\V) = {\hom}_{\mathcal P{\rm Mod}_{\k}}(\V[-r], \V)$, equals the
following composition
\[ \V[-r] \xrightarrow{\mathds{1}_{\V[-r]}} \V[-r]
  \xrightarrow{\iota^{\V}_{\cdot+r,\cdot } \circ} \V\] which is just
$\iota^{\V}_{\cdot-r, \cdot}$, the persistence structure maps of
$\V$. Assume that objects in $\mathcal P{\rm Mod}_{\k}$ admit
sufficient regularities so that they can be equivalently described via
barcodes (see \cite{C-B15}). In this case, by (i) in
Lemma~\ref{lemma-acyclic}, the $r$-acyclic objects in
$\C^{\mathcal P{\rm Mod}_{\k}}$ are precisely those persistence
$\k$-modules with only bars of length at most $r$ in their barcodes
(see \cite{Ush11, Ush13, UZ16}).

\begin{remark} \label{rem:ear}  a. The way that the category $\mathcal P{\rm Mod}_{\k}$ is
  enriched to $\C^{\mathcal P{\rm Mod}_{\k}}$ above is also
  investigated in the recent work
  \cite[Section~10]{BuMi:homological-algebra-persistence}. In
  particular, the morphism set defined in~\eqref{per-mor} coincides
  with the enriched morphism set
  in~\cite[Proposition~10.2]{BuMi:homological-algebra-persistence} and
  $\C^{\mathcal P{\rm Mod}_{\k}}$ is similar to
  $\underline{\rm Mod}_R^P$
  in~\cite[Proposition~10.3]{BuMi:homological-algebra-persistence}
  (when taking $R = \k$ and $P = \R$).
  \label{rem:pers-cat-S} 
  
 b. The notion of persistence category endowed with shift functors is very natural
 in persistence considerations, as already mentioned at point a above. The same notion appears in \cite{Sco20} under the name of {\em locally 
persistent category} and the $0$-level (from Definition \ref{dfn-c0-cinf}) appears in that work 
where it is called the {\em underlying category} of the respective locally persistent category. The definition of interleaving of persistence modules adapts trivially to this context  - of persistence categories or, equivalently, locally persistent categories -   to provide a (pseudo)-distance, possibly infinite, on the objects of such a category, as in Definition \ref{def:interleave}. The work
 \cite{Sco20} analyzes and  establishes key properties - for instance, completeness - for interleaving  distances of this sort  under certain assumptions - such as existence of products, or co-products, or a model structure, or existence of limits -  on the $0$-level category. 

Starting from \S\ref{subsec:TPC},  we focus  on properties of persistence categories  with  $0$-levels that have the structure of triangulated categories. In practice, this means that they are often homotopy categories of other categories.  From this perspective, while we work, in some sense, at the homotopy level,  \cite{Sco20}  is geared towards considering  $0$-levels that have directly a model category structure.  In our case, the triangulation is tied to the persistence structure by some simple axioms.  If $\C$ is such a category, called a {\em triangulated persistence category (TPC)}, then we will see that the $\infty$-level is endowed with a  categorical weight induced by the persistence
structure, and the general machinery in \S \ref{subsec:triang-gen} leads to a class of fragmentation (pseudo) metrics on the objects of $\C$.   These fragmentation metrics extend, on one hand, interleaving type metrics, and, on  the other hand, complexity measurements such as those mentioned in Remark \ref{rem:genLS} and are similar to classical notions in topology such as cone-length \cite{Cor94}. In \S \ref{s:tpfc} we show that certain derived Fukaya categories admit TPC refinements and thus their objects are endowed with persistence fragmentation metrics. The interest of this class of fragmentation metrics in this symplectic context is that, under favourable geometric assumptions, these metrics are both non-degenerate and finite while interleaving type distances take infinite values.  

Some  elementary relations between interleaving and the rest of the algebraic machinery in the paper  appear in \S\ref{subsec:interleaving-etc}. Moreover, it is likely that, in some cases, some of the deeper properties of the interleaving distances discussed in \cite{Sco20}  can be related in more substantial ways to  our fragmentation metrics, however we will not pursue these questions here. \end{remark}

\section{Triangulated persistence categories} \label{subsec:TPC} This
section is central for the rest of the paper. It investigates
triangulation properties in the context of persistence categories. We
start with two key definitions in \S \ref{subsubsec:main-def},
Definition \ref{dfn-tpc} which introduces the notion of \zjnote{triangulated}
persistence category (TPC) - a persistence category $\C$ with a shift
functor whose $0$-level $\C_{0}$ is triangulated, and Definition
\ref{dfn-r-iso} that introduces the notion of $r$-isomorphism.  We
then discuss a number of useful properties of \zjnote{$r$-isomorphisms}. These
properties are in some sense ``shift'' controlled analogues of
properties that appear when defining the Verdier localization of a
triangulated category. Indeed, in \S\ref{subsubsec:localization} we
see that the acyclics of finite order in $\C$ form a triangulated
subcategory $\mathcal{A}\C_{0}$ of $\C_{0}$ and that the Verdier
localization of $\C_{0}$ with respect to this subcategory is the
$\infty$-level category $\C_{\infty}$ of $\C$, which is therefore
itself triangulated.  The main aim of our algebraic formalism is to
construct a notion of weighted exact triangles in $\C$ - and this is
pursued in \S\ref{subsubsec:weight-tr}, in particular in Definition
\ref{subsubsec:weight-tr}. We then discuss in \S\ref{subsubsec:frag1}
associated fragmentation pseudo-metrics.

\subsection{Main definitions}\label{subsubsec:main-def}

We will use consistently below the characterization of $r$-equivalence
in Lemma \ref{lemma-comp} as well as that of $r$-acyclics in Lemma
\ref{lemma-acyclic}.

\begin{dfn} \label{dfn-tpc} A {\em triangulated persistence category}
  is a persistence category $\C$ endowed with a shift functor $\Sigma$
  such that the following three conditions are satisfied:
  \begin{itemize}
  \item[(i)] The $0$-level category $\C_0$ is triangulated with a
    translation automorphism denoted by $T$. \pbnote{Note that in
      particular $\C_0$ is additive and we further assume that the
      restriction of the persistence structure of $\C$ to $\C_0$ is
      compatible with the additive structure on $\C_0$ in the obvious
      way. Specifically this means that
      $\Mor_{\C}^r(A \oplus B,C) = \Mor_{\C}^r(A,C) \oplus
      \Mor_{\C}^r(B,C)$ for all $r \leq 0$ and the persistence maps
      $i_{r,s}$, $r \leq s \leq 0$ are compatible with this
      splitting. The same holds also for
      $\Mor_{\C}^r(A, B \oplus C)$.}
  \item[(ii)] The restriction of $\Sigma^{r}$ to
    $\mathrm{End}(\C_{0})$ is a triangulated endofunctor of $\C_{0}$
    for each $r\in \R$. \pbnote{Note that each of the functors
      $\Sigma^r$, being a triangulated functor, is also assumed to be
      additive. We further assume that all the natural transformations
      $\eta_{r,s}: \Sigma^r \to \Sigma^s$, $s, r \in \mathbb{R}$, are
      compatible with the additive structure on $\C_0$.}
  \item[(iii)] For any $r \geq 0$ and any $A \in {\rm Obj}(\C)$, the
    morphism $\eta^A_r: \Sigma^rA \to A$ defined in
    (\ref{dfn-eta-map}) embeds into an exact triangle of $\C_0$
    \[ \Sigma^r A \xrightarrow{\eta^A_r} A \to K \to T\Sigma^r A \] such
    that $K$ is $r$-acyclic.
  \end{itemize}
\end{dfn}

\begin{ex}\label{ex:co-chains}
  The fundamental example of a triangulated persistence category is
  provided by the homotopy category of filtered (co)-chain complexes
  over a field $\k$, $H^{0}\mathcal{FK}_{\k}$. The objects are
  filtered cochain complexes $(C,\partial)$ over $\k$,
  $C :\ldots \subset C^{\leq \alpha}\subset C^{\leq \beta}\subset
  \ldots $ ($\alpha\leq \beta \in \R$) and $\partial$ does not
  increase filtration, hence each $C^{\leq\alpha}$ is itself a cochain
  complex - a more complete description is given
  in~\S\ref{subsec:filt-co}. The morphisms are homotopy classes of
  filtered chain maps
  $$\Mor^{r}(C,C')=\{f:C\to C' \ | \ f \mathrm{\ is\ a\ chain\ map},
  f(C^{\leq \alpha})\subset (C')^{\leq \alpha +r}\}/\simeq_{r}$$ where
  the relation $\simeq_{r}$ is cochain homotopy via a homotopy
  $h: C^{\ast}\to C^{\ast -1}$ such that
  $h(C^{\leq \alpha})\subset (C')^{\leq \alpha +r}$.  The translation
  functor is defined as usual by translating degree (and keeping the
  filtration unchanged), namely $(TC)^i = C^{i+1}$, and with the
  obvious action on morphisms. The shift functor acts on objects by
  $[\Sigma^{r}C]^{\leq \alpha}=C^{\leq \alpha -r}$ with the obvious
  differential and the obvious action on morphisms. The $0$-level of
  $H^{0}\mathcal{FK}_{\k}$, $[H^{0}\mathcal{FK}_{\k}]_{0}$, is the
  subcategory with the same objects but whose morphisms come only from
  filtration preserving chain maps. This is a triangulated category
  because, for chain preserving maps, the mapping-cone construction is
  filtration preserving. The $r$-acyclics in this case are filtered
  complexes $C$ such that $\mathds{1}_{C}$ is chain homotopic to $0$
  through a chain homotopy that shifts filtration by at most $r$.

  \pbrev{Note that we also have the (full) subcategory
    $\mathcal{FK}^{\text{fg}}_k \subset \mathcal{FK}_{\k}$ of {\em
      finitely generated} filtered cochain complexes which is also a
    TPC. The category $[H^{0}\mathcal{FK}^{\text{fg}}_{\k}]_{\infty}$
    is equivalent to the usual homotopy category of finitely generated
    cochain complexes.}
\end{ex}

\begin{remark} \label{rem:shifts-T} (a) Given that $\C_0$ is
  triangulated, the functors $\Mor_{\C}^s(X,-)$ and
  $\Mor_{\C}^s(-, X)$ are exact for $s=0$. This property together with
  the fact that $\Sigma^s$ is a triangulated functor for all $s$ and
  the relations in Remark~\ref{rem:shift} imply that these functors
  are exact {\em for all} $s\in \R$.

  (b) Condition (ii) \pbnote{requires in particular} that $\Sigma$ and
  $T$ commute. Thus, $T\Sigma^{r} X=\Sigma^{r}TX$ for each object $X$
  and for any $f\in \Mor_{\C}^{0}(A,B)$ we have
  $\Sigma^{r} Tf=T\Sigma^{r}f$. Additionally, each $\Sigma^{r}$
  preserves the additive structure of $\C_{0}$ and it takes each exact
  triangle in $\C_{0}$ to an exact triangle. \pbnote{Moreover, the
    assumptions above imply that we have canonical isomorphisms
    $$\Mor_{\C}^r(A \oplus B, C) \cong \Mor_{\C}^r(A,C)
    \oplus \Mor_{\C}^r(B,C), \ \; \; \forall r \in \mathbb{R},$$}
  \pbnote{and the persistence maps $i_{r,s}$ are compatible with these
    isomorphisms. The same holds also for
    $\Mor_{\C}^r(A, B \oplus C)$. Finally, the maps $\eta_r^{A}$,
    $A \in \text{Obj}(\C_0)$, $r \geq 0$, are compatible with the
    additive structure on $\C_0$.}

  Notice also that the functor $T$ extends from $\C_{0}$ to a functor
  on $\C$. Indeed $T$ is already defined on all the objects of $\C$ as
  well as on all the morphisms of shift $0$.  For $f\in \Mor^{r}(A,B)$
  we define $Tf= T(f\circ (\eta_{r,0})_{A})\circ
  (\eta_{0,r})_{TA}$. It is easily seen that with this definition $T$
  is indeed a functor and it immediately follows that
  $T((\eta_{r,s})_{A})= (\eta_{r,s})_{TA}$ for all objects $A$ in $\C$
  and $r,s\in \R$. Further, by using the identifications in Remark
  \ref{rem:shift}, it also follows that $T$ is a persistence functor.
  In particular, we have $\eta_{r}^{TA}=T\eta_{r}^{A}$ for each object
  $A$ in $\C$.

  (c) Given that $0$-isomorphisms preserve $r$-acyclicity - as noted
  in~\S\ref{subsubsec:pers-cat1}, condition~(iii) in
  Definition~\ref{dfn-tpc} does not depend on the specific extension
  of $\eta_{r}^{A}$ to an exact triangle.
 
  (d) In a way similar to Remark~\ref{rem:c0repl} (b), the data
  encoded in a triangulated persistence category is determined by the
  triangulated category $\C_{0}$ together with an appropriate shift
  functor $\Sigma : (\R, +)\to \mathrm{End}(\C_{0})$. From this data
  it is easy to define a triangulated persistence category $\C$ \ with
  the same objects as $\C_{0}$, that has $\C_{0}$ as its $0$-level and
  with morphisms endowed with a persistence structure such that
  (\ref{nt-dia-4}) and (\ref{nt-dia-5}) are satisfied with respect to
  the given shift functor $\Sigma$.  We do not give further details
  here but we will see such an example in~\S\ref{subsec:Tam}.
\end{remark}

\zhnote{It is clear that TPCs form a category with respect to
  persistence functors that respect the additional structure. The
  appropriate notion is formalized below.}
\zhnote{\begin{dfn}\label{def:TPC functors} Let $\mathcal{C}'$ and
    $\mathcal{C}''$ be two TPCs. A persistence functor
    $\mathcal{F}:\mathcal{C}' \longrightarrow \mathcal{C}''$ is called
    a TPC-functor if it satisfies the following conditions:
\begin{itemize}
\item[(i)] $\mathcal{F}$ is compatible with the shift functors
$\Sigma_{\mathcal{C}'}$, $\Sigma_{\mathcal{C}''}$ of the two
categories, namely
$\mathcal{F} \circ \Sigma^r_{\mathcal{C}'}= \Sigma^r_{\mathcal{C}''}
\circ \mathcal{F}$ for all $r \in \mathbb{R}$, and
$(\eta_{r,s})_{\mathcal{F}(A)} =\mathcal{F}((\eta_{r,s})_A)$ for all
$A \in \Ob(\mathcal{C}')$ and all $r,s$.  
\item[(ii)] The $0$-level 
$\mathcal{F}|_{\mathcal{C}'_0}: \mathcal{C}'_0 \longrightarrow
\mathcal{C}''_0$ of the  functor $\mathcal{F}$  is triangulated. 
\end{itemize}
\end{dfn}}

\zhnote{In the definition above the fact that
  $\mathcal{F}|_{\mathcal{C}'_0}$ maps $\mathcal{C}'_0$ to
  $\mathcal{C}''_0$ follows from the assumption that $\mathcal{F}$ is
  a persistence functor.  Modifying Definition \ref{def:eq-pers}, we
  now give the definition of an equivalence between TPCs.
\begin{dfn}\label{def:TPC equivalences}
  Let $\mathcal{C}'$ and $\mathcal{C}''$ be two TPCs. A TPC-functor
  $\mathcal{F}: \mathcal{C}' \longrightarrow \mathcal{C}''$ is called
  a TPC-equivalence if there exists a TPC-functor
  $\mathcal{G}: \mathcal{C}'' \longrightarrow \mathcal{C}'$ such that
  both $\mathcal{F} \circ \mathcal{G}$ and
  $\mathcal{G} \circ \mathcal{F}$ are isomorphic to the respective
  identity functors via \pbred{persistence} natural transformations of
  shift $0$.
\end{dfn}}

\zhnote{Standard results in triangulated categories
(e.g.~\cite[Section~1.2]{Huyb:FMT}) imply that a TPC-functor
$\mathcal{F}: \mathcal{C}' \longrightarrow \mathcal{C}''$ is a
TPC-equivalence if and only if it is an equivalence of persistence
categories.}

\begin{dfn} \label{dfn-r-iso} Let $\C$ be a triangulated persistence
  category. A map $f \in \Mor_{\C}^0(A,B)$ is said to be an {\em
    {\boldsymbol $r$}-isomorphism} (from $A$ to $B$) if it embeds into
  an exact triangle in $\C_0$ \[ A \xrightarrow{f} B \to K \to TA \]
  such that $K \simeq_r 0$.
\end{dfn}

We write  $f: A \simeq_r B$.

\begin{remark}(a) If $f$ is an $r$-isomorphism, then $f$ is an
  $s$-isomorphism for any $s \geq r$.  It is not difficult to check,
  and we will see this explicitly in Remark \ref{rem:various-iso},
  that for $r=0$ this definition is equivalent to the notion of
  $0$-isomorphism introduced before (namely isomorphism in the
  category $\C_{0}$).

  (b) The relation $T(\eta_{r}^{K})=\eta_{r}^{TK}$ implies that $TK$
  is $r$-acyclic if and only if $K$ is $r$-acyclic and, therefore, $f$
  is an $r$-isomorphism if and only if $Tf$ is one.

  (c) From Definition \ref{dfn-tpc} (iii) we see that for any
  $r \geq 0$ and $A \in {\rm Obj}(\C)$ we have
  $$\eta^A_r: \Sigma^r A \simeq_r A~.~$$ 
\end{remark}

\begin{prop} \label{prop-r-iso} Any triangulated persistence category
  $\C$ has the following properties.
\begin{itemize} 
\item[(i)] If $f: A \to B$ is an $r$-isomorphism, then there exist $\phi \in \Mor_{\C}^0(B, \Sigma^{-r}A)$ and $\psi \in \Mor_{\C}^0(\Sigma^r B, A)$ such that 
\[ \phi \circ f = \eta^A_r  \,\,\mbox{in $\Mor_{\C}^0(A, \Sigma^{-r}A)$} \,\,\,\,\, \mbox{and}\,\,\,\,\, f \circ \psi = \eta^B_r \,\,\mbox{in $\Mor_{\C}^0(\Sigma^r B, B)$}. \]
The map $\psi$ is called a \zjnote{{\rm right $r$-inverse}} of $f$ and  $\phi$ is a \zjnote{{\rm left $r$-inverse}} of $f$. 
They satisfy $\Sigma^r \phi \simeq_r \psi$.
\item[(ii)] If $f$ is an $r$-isomorphism,  then any two left  $r$-inverses $\phi, \phi'$ of $f$ are themselves 
$r$-equivalent and the same conclusion holds for right $r$-inverses. 
\item[(iii)] If $f: A \simeq _r B$ and $g: B \simeq_s C$, then $g \circ f: A \simeq_{r+s} C$. 
\end{itemize}
\end{prop}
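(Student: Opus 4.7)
The common strategy for all three parts starts from the exact triangle $A \xrightarrow{f} B \xrightarrow{g} K \xrightarrow{h} TA$ in $\C_0$ witnessing that $f$ is an $r$-isomorphism, and uses the exactness of the hom functors $\Mor_{\C}^0(-, X)$, $\Mor_{\C}^0(X, -)$ on $\C_0$ (Remark~\ref{rem:shifts-T}(a)) together with the characterizations of $r$-equivalence and $r$-acyclicity in Lemmas~\ref{lemma-comp} and~\ref{lemma-acyclic}. I note at the outset that $T^{-1}K$ is also $r$-acyclic, since $\eta_r^{T^{-1}K}=T^{-1}\eta_r^K$ by Remark~\ref{rem:shifts-T}(b). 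For part (i), to produce the left $r$-inverse $\phi$ I apply $\Mor^0_{\C}(-, \Sigma^{-r}A)$ to the rotated triangle $T^{-1}K \xrightarrow{h'} A \xrightarrow{f} B$; the obstruction to lifting $\eta^A_r$ along $\circ f$ is the composition $\eta^A_r \circ h'$, and the factorization~(\ref{com-da}) shows that this vanishes exactly when $i_{0,r}(h')=0$, which follows from Lemma~\ref{lemma-acyclic}(iii) applied to the $r$-acyclic $T^{-1}K$. The construction of $\psi$ is dual: applying $\Mor^0_{\C}(\Sigma^r B, -)$, the obstruction $g \circ \eta^B_r$ is rewritten via naturality of $\eta_{r,0}: \Sigma^r \Rightarrow \Sigma^0$ applied to $g$ as $\eta^K_r \circ \Sigma^r g$, which vanishes since $\eta^K_r = 0$ by Lemma~\ref{lemma-acyclic}(i).

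To verify the relation $\Sigma^r\phi \simeq_r \psi$, Lemma~\ref{lemma-comp} reduces the task to proving the identity $\eta^A_r\circ(\Sigma^r\phi-\psi)=0$ in $\Mor^0(\Sigma^rB,\Sigma^{-r}A)$. Direct substitution using $\phi\circ f=\eta^A_r$ and $f\circ\psi=\eta^B_r$ gives $\eta^A_r\circ\psi=\phi\circ f\circ\psi=\phi\circ\eta^B_r$. Separately, the naturality square of $\eta_{0,-r}$ evaluated at the morphism $\Sigma^r\phi$, combined with the identification $(\eta_{0,-r})_{\Sigma^rB}=(\eta_{r,0})_B$ stemming from the strict monoidality of $\Sigma$, yields $\eta^A_r\circ\Sigma^r\phi=\phi\circ\eta^B_r$ as well. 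Part (ii) then follows from the same exact sequences: two left $r$-inverses $\phi,\phi'$ satisfy $(\phi-\phi')\circ f = 0$, so exactness gives $\phi-\phi' = \beta\circ g$ for some $\beta\in\Mor^0(K,\Sigma^{-r}A)$, and $i_{0,r}(\phi-\phi')=i_{0,r}(\beta)\circ g$ vanishes because $i_{0,r}(\beta)=0$ by $r$-acyclicity of $K$ (Lemma~\ref{lemma-acyclic}(iii)); the argument for right $r$-inverses is symmetric, using $r$-acyclicity of $T^{-1}K$.

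For part (iii), I apply the octahedral axiom in $\C_0$ to the defining triangles of the $r$-isomorphism $f$ and the $s$-isomorphism $g$; this produces an exact triangle $K_1 \to K \to K_2 \to TK_1$ in which $K$ is the cone of $g\circ f$, $K_1$ is $r$-acyclic, and $K_2$ is $s$-acyclic. To show $K$ is $(r+s)$-acyclic, by Lemma~\ref{lemma-acyclic}(ii) it suffices to verify that $i_{0,r+s}: \Mor^0(X,K)\to\Mor^{r+s}(X,K)$ vanishes for every $X$. Given $\mu\in\Mor^0(X,K)$, its image in $\Mor^s(X,K_2)$ vanishes by $s$-acyclicity of $K_2$, so exactness of the long sequence associated to $K_1\to K\to K_2$ lifts $i_{0,s}(\mu)$ to a class in $\Mor^s(X,K_1)$; applying $i_{s,s+r}$ annihilates that lift by $r$-acyclicity of $K_1$, yielding $i_{0,r+s}(\mu)=0$. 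The principal bookkeeping obstacle is in (i), where one must navigate the two equivalent forms of $\eta^A_r$ (as $\Sigma^rA\to A$ versus $A\to\Sigma^{-r}A$) and select the correct naturality squares; the identification $(\eta_{0,-r})_{\Sigma^rB}=(\eta_{r,0})_B$ via the strict monoidal structure of $\Sigma$ is the key technical ingredient that makes the computation for $\Sigma^r\phi\simeq_r\psi$ come out cleanly.
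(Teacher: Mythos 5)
Your proof is correct and follows essentially the same strategy as the paper: construct the left and right $r$-inverses from the exactness of the hom functors (equivalently, the morphism-of-triangles axiom) applied to the defining triangle of the $r$-isomorphism, use the naturality of $\eta$ for the compatibility relation $\Sigma^r\phi\simeq_r\psi$, and apply the octahedral axiom together with the additivity of acyclicity orders for part (iii). The two small places where you diverge from the paper are harmless variations. For part (ii), you write $\phi-\phi'=\beta\circ g$ using exactness of $\Mor^0(-,\Sigma^{-r}A)$ and then kill $i_{0,r}(\beta)$ by the $r$-acyclicity of $K$, whereas the paper avoids going back to the exact triangle and instead feeds the right $r$-inverse $\psi$ already produced in (i) into $(\phi-\phi')\circ\eta^B_r=(\phi-\phi')\circ f\circ\psi=0$ and invokes Lemma~\ref{lemma-comp}; both work, and yours has the minor advantage of not depending on (i). For part (iii), you re-derive inline the content of the paper's Lemma~\ref{lemma-exact} (that an extension of an $r$-acyclic by an $s$-acyclic is $(r+s)$-acyclic) via the long exact sequence of persistence modules; the paper isolates this as a separate lemma so it can be reused (notably in Corollary~\ref{cor-2} and Proposition~\ref{prop:verdier-loc}), but the computation is the same.
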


\begin{proof} (i) We first construct $\phi$. In $\C_0$, the morphism
  $f: A \to B$ embeds into an exact triangle
  $A \xrightarrow{f} B \xrightarrow{g} K \xrightarrow{h} TA$ with
  $K \simeq_r 0$. Using the fact that $\Sigma$ and $T$ commute, the
  following diagram is easily seen to be commutative:
  \begin{equation} \label{com-shift-fun2} \xymatrixcolsep{3pc}
    \xymatrix{
      K \ar[r]^-{h} \ar[d]_-{\eta_r^K} & TA \ar[d]^-{\eta^{TA}_r}\\
      \Sigma^{-r} K \ar[r]^-{\Sigma^{-r}h} & \Sigma^{-r} TA}
  \end{equation}
  Thus $\eta_r^{TA} \circ h = \Sigma^{-r} h \circ \eta_r^K = 0$ since
  $K$ is $r$-acyclic (and so $\eta_r^K =0$). By rotating exact
  triangles in $\C_0$ we obtain a new $\C_{0}$-exact triangle
  \jznote{$K \xrightarrow{h} TA \xrightarrow{Tf} TB \xrightarrow{Tg} TK$} and
  consider the diagram below (in $\C_{0}$):
  \pbnote{
    \[ \xymatrixcolsep{3pc} \xymatrix{ K \ar[d] \ar[r]^{h} & TA
        \ar[d]^-{\eta^{TA}_r} \ar[r]^{Tf}
        & TB \ar@{-->}[d]^-{\tilde{\phi}} \ar[r]^-{Tg} & TK \ar[d] \\
        0 \ar[r] & \Sigma^{-r} TA \ar[r]^-{\mathds{1}} & \Sigma^{-r} TA \ar[r]
        & 0} ~.~\]} The first square on the left commutes, so we
  deduce the existence of a map
  $$\tilde{\phi}\in \Mor_{\C}^{0}(TB,\Sigma^{-r}TA)$$ 
  that makes commutative the middle and right squares. The desired
  left inverse of $f$ is $\phi=T^{-1}\tilde{\phi}$. A similar argument
  leads to the existence of $\psi$.  We postpone the identity
  $\Sigma^{r}\phi\simeq_{r} \psi$ after the proof of (ii).  \medskip

  (ii) If $\phi$ $\phi'$ are two left inverses of $f$ then
  $(\phi - \phi') \circ f =0$. Therefore
  $$(\phi - \phi') \circ \eta^B_r = (\phi - \phi') \circ (f
  \circ \psi) = ((\phi - \phi') \circ f) \circ \psi = 0.$$
%
  Lemma~\ref{lemma-comp} implies that $\phi \simeq_r \phi'$. The same
  argument works for right inverses. We now return to the identity
  $\Sigma^{r}\phi\simeq_{r} \psi$ (with the notation at point (i)).
  We have the following commutative diagram:
  \[ \xymatrixcolsep{5pc} \xymatrix{ \Sigma^r B \ar[r]^-{\psi}
      \ar@/_1.5pc/[rr]^-{\eta^B_r} & A \ar[r]^-{f}
      \ar@/^1.5pc/[rr]^-{\eta^A_r} & B \ar[r]^-{\phi} & \Sigma^{-r} A
    }~.~\] Therefore, $\eta^A_r \circ \psi = \phi \circ
  {\eta^B_r}$. By the naturality properties of $\eta$ we also have
  $\phi \circ {\eta^B_r}=\eta^A_r \circ \Sigma^r \phi$. Thus, by Lemma
  \ref{lemma-comp}, $\Sigma^r \phi \simeq_r \psi$.  \medskip

  (iii) We will make use of the following lemma.
  \begin{lemma} \label{lemma-exact} If $K \to K'' \to K' \to TK$ is an
    exact triangle in $\C_0$, $K\simeq_r 0$ and $K' \simeq_s 0$, then
    $K'' \simeq_{r+s} 0$.
  \end{lemma}

  \begin{proof} [Proof of Lemma \ref{lemma-exact}] 
    We associate the following commutative diagram to the exact
    triangle in the statement:
    \pbnote{
      \[ \xymatrix{ \Mor^{\alpha}(K'', K) \ar[r] \ar[d] &
          \Mor^{\alpha}(K'', K'')
          \ar[r] \ar[d]^-{v} & \Mor^{\alpha}(K'', K') \ar[d]^-{0} \\
          \Mor^{\alpha+s}(K'', K) \ar[r]^-{k} \ar[d]_-{0} &
          \Mor^{\alpha+s}(K'', K'') \ar[r]^{h} \ar[d]^-{t}
          & \Mor^{\alpha+s}(K'', K')  \\
          \Mor^{\alpha+r+s}(K'', K) \ar[r]^-{n} &
          \Mor^{\alpha+r+s}(K'', K'') & } \]} Here, the vertical
    morphisms are the persistence structure maps. \pbnote{The
      rightmost vertical map and the lower leftmost vertical one are
      both $0$}
    due to our hypothesis together with Lemma~\ref{lemma-acyclic}. The
    functor $\Mor^{\alpha}(K'',-)$ is exact which implies that
    $t\circ v=0$ and, again by Lemma \ref{lemma-acyclic}, we deduce
    $K''\simeq_{r+s} 0$.
  \end{proof}

  Returning to the proof of the proposition, point (iii) now follows
  immediately by using the octahedral axiom to construct the following
  commutative diagram in $\C_0$
  \[ \xymatrix{
      A \ar[r]^-{f} \ar[d] & B \ar[r] \ar[d]^-{g} & K \ar[d] \\
      A \ar[r]^{g \circ f} \ar[d] & C \ar[r] \ar[d] & K'' \ar[d]\\
      0 \ar[r] & K' \ar[r] & K' }\] with exact rows and columns and
  applying Lemma~\ref{lemma-exact} to the \pbnote{rightmost column}.
\end{proof}

\begin{remark}\label{rem:various-iso} (a) Points (i) and (ii)
  in Proposition~\ref{prop-r-iso} imply that the notion
  $0$-isomorphism $f: A \to B$, as given by Definition~\ref{dfn-r-iso}
  for $r=0$, is equivalent to \zjnote{an} isomorphism in
  $\mathcal{C}_{0}$. In particular, for $r=0$, $f$ admits a unique
  inverse in $\Mor_{\C}^0(B,A)$.

  (b) Point (iii) in Proposition \ref{prop-r-iso} shows that being
  $r$-isomorphic (for a fixed $r$) cannot be expected to be an
  equivalence relation on $\Ob(\C)$ (unless $r=0$).
\end{remark}

Here are several useful additional results and corollaries. 

\ocnote{The first is a version of the five-lemma in the TPC context.

  \begin{prop} \label{cor-1} Consider the following commutative
    diagram in $\C_0$,
  
    \begin{equation}\label{eq:diag-ff1}
      \xymatrix{ A \ar[r] \ar[d]_-{u} & B \ar[r] \ar[d]_-{v} & C
        \ar[r] \ar@{-->}[d]_-{w}
        & TA \ar[d]_-{Tu}\\
        A' \ar[r] & B' \ar[r] & C' \ar[r] & TA'}
    \end{equation}  such that the two rows are
    exact triangles. If $u$ is an $r$-isomorphism and $v$ is an
    $s$-isomorphism, then:
    \begin{itemize}
    \item[(i)]  There exists $w$ making the diagram commutative which
      is an $(r+s)$-isomorphism.
    \item[(ii)] Any $w$ making the diagram commutative is a
      $3(r+s)$-isomorphism.
    \end{itemize}
  \end{prop}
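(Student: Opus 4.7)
The plan for (i) is to invoke the $3\times 3$-lemma (or $9$-lemma) for triangulated categories, a standard consequence of the octahedral axiom. Given the two exact rows and the commutative left square with vertical arrows $u$ and $v$, this yields a completion $w$ together with an exact triangle
\[ K_u \to K_w \to K_v \to TK_u \]
in $\C_0$, where $K_u$, $K_v$, $K_w$ denote cones of $u$, $v$, $w$ respectively, fitting into an ambient $3\times 3$ diagram of exact triangles. Since $u$ is an $r$-isomorphism $K_u \simeq_r 0$, and since $v$ is an $s$-isomorphism $K_v \simeq_s 0$. Applying Lemma~\ref{lemma-exact} to the above triangle yields $K_w \simeq_{r+s} 0$, so the constructed $w$ is an $(r+s)$-isomorphism.

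For (ii), let $w_0$ be the $(r+s)$-isomorphism completion produced in (i) and let $w$ be any other completion. Writing $f, g, \partial$ and $f', g', \partial'$ for the structure maps of the top and bottom triangles, the commutativity of the diagram for both $w$ and $w_0$ forces $\delta := w - w_0 \in \Mor^0_{\C}(C,C')$ to satisfy $\delta \circ g = 0$ and $\partial' \circ \delta = 0$. By the exactness of $\Mor^0_{\C}(-, C')$ and $\Mor^0_{\C}(C,-)$ on exact triangles (Remark~\ref{rem:shifts-T}(a)), the first vanishing produces $\delta = \alpha \circ \partial$ for some $\alpha \colon TA \to C'$, and the second produces $\delta = g' \circ \beta$ for some $\beta \colon C \to B'$; a similar argument applied to $\partial' \circ \alpha \circ \partial = 0$ yields $\gamma \colon TB \to TA'$ with $\partial' \circ \alpha = -\gamma \circ Tf$.

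With these factorizations in hand, the plan is to construct explicit $3(r+s)$-inverses for $w$ by perturbing the left and right $(r+s)$-inverses $\phi_{w_0}$, $\psi_{w_0}$ of $w_0$ furnished by Proposition~\ref{prop-r-iso}(i). The correction terms $\phi_{w_0}\circ \delta = \phi_{w_0}\circ \alpha \circ \partial$ and $\delta \circ \psi_{w_0} = g' \circ \beta \circ \psi_{w_0}$ do not vanish, but each can be absorbed at the cost of one further shift of $r+s$: the former using that $Tu$ is an $r$-isomorphism together with the identity $\partial'\circ\alpha = -\gamma\circ Tf$, and the latter by similar means using $\delta = g' \circ \beta$ and that $v$ is an $s$-isomorphism. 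Combined with the initial $(r+s)$-shift carried by $\phi_{w_0}$ and $\psi_{w_0}$, this produces inverses of total shift $3(r+s)$.

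The main obstacle lies entirely in (ii): the perturbation $\delta$ is not killed by a single shift, and there is no single octahedron that directly expresses the cone of $w$ in terms of the cone of $w_0$ with a clean acyclicity bound. An alternative route, which I would pursue in parallel, is to run an octahedron on the factorization $C \xrightarrow{\partial} TA \xrightarrow{\alpha} C'$ to express the cone of $\delta$ in terms of $TB$ and the cone of $\alpha$, then use $\partial' \circ \alpha = -\gamma \circ Tf$ to further bound the cone of $\alpha$ by $K_u$ and $K_v$. Either way, the factor $3$ in $3(r+s)$ reflects the fact that absorbing $\delta$ requires passing through both of its factorizations, each contributing an additional $r+s$ on top of the base $(r+s)$-shift coming from $w_0$.
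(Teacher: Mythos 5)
Part~(i) of your proposal matches the paper's argument, which likewise invokes the octahedral axiom together with Lemma~\ref{lemma-exact} to produce a completion $w$ whose cone sits in a triangle between the cones of $u$ and $v$.

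For part~(ii), however, you are pursuing a genuinely different route from the paper, and it has a real gap — one you yourself flag. The paper does \emph{not} compare an arbitrary completion $w$ to the good $w_0$ from (i). Instead it develops a quantitative, persistence-enriched version of the five lemma: for an object $Z$ it considers the functors $H_Z = \hom_{\C}(Z,-)$ and introduces the notions of a morphism of persistence modules being $r$-epi and $r$-mono. Lemma~\ref{lem:inv-rs} then shows that an $r$-isomorphism makes $H_Z(\cdot)$ both $r$-epi and $r$-mono, proves a five-lemma with explicit $r$-epi/$r$-mono constants, and deduces acyclicity bounds on the cone. Applying the five-lemma to~\eqref{eq:diag-ff1} gives that $H_Z(w)$ is $(2s+r)$-epi and $(2r+s)$-mono, and a second application of the five-lemma (to the comparison with the trivial triangle on $C$, as in~\eqref{eq:diag-ff2}) shows the map $H_K(p)$ from the cone $K$ of $w$ is $3(r+s)$-epi, whence $K$ is $3(r+s)$-acyclic. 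The $3(r+s)$ constant is an artifact of two successive applications of the five-lemma chase; your perturbation approach does not naturally reproduce it.

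The concrete gaps in your plan are the following. First, your accounting does not close: if $\phi_{w_0}, \psi_{w_0}$ each carry shift $r+s$, and absorbing $\delta\circ\psi_0$ (resp. $\phi_{w_0}\circ\delta$) costs a further $r+s$, you obtain at best left and right $2(r+s)$-inverses for $w$; but by Lemma~\ref{lem:inv-rs}(iv) (or Corollary~\ref{cor-2}), having a right $N$-inverse and a left $N$-inverse only shows $w$ is a $2N$-isomorphism — this is exactly the doubling phenomenon built into the definition of $r$-isomorphism and it cannot be avoided. So your route would at best yield $4(r+s)$, not $3(r+s)$. Second, the absorption step itself is unproven: from $\delta = \alpha\circ\partial$ and $\partial'\circ\alpha = -\gamma\circ Tf$ it does not follow that $\delta\circ\psi_0$ vanishes after a single additional shift of $r+s$; the composite $\partial\circ\psi_0:\Sigma^{r+s}C'\to TA$ has no a priori bound, and the identity $\partial'\circ\alpha\circ\partial = -\gamma\circ Tf\circ\partial = 0$ only recovers $\partial'\circ\delta = 0$, which you already knew. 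Third, your ``alternative route'' tries to control $\mathrm{Cone}(\delta)$ and then pass to $\mathrm{Cone}(w)$, but there is no clean triangle relating $\mathrm{Cone}(w_0+\delta)$ to $\mathrm{Cone}(w_0)$ and $\mathrm{Cone}(\delta)$ in a triangulated category, so this step is missing a crucial ingredient. The paper's $r$-epi/$r$-mono framework sidesteps all of this by working uniformly with the long exact sequences $H_Z(\cdot)$ rather than trying to perturb a specific completion.
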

}

\begin{proof} \ocnote{Part (i) of the proposition is an easy
    consequence of Lemma \ref{lemma-exact} and the octahedral axiom in
    $\C_{0}$.

    To show part (ii) we will use the following notation. For an
    object $Z\in \mathcal{O}b(\mathcal{C})$ we denote by $H_{Z}$ the
    functor
    $\hom_{\mathcal{C}}(Z, -):\mathcal{C} \longrightarrow
    \C^{\mathcal{P} {\rm Mod}_{\k}}$ (the target of this functor is an
    obvious enrichment of the category of persistence modules;
    see~\S\ref{ssec-pm}). Similarly denote by $H^{\ast}_{Z}$ the
    (contravariant) functor $\hom_{\mathcal{C}}(-, Z)$.}
  \pbrrvv{Further, if $U$ and $V$ are persistence modules and
    $F:U\to V$ is a morphism in $\C^{\mathcal{P} {\rm Mod}_{\k}}$}
  \ocnote{we say that $F$ is:
    \begin{itemize}
    \item[-] $r$-epi if for all $y\in V$ there exists $x\in U$ such
      that $F(x)=i_{r}(y)$. (Here and in what follows $i_{r}$ stands
      for the persistence structural map on $\hom_{C}(A,B)$ whose
      restriction to $\hom^{\alpha}(A,B)$ is the map
      $i_{\alpha,\alpha+r}:\hom_{C}^{\alpha}(A,B) \longrightarrow
      \hom_{C}^{\alpha+r}(A,B)$, $\alpha\in \R$.)
    \item[-] $r$-mono if for all $x\in U$ with $F(x)=0$ we have
      $i_{r}(x)=0$.
    \end{itemize}
    The proof is based on properties of right and left inverses that
    are contained in the following statement.}

  \ocnote{
    \begin{lem}\label{lem:inv-rs} Let
      $\phi\in \hom_{\mathcal{C}_{0}}(M,N)$.
      \begin{itemize}
      \item[(i)] $\phi$ admits a right $r$-inverse if and only if
        $H_{Z}(\phi)$ is $r$-epi for all
        $Z\in \mathcal{O}b(\mathcal{C})$. The existence of a right
        $r$-inverse implies that $H^{\ast}_{Z}(\phi)$ is $r$-mono for
        all $Z$.
      \item[(ii)] $\phi$ admits a left $r$-inverse if and only if
        $H^{\ast}_{Z}(\phi)$ is $r$-epi for all
        $Z\in \mathcal{O}b(\mathcal{C})$. If such a left $r$-inverse
        exists, then $H_{Z}(\phi)$ is $r$-mono for all $Z$.
      \item[(iii)] Consider a morphism of exact triangles as in
        (\ref{eq:diag-ff1}). Assume that for every
        $Z\in \mathcal{O}b(\mathcal{C})$ $H_{Z}(u)$ is $r$-epi and
        $s$-mono and that $H_{Z}(v)$ is $r'$-epi and $s'$-mono. Then
        $H_{Z}(w)$ is $(r'+r+s')$-epi and $(s+s'+r)$-mono, for all
        $Z$.
      \item[(iv)] If $\phi$ admits a right $r$-inverse and a left
        $s$-inverse, then $\phi$ is a $(r+s)$-isomorphism.
      \end{itemize}
    \end{lem}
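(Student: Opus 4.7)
The plan is to treat the four items in the order (i), (ii), (iv), (iii), deferring (iii) to the end since it is the hardest.

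For (i), in the forward direction, given a right $r$-inverse $\psi \in \Mor^0_{\mathcal C_0}(\Sigma^r N, M)$ with $\phi \circ \psi = \eta^N_r$ and an element $y \in \hom^\alpha(Z, N)$, I would set $x := \psi \circ \Sigma^r \tilde y$, where $\tilde y \in \Mor^0_{\mathcal C_0}(\Sigma^\alpha Z, N)$ is the shift-$0$ representative of $y$; naturality of $\eta$ together with Remark~\ref{rem:shift} then gives $\phi \circ x = i_r(y)$. For the converse, specialize to $Z = N$ and $y = \mathds{1}_N$: the $r$-epi property produces an element of $\hom^r(N,M)$ that, under the canonical isomorphism $\hom^r(N,M)\cong \Mor^0_{\mathcal C_0}(\Sigma^r N, M)$, is the sought right inverse. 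The secondary claim that $H^*_Z(\phi)$ is $r$-mono is immediate from Lemma~\ref{lemma-comp}: if $y\circ \phi = 0$, then $y\circ\phi\circ\psi = y\circ\eta^N_r = 0$, so $i_r(y) = 0$. Point (ii) is dual, using a left $r$-inverse $\tau\in\Mor^0_{\mathcal C_0}(N,\Sigma^{-r}M)$ with $\tau\circ\phi = \eta^M_r$; the only calculation that requires care is the identity $(\eta_{-r,0})_M\circ \eta^M_r = i_r(\mathds{1}_M)$, which follows from the compatibility of composition with the persistence structural maps and the fact that $(\eta_{-r,0})\circ(\eta_{0,-r}) = \mathds{1}$.

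For (iv), embed $\phi$ into an exact triangle $M\xrightarrow{\phi}N\to K\to TM$ in $\mathcal C_0$. By Lemma~\ref{lemma-acyclic} it suffices to show $i_{\alpha,\alpha+r+s}$ vanishes on $\hom^\alpha(Z,K)$ for every $Z$ and $\alpha$. Combining (i) and (ii) with the hypotheses, the map $H_Z(\phi)$ is both $r$-epi and $s$-mono for every $Z$, and the same holds for $H_Z(T\phi)$. Now fix $k\in\hom^\alpha(Z,K)$. Its image $\partial k\in\hom^\alpha(Z,TM)$ is annihilated by $(T\phi)_*$, so the $s$-mono property kills $i_s(\partial k) = \partial(i_s(k))$; by exactness, $i_s(k)$ lifts to some $b\in\hom^{\alpha+s}(Z,N)$. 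The $r$-epi property then lifts $i_r(b)$ to $m\in\hom^{\alpha+s+r}(Z,M)$, and commutativity forces $i_{r+s}(k) = 0$, as required.

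Point (iii) is the crux, and it is the persistence analogue of the classical five-lemma. Applying the exact functor $H_Z(-)$ to the ladder~\eqref{eq:diag-ff1} and to its rotations gives a commutative diagram of persistence modules with exact rows, whose vertical maps $H_Z(u), H_Z(v), H_Z(Tu), H_Z(Tv)$ satisfy the given epi/mono shift bounds. For the $(s+s'+r)$-mono statement, starting from $c\in\hom^\alpha(Z,C)$ with $w_*c = 0$, I would push $\partial c$ into $\hom^\alpha(Z,TA')$, which vanishes by commutativity; the $s$-mono property of $H_Z(Tu)$ then annihilates $i_s(\partial c) = \partial(i_s c)$, so $i_s(c)$ lifts by exactness to $b\in\hom^{\alpha+s}(Z,B)$; then $v_*b$ is in the image of $H_Z(f')$, so $r$-epi of $H_Z(u)$ produces a correcting element in $\hom^{\alpha+s+r}(Z,A)$, and $s'$-mono of $H_Z(v)$ applied to the resulting discrepancy yields $i_{s+s'+r}(c) = 0$. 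The $(r'+r+s')$-epi statement is symmetric: given $c'\in\hom^\alpha(Z,C')$, one lifts $i_r(\partial c')$ to an element of $\hom^{\alpha+r}(Z,TA)$ using $r$-epi of $H_Z(Tu)$, uses $s'$-mono of $H_Z(Tv)$ to arrange compatibility with $(Tf)_*$, lifts back to $\hom^{\alpha+r+s'}(Z,C)$ via exactness, and finally uses $r'$-epi of $H_Z(v)$ to correct the remaining defect in $C'$. The principal obstacle throughout is the bookkeeping of shifts: each lift must land in the correct persistence degree so that the next step in the chase is legal, and the order in which the mono and epi hypotheses are consumed dictates the final shift bound, so the ordering of the chase cannot be chosen freely.
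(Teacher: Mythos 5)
Your proposal is correct and tracks the paper's proof closely. Items (i) and (ii) match the paper's argument almost verbatim: the same formula for producing a preimage of $i_r(y)$ via the right $r$-inverse, the same specialization to $Z=N$ for the converse, and the same short computation for the secondary mono claim. For (iii) the paper simply asserts that the result follows "by direct diagram chasing, as in the proof of the classical five-lemma," whereas you carry the chase out explicitly; your bookkeeping of the shifts and the order in which the epi/mono hypotheses are consumed does yield the stated bounds $(r'+r+s')$ and $(s+s'+r)$.

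The one genuine structural difference is in (iv). You prove it by a direct chase on the cone $M \xrightarrow{\phi} N \to K \to TM$, showing $i_{r+s}(\mathds{1}_K)=0$ directly (and in fact showing it at the level of every $\hom^\alpha(Z,K)$, via Lemma~\ref{lemma-acyclic}). The paper instead feeds the morphism of exact triangles from $M \xrightarrow{\mathds{1}} M \to 0 \to TM$ to $M \xrightarrow{\phi} N \to K \to TM$ into item (iii) and deduces that $H_K(p)$, with $p = 0: 0 \to K$, is $(r+s)$-epi, which forces $i_{r+s}(\mathds{1}_K)=0$. These are the same diagram chase in disguise — the paper's version just hides it inside the already-proved (iii), while yours redoes the special case explicitly — so neither is preferable in substance, though the paper's dependency (iii) $\Rightarrow$ (iv) is a little tighter.
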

  }

  \begin{proof}[Proof of the Lemma.]
    \ocnote{We start with (i). Assume that $\phi$ admits a right
      $r$-inverse $\psi \in
      \hom_{\mathcal{C}_{0}}(\Sigma^{r}N,M)$. Let
      $h\in \hom_{\C}^{\alpha}(Z,N)$.  We have
      $h\circ \eta^{Z}_{r}=\eta_{r}^{N}\circ \Sigma^{r}h=\phi\circ
      (\psi\circ \Sigma^{r}h)=H_{Z}(\phi)(\psi\circ \Sigma^{r}h)$ with
      $\psi\circ \Sigma^{r}h\in
      \hom_{\C}^{\alpha}(\Sigma^{r}Z,M)=\hom^{r+\alpha}_{\C}(Z,M)$.}
    \pbrrvv{Therefore
      $$i_{\alpha, \alpha+r}(h) = (h \circ \eta^Z_r) \circ (\eta_{0,r})_Z
      = \phi \circ \bigl( \psi \circ \Sigma^rh \circ (\eta_{0,r})_Z
      \bigr).$$} \ocnote{It follows that $H_{Z}(\phi)$ is
      $r$-epi. Conversely, assume that $H_{N}(\phi)$ is $r$-epi. Then
      $\eta^{N}_{r}$ is in the image of $H_{N}(\phi)$ which means that
      $\phi$ admits a right $r$-inverse. To finish with (i) let
      $k\in\hom_{\C}^{\alpha}(N,Z)$ such that $k\circ \phi=0$. We
      write
      $0=k\circ \phi =k\circ \phi\circ \psi=k\circ
      \eta^{N}_{r}=\eta^{Z}_{r}\circ \Sigma^{r}k$ which means
      $i_{r}(k)=0\in \hom_{\C}^{r+\alpha}(N,Z)$. Thus
      $H_{Z}^{\ast}(\phi)$ is $r$-mono.

      The point (ii) is perfectly similar to (i).
  
      For the point (iii) we first notice that, by assumption, the
      maps $H_{Z}(u)$ and $H_{Z}(v)$ satisfy the epi and mono
      conditions with constants that are the same for all objects $Z$
      in $\C$. It is immediate to see that $H_{Z}(Tu)$ is $r$-epi
      (respectively, $s$-mono) if and only if $H_{T^{-1}Z}(u)$ is
      $r$-epi (and, respectively, $s$-mono).  This implies that
      $H_{Z}(T^{l}u)$ is $r$-epi and $s$-mono, and that
      $H_{Z}(T^{l}v)$ is $r'$-epi and $s'$-mono for all $l\in \Z$ (and
      all $Z$).  We now apply the exact functor $H_{Z}$ to the
      diagram~\eqref{eq:diag-ff1} and we obtain two long exact
      sequences of persistence modules related by comparison
      morphisms. The desired conclusion follows by direct diagram
      chasing, as in the proof of the classical five-lemma.}

    \pbrrvv{For the point (iv), we use the triangulated structure of
      $\mathcal{C}_0$ to obtain an object $K$ and the following
      commutative diagram in $\C_{0}$:
      \begin{equation} \label{eq:diag-ff2} \xymatrix{ M \ar[r]^{id}
          \ar[d]_-{id} & M \ar[r] \ar[d]_-{\phi} & 0 \ar[r]
          \ar[d]_-{p}
          & TM \ar[d]_-{id}\\
          M\ar[r]^-{\phi} & N\ar[r] & K \ar[r] & TM}
      \end{equation}
    } \ocnote{whose rows are exact triangles. Given that $\phi$ admits
      a right $r$-inverse, we deduce from (i) that $H_{Z}(\phi)$ is
      $r$-epi for all objects $Z$ in $\C$. The existence of a left
      $s$-inverse implies, by (ii), that $H_{Z}(\phi)$ is also
      $s$-mono for all $Z$. We now use (iii) to deduce that
      $H_{K}(p)$-is $(r+s)$-epi. This implies that $i_{r+s}(id_{K})$
      is in the image of $H_{K}(p)$.  But $p$ is the null map and thus
      $i_{r+s}(id_{K})=0$, hence $K\simeq_{r+s}0$. It follows that
      $\phi$ is an $(r+s)$-isomorphism.}
  \end{proof}
  \ocnote{We now return to the proof of the second point of the
    proposition with the notation and the assumptions there. We denote
    by $K$ any object that completes the map $w$ to an exact
    triangle $$C\stackrel{w}{\longrightarrow} C'\to K\to TC~.~$$ An
    $r$-isomorphism admits both right and left $r$-inverses. Thus, the
    points (i) and (ii) of the Lemma show that $H_{Z}(u)$ is $r$-epi
    and $r$-mono and that $H_{Z}(v)$ is $s$-epi and $s$-mono for all
    objects $Z$ in $\C$. The point (iii) of the Lemma then implies
    that $H_{Z}(w)$ is $(2s+r)$-epi and $(2r+s)$-mono for all $Z$.  We
    now consider a diagram just as (\ref{eq:diag-ff2}) but with $M=C$,
    $N=C'$, $\phi=w$ and we use the point (iii) of the Lemma to deduce
    that the map $H_{K}(p)$ is $3(r+s)$-epi which means that $K$ is
    $3(r+s)$-acyclic.}
 \end{proof}

\begin{cor} \label{cor-2} If $f: A \to B$ is an $r$-isomorphism, then
  any right inverse $\psi \in \Mor^0(\Sigma^r B, A)$ (given by (i) in
  Proposition \ref{prop-r-iso}) is a $2r$-isomorphism. The same
  conclusion holds for any left inverse.
\end{cor}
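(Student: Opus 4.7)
My plan is to prove the statement for right inverses first; the case of left inverses will follow by a symmetric argument. Let $\psi \in \Mor_{\C}^0(\Sigma^r B, A)$ be a right $r$-inverse of $f$, so that $f \circ \psi = \eta^B_r$ by Proposition \ref{prop-r-iso}(i). Embed $\psi$ into an exact triangle in $\C_0$
\[
\Sigma^r B \xrightarrow{\psi} A \to K' \to T\Sigma^r B,
\]
and the goal is to show that $K'$ is $2r$-acyclic.

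The key idea is to apply the octahedral axiom to the factorization $\Sigma^r B \xrightarrow{\psi} A \xrightarrow{f} B$. The three cones involved are:
\begin{itemize}
\item the cone $K'$ of $\psi$, which is what we wish to control;
\item the cone $K$ of $f$, which is $r$-acyclic since $f$ is an $r$-isomorphism;
\item the cone of the composition $f \circ \psi = \eta^B_r$, which by Definition \ref{dfn-tpc}(iii) applied to $B$ can be taken to be an $r$-acyclic object $\tilde K$.
\end{itemize}
The octahedral axiom then produces an exact triangle in $\C_0$ of the form
\[
K' \to \tilde K \to K \to TK'.
\]
Rotating this triangle once to the left gives the exact triangle $T^{-1}K \to K' \to \tilde K \to K$, whose first and third terms are both $r$-acyclic (recall that $r$-acyclicity is preserved by $T^{\pm 1}$ since $\eta^{TX}_r = T\eta^X_r$). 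Lemma~\ref{lemma-exact} then yields $K' \simeq_{2r} 0$, so $\psi$ is indeed a $2r$-isomorphism.

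For a left $r$-inverse $\phi \in \Mor_{\C}^0(B, \Sigma^{-r}A)$ satisfying $\phi \circ f = \eta^A_r$ (in $\Mor_{\C}^0(A, \Sigma^{-r}A)$), I would run the mirror argument: embed $\phi$ in an exact triangle $B \xrightarrow{\phi} \Sigma^{-r}A \to K'' \to TB$, apply the octahedral axiom to $A \xrightarrow{f} B \xrightarrow{\phi} \Sigma^{-r}A$, and observe that the cone of the composition $\phi \circ f = \eta^A_r$ (in the form $A \to \Sigma^{-r}A$) is $r$-acyclic by applying Definition~\ref{dfn-tpc}(iii) to $\Sigma^{-r}A$ (noting $\Sigma^r\Sigma^{-r}A = A$). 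This produces an exact triangle of the same shape as above relating $K''$ to two $r$-acyclics, and another rotation plus Lemma~\ref{lemma-exact} gives $K'' \simeq_{2r} 0$.

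No single step is hard; the main delicacy is bookkeeping the rotation of the octahedral triangle so that the two known $r$-acyclics appear as the first and third terms, which is exactly the configuration Lemma~\ref{lemma-exact} requires. The crucial input that makes the estimate $2r$ (rather than something worse) is that both of the other two cones appearing in the octahedron are $r$-acyclic — one by the hypothesis that $f$ is an $r$-isomorphism, and the other by condition~(iii) in the definition of a TPC applied to the target object.
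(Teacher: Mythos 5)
Your proof is correct and uses essentially the same approach as the paper: apply the octahedral axiom to the composition $\Sigma^r B \xrightarrow{\psi} A \xrightarrow{f} B$, identify two of the three cones as $r$-acyclic (one because $f$ is an $r$-isomorphism, the other because the composition is $\eta^B_r$), and then rotate the resulting exact triangle and invoke Lemma~\ref{lemma-exact}. The paper presents this as a $3\times 3$ commuting diagram rather than writing out the octahedral triangle explicitly, but the underlying argument and the reason the bound is $2r$ are identical.
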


\begin{proof} By the octahedral axiom (in $\C_0$), we have the
  following commutative diagram,
  \[ \xymatrix{ \Sigma^r B \ar[r] \ar[d]_-{\psi}
      & \Sigma^r B \ar[r] \ar[d]^-{\eta^B_r} & 0 \ar[d] \\
      A \ar[r] \ar[d] & B \ar[r] \ar[d] & K \ar[d]\\
      K'' \ar[r] & K' \ar[r] & K }\] where $K'' \to K' \to K \to TK''$
  is exact. By (iii) in Definition \ref{dfn-tpc}, $K' \simeq_r
  0$. Therefore, by Lemma \ref{lemma-exact}, $K'' \simeq_{2r} 0$ and
  thus $\psi$ is $2r$-isomorphism. A similar argument applies to the
  left inverse of $f$.
\end{proof} 

\begin{rem}\label{rem:inverses} The fact that left and right
  inverses of $r$-isomorphisms are only $2r$-isomorphisms has
  significant impact on the various algebraic properties of
  TPCs. However, this seems unavoidable. For example, it is easy to
  construct examples of $r$-isomorphisms in the (homotopy) category of
  filtered \zjnote{co}chain complexes that admit a unique right inverse that is
  no less than a $2r$-isomorphism.
\end{rem}

The next consequence is immediate but useful so we state it apart.
\begin{cor} \label{cor-3} If $f: A \to B$ is an $r$-isomorphism, then
  for any \zjnote{$u,u' \in \Mor_{\mathcal C}^0(B, C)$} with $u \circ f = u' \circ f$, we
  have $u \simeq_r u'$, i.e., $u$ and $u'$ are $r$-equivalent.
  Similarly, if \zjnote{$v,v'\in \Mor_{\mathcal C}^0(D, A)$} and $f\circ v=f\circ v'$, then
  $v\simeq_{r} v'$.
\end{cor}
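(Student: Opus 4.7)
The plan is to reduce the statement to a direct application of Proposition~\ref{prop-r-iso}(i) together with Lemma~\ref{lemma-comp}. Since $\C_0$ is additive (as part of the triangulated structure), we may form the difference of parallel morphisms. Set $g = u - u' \in \Mor_{\C}^0(B,C)$. The hypothesis $u \circ f = u' \circ f$ becomes $g \circ f = 0$, and the desired conclusion $u \simeq_r u'$ is, by definition, equivalent to $i_{0,r}(g) = 0$ in $\Mor_{\C}^r(B,C)$.

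By Proposition~\ref{prop-r-iso}(i), the $r$-isomorphism $f$ admits a right $r$-inverse, namely a morphism $\psi \in \Mor_{\C}^0(\Sigma^r B, A)$ satisfying $f \circ \psi = \eta_r^B$. Composing the identity $g \circ f = 0$ on the right with $\psi$ therefore yields
\[
g \circ \eta_r^B \;=\; g \circ f \circ \psi \;=\; 0 \qquad \text{in } \Mor_{\C}^0(\Sigma^r B, C).
\]
Now apply Lemma~\ref{lemma-comp} with $\alpha = 0$: the vanishing $g \circ \eta_r^B = 0$ is precisely equivalent to $i_{0,r}(g) = 0$, giving $u \simeq_r u'$.

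For the dual statement, set $h = v - v' \in \Mor_{\C}^0(D,A)$, so that $f \circ h = 0$. This time use the left $r$-inverse $\phi \in \Mor_{\C}^0(B, \Sigma^{-r}A)$ of $f$ guaranteed by Proposition~\ref{prop-r-iso}(i), which satisfies $\phi \circ f = \eta_r^A$. Composing on the left with $\phi$ yields
\[
\eta_r^A \circ h \;=\; \phi \circ f \circ h \;=\; 0 \qquad \text{in } \Mor_{\C}^0(D, \Sigma^{-r}A),
\]
and the second equivalence in Lemma~\ref{lemma-comp} converts this into $i_{0,r}(h) = 0$, i.e.\ $v \simeq_r v'$.

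There is no real obstacle here: the whole point of introducing left/right $r$-inverses and the characterization of $r$-equivalence via composition with the shift morphisms $\eta_r^{(-)}$ in Lemma~\ref{lemma-comp} is precisely to make cancellation statements of this kind into one-line arguments. The only subtlety worth flagging is that one must use the \emph{right} inverse for the first half (which produces an $\eta_r^B$ on the $B$-side, matching the composition $g \circ \eta_r^B$) and the \emph{left} inverse for the second half (which produces an $\eta_r^A$ on the $A$-side, matching $\eta_r^A \circ h$); swapping them would not yield the required identity.
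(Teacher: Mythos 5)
Your proof is correct and matches the argument the paper has in mind: the corollary is stated without proof because the identical cancellation argument already appears verbatim inside the proof of Proposition~\ref{prop-r-iso}(ii) (uniqueness of left/right $r$-inverses), namely pass to the difference, cancel against the given $r$-inverse to produce $(\cdot)\circ\eta_r^B=0$ or $\eta_r^A\circ(\cdot)=0$, then invoke Lemma~\ref{lemma-comp}. Your remark about matching the right inverse to the post-composition case and the left inverse to the pre-composition case is exactly the right point to flag.
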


\begin{cor} Assume that the following diagram in $\C_0$,
 \jznote{ \[ \xymatrix{ K \ar[r] \ar[d]_{\mathds 1_{K}} & A \ar[r]^-{\phi} \ar[d]_-{f}
      & A' \ar[r] \ar[d]_-{f'} & TK \ar[d]_{\mathds{1}_{TK}}\\
      K \ar[r] & B \ar[r]^{\psi} & B' \ar[r] & TK}\]} is commutative,
  that the two rows are exact and that $K \simeq_r 0$. Then the
  induced morphism $f'$ is unique up to $r$-equivalence.
\end{cor}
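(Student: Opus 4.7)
The plan is to reduce the claim to a statement about $r$-acyclicity of $TK$. Let $q:A'\to TK$ and $q':B'\to TK$ denote the connecting morphisms of the two exact triangles. Suppose $f'_1, f'_2 \in \Mor^0_{\C_0}(A',B')$ are two morphisms fitting into the diagram in place of $f'$, and set $g=f'_1-f'_2\in \Mor^0_{\C_0}(A',B')$. The commutativity of the middle square gives $g\circ\phi=\psi\circ f-\psi\circ f=0$, while the commutativity of the right square gives $q'\circ g=q-q=0$. We want to show $g\simeq_r 0$.

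First I would use the fact that $\C_0$ is triangulated and that the top row is an exact triangle. Applying the contravariant cohomological functor $\Mor_{\C_0}(-,B')$ to the triangle $K\to A\xrightarrow{\phi}A'\xrightarrow{q} TK$ yields a long exact sequence in which the condition $g\circ\phi=0$ forces $g$ to lie in the image of $\circ\, q$. Hence there exists $h\in\Mor_{\C_0}^0(TK,B')$ with $g=h\circ q$.

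Next I would invoke Remark~\ref{rem:shifts-T}(b), which gives $T\eta^K_r=\eta^{TK}_r$; since $K\simeq_r 0$ we have $\eta^K_r=0$, and therefore $\eta^{TK}_r=0$, so that $TK$ is itself $r$-acyclic. Applying Lemma~\ref{lemma-acyclic}(ii) to the $r$-acyclic object $TK$, the persistence structure map
\[
i_{0,r}:\Mor^0_{\C}(A',TK)\to \Mor^r_{\C}(A',TK)
\]
vanishes identically, so in particular $i_{0,r}(q)=0$.

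Finally, using the naturality of composition with respect to the persistence structure (diagram~\eqref{comp-nt}), we obtain
\[
i_{0,r}(g)=i_{0,r}(h\circ q)=h\circ i_{0,r}(q)=0,
\]
which is exactly the statement $g\simeq_r 0$, i.e.\ $f'_1\simeq_r f'_2$. The whole argument has no serious obstacle; the only point worth double-checking is the transfer of $r$-acyclicity from $K$ to $TK$, which is taken care of by the compatibility of $T$ with the natural transformations $\eta_{r,s}$ recorded in Remark~\ref{rem:shifts-T}(b).
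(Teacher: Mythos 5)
Your proof is correct, but it takes a noticeably different route from the paper's. The paper's argument is a one-liner: since $K\simeq_r 0$, the map $\phi$ (after rotating the top triangle) is by definition an $r$-isomorphism, and then Corollary~\ref{cor-3} applied to $f'_1\circ\phi = \psi\circ f = f'_2\circ\phi$ immediately gives $f'_1\simeq_r f'_2$. Internally, Corollary~\ref{cor-3} rests on the existence of a right $r$-inverse $\psi_\phi$ of $\phi$ (Proposition~\ref{prop-r-iso}(i)) together with Lemma~\ref{lemma-comp}: from $(u-u')\circ\phi=0$ one gets $(u-u')\circ\eta^{A'}_r = (u-u')\circ\phi\circ\psi_\phi = 0$.

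Your argument instead factors the difference $g=f'_1-f'_2$ through $TK$ via the long exact sequence obtained by applying $\Mor_{\C_0}(-,B')$ to the top triangle (exactness of these functors is Remark~\ref{rem:shifts-T}(a)), then uses $r$-acyclicity of $TK$ (transferred from $K$ via $T\eta^K_r=\eta^{TK}_r$, as you correctly note) together with Lemma~\ref{lemma-acyclic}(ii) and the persistence-compatibility of composition (diagram~\eqref{comp-nt}) to kill $g$ after the shift by $r$. Both approaches are clean; yours avoids invoking $r$-isomorphisms and right inverses entirely, at the cost of an explicit long-exact-sequence factorization. The commutativity of the rightmost square, which you record as $q'\circ g=0$, is not actually used anywhere, so you could omit that observation. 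One small point worth noting: your argument relies on forming the difference $f'_1 - f'_2$, which is licensed because $\C_0$ is triangulated and hence additive; the paper's argument via Corollary~\ref{cor-3} uses the same additivity implicitly inside the definition of $r$-equivalence.
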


\begin{proof} Since $K \simeq_r 0$, by definition, $\phi$ is an
  $r$-isomorphism. For any two induced morphisms
  \zjnote{$f'_1, f'_2 \in \Mor_{\mathcal C}^0(A', B')$}, we have
  $f'_1 \circ \phi = f'_2 \circ \phi = \psi\circ f$ and the conclusion
  follows from Corollary \ref{cor-3}.
\end{proof}

\begin{cor} \label{cor5} Let $\phi: A \to A'$ be an
  $r$-isomorphism. Then for any $f \in \Mor_{\C}^0(A,B)$, there exists
  $f' \in \Mor_{\C}^0(A', \Sigma^{-r}B)$ such that the following diagram
  commutes in $\C_0$.
  \[ \xymatrix{
      A \ar[r]^-{\phi} \ar[d]_-{f} & A' \ar[d]^-{f'} \\
      B \ar[r]^-{\eta^B_r} & \Sigma^{-r} B} \]
\end{cor}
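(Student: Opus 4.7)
The plan is to obtain $f'$ as a lift of $\eta^B_r \circ f$ along $\phi$, using the cohomological long exact sequence associated to the exact triangle extending $\phi$ and the $r$-acyclicity of its cone.

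Since $\phi$ is an $r$-isomorphism, fix an exact triangle in $\C_0$
\[ T^{-1}K \xrightarrow{\partial} A \xrightarrow{\phi} A' \xrightarrow{\beta} K \]
with $K \simeq_r 0$. Applying the (contravariant) cohomological functor $\Mor^0_{\C}(-, \Sigma^{-r}B)$, which is exact by Remark~\ref{rem:shifts-T}(a), produces an exact sequence
\[
\Mor^0_{\C}(K, \Sigma^{-r}B) \xrightarrow{-\circ \beta} \Mor^0_{\C}(A', \Sigma^{-r}B) \xrightarrow{-\circ \phi} \Mor^0_{\C}(A, \Sigma^{-r}B) \xrightarrow{-\circ \partial} \Mor^0_{\C}(T^{-1}K, \Sigma^{-r}B).
\]
Thus the existence of $f'$ with $f' \circ \phi = \eta^B_r \circ f$ is equivalent to showing that $(\eta^B_r \circ f) \circ \partial = 0$.

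Now $(\eta^B_r \circ f) \circ \partial = \eta^B_r \circ (f \circ \partial)$, and $g := f \circ \partial$ lies in $\Mor^0_{\C}(T^{-1}K, B)$. Since $K \simeq_r 0$ and $r$-acyclicity is preserved under the translation functor $T^{-1}$ (because $T^{-1}\eta^K_r = \eta^{T^{-1}K}_r$), the object $T^{-1}K$ is also $r$-acyclic. By Lemma~\ref{lemma-acyclic}(iii), the persistence map $i_{0,r}: \Mor^0_{\C}(T^{-1}K, B) \to \Mor^r_{\C}(T^{-1}K, B)$ vanishes, so $i_{0,r}(g) = 0$. By Lemma~\ref{lemma-comp} this is equivalent to $\eta^B_r \circ g = 0$ in $\Mor^0_{\C}(T^{-1}K, \Sigma^{-r}B)$, which is exactly what we needed.

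Hence by exactness there exists $f' \in \Mor^0_{\C}(A', \Sigma^{-r}B)$ with $f' \circ \phi = \eta^B_r \circ f$, which is precisely the commutativity of the stated diagram. The only conceptual point to watch is the correct direction of rotation of the triangle and the choice of the covariant versus contravariant functor; everything else is a direct application of the acyclicity lemmas already established.
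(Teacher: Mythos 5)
Your proof is correct, but it takes a genuinely different route from the paper's. The paper's proof is a one-liner: by Proposition~\ref{prop-r-iso}(i), $\phi$ admits a left $r$-inverse $\psi: A' \to \Sigma^{-r}A$ with $\psi \circ \phi = \eta^A_r$; one then defines $f' := \Sigma^{-r}f \circ \psi$, and the diagram commutes because $f' \circ \phi = \Sigma^{-r}f \circ \eta^A_r = \eta^B_r \circ f$, the last equality being naturality of $\eta_r$. This gives an explicit formula for $f'$ with essentially no further work. You instead re-derive the existence of $f'$ via the long exact sequence of the representable functor $\Mor^0_{\C}(-,\Sigma^{-r}B)$ applied to the triangle extending $\phi$, reducing the problem to killing the composite with the boundary map $\partial: T^{-1}K \to A$, which you do by the $r$-acyclicity of $T^{-1}K$ and Lemmas~\ref{lemma-comp} and~\ref{lemma-acyclic}. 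Both arguments ultimately rest on the $r$-acyclicity of the cone of $\phi$, but you access it through the cohomological long exact sequence whereas the paper bundles that information into the pre-packaged left $r$-inverse. Your route is closer in spirit to a standard lifting argument in triangulated-category homological algebra and is a fine alternative; the paper's is shorter and makes $f'$ explicit, which matters in some downstream estimates. One small stylistic note: the non-trivial verification that $T^{-1}K$ is $r$-acyclic whenever $K$ is, which you justify via $T^{-1}\eta^K_r = \eta^{T^{-1}K}_r$, is indeed available from Remark~\ref{rem:shifts-T}(b), so no gap there.
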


\begin{proof} Since $\phi: A \to A'$ is an $r$-isomorphism, there
  exists a left $r$-inverse denoted by $\psi: A' \to \Sigma^{-r} A$
  such that $\psi \circ \phi = \eta^A_r$. Set
  $f' : = \Sigma^{-r}f \circ \psi \in \Mor_{\C}^0(A', \Sigma^{-r} B)$.
\end{proof}
Similar direct arguments lead to the next consequence.
\begin{cor} \label{cor-6} Consider the following commutative diagram
  in $\C_0$,
  \[ \xymatrix{
      A \ar[r]^{\phi} \ar[d]_-{f} & A' \ar[d]^-{f'} \\
      B \ar[r]^-{\phi'} & B'} \] where $f \in \Mor_{\C}^0(A,B)$,
  $f' \in \Mor_{\C}^0(A'.B')$ and $\phi, \phi'$ are $r$-isomorphisms. Let
  $\psi, \psi'$ be any left inverses of $\phi, \phi'$
  respectively. Then the following diagram is $r$-commutative
  \[ \xymatrix{
      A' \ar[r]^-{\psi} \ar[d]_-{f'} & \Sigma^{-r} A \ar[d]^-{\Sigma^{-r} f} \\
      B' \ar[r]^-{\psi'} & \Sigma^{-r} B} \] in the sense that
  $\Sigma^{-r} f \circ \psi \simeq_r \psi' \circ f'$. A similar
  conclusion holds for right inverses.
\end{cor}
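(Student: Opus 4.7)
The plan is to prove the left-inverse case by precomposing both sides of the desired $r$-equivalence with $\phi$ and applying Corollary \ref{cor-3}. Since $\psi, \psi'$ are left $r$-inverses, we have $\psi \circ \phi = \eta_r^A \in \Mor_{\C}^0(A, \Sigma^{-r}A)$ and $\psi' \circ \phi' = \eta_r^B \in \Mor_{\C}^0(B, \Sigma^{-r}B)$. Therefore
\[
(\psi' \circ f') \circ \phi \;=\; \psi' \circ (\phi' \circ f) \;=\; \eta_r^B \circ f,
\]
where the first equality uses commutativity of the given square.

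Next, I would invoke naturality of the natural transformation $\eta_{0,-r}: \Sigma^0 \to \Sigma^{-r}$. Evaluated on the morphism $f: A \to B$, this naturality (together with the identification in Remark~\ref{rem:shift} that turns it into a relation among shift-$0$ morphisms via $i_{-r,0}$) yields the identity
\[
\eta_r^B \circ f \;=\; \Sigma^{-r}f \circ \eta_r^A \qquad \text{in } \Mor_{\C}^0(A, \Sigma^{-r}B).
\]
Combining this with $\eta_r^A = \psi \circ \phi$ gives
\[
(\psi' \circ f') \circ \phi \;=\; \Sigma^{-r}f \circ (\psi \circ \phi) \;=\; (\Sigma^{-r}f \circ \psi) \circ \phi.
\]

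At this point the hypothesis that $\phi$ is an $r$-isomorphism kicks in: Corollary~\ref{cor-3} directly yields $\psi' \circ f' \simeq_r \Sigma^{-r}f \circ \psi$, which is precisely the $r$-commutativity claimed. For the analogous statement concerning right $r$-inverses, I would run the dual argument, postcomposing with $\phi'$ (which is an $r$-isomorphism) instead of precomposing with $\phi$, using $\phi' \circ \psi' = \eta_r^{B'}$ and the appropriate naturality of $\eta_{0,-r}$ on $f'$, and then applying the second half of Corollary~\ref{cor-3}.

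There is no serious obstacle here; the proof is essentially bookkeeping around the naturality of $\eta$ and the defining equations of left/right $r$-inverses. The only point requiring care is consistently distinguishing the two incarnations of $\eta_r^{(\cdot)}$ (as a morphism $\Sigma^r X \to X$ versus $X \to \Sigma^{-r}X$) when manipulating the compositions; once the correct version is chosen in each line, the argument reduces to two applications of Corollary~\ref{cor-3}.
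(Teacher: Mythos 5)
Your proof is correct, and since the paper leaves this as a ``similar direct argument'' to Corollary~\ref{cor5}, it fills the gap in exactly the intended way: use the defining identity $\psi\circ\phi=\eta_r^A$ (resp.~$\psi'\circ\phi'=\eta_r^B$) for left inverses, commutativity of the given square, naturality of $\eta$ to rewrite $\eta_r^B\circ f = \Sigma^{-r}f\circ\eta_r^A$, and then Corollary~\ref{cor-3} to cancel the $r$-isomorphism $\phi$. Your sketch for the right-inverse case (postcompose with $\phi'$, use $\phi'\circ\psi'=\eta_r^{B'}$ and the dual naturality of $\eta_{r,0}$, then invoke the second half of Corollary~\ref{cor-3}) is likewise sound.
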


\subsection{Relation to Verdier
  localization}\label{subsubsec:localization}

\begin{prop}\label{prop:verdier-loc} Let $\C$ be a triangulated
  persistence category and
  let $\mathcal{A}\C$ be the full subcategory of $\C$ with objects the
  $r$-acyclic objects of $\C$ (for all $r\geq 0$).
  \begin{itemize}
  \item[(i)] The category $\mathcal{A}\C$ is a triangulated persistence
    category on its own with $0$-level \zjnote{denoted by $\mathcal{A}\C_{0} ( = (\mathcal{A}\C)_0)$}, the full
    subcategory of $\C_{0}$, having as objects the objects in
    $\mathcal{A}\C$.
  \item[(ii)] The infinity level, $\C_{\infty}$, of $\C$ coincides with
    the Verdier quotient (a.k.a.~localization)
    \zjnote{$\C_0 / \mathcal{A}\C_{0}$} of $\C_{0}$ by \zjnote{$\mathcal{A}\C_{0}$}. In
    particular, $\C_{\infty}$ is triangulated.
  \end{itemize}
\end{prop}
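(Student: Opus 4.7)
The plan is to verify the two parts in turn: part (i) is a direct closure computation using Lemma~\ref{lemma-exact}, while part (ii) is established by constructing mutually inverse functors between $\C_\infty$ and the Verdier quotient $\C_0/\mathcal{A}\C_0$, via the calculus of fractions afforded by the shift morphisms $\eta_r^X$.

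For part (i), I would first verify that $\mathcal{A}\C_0$ is a triangulated subcategory of $\C_0$. Closure under the translation $T$ follows from the identity $T\eta_r^K = \eta_r^{TK}$ (Remark~\ref{rem:shifts-T}(b)) together with Lemma~\ref{lemma-acyclic}(i). Closure under extensions --- given an exact triangle $K \to K'' \to K' \to TK$ in $\C_0$ with $K$ being $r$-acyclic and $K'$ being $s$-acyclic, $K''$ is $(r+s)$-acyclic --- is precisely Lemma~\ref{lemma-exact}. Next I would note that $\mathcal{A}\C$ inherits the persistence structure and the shift functor from $\C$: the characterization in Lemma~\ref{lemma-acyclic} together with the isomorphism~\eqref{sigma-mor} shows that $\Sigma^{\alpha}K$ is $r$-acyclic whenever $K$ is, so $\Sigma$ restricts to $\mathcal{A}\C$. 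Finally, condition (iii) of Definition~\ref{dfn-tpc} is automatic: for $K \in \mathcal{A}\C$ and $r \geq 0$, the exact triangle in $\C_0$ provided by applying condition (iii) in $\C$ has its first two terms in $\mathcal{A}\C_0$, and hence, by the closure under extensions just shown, its third term also lies in $\mathcal{A}\C_0$.

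For part (ii), I would construct a functor $Q : \C_0 \to \C_\infty$ as the composition of the inclusion $\C_0 \hookrightarrow \C$ with the quotient $\C \to \C/\!\simeq_\infty = \C_\infty$. If $s : A \to B$ is an $r$-isomorphism in $\C_0$, Proposition~\ref{prop-r-iso}(i) supplies a right $r$-inverse $\psi \in \Mor^0_\C(\Sigma^r B, A)$ with $s \circ \psi = \eta_r^B$ and a left $r$-inverse $\phi$ with $\phi \circ s = \eta_r^A$; since $\eta_r^A$ and $\eta_r^B$ become identities in the $\infty$-limit, the classes of $\psi$ and $\phi$ give a two-sided inverse of $Q(s)$ in $\C_\infty$. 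As the $r$-isomorphisms are by definition the morphisms of $\C_0$ with cone in $\mathcal{A}\C_0$, the universal property of the Verdier quotient yields a triangulated functor $\bar{Q} : \C_0/\mathcal{A}\C_0 \to \C_\infty$. To produce a quasi-inverse, given $\bar{f} \in \Mor^r_\C(X,Y)$ I would use the isomorphism of Remark~\ref{rem:shift} to represent $\bar{f}$ by a $\C_0$-morphism $\tilde{f} : \Sigma^r X \to Y$, and send it to the roof class $[X \xleftarrow{\eta_r^X} \Sigma^r X \xrightarrow{\tilde{f}} Y]$ in $\C_0/\mathcal{A}\C_0$. Every morphism in the Verdier quotient is equivalent to a roof of this particular form, since an arbitrary roof $X \xleftarrow{s} Z \xrightarrow{f} Y$ with $s$ an $r$-isomorphism is equivalent to $X \xleftarrow{\eta_r^X} \Sigma^r X \xrightarrow{f \circ \psi} Y$ for any right $r$-inverse $\psi$ of $s$.

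The main obstacle I anticipate is the verification of well-definedness and functoriality of the quasi-inverse. Right $r$-inverses are only unique up to $r$-equivalence by Proposition~\ref{prop-r-iso}(ii), and the relation $\simeq_\infty$ on morphisms of $\C$ involves passing to arbitrarily large common shifts; one must show that the resulting ambiguity in the associated roof is absorbed by the equivalence relation on roofs in the Verdier quotient. This reduces to a careful application of Corollary~\ref{cor-3} together with the observation that pre-composing $\eta_r^X$ with $\eta_s^{\Sigma^r X}$ yields $\eta_{r+s}^X$, so that passing to a further shift corresponds to pre-composing a roof with an additional $r$-isomorphism, an operation that preserves its class in $\C_0/\mathcal{A}\C_0$. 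Once these checks are in place, $\bar{Q}$ and its quasi-inverse are seen to be mutually inverse on both objects (which coincide on the nose) and morphisms, finishing the proof.
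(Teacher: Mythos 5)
Your proof is correct, and for part (i) it is essentially the same as the paper's (with a bit more explicit bookkeeping about closure under $T$, shifts, and axiom (iii), all of which are fine).

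For part (ii) the route is organizationally reversed from the paper's: the paper directly defines $\Phi : \C_\infty \to \C_0/\mathcal{A}\C_0$ on roofs and checks that it is well-defined, surjective, and injective; you instead first obtain $\bar{Q} : \C_0/\mathcal{A}\C_0 \to \C_\infty$ from the universal property of localization, and then construct $\Phi$ as a quasi-inverse. Both approaches rest on the same two pillars --- the roof calculus and the right/left $r$-inverses from Proposition~\ref{prop-r-iso} --- and the genuinely hard verification (that a representative $\bar{f} \in \Mor_\C^r(X,Y)$ of a morphism in $\C_\infty$ is well-defined up to the roof equivalence, using Corollary~\ref{cor-3} and composition of $\eta$'s) is shared. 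Your version saves a little effort on functoriality of $\Phi$ (it comes for free once you check $\bar{Q}$ and $\Phi$ are mutually inverse), at the cost of having to know up front that the $r$-isomorphisms form a multiplicative system, which the paper treats as a side remark.

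One small but genuine slip: you describe $\bar{Q}$ as a \emph{triangulated} functor obtained from the universal property of the Verdier quotient, but at that point in the argument $\C_\infty$ is not yet known to be triangulated --- indeed, "in particular, $\C_\infty$ is triangulated" is part of what is being proved. You should invoke only the universal property of the localization of \emph{plain categories} (which applies because $Q$ inverts all $r$-isomorphisms) to produce $\bar{Q}$ as an ordinary functor, show $\bar{Q}$ is an equivalence of ordinary categories, and then transport the triangulated structure from $\C_0/\mathcal{A}\C_0$ onto $\C_\infty$ along that equivalence; afterwards $\bar{Q}$ is automatically triangulated. With that ordering fixed, the argument is complete.
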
  
 
\begin{rem}
  \begin{enumerate}
  \item The collection $S$ of all $r$-isomorphisms (for every
    $r \geq 0$) forms a multiplicative system in $\C_0$. The Verdier
    quotient above is the same as the localization $S^{-1}\C_0$ of
    $\C_0$ by $S$. \zjnote{For a definition of the localization, see subsection 1.6 in \cite{KS90}.} 
  \item A result somewhat similar to the
    Proposition~\ref{prop:verdier-loc}, established for Tamarkin
    categories, appears \zjnote{in Section 6 in~\cite{GS14}, more precisely, its Proposition 6.7}. 
  \end{enumerate}
\end{rem}
 
\begin{proof} For the first point of the proposition we first notice
  that the subcategory of acyclics, $\mathcal{A}\C$, is a persistence
  category. It is obviously endowed with a shift functor by
  restricting the shift functor of $\C$. Moreover, its $0$-level
  clearly is a full subcategory of $\C_{0}$.  Finally, Lemma
  \ref{lemma-exact} implies that \zjnote{$\mathcal A\C_{0}$} is a triangulated
  subcategory of $\C_{0}$ which \zjnote{implies} that it is a TPC.
 
  We pursue with the second point of the proposition.  By inspecting
  the definition of Verdier localization (for instance in Chapter 2 of
  \cite{Neeman-tribook}) we see that the localization of $\C_{0}$ at
  $\mathcal{A}\C_{0}$ - denoted by $\C_{0}/\mathcal{A}\C_{0}$ - is a
  category with the same objects as $\C_{0}$ and having as morphisms
  $A\to B$ equivalence classes of \hhats:
  $$ A \stackrel{u}{\longleftarrow} A'\stackrel{f'}{\longrightarrow} B$$
  with $u$ an $r$-isomorphism, for some $r\geq 0$, and
  $f'\in \mor_{\C_{0}}(A',B)$. Two {\hhats}
   $A \stackrel{u}{\longleftarrow} A'\stackrel{f'}{\longrightarrow} B$
  and
  $A \stackrel{u_{1}}{\longleftarrow}
  A_{1}'\stackrel{f'_{1}}{\longrightarrow} B$ are equivalent if they
  are related by a third {\hhat}
  $A \stackrel{u_{2}}{\longleftarrow}
  A_{2}'\stackrel{f'_{2}}{\longrightarrow} B$ in the sense that there
  are maps $a', a'_{1}$ in $\C_{0}$ making the following diagram
  commutative:
  \[ \xymatrix{ A   &A' \ar[l]_{u}\ar[r]^{f'}  & B \\
      A \ar[d]^{\mathds{1}_{A}}\ar[u]_{\mathds{1}_{A}} & A'_{2}\ar[l]_{u_{2}}
      \ar[r]^{f'_{2}} \ar[d]^{a'_{1}}\ar[u]_{a'}  & B \ar[u]_{\mathds{1}_{B}}\ar[d]^{\mathds{1}_{B}}\\
      A & A'_{1}\ar[l]_{u_{1}} \ar[r]^{f'_{1}} & B }\]
 
  The category $\C_{\infty}$ appears in Definition \ref{dfn-c0-cinf}.
  Its objects are the same as those of $\C_{0}$ and its morphisms are
 \zjnote{$\Mor_{\C_{\infty}}(A,B)=\varinjlim_{r \to \infty}
  \Mor^r_{\C}(A,B)= \varinjlim_{r
    \to\infty}\Mor_{\C_{0}}(\Sigma^{r}A,B)$} where the second
  equality comes from formula (\ref{nt-iso-2}). Given a morphism
  $f\in \Mor_{\C_{\infty}}(A,B)$ this means that $\bar{f}$ is
  represented by $\bar{f}: \Sigma^{r}A\to B$ for some $r\geq 0$ as
  well as by all compositions
  $\Sigma^{s}A\stackrel{\eta_{s-r}}{\longrightarrow}
  \Sigma^{r}A\stackrel{\bar{f}}{\longrightarrow}B$. We now define a
  functor
  $$\Phi :\C_{\infty}\to \C_{0}/\mathcal{A}\C_{0}~.~$$
  It is the identity on objects and for a morphism
  $f\in \Mor_{\C_{\infty}}(A,B)$ we let $\Phi(f)$ be the equivalence
  class of a {\hhat}
  $A\stackrel{\eta_{r}}{\longleftarrow}\Sigma^{r}
  A\stackrel{\bar{f}}{\longrightarrow} B$ where
  $\bar{f}:\Sigma^{r} A\to B$ represents $f$. Any two {\hhats} that are
  associated to two representatives of $f$ are immediately seen to be
  equivalent and as a result $\Phi$ is well-defined.

  It remains to show that $\Phi$ is an isomorphism. Surjectivity is
  immediate.  Fix $H$ a {\hhat}
  $A\stackrel{a}{\longleftarrow} A'\stackrel{b}{\longrightarrow}
  B$. As $a$ is an $r$-isomorphism (for some $r$) we deduce from
  Proposition~\ref{prop-r-iso} the existence of a right $r$-inverse
  $\phi:\Sigma^{r}A\to A'$ of $a$ such that
  \zjnote{$a\circ \phi = \eta^A_{r}$}. Therefore, we may define a new
  {\hhat} $H'$,
  $A\stackrel{\eta_{r}}{\longleftarrow} \Sigma^{r} A\stackrel{b\circ
    \phi}{\longrightarrow} B$. The {\hhats} $H'$ and $H$ are clearly
  equivalent and thus their equivalence class belongs to the image of
  $\Phi$.

  We now show that $\Phi$ is injective. For this we consider the
  commutative diagram below:
  \begin{equation}\label{eq:diagr-v1} \xymatrix{ A   &\Sigma^{r'}A \ar[l]_{\eta^A_{r'}}\ar[r]^{f''}  & B  \\
      A \ar[u]_{\mathds{1}_A} \ar[d]^{\mathds{1}_A} & A'\ar[l]_{u}  \ar[u]_{a''}\ar[r]^{f} \ar[d]^{a'}  & B \ar[d]^{\mathds{1}_B} \ar[u]_{\mathds{1}_B}\\
      A & \Sigma^{r} A\ar[l]_{\eta^A_{r}} \ar[r]^{f'} & B }
  \end{equation}
  with each row a \hhat. We need to show that $f''$ and $f'$ represent
  the same element in $\C_{\infty}$. \zjnote{In fact,} the map $u$ is an $s$-isomorphism for some $s\geq 0$ and let
  $v:\Sigma^{s}A\to A'$ be a right inverse of $u$.  Now consider the
  commutative diagram:
  \begin{equation}\label{eq:diagr-v1} \xymatrix{ A   &\Sigma^{r'}A \ar[l]_{\eta^A_{r'}}\ar[r]^{f''}  & B  \\
      A \ar[u]_{\mathds{1}_A} \ar[d]^{\mathds{1}_A} & \Sigma^{s} A \ar[l]_{\eta_{s}}  \ar[u]_{\bar{a}''}\ar[r]^{\bar{f}} \ar[d]^{\bar{a}'}  & B \ar[d]^{\mathds{1}_B} \ar[u]_{\mathds{1}_B}\\
      A & \Sigma^{r} A\ar[l]_{\eta^A_{r}} \ar[r]^{f'} & B }
  \end{equation}  
  where $\bar{a}'=a'\circ v$, $\bar{a}''=a''\circ v$,
  $\bar{f}=f\circ v$.  Notice that we have
  \zjnote{$\eta^A_{r}\circ \bar{a}'=\eta^A_{s}=\eta_{r}\circ \eta^A_{s-r}$}.  This
  means by Corollary \ref{cor-3} that \zjnote{$\bar{a}'\simeq_{r}\eta^A_{s-r}$}.
  Similarly, we have \zjnote{$\bar{a}''\simeq_{r'} \eta^A_{s-r'}$}. For
  $r'' \geq \max \{r, r'\}$ we deduce
  \zjnote{$\bar{a}'\circ \eta^A_{r''}=\eta^A_{s-r+r''}$}, and
  \zjnote{$\bar{a}''\circ \eta^A_{r''}=\eta^A_{s-r'+r''}$}.  Thus, by composing on
  the middle node with \zjnote{$\eta^A_{r''}:\Sigma^{s+r''}A\to \Sigma^{s} A$},
  we get a new commutative diagram similar to the one above but with
  $\bar{a}''$ and $\bar{a}'$ being replaced with \zjnote{$\eta^A_{s-r+r''}$}
  and, respectively, \zjnote{$\eta^A_{s-r+r'}$}. We deduce that $f''$ and $f'$
  give in $\C_{\infty}$ the same element as \zjnote{$\bar{f}\circ \eta^A_{r''}$}
  which concludes the proof.
\end{proof}
 
\begin{rem}\label{rem:ex-tr-Cinfty} By the properties of Verdier
  localization, the category $\C_{\infty}$ is triangulated in such a
  way that, by definition, a triangle in $\C_{\infty}$ is exact if it
  is isomorphic with the image (in $\C_{\infty}$) of an exact triangle
  from $\C_{0}$.
 \end{rem}

\subsection{Weighted exact triangles}\label{subsubsec:weight-tr}
The key feature of a triangulated persistence category $\C$ is that
there is a natural way to associate weights to a class of triangles
larger than the exact triangles in $\C_{0}$.

\begin{dfn} \label{dfn-set} A {\em strict exact triangle in $\C$} is a
  pair $\widetilde{\Delta}=(\Delta, r)$ where $r\in [0, +\infty)$ and
  $\Delta$ is a diagram
  \begin{equation} \label{dfn-set-1} \Delta \ : \ A
    \xrightarrow{\bar{u}} B \xrightarrow{\bar{v}} C
    \xrightarrow{\bar{w}} \Sigma^{-r} T A
  \end{equation}
  in $\C_0$ with $\bar{u}\in \Mor_{\C}^0(A,B)$,
  $\bar{v} \in \Mor_{\C}^0(B,C)$ and
  $\bar{w} \in \Mor_{\C}^0(C, \Sigma^{-r} TA)$, such that the
  following holds. There exists an exact triangle
  $A \xrightarrow{u} B \xrightarrow{v} C' \xrightarrow{w} TA$ in
  $\C_0$, with $u = \bar{u}$, an $r$-isomorphism $\phi: C' \to C $ and
  a right $r$-inverse of $\phi$ denoted by $\psi: \Sigma^r C \to C'$
  such that the diagram
  \begin{equation}\label{dfn-set-2} 
    \xymatrix{
      & & \Sigma^r C \ar[d]_-{\psi} \ar[rd]^-{\Sigma^{r} \bar{w}} & \\
      A \ar[r]^-{u} & B \ar[r]^-{v} \ar[rd]_-{\bar{v}}
      & C' \ar[r]^-{w} \ar[d]^-{\phi} & TA\\
      & & C &} 
  \end{equation}
  commutes.
  The weight of the strict exact triangle $\widetilde{\Delta}$ is the
  number $r$ and is denoted by $w(\widetilde{\Delta})$.
\end{dfn}

\begin{remark} \label{rmk- set} (a) To simplify terminology we will
  often denote strict exact triangles by the diagram $\Delta$ with the
  weight identified implicitly by the amount of down ``shift'' of the
  last term. Notice that, if $\Sigma^{s}A\not=A$ for all $s$, then the
  diagram $\Delta$ determines the weight of the triangle. However,
  when this is not the case, it is necessary to indicate the weight
  explicitly. For example, for any $r\geq 0$, the pair \zjnote{$(0\to X\xrightarrow{\mathds{1}_X} X\to 0, r)$ is a strict exact triangle of weight $r$ because $\Sigma^s 0 = 0$ in $\C_0$ and two  such triangles  are different as soon as the corresponding weights are different.} In what
  follows, we will not always write strict exact triangles as
  pairs. We will often simply write that a diagram $\Delta$ as above
  is strict exact of weight $w(\Delta) = r$. Although there is a
  slight imprecision in writing $w(\Delta) = r$ (since $\Delta$ does
  not determine $r$) the meaning of this should be clear:
  $(\Delta, r)$ is a strict triangle of weight $r$.
  
  (b) Any exact triangle in $\C_0$ is a strict exact triangle of
  weight $0$. Conversely, it is a simple exercise to see that a strict
  exact triangle of weight $0$ is exact as a triangle in $\C_0$.

  (c) Consider the following diagram
  \[ \xymatrix{
      &\Sigma^{r}B\ar[d]_{\eta_{r}^{B}}\ar[r]^{\Sigma^{r}\bar{v}}
      & \Sigma^r C \ar[d]_-{\psi} \ar[rd]^-{\Sigma^{r} \bar{w}} & \\
      A \ar[r]^-{u} & B \ar[r]^-{v} \ar[rd]_-{\bar{v}}
      & C' \ar[r]^-{w} \ar[d]^-{\phi} & TA \ar[d]^-{\eta^{TA}_r}\\
      & & C \ar[r]^-{\bar{w}} &\Sigma^{-r} TA} \] which is derived
  from the commutative diagram (\ref{dfn-set-2}). The two squares in
  the diagram are not commutative, in general, but they are
  $r$-commutative.  Indeed, since $\phi$ is an $r$-isomorphism let
  $\tilde{\psi}:C\to \Sigma^{-r}C'$ be a left $r$-inverse of $\phi$.
  As $\psi$ is a right \jznote{$r$-inverse} of $\phi$ we deduce from Proposition
  \ref{prop-r-iso} (i) that $\Sigma^{-r}\psi \simeq_{r}
  \tilde{\psi}$. Therefore,
  $\bar{w}\circ \phi=\Sigma^{-r}w\circ \Sigma^{-r}\psi\circ \phi
  \simeq_{r} \Sigma^{-r}w\circ \tilde{\psi}\circ \phi= \Sigma^{-r}w
  \circ \eta_{r}^{C'}=\eta_{r}^{TA}\circ w$.  Using Corollary
  \ref{cor-3}, we also see that
  $\psi \circ \Sigma^{r}\bar{v}\simeq_{r} v\circ \eta_{r}^{B}$ because
  $\phi\circ \psi \circ \Sigma^{r}\bar{v}= \phi \circ v\circ
  \eta_{r}^{B}$.

 \zjr{(d)} Because $T$ commutes with $\Sigma$ and with the natural
  transformations $\eta$, it immediately follows that this functor
  preserves strict exact triangles as well as their weight.
\end{remark}

\begin{ex} \label{ex-w-r-tr} Recall that \pbnote{the map
    $\Sigma^rA \xrightarrow{\eta^A_r} A$ embeds into an exact triangle
    in $\C_0$,}
  $\Sigma^r A \xrightarrow{\eta^A_r} A \xrightarrow{v} K
  \xrightarrow{w} T\Sigma^r A$ \pbnote{, where $K$ is $r$-acyclic.}
  We claim that the diagram
  \[ \Sigma^r A \xrightarrow{\eta^A_r} A \xrightarrow{v} K
    \xrightarrow{0} TA \] is a strict exact triangle of weight
  $r$. Indeed, we have the following commutative diagram,
  \begin{equation*}
    \xymatrix{
      & & \Sigma^r K \ar[d]_-{\eta^K_r} \ar[rd]^-{0} & \\
      \Sigma^r A \ar[r]^-{\eta^A_r} & A \ar[r]^-{v} \ar[rd]_-{v}
      & K \ar[r]^-{w} \ar[d]^-{\mathds 1_K} & T\Sigma^r A\\
      & & K &} 
  \end{equation*}
  where the right upper triangle is commutative since $K$ is
  $r$-acyclic (so $\eta^K_r =0$ \jznote{by Lemma \ref{lemma-acyclic} (i)}). Moreover, $\mathds{1}_K$ is an
  $r$-isomorphism (we recall $T\Sigma^{r} A=\Sigma^{r}TA$).

  \pbnote{Note that also the following diagram
    $$\Sigma^r A  \xrightarrow{\eta^A_r} A \xrightarrow{} 0
    \xrightarrow{} TA$$ is a strict exact triangle of weight $r$.}
\end{ex}

\begin{prop} [Weight invariance] \label{prop-wt-1} Strict exact
  triangles satisfy the following two properties:

 \zjnote{ (i) Suppose the two diagrams $A \xrightarrow{\bar{u}} B \xrightarrow{\bar{v}} C$ and $A' \xrightarrow{\bar{u}'} B' \xrightarrow{\bar{v}'} C'$ are isomorphic in $\mathcal C_0$, i.e., we have the following commutative diagram in $\mathcal C_0$, 
 \begin{equation} \label{prop-wt-1-diag}
  \xymatrix{
      A \ar[r]^-{\bar{u}} \ar[d]^-{f} & B \ar[r]^-{\bar{v}} \ar[d]^-{g} & C \ar[d]^-{h} \\
      A' \ar[r]^-{\bar{u}'} & B' \ar[r]^-{\bar{v}'} & C'} 
      \end{equation}
 then $A \xrightarrow{\bar{u}} B \xrightarrow{\bar{v}} C$ completes to a strict exact triangle of weight $r$, denoted by $\Delta: A \xrightarrow{\bar{u}} B \xrightarrow{\bar{v}} C \xrightarrow{\bar{w}} \Sigma^{-r} T A$ if and only if $A' \xrightarrow{\bar{u}'} B' \xrightarrow{\bar{v}'} C'$ completes to a strict exact triangle of weight $r$, denoted by $\Delta': A' \xrightarrow{\bar{u}'} B' \xrightarrow{\bar{v}'} C' \xrightarrow{\bar{w}'} \Sigma^{-r} T A'$. Moreover, $\Delta$ and $\Delta'$ are isomorphic in $\C_0$.}

  (ii) If
  $\Delta: A \xrightarrow{\bar{u}} B \xrightarrow{\bar{v}} C
  \xrightarrow{\bar{w}} \Sigma^{-r} TA$ satisfies $w(\Delta) = r$,
  then
  $\Delta' : A \xrightarrow{\bar{u}} B \xrightarrow{\bar{v}} C
  \xrightarrow{\bar{w}'} \Sigma^{-r-s} TA$ satisfies
  $w(\Delta') = r+s$ for $s \geq 0$, where $\bar{w}'$ is the
  composition
  $$C \xrightarrow{\bar{w}} \Sigma^{-r} TA
  \xrightarrow{\eta_{s}^{\Sigma^{-r}TA}} \Sigma^{-r-s}TA~.~$$
\end{prop}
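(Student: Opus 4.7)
The plan is to treat the two parts separately, both by unpacking Definition~\ref{dfn-set} and then transporting the auxiliary data (the underlying $\C_0$-exact triangle, the $r$-isomorphism $\phi$, and its right $r$-inverse $\psi$) along the relevant maps: the isomorphism $(f,g,h)$ in part~(i), and a composition with an $\eta$ natural transformation in part~(ii).

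For~(i), I would assume $\Delta$ is strict exact of weight $r$, with witnesses consisting of a $\C_0$-exact triangle $A \xrightarrow{u} B \xrightarrow{v} C'' \xrightarrow{w} TA$ (where $u=\bar u$), an $r$-isomorphism $\phi\colon C'' \to C$, and a right $r$-inverse $\psi\colon \Sigma^rC \to C''$ satisfying $\bar v = \phi\circ v$ and $\Sigma^r\bar w = w\circ \psi$. I then produce witnesses for $\Delta'$ as follows: the exact triangle $A' \xrightarrow{\bar{u}'} B' \xrightarrow{v\circ g^{-1}} C'' \xrightarrow{Tf\circ w} TA'$ (exact in $\C_0$, since replacing the first two slots of an exact triangle by $0$-iso images of themselves preserves exactness); the $r$-isomorphism $\phi' := h\circ \phi$ (an $r$-iso by Proposition~\ref{prop-r-iso}(iii), as a $0$-iso composed with an $r$-iso); and the right $r$-inverse $\psi' := \psi\circ \Sigma^r h^{-1}$, whose defining identity $\phi'\circ \psi' = \eta_r^{C'}$ follows from $\phi\circ\psi=\eta_r^C$ combined with naturality of $\eta_{r,0}$ applied to $h\colon C \to C'$. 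Setting $\bar{w}' := \Sigma^{-r}Tf\circ \bar w\circ h^{-1}$ then forces the analogue of diagram~(\ref{dfn-set-2}) to commute, which shows $\Delta'$ is strict exact of weight $r$. The converse implication is symmetric (using $(f^{-1},g^{-1},h^{-1})$), and the four maps $(f,g,h,Tf)$ manifestly give an isomorphism $\Delta \cong \Delta'$ in $\C_0$.

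For~(ii), I would keep the same auxiliary triangle $A \xrightarrow{u} B \xrightarrow{v} C'' \xrightarrow{w} TA$ and the same $\phi$, noting that an $r$-isomorphism is also an $(r+s)$-iso. The new right $(r+s)$-inverse of $\phi$ is $\psi' := \psi\circ \eta_s^{\Sigma^rC}\colon \Sigma^{r+s}C \to C''$. The identity $\phi\circ \psi' = \eta_{r+s}^C$ follows from the composition law $\eta_r^C \circ \eta_s^{\Sigma^rC} = \eta_{r+s}^C$, which is the object-wise instance of $(\eta_{r,0})\circ(\eta_{r+s,r}) = (\eta_{r+s,0})$. The compatibility $\bar v = \phi\circ v$ already holds for $\Delta$, so only $\Sigma^{r+s}\bar{w}' = w\circ \psi'$ requires computation. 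Using naturality of $\eta_{s,0}$ to pull $\eta_s^{\Sigma^rC}$ across $\psi$ and then across $w$, one obtains
\[
w\circ \psi' \;=\; \eta_s^{TA} \circ \Sigma^s(w\circ \psi) \;=\; \eta_s^{TA}\circ \Sigma^{r+s}\bar w,
\]
while $\Sigma^{r+s}\bar{w}' = \Sigma^{r+s}(\eta_s^{\Sigma^{-r}TA}) \circ \Sigma^{r+s}\bar w$, and the shift-functor identity $\Sigma^u((\eta_{a,b})_X) = (\eta_{a+u,b+u})_X$ from Remark~\ref{rem:shift} gives $\Sigma^{r+s}(\eta_s^{\Sigma^{-r}TA}) = \eta_s^{TA}$, matching the two expressions.

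The main obstacle in both parts is not conceptual but notational: one has to keep careful track of which hom-set each morphism lives in after applying the shift functors, and consistently identify the abstract natural transformations $\eta_{a,b}$ with their shift-$0$ avatars $\eta_r^X$ defined in~(\ref{dfn-eta-map}). Once this dictionary is fixed, each step reduces to a single application of naturality of $\eta$ or of the monoidal structure of $\Sigma$.
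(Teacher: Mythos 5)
Your proposal is correct and follows essentially the same route as the paper: part~(i) transports the underlying $\C_0$-exact triangle and the pair $(\phi,\psi)$ along the isomorphism $(f,g,h,Tf)$, setting $\bar{w}' = \Sigma^{-r}Tf\circ\bar w\circ h^{-1}$ (the paper writes this as $Tf\circ\bar w\circ h^{-1}$, suppressing the canonical identification via $\Sigma^{-r}$), and part~(ii) keeps $\phi$ and pre-composes $\psi$ with $\eta_s^{\Sigma^rC}$, which is exactly the paper's argument. You simply spell out the witnesses $\phi'$, $\psi'$ and the naturality computations that the paper compresses into ``immediately follows'' and ``notice that the right upper triangle is commutative''.
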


\begin{proof} (i) The property
  claimed here immediately follows from the fact that, within $\C_0$,
  all $0$-isomorphisms admit inverses. \zjnote{Therefore, if $A \xrightarrow{\bar{u}} B \xrightarrow{\bar{v}} C$ completes to a strict exact triangle $\Delta$, then the desired map $\bar{w}'$ for $\Delta'$ can be chosen as $\bar{w}' = T(f) \circ \bar{w} \circ h^{-1}$ where $h^{-1}$ is the inverse of map $h$ in (\ref{prop-wt-1-diag}). The weight can be easily deduced from Definition \ref{dfn-set}. }

  \medskip

  (ii) By definition, there exists a commutative diagram,
  \[ \xymatrix{
      & & \Sigma^r C \ar[d]_-{\psi} \ar[rd]^-{\Sigma^{r} \bar{w}} & \\
      A \ar[r]^-{\bar{u}} & B \ar[r]^-{v} \ar[rd]_-{\bar{v}}
      & C' \ar[r]^-{w} \ar[d]^-{\phi} & TA\\
      & & C &} \] Consider $\psi' \in \Mor^r(\Sigma^{r+s}C, C')$
  defined by $\psi' = \psi \circ \eta^{\Sigma^{r}C}_{s}$. Then $\psi'$
  is an $(r+s)$-right inverse of $\phi$.  Consider the following
  diagram
  \[ \xymatrix{
      & & \Sigma^{r+s} C \ar[d]_-{\psi'} \ar[rd]^-{\Sigma^{r+s} \bar{w}'} & \\
      A \ar[r]^-{\bar{u}} & B \ar[r]^-{v} \ar[rd]_-{\bar{v}}
      & C' \ar[r]^-{w} \ar[d]^-{\phi} & TA\\
      & & C &} \] where $\bar{w}': C\to \Sigma^{-r-s}TA$ is defined by
  $\bar{w}'=\eta^{\Sigma^{-r}TA}_{s}\circ \bar{w}$ and notice that the
  right upper triangle is commutative.
\end{proof}

\begin{prop}[Weighted rotation property] \label{prop-rot} Given a
  strict exact triangle
  $$\Delta: A \xrightarrow{\bar{u}} B \xrightarrow{\bar{v}} C
  \xrightarrow{\bar{w}} \Sigma^{-r} TA$$ satisfying $w(\Delta) = r$,
  there exists a triangle
  \begin{equation} \label{rot-ext-tr} R(\Delta): B
    \xrightarrow{\bar{v}} C \xrightarrow{\bar{w}'} \Sigma^{-r} TA
    \xrightarrow{-\bar{u}'} \Sigma^{-2r} TB
  \end{equation} 
  satisfying \jznote{$w(R(\Delta)) = 2r$}, where $\bar{w}' \simeq_r \bar{w}$
  and $\bar{u}'$ is the composition
  $$\bar{u}':\Sigma^{-r}TA \xrightarrow{\Sigma^{-r} T\bar{u}} \Sigma^{-r} TB 
  \xrightarrow{\eta_{r}^{\Sigma^{-r}TB}} \Sigma^{-2r}TB~.~$$
\end{prop}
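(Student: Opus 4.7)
The plan is to construct $R(\Delta)$ via the octahedral axiom applied to the factorization $\bar v = \phi \circ v$ in $\C_0$, where $A \xrightarrow{u} B \xrightarrow{v} C' \xrightarrow{w} TA$ is the underlying $\C_0$-exact triangle of $\Delta$, $\phi: C' \to C$ is the $r$-isomorphism, and $\psi: \Sigma^r C \to C'$ is its right $r$-inverse. Applied to $v$ and $\phi$ (whose composite is $\bar v$), the octahedral axiom yields in $\C_0$ the cone triangle $C' \xrightarrow{\phi} C \to K \to TC'$ with $K$ being $r$-acyclic (since $\phi$ is an $r$-isomorphism), an exact triangle $B \xrightarrow{\bar v} C \xrightarrow{p} E \xrightarrow{q} TB$, and a fourth exact triangle $TA \xrightarrow{\theta} E \to K \to T^2 A$. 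By Definition \ref{dfn-r-iso}, since its cone $K$ is $r$-acyclic, $\theta: TA \to E$ is an $r$-isomorphism.

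By Proposition~\ref{prop-r-iso}(i), choose a left $r$-inverse $\sigma: E \to \Sigma^{-r}TA$ of $\theta$ with $\sigma \circ \theta = \eta^{TA}_r$; Corollary~\ref{cor-2} then promotes $\sigma$ to a $2r$-isomorphism. Set $\bar w' := \sigma \circ p: C \to \Sigma^{-r}TA$ and $\nu := \theta \circ \eta^{TA}_r: \Sigma^r TA \to E$. A direct computation using $\sigma \circ \theta = \eta^{TA}_r$ and the strict monoidality of $\Sigma$ yields $\sigma \circ \nu = \eta^{\Sigma^{-r}TA}_{2r}$, so $\nu$ is a right $2r$-inverse of $\sigma$. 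This certifies $R(\Delta): B \xrightarrow{\bar v} C \xrightarrow{\bar w'} \Sigma^{-r}TA \xrightarrow{-\bar u'} \Sigma^{-2r}TB$ as a strict exact triangle of weight $2r$, with underlying $\C_0$-triangle $B \xrightarrow{\bar v} C \xrightarrow{p} E \xrightarrow{q} TB$, $2r$-isomorphism $\sigma$, and right $2r$-inverse $\nu$, provided the requisite commutativity diagrams hold.

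The equality $\sigma \circ p = \bar w'$ is the definition of $\bar w'$. For the other face, using $q \circ \theta = -Tu$ from the octahedral morphism of triangles together with the naturality of the downward $\eta^\cdot_r$, we obtain $q \circ \nu = -Tu \circ \eta^{TA}_r = -\eta^{TB}_r \circ \Sigma^r Tu$, and this is exactly $\Sigma^{2r}(-\bar u')$ after identifying $\Sigma^{2r}(\eta^{\Sigma^{-r}TB}_r)$ with $\eta^{TB}_r$ via strict monoidality. To verify $\bar w' \simeq_r \bar w$, compute $\bar w' \circ \phi = \sigma \circ \theta \circ w = \eta^{TA}_r \circ w$; pre-composing with $\psi$ and using $\phi \circ \psi = \eta^C_r$ together with the defining relation $w \circ \psi = \Sigma^r \bar w$ yields $\bar w' \circ \eta^C_r = \eta^{TA}_r \circ \Sigma^r \bar w$. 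By naturality of $\eta^\cdot_r$ applied to $\bar w$, the right-hand side equals $\bar w \circ \eta^C_r$, so $(\bar w' - \bar w) \circ \eta^C_r = 0$, and Lemma~\ref{lemma-comp} delivers $\bar w' \simeq_r \bar w$.

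The main obstacle is the careful bookkeeping of the $\eta$ natural transformations—upward versus downward realizations, and $0$-level versus shifted versions—together with ensuring the equivalence $\bar w' \simeq_r \bar w$ holds at level $r$ rather than $2r$. The subtle point is that the clean identity $(\bar w' - \bar w) \circ \eta^C_r = 0$ hinges on the strict equality $w \circ \psi = \Sigma^r \bar w$ from the data of $\Delta$, not merely on a relation up to some further shift; the weight doubling from $r$ to $2r$ in the rotation itself is the natural cost of inverting the $r$-isomorphism $\theta$ via Corollary~\ref{cor-2}.
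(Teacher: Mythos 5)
Your proof is correct and follows essentially the same architecture as the paper's: build the $\C_0$-cone $E$ of $\bar v$, identify the comparison map $TA \to E$ as an $r$-isomorphism, pass to its left $r$-inverse (a $2r$-isomorphism by Corollary~\ref{cor-2}), and read off the weight-$2r$ structure together with the relation $\bar w' \simeq_r \bar w$. The one genuine organizational difference is how you establish that the comparison map $\theta$ is an $r$-isomorphism: the paper embeds $B \to C$ into a $\C_0$-triangle, obtains the induced map $\bar\phi$ from functoriality, and invokes the five-lemma (Proposition~\ref{cor-1}(i)); you instead apply the octahedral axiom to the factorization $\bar v = \phi \circ v$, which directly exhibits the cone of $\theta$ as the $r$-acyclic cone $K$ of $\phi$. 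This is slightly cleaner, since the $r$-acyclicity is displayed rather than deduced (the five-lemma itself being a consequence of the octahedral axiom plus Lemma~\ref{lemma-exact}). Your verification of $\bar w' \simeq_r \bar w$ is also a minor streamlining of the paper's chain of two $\simeq_r$ relations: by precomposing with $\psi$ and using the strict identity $w \circ \psi = \Sigma^r \bar w$ from the definition of $\Delta$, you arrive at an exact equality $\bar w' \circ \eta^C_r = \bar w \circ \eta^C_r$, which Lemma~\ref{lemma-comp} converts to the desired $r$-equivalence. Your closing remark correctly identifies why this argument gives $\simeq_r$ rather than a weaker $\simeq_{2r}$.
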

We call \jznote{$R(\Delta)$} the (first) positive rotation of $\Delta$.

\begin{proof} By definition, there exists a commutative diagram,
  \[ \xymatrix{
      & & \Sigma^r C \ar[d]_-{\psi} \ar[rd]^-{\Sigma^{r} \bar{w}} & \\
      A \ar[r]^-{\bar{u}} & B \ar[r]^-{v} \ar[rd]_-{\bar{v}}
      & C' \ar[r]^-{w} \ar[d]^-{\phi} & TA\\
      & & C &} \] where
  $A \xrightarrow{\bar{u}} B \xrightarrow{v} C' \xrightarrow{w} TA$ is
  an exact triangle in $\C_0$ \pbnote{and $\psi$ is a right
    $r$-inverse of $\phi$.} By the rotation property of $\C_0$,
  $B \xrightarrow{v} C' \xrightarrow{w}TA \xrightarrow{-T\bar{u}} TB$
  is an exact triangle in $\C_0$. \pbnote{We now construct the
    following diagram in $\C_0$ in which the upper squares will be
    commutative and the lower square $r$-commutative:}
  \pbnote{
    \begin{equation}\label{eq:foursq}\xymatrixcolsep{4pc} \xymatrix{
        B \ar[r]^-{v} \ar[d]_-{\mathds 1_B}
        & C' \ar[r]^-{w} \ar[d]_-{\phi}
        & TA \ar[r]^{-T\bar{u}} \ar[d]_-{\bar{\phi}}
        & TB \ar[d]_-{\mathds 1_{TB}} \\
        B \ar[r]^-{\bar{v}}
        & C \ar[r]^-{w'} \ar[d]_-{\Sigma^{-r} \psi}
        & TA' \ar[r]^-{u'} \ar[d]_-{\bar{\psi}} & TB \\
        & \Sigma^{-r} C' \ar[r]^-{\Sigma^{-r} w} & \Sigma^{-r} TA} 
    \end{equation}
  } \pbnote{Here the second row of maps comes from embedding
    $B \xrightarrow{\bar{v}} C$ into an exact triangle
    $B \xrightarrow{\bar{v}} C \xrightarrow{w'} A'' \to TB$ for some
    $A''$ in $\C_0$ and $A'=T^{-1}A''$. The map $\bar{\phi}$ is then
    induced by the functoriality of triangles in $\C_{0}$ and is an
    $r$-isomorphism by \ocnote{Proposition \ref{cor-1} i}. So far this gives the
    upper three squares of the diagram and their commutativity. To
    construct the lower square, let $\bar{\psi}$ be a left $r$-inverse
    of $\bar{\phi}$
    (i.e.~$\bar{\psi} \circ \bar{\phi} = \eta^{TA}_r$). By
    Corollary~\ref{cor-2}, $\bar{\psi}$ is a $2r$-isomorphism.}

  \pbnote{We claim that the lower square in diagram~\eqref{eq:foursq}
    is $r$-commutative, and therefore we have
    $\bar{\psi} \circ w'\simeq_{r} \Sigma^{-r}w\circ \Sigma^{-r}\psi =
    \bar{w}$.}

  \pbnote{Indeed, let $\psi'$ be a left $r$-inverse of $\phi$. By
    using the commutativity of the middle upper square in
    diagram~\eqref{eq:foursq}, we deduce
    $\bar{\psi}\circ w'\circ \phi= \Sigma^{-r}w\circ \psi'\circ
    \phi$. As $\phi$ is an $r$-isomorphism we obtain
  \begin{equation} \label{eq:lsquare} \bar{\psi}\circ w' \simeq_{r}
    \Sigma^{-r}w\circ \psi'\simeq_{r} \Sigma^{-r}w\circ
    \Sigma^{-r}\psi = \bar{w},
  \end{equation}
  because, by Proposition~\ref{prop-r-iso}, we have
  $\Sigma^{-r}\psi \simeq_{r} \psi'$. This shows the lower square is
  $r$-commutative and the related $r$-identity.}

  We next consider the following diagram
  \[ \xymatrix{ & & \Sigma^r TA \ar[d]_-{\bar{\phi} \circ \eta^{TA}_r}
      \ar[rd]^-{\Sigma^{2r} \bar{u}'} & \\
      B \ar[r]^-{\bar{v}} & C \ar[r]^-{w'} \ar[rd]_-{\bar{w}'}
      & TA' \ar[r]^-{u'} \ar[d]_{\bar{\psi}} & TB\\
      & & \Sigma^{-r} TA &} \] where
  $\bar{u}'= \Sigma^{-2r}(u'\circ \bar{\phi}\circ \eta_{r}^{TA})$ and
  $\bar{w}'=\bar{\psi}\circ w'$.  Given that $\bar{\psi}$ is a
  $2r$-isomorphism, this means that we have a strict exact triangle of
  weight $2r$ of the form:
  $$B\xrightarrow{\bar{v}} C \xrightarrow{\bar{w}'}
  TA'\xrightarrow{\bar{u}'} \Sigma^{-2r}TB$$ We already know
  $\bar{w}'=\bar{\psi}\circ w'\simeq_{r} \bar{w}$. On the other hand,
  \[ \Sigma^{2r} \bar{u}'=u' \circ (\bar{\phi} \circ \eta^{TA}_r) =
    -T\bar{u} \circ \eta^{TA}_r = -\eta^{\Sigma^{r}TB}_r \circ
    \Sigma^r T\bar{u} \] which concludes the proof.
\end{proof}
 
\begin{rem}\label{rem:neg-rot} A perfectly similar argument also
  shows that there exists a strict exact triangle of weight $2r$ and
  of the form:
  $$R^{-1}(\Delta): T^{-1}\Sigma^{r}C\to A\to B \to \Sigma^{-r}C$$
  which is the \jznote{(first)} negative rotation of $\Delta$. \jznote{Note that $R^{-1} (R(\Delta)) \neq R(R^{-1}(\Delta)) \neq \Delta$.} 
\end{rem}

\begin{rem} \label{r:rotation-weight}
  \pbnote{Proposition~\ref{prop-rot} describes a rotation of weighted
    exact triangles that does {\em not} preserve weights. Indeed, the
    rotation of the weight $r$ triangle $\Delta$ from
    Proposition~\ref{prop-rot} has weight $2r$. It is not clear to
    what extent one can improve this. Ideally, one would like to be
    able to rotate $\Delta$ into a weighted exact triangle
    $B \xrightarrow{} C \xrightarrow{} \Sigma^{-r} TA \xrightarrow{}
    \Sigma^{-r} TB$ of the {\em same} weight $r$. There is some
    evidence, coming from symplectic topology, indicating that in
    certain circumstances this might be possible
    (see~\S\ref{sbsb:cobs-rot}). However, the algebraic setting in
    this paper, in particular the definition of weighted exact
    triangles, might be too general to render this feasible, at least
    without additional assumptions on $\Delta$.}
\end{rem}

\begin{prop} [Weighted octahedral formula] \label{prop-w-oct} Given
  two strict exact triangles
  $$\Delta_1: E \xrightarrow{\beta} F \xrightarrow{\alpha} X
  \xrightarrow{k} \Sigma^{-r} TE$$ and
  $$\Delta_2: X \xrightarrow{u} A \xrightarrow{\gamma} B
  \xrightarrow{b} \Sigma^{-s}TX$$ with $w(\Delta_1) = r$ and
  $w(\Delta_2) = s$, there exists a diagram
  \begin{equation} \label{w-oct} \xymatrix{ E \ar[d]_{\beta}\ar[r] & 0
      \ar[r] \ar[d]
      & TE \ar[d] \ar[r]& TE \ar[d]^{T\beta}\\
      F \ar[d]_{\alpha} \ar[r]^{u\circ \alpha} & A \ar[r] \ar[d]_{\mathds{1}}
      & C \ar[d] \ar[r] & TF \ar[d]^{(\Sigma^{-s} T\alpha)\circ \eta^{TF}_{s}} \\
      X \ar[d]_{k} \ar[r]^{u} & A \ar[r]^{\gamma}\ar[d]
      & B \ar[r]^{b} \ar[d] & \Sigma^{-s} TX \ar[d]^{\Sigma^{-s}(Tk) } \\
      \Sigma^{-r} TE \ar[r] &0 \ar[r] &\Sigma^{-r-s} T^{2}E \ar[r] &
      \Sigma^{-r-s} T^{2}E}
  \end{equation}
  with all squares commutative except for the right bottom one that is
  $r$-anti-commutative, such that the triangles
  $\Delta_3: F \to A \to C \to TF$ and
  $\Delta_4: TE \to C \to B \to \Sigma^{-r-s} T^{2}E$ are strict exact
  with $w(\Delta_3) =0$ and $w(\Delta_4) = r+s$.
\end{prop}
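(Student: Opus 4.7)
The plan is to reduce the weighted octahedral formula to the classical octahedral axiom in $\C_0$ by first unwrapping the strict exact triangles $\Delta_1$ and $\Delta_2$ via Definition~\ref{dfn-set}. Concretely, $\Delta_1$ comes from a $\C_0$-exact triangle $E \xrightarrow{\beta} F \xrightarrow{v_1} X_1 \xrightarrow{w_1} TE$ equipped with an $r$-isomorphism $\phi_1: X_1 \to X$ satisfying $\phi_1 \circ v_1 = \alpha$ (and the corresponding compatibility for $k$ via a right $r$-inverse $\psi_1$). Similarly $\Delta_2$ comes from $X \xrightarrow{u} A \xrightarrow{v_2} B_1 \xrightarrow{w_2} TX$ with an $s$-isomorphism $\phi_2: B_1 \to B$ and $\gamma = \phi_2 \circ v_2$.

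With this data I will apply the classical octahedral axiom in $\C_0$ to the composable pair $F \xrightarrow{v_1} X_1 \xrightarrow{u \circ \phi_1} A$. The composition is $u \circ \alpha$, and the octahedron produces simultaneously the $\C_0$-exact triangle $\Delta_3: F \xrightarrow{u\circ \alpha} A \to C \to TF$ (which is automatically strict exact of weight $0$ by Remark~\ref{rmk- set}(b)) together with a $\C_0$-exact triangle of the form $TE \to C \to B_2 \to T^2E$, where $B_2$ is the cone of $u \circ \phi_1$ in $\C_0$. A second application of the octahedral axiom in $\C_0$, this time to the composition $X_1 \xrightarrow{\phi_1} X \xrightarrow{u} A$, fits $B_2$ into an exact triangle $K \to B_2 \to B_1 \to TK$ with $K \simeq_r 0$ (since $\phi_1$ is an $r$-isomorphism), whence by Definition~\ref{dfn-r-iso} the induced map $\mu: B_2 \to B_1$ is an $r$-isomorphism.

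To obtain $\Delta_4$ as a strict exact triangle of weight $r+s$, I will set $\Phi := \phi_2 \circ \mu: B_2 \to B$. Proposition~\ref{prop-r-iso}(iii) then yields that $\Phi$ is an $(r+s)$-isomorphism. Choosing a right $(r+s)$-inverse of $\Phi$, the $\C_0$-exact triangle $TE \to C \to B_2 \to T^2E$ is promoted, precisely as in Definition~\ref{dfn-set} and the construction in Example~\ref{ex-w-r-tr}, to a strict exact triangle
$$\Delta_4: TE \to C \to B \to \Sigma^{-r-s} T^{2}E$$
of weight $r+s$, with connecting morphism obtained by composing the $\C_0$-connecting map $C \to T^2E$ with $\eta^{T^2E}_{r+s}$ (after the identification provided by $\Phi$ and its right inverse).

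It remains to populate the diagram~(\ref{w-oct}) and verify the stated commutativity. The middle $2 \times 2$ block, as well as the commutativity of the second column and the identifications $u \circ \alpha$ in the second row, follow directly from the first octahedron in $\C_0$. The fourth column, which involves the shift maps $\eta$ and $\Sigma^{-s}$, reflects the way $\bar{w} = k$ and $\bar{b} = b$ descend from their $\C_0$-lifts $w_1$ and $w_2$ through $\phi_1$ and $\phi_2$, so the corresponding squares commute up to the identifications recorded in Remark~\ref{rmk- set}(c). The main obstacle will be the rightmost bottom square, whose anti-commutativity reflects the classical sign appearing in the octahedral axiom; the factor $\eta^{TF}_s$ appearing in the column~4 map together with Corollary~\ref{cor-3} (applied to the $r$-isomorphism $\mu$) is exactly what is needed to produce the $r$-anti-commutativity instead of honest anti-commutativity, so careful bookkeeping of $\eta$ maps, signs, and the right/left inverses chosen in Steps~1 and~4 will complete the verification.
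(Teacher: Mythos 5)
Your proposal follows essentially the same route as the paper: unwrap the two strict exact triangles via Definition~\ref{dfn-set}, apply the classical octahedral axiom in $\C_0$ to the composable pair built from the underlying $\C_0$-triangles, identify an $(r+s)$-isomorphism from the resulting cone to $B$, and promote the $\C_0$-triangle to a strict exact triangle of weight $r+s$. The one point of difference is how you produce the $r$-isomorphism $\mu\colon B_2\to B_1$: you invoke a second application of the octahedral axiom to the composition $X_1\xrightarrow{\phi_1}X\xrightarrow{u}A$, obtaining an exact triangle $K\to B_2\to B_1\to TK$ with $K$ $r$-acyclic, whereas the paper obtains the analogous map $\phi'\colon B''\to B'$ from the morphism of $\C_0$-exact triangles determined by $\phi$ and $\mathds 1_A$, using the TPC five-lemma (Proposition~\ref{cor-1}~(i)). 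Since the proof of Proposition~\ref{cor-1}~(i) for this $r$-iso/$0$-iso case reduces to exactly the octahedral argument you write out, the two moves are essentially equivalent; you have simply unfolded the lemma.

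There is, however, a genuine gap at the end. Constructing $\Delta_4$ and verifying the commutativity of the middle columns are routine, but the $r$-anti-commutativity of the bottom-right square is not. The paper devotes the final third of the proof to this: it uses Corollary~\ref{cor-6} to replace the commuting square relating $\theta$ and $g$ by an $r$-commuting square involving the right inverses $\psi_2$, $T\psi_1$, then chains this with the definitions of $\psi''=\psi_2\circ\Sigma^r\psi_3$ and $t=-T\delta\circ\theta$ to show $-t\circ\psi''\simeq_r \Sigma^r(Tk)\circ\Sigma^{r+s}b$. Your proposal correctly flags this as the main obstacle and names some of the right ingredients (Corollary~\ref{cor-3}, the $\eta$-factors), but defers the actual verification to ``careful bookkeeping.'' In a proposition of this flavor, that verification is where the substance lies --- in particular the need to pass from the commuting square to an $r$-commuting one via right inverses, and the precise role of the chosen right $(r+s)$-inverse of $\Phi$ in the final identity, are not visible from your sketch and would need to be written out for the argument to be complete.
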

By forgetting the $\Sigma$'s (or assuming that $r=s=0$) this is
equivalent to the usual octahedral axiom in a triangulated category
(namely $\C_{0}$) and the right bottom square is commutative up to
sign (or anti-commutative).
\begin{proof} [Proof of Proposition \ref{prop-w-oct}]
  By definition, there are two commutative diagrams,
  \begin{equation} \label{prop-w-oct-1} \xymatrixcolsep{4pc}
    \xymatrix{
      & E \ar[d] & \\
      & F \ar[d]_{\alpha'} \ar[rd]^{\alpha} & \\
      \Sigma^r X \ar[r]^-{\psi_1} \ar[rd]_-{\Sigma^r k}
      & X' \ar[d]_-{\delta} \ar[r]^-{\phi} & X \\
      & TE &} \,\,\,\,\,\,\,\, \xymatrixcolsep{4pc} \xymatrix{
      & & \Sigma^s B \ar[d]_-{\psi_3} \ar[rd]^-{\Sigma^s b} & \\
      X \ar[r]^-{u} & A \ar[r]^-{v'} \ar[rd]_{\gamma}
      & B' \ar[r]^-{g} \ar[d]_-{\bar{\phi}} & TX \\
      & & B &}
  \end{equation}
  with $\phi$ an $r$-isomorphism and $\bar{\phi}$ an $s$-isomorphism
  and $\psi_1$ and $\psi_{3}$ are, their right $r$ and $s$-inverses,
  respectively. By the octahedral axiom in $\C_0$, we construct the
  following diagram commutative except for the right bottom square
  that is anti-commutative:
  \begin{equation} \label{w-oct-2} \xymatrixcolsep{3pc} \xymatrix{
      E \ar[r]\ar[d] & 0 \ar[r] \ar[d] & TE \ar[d] \ar[r]& TE\ar[d] \\
      F \ar[r]^{\alpha''} \ar[d]_{\alpha'} & A \ar[r] \ar[d]_{\mathds{1}_A}
      & C \ar[r] \ar[d]_-{w} & TF\ar[d]_{T\alpha'}\\
      X' \ar[r]^-{u \circ \phi} \ar[d]_-{\delta} & A\ar[d] \ar[r]^-{v}
      & B'' \ar[r]^-{\theta} \ar[d]_-{t}
      & TX' \ar[d]_-{T\delta} \\
      TE \ar[r] & 0 \ar[r]& T^{2}E \ar[r]^{\mathds{1}_{T^2E}} & T^{2}E}
  \end{equation}
  Thus \jznote{$\alpha''=u\circ \phi\circ \alpha'$}, $t=-T\delta\circ \theta$.
  We denote by $$\Delta_3: F \xrightarrow{\alpha''} A \to C \to TF$$
  the respective exact triangle in $\C_0$ so that, as in Remark
  \ref{rmk- set}, $w(\Delta_3) =0$. The map $w\in \Mor^{0}(C,B'')$ is
  induced from the commutativity of the middle, left triangle. We now
  consider the following diagram.
  \begin{equation} \label{w-oct-3} \xymatrixcolsep{3pc} \xymatrix{ F
      \ar[d]_{\alpha'}\ar[r] &A \ar[d]_{\mathds{1}_A}\ar[r]& C \ar[d]_{w}\ar[r]
      & TF \ar[d]_{T\alpha'}\\
      X' \ar[r]^-{u \circ \phi} \ar[d]_-{\phi} & A \ar[r]^-{v}
      \ar[d]_{\mathds{1}_A} & B'' \ar[r]^-{\theta} \ar[d]_-{\phi'}
      & TX' \ar[d]_-{T\phi} \\
      X \ar[r]^-{u} & A\ar[d]_{\mathds{1}_A} \ar[r]^-{v'}
      & B' \ar[r]^-{g}\ar[d]_{\bar{\phi}} & TX\\
      & A\ar[r]^{\gamma} & B \ar[r]^{b}& \Sigma^{-s} TX }
  \end{equation}
  The three long rows are exact triangles in $\C_{0}$ and we deduce
  the existence of $\phi' \in \Mor^0(B'', B')$ making the adjacent
  squares commutative.  This is an $r$-isomorphism by \ocnote{Proposition 
  \ref{cor-1} i}.  We fix a right $r$-inverse
  $\psi_{2}\in \Mor^{0}(\Sigma^{r}B',B'')$ of $\phi'$.  The
  composition $\phi'' = \bar{\phi} \circ \phi' \in \Mor^0(B'', B)$ is
  an $(r+s)$-isomorphism by Proposition \ref{prop-r-iso} (iii).  Let
  \jznote{$\psi''=\psi_{2}\circ \Sigma^{r}\psi_{3}$} (recall
  $\psi_{3}$ from (\ref{prop-w-oct-1})) and notice that $\psi''$ is a
  right $(r+s)$-inverse of $\phi''$.

  We are now able to define the triangle $\Delta_{4}$:
  \[ \Delta_4: TE \to C \xrightarrow{\phi'' \circ w} B
    \xrightarrow{\Sigma^{-r-s}(t \circ \psi'')} \Sigma^{-r-s}T^{2}E
    ~.~\] The following commutative diagram shows that $\Delta_{4}$ is
  strict exact and $w(\Delta_4) = r+s$.
  \[ \xymatrix{
      & & \Sigma^{r+s} B \ar[d]_-{\psi''} \ar[rd]^-{t \circ \psi''} & \\
      TE \ar[r] & C \ar[r]^-{w} \ar[rd]_-{\phi'' \circ w}
      & B'' \ar[r]^-{t} \ar[d]^-{\phi''} & T^{2}E\\
      & & B &} \] It is easy to check that all the squares in
  (\ref{w-oct}), except the right bottom one, are commutative.

  We now check the $r$-anti-commutativity of the right bottom
  square. We need to show
  $\Sigma^{-s} (Tk) \circ b \simeq_r - \Sigma^{-r-s}(t \circ \psi'')$,
  which is equivalent to
  $\Sigma^r (Tk) \circ \Sigma^{r+s} b \simeq_r - t \circ
  \psi''$. Given that the $B''(TX')(TX)B'$ square in (\ref{w-oct-3})
  commutes and using Corollary \ref{cor-6}, we have the following
  $r$-commutative diagram
  \[ \xymatrixcolsep{3pc} \xymatrix{ \Sigma^{r} B' \ar[r]^-{\Sigma^r
        g} \ar[d]_-{\psi_2}
      & \Sigma^r TX \ar[d]_-{T\psi_1} \\
      B'' \ar[r]^-{\theta} & TX'} \]

  Now consider the following diagram, commutative except the middle
  square being $r$-commutative,
  \[ \xymatrixcolsep{3pc} \xymatrix{ \Sigma^{r+s} B \ar[d]_-{\Sigma^r
        \psi_3}
      \ar[rd]^-{\Sigma^{r+s} b} \ar@/_4pc/[dd]_-{\psi''} & & \\
      \Sigma^r B' \ar[r]^-{\Sigma^r g} \ar[d]_-{\psi_2}
      & \Sigma^r TX \ar[d]_-{T\psi_1} \ar[rd]^-{\Sigma^r (Tk)} & \\
      B'' \ar[r]^-{\theta} \ar@/_1.5pc/[rr]^-{-t} & TX'
      \ar[r]^{T\delta} & T^{2}E } \] and write
  \[- t \circ \psi'' = (T\delta) \circ (\theta \circ \psi_2) \circ
    \Sigma^r \psi_3 \simeq_r (T\delta) \circ ((T\psi_1) \circ
    \Sigma^rg) \circ \Sigma^r \psi_3 = \Sigma^r (Tk) \circ
    \Sigma^{r+s}b \] which completes the proof.\end{proof}

Given a triple of maps
$\Delta: A \xrightarrow{u} B \xrightarrow{v} C \xrightarrow{w} D$ with
shifts $\ceil*{u}, \ceil*{v}, \ceil*{w} \in \R$ it is useful to
introduce a special notation for an associated triple in $\C_{0}$,
denoted by $\Sigma^{s_1, s_2, s_3, s_4} \Delta$, for
$s_1, s_2, s_3, s_4 \in \R$ satisfying the following relations
\begin{align*}
  -s_1 + s_2 + \ceil*{u} & \leq 0 \\
  -s_2 + s_3 + \ceil*{v} & \leq 0\\
  -s_3 + s_4 + \ceil*{w} & \leq 0.
\end{align*}
The triple $\Sigma^{s_1, s_2, s_3, s_4} \Delta$ has the form
\begin{equation}\label{eq:shift-tr} \Sigma^{s_1} A
  \xrightarrow{\bar{u}} \Sigma^{s_2}B \xrightarrow{\bar{v}}
  \Sigma^{s_3} \xrightarrow{\bar{w}} \Sigma^{s_4} D
\end{equation}
where $\bar{u}$ is the composition of the composition
$\Sigma^{s_1} A \xrightarrow{(\eta_{s_1, 0})_A} A \xrightarrow{u} B
\xrightarrow{(\eta_{0, s_2})_B} \Sigma^{s_2} B$ and the persistence
structure map \zjr{$i_{-s_1 + s_2+\ceil*{u}, 0}$}, i.e.,
\begin{equation} \label{shift-u} \bar{u} = i_{-s_1 + s_2 + \ceil*{u},
    0}\left((\eta_{0, s_2})_B \circ u \circ(\eta_{s_1, 0})_A\right).
\end{equation}
The definitions of $\bar{v}$ and $\bar{w}$ are similar and, in
particular, $\ceil*{\bar{u}}=\ceil*{\bar{v}}= \ceil*{\bar{w}}=0$. The
inequalities above ensure that the resulting triangle
(\ref{eq:shift-tr}) has all morphisms in $\C_{0}$.

For $s_1 = s_2 = s_3 = s_4 =k$ (which implies that
$\ceil*{u}, \ceil*{v}, \ceil*{w} \leq 0$) we denote, for
brevity, $$\Sigma^k \Delta : = \Sigma^{s_1, s_2, s_3, s_4} \Delta~.~$$

\begin{remark} \label{rmk-shift-notation} Assume that
  $\Delta: A \xrightarrow{u} B \xrightarrow{v} C \xrightarrow{w}
  \Sigma^{-r} TA$ is strict exact of weight $w(\Delta) = r$.

  (a) It is a simple exercise to show that the triangle
  $\Sigma^k \Delta: \Sigma^kA \to \Sigma^k B \to \Sigma^k C \to
  \Sigma^{-r +k} TA$ is strict exact and
  $w\left(\Sigma^k \Delta\right) = r$.

  (b) For $s\geq 0$, Proposition \ref{prop-wt-1} (ii) claims that
  $\Sigma^{0,0,0,-s}\Delta$ is strict exact of weight $r+s$. It is
  again an easy exercise to see that $\Sigma^{0,0,-s,-s}\Delta$ is
  strict exact of weight $r+s$.
\end{remark}

\begin{prop}[Functoriality of triangles] \label{prop-ind} Consider two
  strict exact triangles as below with $f \in \Mor^0(A_1, A_2)$ and
  $g \in \Mor^0(B_1, B_2)$
  \[ \xymatrix{ \Delta_1: & A_1 \ar[r]^-{\bar{u}_1} \ar[d]_-{f} & B_1
      \ar[r]^-{\bar{v}_1} \ar[d]_-{g}
      & C_1 \ar[r]^-{\bar{w}_1} & \Sigma^{-r} TA_1 \\
      \Delta_2: & A_2 \ar[r]^-{\bar{u}_2} & B_2 \ar[r]^-{\bar{v}_2} &
      C_2 \ar[r]^-{\bar{w}_2} & \Sigma^{-s} TA_2} \] and
  $w(\Delta_1) = r$, $w(\Delta_2) = s$. Then there exists a morphism
  $h \in \Mor^0(C_1, \Sigma^{-r} C_2)$ inducing maps relating the
  triangles $\Delta_1 \to \Sigma^{0,0,-r,-r}\Delta_2$ as in the
  following diagram
  \[ \xymatrix{ A_1 \ar[r] \ar[d]_-{f} & B_1 \ar[r] \ar[d]_-{g} & C_1
      \ar[r] \ar[d]_-{h} & \Sigma^{-r} TA_1 \ar[d]^-{\eta^{TA_2}_s
        \circ \Sigma^{-r} Tf} \\
      A_2 \ar[r] & B_2 \ar[r] & \Sigma^{-r} C_2 \ar[r] & \Sigma^{-r-s}
      TA_2} \] where the middle square is $r$-commutative and the
  right square is $s$-commutative. \end{prop}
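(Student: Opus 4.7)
The strategy is to reduce the situation to $\mathcal{C}_0$ by invoking the underlying exact triangles provided by Definition~\ref{dfn-set}, apply the classical TR3 axiom there, and then transport the resulting morphism through the $r$-isomorphism $\phi_1$ supplied by the strict exact structure of $\Delta_1$, using Corollary~\ref{cor5} for the transport.

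By Definition~\ref{dfn-set}, for $i=1,2$, each $\Delta_i$ arises from an exact triangle $A_i \xrightarrow{u_i} B_i \xrightarrow{v_i} C_i' \xrightarrow{w_i} TA_i$ in $\mathcal{C}_0$ together with a $w(\Delta_i)$-isomorphism $\phi_i: C_i' \to C_i$ and a right $w(\Delta_i)$-inverse $\psi_i$ satisfying $u_i = \bar{u}_i$, $\bar{v}_i = \phi_i \circ v_i$, and $\Sigma^{w(\Delta_i)}\bar{w}_i = w_i \circ \psi_i$. Since the left square of the diagram in the statement is commutative by hypothesis, i.e.~$g \circ u_1 = u_2 \circ f$ in $\mathcal{C}_0$, axiom TR3 applied to the two underlying exact triangles yields a morphism $h' \in \Mor_{\mathcal{C}}^0(C_1', C_2')$ with $h' \circ v_1 = v_2 \circ g$ and $w_2 \circ h' = Tf \circ w_1$. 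I would then define $h \in \Mor_{\mathcal{C}}^0(C_1, \Sigma^{-r}C_2)$ by applying Corollary~\ref{cor5} to the $r$-isomorphism $\phi_1$ and the morphism $\phi_2 \circ h' \in \Mor_{\mathcal{C}}^0(C_1', C_2)$; concretely, $h := \Sigma^{-r}\phi_2 \circ \Sigma^{-r}h' \circ \tilde{\psi}_1$, where $\tilde{\psi}_1 \in \Mor_{\mathcal{C}}^0(C_1, \Sigma^{-r}C_1')$ is a left $r$-inverse of $\phi_1$ (whose existence is ensured by Proposition~\ref{prop-r-iso}(i)). By construction, $h \circ \phi_1 = \eta_r^{C_2} \circ \phi_2 \circ h'$.

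The middle square is then verified by direct computation: using $\bar{v}_1 = \phi_1 \circ v_1$, the identity $h \circ \phi_1 = \eta_r^{C_2} \circ \phi_2 \circ h'$, the TR3 relation $h' \circ v_1 = v_2 \circ g$, and $\bar{v}_2 = \phi_2 \circ v_2$, one obtains $h \circ \bar{v}_1 = \eta_r^{C_2} \circ \bar{v}_2 \circ g$. Under the canonical identification by which the middle vertical map of $\Sigma^{0,0,-r,-r}\Delta_2$ is $\eta_r^{C_2} \circ \bar{v}_2$, this is a strict equality in $\mathcal{C}_0$, and in particular the middle square is $r$-commutative. For the right square, one expands $\Sigma^{-r}\bar{w}_2 \circ h$ using the definition of $h$, invokes Remark~\ref{rmk- set}(c) to replace $\bar{w}_2 \circ \phi_2$ by $\eta_s^{TA_2} \circ w_2$ modulo $s$-equivalence, substitutes $w_2 \circ h' = Tf \circ w_1$ from TR3, and then uses $\tilde{\psi}_1 \simeq_r \Sigma^{-r}\psi_1$ from Proposition~\ref{prop-r-iso}(i) together with the identity $\Sigma^{-r}w_1 \circ \Sigma^{-r}\psi_1 = \bar{w}_1$ (obtained by applying $\Sigma^{-r}$ to $\Sigma^r\bar{w}_1 = w_1 \circ \psi_1$). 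The naturality of the transformations $\eta$ and the monoidal identity $\Sigma^{-r}\eta_s^{TA_2} = \eta_s^{\Sigma^{-r}TA_2}$ then assemble these ingredients into the required comparison with $\eta_s^{\Sigma^{-r}TA_2} \circ \Sigma^{-r}Tf \circ \bar{w}_1$.

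\textbf{Main obstacle.} The delicate step is the bookkeeping in the right square: one must organize the chain of substitutions so that the $r$-gap produced by replacing $\tilde{\psi}_1$ with $\Sigma^{-r}\psi_1$ is absorbed by the explicit factor $\eta_s^{\Sigma^{-r}TA_2}$ appearing in the right vertical map, ensuring that the final discrepancy is genuinely $s$-small. This uses crucially the compositional identity $\eta_{s-r}^{\Sigma^{-2r}TA_2} \circ \eta_r^{\Sigma^{-r}TA_2} = \eta_s^{\Sigma^{-r}TA_2}$ (whenever $s \geq r$), which routes the $r$-equivalence through the $\eta_r$-factor already inside $\eta_s^{\Sigma^{-r}TA_2}$, and it is here that care with the shift parameters is essential. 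Apart from this, the proof is a careful translation of TR3 from $\mathcal{C}_0$ through the structural data of strict exact triangles.
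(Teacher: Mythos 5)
The paper leaves this as an exercise, so there is no official proof to compare against; I will assess the proposal on its own terms.

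Your overall strategy — apply TR3 to the underlying $\mathcal{C}_0$-triangles and transport the resulting map through the $r$-isomorphism $\phi_1$ — is the right one, but there is a genuine gap caused by your choice of the transport map. You define $h := \Sigma^{-r}\phi_2 \circ \Sigma^{-r}h' \circ \tilde{\psi}_1$ using a \emph{left} $r$-inverse $\tilde{\psi}_1$ of $\phi_1$. This makes the middle square commute strictly, but as you yourself observe, the verification of the right square then requires the compositional identity $\eta_{s-r}^{\Sigma^{-2r}TA_2}\circ\eta_r^{\Sigma^{-r}TA_2}=\eta_s^{\Sigma^{-r}TA_2}$, which only exists when $s\geq r$. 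The proposition imposes no such hypothesis, so your argument does not establish the stated result when $s<r$. Concretely, the gap is the term $\Sigma^{-r}w_1\circ\tilde{\psi}_1-\bar{w}_1$, which is merely $\simeq_r 0$; pushing it forward through $\eta_s^{\Sigma^{-r}TA_2}\circ\Sigma^{-r}Tf$ produces an error that is $\simeq_{\max(0,r-s)} 0$, and this can exceed $s$.

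The fix is to use (the shift of) the \emph{right} $r$-inverse instead: set $h := \Sigma^{-r}\phi_2 \circ \Sigma^{-r}h' \circ \Sigma^{-r}\psi_1$. Definition~\ref{dfn-set} supplies the \emph{exact} identity $w_1\circ\psi_1=\Sigma^{r}\bar{w}_1$, so the right-square computation becomes
\[
\Sigma^{-r}\bar{w}_2\circ h
= \Sigma^{-r}\bigl(\bar{w}_2\circ\phi_2\circ h'\circ\psi_1\bigr)
\simeq_s \Sigma^{-r}\bigl(\eta_s^{TA_2}\circ w_2\circ h'\circ\psi_1\bigr)
= \Sigma^{-r}\bigl(\eta_s^{TA_2}\circ Tf\circ \Sigma^r\bar{w}_1\bigr)
= \eta_s^{\Sigma^{-r}TA_2}\circ\Sigma^{-r}Tf\circ\bar{w}_1,
\]
where the single $\simeq_s$ comes from Remark~\ref{rmk- set}(c) applied to $\Delta_2$; no constraint between $r$ and $s$ is needed. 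The cost is that the middle square degrades from strict commutativity to $r$-commutativity: since $\Sigma^{-r}\psi_1\simeq_r\tilde{\psi}_1$ (Proposition~\ref{prop-r-iso}(i)), one has $\Sigma^{-r}\psi_1\circ\phi_1\simeq_r\eta_r^{C_1'}$, so $h\circ\phi_1\simeq_r\eta_r^{C_2}\circ\phi_2\circ h'$ and hence $h\circ\bar{v}_1\simeq_r\eta_r^{C_2}\circ\bar{v}_2\circ g$ — which is exactly the $r$-commutativity the statement asks for. In short: your asymmetric use of the left inverse buys you an unnecessary improvement in the middle square at the cost of breaking the right square for $s<r$; the right inverse is the morphism that matches the structural identity $w_1\circ\psi_1=\Sigma^r\bar w_1$ and gives the claimed bounds uniformly.
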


The proof is left as an exercise.

\begin{prop}\label{prop:tr-isos} Let
  $\Delta: A \xrightarrow{\bar{u}} B \xrightarrow{\bar{v}}
  C\xrightarrow{\bar{w}} \Sigma^{-r} TA$ be a strict exact triangle of
  weight $r$ in $\C$ and let
  $\Delta': A\xrightarrow{\bar{u}} B\xrightarrow{v} C'\xrightarrow{w}
  TA$ be the exact triangle in $\C_{0}$ associated to $\Delta$ as in
  Definition \ref{dfn-set}. There are morphisms of triangles
  $h: \Sigma^{2r}\Delta\to \Delta'$ and
  $h':\Delta' \to \Sigma^{-r} \Delta$ such that the compositions
  $h'\circ h$ and $h\circ \Sigma^{3r} h'$ have as vertical maps the
  \jznote{shifted natural transformations $\eta_{3r}^{(-)}$ defined in
    (\ref{dfn-eta-map}).}
\end{prop}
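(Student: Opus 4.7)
The plan is to construct both morphisms $h$ and $h'$ from the data packaged in the strict exact triangle: the $r$-isomorphism $\phi: C' \to C$ and its right $r$-inverse $\psi: \Sigma^r C \to C'$ (which satisfy $\bar u = u$, $\bar v = \phi \circ v$, and $\Sigma^r \bar w = w \circ \psi$ by Definition~\ref{dfn-set}), together with a left $r$-inverse $\phi'': C \to \Sigma^{-r}C'$ supplied by Proposition~\ref{prop-r-iso} (so that $\phi'' \circ \phi = \eta^{C'}_r$ and $\Sigma^r \phi'' \simeq_r \psi$).

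For $h: \Sigma^{2r}\Delta \to \Delta'$ I would take the outer vertical maps to be the canonical $\eta^A_{2r}$, $\eta^B_{2r}$, $\eta^{TA}_r$, and in the middle $h_C := \eta^{C'}_r \circ \Sigma^{2r}\phi''$, which by naturality also equals $\psi \circ \eta^{\Sigma^r C}_r$ (the two forms agree because $\psi - \Sigma^r \phi''$ is $r$-null and thus vanishes after precomposing with $\eta^{\Sigma^r C}_r$, by Lemma~\ref{lemma-comp}). Squares $1$ and $3$ then follow from naturality of $\eta$ together with $\Sigma^r \bar w = w \circ \psi$, and square $2$ reduces to the identity $\phi'' \circ \bar v = \phi'' \circ \phi \circ v = \eta^{C'}_r \circ v$. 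For $h': \Delta' \to \Sigma^{-r}\Delta$, I would take outer maps $\eta^A_r$, $\eta^B_r$, $\eta^{TA}_{2r}$ together with $h'_C := \eta^C_r \circ \phi$. Squares $1$ and $2$ are again naturality statements; the only nontrivial square is the third, where I invoke the $r$-equivalence $\bar w \circ \phi \simeq_r \eta^{TA}_r \circ w$ from Remark~\ref{rmk- set}(c) and use Lemma~\ref{lemma-comp} to kill the resulting $r$-null discrepancy after post-composing with $\eta^{\Sigma^{-r}TA}_r$.

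It remains to evaluate the two compositions. At the outer vertices each composition is a concatenation of two shift transformations, which collapses to $\eta^{(-)}_{3r}$ by the strict monoidal coherence of $\Sigma$. At the middle vertex, $h'_C \circ h_C$ expands via naturality and the $r$-equivalence $\phi \circ \Sigma^r \phi'' \simeq_r \phi \circ \psi = \eta^C_r$; the $r$-null remainder is again killed by the outer $\eta^C_r$ thanks to Lemma~\ref{lemma-comp}. The composition $h_C \circ \Sigma^{3r}h'_C$ is cleaner: pushing $\Sigma^{2r}\phi''$ across the intermediate $\eta$ by naturality yields $\Sigma^{3r}(\phi'' \circ \phi) = \Sigma^{3r}\eta^{C'}_r$, and this telescopes to $\eta^{C'}_{3r}$ with no error term. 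The main obstacle throughout is that the right $r$-inverse $\psi$ and the shift $\Sigma^r \phi''$ of a left $r$-inverse only agree up to $r$-equivalence; the asymmetric $2r$ shift in the statement provides exactly the one extra unit of persistence room needed so that Lemma~\ref{lemma-comp} turns these $r$-equivalences into genuine equalities, making the squares and the final identifications commute on the nose.
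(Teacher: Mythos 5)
Your construction agrees with the paper's: the vertical maps of $h$ and $h'$ are exactly those the paper obtains by pre-/post-composing the ``naive'' quadruples $(\eta_r^A,\eta_r^B,\psi,\mathds 1_{TA})$ and $(\mathds 1_A,\mathds 1_B,\phi,\eta_r^{TA})$ with $\eta_r$ (your middle map $\eta^{C'}_r\circ\Sigma^{2r}\phi''$ equals $\psi\circ\eta_r^{\Sigma^r C}$, as you note), and the verification that the squares close and that both compositions collapse to $\eta_{3r}^{(-)}$ proceeds by the same combination of naturality, the defining identities $\phi\circ\psi=\eta_r^C$, $\phi''\circ\phi=\eta_r^{C'}$, and Lemma~\ref{lemma-comp} to kill $r$-equivalence defects. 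This is the paper's argument, with the only cosmetic difference being that you favor the left $r$-inverse $\phi''$ where the paper favors the right $r$-inverse $\psi$.
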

\begin{proof} We use the notation in Definition \ref{dfn-set} and
  consider the diagram below:
  \[ \xymatrixcolsep{3pc} \xymatrix{ \Sigma^r A \ar[r]^{\Sigma^r
        \bar{u}} \ar[d]_{\eta^A_r} & \Sigma^r B \ar[r]^{\Sigma^r
        \bar{v}} \ar[d]_{\eta^B_r} & \Sigma^r C \ar[r]^-{\Sigma^r
        \bar{w}} \ar[d]_-{\psi}
      &  TA \ar[d]^-{\mathds{1}_{TA}} \\
      A \ar[r]^-{\bar{u}} \ar[d]_{\mathds 1_A} & B \ar[r]^-{v}
      \ar[d]_-{\mathds 1_B} & C' \ar[r]^-{w} \ar[d]_-{\phi}
      & TA \ar[d]^-{\eta^{TA}_r} \\
      A \ar[r]^-{\bar{u}} & B \ar[r]^-{\bar{v}} & C \ar[r]^{\bar{w}} &
      \Sigma^{-r} TA} ~.~\] Denote
  \jznote{$h_{1}=(\eta_{r}^{A},\eta_{r}^{B},\psi , \mathds{1}_{TA})$ and
  $h_{1}'=(\mathds 1_{A}, \mathds{1}_{B},\phi,\eta_{r}^{TA})$}.  Notice that $h_{1}$ as
  well as $h'_{1}$ are not morphisms of triangles because the bottom
  right-most square is only $r$-commutative, and the same is true for
  the middle top square - as discussed in Remark \ref{rmk- set}
 \zjr{(c)}. Let $h=h_{1}\circ \eta_{r}$ and $h'=\eta_{r}\circ h'_{1}$
  (where we view $\eta_{r}$ as a quadruple of morphisms of the form
  $\eta_{r}^{-}$). It follows that both $h$ and $h'$ are morphisms of
  triangles. Moreover, given that $\phi\circ \psi=\eta_{r}^{C}$, it is
  clear that $h'\circ h=\eta_{3r}$.  The other composition,
  \zjr{$h\circ \Sigma^{3r}h'$}, has one term of the form
  \zjr{$\psi\circ \eta_{r}\circ \eta_{r} \circ \Sigma^{3r}$} so, by
  Proposition \ref{prop-r-iso}, this coincides with $\eta_{3r}^{C'}$
  as claimed.
\end{proof}

\begin{rem}\label{rem:tr-iso-0} Proposition \ref{prop:tr-isos} shows that
a strict exact triangle of weight $r$ is
\zjr{approximately isomorphic in a sense similar to interleaving}, to an exact triangle in $\C_{0}$. 
  \end{rem}

\subsection{Fragmentation pseudo-metrics on
  $\mathrm{Obj}(\C)$}\label{subsubsec:frag1}

In a triangulated persistence category there is a natural notion of
iterated-cone decomposition, similar to the corresponding notion in
the triangulated setting from \S\ref{subsec:triang-gen}.

\begin{dfn}\label{def:iterated-coneD} Let $\C$ be a triangulated
  persistence category, and $X \in {\rm Obj}(\C)$. An {\em iterated
    cone decomposition} $D$ of $X$ with {\em linearization}
  $(X_1,X_{2}, ..., X_n)$ where $X_i \in {\rm Obj}(\C)$ consists of a
  family of strict exact triangles in $\C$
  \[ 
    \left\{ 
      \begin{array}{ll}
        \Delta_{1}: \, \, & X_{1}\to 0\to Y_{1}\to \Sigma^{-r_{1}} TX_{1}\\
        \Delta_2: \,\, & X_2 \to Y_1 \to Y_2 \to \Sigma^{-r_2} TX_2\\
        \Delta_3: \,\, & X_3 \to Y_2 \to Y_3 \to \Sigma^{-r_3} TX_3\\
                          &\,\,\,\,\,\,\,\,\,\,\,\,\,\,\,\,\,\,\,\,\vdots\\
        \Delta_n: \,\, & X_n \to Y_{n-1} \to X \to \Sigma^{-r_n} TX_n
      \end{array} \right.\]
  The weight of such a cone decomposition is defined by
  $$w(D) = \sum_{i=1}^n w(\Delta_i)~.~$$ 
  The linearization of $D$ is denoted by $\ell(D) = (X_1, ..., X_n)$.
\end{dfn}

\begin{prop} \label{prop-cone-ref} Assume that $X$ admits an iterated
  cone decomposition $D$ with linearization $(X_1, ..., X_n)$ and for
  some $i \in \{1, ..., n\}$, $X_i$ admits an iterated cone
  decomposition $D'$ with linearization $(A_1, ..., A_k)$. Then $X$
  admits an iterated cone decomposition $D''$ with linearization
  \begin{equation} \label{cone-ref} (X_1, ..., X_{i-1}, TA_1, ...,
    TA_k, X_{i+1}, ..., X_n).
  \end{equation}
  Moreover, the weights of these cone decompositions satisfy
  $w(D'') = w(D) + w(D')$.
\end{prop}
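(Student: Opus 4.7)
The plan is to prove the statement by induction on the length $k \geq 1$ of $D'$. Write the triangles of $D$ as $\Delta_j : X_j \to Y_{j-1} \to Y_j \to \Sigma^{-r_j} TX_j$ with $Y_0 = 0$, $Y_n = X$, $w(\Delta_j) = r_j$, and those of $D'$ as $\Delta'_\ell : A_\ell \to Z_{\ell-1} \to Z_\ell \to \Sigma^{-r'_\ell} TA_\ell$ with $Z_0 = 0$, $Z_k = X_i$, $w(\Delta'_\ell) = r'_\ell$. The engine of the argument is the weighted octahedral formula (Proposition \ref{prop-w-oct}) applied to the pair $(\Delta'_k, \Delta_i)$, whose hypotheses are met since the third vertex of $\Delta'_k$ equals the first vertex of $\Delta_i$, namely $X_i$.

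This application produces a diagram containing the strict exact triangles
\[
\Delta^{(3)} : Z_{k-1} \to Y_{i-1} \to C \to T Z_{k-1}, \qquad w(\Delta^{(3)}) = 0,
\]
\[
\Delta^{(4)} : TA_k \to C \to Y_i \to \Sigma^{-r'_k - r_i} T^2 A_k, \qquad w(\Delta^{(4)}) = r'_k + r_i.
\]
Crucially, $\Delta^{(3)}$ has middle term $Y_{i-1}$ (the previous stage of $D$) and $\Delta^{(4)}$ has middle term $C$ (the output of $\Delta^{(3)}$), so the two triangles fit consecutively at positions $i$ and $i+1$ in an iterated cone decomposition. When $k \geq 2$, substituting $(\Delta^{(3)}, \Delta^{(4)})$ for $\Delta_i$ in $D$ yields an intermediate iterated cone decomposition $\tilde D$ of $X$ with linearization $(X_1, \ldots, X_{i-1}, Z_{k-1}, TA_k, X_{i+1}, \ldots, X_n)$ and weight $w(D) + r'_k$. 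Since $Z_{k-1}$ admits the iterated cone decomposition $D'_{k-1}$ of length $k-1$ with linearization $(A_1, \ldots, A_{k-1})$ and weight $\sum_{\ell=1}^{k-1} r'_\ell$, the inductive hypothesis applied to $\tilde D$ and $D'_{k-1}$ (at position $i$, replacing $Z_{k-1}$) yields the desired $D''$ with linearization $(X_1, \ldots, X_{i-1}, TA_1, \ldots, TA_k, X_{i+1}, \ldots, X_n)$ and total weight $w(\tilde D) + w(D'_{k-1}) = w(D) + w(D')$.

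In the base case $k = 1$, we have $Z_0 = 0$, so $\Delta^{(3)}$ degenerates to $0 \to Y_{i-1} \to C \to 0$, which is exact in $\C_0$ of weight $0$ and forces a $0$-isomorphism $C \cong Y_{i-1}$. Via Proposition \ref{prop-wt-1}(i), this isomorphism transports $\Delta^{(4)}$ to a strict exact triangle $TA_1 \to Y_{i-1} \to Y_i \to \Sigma^{-r'_1 - r_i} T^2 A_1$ of weight $r'_1 + r_i$, which replaces $\Delta_i$ directly in $D$ to produce $D''$ with total weight $w(D) + r'_1 = w(D) + w(D')$. The main obstacle is the precise weight accounting in Proposition \ref{prop-w-oct}, which assigns weight $0$ to $\Delta^{(3)}$ and the full sum $r'_k + r_i$ to $\Delta^{(4)}$: any alternative approach that passes through the weighted rotation of Proposition \ref{prop-rot}, whose weight doubles at each step, would destroy additivity and fail to produce the sharp identity $w(D'') = w(D) + w(D')$.
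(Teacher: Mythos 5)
Your proof is correct, and it is essentially the same as the paper's: both arguments feed the last triangle $\Delta'_k$ of $D'$ together with $\Delta_i$ into the weighted octahedral formula (Proposition \ref{prop-w-oct}), then work backwards through $\Delta'_{k-1},\ldots,\Delta'_1$, using at each stage the weight-$0$ triangle $\Delta_3$ as the ``carrier'' into the next application. The only difference is expository: you package the iteration as an induction on $k$ and handle the termination via the degenerate $\Delta^{(3)}\colon 0\to Y_{i-1}\to C\to 0$ and a transport by Proposition~\ref{prop-wt-1}(i), whereas the paper unrolls the same recursion explicitly and invokes the normalization clause of Definition~\ref{def:triang-cat-w}(ii) at the final step; the constructions and the weight bookkeeping coincide.
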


A cone decomposition $D''$ as in the statement of Proposition
(\ref{cone-ref}) is called a {\em refinement} of the cone
decomposition $D$ with respect to $D'$.

\begin{ex} \label{ex:crefine} A single strict exact triangle
  $A \to B \to X \to \Sigma^{-r} TA$ can be regarded as a cone
  decomposition $D$ of $X$ with linearization $(T^{-1}B,A)$ such that
  $w(D) = r$. Assume that $A$ fits into a second strict exact triangle
  $E\to F\to A\to \Sigma^{-s} TE$ of weight $s$. Thus we have a
  cone-decomposition of $D'$ of $A$, with linearization $(T^{-1}F, E)$
  and $w(D')=s$.  Diagram (\ref{w-oct}) from Proposition
  \ref{prop-w-oct} yields the following commutative diagram,
  \[ \xymatrix{
      E \ar[r] \ar[d] & 0 \ar[r] \ar[d] & TE \ar[d] \\
      F \ar[r] \ar[d] & B \ar[r] \ar[d] & Y \ar[d] \\
      A \ar[r] & B \ar[r] & X} \] for some object
  $Y \in {\rm Obj}(\C)$. In particular, we obtain a strict exact
  triangle $TE \to Y \to X \to \Sigma^{-r-s} T^{2}E$ of weight
  $r+s$. Thus, we have a refinement of $D$ with respect to $D'$ as
  follows,
\jznote{  \[ D'' := \left\{
      \begin{array}{l}
        T^{-1}B\to 0 \to B \to B\\
        F  \to B \to Y \to TF \\
        TE \to Y \to X \to \Sigma^{-r-s} TE
      \end{array} \right. \]}
  Moreover, $w(D'') = r+s = w(D) + w(D')$.
\end{ex}

\begin{proof} [Proof of Proposition \ref{prop-cone-ref}] By
  definition, the cone decomposition $D$ consists of a family of
  strict exact triangles in $\C$ as follows,
  \[ 
    \left\{ 
      \begin{array}{ll}
        &\,\,\,\,\,\,\,\,\,\,\,\,\,\,\,\,\,\,\,\,\vdots\\
        \Delta_{i-1}: \,\, & X_{i-1} \to Y_{i-2} \to Y_{i-1}
                             \to \Sigma^{-r_{i-1}} X_{i-1}\\
        \Delta_i: \,\, & X_{i} \,\,\,\,\,\,\to Y_{i-1}
                         \to Y_{i} \,\,\,\,\,\,\to \Sigma^{-r_{i}} X_{i}\\
        \Delta_{i+1}: \,\, & X_{i+1} \to Y_{i} \,\,\,\,\,\,\to Y_{i+1}
                             \to \Sigma^{-r_{i+1}} X_{i+1}\\
        &\,\,\,\,\,\,\,\,\,\,\,\,\,\,\,\,\,\,\,\,\vdots\\
      \end{array} \right.\]
  We aim to replace the triangle $\Delta_i$ by a sequence of strict
  exact triangles
 \zjr{ $$\overline{\Delta}_j: TA_j \to B_{j-1} \to B_j \to \Sigma^{-s_j} T^2A_j$$}
  for $j \in \{1, ..., k\}$ with $B_{0} = Y_{i-1}$, $B_k = Y_i$ and
  such that
  \begin{equation} \label{sum-ref} \sum_{j=1}^k w(\overline{\Delta}_j)
    = w(\Delta_i) + w(D').
  \end{equation}
  In this case, the ordered family of strict exact triangles
  $(\Delta_1, ..., \Delta_{i-1}, \overline{\Delta}_1, ...,
  \overline{\Delta}_k, \Delta_{i+1}, ..., \Delta_n)$ form the
  refinement $D''$, and (\ref{sum-ref}) implies that
  \begin{equation}
    w(D'')  = \sum_{j \in \{1, ..., n\} \backslash \{i\}} w(\Delta_j) +
    \sum_{j =1}^{k} w(\overline{\Delta}_j) 
    = \sum_{j=1}^n w(\Delta_j) + w(D') = w(D) + w(D')
  \end{equation}
  as claimed. 

  In order to obtain the desired sequence of strict exact triangles we
  focus on $\Delta_i$ and, to shorten notation, we rename its terms by
  $A = X_i$, $B = Y_{i-1}$, $C = Y_i$ and $r = r_i$ so that, with this
  notation, $\Delta_{i}$ is a strict exact triangle
  $A \to B \to C \to \Sigma^{-r} TA$.

  We now fix notation for the cone decomposition $D'$ of $A=X_{i}$. It
  consists of the following family of strict exact triangles,
  \[ 
    \left\{ 
      \begin{array}{ll}
        \Delta'_1: \,\, & A_1 \to 0 \to Z_1 \to  \Sigma^{-s_{1}}TA_1\\
        \Delta'_2: \,\, & A_2 \to Z_1 \to Z_2 \to \Sigma^{-s_2} TA_2\\
                        &\,\,\,\,\,\,\,\,\,\,\,\,\,\,\,\,\,\,\,\,\vdots\\
        \Delta'_{k-1}: \,\, & A_{k-1} \to Z_{k-2} \to Z_{k-1}
                              \to \Sigma^{-s_{k-1}} TA_{k-1}\\
        \Delta'_k: \,\, & A_k \to Z_{k-1} \to A \to \Sigma^{-s_k} TA_k
      \end{array} \right.
  \]
  We will apply Proposition \ref{prop-w-oct} iteratively. The first
  step is the following commutative diagram obtained from
  (\ref{w-oct}),
  \[ \xymatrix{
      A_k \ar[r] \ar[d] & 0 \ar[r] \ar[d] & TA_k \ar[d] \\
      Z_{k-1} \ar[r] \ar[d] & B \ar[r] \ar[d]_{\mathds{1}_B} & B_{k-1} \ar[d] \\
      A \ar[r] & B \ar[r] & C} \] for some
  $B_{k-1} \in {\rm Obj}(\C)$. Define
  $$\overline{\Delta}_k :TA_{k} \to B_{k-1} \to C \to
  \Sigma^{-r - s_k} T^{2}A_k~.~$$ We have
  \begin{equation} \label{inductive-1} w(\overline{\Delta}_k) = r +
    s_k = w(\Delta_i) + w(\Delta'_k).
  \end{equation}
  We then consider the following commutative diagram again obtained
  from (\ref{w-oct}),
  \begin{equation}\label{eq:diag-del} \xymatrix{
      A_{k-1} \ar[r] \ar[d] & 0 \ar[r] \ar[d] & TA_{k-1} \ar[d] \\
      Z_{k-2} \ar[r] \ar[d] & B \ar[r] \ar[d] & B_{k-2} \ar[d] \\
      Z_{k-1} \ar[r] & B \ar[r] & B_{k-1}} 
  \end{equation}
  for some $B_{k-2} \in {\rm Obj}(\C)$. Define
  $\overline{\Delta}_{k-1}$ to be the strict exact triangle:
  $$\overline{\Delta}_{k-1}: TA_{k-1} \to B_{k-2} \to B_{k-1}
  \to \Sigma^{- s_{k-1}} T^{2}A_{k-1}~.~$$ Then
  \begin{equation} \label{inductive-2} w(\overline{\Delta}_{k-1}) =
    s_{k-1} = w(\Delta'_{k-1}).
  \end{equation}
  Inductively, we obtain $B_i \in {\rm Obj}(\C)$, strict exact
  triangles
  \jznote{
  \begin{equation}\label{eq:it-tr}
    \overline{\Delta}_{i}: TA_i \to B_{i-1} \to B_i \to
    \Sigma^{-s_i} T^{2}A_i
  \end{equation}
} \pbnote{for $2 \leq i \leq k-1$} such that
  \begin{equation} \label{inductive-i}
    w(\overline{\Delta}_i) = s_i  = w(\Delta'_i).
  \end{equation}
  The final step lies in the consideration of the following diagram,
  \begin{equation} \label{eq:final-sq}\xymatrix{
      A_{1} \ar[r] \ar[d] & 0 \ar[r] \ar[d] & TA_{1} \ar[d] \\
      0 \ar[r] \ar[d] & B \ar[r] \ar[d] & B \ar[d] \\
      Z_1 \ar[r] & B \ar[r] & B_1}
  \end{equation}
  for some \zjr{$B \in {\rm Obj}(\C)$}. Define $\overline{\Delta}_1$ to be
  the strict triangle
 \zjr{$$\overline{\Delta}_{1}: TA_1 \to B\to B_1 \to  \Sigma^{-s_1} TA_1~.~$$} Then 
  \begin{equation} \label{inductive-11} w(\overline{\Delta}_1) =
    w(\Delta'_1)=s_{1}~.~
  \end{equation}
  Together, the ordered family
  $(\overline{\Delta}_1, ..., \overline{\Delta}_k)$ form the desired
  sequence of strict exact triangles. \pbnote{Finally, the
    equalities~\eqref{inductive-1},~\eqref{inductive-i}
    and~\eqref{inductive-11}} yield
  \[ \sum_{j=1}^k w(\overline{\Delta}_j) =s_{1}+ s_2 + ... + s_k + r =
    \sum_{j=1}^k w(\Delta'_j) + w(\Delta_i) = w(D') + w(\Delta_i) \]
  as claimed in (\ref{sum-ref}).
\end{proof}

Let $\mathcal F \subset {\rm Obj}(\C)$ be a family of objects of $\C$.
For two objects $X, X' \in {\rm Obj}(\C)$, define just as in
\S\ref{subsec:triang-gen},
\begin{equation} \label{frag-met} \delta^{\mathcal F} (X, X') =
  \inf\left\{ w(D) \, \Bigg| \, \begin{array}{ll} \mbox{$D$ is an
        iterated cone decomposition} \\ \mbox{of $X$ with
        linearization}\ \mbox{$(F_1, ..., T^{-1}X', ..., F_k)$}\\
      \mbox { where $F_i \in \mathcal F$, $k \in
        \N$} \end{array} \right\}.
\end{equation}

\begin{cor} \label{tri-ineq} With the definition of $\delta^{\F}$ in
  (\ref{frag-met}), we have the following inequality:
  \[ \delta^{\F}(X, X') \leq \delta^{\F}(X, X'') + \delta^{\F}(X'', X') \]
  for any $X, X', X'' \in {\rm Obj}(\C)$.
\end{cor}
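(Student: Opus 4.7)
The plan is to establish the triangle inequality by concatenating two near-optimal iterated cone decompositions, using Proposition \ref{prop-cone-ref} as the combining tool. Given $\epsilon>0$, I would first choose an iterated cone decomposition $D_1$ of $X''$ with linearization of the form $(F'_1,\ldots,T^{-1}X',\ldots,F'_j)$, $F'_i\in\mathcal{F}$, whose weight satisfies $w(D_1)\leq \delta^{\mathcal{F}}(X'',X')+\epsilon/2$, and an iterated cone decomposition $D_2$ of $X$ with linearization $(F_1,\ldots,T^{-1}X'',\ldots,F_k)$, $F_i\in\mathcal{F}$, with $w(D_2)\leq \delta^{\mathcal{F}}(X,X'')+\epsilon/2$.

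The second step is to produce a cone decomposition of the object $T^{-1}X''$ that appears as one entry in $\ell(D_2)$, so that Proposition \ref{prop-cone-ref} can be applied at that position. Since $T$ preserves strict exact triangles and their weights (Remark \ref{rmk- set} (d)), applying $T^{-1}$ entrywise to the triangles forming $D_1$ gives an iterated cone decomposition $T^{-1}D_1$ of $T^{-1}X''$ with linearization $(T^{-1}F'_1,\ldots,T^{-2}X',\ldots,T^{-1}F'_j)$ and $w(T^{-1}D_1)=w(D_1)$.

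Now I would invoke Proposition \ref{prop-cone-ref} to refine $D_2$ at the entry $T^{-1}X''$ using $T^{-1}D_1$, obtaining a new iterated cone decomposition $D''$ of $X$. By the formula for the linearization of a refinement, the linearization of $D''$ is
\[
(F_1,\ldots,T(T^{-1}F'_1),\ldots,T(T^{-2}X'),\ldots,T(T^{-1}F'_j),\ldots,F_k) = (F_1,\ldots,F'_1,\ldots,T^{-1}X',\ldots,F'_j,\ldots,F_k),
\]
so all entries lie in $\mathcal{F}$ except for the single entry $T^{-1}X'$ at the required position; hence $D''$ is of the form allowed in the definition of $\delta^{\mathcal{F}}(X,X')$. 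Moreover, the weight formula in Proposition \ref{prop-cone-ref} gives $w(D'')=w(D_2)+w(T^{-1}D_1)=w(D_2)+w(D_1)$.

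The conclusion then follows immediately:
\[
\delta^{\mathcal{F}}(X,X')\leq w(D'')=w(D_2)+w(D_1)\leq \delta^{\mathcal{F}}(X,X'')+\delta^{\mathcal{F}}(X'',X')+\epsilon,
\]
and letting $\epsilon\to 0$ yields the inequality. The only subtlety, which is the real content of the argument, is the invocation of Proposition \ref{prop-cone-ref}, whose proof in turn relies on the weighted octahedral formula (Proposition \ref{prop-w-oct}) to splice cone decompositions without losing track of weights; once that machinery is available, the triangle inequality is essentially formal. I do not expect any genuine obstacle here, as the shift by $T$ introduced by the refinement is benign because $T$ preserves both the family $\mathcal{F}$-membership structure of the relevant entries and the weights of all strict exact triangles involved.
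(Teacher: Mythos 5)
Your proof is correct and follows essentially the same route as the paper: fix near-optimal decompositions $D_2$ of $X$ and $D_1$ of $X''$, desuspend $D_1$ to a decomposition of $T^{-1}X''$, and splice it into $D_2$ via Proposition \ref{prop-cone-ref}, observing that the $T$ appearing in the linearization formula \eqref{cone-ref} cancels the $T^{-1}$ and that weights add. The only (immaterial) difference is your use of $\epsilon/2$ in each half rather than $\epsilon$, giving a $+\epsilon$ error in place of the paper's $+2\epsilon$.
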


\begin{proof} For any $\ep>0$, there are cone decompositions $D$ of
  $X$ and $D'$ of $X''$ respectively such that
  \[ w(D) \leq \delta^{\F}(X, X'')+ \ep\,\,\,\,\,\mbox{and}\,\,\,\,
    w(D') \leq \delta^{\F}(X'', X')+ \ep \] with linearizations
  $\ell(D) = (F_1, ..., T^{-1}X'', ..., F_s)$ and
  $\ell(D') = (F'_1,..., T^{-1}X', ..., F'_k)$, respectively,
  $F_i, F'_j\in \F$.  This means that $T^{-1}X''$ has a corresponding
  cone decomposition $T^{-1}D'$ with linearization
  $\ell(T^{-1}D')=(T^{-1}F'_{1},\ldots, T^{-2} X',\ldots,
  T^{-1}F'_{k})$.  Proposition \ref{prop-cone-ref} implies that there
  exists a cone decomposition $D''$ of $X$ that is a refinement of $D$
  with respect to $T^{-1}D'$ such that
  $\ell(D'')=(F_{1},\ldots, F'_{1},\ldots, T^{-1}X',\ldots
  F'_{k},\ldots, F_{s})$ and
  \[ w(D'') = w(D) + w(D') \leq \delta^{\F}(X, X'') + \delta^{\F}(X'',
    X') + 2 \ep\] which implies the claim.
\end{proof}

Finally, there are also fragmentation pseudo-metrics specific to this
situation with properties similar to those in Proposition
\ref{prop:tr-weights-gen}.

\begin{dfn} \label{dfn-frag-met} Let $\C$ be a triangulated
  persistence category and let $\F \subset {\rm Obj}(\C)$.  The {\em
    fragmentation pseudo-metric}
  $$d^{\F}: {\rm Obj}(\C)\times {\rm Obj}(\C)\to [0,\infty)\cup \{
  +\infty\}$$ associated to $\F$ is defined by:
  \[ d^{\F}(X, X') = \max\{\delta^{\F}(X, X'), \delta^{\F}(X',
    X)\}. \]
\end{dfn}

\begin{rem} \label{ex-delta} (a) It is clear from Corollary
  \ref{tri-ineq} that $d^{\F}$ satisfies the triangle inequality and,
  by definition, it is symmetric. It is immediate to see that
  $d^{\F}(X,X)=0$ for all objects $X$ (this is because of the
  existence of the exact triangle in $\C_{0}$, \zjnote{$T^{-1}X\to 0\to X \to X$}).
  It is of course possible that this pseudo-metric \zjnote{can} be degenerate and
  it is also possible that it is not finite.

  (b) \jznote{If $X\in \mathcal F$}, then $\delta^{\F}(TX,0)=0$ because of the exact
  triangle \zjnote{$X\to 0 \to TX \to TX$}.  On the other hand, $\delta^{\F}(0,TX)$ is
  not generally trivial. However, the exact triangle \zjnote{$TX\to TX\to 0 \to T^2X$}
  shows that, if $TX\in \F$, then $\delta^{\F}(0,TX)=0$.

  (c) It follows from the previous point that if $\F = {\rm Obj}(\C)$,
  then $d^{\F}(X,X') =0$ (in other words, the pseudo-metric
  $d^{\mathcal{F}}(-,-)$ is completely degenerate).  More generally,
  if the family $\F$ is $T$ invariant (in the sense that if $X\in \F$,
  then $TX, T^{-1}X \in \F$), then $T$ is an isometry with respect to
  the pseudo-metric $d^{\F}$ and $d^{\F}(X,X')=0$ for all
  $X,X'\in \F$.

  (d) The remark \ref{rem:finite-metr} (c) applies also in this
  setting in the sense that we may define at this triangulated
  persistence level fragmentation pseudo-metrics
  $\underline{d}^{\mathcal{F}}$ given by (the symmetrization of)
  formula (\ref{eq:frag-simpl}) but making use of weighted triangles
  in $\mathcal{C}$ instead of the exact triangles in the triangulated
  category $\mathcal{D}$.
\end{rem}

Recall that by assumption $\C_0$ is triangulated and thus
additive. Therefore, for any two objects
$X,X' \in {\rm Obj}(\C_0) = {\rm Obj}(\C)$, the direct sum
$X \oplus X'$ is a well-defined object in ${\rm Obj}(\C)$.

\begin{prop} \label{prop-frag-sum} For any
  $A, B, A', B' \in {\rm Obj}(\C)$, we have
  \[ d^{\F}(A \oplus B, A' \oplus B') \leq d^{\F}(A, A') + d^{\F}(B,
    B'). \]
\end{prop}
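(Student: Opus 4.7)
The plan is to route the estimate through two intermediate comparisons by means of the triangle inequality from Corollary~\ref{tri-ineq}. I would first establish
$$\delta^\F(A \oplus B, A' \oplus B) \leq \delta^\F(A, A') \quad \text{and} \quad \delta^\F(A' \oplus B, A' \oplus B') \leq \delta^\F(B, B'),$$
then apply Corollary~\ref{tri-ineq} to chain them into $\delta^\F(A \oplus B, A' \oplus B') \leq \delta^\F(A,A') + \delta^\F(B,B')$, and take maxima with the analogous bound for $\delta^\F(A' \oplus B', A \oplus B)$ to conclude via Definition~\ref{dfn-frag-met}.

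To prove the first intermediate estimate, I would fix an iterated cone decomposition $D$ of $A$ with linearization $(F_1,\ldots,F_{j-1}, T^{-1}A', F_{j+1},\ldots,F_k)$ and $w(D) \leq \delta^\F(A,A') + \varepsilon$, and build from it a cone decomposition $\widetilde{D}$ of $A \oplus B$ of the same total weight, whose linearization is $(F_1,\ldots,F_{j-1}, T^{-1}(A'\oplus B), F_{j+1},\ldots,F_k)$. The triangles before position $j$ are kept as in $D$, reaching the intermediate object $Y^A_{j-1}$. At position $j$, the triangle $\Delta_j^A$ is replaced by its direct sum with the strict exact triangle $T^{-1}B \to 0 \to B \to \Sigma^{-r^A_j}B$ (which has weight $r^A_j$ by Proposition~\ref{prop-wt-1}(ii) applied to the tautological weight-$0$ triangle $T^{-1}B \to 0 \to B \to B$). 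Using the identification $T^{-1}A' \oplus T^{-1}B = T^{-1}(A'\oplus B)$, this yields the strict exact triangle of weight $r_j^A$
$$T^{-1}(A'\oplus B) \to Y^A_{j-1} \to Y^A_j \oplus B \to \Sigma^{-r^A_j}(A'\oplus B),$$
which is the step that introduces the $B$ summand. For positions $i>j$, the triangle $\Delta_i^A$ is direct-summed with the weight-$0$ triangle $0 \to B \to B \to 0$, preserving the weight $r^A_i$ and eventually reaching $Y^A_k \oplus B = A\oplus B$. The second intermediate estimate is proved analogously, by starting from a cone decomposition of $B$ with $T^{-1}B'$ in its linearization and fusing at the critical step with an auxiliary triangle involving $T^{-1}A'$ to produce $T^{-1}(A'\oplus B')$.

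The key technical ingredient — the step worth verifying with care — is that the direct sum of two strict exact triangles of a common weight $r$ is again strict exact of weight $r$. This is checked by applying Definition~\ref{dfn-set} componentwise: one takes the direct sum of the underlying $\C_0$-exact triangles (which remains exact in $\C_0$), of the $r$-isomorphisms $\phi_i$ (whose direct sum is an $r$-isomorphism, because the cone of a direct sum of morphisms in $\C_0$ is the direct sum of the cones and a direct sum of $r$-acyclics is $r$-acyclic by Lemma~\ref{lemma-acyclic}(i)), and of the right $r$-inverses $\psi_i$. Triangles of differing weights are first raised to a common weight via Proposition~\ref{prop-wt-1}(ii) before being summed.

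The hard part will be the bookkeeping: one must ensure that the linearization of $\widetilde{D}$ contains precisely the single element $T^{-1}(A'\oplus B)$, as required by the definition of $\delta^\F$ in~\eqref{frag-met}, rather than the summands $T^{-1}A'$ and $T^{-1}B$ spread across two separate steps. This is exactly what forces the ``fusion'' of $\Delta^A_j$ with the auxiliary triangle at a single step, instead of the naive concatenation of cone decompositions of $A$ and $B$. Once the intermediate estimates are in place, invoking Corollary~\ref{tri-ineq} and letting $\varepsilon \to 0$, then symmetrizing via Definition~\ref{dfn-frag-met}, closes the argument.
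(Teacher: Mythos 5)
Your proof is correct, and it takes a slightly different route from the paper's — in a way that is actually more careful on the key technical point. The paper constructs a single decomposition $D''$ of $A\oplus B$ of weight $w(D)+w(D')$ by concatenating the decomposition $D$ of $A$ (yielding $A$ after $s$ steps) with the decomposition $D'$ of $B$ each term summed with $0 \to A \to A \to 0$ (yielding $A\oplus B$ after the remaining $s'$ steps). Read strictly, however, the resulting linearization is $(F_1,\ldots,T^{-1}A',\ldots,F_s,F'_1,\ldots,T^{-1}B',\ldots,F'_{s'})$, which contains \emph{two} elements not belonging to $\mathcal{F}$ at separate positions rather than the single element $T^{-1}(A'\oplus B')$ that the definition~\eqref{frag-met} of $\delta^{\mathcal{F}}(A\oplus B,A'\oplus B')$ requires. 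You noticed exactly this issue, and your route — proving $\delta^{\mathcal{F}}(A\oplus B, A'\oplus B)\le\delta^{\mathcal{F}}(A,A')$ and $\delta^{\mathcal{F}}(A'\oplus B, A'\oplus B')\le\delta^{\mathcal{F}}(B,B')$ separately, fusing $B$ (resp. $A'$) with the critical triangle in each decomposition so that the linearization has exactly one non-$\mathcal{F}$ entry, and then chaining via Corollary~\ref{tri-ineq} — avoids it cleanly. Both arguments ultimately rest on the same workhorse, namely that a direct sum of strict exact triangles is strict exact with weight the maximum of the two (the paper's Lemma~\ref{claim-frag-sum}), which you verify in essentially the same componentwise way via Lemma~\ref{lemma-acyclic}. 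One tiny simplification you could make: in the fusion step at position $j$ you do not actually need to first raise the auxiliary triangle $T^{-1}B\to 0\to B\to B$ to weight $r_j^A$; Lemma~\ref{claim-frag-sum} applied directly to weights $r_j^A$ and $0$ already yields weight $\max\{r_j^A,0\}=r_j^A$ with the exit shifted by $\Sigma^{-r_j^A}$.
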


\begin{proof}
  The proof follows easily from the following lemma.

  \begin{lem} \label{claim-frag-sum} Let
    $\Delta: A \to B \to C \to \Sigma^{-r} TA$ and
    $\overline{\Delta}: \overline{A} \to \overline{B} \to \overline{C}
    \to \Sigma^{-s} T\overline{A}$ be two strict exact triangles with
    $w(\Delta) = r$ and $w(\Delta') = s$. Then
    \[ \Delta'': A \oplus \overline{A} \to B \oplus \overline{B} \to C
      \oplus \overline{C} \to \Sigma^{- \max\{r,s\}} \bigl(TA \oplus
      T\overline{A} \bigr) \] is a strict exact triangle with
    $w(\Delta'') = \max\{r,s\}$.
  \end{lem}
  \begin{proof}[Proof of Lemma \ref{claim-frag-sum}]
    By definition, there are two commutative diagrams, 
    \[ \xymatrix{
        & & \Sigma^r C \ar[d]_-{\psi} \ar[rd] & \\
        A \ar[r] & B \ar[r] \ar[rd] & C' \ar[r] \ar[d]^-{\phi} & TA\\
        & & C &} \,\,\,\,\, \xymatrix{
        & & \Sigma^s \overline{C} \ar[d]_-{\bar{\psi}} \ar[rd] & \\
        \overline{A} \ar[r] & \overline{B} \ar[r] \ar[rd]
        & \overline{C'} \ar[r] \ar[d]^-{\bar{\phi}} & T\overline{A}\\
        & & \overline{C} &} \] This yields the following commutative
    diagram,
   \jznote{ \[ \xymatrix{ & & \Sigma^{\max\{r,s\}} C \oplus \overline{C}
        \ar[d]_-{\psi \oplus \bar{\psi}} \ar[rd] & \\
        A \oplus \overline{A} \ar[r] & B \oplus \overline{B} \ar[r]
        \ar[rd] & C' \oplus \overline{C'} \ar[r] \ar[d]^-{\phi \oplus
          \bar{\phi}}
        & TA \oplus T\overline{A}\\
        & & C \oplus \overline{C} &} \] }and it is easy to check that
    $\phi \oplus \bar{\phi}$ is a
    $\max\{r,s\}$-isomorphism.
  \end{proof}

  Returning to the proof of the proposition, it suffices to prove
  $\delta^{\F}(A \oplus B, A' \oplus B') \leq \delta^{\F}(A, A') +
  \delta^{\F}(B, B')$. For any $\ep>0$, by definition, there exist
  cone decompositions $D$ and $D'$ of $A$ and $B$ respectively with
  $\ell(D) = (F_1, ..., F_{i-1}, T^{-1}A', F_{i+1},..., F_s)$ and
  $\ell(D') = (F'_1, ..., T^{-1}B', ..., F'_{s'})$ such that
  \[ w(D) \leq \delta^{\F}(A, A') + \ep \,\,\,\,\,\mbox{and}\,\,\,\,\,
    w(D') \leq \delta^{\F}(B, B') + \ep. \]

  The desired cone decomposition of $A \oplus B$ is defined as follows. 
  \begin{equation}\label{eq:sum-dec} D'' : = 
    \left\{
      \begin{array}{l} 
        F_1 \to 0 \to E_1 \to \Sigma^{-r_1} TF_1 \\
        F_2 \to E_1 \to E_2 \to \Sigma^{-r_2} TF_2\\
        \,\,\,\,\, \,\,\,\,\, \,\,\,\,\, \,\,\,\,\, \,\,\,\,\,\vdots\\
        F_s  \to E_{s-1} \to A \to \Sigma^{-r_s} TF_s\\
        F'_{1}\to A\oplus 0 \to A\oplus E'_{1}\to \Sigma^{-r'_{1}}TF'_{1}\\
        F'_{2} \to A\oplus E'_{1} \to A\oplus E'_{2} \to \Sigma^{-r'_{2}}  
        TF'_{s+2-i}\\
        \,\,\,\,\, \,\,\,\,\, \,\,\,\,\, \,\,\,\,\, \,\,\,\,\,\vdots\\
        T^{-1}B' \to A \oplus E'_j \to A \oplus E'_{j+1} \to
        \Sigma^{- r'_{B'}} B' \\
        \,\,\,\,\, \,\,\,\,\, \,\,\,\,\, \,\,\,\,\, \,\,\,\,\,\vdots\\
        F'_{s'} \to A \oplus E'_{s'-1} \to A \oplus B \to
        \Sigma^{-r'_{s'}} TF'_{s'}
      \end{array} \right. 
  \end{equation}
  Here we identify $0\oplus F'_{i}=F'_{i}$. The first $s$-triangles
  come from the decomposition of $A$ and the following $s'$ triangles
  are associated, using Lemma \ref{claim-frag-sum}, to the respective
  triangles in the decomposition of $B$ and to the triangle
  \zjnote{$0\to A\to A \to 0$} (of weight $0$).  It is obvious that
  $w(D'')=w(D)+w(D')$ and thus
  $\delta^{\F}(A \oplus B, A' \oplus B') \leq \delta^{\F}(A, A') +
  \delta^{\F}(B,B') $.
\end{proof}

The next statement
is an immediate consequence of Proposition \ref{prop-frag-sum}.

\begin{cor} \label{cor:Hsp} The set ${\rm Obj}(\C)$ with the topology induced by the fragmentation pseudo-metric $d^{\F}$ is an $H$-space relative to the operation $$(A,B)\in {\rm Obj}(\C) \times {\rm Obj}(\C)
\to A\oplus B\in  {\rm Obj}(\C) ~.~$$\end{cor}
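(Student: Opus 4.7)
The plan is to verify directly the two requirements of being an H-space: the existence of a strict identity element for the operation $\oplus$, and the continuity of $\oplus$ with respect to the topology induced by $d^{\F}$.

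First, I would take the identity element to be the zero object $e = 0 \in \mathrm{Obj}(\C)$. Since $\C_0$ is additive (by Definition~\ref{dfn-tpc}(i)), we have the literal equalities $0 \oplus X = X \oplus 0 = X$ for every $X \in \mathrm{Obj}(\C)$, which is even stronger than the H-space requirement $\mu(e,x) = \mu(x,e) = x$.

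The substantive step is to check that the direct sum
\[
\oplus : \mathrm{Obj}(\C) \times \mathrm{Obj}(\C) \longrightarrow \mathrm{Obj}(\C), \qquad (A,B) \longmapsto A \oplus B,
\]
is continuous when the source carries the product topology and the target carries the topology induced by $d^{\F}$. This follows immediately from Proposition~\ref{prop-frag-sum}: if $(A_n, B_n) \to (A, B)$ in the product topology, meaning $d^{\F}(A_n, A) \to 0$ and $d^{\F}(B_n, B) \to 0$, then
\[
d^{\F}(A_n \oplus B_n,\, A \oplus B) \;\leq\; d^{\F}(A_n, A) + d^{\F}(B_n, B) \;\longrightarrow\; 0,
\]
so $A_n \oplus B_n \to A \oplus B$ in the pseudo-metric topology. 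Thus $\oplus$ is continuous.

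Combining these two observations yields that $(\mathrm{Obj}(\C), d^{\F})$ endowed with the operation $\oplus$ and the distinguished element $0$ is an H-space in the sense recalled in Proposition~\ref{prop:tr-weights-gen}(ii). There is no genuine obstacle here; the content of the corollary is entirely packaged in Proposition~\ref{prop-frag-sum}, which in turn rests on Lemma~\ref{claim-frag-sum} and the explicit cone decomposition~\eqref{eq:sum-dec}. The only mild point worth noting is that $d^{\F}$ is merely a pseudo-metric, so the induced topology need not be Hausdorff; however, the definition of H-space recalled in Proposition~\ref{prop:tr-weights-gen}(ii) does not require Hausdorffness, only continuity of $\mu$ and the identity property, both of which are established above.
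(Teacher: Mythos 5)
Your proof is correct and matches the paper's intent exactly: the paper states that Corollary~\ref{cor:Hsp} is ``an immediate consequence of Proposition~\ref{prop-frag-sum}'' and leaves the verification to the reader, while you spell out precisely the two steps (strict unit $0$, and continuity of $\oplus$ via the inequality $d^{\F}(A\oplus B, A'\oplus B') \le d^{\F}(A,A') + d^{\F}(B,B')$) that make it immediate. One small point worth being aware of, though it does not affect correctness: the equalities $0 \oplus X = X \oplus 0 = X$ are literal only after fixing a direct-sum functor with $0$ as a strict unit (the standard convention); without that choice one only has canonical isomorphisms, but since $0$-isomorphic objects are at $d^{\F}$-distance zero this is harmless for the topological conclusion.
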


\subsubsection{Proof of Proposition
  \ref{prop:tr-weights-gen}}\label{subsubsec:proof-prop}
We now return to the setting in \S\ref{subsec:triang-gen}. Thus,
$\mathcal{D}$ is triangulated category, $w$ is a triangular weight on
$\mathcal{D}$ in the sense of Definition \ref{def:triang-cat-w}, and
the quantities $w(D)$ (associated to an iterated cone-decomposition
$D$), $\delta^{\mathcal{F}}$, $d^{\mathcal{F}}$ are defined as in
\S\ref{subsec:triang-gen}.

The first (and main) step is to establish a result similar to
Proposition \ref{prop-cone-ref}. Namely, if $X$ admits an iterated
cone decomposition $D$ with linearization $(X_1, ..., X_n)$ and some
$X_i$ admits a decomposition $D'$ with linearization
$(A_1, ..., A_k)$, then $X$ admits an iterated cone decomposition
$D''$ with linearization
$(X_1, ..., X_{i-1}, TA_1, ..., TA_k, X_{i+1}, ..., X_n)$ and
\begin{equation}\label{eq:weight-ineq2}
  w(D'') \leq w(D) + w(D')
\end{equation}
For convenience, recall from \S\ref{subsec:triang-gen} that the
expression of $w(D)$ is:
\begin{equation}\label{eq:weight-again}
 w(D)= \sum_{i=1}^{n}w(\Delta_{i})-w_{0}
\end{equation}
where \zjnote{$w_{0}=w(0\to X\xrightarrow{\mathds{1}_X} X\to 0)$} (for
all $X$).  To show (\ref{eq:weight-ineq2}) we go through exactly the
same construction of the refinement $D''$ of the decomposition $D$
with respect to $D'$, as in the proof of \ref{prop-cone-ref}, assuming
now that all shifts are trivial along the way. The analogue of diagram
(\ref{eq:final-sq}) that appears in the last step of the construction
of $D''$ remains possible in this context due to the point (ii) of
Definition \ref{def:triang-cat-w}.  By tracking the respective weights
along the construction and using Remark \ref{rem:gen-weights} (b) to
estimate the weight of $TA_{1}\to B\to B_{1}\to T^{2}A_{1}$ from
(\ref{eq:final-sq}) we deduce (with the notation in the proof of
Proposition \ref{prop-cone-ref})
\zjnote{$$\sum_{j=1}^kw(\bar{\Delta}_{j}) \leq
  w(D')+w(\Delta_{i})-w_{0}$$} which implies (\ref{eq:weight-ineq2}).
Once formula (\ref{eq:weight-ineq2}) established, it immediately
follows that $\delta^{\mathcal{F}}(-,-)$ satisfies the triangle
inequality. Further, because the weight of a cone-decomposition is
given by (\ref{eq:weight-again}), it follows that the
cone-decomposition $D$ of $X$ with linearization $(T^{-1}X)$ given by
the single exact triangle $T^{-1}X\to 0\to X\to X$ is of weight
$w(D)=0$. As a consequence, $\delta^{\mathcal{F}}( X,X)=0$. It follows
that $d^{\mathcal{F}}(-,-)$ is a pseudo-metric as claimed at the point
(i) of Proposition \ref{prop:tr-weights-gen}.

Assuming now that $w$ is subadditive, the same type of decomposition
as in equation (\ref{eq:sum-dec}) can be constructed to show that
$\delta^{\mathcal{F}}(A\oplus B, A'\oplus B')\leq
\delta^{\mathcal{F}}(A,A')+\delta^{\mathcal{F}}(B,B')+w_{0}$ which
implies the claim. \Qed

\section{A persistence triangular weight on $\C_{\infty}$}
\label{subsec:trstr-weights}

The purpose of this section is to further explore the structure of the
limit category $\C_{\infty}$ associated to a triangulated persistence
category $\C$. We already know from \S\ref{subsubsec:localization}
that the category $\C_{\infty}$ is triangulated.  The main aim here is
to use the properties of the weighted exact triangles introduced in
\S\ref{subsubsec:weight-tr} to endow $\C_{\infty}$ with a triangular
weight, in the sense of \S\ref{subsec:triang-gen}.

\subsection{Weight of exact triangles in
  $\C_{\infty}$} \label{subsubsec:ex-cinfty}

In this section we will use the weighted strict exact triangles in
$\C$ to associate weights to the exact triangles in $\C_{\infty}$.

Assume that $\C$ is a persistence category and recall its
$\infty$-level $\C_{\infty}$ from Definition \ref{dfn-c0-cinf}: its
objects are the same as those of $\C$ and its morphisms are
$\Mor_{\C_{\infty}}(A,B) = \varinjlim_{\alpha \to \infty}
\Mor_{\C}^{\alpha}(A,B)$ for any two objects $A, B$ of $\C$. For a
morphism $\bar{f}$ in $\C$ we denote by $[\bar{f}]$ the corresponding
morphism in $\C_{\infty}$ and if $f=[\bar{f}]$,
\zjnote{for $f\in\Mor_{\C_{\infty}}$ and $\bar{f}\in\Mor_{\C}$}, we say that $\bar{f}$
represents $f$. We use the same terminology for diagrams (including
triangles) in $\C$ in relation to corresponding diagrams in
$\C_{\infty}$ in the sense that a diagram in $\C$ represents one in
$\C_{\infty}$ if the objects in the two cases are the same and the
morphisms in the diagram in $\C$ represent the corresponding ones in
the $\C_{\infty}$ diagram. Clearly, all $r$-commutativities and
$r$-isomorphisms in $\C$ become, respectively, commutativities and
isomorphisms in $\C_{\infty}$. For instance, if $K$ is $r$-acyclic,
then $K$ is isomorphic to $0$ in $\C_{\infty}$.

For further reference notice also that the hom-sets of $\C_{\infty}$
admit a natural filtration as follows.  For any
$A, B \in {\rm Obj}(\C_{\infty})$, and
$f \in {\hom}_{\C_{\infty}}(A,B)$, let the {\em spectral invariant}
of $f$ be given by:
\begin{equation}\label{eq:spec} \sigma(f) : =\inf \left\{ k\in \R \cup
    \{-\infty\} \, \bigg| \, \mbox{$f = \left[\bar{f}\right]$ for some
      $\bar{f} \in \Mor_{\C}^{k}(A,B)$} \right\}
\end{equation}
and
$$\Mor_{\C_{\infty}}^{\leq \alpha}(A,B)=\{f\in \Mor_{\C_{\infty}}(A,B)
\, | \, \sigma(f)\leq \alpha\}.$$

\smallskip

Assume from now on that $\C$ is a triangulated persistence category.
In this case, we have already seen in \S\ref{subsubsec:localization}
that $\C_{\infty}$ is identified with $\C_{0}/\mathcal{AC}_{0}$, the
Verdier quotient of $\C_{0}$ by the subcategory of acyclics. Thus
$\C_{\infty}$ is triangulated with its exact triangles defined through
isomorphism with the image in $\C_{\infty}$ of the exact triangles in
$\C_{0}$, see Remark \ref{rem:ex-tr-Cinfty}.

Before proceeding, we notice that the shift functor $\Sigma$
associated to $\mathcal{C}$ (see Definition \ref{dfn-tpc}) induces a
similar functor $\Sigma : (\R, +) \to \mathrm{End}(\C_{\infty})$. We
will continue to use the same notation for the $r$-shifts $\Sigma^{r}$
and the natural transformations $\eta_{r,s}:\Sigma^{r}\to
\Sigma^{s}$. \pbnote{At the same time, in contrast to morphisms in
  $\C$, there is no meaning to the ``amount of shift'' for a morphism
  in $\C_{\infty}$ (though one can associate to such a morphism its
  spectral invariant as above).}  Similarly, the functor $T$ (which is
defined as in Remark~\ref{rem:shifts-T} (b) on all of $\C$) also
induces a similar functor on $\C_{\infty}$.

\

Given a triple of maps $\Delta: A\to B\to C\to D$ in $\C$ the shifted
triple $\Sigma^{s_1, s_2, s_3, s_4} \Delta$ was defined in
(\ref{eq:shift-tr}) and we will use the same notation for similar
triples in $\C_{\infty}$. Note however that in $\C_{\infty}$ the
inequalities relating the $s_{i}$'s and the shifts of $u,v,w$ are no
longer relevant \pbnote{(in fact, do not make sense)} and the shift
$\Sigma^{s_{1},s_{2},s_{3},s_{4}}$ will be used in $\C_{\infty}$
without these constraints.

\begin{dfn} \label{dfn-extri-inf} The {\em unstable weight},
  $w_{\infty}(\Delta)$, of an exact triangle
  $\Delta: A \xrightarrow{u} B \xrightarrow{v} C \xrightarrow{w} TA$
  in $\C_{\infty}$ is the infimum of the weights of the strict exact
  triangles in $\C $ of the form \zjnote{$(\widetilde{\Delta}, r)$} where
  \begin{equation} \label{extri-shifted} \widetilde{\Delta} \ : \ A
    \xrightarrow{u'} \Sigma ^{-q} B \xrightarrow{v'}\Sigma^{-s} C
    \xrightarrow{w'}\Sigma^{-r} TA
  \end{equation}
  $0\leq q\leq s\leq r$ and $\widetilde{\Delta}$ represents $\Delta$
  in the following sense: the class the composition
  $A\xrightarrow{u'} \Sigma^{-q}B\xrightarrow{\eta_{-q,0}} B$ in
  $\C_{\infty}$ equals $u$, and similarly for $v'$ and $w'$. The {\em weight} of $\Delta$, $\bar{w}(\Delta)$, is given by:
  $$\bar{w}(\Delta)=\inf\{ \ w_{\infty}(\Sigma^{s,0,0,s}\Delta)
  \ | \ s\geq 0\}~.~$$
\end{dfn}

\begin{rem} (a) By definition,
  $\bar{w}(\Delta) \leq w_{\infty}(\Delta)$, and Example
  \ref{ex-embed} below shows that this inequality can be strict.
\end{rem}

For the weight $\bar{w}$ of exact triangles in $\C_{\infty}$ defined
as above, recall that $\bar{w}_0$ denotes the normalization constant
in Definition \ref{def:triang-cat-w} (ii).  The main result is the
following:

\begin{thm}\label{thm:main-alg}
  Let $\C$ be a triangulated persistence category. The limit category
  $\C_{\infty}$ with the triangular structure coming from the
  identification \zjnote{$\C_{\infty} \simeq \C_{0}/\mathcal{AC}_{0}$ in Proposition \ref{prop:verdier-loc}} admits
  $\bar{w}$ as a triangular, subadditive weight with $\bar{w}_{0}=0$.
\end{thm}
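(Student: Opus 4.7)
The plan is to verify that $\bar{w}$ satisfies the three conditions of Definition \ref{def:triang-cat-w} plus subadditivity, by lifting statements in $\C_\infty$ to statements about strict exact triangles in $\C$ and invoking the machinery built up in \S\ref{subsubsec:weight-tr}. Non-negativity is automatic from the definition of the weight of a strict exact triangle. For the normalization $\bar{w}_0 = 0$: any triangle of the form $0\to X\xrightarrow{\mathds{1}_X}X\to 0$ and any of its rotations is exact in $\C_0$, hence a strict exact triangle of weight $0$ in $\C$ by Remark \ref{rmk- set} (b); projecting to $\C_\infty$ gives $w_\infty=0$ and a fortiori $\bar{w}=0$. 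The auxiliary clause in condition (ii) (simplification~(\ref{eq:simpl-d3}) when $B=0$) follows from the octahedral construction below, since the constructed $\Delta_3$ is then isomorphic in $\C_\infty$ to a trivial triangle of weight $0$.

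For subadditivity, fix an exact triangle $\Delta$ in $\C_\infty$ and an object $X\in\Ob(\C)$. Given $\epsilon>0$, choose $s\geq 0$ and a strict exact representative $\widetilde{\Delta}$ in $\C$ of $\Sigma^{s,0,0,s}\Delta$ of weight at most $\bar{w}(\Delta)+\epsilon$. Viewing $X$ as a trivial strict exact triangle of weight $0$, Lemma \ref{claim-frag-sum} shows that $X\oplus\widetilde{\Delta}$ is a strict exact triangle of weight $\max\{r,0\}=r$, and it represents $\Sigma^{s,0,0,s}(X\oplus\Delta)$. Hence $\bar{w}(X\oplus\Delta)\leq\bar{w}(\Delta)+\epsilon$, and letting $\epsilon\to 0$ gives subadditivity. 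The same argument works for $\Delta\oplus X$.

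The main step is the weighted octahedral axiom. Given composable exact triangles $\Delta_1:A\to B\to C\to TA$ and $\Delta_2:C\to D\to E\to TC$ in $\C_\infty$, fix $\epsilon>0$ and, for $i=1,2$, choose strict exact representatives $\widetilde{\Delta}_i$ in $\C$ of suitably shifted forms $\Sigma^{s_i,0,0,s_i}\Delta_i$ with weights $r_i\leq\bar{w}(\Delta_i)+\epsilon$. The delicate technical point is alignment: by Definition \ref{dfn-extri-inf}, in $\widetilde{\Delta}_1$ the object at the ``$C$''-position appears as a shift $\Sigma^{-s}C$, while in $\widetilde{\Delta}_2$ the initial object appears as a possibly different shift of $C$, so the two strict triangles are not literally composable in $\C$. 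To remedy this, I would use the freedom to further shift each representative (via Remark \ref{rmk-shift-notation} together with the natural transformations $\eta_{-p,-q}$) to produce modified strict exact representatives of the same weighted class whose middle object coincides in $\C$, without increasing the total weight. One then applies Proposition \ref{prop-w-oct} (the weighted octahedral formula in $\C$) to obtain strict exact triangles $\widetilde{\Delta}_3$ and $\widetilde{\Delta}_4$ fitting the required octahedral pattern in $\C$ and satisfying $w(\widetilde{\Delta}_3)+w(\widetilde{\Delta}_4)\leq r_1+r_2$. Projecting back to $\C_\infty$ yields exact triangles $\Delta_3,\Delta_4$: the $r$-commutativities in the strict octahedron become genuine commutativities in $\C_\infty$ (and the anti-commutativity of the bottom-right square persists), so the diagram from Definition \ref{def:triang-cat-w} (i) is realised. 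By definition of $\bar{w}$, this yields $\bar{w}(\Delta_3)+\bar{w}(\Delta_4)\leq r_1+r_2\leq \bar{w}(\Delta_1)+\bar{w}(\Delta_2)+2\epsilon$, and letting $\epsilon\to 0$ completes the proof.

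The principal obstacle is this alignment step: reconciling the different shift profiles of the two strict representatives while preserving the weight bound, so that Proposition \ref{prop-w-oct} applies. Everything else — normalization, subadditivity, and the verification of commutativities after projection (the latter via Proposition \ref{prop-ind} and Proposition \ref{prop:tr-isos}) — is comparatively routine bookkeeping once the alignment is achieved.
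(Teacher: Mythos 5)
Your proposal omits a step that the paper devotes an entire subsection to (\S\ref{subsubsec:fuw}): showing that $\bar{w}$ actually takes {\em finite} values, i.e.\ that every exact triangle in $\C_\infty$ admits a strict exact representative in $\C$ of the form~\eqref{extri-shifted} with finite weight. This is not automatic. By Remark~\ref{rem:ex-tr-Cinfty}, an exact triangle in $\C_\infty$ is merely {\em isomorphic} in $\C_\infty$ to the image of a $\C_0$-exact triangle; the isomorphism of triangles consists of three $s$-isomorphisms in $\C$ for some finite $s$, and converting that data into an actual strict exact representative requires a nontrivial diagram-chase: represent the vertical arrows by $\C_0$-morphisms, use Lemma~\ref{lemma-comp} to promote $r'$-commuting squares to genuinely $\C_0$-commuting ones at the cost of enlarging the shift parameters, and intercalate an auxiliary $\C_0$-exact row to finally obtain a strict exact triangle of finite weight. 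Without this argument $\bar{w}$ might take the value $+\infty$, so Definition~\ref{def:triang-cat-w}, which requires $w:\mathcal{T}_{\mathcal{D}}\to[0,\infty)$, would not even apply.

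Your subadditivity argument also has a gap. If $\widetilde\Delta:\Sigma^sA\to\Sigma^{-q}B\to\Sigma^{-q'}C\to\Sigma^{s-r}TA$ is a strict exact representative of $\Sigma^{s,0,0,s}\Delta$, then summing it with the unshifted trivial triangle $0\to X\to X\to 0$ produces middle objects $X\oplus\Sigma^{-q}B$ and $X\oplus\Sigma^{-q'}C$, which are not of the required form $\Sigma^{-p}(X\oplus B)$, $\Sigma^{-p'}(X\oplus C)$ (unless $q=q'=0$); so $X\oplus\widetilde\Delta$ is {\em not} a strict representative of $\Sigma^{s,0,0,s}(X\oplus\Delta)$. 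The paper instead sums $\widetilde\Delta$ with the shifted trivial triangle $0\to\Sigma^{-q}X\to\Sigma^{-q'}X\to 0$, which has weight $q'-q\leq r$, so Lemma~\ref{claim-frag-sum} still gives a representative of weight $\max\{q'-q,r\}=r$. Your outline of the octahedral axiom is otherwise close to the paper's, and your diagnosis of the alignment problem is correct; the paper resolves it by the uniform shift $\Sigma^{-t_1-t_2}\widetilde{\Delta}_2$, which preserves weight by Remark~\ref{rmk-shift-notation}(a). But worth stressing is the paper's key observation that the constructed $\Delta_3$ comes from a $\C_0$-exact triangle by a shift of the form $\Sigma^{-p,0,0,-p}$ and hence has $\bar{w}(\Delta_3)=0$ {\em exactly}, the stabilization built into $\bar{w}$ absorbing the auxiliary shift $p$; this exact vanishing, rather than the $\epsilon\to 0$ limit you propose (which would require knowing that the $\bar{w}$-values of the constructed $\Delta_3,\Delta_4$ are stable as the representatives $\widetilde{\Delta}_i$ vary), is what makes the estimate usable.
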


A persistence refinement of a triangulated category $\mathcal{D}$ is a
TPC, $\widetilde{\mathcal{D}}$, such that
$\widetilde{\mathcal{D}}_{\infty}=\mathcal{D}$. The triangular weight
$\bar{w}$ as in Theorem \ref{thm:main-alg} is called the persistence
weight induced by the respective refinement. The following consequence
of Theorem \ref{thm:main-alg} is immediate from the general
constructions in \S\ref{subsec:triang-gen}.

\begin{cor} If a small triangulated category $\mathcal{D}$ admits a
  TPC \pbnote{refinement} $\widetilde{\mathcal{D}}$, then
  $\mathrm{Obj}(\mathcal{D})$ is endowed with a family of
  fragmentation pseudo-metrics $\bar{d}^{\mathcal{F}}$, defined as in
  \S\ref{subsec:triang-gen}, associated to the persistence weight
  $\bar{w}$ induced by the refinement $\widetilde{\mathcal{D}}$ and it
  has an H-space structure with respect to the topologies induced by
  these metrics.
\end{cor}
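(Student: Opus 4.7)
The plan is to derive the corollary as a formal consequence of Theorem \ref{thm:main-alg} combined with the general framework set up in \S\ref{subsec:triang-gen}. By the hypothesis that $\widetilde{\mathcal{D}}$ is a TPC refinement of $\mathcal{D}$, we have $\widetilde{\mathcal{D}}_{\infty} = \mathcal{D}$ as triangulated categories. Theorem \ref{thm:main-alg} therefore applies and equips $\mathcal{D}$ with a subadditive triangular weight $\bar{w}$, having normalization constant $\bar{w}_0 = 0$, in the sense of Definition \ref{def:triang-cat-w}. Note that the collection of objects over which the infima in (\ref{frag-met-0}) are taken is indeed a set rather than a proper class, because $\mathcal{D}$ (equivalently, $\widetilde{\mathcal{D}}$) is assumed small.

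With $\bar{w}$ in hand, for each family $\mathcal{F} \subset \mathrm{Obj}(\mathcal{D})$ I define $\bar{d}^{\mathcal{F}}$ precisely as in (\ref{frag-met-0}) followed by its symmetrization as in Proposition \ref{prop:tr-weights-gen}, using $\bar{w}$ in place of $w$ throughout. Part (i) of Proposition \ref{prop:tr-weights-gen} then yields directly that $\bar{d}^{\mathcal{F}}$ is a pseudo-metric on $\mathrm{Obj}(\mathcal{D})$. Furthermore, since $\bar{w}$ is subadditive and $\bar{w}_0 = 0$, part (ii) of the same Proposition gives the estimate
\[ \bar{d}^{\mathcal{F}}(A \oplus B, A' \oplus B') \leq \bar{d}^{\mathcal{F}}(A,A') + \bar{d}^{\mathcal{F}}(B,B'), \]
which is exactly the continuity of the direct sum with respect to the topology induced by $\bar{d}^{\mathcal{F}}$. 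The zero object plays the role of two-sided identity up to distance zero, since the exact triangle $0 \to X \xrightarrow{\mathds{1}_X} X \to 0$ has $\bar{w}$-weight $\bar{w}_0 = 0$, whence $\bar{d}^{\mathcal{F}}(X \oplus 0, X) = \bar{d}^{\mathcal{F}}(0 \oplus X, X) = 0$. This establishes the H-space structure claimed in the statement.

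The main obstacle has already been surmounted in Theorem \ref{thm:main-alg}: the delicate content lies in verifying the weighted octahedral axiom and the normalization property for $\bar{w}$ at the $\infty$-level, which requires the full machinery of strict exact triangles developed in \S\ref{subsubsec:weight-tr}. Once those inputs are granted, the present corollary reduces to a mechanical packaging of the abstract fragmentation construction of \S\ref{subsec:triang-gen}, and no further substantive work is needed.
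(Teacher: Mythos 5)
Your proposal is correct and follows exactly the route the paper intends: the corollary is an immediate consequence of Theorem~\ref{thm:main-alg} (which supplies the subadditive triangular weight $\bar{w}$ with $\bar{w}_0=0$ on $\mathcal{D}=\widetilde{\mathcal{D}}_{\infty}$) together with Proposition~\ref{prop:tr-weights-gen} (which, under those hypotheses, already yields both the pseudo-metric and the H-space conclusion). Your added remark about the identity element is a harmless elaboration of what Proposition~\ref{prop:tr-weights-gen}(ii) already asserts, and your observation that smallness keeps the infima in~\eqref{frag-met-0} set-theoretically well-defined is a reasonable aside, so no further work is needed.
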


\begin{rem}
  (a) We have seen in \S\ref{subsubsec:frag1}, in particular
  Definition \ref{dfn-frag-met}, that for a TPC $\mathcal{C}$ there
  are fragmentation pseudo-metrics $d^{\mathcal{F}}$ defined on
  $\mathrm{Obj}(\mathcal{C})$.  The metrics $\bar{d}^{\mathcal{F}}$
  associated to the persistence weight $\bar{w}$ on $\C_{\infty}$,
  through the construction in \S\ref{subsec:triang-gen}, are defined
  on the same underlying set, $\mathrm{Obj}(\mathcal{C})$.  The
  relation between them is
  $$\bar{d}^{\mathcal{F}}\leq d^{\mathcal{F}}$$ for any family of
  objects $\mathcal{F}$. The interest to work with
  $\bar{d}^{\mathcal{F}}$ rather than with $d^{\mathcal{F}}$ is that
  if $\mathcal{F}$ is a family of triangular generators of
  $\C_{\infty}$, and is closed to the action of $T$, then
  $\bar{d}^{\mathcal{F}}$ is finite (see Remark
  \ref{rem:finite-metr}).

  (b) As discussed in \zjr{Remark \ref{rem:finite-metr} (c)} (and also in \zjr{Remark
  \ref{ex-delta} (d)}), in this setting too we may define a simpler
  (but generally larger) fragmentation pseudo-metric of the type
  $\underline{\bar{d}}^{\mathcal{F}}$.
\end{rem}

We postpone the proof of \jznote{Theorem \ref{thm:main-alg} to \S \ref{subsubsec:proofTh1}. Here, we pursue} with a few \zjnote{examples}
shedding some light on Definition \ref{dfn-extri-inf}.

\begin{ex} \label{ex-tri-c-c-inf} Assume that
  $\Delta: A \xrightarrow{\bar{u}} B \xrightarrow{\bar{v}} C
  \xrightarrow{\bar{w}} \Sigma^{-r} TA$ is a strict exact triangle of
  weight $r$ in $\C$. Consider the following triangle in
  $\C_{\infty}$,
  \[ \Delta_{\infty}: A \xrightarrow{u} B \xrightarrow{v} C
    \xrightarrow{w} TA \] where $u = [\bar{u}]$, $v = [\bar{v}]$ and
  $w = [C \xrightarrow{\bar{w}} \Sigma^{-r} TA
  \xrightarrow{(\eta_{-r,0})_{TA}} TA]$. We claim that
  $\Delta_{\infty}$ is an exact triangle in $\C_{\infty}$ and
  $w_{\infty}(\Delta_{\infty}) \leq r$.  Indeed, by construction,
  $\Delta_{\infty}$ is represented by
  $\bar{\Delta}_{\infty}: A \xrightarrow{\bar{u}} B
  \xrightarrow{\bar{v}} C \xrightarrow{(\eta_{-r,0})_A \circ \bar{w}}
  TA$ and $\ceil*{\bar{u}} = \ceil*{\bar{v}} =0$,
  $\ceil*{(\eta_{-r,0})_A \circ \bar{w}} = r +0 = r$. The shifted
  triangle
  $\widetilde \Delta_{\infty} = \Sigma^{0,0,0,-r}
  \bar{\Delta}_{\infty}$ obviously equals $\Delta$, the initial strict
  exact triangle of weight $r$. Thus,
  $[\widetilde{\Delta}_{\infty}] = \Delta_{\infty}$ and
  $w(\widetilde{\Delta}_{\infty}) = r$. Therefore,
  $w_{\infty}(\Delta_{\infty}) \leq r$.
\end{ex}

\begin{rem} \label{rem:ex0tr} A special case of the situation in
  Example \ref{ex-tri-c-c-inf} is worth emphasizing.  Any exact
  triangle in $\C_0$ induces an exact triangle in $\C_{\infty}$ with
  unstable weight equal to $0$.  This implies that any morphism
  $f\in \Mor_{\C_{\infty}}(A,B)$ with $\sigma(f)\leq 0$ can be
  completed to an exact triangle of unstable weight $0$ in
  $\C_{\infty}$. Indeed, we first represent $f$ by a morphism
  $\bar{f}\in \Mor^{\alpha}_{\C}(A,B)$ with $\alpha\leq 0$. If
  $\alpha\not=0$, we shift $\bar{f}$ up using the persistence
  structure maps and denote $\bar{f}'=i_{\alpha,0} (\bar{f})$. We
  obviously have $[\bar{f}']=f$. We then complete $\bar{f}'$ to an
  exact triangle in $\C_{0}$. The image of this triangle in
  $\C_{\infty}$ is exact, of unstable weight $0$, and has $f$ as the
  first morphism in the triple.
\end{rem}

\begin{ex} \label{ex-id-rigid} Consider the strict exact triangle
  $\Delta: A \to 0 \to \Sigma^{-r}TA \xrightarrow{\mathds{1}} \Sigma^{-r} TA$
  in $\C$, which is of weight $r\geq 0$ (see
  Remark~\ref{rmk-shift-notation}~(b)). Let $\Delta_{\infty}$ be the
  following triangle in $\C_{\infty}$
  \[ \Delta_{\infty}: A \to 0 \to \Sigma^{-r} TA
    \xrightarrow{\eta_{-r,0}} TA. \] We claim that if
  \zjnote{$\sigma(\mathds{1}_A) = 0$}, then
  $\bar{w}(\Delta_{\infty})=w_{\infty}(\Delta_{\infty}) = r$.  Indeed,
  assume $w_{\infty}(\Delta_{\infty}) = s<r$.  Then there exists a
  strict exact triangle in $\C$ of the form
  \[ A \to 0 \to \Sigma^{-r'} TA \xrightarrow{w'} \Sigma^{-s} TA, \]
  with $r'\geq r$, of weight $s$ and such that $\ceil*{w'}=0$,
  $[\eta_{-s,0}\circ w'\circ \eta_{-r,-r'}]=[\eta_{-r,0}]$. Notice
  $\ceil*{(\eta_{-s,0}\circ w'\circ \eta_{-r,-r'})}=r-r'+s$. Thus, as
  $r'\geq r>s$, we deduce $\sigma([\eta_{-r,0}])< r$. By writing
  $\mathds{1}_{A}= \eta_{0,-r}\circ \eta_{-r,0}$ we deduce
  $\sigma(\mathds{1}_{A})<0$. We conclude $w_{\infty}(\Delta_{\infty})=r$. We
  next consider a triangle $\Sigma^{k,0,0,k}\Delta_{\infty}$:
  $$\Sigma^{k}A \to 0 \to \Sigma^{-r} TA \to \Sigma^{k}TA$$
  and rewrite it as
  $$ \Delta':B\to 0 \to \Sigma^{-r-k} TB\to TB$$
  with $B= \Sigma^{k}A$. Suppose that $w_{\infty}(\Delta') < r+k$,
  then $\sigma (\mathds{1}_{B}) <0$ by the previous argument. This implies
  $\sigma (\mathds{1}_{A})<0$ and again contradicts our assumption. Thus
  $w_{\infty}(\Delta')=r+k$ and $\bar{w}(\Delta_{\infty})=r$.
\end{ex}

\ocnote{
\begin{rem} \label{rem:sig-id} For an object $A\in\mathcal{O}b(\C)$ we always have $\sigma(\mathds{1}_{A})\leq 0$. We have 
seen just above that if $\sigma([(\eta_{-r,0})_A]) < r$, then 
$\sigma(\mathds{1}_{A})<0$. The same conclusion remains true  if $\sigma([(\eta_{0,-r})_{A}]) < - r$ by the same argument.  Another useful observation is that if the map $\eta_{s}: \Sigma^{s}A\to A$ is an $r$-isomorphism, with $r<s$, then $\sigma(\mathds{1}_{A})\leq r-s$. 
Indeed, if $\eta_{s}$ is an $r$-isomorphism,
then it has a left $r$-inverse $\psi:\Sigma^{r}A\to \Sigma^{s}A$ with $\eta_{s}\circ \psi=\eta_{r}$ which implies $\sigma([\eta_{r}])\leq \sigma ([\eta_{s}])\leq -s $ and thus $\sigma(\mathds{1}_{A})\leq r-s$.
\end{rem}
}

\begin{ex} \label{ex-embed} Let $f \in \Mor_{\C_{\infty}}(A,B)$ and
  suppose that $\sigma(f) > 0$.  We will see here that we can extend
  $f$ to an exact triangle in $\C_{\infty}$, of unstable weight
  $\leq \sigma(f)+\epsilon$ (for any $\epsilon >0$) but, at the same
  time, no triangle extending $f$ has unstable \jznote{weight} less than
  $\sigma(f)$. \pbnote{Fix $\sigma(f) < t \leq \sigma(f)+\epsilon$ and
    let $\bar{f}\in \Mor^{t}_{\C}(A,B)$ be a representative of $f$.}
  Consider the composition
  $f': A \xrightarrow{\bar{f}} B \xrightarrow{(\eta_{0,-t})_B}
  \Sigma^{-t} B$. Then \zjnote{$f' \in \Mor_{\mathcal C}^0(A,\Sigma^{-t} B)$}. There exists
  an exact triangle in $\C_{0}$
  $$\Delta: A \xrightarrow{f'} \Sigma^{-t} B \xrightarrow{v'}
  C \xrightarrow{w} TA$$ for some $C \in {\rm Obj}(\C_0)$. In
  particular \pbnote{$w \in \Mor_{\C}^0(C,TA)$}. Next, consider the
  following triangle
  \[ \Sigma^{0,0,0,-t} \Delta: A \xrightarrow{f'} \Sigma^{-t} B
    \xrightarrow{v'} C \xrightarrow{w'} \Sigma^{-t} TA,
    \,\,\,\,\mbox{where $w' = \eta_{t}^{TA} \circ w$}. \] By
  \pbnote{Remark~\ref{rmk-shift-notation}(b)},
  $\Sigma^{0,0,0,-t} \Delta$ is a strict exact triangle in $\C$ of
  weight $t$. Finally, consider the following triangle in $\C$,
  obtained by shifting up the last three terms of \jznote{$\Sigma^{0,0,0,-t}\Delta$}:
  \[ \bar{\Delta}: A \xrightarrow{\bar{f}= (\eta_{-t, 0})_B \circ f'}
    B \xrightarrow{v: = \Sigma^{t} v'} \Sigma^t C \xrightarrow{w:
      =\Sigma^{t} w'} TA. \] Its image in $\C_{\infty}$ is the
  triangle
  \begin{equation} \label{eq:triangle4} \Delta_{\infty}: A
    \xrightarrow{f=[\bar{f}]} B \xrightarrow{[v]} \Sigma^t C
    \xrightarrow{[w]} TA
  \end{equation}
  and is exact. In the terminology of Definition \ref{dfn-extri-inf},
  the representative $\tilde{\Delta}$ of $\Delta_{\infty}$ is the
  strict exact triangle $\Sigma^{0,0,0,-t} \Delta$. In particular,
  $w_{\infty}(\Delta_{\infty})\leq t$.  Notice that Definition
  \ref{dfn-extri-inf} immediately implies that any triangle
  $\Delta'': A\xrightarrow{f} B\to D\to TA$ in $\C_{\infty}$ satisfies
  $w_{\infty}( \Delta'')\geq \sigma (f)$ (because, with the notation
  of the definition, the weight of the triangle $\tilde{\Delta}$ in
  that definition is at least $q$).  At the same time
  $\bar{w}(\Delta_{\infty})=0$ because
  $\Sigma^{t,0,0,t}(\Delta_{\infty})$ has as representative
  $$\Sigma^{t}\Delta : \Sigma^{t}A\to B\to \Sigma^{t}C\to
  \Sigma^{t}TA$$ which is exact in $\C_{0}$.
\end{ex}

\begin{rem}\label{rem:tr-ex2}
  (a) The definition of the weights of the exact triangles in $\C_{\infty}$ is
  designed precisely to allow for the construction in Example
  \ref{ex-embed}. This is quite different compared to the case when
  the spectral invariant of $f$ is non positive (compare with Remark
  \ref{rem:ex0tr}) because the persistence structure maps can be used
  to ``shift'' up but not down.  It also follows from Example
  \ref{ex-embed} that for $f\in \Mor_{\C_{\infty}}(A,B)$ with
  $\sigma(f)\geq 0$ we have:
  \begin{equation}
    \sigma(f)= \inf \{\  w_{\infty}(\Delta) \ | \ \Delta :
    A\xrightarrow{f} B\to C \to TA \ \}~.~
  \end{equation}

  (b) It is not difficult to see that if
  $w_{\infty}(\Sigma^{s,0,0,s}\Delta)\leq r$ for a triangle $\Delta$
  in $\C_{\infty}$ and $s\geq 0$, then $w_{\infty}(\Delta)\leq
  r+s$. 
\end{rem}

\ocnote{
\begin{ex}\label{ex-embed2} Let $\Delta: A\to B\to C\to TA$ be an
  exact triangle in $\C_{0}$. It is clear that, in $\C_{\infty}$ and
  for $s\leq 0$, the corresponding triangles of the form
  $\Delta_{s}:\Sigma^{s}A\to B\to C\to \Sigma^{s}TA$ (defined using
  the (pre)-composition of the maps in $\Delta$ with the appropriate
  \zjnote{maps} $\eta$'s \zjnote{on $A$}) have the property $\bar{w}(\Delta_{s})=0$. On the other
  hand, if $s>0$ this is no longer the case, in general. Indeed,
  assuming $s>0$ and $\bar{w}(\Delta_{s})=0$ it follows that for some
  possibly even larger $r>0$ we have $w_{\infty}(\Delta_{r})=0$.  This
  means that for any sufficiently small $\epsilon >0$, there is an exact triangle in $\C_{0}$ of the form
  $\Sigma^{r'}A\to \Sigma^{-\epsilon'}B\to C'\to \Sigma^{r'}TA$ together with an $\epsilon$-isomorphism $\phi:C'\to \Sigma^{-\epsilon''}C$ satisfying the conditions in Definition \ref{dfn-set} and with $0\leq \epsilon'\leq \epsilon''\leq \epsilon$ (see Definition \ref{dfn-extri-inf}), $r'\geq r$. This triangle can be
  compared to a shift   $\Sigma^{-2\epsilon}\Delta$ of the initial $\Delta$
  (the constant $2$ is necessary to ensure the commutativity of the rectangles in the comparison diagram - see Remark \ref{rmk- set} (c)). 
  The resulting commutative diagram is:
  \[ \xymatrix{ \Sigma^{r'}A\ar[d]^{\eta^{A}} \ar[r] &
    \Sigma^{-\epsilon'}B'\ar[d]^{ \eta^{B}} \ar[r]    & C'
    \ar[r]\ar[d]^{\eta_{2\epsilon}^{C}\circ\phi} & \Sigma^{r'}TA \ar[d]^{\eta^{A}}\\
    \Sigma^{-2\epsilon}A\ar[r] & \Sigma^{-2\epsilon}B \ar[r]
    &\Sigma^{-2\epsilon} C \ar[r]&\Sigma^{-2\epsilon} TA }\] 
  The map out of $C'$ in this diagram is a $3\epsilon$-isomorphism and, by 
Proposition \ref{cor-1} ii, we deduce that $\eta^{A}_{r'+2\epsilon}$ is a $15\epsilon$-isomorphism.  We can take $\epsilon$ as small as needed and 
 we deduce $\sigma(\mathds 1_{A})\leq -r'\leq -s$.  
\end{ex}
}

\ocnote{
\begin{ex} \label{ex-other} Let
  $\Delta : A\to 0\to \Sigma^{r} TA \to TA$ be a triangle in
  $\C_{\infty}$ with the last map (the class of) $\eta^{TA}_{r}$ and
  with $r\geq 0$.  Assuming $\sigma(\mathds 1_{A})=0$, we claim that
  $w_{\infty}(\Delta) = r$ and $\bar{w}(\Delta)=0$. Indeed, the fact
  that $\bar{w}(\Delta)=0$ is obvious because
  $\Sigma^{r,0,0,r}\Delta :\Sigma^{r}A\to 0\to \Sigma^{r} TA\to
  \Sigma^{r}TA$ is exact in $\C_{0}$.  Now assume that
  $w_{\infty}(\Delta)<r$. Then there is a triangle
  $A\to 0 \to \Sigma^{r-s'} TA\xrightarrow{u} \Sigma^{-s}TA$ which is
  strict exact in $\C$ and with $s\geq s'$ and $w_{\infty}(\Delta)\leq s <r$. Using the definition of
  strict exact triangles and the existence of the exact triangle
  $A\to 0 \to TA\to TA$ in $\C_{0}$, we deduce that there exists an
  $s$-isomorphism $\phi: TA \to \Sigma^{r-s'}TA$ with a right inverse
  $\psi:\Sigma^{s+r-s'}TA \to TA$ that coincides with $\eta^{TA}$ in
  $\C_{\infty}$. As a result, $\phi$ coincides with
  $\eta^{TA}_{0,r-s'}$ in $\C_{\infty}$ and thus
  $0=\ceil*{\phi}\geq r-s'$ ($\sigma (\mathds 1_{A})=0$ implies that
  $\sigma([\eta^{TA}_{0,r-s'}])=r-s'$). Therefore, $s'\geq r$ which
  contradicts \zjnote{our assumption} $s<r$.
\end{ex}
}

\begin{ex}\label{ex:sequences2}
Another examples of interest is given by the triangle in $\C_{\infty}$:
$$\Delta :  0\to X\to \Sigma^{r} X \to 0$$
where the map $X\to \Sigma^{r}X$ is the class \zjnote{of $(\eta_{0,r})_X$}. We
claim that \zjnote{$w_{\infty}(\Delta)\leq |r|$}. We start with the case
$r>0$. In that case, by shifting down the last two objects by $r$, we
obtain a strict exact triangle in $\C$: $(0\to X\to X\to 0, r)$ (here
it is important to view strict exact triangles as pairs
$(\mathrm{triangle},\ \mathrm{weight})$ as in Definition
\ref{dfn-set}). In the case $r\leq 0$, there is a strict exact
triangle in $\C$ \zjnote{$(0\to X\to \Sigma^{r}X\to 0, -r)$} that can be
reached from $\Delta$ by shifting down the last object \zjr{by $|r|$}. This
shows the claim.  Additionally, it is easy to see that if
$\sigma(\mathds 1_{X})=0$, then $w_{\infty}(\Delta)=r$.
\end{ex}

\begin{ex}\label{ex:sequences3} In this example we consider an exact
  triangle in $\C_{\infty}$
  $$\Delta \ : \  0\to X\to Y\to 0 ~.~$$
  We claim that
  $\bar{w}(\Delta)= \inf \{ r \geq 0 \ | \ \exists \ \text{an
    $r$-isomorphism } \phi : \Sigma^{s} X \to Y \text{ with } 0\leq
  s\leq r\}$. By the definition of the weight of \zjnote{triangles in $\C_{\infty}$,} we are looking for strict exact triangles in $\C$ of the
  form
  \zjnote{$$(0\to X\to \Sigma^{-s} Y\to \Sigma^{-r}0, r)$$} with $s\leq r$.
  The unstable weight, in this case equal to the stable weight, is
  obtained by infimizing $r$. The existence of such a strict exact
  triangle is equivalent to the existence of an $r$-isomorphism
  $\phi: X\to \Sigma^{-r}Y$ which shows the claim.
\end{ex}

\begin{ex}\label{eq:sequences4} Consider an exact triangle in $\C_{\infty}$
of the form:
$$\Delta \ : \ T^{-1} X\to  0\to  Y\to X ~.~$$
We claim that
$w_{\infty}(\Delta)= \inf \{ r \geq 0 \ | \ \exists \ \text{an
  $r$-isomorphism } \phi : \Sigma^{s} X \to Y \text{ with } 0\leq
s\leq r\}$ and
$\bar{w}(\Delta)= \inf \{ r \geq 0 \ | \ \exists \ \text{an
  $r$-isomorphism } \phi : \Sigma^{s} X \to Y \text{ with } s \geq 0
\}$. The relevant strict exact triangles (of weight $r$) in this case
are:
$$T^{-1}X\to 0 \to \Sigma^{-s}Y\to \Sigma^{-r}X$$ with $s\leq r$.
Again, the existence of such a triangle is equivalent to the existence
of an $r$-isomorphism $\phi : X\to \Sigma^{-s}Y$ which provides the
estimate for $w_{\infty}$.  To estimate $\bar{w}$ we apply the same
argument but by replacing $X$ by $\Sigma^{k}X$ with $k$ positive. The
condition becomes $\phi: \Sigma^{k}X\to \Sigma^{-s}Y$ is an
$r$-isomorphism, and $s\leq r$.
 \end{ex}

\begin{ex}\label{eq:sequences5} Consider an exact triangle in $\C_{\infty}$ of 
  the form: $$\Delta \ : T^{-1}X\to K\to Y\to X ~.~$$ where $K$ is
  $k$-acyclic.  We claim that
  $$\bar{w}(\Delta)\geq \inf \{ r \geq 0 \ | \ \exists
  \text{ an $(r+ 2 k)$-isomorphism } \phi : \Sigma^{s} X \to Y \text{
    with } s\geq 0\}.$$ We thus assume that there exists a strict
  exact triangle
  $\Delta' \ : \ T^{-1}X\to \Sigma^{-s_{1}}K\to \Sigma^{-s_{2}}Y\to
  \Sigma^{-r}X$ of weight $r$, with $s_{1}\leq s_{2}\leq r$, that
  represents $\Delta$. As this is strict exact, there is an exact
  triangle in $\C_{0}$ of the form
  $T^{-1}X\to \Sigma^{-s_{1}}K\to C\to X$ that maps to $\Delta$ as in
  Definition \ref{dfn-set}. In particular, there is an $r$-isomorphism
  $C\to \Sigma^{-s_{2}}Y$. \ocnote{The fact that $K\simeq_{k}0$ immediately
  implies that there is a $k$-isomorphism $C\to X$. We consider a right $k$-inverse of it  $\Sigma^{k}X\to C$. This a $2k$-isomorphism. By composition we
  get a \zjnote{$(2k+r)$}-isomorphism $\Sigma^{k}X\to \Sigma^{-s_{2}}Y$. The constraint
  $s_{2}\leq r$ is eliminated by applying the same argument as at
  Example \ref{eq:sequences4}, by replacing \zjr{$X$ with $\Sigma^{s}X$ for some non-negative $s$}.}
\end{ex}

\begin{rem}\label{rem:rot-w} It is useful to know how the weights of triangles \zjnote{in $\C_{\infty}$} behave with respect to rotation.
Thus let 
$\Delta: A \xrightarrow{u} B \xrightarrow{v} C \xrightarrow{w} TA$ be
an exact triangle in $\C_{\infty}$ of unstable weight $r$.  Then its first positive
rotation
$R(\Delta): B \xrightarrow{v} C \xrightarrow{w} TA \xrightarrow{-Tu}
TB$ is of unstable weight at most $2r$. 
Indeed, by definition,
there exists a triangle
$\bar{\Delta}: A \xrightarrow{\bar{u}} B \xrightarrow{\bar{v}} C
\xrightarrow{\bar{w}} TA$ representing $\Delta$ such that the shifted
triangle
$\widetilde{\Delta}: A \xrightarrow{u'} \Sigma^{-t_1} B
\xrightarrow{v'} \Sigma^{-t_1-t_2} C \xrightarrow{w'} \Sigma^{-r} TA$
(where we put $t_1 = \ceil*{\bar{u}}$, $t_2 = \ceil*{\bar{v}}$,
$t_3 = \ceil*{\bar{w}}$ and $r = t_1 + t_2 + t_3$) is a strict exact
triangle in $\C$ of weight $r$.  By Proposition \ref{prop-rot}, the
first positive rotation
\[ R(\widetilde{\Delta}): \Sigma^{-t_1} B \xrightarrow{v'}
  \Sigma^{-t_1-t_2} C \xrightarrow{w''} \Sigma^{-r} TA
  \xrightarrow{u''} \Sigma^{-t_1 - 2r} TB \] is a strict exact
triangle in $\C$ of weight $2r$. We have
\begin{align*} 
u'' & = -(\eta_{0, -t_1 - 2r})_B \circ \bar{u} \circ (\eta_{-r, 0})_A\\
v' & = (\eta_{0, -t_1 -t_2})_C \circ \bar{v} \circ (\eta_{-t_1, 0})_B \\
w''&\simeq_r w = (\eta_{0, -r})_A \circ \bar{w} \circ (\eta_{-t_1 - t_2, 0})_C. 
\end{align*}
Consider a new triangle: 
\begin{equation} \label{rot-tilde-tri} \Sigma^{t_1}
  R(\widetilde{\Delta}): B \xrightarrow{\Sigma^{t_1} v'} \Sigma^{-t_2}
  C \xrightarrow{\Sigma^{t_1} w''} \Sigma^{-r+t_1} TA
  \xrightarrow{\Sigma^{t_1} u''} \Sigma^{-2r} TB.
\end{equation}
By Remark \ref{rmk-shift-notation} (1),
$\Sigma^{t_1} R(\widetilde{\Delta})$ is also a strict exact triangle
in $\C$ of weight $2r$. By shifting up this triangle we get to
$B\to C\to TA \to TB$ that represents $R(\Delta)$ and thus
$w_{\infty}(R(\Delta))\leq 2r$. In a similar way, using Remark
\ref{rem:neg-rot}, one can treat the negative rotation of $\Delta$,
$R^{-1}(\Delta)$.
\end{rem}

\subsection{Proof of Theorem \ref{thm:main-alg}} \label{subsubsec:proofTh1} 
There are two steps. The first is to show that each exact triangle in $\C_{\infty}$ has finite unstable weight. The second step is to show that $\bar{w}$ satisfies Definition \ref{def:triang-cat-w} and is subadditive with $\bar{w}_{0}=0$.
\subsubsection{Every triangle in $\C_{\infty}$ has finite unstable weight}\label{subsubsec:fuw}
Let $\Delta: A \xrightarrow{u} B \xrightarrow{v} C \xrightarrow{w} TA$
be an exact triangle in $\C_{\infty}$. Thus, there exists a triangle
$\bar{\Delta}: A \xrightarrow{\bar{u}} B \xrightarrow{\bar{v}} C
\xrightarrow{\bar{w}} TA$ in $\C$ and an exact triangle in $\C_{0}$,
$\Delta': A' \xrightarrow{u'} B' \xrightarrow{v} C' \xrightarrow{w}
TA'$, together with isomorphisms in $\C_{\infty}$, $a:A'\to A$,
$b:B'\to B$, $c:C'\to C$ such that the resulting map $(a,b,c,-Ta)$ of
triangles in $\C_{\infty}$ is an isomorphism of triangles. We may
assume that the shifts, $x$ of $\bar{u}$, $y$ of $\bar{v}$, and $z$ of
$\bar{w}$, are all non-negative.

We represent the maps $a$, $b$, $c$ by $s$-isomorphisms
$\bar{a}: \Sigma^{r}A'\to A$, $\bar{b}: \Sigma^{r}B'\to B$,
$\bar{c} :\Sigma^{r}C'\to C$ which is possible by taking $s$ and $r$
sufficiently large. We now consider the diagram in $\C_{0}$:

\[ \xymatrix{ \Sigma^{r}A' \ar[d]_{\bar{a}}
    \ar[r]^{\Sigma^{r}u'} &   \Sigma^{r}B'   \ar[d]^{ \eta_{x}\circ \bar{b}}\\
    A\ar[r]_{\eta_{x}\circ \bar{u}} & \Sigma^{-x}B }\] This diagram
$r'$-commutes for $r'$ sufficiently large. Lemma \ref{lemma-comp}
shows that by taking $r$ sufficiently large we may assume that the
above square commutes in $\C_{0}$ (with the price of both $r$ and $s$
being very large). We complete the diagram to the right thus getting a
new diagram in $\C_{0}$

\[ \xymatrix{ \Sigma^{r}A' \ar[d]_{\bar{a}} \ar[r]^{\Sigma^{r}u'} &
    \Sigma^{r}B' \ar[d]^{ \eta_{x}\circ \bar{b}} \ar[r]^{\Sigma^{r}v'}
    & \Sigma^{r}C'
    \ar[r]^{\Sigma^{r}w'}\ar[d]^{c''} & \Sigma^{r}TA' \ar[d]^{a''}\\
    A\ar[r]_{\eta_{x}\circ \bar{u}} & \Sigma^{-x}B \ar[r]_{v''}
    &\Sigma^{-x-y-t} C \ar[r]_{w''}&\Sigma^{-x-y-t-t'-w} TA }\] with
each square commutative and with the arrows marked with $(-)''$ being
compositions of $(-)$ with the appropriate $\eta$'s. The commutativity
in $\C_{0}$ for the \zjr{second square} requires possibly a shift by $-t$ of
the third term in the bottom row. Similarly, the commutativity of the
\zjr{third square} requires a shift by $-t'$ for the last term of the row
(this is a variant of Proposition \ref{prop-ind}). Obviously, the top
row is exact in $\C_{0}$ and the vertical maps are $k$-isomorphisms,
each one for a different $k$. With the price of yet again increasing
$t$ and $t'$ we can intercalate between the two rows a new exact
triangle in $\C_{0}$.
 
\[ \xymatrix{
    \Sigma^{r}A' \ar[d]_{\bar{a}}    \ar[r]^{\Sigma^{r}u'} &   \Sigma^{r}B'   \ar[d]^{ \eta_{x}\circ \bar{b}} \ar[r]^{\Sigma^{r}v'} & \Sigma^{r}C' \ar[r]^{\Sigma^{r}w'}\ar[d]^{c'''} & \Sigma^{r}TA' \ar[d]^{\bar{a}}\\
    A\ar[r]_{\eta_{x}\circ \bar{u}}\ar[d] & \Sigma^{-x}B \ar[r]_{v''} \ar[d] & C''\ar[d]^{d}\ar[r]& TA\ar[d]^{e}\\
    A\ar[r]_{\eta_{x}\circ \bar{u}} & \Sigma^{-x}B \ar[r]_{v''}
    &\Sigma^{-x-y-t} C \ar[r]_{w''}&\Sigma^{-x-y-t-t'-w} TA }\] The
map $c'''$ is induced by the first two vertical maps between the top
rows and thus it is an $k$-isomorphism for some \zjnote{sufficiently large} $k$. Given that
$c''$ is such an isomorphism too, by possibly increasing $t$ we obtain
the existence of a $k'$- isomorphism $d$ with $k'$ very large. By
possibly increasing $t'$ we can also get the commutativity of the
bottom right square. Again by possibly increasing $t'$ we may ensure
that $k'=x+y+t+t'+w$ (recall that any $k'$-isomorphism is also a
$k''$-isomorphism for $k''\geq k'$). This means that the bottom row is
a strict exact triangle in $\C$ of weight $k'$. To end the proof, we
notice that this triangle is of the form $\widetilde{\Delta}$ as in
Definition \ref{dfn-extri-inf}.

 \
 
\subsubsection{\zjnote{The weighted octahedral axiom in $\C_{\infty}$}} \label{subsubsec:proofTh2}

To  finish the proof of Theorem \ref{thm:main-alg} we need to show that the weighted octahedral axiom is
satisfied by $\bar{w}$, that  $\bar{w}$ satisfies the normalization
in Definition \ref{def:triang-cat-w} (ii) with $\bar{w}_{0}=0$, and that it is 
subadditive. We start below with the weighted octahedral axiom and will end with
the other properties.

\begin{lem} \label{lem-oct-cinf}
  The weight
  $\bar{w}$ as defined in Definition \ref{dfn-extri-inf} satisfies the
  weighted octahedral axiom from Definition \ref{def:triang-cat-w}
  (i).
\end{lem}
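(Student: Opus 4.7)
The plan is to deduce the weighted octahedral axiom for $\bar{w}$ in $\C_\infty$ from its analogue in $\C$ (Proposition~\ref{prop-w-oct}) by lifting the given configuration, aligning the shifts appropriately, and then passing back to $\C_\infty$ via the $\eta$-identifications.

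Let $\Delta_1: A \to B \to C \to TA$ and $\Delta_2: C \to D \to E \to TC$ be exact triangles in $\C_\infty$ and fix $\epsilon > 0$. By Definition~\ref{dfn-extri-inf} I pick non-negative shifts $a_1, a_2$ and strict exact triangles in $\C$
\begin{align*}
\widetilde\Delta_1 &: \Sigma^{a_1}A \to \Sigma^{-q_1}B \to \Sigma^{-p_1}C \to \Sigma^{a_1-r_1}TA, \\
\widetilde\Delta_2 &: \Sigma^{a_2}C \to \Sigma^{-q_2}D \to \Sigma^{-p_2}E \to \Sigma^{a_2-r_2}TC,
\end{align*}
of weights $r_i \leq \bar{w}(\Delta_i) + \epsilon$ representing $\Sigma^{a_i,0,0,a_i}\Delta_i$, with $0 \leq q_i \leq p_i \leq r_i$. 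To glue them along the common object $C$, I shift $\widetilde\Delta_2$ by $\Sigma^{-(a_2+p_1)}$, which preserves strict exactness and weight by Remark~\ref{rmk-shift-notation}(a) and makes its first object equal to $\Sigma^{-p_1}C$, matching the third object of $\widetilde\Delta_1$.

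Applying Proposition~\ref{prop-w-oct} to this aligned pair yields an octahedral diagram in $\C$ together with a strict exact triangle $\widetilde\Delta_3$ of weight $0$ (hence exact in $\C_0$) of the form $\Sigma^{-q_1}B \to \Sigma^{-q_2-a_2-p_1}D \to F_0 \to T\Sigma^{-q_1}B$, and a strict exact triangle $\widetilde\Delta_4$ of weight $r_1+r_2$ of the form $\Sigma^{a_1}TA \to F_0 \to \Sigma^{-(p_2+a_2+p_1)}E \to \Sigma^{a_1-r_1-r_2}T^2A$. Pushing the whole diagram to $\C_\infty$ via the $\eta$-natural transformations gives the desired octahedral configuration for $\Delta_1, \Delta_2$, with induced exact triangles $\Delta_3: B \to D \to F \to TB$ and $\Delta_4: TA \to F \to E \to T^2A$.

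It remains to verify $\bar{w}(\Delta_3) + \bar{w}(\Delta_4) \leq r_1 + r_2$. Exploiting the freedom to choose the intermediate object $F$ in the diagram up to isomorphism in $\C_\infty$, I take $F = \Sigma^{q_2+a_2+p_1} F_0$. With this choice, $\Sigma^{q_2+a_2+p_1}\widetilde\Delta_3$ (still of weight $0$) is a representative of some $\Sigma^{s,0,0,s}\Delta_3$ in the canonical form of Definition~\ref{dfn-extri-inf}, giving $\bar{w}(\Delta_3) = 0$. Similarly, $\Sigma^{q_2+a_2+p_1}\widetilde\Delta_4$ is a representative of some $\Sigma^{s,0,0,s}\Delta_4$ whose third-slot shift is $p_2 - q_2$, satisfying the constraint $0 \leq p_2 - q_2 \leq r_2 \leq r_1 + r_2$ of Definition~\ref{dfn-extri-inf}, yielding $\bar{w}(\Delta_4) \leq r_1 + r_2$. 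The main technical obstacle throughout is precisely this shift alignment: the direct output of the $\C$-octahedral does not immediately fit the canonical form of Definition~\ref{dfn-extri-inf} needed to compute $\bar{w}$, and one must exploit the isomorphism freedom on the intermediate object to find the right normalization. Summing gives $\bar{w}(\Delta_3) + \bar{w}(\Delta_4) \leq r_1 + r_2 \leq \bar{w}(\Delta_1) + \bar{w}(\Delta_2) + 2\epsilon$, and letting $\epsilon \to 0$ concludes the proof.
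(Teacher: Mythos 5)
Your proof is correct and follows the same overall strategy as the paper's: represent the two triangles by strict exact triangles in $\C$, align them by a shift, apply the $\C$-level weighted octahedral (Proposition~\ref{prop-w-oct}), push the output down to $\C_\infty$, and then normalize to estimate weights. Your shift bookkeeping at the end (checking that $q_2+a_2+p_1-q_1 \geq 0$ and $0 \leq p_2-q_2 \leq r_1+r_2$, so the renormalized triangles fall into the canonical form of Definition~\ref{dfn-extri-inf}) is exactly right.

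There is one genuine improvement over the paper's argument worth flagging. You begin by choosing shifts $a_1,a_2 \geq 0$ and taking strict exact representatives of $\Sigma^{a_i,0,0,a_i}\Delta_i$ of weight $r_i \leq \bar w(\Delta_i)+\epsilon$. The paper's proof, as written, takes $\bar\Delta_i$ to represent $\Delta_i$ itself (with the first object unshifted), which, by Definition~\ref{dfn-extri-inf}, only allows $r$ close to $w_\infty(\Delta_1)$ and $s$ close to $w_\infty(\Delta_2)$. Since $w_\infty$ can strictly exceed $\bar w$ (Example~\ref{ex-embed} makes this explicit), the bound $\bar w(\Delta_3)+\bar w(\Delta_4) \leq r+s$ obtained that way only gives $\leq w_\infty(\Delta_1)+w_\infty(\Delta_2)$, not the required $\leq \bar w(\Delta_1)+\bar w(\Delta_2)$. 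Your explicit insertion of the $\Sigma^{a_i,0,0,a_i}$ at the outset closes this gap: it is precisely what lets $r_i+s_i$ approach $\bar w(\Delta_1)+\bar w(\Delta_2)$ rather than the larger unstable weights.

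One small caveat, present in both your proof and the paper's, concerns the closing step ``letting $\epsilon \to 0$.'' The triangles $\Delta_3^\epsilon, \Delta_4^\epsilon$ produced depend on $\epsilon$, whereas the axiom asks for a single $\Delta_3, \Delta_4$ with the sharp inequality. This is resolved by noting that $\bar w(\Delta_3^\epsilon)=0$ exactly (not just up to $\epsilon$), and that the various $\Delta_4^\epsilon$ are all isomorphic in $\C_\infty$ (being the octahedral completions of the same pair $\Delta_1,\Delta_2$); invariance of $\bar w$ under isomorphism of triangles in $\C_\infty$ (which follows from Proposition~\ref{prop-wt-1}(i) and the definitions) then lets one pass to the infimum. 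It would be worth spelling out this last observation, but it does not affect the essential correctness of your argument.
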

\begin{proof} 
  Recall that given the exact triangles
  $\Delta_{1}: A\to B\to C\to TA$ and $\Delta_{2}: C\to D\to E\to TC$
  in $\C_{\infty}$ we need to show that there are exact triangles:
  $\Delta_{3}: B\to D\to F\to TB$ and
  $\Delta_{4}: TA\to F\to E\to T^{2}A$ making the diagram below
  commute, except for the right-most bottom square that anti-commutes,
  \[ \xymatrix{
      A \ar[r] \ar[d] & 0 \ar[r] \ar[d] & TA \ar[d]\ar[r]& TA\ar[d] \\
      B \ar[r] \ar[d] & D \ar[r] \ar[d] & F \ar[d]\ar[r]& TB \ar[d]\\
      C \ar[r] \ar[d] & D \ar[r]\ar[d] & E\ar[r]\ar[d] & TC\ar[d] \\
      TA\ar[r] & 0 \ar[r] & T^{2}A \ar[r]& T^{2} A }
  \]
  and such that
  $\bar{w}(\Delta_{3}) + \bar{w}(\Delta_{4})\leq \bar{w}(\Delta_{1}) +
  \bar{w}(\Delta_{2})$.

  In $\C$, there are triangles $\bar{\Delta}_1: A \to B \to C \to TE$
  with non-negative morphisms shifts $t_1, t_2, t_3$ and
  $\bar{\Delta}_2: C \to D \to E \to TC$ with non-negative morphisms
  shifts $k_1, k_2, k_3$ that represent $\Delta_{1}$ and $\Delta_{2}$
  respectively and such that the associated triangles
  \[ \widetilde{\Delta}_1: A \to \Sigma^{-t_1} B \to \Sigma^{-t_1-t_2}
    C \to \Sigma^{-r} TA \,\,\,\,\,\mbox{where $r = t_1 + t_2 +
      t_3$} \] and
  \[ \widetilde{\Delta}_2: C \to \Sigma^{-k_1} D \to \Sigma^{-k_1
      -k_2} E \to \Sigma^{-s} TC \,\,\,\,\,\mbox{where
      $s = k_1 + k_2 + k_3$} \] are strict exact triangles in $\C$ of
  weight $r$ and $s$, respectively.  Consider
  $\Sigma^{-t_{1}-t_{2}} \widetilde{\Delta}_2$,
  \[ \Sigma^{-t_{1}-t_{2}} \widetilde{\Delta}_2: \Sigma^{-t_{1}-t_{2}}
    C \to \Sigma^{-k_1 -t_{1}-t_{2}} D \to
    \Sigma^{-k_1-k_2-t_{1}-t_{2}} E \to \Sigma^{-s-t_{1}-t_{2}} TC. \]
  The weighted octahedral property for strict exact triangles in $\C$
  in Proposition \ref{prop-w-oct} implies that we can construct the
  following commutative diagram in $\C$ (with the bottom right square
  which is $r$-anti-commutative).
  \begin{equation} \label{oct-c-inf-2} \xymatrix{
      A \ar[d] \ar[r] & 0 \ar[r] \ar[d] & TA \ar[d] \ar[r] & TA\ar[d] \\
      \Sigma^{-t_1} B \ar[d] \ar[r] & \Sigma^{-k_1-t_{1}-t_{2}}
      D\ar[r] \ar[d] & D' \ar[d]\ar[r]
      & \Sigma^{-t_1} TB  \ar[d]\\
      \Sigma^{-t_{1}-t_{2}} C \ar[r]\ar[d] & \Sigma^{-k_1-t_{1}-t_{2}}
      D \ar[r] \ar[d] & \Sigma^{-k_1-k_2-t_{1}-t_{2}} E \ar[r] \ar[d]
      & \Sigma ^{-t_{1}-t_{2}-s}TC \ar[d]\\
      \Sigma^{-r}TA \ar[r]& 0\ar[r] & \Sigma^{-r-s}T^{2}A\ar[r] &
      \Sigma^{-r-s}T^{2} A}
  \end{equation}
  Here the triangle
  $\Delta_{3}' :\Sigma^{-t_1} B \to \Sigma^{-k_1-t_{1}-t_{2}} D \to D'
  \to \Sigma^{-t_1} TB$ is an exact triangle in $\C_{0}$. The triangle
  $\Delta_{3}'' : \Sigma^{k_{1}+t_{2}}B\to D\to
  \Sigma^{k_{1}+t_{1}+t_{2}}D'\to \Sigma^{k_{1}+t_{2}}TB$ obtained by
  shifting up $\Delta_{3}'$ by $k_{1}+t_{1}+t_{2}$ is also exact in
  $\C_{0}$. Let $[\Delta''_{3}]$ be the image of this triangle in
  $\C_{\infty}$. We put $F= \Sigma^{k_{1}+t_{1}+t_{2}}D'$ and take
  $\Delta_{3}$ to be the triangle in $\C_{\infty}$
  $$\Delta_{3}: B\to D\to F\to TB$$ obtained  by applying
  $\Sigma^{-k_{1}-t_{2},0,0,-k_{1}-t_{2}}$ to $[\Delta_{3}'']$. We
  obviously have $w_{\infty}([\Delta_{3}''])=0$ and thus
  $\bar{w}(\Delta_{3})=0$.

  The next step is to identify the triangle $\Delta_{4}$. Proposition
  \ref{prop-w-oct} implies that the third column in
  (\ref{oct-c-inf-2}):
  $$\Delta_{4}' : TA\to \Sigma^{-k_{1}-t_{1}-t_{2}}F
  \to \Sigma^{-k_{1}-k_{2}-t_{1}-t_{2}}E\to \Sigma^{-r-s}T^{2}A$$ is a
  strict exact triangle in $\C$. We let $[\Delta'_{4}]$ be the image
  of this triangle in $\C_{\infty}$ and let $\Delta_{4}$ be given by
  applying $\Sigma^{0, k_{1}+t_{1}+t_{2},k_{1}+k_{2}+t_{1}+t_{2},r+s}$
  to $[\Delta'_{4}]$:
  $$\Delta_{4} : TA \to F \to E\to T^{2}A~.~$$
  We deduce from Definition \ref{dfn-extri-inf} that
  $w_{\infty}(\Delta_{4})\leq r+s$ and thus also
  $\bar{w}(\Delta_{4})\leq r+s$.

  The commutativity required in the statement follows from that
  provided by Proposition \ref{prop-w-oct} for (\ref{oct-c-inf-2}).
\end{proof}

\begin{remark} \label{rem:weak-oct} For the triangle $\Delta_{3}$
  produced in this proof it is easy to see that
  $w_{\infty}(\Delta_{3})\leq k_{1}+t_{2}$.  Therefore we have:
  \[ w_{\infty}(\Delta_3) + w_{\infty}(\Delta_4) \leq (k_1 + t_2) +
    (r+s) \leq 2
    (r+s)=2(w_{\infty}(\Delta_{1})+w_{\infty}(\Delta_{2})). \] Thus
  the weight $w_{\infty}$ satisfies a weak form of the weighted
  octahedral axiom.
\end{remark}

The next step in proving Theorem \ref{thm:main-alg} is to show the
normalization property in Definition \ref{def:triang-cat-w} (ii). This
property is satisfied with the constant $\bar{w}_{0}=0$. Indeed, any
triangle $0\to X\to X\to 0$ and all its rotations are exact in
$\C_{0}$ and thus they are of unstable weight equal to $0$.  The last
verification needed is to see that, if $B=0$ in the diagram of the
weighted octahedral axiom, then the triangle $\Delta_{3}$ -
constructed in the proof of the \zjnote{Lemma \ref{lem-oct-cinf}} - can be of the form:
$\Delta_{3}: 0\to D\to D\to 0$. This is trivially satisfied in our
construction because if $B=0$ we may take $t_{1}=t_{2}=0$ and the
triangle
$\Delta'_{3}: 0 \to \Sigma^{-k_{1}}D\xrightarrow{\mathds{1}}
\Sigma^{-k_{1}}D\to 0$.

\

Finally, to finish the proof of Theorem \ref{thm:main-alg} we need to
show that $\bar{w}$ is sub-additive. Thus, assuming that
$\Delta: A\to B\to C\to TA$ is exact in $\mathcal{C}_{\infty}$ and $X$
is an object in $\C$, then
$\bar{w}(X\oplus \Delta)\leq \bar{w}(\Delta)$ where the triangle
$X\oplus\Delta$ has the form
$X\oplus \Delta: A\to X\oplus B\to X\oplus C\to TA$. We consider the
strict exact triangle in $\C$
$$\widetilde{\Delta}: A\to \Sigma^{-s_{1}}B
\to \Sigma^{-s_{1}-s_{2}}C \to \Sigma^{-s_{1}-s_{2}-s_{3}}TA$$
associated to $\Delta$ as in Definition \ref{dfn-extri-inf} with
$s_{i}\geq 0$, $1\leq i\leq 3$.  Consider the triangle
\jznote{$$\Delta' : 0\to \Sigma^{-s_{1}}X
  \xrightarrow{\eta_{s_{2}}^{X}} \Sigma^{-s_{1}-s_{2}}X\to 0~.~$$}
This triangle is obtained from the exact triangle in $\C_{0}$,
$0\to \Sigma^{-s_{1}}X\xrightarrow{\mathds{1}} \Sigma^{-s_{1}}X\to 0$
by applying $\Sigma^{0,0,-s_{2}, -s_{2}}$ and it is of weight
$\leq s_{2}$. By Lemma \ref{claim-frag-sum} we have
$w(\Delta' \oplus \widetilde{\Delta})\leq w(\widetilde{\Delta})$.  We
now notice that $\Delta'\oplus \widetilde{\Delta}$ can be viewed as
obtained from $X\oplus\Delta$ by applying
$\Sigma^{0,-s_{1},-s_{1}-s_{2}, -s_{1}-s_{2}-s_{3}}$ and thus
$w_{\infty}(X\oplus \Delta)\leq w(\widetilde{\Delta})$ which implies
the claim. The proof for $\Delta\oplus X$ is similar.


\subsection{Some properties of fragmentation pseudo-metrics.}\label{subsec:rem-nondeg}
The purpose of this section is to rapidly review some of the
properties of the persistence fragmentation pseudo-metrics.  We start
by recalling the main definitions, we then discuss some algebraic
properties, and relations to standard notions such as the interleaving
distance.
\subsubsection{Persistence fragmentation pseudo-metrics, review of
  main definitions} \label{subsubsec:prop-fr} Assume that $\C$ is a
TPC. We have defined in \S \ref{subsubsec:frag1} and
\S\ref{subsubsec:ex-cinfty} three types of similarly defined
measurements on the objects of $\C$ that, after symmetrization, define
fragmentation pseudo-metrics on $\mathrm{Obj}(\C)$.  In \S
\ref{subsubsec:frag1} this construction uses directly the weight $w$
of the strict exact triangles in $\C$ (making use of Proposition
\ref{prop-w-oct}) and it gives rise to pseudo-metrics
$d^{\mathcal{F}}$ as in Definition \ref{dfn-frag-met} as well as a
simplified version $\underline{d}^{\mathcal{F}}$ mentioned in Remark
\ref{ex-delta} (d).

In \S\ref{subsubsec:ex-cinfty} we have endowed the triangles of the
category $\C_{\infty}$ with weights: an unstable weight $w_{\infty}$
as well as with a (smaller) stable weight $\overline{w}$, as given by
Definition \ref{dfn-extri-inf}. Working in the category $\C_{\infty}$
has a significant advantage compared to the category $\C$ because, by
contrast to $\C$, in $\C_{\infty}$ any morphism can be completed to an
exact triangle of finite weight.  Moreover, when we have a
(translation invariant) family of triangular generators $\mathcal{F}$,
the fragmentation pseudo-distance relative to $\mathcal{F}$ is finite
- see Remark~\ref{rem:finite-metr}~(a).  Finally, in $\C_{\infty}$
exact triangles have the standard form expected in a triangulated
category and do not involve shifts. Therefore, we will focus here on
the fragmentation pseudo-metrics defined using $w_{\infty}$ and,
mainly, $\overline{w}$.

An important remark at this point is that the unstable weight
$w_{\infty}$ does not satisfy the weighted octahedral axiom (but only
its weak form as discussed in Remark \ref{rem:weak-oct}) and thus only
the pseudo-metrics of the form $\underline{\bar{d}}^{\mathcal{F}}$ can
be defined using it. By contrast, $\overline{w}$ does satisfy the
weighted octahedral axiom and there is a pseudo-metric
$\bar{d}^{\mathcal{F}}$ associated to it through the construction in
\S\ref{subsec:triang-gen}.  Both $\overline{w}$ and $w_{\infty}$ are
subadditive and satisfy the normalization property in Definition
\ref{def:triang-cat-w} with $w_{0}=0$.

To eliminate possible ambiguities we recall the definitions of the two
relevant pseudo-metrics here. Both of them are based on considering a
sequence of exact triangles in $\C_{\infty}$ as below:
\begin{equation}\label{eq:iterated-tr2} \xymatrixcolsep{1pc} \xymatrix{
    Y_{0} \ar[rr] &  &  Y_{1}\ar@{-->}[ldd]  \ar[r] &\ldots
    \ar[r]& Y_{i} \ar[rr] &  &  Y_{i+1}\ar@{-->}[ldd]
    \ar[r] &\ldots \ar[r]&Y_{n-1} \ar[rr] &   &Y_{n} \ar@{-->}[ldd]  &\\
    &         \Delta_{1}                  &  & & &  \Delta_{i+1}                          & &  &  &    \Delta_{n}             \\
    & X_{1}\ar[luu] &  & & &X_{i+1}\ar[luu] &  &  & &X_{n}\ar[luu] }
\end{equation}
where the dotted arrows represent maps $Y_{i} \to TX_{i}$. We fix a
family of objects $\mathcal{F}$ in $\C$ with $0\in \mathcal{F}$ and
define:
\begin{equation}\label{eq:semi-metrics1}
  \underline{\bar{\delta}}^{\mathcal{F}}(X,X') =
  \inf\left\{ \sum_{i=1}^{n}w_{\infty}(\Delta_{i}) \, \Bigg| \,
    \begin{array}{ll} \mbox{$\Delta_{i}$
      are successive exact triangles as in (\ref{eq:iterated-tr2})} \\
      \mbox{with $X'=Y_{0}$,
      $X=Y_{n}$  and  $X_i \in \mathcal{F}$,   $n \in \N$}
    \end{array} \right\}
\end{equation}

\begin{equation}\label{eq:delta-fr}
  \bar{\delta}^{\mathcal{F}}(X,X') =
  \inf\left\{ \sum_{i=1}^{n}\bar{w}(\Delta_{i}) \,
    \Bigg| \,
    \begin{array}{ll} \mbox{$\Delta_{i}$ are as in (\ref{eq:iterated-tr2})
      with $Y_{0}=0$, $X=Y_{n}$, $X_i \in \mathcal{F}$,   } \\
      \mbox{$n \in \N$  except for some $j$ such that $X_{j}=T^{-1}X'$}
    \end{array} \right\}~.~
\end{equation}
Finally, the pseudo-metrics $\underline{\bar{d}}^{\mathcal{F}}$ and
$\bar{d}^{\mathcal{F}}$ are obtained by symmetrizing
$\underline{\bar{\delta}}^{\mathcal{F}}$ and
$\bar{\delta}^{\mathcal{F}}$, respectively:
$$\underline{\bar{d}}^{\mathcal{F}}(X,X') =
\max\{\underline{\bar{\delta}}^{\mathcal{F}}(X,X'),
\underline{\bar{\delta}}^{\mathcal{F}}(X',X) \}, \ \
\bar{d}^{\mathcal{F}}(X,X')=\max\{\bar{\delta}^{\mathcal{F}}(X,X'),
\bar{\delta}^{\mathcal{F}}(X',X) \}~.~$$

\subsubsection{Algebraic properties.}\label{subsubsec:prop-alg}
There are many fragmentation pseudo-metrics of persistence type
associated to the same weight, depending on the choices of family
$\mathcal{F}$.  In fact, the choices available are even more abundant
for the following two reasons.
\begin{itemize}
\item[(i)] Triangular weights themselves can be mixed. For instance, if
  $\C$ is a TPC, there is a triangular weight of the form
  $\overline{w}^{+}=\overline{w}+w_{fl}$ that is defined on
  $\C_{\infty}$ (where $w_{fl}$ is the flat weight \jznote{defined in
    \S\ref{subsec:triang-gen}}).
\item[(ii)] Fragmentation metrics themselves can also be mixed. If
  $d^{\mathcal{F}}$ and $d^{\mathcal{F}'}$ are two fragmentation
  pseudo-metrics (whether defined with respect to the same weight or
  not), then the following expressions
  $\alpha ~d^{\mathcal{F}}+\beta~ d^{\mathcal{F}'}$ with
  $\alpha, \beta\geq 0$ as well as
  $\max \{d^{\mathcal{F}}, d^{\mathcal{F}'}\}$ are also
  pseudo-metrics.
\end{itemize}

In essence, while it is not easy to produce interesting sub-additive
triangular weights on a triangulated category, once such a weight is
constructed - as in the case of the persistence weight $\overline{w}$
defined on $\C_{\infty}$ (where $\C$ is a TPC) - one can associate to
it a large class of pseudo-metrics, either by combining the weight
with the flat one and/or by ``mixing'' the pseudo-metrics associated
to different families $\mathcal{F}$.  Another useful (and obvious)
property relating the pseudo-metrics $d^{\mathcal{F}}$ and
$d^{\mathcal{F}'}$ associated to the same triangular weight is that:
 
\begin{itemize}
\item[(iii)] If $\mathcal{F}\subset \mathcal{F}'$, then
  $d^{\mathcal{F}'}\leq d^{\mathcal{F}}$.
\end{itemize}

The last useful construction has to do with making the metrics
invariant with respect to the action of the shift functor.
\begin{itemize}
\item[(iv)] For a given fragmentation metric $d^{\mathcal{F}}$ we
  define its shift-invariant version
  \begin{equation}\label{eq:inv-shift-metr}\widehat{d^{\mathcal{F}}}(X,Y)=\max
    \{ \widehat{\delta^{\mathcal{F}}}(X,Y),
    \widehat{\delta^{\mathcal{F}}}(Y,X) \} ~.~
  \end{equation}
\end{itemize}
Here
$$\widehat{\delta^{\mathcal{F}}}(X,Y) =
\inf_{r,s \in\R} \delta^{\mathcal{F}}(\Sigma^{r}X,\Sigma^{s}Y)$$ is
the shift invariant version of the semi (pseudo)-metrics
$\delta^{\mathcal{F}}$ as in (\ref{eq:semi-metrics1}) and
(\ref{eq:delta-fr}).  It is immediate to see that
$\widehat{\delta^{\mathcal{F}}}$ satisfies the triangle inequality. By
symmetrizing, we obtain indeed a pseudo-metric that is obviously
bounded above by $d^{\mathcal{F}}$.  In case the family $\mathcal{F}$
is closed under the action of $\Sigma^{r}$ for all $r\in \R$, the
metrics of type $d^{\mathcal{F}}$ have the property that
$d^{\mathcal{F}}(X,Y)= d^{\mathcal{F}}(\Sigma^{r}X,\Sigma^{r} Y)$ for
all $r\in \R$. In this case, the shift invariant metric associated to
$d^{\mathcal{F}}$ has a simpler form
$\widehat{d^{\mathcal{F}}}(X,Y)=\inf_{r\in\R}d^{\mathcal{F}}(\Sigma^{r}X,Y)$. The
interest of this type of shift-invariant pseudo-metric is that it
compares the ``shape'' of objects  by contrast to a comparison of the
objects themselves that is sensitive to translations (the spectral
distance in symplectic topology is of this type).  Thus, for instance, two Morse
functions $f$ and $f+k$ with $k\in \R$ are not distinguished by shift-invariant type pseudo-metrics.

\subsubsection{Vanishing and non-degeneracy of fragmentation
  metrics.}\label{subsubsec:non-deg-v}
We fix here a TPC denoted by $\C$ together with the associated weights
and pseudo-metrics, as above. \zjnote{We will denote by $\bar{d}^{\{0\}}$ the
pseudo-metric associated to the family consisting of only the element
$0$}. In view of point (iii) above $\bar{d}^{\{0\}}$ is \jznote{an} upper
bound for all the pseudo-metrics $\bar{d}^{\mathcal{F}}$.

It is obvious, as noticed in Remark~\ref{ex-delta}, that in general
$\bar{d}^{\mathcal{F}}$ is degenerate. For instance, if
$\mathcal{F}=\mathrm{Obj}(\mathcal{C})$ then
$\bar{d}^{\mathcal{F}}\equiv 0$. The rest of Remark~\ref{ex-delta}
also continues to apply to $\bar{d}^{\mathcal{F}}$. We list below some
other easily proven properties. We assume for all the objects $X$
involved here that $\sigma(\mathds{1}_{X})=0$ and we will use the
calculations in Examples \ref{ex-id-rigid}, \ref{ex-embed},
\ref{ex-embed2} and \ref{ex-other}. Recall the notion of
$r$-isomorphism from Definition \ref{dfn-r-iso}, in particular, this
is a morphism in $\mathcal{C}_{0}$. A $0$-isomorphism is simply an
isomorphism in the category $\C_{0}$ and is denoted by $\equiv$.
\begin{itemize}
\item[(i)] If $X\equiv X'$, then $\bar{d}^{\mathcal{F}}(X,X')=0=
  \underline{\bar{d}}^{\mathcal{F}}(X,X')$ for any family
  $\mathcal{F}$.
\item[(ii)] \ocnote{ We have
    $2\ \bar{\delta}^{\{0\}}(X,X')\geq \inf\{ r \in \R \ |\ \exists
    \text{ an $r$-isomorphism } \phi : \Sigma^{k}X'\to X \text{ for
      some } k\geq 0\}\geq \bar{\delta}^{\{0\}}(X,X')~.~$ For the
    first inequality, consider a sequence of triangles
    (\ref{eq:iterated-tr2}) of total weight $\leq r$. We intend to
    show that, for some $k\geq 0$, there exists a $2r$-isomorphism
    $\Sigma^{k} X'\to X$. We make use of Examples \ref{ex:sequences3},
    \ref{eq:sequences4}, \ref{eq:sequences5}.  We assume without loss
    of generality that $T^{-1}X'$ appears in the $j$'th triangle. The
    first triangles are of the form $0\to Y_{i}\to Y_{i+1}\to 0$ for
    $i\leq j-1$ with $Y_{0}=0$. By Example \ref{eq:sequences4} we
    deduce that $Y_{j-1}$ is $r_{0}$- acyclic and $r_{0}\leq$ the sum
    of the weights of the first $j-1$ triangles. The next triangle is
    of the form $T^{-1}X'\to Y_{j-1}\to Y_{j}\to X'$ and of weight
    $r_{j}$.  Example \ref{eq:sequences5} shows that there exists a
    \zjnote{$(2 r_{0}+r_{j})$-isomorphism} $X'\to \Sigma^{-k}Y_{j}$.
    The next triangles, of the form $0\to Y_{i}\to Y_{i+1}\to 0$, have
    weights $r_{i}$, and there are $r_{i}$-isomorphisms
    $Y_{i}\to \Sigma^{-s_{i}}Y_{i+1}$ with $r_{i}\geq s_{i}$ (see
    Example \ref{ex:sequences3}). Putting things together
    $2 r\geq 2 r'\geq 2 r_{0}+ r_{j} +\ldots + r_{n}$ and there is an
    $2 r'$ -isomorphism
    $X'\to \Sigma^{-s_{1}-s_{2}-s_{3}-\ldots-s_{n}}X$. This implies
    that \zjnote{$ 2\ \bar{\delta}^{\{0\}}\geq\inf$}. For the second
    inequality assume that $\phi:X'\to \Sigma^{-k} X$ is an
    $r$-isomorphism. We need to construct a cone-decomposition of
    $\bar{w}$ weight $\leq r$. We first assume $k\leq r$.  The first
    triangle is $T^{-1}X'\to 0\to X'$ - it is exact in $\C_{0}$ and of
    weight $0$.  The second triangle is $0\to X'\to X\to 0$ . The
    associated strict exact triangle is
    $(0\to X'\to \Sigma^{-k} X\to 0, r)$ and it uses $\phi$ in an
    obvious way to compare with the $\C_{0}$ exact triangle
    $0\to X'\to X'\to 0$. So we are left with the case $k>r$.  In this
    case, the first triangle is
    $T^{-1}X'\to 0 \to \Sigma^{k-r}X'\to X'$. Its $\bar{w}$-weight is
    null. The next triangle is $0\to \Sigma^{k-r}X'\to X\to 0$ the
    associated strict exact triangle being
    $(0\to \Sigma^{k-r}X'\to \Sigma^{-r} X \to 0, r)$ where $\phi$ is
    now used to compare with the exact triangle
    $0\to \Sigma^{k-r} X'\to \Sigma^{k-r}X'\to 0$. }
\item[(iii)] We have
  $\underline{\bar{\delta}}^{\{0\}}(X,X')=\inf\{ r \in \R \ |\
  \exists \text{ an $r$-isomorphism } \phi : \Sigma^{k} X'\to X \text{
    with } r\geq k\geq 0 \}$. This happens because the first triangle
  in the sequence (\ref{eq:iterated-tr2}) is $0\to X'\to Y_{1}\to 0$
  and the next triangles are of the form $0\to Y_{i}\to Y_{i+1}\to
  0$. Each of them has a weight estimated by the numbers $r_{i}$ for
  which there is an $r_{i}$-isomorphism
  $Y_{i}\to \Sigma^{-s_{i}}Y_{i+1}$ with $0\leq s_{i}\leq r_{i}$ which
  shows the claim.
 \item[(iv)] We have $\bar{d}^{\{0\}}(X,\Sigma^{r}X)=r$ for any
   $r\in \R$ (this follows from Examples \ref{ex-id-rigid} and
   \ref{ex-other}).
 \item[(v)] For the shift invariant metric $\widehat{d^{\{0\}}}$
   induced by $\bar{d}^{\{0\}}$ through the formula
   (\ref{eq:inv-shift-metr}) we have
   $\widehat{d^{\{0\}}}(X,\Sigma^{r}X)=0$ for all $X$ and
   $r\in \R$.
\end{itemize}

Thus $\bar{d}^{\{0\}}$ is finite for pairs of objects that are
isomorphic in $\C_{\infty}$ and $\bar{d}^{\{0\}}(X,X')$ is the
optimal upper-bound $r$ such there are $s$-isomorphisms in $\C$ with
$s\leq r$, from some positive shift of $X$ to $X'$ and, similarly,
from some positive shift of $X'$ to $X$. To some extent,
$\bar{d}^{\{0\}}$ can be viewed as an abstract analogue of the
interleaving distance in the theory of persistence modules
(cf.~\cite[Section~1.3]{PRSZ20}). We explore the relation with
interleaving in more detail in~\S\ref{subsec:interleaving-etc} (see
also Proposition~\ref{prop-1}). 
\begin{rem}\label{rem:zero-empty}
 There is another pseudo-metric $\bar{d}^{\emptyset}$ which means that the set $\mathcal F = \emptyset$. In this case, by definition, this $\bar{d}^{\emptyset}$ is again an algebraic analogue of the interleaving distance.
 We will not use this pseudo-metric later in  the paper so we do not further discuss its properties here. 
 \end{rem}
   For $\underline{\bar{d}}^{\{0\}}$
there is an additional constraint that the respective shifts
\jznote{should} be also bounded by $r$.  As a consequence:
\begin{itemize}
\item[(vi)] If $\bar{d}^{\{0\}}(X,X')=0$, then $X$ and $X'$ are
  $0$-isomorphic up to shift. Moreover, if $X$ and $X'$ are not
  $0$-isomorphic, they are both periodic in the sense that there exist
  $k$ and $k'$ (not both null) and $0$-isomorphisms
  $\Sigma^{k}X\to X$, $\Sigma^{k'}X'\to X'$.
\item[(vii)] If $\underline{\bar{d}}^{\{0\}}(X,X')=0$, then $X\equiv X'$.
\end{itemize}

In summary, this means that the best we can expect from the
fragmentation pseudo-metrics is that they \jznote{should be} non-degenerate on the
space of $0$-isomorphism types. From now on, we will say that a
fragmentation pseudo-metric is non-degenerate if this is the
case. Assuming no periodic objects exist, the metric
$\bar{d}^{\{0\}}$ is non-degenerate in this sense. However, the
distance it measures for two objects that are not isomorphic in
$\C_{\infty}$ is infinite. On the other hand, a metric such as
$\bar{d}^{\mathcal{F}}$ (as well as
$\underline{\bar{d}}^{\mathcal{F}}$) where $\mathcal{F}$ is a family
of triangular generators of $\C_{\infty}$ is finite but is in general
degenerate.

The last point we want to raise in this section is that mixing
fragmentation pseudo-metrics can sometimes produce non-degenerate
ones. We will see an example of this sort in the symplectic section \S\ref{subsec-symp}, 
but we end here by describing a more general, abstract argument. Fix
two families $\mathcal{F}_{i}$, $i=1,2$ of generators of
$\mathcal{C}_{\infty}$.  Consider the mixed pseudo-metric defined by
\begin{equation}\label{eq:mixing1}
  \bar{d}^{\mathcal{F}_{1},\mathcal{F}_{2}}=\max\{\bar{d}^{\mathcal{F}_{1}}, \bar{d}^{\mathcal{F}_{2}} \}~.~
\end{equation}
The idea is that if these two families are ``separated'' in a strong
sense, then the mixed metric is non-degenerate. For instance, denote
by $\mathcal{F}_{i}^{\Delta}$ the subcategory of $\C_{0}$ that is
generated by $\mathcal{F}_{i}$.  Now assume that
$\mathrm{Obj}(\mathcal{F}_{1}^{\Delta})\cap
\mathrm{Obj}(\mathcal{F}_{2}^{\Delta})=\{0\}$ (this is of course quite
restrictive).  We now claim that
$\underline{\bar{d}}^{\mathcal{F}_{1},\mathcal{F}_{2}}$ is
non-degenerate and that $\bar{d}^{\mathcal{F}_{1},\mathcal{F}_{2}}$
satisfies a weaker non-degeneracy condition which is that
$\bar{d}^{\mathcal{F}_{1},\mathcal{F}_{2}}(X,0)=0$ if and only if
$X\equiv 0$. This latter fact follows immediately by noticing that
$\bar{d}^{\mathcal{F}_{i}}(X,0)=0$ means that
$X\in \mathcal{F}_{i}^{\Delta}$. We leave the former as an exercise.

\subsubsection{Fragmentation pseudo-metrics, the interleaving
  pseudo-metric and other algebraic measurements}
\label{subsec:interleaving-etc} The aim of this section is to describe
relations between the fragmentation pseudo-metrics introduced before
and the interleaving distance which is well-known in persistence
theory as well as in Morse and Floer theory.  A discussion of the
bottleneck distance, which is closely related to interleaving, is
included in \S\ref{subsec:filt-co}.

\

Adapting the definition of the interleaving distance \cite{PRSZ20} to
TPCs is immediate.

\begin{dfn}\label{def:interleave} Let $\C$ be a triangulated
  persistence category with shift
  functor $\Sigma$. Given two objects $X,Y\in \mathrm{Obj}(\C)$ the
  interleaving distance between $X$ and $Y$ is defined by:
 \begin{eqnarray*}
    d_{int}(X,Y) & =\inf\{ \ r\geq 0 \ | \ \exists \
                   \phi\in\Mor_{\C_{0}}(\Sigma^{r}X,Y), \
                   \psi \in \Mor_{\C_{0}}(\Sigma^{r}Y, X), \\
                 &  \mathrm{such \ that}\   \psi\circ \Sigma^{r}\phi =
                   \eta_{2r}^{X} \ , 
                   \phi \circ \Sigma^{r}\psi = \eta_{2r}^{Y} \}.
  \end{eqnarray*}
\end{dfn}
It is a simple exercise to check that this is indeed a pseudo-metric.

We will also make use of the shift invariant version:
$$\widehat{d_{int}}(X,Y)=\inf_{r\in \R} \ d_{int}(\Sigma^{r}X,Y) $$
that we will refer to as the {\em shift invariant interleaving pseudo-metric}.

\begin{lem} \label{lem:ineq-inter} Fix the triangulated persistence
  category $\C$ and two objects $X$ and $Y$ in $\C$.  If
  $\phi: X\to Y$ is an $r$-isomorphism, then $d_{int}(X,Y)\leq r$.
  Conversely, if $d_{int}(X,Y) = s$, then for any $r>s$ there are
  $4r$-isomorphisms $\Sigma^{r} X\to Y$ and $\Sigma^{r} Y\to X$.
\end{lem}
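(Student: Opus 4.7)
The plan is to establish the two directions separately, where the forward direction uses the right inverse provided by Proposition \ref{prop-r-iso} directly, while the converse extracts both a right and a left ``half-shift inverse'' from the interleaving data and then invokes the key lemma from the proof of Proposition \ref{cor-1}(iv).

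For the forward direction, assume $\phi: X \to Y$ is an $r$-isomorphism and let $\psi \in \Mor_{\C_0}(\Sigma^r Y, X)$ be a right $r$-inverse of $\phi$ as provided by Proposition \ref{prop-r-iso}(i), so that $\phi \circ \psi = \eta_r^Y$. I would take $\Phi = \phi \circ \eta_r^X \in \Mor_{\C_0}(\Sigma^r X, Y)$ and $\Psi = \psi$. The relation $\Phi \circ \Sigma^r \Psi = \eta_{2r}^Y$ follows by naturality of $\eta$: $\eta_r^X \circ \Sigma^r \psi = \psi \circ \eta_r^{\Sigma^r Y}$, so $\Phi \circ \Sigma^r \Psi = \phi \circ \psi \circ \eta_r^{\Sigma^r Y} = \eta_r^Y \circ \eta_r^{\Sigma^r Y} = \eta_{2r}^Y$, where the last equality uses the composition formula $\eta_s^A \circ \eta_r^{\Sigma^s A} = \eta_{r+s}^A$ coming from Remark \ref{rem:shift}. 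For the relation $\Psi \circ \Sigma^r \Phi = \eta_{2r}^X$, precomposing $\phi \circ \psi = \eta_r^Y$ with $\Sigma^r \phi$ and using naturality yields $\phi \circ (\psi \circ \Sigma^r \phi) = \phi \circ \eta_r^X$; since $\phi$ is an $r$-isomorphism, Corollary \ref{cor-3} gives $\psi \circ \Sigma^r \phi \simeq_r \eta_r^X$, which via Lemma \ref{lemma-comp} means $\psi \circ \Sigma^r \phi \circ \eta_r^{\Sigma^r X} = \eta_r^X \circ \eta_r^{\Sigma^r X} = \eta_{2r}^X$.

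For the converse, suppose $d_{int}(X,Y) = s$ and fix $r > s$; by definition there exist $\phi \in \Mor_{\C_0}(\Sigma^r X, Y)$ and $\psi \in \Mor_{\C_0}(\Sigma^r Y, X)$ with $\psi \circ \Sigma^r \phi = \eta_{2r}^X$ and $\phi \circ \Sigma^r \psi = \eta_{2r}^Y$. The first equation exhibits $\Sigma^r \psi$ as a right $2r$-inverse of $\phi$ (viewed as a map $\Sigma^r X \to Y$), since $\phi \circ \Sigma^r \psi = \eta_{2r}^Y$. For a left $2r$-inverse, I would apply $\Sigma^{-r}$ to $\psi \circ \Sigma^r \phi = \eta_{2r}^X$ to get $\Sigma^{-r}\psi \circ \phi = \Sigma^{-r} \eta_{2r}^X$, and then identify $\Sigma^{-r} \eta_{2r}^X$ with $\eta_{2r}^{\Sigma^r X}$ (in the convention $\Mor^0(\Sigma^r X, \Sigma^{-r} X)$) using the formula $\Sigma^r(\eta_{s,s'})_A = (\eta_{s+r,s'+r})_A$ of Remark \ref{rem:shift} together with the compatibility of $\Sigma$ with the persistence structure maps $i_{r,s}$ from diagrams (\ref{nt-dia-4})--(\ref{nt-dia-5}). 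Thus $\Sigma^{-r} \psi: Y \to \Sigma^{-r} X$ is a left $2r$-inverse of $\phi$. Applying the lemma in the proof of Proposition \ref{cor-1}(iv) (existence of a right $r'$-inverse plus a left $s'$-inverse implies $(r'+s')$-isomorphism) yields that $\phi: \Sigma^r X \to Y$ is a $4r$-isomorphism. A symmetric argument, swapping the roles of $\phi$ and $\psi$ and of $X$ and $Y$, shows that $\psi: \Sigma^r Y \to X$ is likewise a $4r$-isomorphism.

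The only real obstacle is the bookkeeping of shifts in the converse direction, specifically the verification that $\Sigma^{-r}$ applied to $\eta_{2r}^X$ coincides with $\eta_{2r}^{\Sigma^r X}$ as morphisms in $\C_0$; this comes down to the identities $(\eta_{s,s'})_{\Sigma^r A} = (\eta_{s+r, s'+r})_A = \Sigma^r(\eta_{s,s'})_A$ from Remark \ref{rem:shift}, combined with the compatibility of the persistence structure maps $i_{\alpha,\beta}$ with $\Sigma$. Once this identification is made, the two directions reduce to direct applications of the results already developed in \S\ref{subsec:TPC}, with no deeper conceptual input required.
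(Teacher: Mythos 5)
Your proof is correct and follows essentially the same route as the paper's: same choice of interleaving maps $\Phi = \phi\circ\eta_r^X$, $\Psi=\psi$ in the forward direction (the paper derives $\psi\circ\Sigma^r\phi\simeq_r\eta_r^X$ via Proposition~\ref{prop-r-iso}(i) rather than Corollary~\ref{cor-3}, but these are equivalent), and the same appeal to Lemma~\ref{lem:inv-rs}(iv) in the converse after extracting right and left $2r$-inverses from the interleaving equations. You merely make the shift bookkeeping in the converse more explicit than the paper, which states it tersely.
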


\begin{proof} We start with the first part of the lemma.  By the
  results in \S\ref{subsubsec:main-def}, $\phi$ has a right
  $r$-inverse $\psi:\Sigma^{r}Y\to X$ such that
  $\phi\circ \psi =\eta^{Y}_{r}$. Let $\phi'=\phi\circ \eta_{r}^{X}$
  such that we have the diagram
  $$\Sigma^{2r}Y\stackrel{\Sigma^{r}\psi}{\longrightarrow}
  \Sigma^{r} X \stackrel{\phi'}{\longrightarrow} Y$$ with
 \zjr{$\phi'\circ \Sigma^{r} \psi=\eta_{2r}^{Y}$}. We now consider the composition
  $\psi\circ \Sigma^{r}\phi'$:

  $$\Sigma^{2r}X \stackrel{\eta_{r}^{\Sigma^{r}X}}{\longrightarrow}
  \Sigma^{r} X \stackrel{\Sigma^{r}\phi}{\longrightarrow}
  \Sigma^{r}Y\stackrel{\psi}{\longrightarrow}X~.~$$ Proposition
  \ref{prop-r-iso} claims that $\psi$ is $r$-equivalent to a left
  inverse of $\phi$. Thus
  $\psi\circ \Sigma^{r}\phi \simeq_{r} \eta_{r}^{X}$ and thus
  $\psi\circ \Sigma^{r}\phi'=\eta_{2r}^{X}$ which shows the claim.

  We pass to the second part of the lemma and now assume that
  $d_{int}(X,Y)= s < r$. We fix morphisms $f : \Sigma^{r}X\to Y$ and
  $g: \Sigma^{r}Y\to X$ such that
  \zjnote{$g\circ \Sigma^{r}f=\eta^X_{2r}$} and
  \zjnote{$f\circ \Sigma^{r} g=\eta^Y_{2r}$}.  \ocnote{This means that
    both $f$ and $g$ have right and left $2r$-inverses. Therefore, by
    Lemma \ref{lem:inv-rs} iv), they are both}
  \pbrrvv{$4r$-isomorphisms.}
  \end{proof}

Recall now the largest of our metrics from \S\ref{subsubsec:prop-fr},
$\bar{d}^{\{0\}}$, and its shift invariant version
$\widehat{{d}^{\{0\}}}$ - see also \S \ref{subsubsec:prop-alg} and
\S\ref{subsubsec:non-deg-v}.

\ocnote{
\begin{cor}\label{cor:int-metr} In the setting above we have:
  $$\frac{1}{2}\ \widehat{d_{int}}(X,Y) \leq \widehat{d^{\{0\}}}(X,Y)
  \leq 4\ \widehat{d_{int}}(X,Y) ~.~$$ In particular, all shift
  invariant pseudo-metrics of the type
  $\widehat{\bar{d}^{\mathcal{F}}}$ have as upper bound
  $$4  \cdot \mbox{\rm (the shift invariant interleaving pseudo-metric)}~.~$$
\end{cor}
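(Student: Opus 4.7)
The plan is to prove the two inequalities of the corollary separately, in each case combining Lemma \ref{lem:ineq-inter} (which converts between $r$-isomorphisms and interleavings) with property (ii) from \S\ref{subsubsec:non-deg-v} (which converts between $r$-isomorphisms up to positive shift and the quantity $\bar{\delta}^{\{0\}}$). Shift invariance of both $\widehat{d_{int}}$ and $\widehat{d^{\{0\}}}$ will absorb any spurious shifts that appear along the way.

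For the upper bound $\widehat{d^{\{0\}}}(X,Y) \leq 4\,\widehat{d_{int}}(X,Y)$, I would fix $s > \widehat{d_{int}}(X,Y)$ and $a \in \R$ with $d_{int}(\Sigma^a X, Y) < s$. For any $t > s$, the second part of Lemma \ref{lem:ineq-inter} produces $4t$-isomorphisms $\Sigma^{t+a} X \to Y$ and $\Sigma^t Y \to \Sigma^a X$. Feeding each of these into the right-hand inequality of property (ii) (``an $r$-iso with source $\Sigma^k B$, $k \geq 0$, and target $A$ forces $\bar{\delta}^{\{0\}}(A,B) \leq r$'') bounds both $\widehat{\bar{\delta}^{\{0\}}}(Y,X)$ and $\widehat{\bar{\delta}^{\{0\}}}(X,Y)$ by $4t$. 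If the exponent $t+a$ is negative, one first applies $\Sigma^{-t-a}$ to the iso, turning it into $X \to \Sigma^{-t-a} Y$ with trivial source shift $k=0$; this is harmless once one passes to infima over shifts. Sending $t \downarrow s \downarrow \widehat{d_{int}}(X,Y)$ yields the desired bound.

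For the lower bound $\tfrac{1}{2}\widehat{d_{int}}(X,Y) \leq \widehat{d^{\{0\}}}(X,Y)$, I would fix $u > \widehat{d^{\{0\}}}(X,Y)$ and choose $p, q \in \R$ with $\bar{\delta}^{\{0\}}(\Sigma^p X, \Sigma^q Y) < u$. For any $v > u$, the left-hand inequality of property (ii) delivers a $2v$-isomorphism $\Sigma^{q+k} Y \to \Sigma^p X$ with $k \geq 0$. The first part of Lemma \ref{lem:ineq-inter} then yields $d_{int}(\Sigma^{q+k} Y, \Sigma^p X) \leq 2v$; applying $\Sigma^{-p}$ to both entries (which preserves $d_{int}$) together with symmetry of $d_{int}$ gives $\widehat{d_{int}}(X,Y) \leq d_{int}(X, \Sigma^{q+k-p} Y) \leq 2v$. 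Sending $v \downarrow u \downarrow \widehat{d^{\{0\}}}(X,Y)$ completes this direction.

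The ``in particular'' clause is immediate: since $0 \in \mathcal{F}$ by convention, property (iii) in \S\ref{subsubsec:prop-alg} gives $\bar{d}^{\mathcal{F}} \leq \bar{d}^{\{0\}}$, hence $\widehat{\bar{d}^{\mathcal{F}}} \leq \widehat{d^{\{0\}}} \leq 4\,\widehat{d_{int}}$. The only bookkeeping I anticipate is tracking signs of the shifts $k$ in property (ii); these are always made nonnegative by a routine pre-composition with $\eta$-morphisms, and the adjustments vanish upon taking infima over shifts, so I do not foresee a substantive obstacle.
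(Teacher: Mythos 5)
Your proposal is correct and follows essentially the same route as the paper's own argument: both inequalities are obtained by shuttling between $r$-isomorphisms and the quantity $\bar{\delta}^{\{0\}}$ via property (ii) of \S\ref{subsubsec:non-deg-v} on one side, and between $r$-isomorphisms and $d_{int}$ via Lemma \ref{lem:ineq-inter} on the other, with the shift-invariant hats absorbing the intermediate shift parameters. The only inessential differences are cosmetic: you use the two-parameter form $\inf_{p,q}\bar{\delta}^{\{0\}}(\Sigma^p X,\Sigma^q Y)$ where the paper uses the equivalent one-parameter form (valid since $\{0\}$ is shift-closed), and your side-remark about possibly negative exponents $t+a$ is moot — applying property (ii) with $X'=\Sigma^a X$ and $k=t$ keeps the source shift nonnegative automatically.
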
 
\begin{proof} We start with the first inequality from the left. Assume
  that $\widehat{d^{\{0\}}}(X,Y)=s$ and fix $r>s$,
  $r<s+\epsilon$. This means that there exists $m\in \R$ such that
  $\bar{d}^{\{0\}}(\Sigma^{m}X, Y) < r$.  In particular,
  $\bar{\delta}^{\{0\}}(\Sigma^{m}X,Y) < r$.  Thus, by point (ii) in
  \S\ref{subsubsec:non-deg-v}, we deduce that there exists some
  $k \geq 0$ and a $2r$-isomorphism $\Sigma^{m+k}X\to Y$. By the first
  point of Lemma \ref{lem:ineq-inter} we deduce that
  $d_{int}(\Sigma^{m+k}X, Y)\leq 2 r$ and thus
  $\widehat{d_{int}}(X,Y)\leq 2 r$ which implies the desired inequality.

  We pursue with the second inequality.  Let now
  $s=\widehat{d_{int}}(X,Y)$ and $r>s$, $r<s+\epsilon$.  There exists
  $m\in \R$ such that $d_{int}(\Sigma^{m}X,Y) < r$.  From the second
  point of the Lemma \ref{lem:ineq-inter} we deduce that there are
  $4r$-isomorphisms $\Sigma^{r+m}X \to Y$ and
  $\Sigma^{r}Y\to \Sigma^{m}X$. This means by point (ii) in
  \S\ref{subsubsec:non-deg-v} that
  $\bar{\delta}^{\{0\}}(\Sigma^{m}X,Y)\leq 4r$ and
  $\bar{\delta}^{\{0\}}(Y,\Sigma^{m}X)\leq 4r$. Thus
  $\bar{d}^{\{0\}}(\Sigma^{m}X,Y)\leq 4r$ and we conclude
  $\widehat{d^{\{0\}}}(X,Y)\leq 4\ \widehat{d_{int}}(X,Y)$.
\end{proof}
}
\subsubsection{Other algebraic measurements}\label{subsubsec:other-w}
Other algebraic pseudo-metrics based on measuring the weight of
cone-decompositions - and not necessarily individual triangles - have
appeared in \cite{Bi-Co-Sh:LagrSh}. The basic measurement introduced
there can be viewed as a sort of extension of the interleaving
distance and is easily formulated in the TPC setting (and in fact in
any persistence category). To fix ideas let $\C$ be a TPC and let
$f:X\to Y$ be a morphism in $\C$.  We define:
$$\rho(f)=\inf_{g,s} \ \{\ \max ( \ceil*{f} + \ceil*{g}, s, 0)
\ | \ \ g:Y\to X, \ g\circ f \simeq_{s} \mathds{1}_{X} \ \}.$$ The way this is
used in \cite{Bi-Co-Sh:LagrSh} is the following. Consider a triple
$(\eta, Y_{n}, \phi)$ formed of an iterated cone decomposition $\eta$
in $\C_{0}$ having as final term $Y_{n}$, as below
$$\xymatrixcolsep{1pc} \xymatrix{
  Y_{0} \ar[rr] & & Y_{1}\ar@{-->}[ldd] \ar[r] &\ldots \ar[r]& Y_{i}
  \ar[rr] & & Y_{i+1}\ar@{-->}[ldd]
  \ar[r] &\ldots \ar[r]&Y_{n-1} \ar[rr] &   &Y_{n} \ar@{-->}[ldd]  &\\
  &         \Delta_{1}                  &  & & &  \Delta_{i+1}                          & &  &  &    \Delta_{n}             \\
  & X_{1}\ar[luu] & & & &X_{i+1}\ar[luu] & & & &X_{n}\ar[luu] }$$ and
with $\phi: Y\to Y_{n}$ a morphism in $\C$ that induces an isomorphism
in $\C_{\infty}$. The weight $W$ of such a triple is defined by
$W (\eta, Y_{n}, \phi)=\rho (\phi)$. This can be used to compare
objects $X,Y$ in $\C$ relative to a family of objects $\mathcal{F}$,
which we assume to be closed to the action of $\Sigma$, by defining
for two objects $X$, $Y$:
$$Z^{\mathcal{F}}(Y,X)=\inf \{ \ W(\eta, Y_{n}, \phi) \
| \ \exists j , \ \forall \ i \not = j , \ X_{i} \in \mathcal{F},\
T^{-1}X=X_{j}, \ \phi : \Sigma^{k}Y\to Y_{n}, \ k\in \R \ \}~.~$$ Such
a $Z^{\mathcal{F}}$ can be obviously symmetrized and it is shift
invariant (because $\mathcal{F}$ is closed under the action of
$\Sigma$ and we included the parameter $k$ in the infimum). However,
the fact that the triangle inequality is \zjnote{satisfied} is non-trivial and
it is not clear whether this is true for general TPCs. As we will see
later in the paper, in Remark \ref{rem:chain}, the triangle inequality
is true in important examples such as the homotopy category of a
filtered pre-triangulated dg-category and similarly for filtered
modules over a filtered $A_{\infty}$-category (this was the case
treated in \cite{Bi-Co-Sh:LagrSh}). This \zjnote{subtlety} is related to the
degree of precision in constructing maps induced on cones.
Nonetheless, there is a simple way to compare some of our
pseudo-metrics and $Z^{\mathcal{F}}$.

\begin{lem}\label{lem:metric-D} In the setting above we have:
$$Z^{\mathcal{F}}(Y,X)\leq 4\cdot \widehat{\bar{\delta}^{\mathcal{F}}}(Y,X) ~.~ $$
\end{lem}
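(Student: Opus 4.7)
Fix $\epsilon >0$. By the definitions of $\widehat{\bar{\delta}^{\mathcal{F}}}$ and $\bar{\delta}^{\mathcal{F}}$, there exist real numbers $k,k'$ and a sequence of exact triangles in $\C_{\infty}$ of the form
$$\Delta_{i}\ : \ X_{i}\to Y_{i-1}\to Y_{i}\to TX_{i},\quad 1\le i\le n,$$
with $Y_{0}=0$, $Y_{n}=\Sigma^{k}Y$, a distinguished index $j$ with $X_{j}=T^{-1}\Sigma^{k'}X$, all other $X_{i}\in\mathcal F$, and $\sum_{i}\bar{w}(\Delta_{i})\le \widehat{\bar{\delta}^{\mathcal F}}(Y,X)+\epsilon$. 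The aim is to lift this sequence to a cone decomposition $\eta$ in $\C_{0}$ ending at some object $Z_{n}$ together with a comparison morphism $\phi\colon\Sigma^{k}Y\to Z_{n}$ whose weight $\rho(\phi)$ is controlled by $4\sum\bar w(\Delta_{i})+O(\epsilon)$.

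The first step is to replace each $\Delta_{i}$ by a strict exact triangle in $\C$. By Definition \ref{dfn-extri-inf}, for every $i$ one can find $s_{i}\ge 0$ and a strict exact triangle $\widetilde{\Delta}_{i}$ in $\C$ of the form
$$\Sigma^{s_{i}}X_{i}\to \Sigma^{-q_{i}}Y_{i-1}\to \Sigma^{-q'_{i}}Y_{i}\to \Sigma^{-r_{i}}T\Sigma^{s_{i}}X_{i},$$
with $0\le q_{i}\le q'_{i}\le r_{i}$, whose weight $r_{i}$ is within $\epsilon/n$ of $\bar w(\Delta_{i})$, and that represents $\Sigma^{s_{i},0,0,s_{i}}\Delta_{i}$ in $\C_{\infty}$. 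By Definition \ref{dfn-set} each $\widetilde{\Delta}_{i}$ is in turn linked to a genuine exact triangle $\Delta'_{i}\colon A_{i}\to B_{i}\to C_{i}\to TA_{i}$ in $\C_{0}$ together with an $r_{i}$-isomorphism $\phi_{i}\colon C_{i}\to \Sigma^{-q'_{i}}Y_{i}$; the inverse of $\phi_{i}$ is only a $2r_{i}$-isomorphism by Corollary \ref{cor-2}.

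The second step is to assemble the triangles $\Delta'_{i}$ inductively into a cone decomposition $\eta$ in $\C_{0}$ whose successive apexes $Z_{1},Z_{2},\ldots,Z_{n}$ are constructed so as to be linked to the $Y_{i}$ (up to the cumulative shift carried forward) by $r_{i}$-isomorphisms $\psi_{i}\colon Z_{i}\to\Sigma^{-\sum_{\ell\le i}q'_{\ell}}Y_{i}$. At each step one uses Proposition \ref{prop-ind} (functoriality of strict exact triangles) and Proposition \ref{prop:tr-isos} (which says that the strict exact triangle $\widetilde{\Delta}_{i}$ is ``interleaved'' with the $\C_{0}$ triangle $\Delta'_{i}$) to produce the next $Z_{i+1}$ together with the comparison $\psi_{i+1}$. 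Absorbing the intermediate shifts $\Sigma^{s_{i}}$ (using that $\mathcal F$ is assumed closed under $\Sigma$) keeps the linearization of $\eta$ in $\mathcal F\cup\{T^{-1}X\}$ up to the translation freedom allowed in the definition of $Z^{\mathcal F}$.

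The final step is to estimate $\rho(\phi)$ for $\phi=\psi_{n}\colon \Sigma^{k}Y\to Z_{n}$. Since $\psi_{n}$ is (after composing with the cumulative $\eta$) an $R$-isomorphism with $R\le\sum_{i}r_{i}$, it admits a left $R$-inverse $g$ with $g\circ \phi\simeq_{R}\mathds 1$; the critical point is that by Corollary \ref{cor-2} the right inverse needed to control the persistence shift of $g$ costs a factor of $2$, and tracing through the compositions of shifts in the inductive construction of $\psi_{n}$ produces a further factor of $2$ because each step introduces both $\phi_{i}$ and its inverse. This yields the total bound $\rho(\phi)\le 4\sum_{i}r_{i}\le 4\widehat{\bar{\delta}^{\mathcal F}}(Y,X)+4\epsilon$, proving the lemma after letting $\epsilon\to 0$. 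The main obstacle is precisely this shift-bookkeeping in the inductive step: showing that the passage from strict exact triangles in $\C$ to $\C_{0}$ exact triangles can be done compatibly across all $n$ triangles without uncontrolled shift accumulation, so that the final penalty factor is the stated universal $4$ rather than something growing with $n$.
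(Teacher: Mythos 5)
Your overall strategy matches the paper's: replace the $\C_{\infty}$-triangles by strict exact triangles in $\C$, lift these inductively to a cone decomposition $\bar\eta$ in $\C_{0}$ while tracking a comparison morphism to the current $Y_{i}$, and finally bound $\rho$ of the resulting comparison. But the inductive invariant you state is incorrect, and the way you fix it is left open, which is exactly where the lemma lives or dies.

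You claim the comparisons $\psi_{i}:Z_{i}\to\Sigma^{-\sum q'_{\ell}}Y_{i}$ are $r_{i}$-isomorphisms (a quantity depending only on the $i$-th triangle), and later that $\psi_{n}$ is an $R$-isomorphism with $R\le\sum_{i}r_{i}$. Neither is right. The correct invariant, and the heart of the paper's argument, is that the accumulated comparison $\phi_{k}$ is an $s_{k}$-isomorphism with $s_{k}=2\sum_{\ell\le k}r_{\ell}$. The factor of $2$ enters \emph{at every step}, because each strict exact triangle produces an $r_{k+1}$-isomorphism $f_{k+1}$ from the genuine $\C_{0}$-cone to the shifted $Y_{k+1}$, and to continue the construction you must precompose with a left $r_{k+1}$-inverse $g_{k+1}$ of $f_{k+1}$, which by Corollary~\ref{cor-2} is only a $2r_{k+1}$-isomorphism. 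The comparison $\phi_{k}$ is pushed through the cone to an $s_{k}$-isomorphism $h$ via Proposition~\ref{cor-1}(i), and the new comparison is the composition $h\circ g_{k+1}$, which by Proposition~\ref{prop-r-iso}(iii) is an $(s_{k}+2r_{k+1})$-isomorphism. It is this \emph{additivity} under composition that makes the penalty linear in $\sum r_{i}$ rather than growing in $n$ — the very point you flag as ``the main obstacle'' but do not resolve. Once the invariant is set up correctly, the paper also reshifts the first $k$ triangles by $\Sigma^{-r_{k+1}}$ at each step to keep the $\C_{0}$-triangles aligned.

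The final factor of $4$ then comes in one step, not by tracing two separate factors of $2$ through the induction as you suggest: with $\phi_{n}:\Sigma^{k}Y\to\bar Y_{n}$ a $2r$-isomorphism ($r=\sum r_{i}$), Lemma~\ref{lem:ineq-inter} gives a $2r$-interleaving between $\Sigma^{k}Y$ and $\bar Y_{n}$, and the interleaving morphisms of shift $2r$ composing to $\eta_{4r}$ yield $\rho(\phi_{n})\le 4r$. So your intuition that Corollary~\ref{cor-2} is the source of the overhead, and your identification of the shift-bookkeeping as the crux, are both right — but the claim that each $\psi_{i}$ is merely an $r_{i}$-isomorphism would, if true, give a much better bound and does not reflect the actual accumulation, and the argument as written leaves the central estimate unproved.
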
 

\begin{proof} 
Assume that $\widehat{\bar{\delta}^{\mathcal{F}}}(Y,X)  =  r$. This means that there exists 
$k\in \R$ and a sequence of exact triangles $\eta_{\infty}$ in $\C_{\infty}$
\begin{equation}\label{eq:ex-tr-cinf}\xymatrixcolsep{1pc} \xymatrix{
 0 \ar[rr] &  &  Y_{1}\ar@{-->}[ldd]  \ar[r] &\ldots  \ar[r]& Y_{i} \ar[rr] &  &  Y_{i+1}\ar@{-->}[ldd]  \ar[r] &\ldots \ar[r]&Y_{n-1} \ar[rr] &   &Y_{n} \ar@{-->}[ldd]  &\\
 &         \Delta_{1}                  &  & & &  \Delta_{i+1}                          & &  &  &    \Delta_{n}             \\
  & X_{1}\ar[luu] &  & & &X_{i+1}\ar[luu] &  &  & &X_{n}\ar[luu] } 
\end{equation}
with $X_{i}\in \mathcal{F}$ for all indices except for $X_{j}$, in which case $X_{j}=\Sigma ^{l}T^{-1}X$ for some 
$l\in\R$. Moreover, $Y_{n}=Y$ and $\bar{w}(\Delta_{i})=r_{i}$ with $\sum_{i} r_{i} = r+\epsilon$ (for any small $\epsilon$).

The aim is to construct another sequence $\bar{\eta}$  of exact triangles, this time exact in $\C_{0}$
\begin{equation}\label{eq:ex-tr-C0}\xymatrixcolsep{1pc} \xymatrix{
 0 \ar[rr] &  &  \bar{Y}_{1}\ar@{-->}[ldd]  \ar[r] &\ldots  \ar[r]& \bar{Y}_{i} \ar[rr] &  &  \bar{Y}_{i+1}\ar@{-->}[ldd]  \ar[r] &\ldots \ar[r]&\bar{Y}_{n-1} \ar[rr] &   &\bar{Y}_{n} \ar@{-->}[ldd]  &\\
 &         \Delta_{1}'                  &  & & &  \Delta_{i+1}'                          & &  &  &    \Delta_{n}'             \\
  & \bar{X}_{1}\ar[luu] &  & & &\bar{X}_{i+1}\ar[luu] &  &  & &\bar{X}_{n}\ar[luu] } 
\end{equation}
such that for each $i$ there is a $k_{i}\in \R$ with $\bar{X}_{i}=\Sigma^{k_{i}} X_{i}$ and there is some 
$\phi:  Y\to \bar{Y}_{n}$ that is a $2(r+\epsilon)$-isomorphism. Assuming this construction is achieved, we deduce from Lemma \ref{lem:ineq-inter} that $Y$ and $\bar{Y}_{n}$ are $2(r+\epsilon)$-interleaved and thus \zjr{$\rho(\phi)\leq 4(r+\ep)$} which implies the claim.

The first step is to replace the sequence $\eta_{\infty}$ with a sequence $\eta$ of strict exact triangles  in $\C$:
\[ 
    \left\{ 
      \begin{array}{ll}
        \Delta_{1}: \, \, & X'_{1}\to 0\to Y'_{1}\to \Sigma^{-r_{1}} TX'_{1}\\
        \Delta_2: \,\, & X'_2 \to Y'_1 \to Y'_2 \to \Sigma^{-r_2} TX'_2\\
        \Delta_3: \,\, & X'_3 \to Y''_2 \to Y'_3 \to \Sigma^{-r_3} TX'_3\\
                          &\,\,\,\,\,\,\,\,\,\,\,\,\,\,\,\,\,\,\,\,\vdots\\
        \Delta_n: \,\, & X'_n \to Y'_{n-1} \to Y_{n}' \to \Sigma^{-r_n} TX'_n
      \end{array} \right.\]
where all the $(-)'$ are appropriate shifts of the corresponding nodes of the sequence $\eta_{\infty}$. Such triangles exist by the definition of the weight $\bar{w}$. We now proceed by induction: we assume that we constructed the first  $k$ of the
triangles as in $\bar{\eta}$ together with an $s_{k}=2(r_{1} + r_{2}+\ldots +r_{k})$ isomorphism $\phi_{k}: Y_{k}'\to \bar{Y}_{k}$.  
We now consider the $k+1$ strict exact triangle:
$$\xymatrix{
\Delta_{k+1}:  &  X'_{k+1} \ar[r]^{u_{k+1}}  & Y'_{k} \ar[r] & Y''_{k+1}\ar[d]^{f} \ar[r]  & TX'_{k+1}\\
 & & & Y'_{k+1} \ar[r]& \Sigma^{-r_{k+1}} TX'_{k+1}
}$$
as in Definition \ref{dfn-set} with $f$ an $r_{k+1}$ isomorphism and the top row an exact triangle in $\C_{0}$.
We consider the two exact triangles in $\C_{0}$.
 
 \begin{equation}\label{eq:diag-tr3}\xymatrix{
 X'_{k+1} \ar[r]^{u_{k+1}}\ar[d]  & Y'_{k} \ar[r]\ar[d]^{\phi_{k}} & Y''_{k+1}\ar[d]^{h} \ar[r]  & TX'_{k+1}\ar[d]\\
X'_{k+1} \ar[r]^{u'}              &    \bar{Y}_{k} \ar[r] & Y'''_{k+1} \ar[r] \ar[r] & TX'_{k+1}
}
\end{equation}
where $u'=\phi_{k}\circ u_{k+1}$ and $h$ is induced from the first square on the left. In particular, $h$ is an $s_{k}$-isomorphism. So now we consider:
$$Y'_{k+1} \stackrel{g}{\longrightarrow} \Sigma^{-r_{k+1}} Y''_{k+1} \stackrel{h}{\longrightarrow} \Sigma^{-r_{k+1}} Y'''_{k+1}$$ where $g$ is a left \zjr{$r_{k+1}$-inverse} of $f$
and we notice that $h\circ g$ is an $s_{k+1}$-isomorphism.  We will take the 
map $\phi_{k+1}$ to be the composition $\phi_{k+1}=h\circ g$ and we put $\bar{Y}_{k+1}= \Sigma^{-r_{k+1}} Y'''_{k+1}$.
We  take  the triangle
$$\xymatrix{
\Sigma^{-r_{k+1}}X'_{k+1} \ar[r]^{u'}              & \Sigma^{-r_{k+1}}   \bar{Y}_{k} \ar[r] &\Sigma^{-r_{k+1}} Y'''_{k+1} \ar[r] \ar[r] & \Sigma^{-r_{k+1}}TX'_{k+1}
}$$
 which is the \zjnote{bottom} row in (\ref{eq:diag-tr3}) shifted by $\Sigma^{-r_{k+1}}$ 
 as the $k+1$ exact triangle in the sequence $\bar{\eta}$.  Finally,
we adjust the first $k$ triangles already constructed by shifting them all down by $\Sigma^{-r_{k+1}}$. This produces a sequence
of $k+1$ triangles, each exact in $\C_{0}$, with the properties desired, \zjnote{together}
with the map $\phi_{k+1}$ and completes the induction step. 
\end{proof}
 
Possibly more useful than the actual statement of the \zjnote{Lemma \ref{lem:metric-D}} is the method of proof: we produced  a sequence $\bar{\eta}$ of exact triangles in $\C_{0}$, as  in (\ref{eq:ex-tr-C0}), and a $2r$-isomorphism $\phi : Y_{n}\to \bar{Y}_{n}$ out of the sequence of triangles in $\C_{\infty}$  in
 (\ref{eq:ex-tr-cinf})  whose sum of weights is $r$.  

Using a right inverse $\psi:\Sigma^{2r}{\bar{Y}_{n}}\to Y_{n}$ of $\phi$ we can transform the last $\C_{0}$ exact triangle into a strict exact triangle of weight $4r$. 
 The interest of this construction - and this will be used  in the applications in \S\ref{subsec-symp} -  is that we obtain in this way a method to bound 
\zjnote{$\widehat{\bar{\delta}^{\mathcal{F}}}(Y,X)$} both from below and from above by a simpler 
 quantity $Q^{\mathcal{F}}(Y,X)$ that is defined as the infimum of the sum of weights of triangles in decompositions as   in (\ref{eq:ex-tr-cinf}) but with the first $n-1$ triangles of weight $0$. Thus the weight of such a \zjnote{decomposition} equals the weight of $\Delta_{n}$. To summarize what  was discussed above we have:
 \begin{cor}\label{eq:estimate3}
\zjnote{ \begin{equation} \widehat{\bar{\delta}^{\mathcal{F}}}(Y,X) \leq Q^{\mathcal{F}}(Y,X) \leq 4\cdot \widehat{\bar{\delta}^{\mathcal{F}}}(Y,X)~.~
 \end{equation}}
 \end{cor}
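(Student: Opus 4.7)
My plan splits into the two inequalities. The left inequality $\widehat{\bar\delta^{\mathcal{F}}}(Y,X) \leq Q^{\mathcal{F}}(Y,X)$ should be essentially immediate. Any decomposition counted by $Q^{\mathcal{F}}(Y,X)$ has all but its last triangle of $\bar w$-weight $0$, so its total weight equals $\bar w(\Delta_n)$; this is a special instance of a decomposition admissible for $\bar\delta^{\mathcal{F}}(Y,X)$, so $\bar\delta^{\mathcal{F}}(Y,X)\leq Q^{\mathcal{F}}(Y,X)$, and then $\widehat{\bar\delta^{\mathcal{F}}}(Y,X)\leq \bar\delta^{\mathcal{F}}(Y,X)$ follows by taking trivial shifts $r=s=0$ in the defining infimum of $\widehat{(-)}$.

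For the substantive right inequality $Q^{\mathcal{F}}(Y,X)\leq 4\,\widehat{\bar\delta^{\mathcal{F}}}(Y,X)$, I would execute the construction announced in the paragraph preceding the statement. Given $\epsilon>0$, set $r:=\widehat{\bar\delta^{\mathcal{F}}}(Y,X)+\epsilon$ and extract a decomposition $\eta_\infty$ of the form \eqref{eq:ex-tr-cinf} in $\C_\infty$ with $Y_n=Y$, $X_j$ a shift of $T^{-1}X$, other $X_i\in\mathcal{F}$, and total weight strictly less than $r$. I then feed $\eta_\infty$ into the procedure built during the proof of Lemma \ref{lem:metric-D}: this produces a sequence $\bar\eta$ of exact triangles in $\C_0$ (each automatically of $\bar w$-weight zero by Remark \ref{rem:ex0tr}) ending at some object $\bar{Y}_n$, whose intermediate terms $\bar{X}_i$ are shifts of the original $X_i$'s, together with a $2r$-isomorphism $\phi:Y\to \bar{Y}_n$ in $\C_0$.

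The crucial step is to replace the single final triangle of $\bar\eta$ with a strict exact triangle of controlled weight whose third vertex is $Y$. By Proposition \ref{prop-r-iso}(i), $\phi$ admits a right $2r$-inverse $\psi:\Sigma^{2r}\bar{Y}_n\to Y$, and Corollary \ref{cor-2} makes $\psi$ a $4r$-isomorphism. I then shift the entire sequence $\bar\eta$ up by $\Sigma^{2r}$ (it remains a sequence of $\C_0$-exact triangles, each of $\bar w$-weight zero) and apply Definition \ref{dfn-set} to the shifted last $\C_0$-triangle $\Sigma^{2r}\bar{X}_n\to \Sigma^{2r}\bar{Y}_{n-1}\to \Sigma^{2r}\bar{Y}_n$ together with the $4r$-isomorphism $\psi$. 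This yields a strict exact triangle
$$\Sigma^{2r}\bar{X}_n\to \Sigma^{2r}\bar{Y}_{n-1}\to Y\to \Sigma^{-4r}T\Sigma^{2r}\bar{X}_n$$
in $\C$ of weight $4r$, whose image in $\C_\infty$ has $\bar w$-weight at most $4r$ by Example \ref{ex-tri-c-c-inf}. Concatenating with the first $n-1$ weight-zero triangles from the shifted $\bar\eta$ produces a decomposition of $Y$ eligible for $Q^{\mathcal{F}}(Y,X)$, of total weight $\leq 4r$; letting $\epsilon\to 0$ finishes the bound.

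The main obstacle, as often in these hat-constructions, will be the bookkeeping of shifts. I must ensure that the intermediate terms $\Sigma^{2r}\bar{X}_i$ still lie in $\mathcal{F}$ for $i\neq j$ and that the distinguished term $\Sigma^{2r}\bar{X}_j$ can be matched with $T^{-1}X$ as the definition of $Q^{\mathcal{F}}$ requires. This will use the standing closure of $\mathcal{F}$ under the shift functor $\Sigma$ (cf.~Remark~\ref{ex-delta}(c)) and the fact that any residual shift on the distinguished $X_j$-term relative to $T^{-1}X$ can be absorbed into an adjacent weight-zero $\C_0$-triangle, exploiting precisely the shift-invariance already encoded in the hat-symmetrization $\widehat{(-)}$.
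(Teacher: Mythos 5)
Your proposal is correct and follows the paper's own route: the paper gives no separate proof of this corollary but sketches exactly your construction in the paragraph immediately preceding it (run the near-optimal decomposition through the proof of Lemma~\ref{lem:metric-D}, extract the $2r$-isomorphism $\phi: Y \to \bar{Y}_n$, take a right $2r$-inverse $\psi: \Sigma^{2r}\bar{Y}_n \to Y$, which is a $4r$-isomorphism by Corollary~\ref{cor-2}, and fold the last $\C_0$-exact triangle into a strict exact triangle of weight $4r$). One micro-point on your left inequality: a $Q^{\mathcal{F}}$-decomposition has distinguished term $\Sigma^{l}T^{-1}X$ for some $l\in\R$, so it is admissible for $\bar\delta^{\mathcal{F}}(Y,\Sigma^{l}X)$ rather than for $\bar\delta^{\mathcal{F}}(Y,X)$, and your intermediate claim $\bar\delta^{\mathcal{F}}(Y,X)\leq Q^{\mathcal{F}}(Y,X)$ need not hold as stated; however $\widehat{\bar\delta^{\mathcal{F}}}(Y,X)\leq\bar\delta^{\mathcal{F}}(Y,\Sigma^{l}X)$ by definition of the hat, so the left inequality still follows and the conclusion is unaffected.
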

 
 \begin{rem}\label{lem: sequence} 
 If in the inequality above one could avoid the factor $4$, then 
we would have a simpler description of the fragmentation pseudo-metrics discussed here by replacing sequences of strict exact triangles in $\C$ by  corresponding sequences in $\C_{0}$, followed by an $s$-isomorphism with $s$ being the sum of the weights of the initial triangles. However, this coefficient has to do with the fact that left (or right) inverses of \zjnote{$k$-isomorphisms} are, in general, only $2k$-isomorphisms, see also Remark \ref{rem:inverses}, and a factor of at least $2$ is basically unavoidable. \end{rem}

\section{Examples}\label{sec-example}

\subsection{Filtered dg-categories}\label{subsec:dg}

The key property of dg-categories, introduced in \cite{BK91} (see also
\cite{Dri04}), is that they admit natural, pre-triangulated
closures. The $0$-cohomological category of this closure is
triangulated. We will see here that there is a natural notion of
\zjnote{{\it filtered}} dg-categories.  Such a category also admits a
pre-triangulated closure, defined using filtered twisted complexes,
following closely \cite{BK91}. Its $0$-cohomological category is a
triangulated persistence category.

\subsubsection{Basic definitions} \label{subsec:basic-def} Following
\zjnote{a} standard convention we will work in a co-homological
setting and we keep all the sign conventions as in \cite{BK91}.  For
our purposes it is convenient to view a filtered cochain complex over
the field $\k$ as a triple $(X, \partial, \ell)$ consisting of a
cochain complex $(X, \partial)$ and a filtration function
$\ell: X \to \R \cup\{-\infty\}$ such that for any $a, b \in X$ and
$\lambda \in \k \backslash \{0\}$,
$\ell(\lambda a + b) \leq \max\{\ell(a), \ell(b)\}$,
$\ell(a) = -\infty$ if and only if $a = 0$, and
$\ell(\partial a) \leq \ell(a)$.  We denote
$X^{\leq r}= \{x \in X \,|\, \ell(x) \leq r\} \subset X$ the
filtration induced on $X$ by the filtration function
\zjnote{$\ell$}. Clearly, $X^{\leq r}$ is again a filtered cochain
complex. The family \zjnote{$\{X^{\leq r}\}_{r \in \R}$} determines
the function $\ell$. The \jznote{cohomology} of a filtered cochain
complex is a persistence module: $V^{r}(X)=H(X^{\leq r};\k)$ whose
structural maps $i_{r,s}$ are induced by the inclusions
$\iota_{r,s}:X^{\leq r}\hookrightarrow X^{\leq s}$, $r\leq s$. We have
omitted here the grading, as is customary. In case it needs to be
indicated we write, for instance,
$[V^{r}(X)]^{i}=H^{i}(X^{\leq r};\k)$. We denote this (graded)
persistence module by $\mathbb{V}(X)$,
\zjnote{\begin{equation}\label{eq:pers-v} \mathbb{V}(X): =
    (\{V^{r}(X)\}_{r \in \R}, \{i_{r,s}: V^r(X) \to V^s(X)\}_{r \leq s
      \in \R})~.~
\end{equation}}
Given two filtered cochain complexes $X = (X, \partial^X, \ell_X)$ and
$Y = (Y, \partial^Y, \ell_Y)$, their tensor product is a filtered
\jznote{cochain} complex $(X \otimes Y, \partial^{\otimes}, \ell_{\otimes})$
given by $(X\otimes Y)_k = \bigoplus_{i+j = k} (X_i \otimes Y_j)$ and
\begin{equation} \label{dfn-tensor-complex}
  \partial^{\otimes}(x\otimes y) = \partial^X(x) \otimes y +
  (-1)^{|x|}x \otimes \partial^Y(y) \ , \ \ell_{\otimes}(a \otimes b)
  = \ell_X(a) + \ell_Y(b) ~.~
\end{equation}
If $(X, \ell_{X})$ and $(Y,\ell_{Y})$ are filtered vector spaces, we call a  linear map $\phi : X\to Y$
$r$-filtered if $\ell_{Y}(\phi(x))\leq \ell_{X}(x)+r$ for all $x\in X$. A $0$-filtered map is sometimes
called (for brevity) filtered. For more background on this formalism,  see \cite{UZ16}.

The next definition is an obvious analogue of the notion of dg-category in \cite{BK91} \S 1.

\begin{dfn} \label{dfn-fdg-cat} A {\em filtered dg-category} is a
  preadditive category $\A$ where
  \begin{itemize}
  \item[(i)] for any $A, B \in {\rm Obj}(\A)$ the hom-set
    ${\hom}_{\A}(A,B)$ is a filtered cochain complex with
    filtrations denoted by ${\hom}_{A}^{\leq r}(A,B)$ such that for
    each identity element we have $\ell(\mathds 1_{A})=0$ and $\mathds 1_{A}$ is
    closed;
  \item[(ii)] the composition is a filtered chain map:
    $${\hom}_{\A}(B,C) \otimes {\hom}_{\A}(A,B)
    \xrightarrow{\circ} {\hom}_{\A}(A,C)~;~$$
  \item[(iii)] for any inclusions $\iota^{AB}_{r,r'}$ and
    $\iota^{BC}_{s,s'}$, the composition morphism satisfies the
    compatibility condition
    $\iota^{BC}_{s,s'}(g) \circ \iota^{AB}_{r,r'}(f) =
    \iota^{AC}_{r+s, r'+s'}(g \circ f)$ for any
    $f \in {\hom}^{\leq r}_{\A}(A,B)$ and
    $g \in {\hom}^{\leq s}_{\A}(B,C)$.
\end{itemize}
\end{dfn}

\begin{remark} \label{rmk-fil-dgc} 
  A filtered dg-category is trivially a persistence category by
  forgetting the boundary maps on each ${\hom}_{\A}(A,B)$. Explicitly, for any $A, B \in {\rm Obj}(\A)$,
  define $E_{AB}: (\R, \leq) \to {\rm Vect}_{\k}$ by
  $E_{AB}(r) = {\hom}_{\A}^{\leq r}(A,B)$ and
  $E_{AB}(i_{r,s}) = \iota_{r,s}: {\hom}_{\A}^{\leq r}(A,B)\to {\rm
    hom}_{\A}^{\leq s}(A,B)$.
\end{remark}

The (co)homology category of a filtered dg-category $\A$, denoted by
${\rm H}(\A)$, is a category with
$${\rm Obj}({\rm H}(\A)) = {\rm Obj}(\A)$$ and, for any
$A, B \in {\rm Obj}({\rm H}(\A))$,
\pbnote{
$${\hom}_{{\rm H}(\A)}(A,B) := \mathbb V({\hom}_{\A}(A,B))
= \Bigl( \bigl\{ H^*(\Hom_{\A}^{\leq r}(A,B))\bigr\}_{r \in
  \mathbb{R}}, \{i_{r,s}\}_{r \leq s}\Bigr),$$} \pbnote{is the
persistence module as described in~\eqref{eq:pers-v}.}  It is
immediate to see that for any filtered dg-category $\A$, its (co)homology
category ${\rm H}(\A)$ is a (graded) persistence category.

\subsubsection{Twisted complexes} \label{subsubsec:tw-cplxes} It is easy to construct a formal shift-completion
of a dg-category.


\begin{dfn} \label{dfn-sus} Let $\A$ be a filtered dg-category. The {\em shift completion} $\Sigma \A$ of $\A$ 
is a  filtered dg-category such that: 
\begin{itemize}
\item[(i)] The objects of $\Sigma \A$ are
\begin{equation} \label{suspend-not}
{\rm Obj}(\Sigma\A) = \left\{ \Sigma^r A[d] \, | \, A \in {\rm Obj}(\A), \, r \in \R \,\,\mbox{and}\,\, d\in \Z\right\}
\end{equation}
such that $\Sigma^0 A  = A$, $\Sigma^s (\Sigma^r A)  = \Sigma^{r+s} A$, $A[0] = A$, $(A[d_1])[d_2] = A[d_1 +d_2]$, $(\Sigma^r A)[d] = \Sigma^r(A[d])$, for any $r, s \in \R$ and $d_1, d_2, d \in \Z$.
\item[(ii)] For any $\Sigma^r A[d_A], \Sigma^s B[d_B] \in {\rm Obj}(\A)$, the hom-set ${\hom}(\Sigma^rA[d_{A}], \Sigma^s B [d_{B}])$ is a filtered \jznote{cochain} complex with the same underlying cochain complex of ${\hom}(A,B)$ but with degree shifted by $d_B -d_A$ and filtration function $\ell_{\Sigma^rA[d_A]\, \Sigma^s B[d_B]} = \ell_{AB}+s -r$.
\end{itemize}
\end{dfn}

\begin{remark} \label{rmk-sus}  It is immediate to check that $\Sigma \A$ as given in Definition \ref{dfn-sus} is still a filtered dg-category.   
\end{remark}

The category $\Sigma \A$ carries a natural functor $\Sigma: (\R, +) \to \mathcal P{\rm End}(\Sigma \A)$ defined on objects by $\Sigma^r(A) = \Sigma^r A$ and with an obvious definition on morphisms such that  $\Sigma^r$  is filtration preserving\zjr{. For any $r,s \in \R$, the} natural transformations $\eta_{r,s}:\Sigma^{r}\to \Sigma^{s}$ are such that $(\eta_{r,s})_A :\Sigma^{r}A\to \Sigma^{s} A$ is
 induced by the identity map $\mathds{1}_A$ for each $A \in {\rm Obj}(\A)$.
In this context we have a natural definition of (one-sided) twisted complexes obtained by adjusting to the filtered case  the Definition 1 in \S 4 \cite{BK91}.

\begin{dfn} \label{dfn-tw-cpx} Let $\A$ be a filtered dg-category. A filtered (one-sided) twisted complex of $\Sigma \A$ is a pair $A = \left(\bigoplus_{i=1}^n \Sigma^{r_i} A_i[d_i], q = (q_{ij})_{1 \leq i, j \leq n} \right)$ such that the following conditions hold.
\begin{itemize}
\item[(i)] $\Sigma^{r_i} A_i[d_i] \in {\rm Obj}(\Sigma \A)$, where $r_i \in \R$ and $d_i \in \Z$.
\item[(ii)] $q_{ij} \in {\hom}_{\Sigma \A}(\Sigma^{r_j}A_j[d_j], \Sigma^{r_i}A_i[d_i])$ is of degree $1$, and $q_{ij} =0$ for $i \geq j$.
\item[(iii)] $d_{\hom} q_{ij} + \sum_{k=1}^n q_{ik} \circ q_{kj} =0 $.
\item[(iv)] For any $q_{ij}$, $\ell_{\Sigma^{r_j}A_j[d_j] \Sigma^{r_i}A_i[d_i]}(q_{ij}) \leq 0$.
\end{itemize}
\end{dfn}

\begin{rem} We will mostly work with \jznote{{\em filtered} one-sided twisted complexes} as defined above but, more generally, the pair \zjr{$A = \left(\bigoplus_{i=1}^n \Sigma^{r_i} A_i[d_i], q = (q_{ij})_{1 \leq i, j \leq n} \right)$} subject only to (i),(ii), (iii) is called a one-sided twisted complex. 
\end{rem}


It is easy to see that there are at least as many filtered one-sided twisted complexes as one-sided twisted complexes as it follows from the statement below whose proof we leave to the reader.

\begin{lemma} \label{lemma-exist-fil-tc} Given a twisted complex $\left(\bigoplus_{i=1}^n A_i[d_i], q = (q_{ij})\right)$, there exist $(r_i)_{1 \leq i \leq n}$ such that condition (iv) in Definition \ref{dfn-tw-cpx} is satisfied for the filtration shifted twisted complex $\left(\bigoplus_{i=1}^n \Sigma^{r_i}A_i[d_i], q = (q_{ij})\right)$. \end{lemma}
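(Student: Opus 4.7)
The plan is to observe that the filtration condition (iv) in Definition~\ref{dfn-tw-cpx} translates, via the shift rule $\ell_{\Sigma^{r_j}A_j[d_j]\,\Sigma^{r_i}A_i[d_i]} = \ell_{A_jA_i} + r_i - r_j$, into a finite system of linear inequalities in the unknowns $r_1, \ldots, r_n$. Since $q_{ij} = 0$ whenever $i \geq j$, the nontrivial constraints are precisely
\begin{equation*}
  r_j - r_i \ \geq \ \ell_{A_jA_i}(q_{ij}) \qquad \text{for all } 1 \leq i < j \leq n.
\end{equation*}
Thus the task reduces to finding real numbers $r_1, \ldots, r_n$ satisfying these finitely many inequalities.

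This is easy: set
\begin{equation*}
  C \ = \ \max \left\{ 0,\ \max_{1 \leq i < j \leq n}\ell_{A_jA_i}(q_{ij}) \right\},
\end{equation*}
which is finite since $q$ has only finitely many entries, and put $r_i := iC$. Then for any $i < j$ we have $r_j - r_i = (j-i)C \geq C \geq \ell_{A_jA_i}(q_{ij})$, so condition (iv) holds. One must still check that conditions (i), (ii), (iii) of Definition~\ref{dfn-tw-cpx} are preserved by this shift, but this is immediate: the shift functor $\Sigma^{r_i}$ on $\Sigma\mathcal{A}$ leaves the underlying cochain complex of each hom-set unchanged (only the filtration function changes), so $d_{\hom}q_{ij}$ and the compositions $q_{ik}\circ q_{kj}$ in the shifted twisted complex agree with those computed in the original one, and the Maurer--Cartan equation from condition (iii) is inherited verbatim.

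There is no real obstacle here; the lemma is a purely formal consequence of the fact that only finitely many filtration values appear among the entries of $q$, together with the definition of the filtration on morphisms in $\Sigma\mathcal{A}$ given in Definition~\ref{dfn-sus}(ii). The only choice being made is that of the sequence $(r_i)$, and any sufficiently fast-growing arithmetic progression works.
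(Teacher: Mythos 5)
Your proof is correct and this is essentially the argument the authors have in mind (they leave the proof to the reader, which signals they expected exactly this kind of short linear-inequality argument). The translation of condition~(iv) via Definition~\ref{dfn-sus}(ii) is applied with the correct roles of source and target (recall $q_{ij} \in \hom(\Sigma^{r_j}A_j[d_j], \Sigma^{r_i}A_i[d_i])$, so the filtration shift is $r_i - r_j$), the upper-triangularity $q_{ij}=0$ for $i \geq j$ correctly restricts the constraints to $i<j$, and the arithmetic-progression choice $r_i = iC$ visibly satisfies them since $j-i\geq 1$ and $C\geq 0$. The remark that conditions (i)--(iii) are unaffected because $\Sigma^{r}$ only changes the filtration function and leaves the underlying cochain complexes and compositions untouched is the other half of what needed to be said, and you said it.
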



\subsubsection{Pre-triangulated completion.}
We will see next that the filtered twisted complexes over $\A$ form a category that provides a  (pre-)triangulated
closure of $\A$. The $0$-cohomology category of this completion is a triangulated persistence category.

\begin{dfn} \label{dfn-fil-pre-tr} Given a filtered dg-category $\A$,
  define its \jznote{{\em filtered pre-triangulated completion}}, denoted by
  $Tw(\A)$, to be a category with the following properties.
  \begin{itemize}
  \item[(i)] Its objects are,
    \[ {\rm Obj}(Tw(\A)) := \{ \mbox{filtered one-sided twisted
        complex of $\Sigma\A$} \}.\]
  \item[(ii)] For $A = \left(\bigoplus \Sigma^{r_j}A_j[d_j], q\right)$
    and $A' = \left(\bigoplus \Sigma^{r'_i}A'_i[d'_i], q'\right)$ in
    ${\rm Obj}(Tw(\A))$, a morphism $f \in {\hom}_{Tw(\A)}(A, A')$
    is a matrix of morphisms in $\A$ denoted by
    $f = (f_{ij}): A \to A'$, where
    \[ f_{ij} \in {\hom}_{\Sigma\A}\left(\Sigma^{r_j}A_j[d_j],
        \Sigma^{r'_i}A'_i[d'_i]\right). \]
  \item[(iii)] The hom-differential is defined as follows. For any
    $f \in {\hom}_{Tw(\A)}(A, A')$ as in (ii) above, define
 \zjr{   \begin{equation} \label{diff-hull} d_{Tw\A}(f) := (d_{\rm
        hom}f_{ij}) + q' f - (-1)^l f q\end{equation} }where
    ${\rm deg}(f_{ij}) = l$ and the right-hand side is written
    \pbnote{in matrix form}. The composition $f' \circ f$ is given by
    the matrix multiplication.
  \end{itemize}
\end{dfn}

\begin{lemma} \label{lemma-pre-tr-fdg} Given a filtered dg-category
  $\A$, its filtered \jznote{pre-triangulated completion $Tw (\A)$} is a filtered
  dg-category.
\end{lemma}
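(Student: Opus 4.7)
The plan is to verify the three axioms of Definition \ref{dfn-fdg-cat} for $Tw(\A)$: we must (a) endow each hom-set with the structure of a filtered cochain complex, with closed identity of filtration $0$; (b) show that composition is a filtered chain map; and (c) check compatibility of composition with the filtration inclusions. The main technical step is showing that $d_{Tw\A}^2 = 0$ and $d_{Tw\A}$ is non-increasing with respect to the filtration; everything else then follows by straightforward matrix manipulations.

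First, I would define the filtration on $\hom_{Tw(\A)}(A,A')$: for $A = (\bigoplus \Sigma^{r_j}A_j[d_j], q)$ and $A' = (\bigoplus \Sigma^{r'_i}A'_i[d'_i], q')$, a homogeneous matrix $f = (f_{ij})$ of degree $l$ has each entry $f_{ij} \in \hom_{\Sigma\A}^{l}(\Sigma^{r_j}A_j[d_j], \Sigma^{r'_i}A'_i[d'_i])$, and I set
\[
\ell_{Tw(\A)}(f) := \max_{i,j} \ell_{\Sigma\A}(f_{ij}) = \max_{i,j}\bigl(\ell_{\A}(f_{ij}) + r'_i - r_j\bigr).
\]
The identity $\mathds{1}_A$ has diagonal entries $\mathds{1}_{A_i}$, each of filtration $0$ by axiom (i) of Definition \ref{dfn-fdg-cat} applied to $\A$ (and the shifted filtration cancels since the source and target index agree), and $d_{\text{hom}}(\mathds{1}_A)$ vanishes entrywise; since $q' = q$ in this case, one computes directly from~\eqref{diff-hull} that $d_{Tw\A}(\mathds{1}_A) = q - q = 0$.

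Next, I would check $d_{Tw\A}^2 = 0$. Expanding $d_{Tw\A}^2(f)$ using the Leibniz rule for $d_{\text{hom}}$ on matrix products (which holds because composition in $\A$ is a chain map), all mixed terms involving $d_{\text{hom}}(f)$ and $q' f q$ cancel in pairs, and one is left with
\[
\bigl(d_{\text{hom}}(q') + q' \circ q'\bigr) f \;-\; f \bigl(d_{\text{hom}}(q) + q \circ q\bigr),
\]
which vanishes by condition (iii) of Definition \ref{dfn-tw-cpx}. For the filtration estimate, since $d_{\text{hom}}$ does not increase $\ell_\A$ by axiom (i) in $\A$, and since $\ell_{\Sigma\A}(q_{ij}) \leq 0$ and $\ell_{\Sigma\A}(q'_{ij}) \leq 0$ by condition (iv) of Definition \ref{dfn-tw-cpx}, each of the three terms in~\eqref{diff-hull} has filtration at most $\ell_{Tw(\A)}(f)$ on each entry, and thus $\ell_{Tw(\A)}(d_{Tw\A}(f)) \leq \ell_{Tw(\A)}(f)$.

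Finally, I would verify the Leibniz rule for composition of matrices $f:A \to A'$ of degree $l$ and $g:A' \to A''$ of degree $m$. A direct expansion gives
\[
d_{Tw\A}(gf) = d_{\text{hom}}(g)f + (-1)^m g\, d_{\text{hom}}(f) + q''(gf) - (-1)^{m+l}(gf)q,
\]
while $d_{Tw\A}(g) f + (-1)^m g\, d_{Tw\A}(f)$ expands to the same expression once the two terms $\pm(-1)^m g q' f$ cancel; this shows $d_{Tw\A}$ is a derivation with respect to composition. The filtration inequality $\ell_{Tw(\A)}(gf) \leq \ell_{Tw(\A)}(g) + \ell_{Tw(\A)}(f)$ follows entrywise from the corresponding property of composition in $\A$ together with the additivity of the shift corrections $r'_i - r_j$. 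Compatibility with the persistence inclusions $\iota_{r,s}$ is then automatic since the inclusions act entrywise and composition is defined by matrix multiplication. The hardest bookkeeping is the sign/filtration verification in $d_{Tw\A}^2$; once that is done, everything else is formal.
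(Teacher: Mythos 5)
Your proposal is correct and follows the same route as the paper: you define exactly the filtration function in~(\ref{fil-fun-pre-tr}) (written out via the shifted filtrations from Definition~\ref{dfn-sus}), and then carry out the verifications that the paper explicitly leaves to the reader — $d_{Tw\A}^2 = 0$ via the Maurer--Cartan condition (iii), the Leibniz rule with the cancellation of the $\pm(-1)^m g q' f$ terms, and the filtration estimates using condition (iv) that $\ell(q_{ij}) \leq 0$. This is the paper's intended argument fleshed out, not a different approach.
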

\begin{proof} The main step is to notice that there exists a
  filtration function on \jznote{${\hom}_{Tw(\A)}(A, A')$} for any
  $A, A' \in {\rm Obj}(Tw(\A))$. For any
  $f = (f_{ij}) \in {\hom}_{Tw(\A)}(A, A')$,
  set
  \begin{equation} \label{fil-fun-pre-tr} \ell_{AA'}(f) = \max_{i,j}
    \left\{\ell_{\Sigma^{r_j}A_j[d_j]
        \,\Sigma^{r'_i}A'_i[d'_i]}(f_{ij})\right\}.
  \end{equation}
  It is easily checked that $\ell_{AA'}$ is a filtration function as
  well as the other required properties.
\end{proof}

The first step towards triangulation is to define an appropriate cone
of a morphism.

\begin{dfn} \label{dfn-fmc} Let $\A$ be a filtered dg-category and
  $Tw(\A)$ be \jznote{its pre-triangulated completion}. Let
  $A = \left(\bigoplus \Sigma^{r_j} A_j[d_j], q = (q_{ij})_{1 \leq i,
      j \leq n} \right)$,
  $A' = \left(\bigoplus \Sigma^{r'_i} A'_i[d'_i], q = (q'_{ij})_{1
      \leq i, j \leq m} \right)$ be two objects of $Tw(\A)$ and let
  $f: A \to A'$ be a closed, degree preserving, morphism. Define the
  {\em $\lambda$-filtered mapping cone} of $f$, where
  $\lambda \geq \ell_{AA'}(f)$, by
\zjr{  \begin{equation} \label{const-cone} {\rm Cone}^{\lambda}(f) : =
    \left( \bigoplus_{i} \Sigma^{r'_i} A'_i[d'_i] \oplus \bigoplus_{j}
      \Sigma^{r_j + \lambda} A_j[d_j+1], q_{\rm co}
    \right)\,\,\mbox{where} \,\, q_{\rm co} = \begin{pmatrix}
      q' & f  \\
      0 & -q\end{pmatrix},
  \end{equation}}
  where $q', q,f$ are all block matrices.
\end{dfn}

\begin{remark}\label{thm-pre-tr-cone} (1) The condition
  $\lambda \geq \ell_{AA'}(f)$ guarantees that
  ${\rm Cone}^{\lambda}(f)$ is indeed a filtered one-sided twisted
  complex over $\Sigma \A$. Therefore, $Tw(\A)$ is closed under taking
  degree-shifts, filtration-shifts, and filtered mapping cones of
  (degree preserving) closed morphisms.

  (2) Notice that a $\lambda$-filtered cone can also be written as a
  $0$-filtered cone but for a different map.

  (3) Given a filtered dg-category $\A$ it is easy to see that every
  object in $Tw(\A)$ can be obtained from objects in $\Sigma \A$ by
  taking iterated filtered $0$-filtered mapping cones.
\end{remark}

The $0$-cohomological category associated to a dg-category is a
triangulated category. The next result is the analogue in the filtered
case.

\begin{prop} \label{thm-hol-hull} If $\A$ is a filtered dg-category
  and $Tw(\A)$ is its \jznote{filtered pre-triangulated completion}, then the
  degree-$0$ cohomology category \zjnote{${H}^0(Tw(\A))$} is a triangulated
  persistence category.
\end{prop}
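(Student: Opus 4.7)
The plan is to verify the three conditions of Definition \ref{dfn-tpc} for $\C := H^0(Tw(\A))$ equipped with the shift functor $\Sigma$ extending the one on $\Sigma\A$. Before addressing those conditions, observe that $\C$ is automatically a persistence category: by Lemma \ref{lemma-pre-tr-fdg} the filtered pre-triangulated completion $Tw(\A)$ is itself a filtered dg-category, so the construction described after Remark \ref{rmk-fil-dgc} produces a persistence structure on its degree-$0$ cohomology with $E_{A,B}(r) = H^0(\hom^{\leq r}_{Tw(\A)}(A,B))$. The shift functor $\Sigma$ extends from $\Sigma\A$ to $Tw(\A)$ by acting only on the filtration labels of the summands $\Sigma^{r_j}A_j[d_j]$, leaving the matrix $q$ and the underlying cochain complex intact; this descends to $\C$ and is plainly compatible with the persistence structure, making $\Sigma$ a shift functor in the sense of Definition \ref{dfn-shift-functor}.

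For condition (i), namely that the $0$-level $\C_0$ is triangulated with translation $T=[1]$, I would adapt to the filtered setting the argument of \cite{BK91}. Declare the exact triangles in $\C_0$ to be those isomorphic to one of the form $A \xrightarrow{f} B \xrightarrow{\iota} \text{Cone}^{0}(f) \xrightarrow{\pi} TA$ for a closed, degree and filtration preserving morphism $f$. The verification of (TR1)--(TR4) follows the unfiltered pattern; the point to track is that the explicit homotopy equivalences used in \cite{BK91} -- producing isomorphisms between iterated cones in the rotation axiom and in the octahedral axiom -- are assembled from block-identity matrices between matching summands. Under the filtration conventions of Definitions \ref{dfn-sus}, \ref{dfn-tw-cpx} and \ref{dfn-fmc} each such block map has filtration $\leq 0$ and therefore represents a morphism in $\C_0$. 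Condition (ii) is then immediate: $\Sigma^r$ commutes with $T$, with direct sums, and with the formation of $0$-filtered mapping cones, and the natural transformations $\eta_{r,s}$ are built from identity entries on matching summands.

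The heart of the argument is condition (iii). For $A \in \text{Obj}(\C)$ and $r \geq 0$, the morphism $\eta^A_r \in \Mor^0_\C(\Sigma^r A, A)$ is represented by the identity matrix: by the shift convention $\ell_{\Sigma^r A, A}(\mathds 1_A) = \ell_{A,A}(\mathds 1_A) - r = -r$, so $\mathds 1_A$ lies in filtration $-r$ and $i_{-r, 0}$ promotes it to a filtration-zero closed morphism. By (i) we embed $\eta^A_r$ into an exact triangle
\[ \Sigma^r A \xrightarrow{\eta^A_r} A \longrightarrow K \longrightarrow T\Sigma^r A, \qquad K := \text{Cone}^{0}(\eta^A_r) = A \oplus \Sigma^r A[1]. \]
It remains to show $K \simeq_r 0$. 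For this I would exhibit the filtered contracting homotopy
\[ h = \begin{pmatrix} 0 & 0 \\ \mathds 1 & 0 \end{pmatrix} : K \longrightarrow K, \]
of degree $-1$, whose only nonzero entry $\mathds 1_A : A \to \Sigma^r A[1]$ has filtration $\ell_{A,\Sigma^r A[1]}(\mathds 1_A) = r$ by the shift convention, so that $h \in \hom^{\leq r}_{Tw(\A)}(K,K)$. A direct matrix computation using formula \eqref{diff-hull} with $q_{\text{co}} = \begin{pmatrix} q_A & \mathds 1 \\ 0 & -q_{\Sigma^r A}\end{pmatrix}$ gives $q_{\text{co}} h + h q_{\text{co}} = \mathds 1_K$, i.e.\ $d_{Tw(\A)}(h) = \mathds 1_K$. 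Thus $i_{0,r}(\mathds 1_K) = 0$ in $\Mor^r_\C(K,K)$, so $K$ is $r$-acyclic and condition (iii) holds.

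I expect the principal obstacle to be the careful bookkeeping of filtration shifts in the homotopy equivalences underlying the triangulation axioms of (i); once the filtration conventions are firmly in place, the $r$-acyclicity calculation in (iii) is a short and self-contained check, and the remaining conditions are essentially formal.
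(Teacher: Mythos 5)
Your proposal is correct and follows essentially the same approach as the paper's own proof: both verify the three conditions of Definition \ref{dfn-tpc}, with condition (i) deferred to a BK91-style verification with filtration bookkeeping, condition (ii) observed to be automatic, and condition (iii) established by the very same explicit contracting homotopy $h = \begin{pmatrix} 0 & 0 \\ (\eta_{0,r})_A & 0 \end{pmatrix}$ of filtration level $r$ on $\mathrm{Cone}^0(\eta^A_r) = A \oplus \Sigma^r A[1]$; your $\mathds{1}$ and the paper's $(\eta_{0,r})_A$ denote the same underlying matrix entry. The only point left implicit in your account that the paper makes explicit is why the $(2,1)$ entry of $q_{\mathrm{co}} h + h q_{\mathrm{co}}$ vanishes, namely the commutation $\Sigma^r q \circ (\eta_{0,r})_A = (\eta_{0,r})_A \circ q$, but this is immediate from the conventions and your argument clearly has it in mind.
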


In view of this result, it is natural to call a filtered dg-category
$\A$ {\em pre-triangulated} if the inclusion
$\A \hookrightarrow Tw(\A)$ is an equivalence of filtered
dg-categories.
\begin{cor}\label{cor:pre-tr} Let $\A$ be a filtered pre-triangulated
  dg-category. Then its degree-0 cohomology category ${\rm H}^0(\A)$
  is a triangulated persistence category.
\end{cor}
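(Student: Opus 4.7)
The plan is to deduce the corollary directly from Proposition~\ref{thm-hol-hull} by transporting the TPC structure across the equivalence $\A \hookrightarrow Tw(\A)$. Concretely, let $i: \A \hookrightarrow Tw(\A)$ denote the inclusion functor. By hypothesis $i$ is an equivalence of filtered dg-categories, i.e.~it is fully faithful at the level of filtered cochain complexes of morphisms and essentially surjective in the filtered dg sense. Passing to degree-$0$ cohomology, $i$ induces a functor $H^0(i): H^0(\A) \to H^0(Tw(\A))$ which is fully faithful as a functor of persistence categories (it is an isomorphism on each persistence level $\Mor^{r}$) and essentially surjective up to $0$-isomorphism. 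Hence $H^0(i)$ is a persistence equivalence in the sense of Definition~\ref{def:eq-pers}.

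Next I would invoke Proposition~\ref{thm-hol-hull} to obtain that $H^0(Tw(\A))$ is a triangulated persistence category, with shift functor induced from the shift completion $\Sigma$ of $\Sigma\A$ and with triangulated structure on the $0$-level generated by the $\lambda$-filtered mapping cones from Definition~\ref{dfn-fmc}. The task is then to transport this TPC structure along the persistence equivalence $H^0(i)$: declare the shift functor on $H^0(\A)$ to be the one induced from $\Sigma\A$ (which is already defined on $\A$, hence on $H^0(\A)$), and declare a triangle in $H^0(\A)_0$ to be exact if and only if its image under $H^0(i)$ is exact in $H^0(Tw(\A))_0$.

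The verification of the three TPC axioms is then routine using the fact that $H^0(i)$ is an equivalence of persistence categories that is compatible with $\Sigma$: axiom (i) (triangulation of $H^0(\A)_0$) follows by transport of structure through the equivalence of the underlying $0$-level additive categories; axiom (ii) (that each $\Sigma^r$ is a triangulated endofunctor of $H^0(\A)_0$) follows because the shift functor commutes with $H^0(i)$ and is triangulated on $H^0(Tw(\A))_0$; axiom (iii) (the existence of exact triangles $\Sigma^{r}A \xrightarrow{\eta^{A}_{r}} A \to K \to T\Sigma^r A$ with $K$ being $r$-acyclic) follows by pulling back the corresponding triangle from $H^0(Tw(\A))$ through the equivalence, since $r$-acyclicity is a persistence notion preserved by any persistence equivalence.

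The main obstacle, and really the only non-trivial point, is making sure that the transported class of exact triangles in $H^0(\A)_0$ is intrinsic and not merely borrowed from $H^0(Tw(\A))_0$. This amounts to showing that if $X \to Y \to Z \to TX$ in $H^0(\A)_0$ becomes exact in $H^0(Tw(\A))_0$, then it is isomorphic (in $H^0(\A)_0$) to a cone sequence that already lives in $\A$. Here one uses pre-triangulatedness: every $\lambda$-filtered cone in $Tw(\A)$ is quasi-isomorphic to an object of $\A$, and the quasi-isomorphism can be chosen with shift $0$ (i.e.~as a $0$-isomorphism in $H^0(\A)$), so the cone construction does not leave $H^0(\A)_0$ up to $0$-isomorphism. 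Once this is in place, the TPC structure on $H^0(\A)$ is well-defined and intrinsic, which proves the corollary.
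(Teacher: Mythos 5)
Your proof is correct and takes essentially the same route as the paper, which states the corollary as an immediate consequence of Proposition~\ref{thm-hol-hull} together with the definition of pre-triangulatedness (transport of the TPC structure along the persistence equivalence $H^0(\A)\simeq H^0(Tw(\A))$). The one place you could tighten the writing is the phrase ``the shift functor $\ldots$ is already defined on $\A$'': since $\A$ need not literally contain the objects $\Sigma^r A$, the shift functor on $H^0(\A)$ is obtained by choosing, for each $A$ and $r$, a representative in $\A$ that is $0$-isomorphic to the formal shift $\Sigma^r A \in Tw(\A)$ (which exists by essential surjectivity of the equivalence) — but this is exactly the content of your final paragraph applied to shifts rather than cones, so it fits within the argument you already gave.
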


\begin{proof}[Proof of Proposition \ref{thm-hol-hull}]
  It is trivial to notice that the category $H^{0}(Tw(\A))$ is a
  persistence category. It is endowed with an obvious shift functor as
  defined in \S\ref{subsubsec:tw-cplxes}. The first thing to check at
  this point is that the $0$-level category $[H^{0}(Tw(\A))]_{0}$ with
  the same objects as $H^{0}(Tw(\A))$ and only with the shift
  $0$-morphisms is triangulated - see Definition \ref{dfn-tpc}. The
  family of triangles that will provide the exact ones are the
  triangles of the form
  \zjr{$$A\xrightarrow{f}B\xrightarrow{i}
  \mathrm{Cone}^{0}(f)\xrightarrow{\pi}A[1]$$} associated to the
  $0$-cones, as given in Definition \ref{dfn-fmc}. From this point on
  checking that $[H^{0}(Tw(\A))]_{0}$ is triangulated comes down to
  the usual verifications showing that the $H^{0}$ of a dg-category is
  triangulated, with a bit of care to make sure that the relevant
  homotopies preserve filtration. We leave this verification to the
  reader. It is then automatic that $\Sigma^{r}$ is triangulated when
  restricted to $[H^{0}(Tw(\A))]_{0}$. The last step is to show that
  the morphism $\eta_{r}^{A}:\Sigma^{r}A\to A$ has an $r$-acyclic
  \zjnote{cone} in $Tw(\A)$. In this context, of filtered dg-categories, an
  object $K$ is $r$-acyclic if the identity
  $\mathds{1}_{K}\in \Hom_{Tw(\A)}(K,K)$ is a boundary of some element
  $\eta\in \Hom_{Tw(\A)}^{\leq r}(K,K)$.

  The map $\eta^A_r \in {\rm Mor}^0_{Tw(\A)}(\Sigma^r A, A)$ is
  induced by the identity.  By definition
 \zjr{ ${\rm Cone}^0(\eta^A_r) = A \oplus \Sigma^r A[1]$} and
 \zjr{ $$ q_{\rm co} = \begin{pmatrix}
    q & \eta^{A}_{r}  \\
    0 & -q'\end{pmatrix}
  $$}
  where $q$ is the structural map of the twisted complex $A$ and
  $q'=\Sigma^{r}q$. Consider a homotopy
 \zjr{ \[ K = \begin{pmatrix} 
      0  & 0 \\
      (\eta_{0,r})_A&0
    \end{pmatrix}: {\rm Cone}^0(\eta^A_r) \to {\rm
      Cone}^0(\eta^A_r)[1]. \]} Note that $\ell(K) =r$. We have
\zjr{\jznote{      \begin{align*} 
dK & = \begin{pmatrix} 
      0 & 0   \\
      0 & 0
    \end{pmatrix} + \begin{pmatrix}
      q & \eta^A_r  \\
      0 & -q'
    \end{pmatrix} \begin{pmatrix} 
      0  & 0 \\
      (\eta_{0,r})_A & 0 
    \end{pmatrix} + \begin{pmatrix} 
      0  & 0 \\
      (\eta_{0,r})_A & 0 
    \end{pmatrix}  \begin{pmatrix} 
      q & \eta^A_r  \\
      0 & -q'
    \end{pmatrix} \\ &= \begin{pmatrix}
      \mathds{1}_A & 0   \\
      -q'\circ (\eta_{0,r})_A+(\eta_{0,r})_A\circ q & \mathds{1}_{\Sigma^r A[1]})
    \end{pmatrix} =\mathds{1}_{{\rm Cone}^0(\eta^A_r)}
  \end{align*}}}
  because $\Sigma^{r}q\circ (\eta_{0,r})_A= (\eta_{0,r})_A\circ q$ and this
  concludes the proof.
\end{proof}

\begin{rem}\label{rem:chain}
 (a) In the filtered dg-category $Tw(\A)$ we can
  replicate all the constructions in \S\ref{subsec:TPC} at the chain
  level, similarly to the definition of $r$-acyclic objects mentioned
  inside the proof above. For instance,  $r$-isomorphisms are replaced
  by $r$-quasi-isomorphisms (meaning filtration preserving morphisms that
  induce an $r$-isomorphism in homology), and all the functorial type constructions of that section can be pursued at the chain level, by replacing
 commutativity at the chain level by commutativity up to homotopy.
  
 (b) One advantage of working at the chain level instead of the general setting of triangulated persistence categories is that the maps $c$ induced on cones through diagrams of the following type:
  $$\xymatrix{
  A \ar[r]^{f}\ar[d]^{a} &B\ar[r]\ar[d]^{b} & \mathrm{Cone\/}(f) \ar[d]^{c}\\
  A' \ar[r]^{f'} & B'\ar[r] & \mathrm{Cone\/}(f') 
  }$$
  are defined \zjnote{explicitly} in terms of the homotopy making the square on the left commutative. An example \zjnote{relevant} for this paper is that in the homological category of a filtered dg-category the measurement $Z^{\mathcal{F}}(- , -)$ from \S\ref{subsubsec:other-w} satisfies the triangle inequality.   The proof follows closely the arguments
  in Lemma 6.4.4 in \cite{Bi-Co-Sh:LagrSh} with all weakly filtered maps there being replaced with filtered ones here.
\end{rem}


\subsection{Filtered cochain complexes.}\label{subsec:filt-co}

In this section we discuss the main example of a filtered dg-category,
the category of filtered co-chain complexes. As we shall see, this is
pre-triangulated and thus, in view of Corollary \ref{cor:pre-tr}, its
homotopy category is a triangulated persistence category.

We will work over a field $\k$ and will denote the resulting category
by $\mathcal{FK}_{\k}$.  The objects of this category are filtered
cochain complexes $(X,\partial, \ell)$ where $(X,\partial)$ is a
cochain complex and $\ell$ is a filtration function, as in
\S\ref{subsec:basic-def}. Given two filtered \jznote{cochain} complexes
$(X,\partial_{X}, \ell_{X})$ and $(Y,\partial_{Y},\ell_{Y})$ the
morphisms $\Mor_{\mathcal{FK}_{\k}}(X,Y)$ are linear graded maps
$f:X\to Y$ such that the quantity

\begin{equation}\label{eq:filtr-hom}
  \ell(f)= \inf\{ r \in \R \ | \ \ell_{Y}(f(x))
  \leq \ell_{X}(x)+r, \forall x \in X\}
\end{equation}
is finite. The filtration function on $\Mor_{\mathcal{FK}_{k}}(X,Y)$
is then defined through (\ref{eq:filtr-hom}).  The differential on
$\Mor_{\mathcal{FK}_{k}}(X,Y)$ is given, as usual, by
$\partial (f)= \partial _{Y}\circ f - (-1)^{|f|}f\circ\partial_{X}$
and it obviously preserves filtrations.  The composition of morphisms
is also obviously compatible with the filtration and therefore
$\mathcal{FK}_{\k}$ is a filtered dg-category.

There is a natural shift functor on $\mathcal{FK}_{\k}$ defined by
$\Sigma: (\R, +) \to \mathcal P{\rm End}(\mathcal {FK}_k)$ by
\[ \Sigma^r(X, \partial, \ell_{X}) = (X, \partial, \ell_{X}+r),
  \,\,\,\,\, \mbox{and}\,\,\,\,\, \Sigma^r(f) = f \] for any
$f \in {\rm Mor}_{\mathcal {FK}_k}(X,Y)$. Moreover, for $r, s\in \R$,
there is a natural transformation from $\Sigma^r$ to $\Sigma^s$
induced by the identity.

Assume that
$f: (X,\partial_{X}, \ell_{X})\longrightarrow
(Y,\partial_{Y},\ell_{Y})$ is a cochain morphism such that
$\ell(f)\leq 0$. In this case, the usual cone construction
\zjr{$\mathrm{Cone}(f)=(Y\oplus X[1], \partial_{\mathrm{co}})$} with
\zjr{$$\partial_{\mathrm{co}} = \begin{pmatrix} 
  \partial_{Y} & f  \\
  0 & -\partial_{X}\end{pmatrix}$$} produces a filtered complex and fits
into a triangle of maps with $\ell \leq 0$:
\zjr{$$X \xrightarrow{f} Y\xrightarrow{i}
\mathrm{Cone}(f)\xrightarrow{\pi} X[1]~.~$$}
The standard properties of this construction immediately imply that
the dg-category $\mathcal{FK}_{\k}$ is pre-triangulated and thus the
$0$-cohomological category, $H^{0}\mathcal{FK}_{\k}$, is a
\pbnote{triangulated} persistence category.

It is useful to make explicit some of the properties of this category:
\begin{itemize}
\item[(i)] The objects of $H^{0}\mathcal{FK}_{\k}$ are filtered cochain
  complexes $(X,\partial_{X},\ell_{X})$.
\item[(ii)] The morphisms in $\Mor_{H^{0}\mathcal{FK}_{\k}}^{r}(X,Y)$
  are cochain maps
  $f:(X,\partial_{X},\ell_{X})\to (Y,\partial_{Y},\ell_{Y})$ such that
  $\ell(f)\leq r$ up to chain homotopy $h:f\simeq f'$ with
  $\ell(h)\leq r$.
\item[(iii)] A filtered complex $(K,\partial_{K},\ell_{K})$ is
  $r$-acyclic if the identity $\mathds{1}_{K}$ is chain homotopic to $0$
  through a chain homotopy $h:\mathds{1}_{K}\simeq 0$ with $\ell(h)\leq r$.
\item[(iv)] The construction of weighted exact triangles as well as
  their properties can be pursued in this context by following closely
  the scheme in \S\ref{subsubsec:weight-tr}.
\item[(v)] The limit category $[H^{0}\mathcal{FK}_{\k}]_{\infty}$ has
  as morphisms chain homotopy classes of cochain maps \pbnote{(where
    both the cochain maps and the homotopies are assumed to be of
    bounded shifts)}.  Its objects are still filtered cochain
  complexes. It is triangulated, with translation functor \zjr{$TX= X[1]$},
  as expected.
\end{itemize}

\begin{remark} \label{rem:extensions} The example of the dg-category
  $\mathcal{FK}_{\k}$ can be extended in a number of ways and we
  mention a couple of them here.

  (a) Assume that we fix a filtered dg-category $\mathcal{A}$. There
  is a natural notion of filtered (left/right) module $\mathcal{M}$
  over $\mathcal{A}$. Such modules together with filtered maps
  relating them form a new filtered dg-category denoted by
  $\mathrm{Mod}_{\mathcal{A}}$. The $0$-cohomology category associated
  to this filtered dg-category, $H^{0}\mathrm{Mod}_{\mathcal{A}}$, is
  pre-triangulated because the category $\mathrm{Mod}_{\mathcal{A}}$
  is naturally endowed with a shift functor, just like
  $\mathcal{FK}_{\k}$, as well as with an appropriate
  cone-construction over filtered, closed, degree preserving
  morphisms.

  (b) Similarly to (a), we may take $\mathcal{A}$ to be a filtered
  $A_{\infty}$-category and consider the category of filtered modules,
  $\mathrm{Mod}_{\mathcal{A}}$, over $\mathcal{A}$. Again this is a
  filtered dg-category and it is pre-triangulated (the formalism
  required to establish this fact appears in \cite{Bi-Co-Sh:LagrSh},
  in a version dealing with weakly filtered structures).
\end{remark}

As mentioned in the beginning of Introduction \S \ref{sec-intro},
there exists a quantitative comparison between two filtered cochain
complexes $X, Y$, called the bottleneck distance and denoted by
$d_{\rm bot}(X,Y)$. This is best expressed in the barcode language
from~\cite{Bar94} or~\cite{UZ16}. 

\

For completeness we specify the version of barcodes used here. 
A barcode $\mathcal{B} = \{(I_j, m_j)\}_{j \in \mathcal{J}}$ is a collection of
pairs consisting of intervals $I_j \subset \mathbb{R}$ and positive
integers $m_j \in \mathbb{Z}_{>0}$, indexed by a set $\mathcal{J}$,
and satisfying the following {\em admissibility} conditions:
\begin{enumerate}
\item[-] $\mathcal{J}$ is assumed to be either finite or
  $\mathcal{J} = \mathbb{Z}_{\geq 0}$.
\item[-] Each interval $I_j$ is of the type $I_j = [a_j, b_j)$, with
  $-\infty < a_j < b_j \leq \infty$.
\item[-] In case $\mathcal{J} = \mathbb{Z}_{\geq 0}$ we assume that
  $a_j \longrightarrow \infty$ as $j \longrightarrow \infty$.
\end{enumerate}
The intervals $I_j$ are called bars and for each $j$, $m_j$ is called
the multiplicity of the bar $I_j$.
To such a barcode one can associate a persistence module $V(\mathcal{B})$ that satisfies the following conditions:
\begin{itemize}
\item[-](lower semi-continuity) For any $s\in \R$ and any $t\geq s$
  sufficiently close to $s$, the map $i_{s,t}:M^{s}\to M^{t}$ is an
  isomorphism.
\item[-](lower bounded) For $s$ sufficiently small we have
  $M^{s}=0$.
\item[-] (tame) For every $s \in \mathbb{R}$, 
  \begin{equation}\label{eq:finite-dim}
    \dim_{\k} (M^s) < \infty ~.~
  \end{equation}
\end{itemize}
The module $V(\mathcal{B})$ is defined as the direct sum of the elementary 
persistence modules $V(I)$ for each bar $I$ in the barcode $\mathcal{B}$. Here
$V([a,b))^{s}= \k$ if $s\in [a,b)$ and $V([a,b))^{s}=0$ if $s\not\in [a,b)$.
Conversely, the \zjnote{Normal Form Theorem in Section 2.1 in \cite{PRSZ20} or the main result in \cite{C-B15} says} that any 
persistence module $M$ with the three properties above \zjnote{can be decomposed as} a direct sum of persistence modules of the form $V([a, b))$ and $V([c,\infty))$ \zjnote{in a unique way, up to permutation. Thus} we can associate to it a barcode $\mathcal{B}(M)$ \zjnote{that consists of intervals $[a,b)$ and $[c, \infty)$ appearing in the decomposition.}

\

The homology $H(X)$ of a filtered cochain complex $X$ is a persistence
module whose barcode can be read out of the normal form of $X$.  More
precisely, by Proposition 7.4 in~\cite{UZ16} \pbrev{(see
  also~\cite{Bar94})} there is a filtered isomorphism \pbrev{(in the
  category $H^{0}\mathcal{FK}_{\k}$)} as follows:
\zjr{\begin{equation} \label{decomp} X \simeq \bigoplus_{[a,+\infty)
      \in \mathcal B(X)} E_{1}(a) \oplus \bigoplus_{[c,d) \in \mathcal
      B(X)} E_{2}(c,d)
\end{equation} }
where $E_{1}(a), E_{2}(c,d) \in {\rm Obj}(\mathcal{FK}_{\k})$ are
filtered \jznote{cochain} complexes defined by
\[ E_{1}(a) = ((\cdots \to 0 \to \k\left<x\right> \to 0 \to \cdots),
  \ell(x) = a). \] and \pbnote{
  \[ E_{2}(c,d) = ((\cdots \to 0 \to \k\left<y \right>
    \xrightarrow{\partial} \k\left<x \right> \to 0 \to \cdots), \;
    \ell(y) =c, \ell(x) = d ),\]} where $c \geq d$,
\pbnote{$\partial(y) = \kappa x$ for some $0 \neq \kappa \in \k$}. The
notation \zjr{$\mathcal B(X)$} in~\eqref{decomp} stands for a
collection of intervals of two types: finite or semi-infinite
intervals or $\R$ of the form $[c,d)$ with $c<d$, possibly with
$d=+\infty$; intervals of $0$ length, $[c,d]$, with $c=d$.
  
 
 \
  
 In
what follows, sometimes for brevity, denote by $E_*(I)$ either
$E_{1}(a)$ or $E_{2}(c,d)$ for the corresponding interval
$I = [a, + \infty)$ or $I =[c,d)$ in \zjr{$\mathcal B(X)$}. 
Then $d_{\rm bot}(X, Y)$ is defined as the
infimum $\tau$ satisfying the following conditions: there exist some
subsets consisting of certain \zjr{ ``short intervals''
$\mathcal B(X)_{\rm short} \subset \mathcal B(X)$ and
$\mathcal B(Y)_{\rm short} \subset \mathcal B(Y)$} such that
\begin{itemize}
\item[(i)] each short interval $[c,d)$ satisfies $2(d-c) \leq \tau$;
\item[(ii)] there is a bijection
 \zjr{ $\sigma: \mathcal B(X) \backslash \mathcal B(X)_{\rm short} \to
  \mathcal B(Y) \backslash \mathcal B(Y)_{\rm short}$};
\item[(iii)] if $\sigma([a,b)) = [c,d)$, then $\max\{|a-c|, |b-d|\} \leq \tau$;
\item[(iv)] if $\sigma([a, \infty)) = [c,\infty)$, then $|a-c| \leq \tau$. 
\end{itemize}

\

In what follows, we assume that the cardinalities of barcodes
$\#|\mathcal B(X)|$ and $\#|\mathcal B(Y)|$ are both finite. The
following result compares the fragmentation pseudo-metric $d^{\F}$
defined in Definition \ref{dfn-frag-met} with the bottleneck distance
$d_{\rm bot}$ defined above. 
\begin{prop} \label{prop-1} Let $\C = H^0\mathcal{FK}_{\k}$ and
  $\F \subset {\rm Obj}(\C)$ be a subset containing $0$. Then
  \[ d^{\F}(X,Y) \leq C_{X,Y} d_{\rm bot}(X,Y),\] where
  $C_{X,Y} = 4\{\#|\mathcal B(X)|, \#|\mathcal B(Y)|\} +1$.
\end{prop}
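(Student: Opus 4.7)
The idea is to reduce to the extremal case $\F = \{0\}$ and then use the normal form \eqref{decomp} together with the matching witnessing $d_{\rm bot}$ to build an explicit iterated cone decomposition of $X$ whose linearization consists of $T^{-1}Y$ inserted once and copies of $0$, with total weight proportional to the matching error times the number of bars. Since $\delta^{\F}$ is monotone non-increasing in $\F$, it suffices to bound $d^{\{0\}}(X, Y)$, and unpacking Definition \ref{def:iterated-coneD} in this extremal case shows that such a decomposition amounts to a sequence of $r$-isomorphisms in $\C_0$ taking $Y$ to $X$ (preceded by one weight-$0$ triangle that brings $T^{-1}Y$ into play); by Proposition \ref{prop-r-iso}(iii) the total weight is bounded by the composed $r$-iso cost.

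I would fix $\tau > d_{\rm bot}(X, Y)$ and apply \eqref{decomp} to write $X \cong \bigoplus_{I \in \mathcal B(X)} E_*(I)$ and $Y \cong \bigoplus_{J \in \mathcal B(Y)} E_*(J)$ in $\C_0$, then use the definition of the bottleneck distance to pick a matching $\sigma$ on long bars with endpoint errors $\leq \tau$ and short complements $\mathcal B(X)_{\rm short}$, $\mathcal B(Y)_{\rm short}$ whose bars have length $\leq \tau/2$. Every elementary complex $E_2(c,d)$ is $(c-d)$-acyclic via the explicit two-term contraction $h(x) = \kappa^{-1} y$, so the short summands are $\tau/2$-acyclic in $\C_0$. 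For each matched pair $(I, \sigma(I))$ I would construct an elementary filtered chain map between the two two-term complexes and compute its cone directly to obtain an $O(\tau)$-isomorphism between the matched summands; in the delicate case where the two endpoint errors have opposite signs and no nonzero $0$-filtered chain map exists between the complexes directly, the construction is routed through an auxiliary interpolating elementary complex, with the $\tau/2$-acyclicity of the short summands used to absorb the resulting discrepancy.

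Assembling the block-by-block construction yields a chain-level $O(\tau)$-interleaving of $X$ and $Y$ whose cost is the sum of per-bar contributions. Lemma \ref{lem:ineq-inter}, Proposition \ref{prop-r-iso}(i), and Corollary \ref{cor-2} then convert this interleaving into $r$-isomorphisms in $\C_0$ connecting $Y$ and $X$, which organize into the required chain of strict exact triangles of the form $0 \to Y_{i-1} \to Y_i \to 0$ of bounded weight (together with the single weight-$0$ triangle introducing $T^{-1}Y$). The total weight is $\leq C_{X,Y}\, \tau$, with the factor $4$ reflecting the doubling incurred when inverting $r$-isomorphisms through right-inverses and the $+1$ absorbing the initial $T^{-1}Y$-step; the symmetric argument controls $\delta^{\F}(Y, X)$, and letting $\tau \searrow d_{\rm bot}(X, Y)$ concludes.

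The main obstacle will be the explicit chain-level construction in the opposite-sign case for matched pairs, where one has to route through auxiliary elementary complexes while carefully tracking the filtration shifts on the resulting contracting homotopies. This is essentially a quantitative, chain-level lift of the isometry theorem for tame persistence modules, and the combinatorial bookkeeping through each matched pair and each short summand is the source of the linear growth of $C_{X,Y}$ with the number of bars.
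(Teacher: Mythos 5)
Your overall game plan (decompose into elementary bars via \eqref{decomp}, handle matched and short bars separately, assemble per-bar costs) is the right shape, and it coincides in outline with the paper's proof. However, there is a genuine gap in the key per-bar step, and the reduction you propose at the outset is also an oversimplification.

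The opposite-sign case is not handled correctly. Take matched bars $E_2(c,d)$ and $E_2(c',d')$ with, say, $c' > c$ and $d' < d$ (both within $\tau$). Then a nonzero $0$-filtered chain map exists in \emph{neither} direction, and routing through a third elementary complex $E_2(e,f)$ cannot help: $E_2(c,d) \to E_2(e,f) \to E_2(c',d')$ $0$-filtered forces $c' \leq e \leq c$, contradicting $c' > c$. Nor does the $\tau/2$-acyclicity of the \emph{short} summands (which are the unmatched bars of $\mathcal{B}(X)_{\rm short}$, $\mathcal{B}(Y)_{\rm short}$) do anything for a matched non-short pair: those live in different direct summands and cannot absorb its cone. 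The mechanism the paper actually uses is strict exact triangles with a nonzero shift in the final term, as in Example~\ref{ex-w-r-tr} and Remark~\ref{rmk-shift-notation}(b): the triangle $\Sigma^r A \to 0 \to A \to \Sigma^{-r}TA$ has weight $r$, and this is what allows one to pass from $E_2(c,b)$ to $\Sigma^{a-c}E_2(c,b) = E_2(a, b+a-c)$ at cost $a-c$ and then compose with a genuine $0$-filtered map. Since you have deliberately restricted to triangles of the form $0 \to Y_{i-1} \to Y_i \to 0$, i.e.\ $r$-isomorphisms, you have thrown away precisely this resource. The shift trick is not a cosmetic choice; it is the only way through the opposite-sign case.

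Relatedly, the reduction ``a cone decomposition with linearization $(0,\ldots,T^{-1}Y,\ldots,0)$ amounts to a sequence of $r$-isomorphisms from $Y$ to $X$'' is not what Definition~\ref{def:iterated-coneD} gives. A triangle $0 \to 0 \to K \to 0$ of weight $r$ is admissible whenever $K$ is $r$-acyclic, so the decomposition can freely \emph{introduce} acyclic objects, not just transport $Y$ through $r$-isomorphisms. The paper's Case~I estimate for $\delta^{\F}(E_1(a),E_1(c))$ (with $a\geq c$) actually opens by spawning the $(a-c)$-acyclic cone $K$ via exactly such a triangle, then reaches $E_1(a)$ by a weight-$0$ rotation; nothing in that decomposition is a chain of $r$-isomorphisms from $E_1(c)$. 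Finally, the factor $4$ in $C_{X,Y}$ comes from changing the two coordinates of a finite bar separately at cost $2\tau$ each (one $\tau$ for the good direction, $2\tau$ for the shift-then-map direction), not from inverting $r$-isomorphisms via right inverses; you have also not taken $\tau \searrow d_{\rm bot}$ carefully into account when claiming the $+1$ absorbs the introductory step, since in the paper the $+\tau$ term comes from the two short acyclic direct summands, not from the $T^{-1}Y$ triangle, which has weight $0$.
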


\begin{proof} It is immediate to see that we may assume that both
  $\mathcal{B}'(X)$ and $\mathcal{B}'(Y)$ do not contain any
  $0$-length bars and thus $\mathcal{B}'(X)=\mathcal{B}(X)$ and the
  same for $Y$.  It suffices to prove the conclusion when
  $\mathcal B(X)$ and $\mathcal B(Y)$ have the same cardinality of the
  infinite-length bars (otherwise by definition
  $d_{\rm bot}(X,Y) = +\infty$ and the conclusion holds
  trivially). Let $\tau : = d_{\rm bot}(X,Y) + \ep$ for an arbitrarily
  small $\ep>0$. Since $d^{\F}(\cdot, \cdot)$ is invariant under
  \pbnote{filtered isomorphisms (applied to either of its two inputs)
    then} by~\eqref{decomp} and by reordering summands we obtain:
  \begin{align*}
    d^{\F}(X,Y) & \leq  d^{\F}
                  \left(
                  \bigoplus_{I \in \mathcal B(X) \backslash \mathcal B(X)_{\rm short}}
                  E_*(I),
                  \bigoplus_{\sigma(I) \in \mathcal B(Y) \backslash \mathcal
                  B(Y)_{\rm short}} E_*(\sigma(I))\right)\\
                & + d^{\F} \left(\bigoplus_{J \in \mathcal B(X)_{\rm short}} E_2(J),
                  \bigoplus_{J' \in \mathcal B(Y)_{\rm short}} E_2(J')\right),
  \end{align*}
  where the inequality is given by the triangle inequality of $d^{\F}$
  with respect to the direct sum, see Proposition
  \ref{prop-frag-sum}. For $d^{\F}$ with short intervals, both
  $\bigoplus_{J \in \mathcal B(X)_{\rm short}} E_2(J)$ and
  $\bigoplus_{J' \in \mathcal B(Y)_{\rm short}} E_2(J')$ are acyclic
  objects in $\C$, therefore by (i) in the definition $d_{\rm bot}$
  above, triangles
  \[ 0 \to 0 \to \bigoplus_{J \in \mathcal B(X)_{\rm short}} E_2(J)
    \to 0 \,\,\,\,\mbox{and}\,\,\,\, 0 \to 0 \to \bigoplus_{J' \in
      \mathcal B(Y)_{\rm short}} E_2(J') \to 0\] are
  weight-$\frac{\tau}{2}$ exact triangles (here we identify
  $\Sigma^{\lambda} 0$ with $0$ for any shift $\lambda \in \R$). Thus,
  \begin{align*} \label{est-2}
    d^{\F} \left(\bigoplus_{J \in \mathcal B(X)_{\rm short}} E_2(J),
    \bigoplus_{J' \in \mathcal B(X)_{\rm short}} E_2(J')\right)
    &  \leq d^{\F}\left(\bigoplus_{J \in \mathcal B(X)_{\rm short}} E_2(J),
      0\right) \\
    & + d^{\F}\left(0, \bigoplus_{J' \in \mathcal B(X)_{\rm short}}
      E_2(J')\right) \leq \tau.
  \end{align*}

  On the other hand, by Proposition \ref{prop-frag-sum} again, for
  $d^{\F}$ with non-short intervals, we have
  \pbrev{\[ d^{\F}\left(\bigoplus_{I \in \mathcal B(X) \backslash
          \mathcal B(X)_{\rm short}} E_*(I), \bigoplus_{\sigma(I) \in
          \mathcal B(Y)} E_*(\sigma(I)) \right) \leq \sum_{I \in
        \mathcal B(X) \backslash \mathcal B(X)_{\rm short}}
      d^{\F}\bigl(E_*(I), E_*(\sigma(I))\bigr). \]} Since
  $d_{\rm bot}(X,Y)< +\infty$, the bijection $\sigma$ will always map
  a finite interval to a finite interval, a semi-infinite interval to
  a semi-infinite interval, so it suffice to consider the following
  two cases.

  \medskip

  \noindent {Case I}. \emph{Estimate $d^{\F}(E_1(a), E_1(c))$}. We
  need to build a desired cone decomposition. Without loss of
  generality, assume $a\geq c$. Then the identity map
  $\left<x \right>_{E_1(a)} \to \left<x \right>_{E_1(c)}$ (with {\it
    negative} filtration shift) implies that the triangle
  \zjr{$E_1(a)\to E_1(c)\to K \to E_1(a)[1]$} is weight-0 exact triangle (in
  fact in $\C_0$) where $K$ is the filtered mapping cone. Then in the
  following cone decomposition (with linearization $(0, E_1(c))$),
  \zjr{\[ 
    \left\{ \begin{array}{l} 0 \to 0 \to K \to 0 \\ E_1(c) \to K \to
        E_1(a) \to E_1(c)[1] \end{array} \right.\]} the first triangle is
  a weight-$(c-a)$ exact triangle since it is readily to verify that
  $K$ is $(c-a)$-acyclic. Then
  $\delta^{\F}(E_1(a), E_1(c)) \leq (c-a) + 0 \leq \tau$ by (iv) in
  the definition $d_{\rm bot}$ above. On the other hand, consider the
  following cone decomposition with linearization $(0, E_1(a), 0)$
  (note that by definition $\Sigma^{-(a-c)} E_1(a) = E_1(c)$),
\zjr{  \begin{equation} \label{1} \left\{
      \begin{array}{l} 0 \to 0 \to 0 \to
        0 \\ E_1(a) \to 0 \to \Sigma^{-(a-c)} E_1(a) \to \Sigma^{-(a-c)}
        E_1(a)[1] \\ 0 \to E_1(c) \to E_1(c) \to 0
      \end{array} \right.
  \end{equation}}
  where the second triangle has weight $a-c >0$ by Remark
  \ref{rmk-shift-notation} (b). Therefore,
  $\delta^{\F}(E_1(c), E_1(a)) \leq 0 + (a-c) + 0 \leq \tau$, which
  implies that
  \begin{equation} \label{est-1-1}
    d^{\F}(E_1(a), E_1(c)) \leq \tau. 
  \end{equation}
  
  \medskip

  \noindent {Case II}. \emph{Estimate $d^{\F}(E_2(a,b),
    E_2(c,d))$}. We will carry on the estimation as follows,
  \[ d^{\F}(E_2(a,b), E_2(c,d)) \leq d^{\F}(E_2(a,b), E_2(c,b)) +
    d^{\F}(E_2(c,b), E_2(d,b)). \] Moreover, we will only estimate
  $d^{\F}(E_2(a,b), E_2(c,b))$ with $a \geq c$, and other situations
  can be done in a similar and symmetric way. Similarly to Case I
  above, consider the following cone decomposition
\zjr{  \[ 
    \left\{ \begin{array}{l} 0 \to 0 \to K \to 0 \\ E_2(c,b) \to K \to
        E_2(a,b) \to E_2(c,b)[1] \end{array} \right.\]} where
  $E_2(a,b) \to E_2(c,b)$ is the identity map
  $\left<x\right>_{E_2(a,b)} \to \left<x\right>_{E_2(c,b)}$ (and
  similarly to the generator $y$) with a negative filtration shift and
  $K$ is the cone. Since $K$ is $(a-c)$-acyclic, we have
  $\delta^{\F}(E_2(a,b), E_2(c,b)) \leq 0 + (a-c) \leq \tau$. On the
  other hand,
  \[ \delta^{\F}(E_2(c,b), E_2(a,b)) \leq \delta^{\F}(E_2(c,b),
    \Sigma^{a-c} E_2(c,b)) + \delta^{\F}(E_2(a, b+a-c), E_2(a,b)), \]
  where $\delta^{\F}(E_2(c,b), \Sigma^{a-c} E_2(c,b)) \leq a-c$ by a
  similar cone decomposition as in (\ref{1}). Meanwhile, since
  $b + a-c \geq b$, the identity map from $E_2(a,b+a-c)$ to $E_2(a,b)$
  (with negative filtration shift) yields
  $\delta^{\F}(E_2(a, b+a-c), E_2(a,b)) \leq a-c$. Therefore, together
  we have, by (iii) in the definition $d_{\rm bot}$ above,
  \[ d^{\F}(E_2(a,b), E_2(c,b)) \leq 2 (a-c) \leq 2\tau, \] which
  implies
  \begin{equation} \label{est-1-2} d^{\F}(E_2(a,b), E_2(c,d)) \leq
    4\tau.
  \end{equation}

  Therefore, by (\ref{est-1-1}) and (\ref{est-1-2}) together, we have
  \begin{align*}
    d^{\F}(X,Y) & \leq \#|\mathcal B(X)\backslash \mathcal B(X)_{\rm short}| \cdot 4\tau + \tau\\
                & \leq (4 \#|\mathcal B(X)\backslash \mathcal B(X)_{\rm short}|+1) (d_{\rm bot}(X,Y) + \ep) \\
                & \leq (4 \min\{\#|\mathcal B(X)|, \#|\mathcal B(Y)|\} +1) (d_{\rm bot}(X,Y) + \ep)
  \end{align*}
  where the last inequality holds since $\sigma$ is a bijection by
  (ii) in the definition $d_{\rm bot}$ above.  Let $\ep \to 0$, and we
  complete the proof.
\end{proof}

\subsection{Topological \jznote{spaces} $+$}\label{subsec:top-sp}

There are many topological categories, consisting of topological
spaces endowed with additional structures (indicated by the $+$ in the
title of the subsection), that can be analyzed with the tools
discussed before. We will discuss here two elementary examples. They
both fit the following scheme: we will have a triple consisting of a
(small) category $\mathcal{K}$, an endofunctor
$T_{\mathcal K}:\mathcal{K}\to \mathcal{K}$ and a class of triangles
$\Delta_{\mathcal{K}}$, in $\mathcal{K}$ of the form
$$A\to B \to C \to
T_{\mathcal{K}}A~.~$$ 
In these cases the objects of $\mathcal{K}$ have an underlying
structure as topological spaces and, similarly, the morphisms in
$\mathcal{K}$ are continuous maps, the functor $T_{\mathcal{K}}$
corresponds to the suspension of spaces.

The aim is to define fragmentation pseudo-metrics on the objects of
$\mathcal{K}$ by first associating a weight with some reasonable
properties to the triangles in $\Delta_{\mathcal{K}}$,
$\bar{w}_{\mathcal{K}}: \Delta_{\mathcal{K}}\to \R$, and then defining
quantities $\delta^{\mathcal{F}}(X,Y)$ and
$\underline{\delta}^{\mathcal{F}}(X,Y)$ as in, respectively,
(\ref{frag-met-0}) and (\ref{eq:frag-simpl}), only taking into account
decompositions appealing to triangles
$\Delta_{i}\in \Delta_{\mathcal{K}}$. Notice that
$\delta^{\mathcal{F}}$ is not generally defined in this setting as its
definition requires to desuspend spaces. On the other hand, as soon as
$\bar{w}_{\mathcal{K}}$ is given, $\underline{\delta} ^{\mathcal{F}}$  \ocnote{can be defined by  formula (\ref{eq:semi-metrics1})  with $w_{\mathcal{K}}$ replacing
$w_{\infty}$ there, and with each triangle in the sequence (\ref{eq:iterated-tr2}) being replaced with a triangle in $\Delta_{\mathcal{K}}$. We assume  that the family 
$\mathcal{F}$  is such that \zjr{$0 \in \mathcal{F}$} and in most cases we assume
implicitly that $\mathcal{F}$ consists of all the objects $F$ such that there are triangles in $\Delta_{\mathcal{K}}$ of the form $F\to A\to B\to T_{\mathcal{K}}F$.  The resulting $\underline{\delta}^{\mathcal{F}}$ trivially satisfies the triangle inequality. The pseudo-metric $\underline{d}^{\mathcal{F}}$
obtained by the symmetrization of $\underline{\delta} ^{\mathcal{F}}$ exists in this case too (see Remark \ref{rem:finite-metr} (c)).}
Based on the various constructions discussed earlier in the paper,
there are two approaches to define a weight $\bar{w}_{\mathcal{K}}$
(that is not flat) and they both require some more structure:
\begin{itemize}
\item[A.] The additional structure in this case is a functor
  $\Phi :\mathcal{K}\to \C_{\infty}$ where $\C$ is a TPC, in the
  examples below $\C=H^{0}\mathcal{FK}_{\k}$ - the triangulated
  persistence \pbnote{homotopy category of \jznote{filtered cochain
    complexes}}. We also require that $\Phi$ commutes with $T$ (at
  least up to some natural equivalence) and that for each
  $\Delta\in \Delta_{\mathcal{K}}$ the image $\Phi(\Delta)$ of
  $\Delta$, is exact in $\mathcal{C}_{\infty}$ (and thus
  $\bar{w}(\Phi(\Delta))<\infty$ where $\bar{w}$ is the persistence
  weight introduced in Definition \ref{dfn-extri-inf}).  In this case
  for each $\Delta\in \Delta_{\mathcal{K}}$ we put
$$\bar{w}_{\mathcal{K}}(\Delta)=\bar{w}(\Phi(\Delta))~.~$$  
\item[B.] This second approach requires first that the morphisms
  $\Mor_{\mathcal{K}} (A,B)$ are endowed with a natural increasing
  filtration compatible with the composition. Secondly, there should
  be a shift functor
  $\Sigma_{\mathcal{K}}: (\R,+)\to {\rm End}(\mathcal{K})$ compatible with
  the filtration on morphisms and that commutes with
  $T_{\mathcal{K}}$. Moreover, the triangles in $\Delta_{\mathcal{K}}$
  have to be part of a richer structure such as a model category or a
  Waldhausen category (that is compatible with the functor
  $\Sigma_{\mathcal{K}}$).  In this case, the definition of weighted
  triangles can be pursued following the steps in
  \S\ref{subsubsec:weight-tr}, but at the space level, without moving
  to an algebraic category. This approach goes beyond the scope of
  this paper and will not be pursued here.
\end{itemize}

\begin{rem}\label{rem:shift-et-al}
  Of course, it is also possible to mix in some sense the two
  approaches mentioned before. For instance, in the two examples below
  the category $\mathcal{K}$ carries a shift functor \jznote{$\Sigma_{\K}$ as
  at $B$ but also a functor $\Phi$ as at $A$} such that $\Phi$ commutes
  with the shift functors in the domain and target. In that case we
  can use $\Phi$ to pull back to $\mathcal{K}$ more of the structure
  and weights in $\C$ (of course, this remains less precise than
  constructing weights at the space level).
\end{rem}

\subsubsection{Topological spaces with action functionals.}
We will discuss here a category denoted by
$\mathcal{AT}op_{\ast}$. The objects of this category are pairs
$(A,f_{A})$ where \zjnote{$A = (A, \ast_A)$} is a pointed topological space and
$f_{A}:A\to \R$ is a continuous function bounded from below by the
value $f_{A}(\ast_{A})$ of $f_{A}$ at the base point $\ast_{A}$ of
$A$. We will refer to $f_{A}$ as the action functional associated to
$A$. The morphisms in this category are pointed continuous maps
$u:A\to B$ such that there exists $r\in \R$ with the property that
$f_{B}(u(x))\leq f_{A}(x)+r\ , \ \forall x\in A$.

We will see that there is a natural \pbnote{{\em contravariant}}
functor
\begin{equation} \label{eq:funtor-spaces}
\Phi:\mathcal{AT}op_{\ast} \to [H^{0}\mathcal{FK}_{\k}]_{\infty}
\end{equation}
inducing a weight $\bar{w}_{\mathcal{AT}op_{\ast}}$ and the associated
pseudo-metrics $\underline{d}^{\mathcal{F}}$ on
$\mathrm{Obj}(\mathcal{AT}op_{\ast})$ along the lines of point A
above.

\begin{rem}\label{rem:constr-top}
  The condition on $f_{A}$ being bounded from below is one possible
  choice in this construction. Its role is to allow the constant map
  $u: (A, f_{A})\to (B,f_{B})$ to be part of the morphisms of
  $\mathcal{AT}op_{\ast}$.
\end{rem}

Before proceeding with the construction of the functor $\Phi$ we
discuss some features of $\mathcal{AT}op_{\ast}$. Notice first that
the morphisms are filtered with the $r$-th stage being
\zjr{$$\Mor_{\mathcal{AT}op_*}^{\leq r}(A,B) =
\{ u: A\to B \ | \ u\ \mathrm{continuous}, \ \ u(\ast_A) = \ast_B, \ \  f_{B}(u(x)) \leq
f_{A}(x)+r \ , \ \forall x\in A\}~.~$$} There is an obvious \zjnote{family of functors}
$\Sigma_{\mathcal{AT}op_{\ast}} : (\R,+)\to \mathcal{AT}op_{\ast}$
defined by \zjnote{$\Sigma_{\mathcal{AT}op_{\ast}}^s(A,f_{A})=(A,f_{A}+s)$ and being the identity on
morphisms.} The next step is to define the translation functor
$T_{\mathcal{AT}op_{\ast}}$.  At the underlying topological level this
is just the topological suspension but we need to be more precise
about the action functional. Given an object $(A,f_{A})$ we first
define the cone $(CA, f_{CA})$. We take $CA$ to be the reduced cone,
in other words the quotient topological space
$CA=A\times [0,1]/(A\times \{1\}\cup \ast_{A}\times [0,1])$.  To
define $f_{CA}$ we first consider the homotopy
$h_{A}:A\times [0,1]\to \R$,
$$
h_{A}(x,t)= \left\{ \begin{array}{l}
f_{A}(x) \  \ \  \mathrm{if}\ \ \  0\leq t \leq \frac{1}{2}\\
(2-2t) (f_{A}(x) -f_{A}(\ast_{A})) + f_{A}(\ast_{A}) \ \ \  \mathrm{if} \  \  \ \frac{1}{2}\leq t \leq 1 
\end{array}\right. ~.~
$$
The map $f_{CA}: CA\to \R$ is induced by $h_{A}$. We now define the
reduced suspension, $SA= CA/A\times\{0\}$ and take $f_{SA}$ to be the
map induced to the quotient by the homotopy
$h'_{A}:A\times [0,1]\to \R$,

$$
h'_{A}(x,t)= \left\{ \begin{array}{l}
2t (f_{A}(x)-f_{A}(\ast_{A}))+f_{A}(\ast_{A}) \  \ \  \mathrm{if}\ \ \  0\leq t \leq \frac{1}{2}\\
(2-2t) (f_{A}(x) -f_{A}(\ast_{A})) + f_{A}(\ast_{A})\ \ \  \mathrm{if} \  \  \ \frac{1}{2}\leq t \leq 1 
\end{array}\right.~.~
$$
We put $T(A,f_{A})=(SA, f_{SA})$. It is immediate to see that $T$
extends to a functor on $\mathcal{AT}op_{\ast}$ and that it commutes
with $\Sigma$. Moreover, both $\Sigma$ and $T$ so defined commute and
are compatible with the filtration of the morphisms in the sense that
they take $\Mor^{\leq r}$ to $\Mor ^{\leq r}$ for each $r$. Moreover,
composition of morphisms is also compatible with the filtrations in
the sense that it takes
$\Mor^{\leq r_{1}}(B,C) \times \Mor^{\leq r_{2}}(A,B)$ to
$\Mor^{\leq r_{1}+r_{2}}(A,C)$.

We now define the class of exact triangles
$\Delta_{\mathcal{AT}op_{\ast}}$. For this we consider a morphism
$u: (A, f_{A})\to (B,f_{B})$ and we first define its cone
$\mathrm{Cone}(u)$. As a topological space this is, as expected, the
quotient topological space $(B\cup CA)/\sim$ where the equivalence
relation $\sim$ is generated by $f(x)\sim x\times \{0\}$. The base
point of $\mathrm{Cone}(u)$ is the same as that of $B$.  The action
functional $f_{\mathrm{Cone}(u)}$ is induced to the respective
quotient by :
$$
G(x)= \left\{ \begin{array}{l}
f_{B}(x) \  \ \  \mathrm{if}\ \ \  x\in B \\
(1-2t) (f_{B}(u(y)) -f_{B}(\ast_{B})) + 2t( f_{A}(y)-f_{A}(\ast_{A}))+f_{B}(\ast_{B}) \   \mathrm{if} \ \ x = (y,t) \in A\times [0,\frac{1}{2}]\\
(2-2t) (f_{A}(y) -f_{A}(\ast_{A}))+ f_{B}(\ast_{B}) \  \mathrm{if}\ \  x=(y,t) \in A\times [\frac{1}{2}, 1]
\end{array}\right.. 
$$
There is an obvious inclusion $i: (B,f_{B})\to \mathrm{Cone}(u)$ as
well as a projection $p :\mathrm{Cone}(u)\to TA$ (that contracts $B$
to a point). This map belongs to our class of morphisms because the
functional \zjr{$f_B$} is bounded from below.  The class
$\Delta_{\mathcal{AT}op_{\ast}}$ consists of triangles $\Delta$:
\begin{equation}\label{eq:extr-top}
  \Delta : A\xrightarrow{u} B\xrightarrow{i}
  \mathrm{Cone}(u)\xrightarrow{p} TA ~.~
\end{equation}
We finally construct the functor
$\Phi: \mathcal{AT}op_{\ast}\to [H^{0}\mathcal{FK}_{\k}]_{\infty}$.
\pbnote{This functor will be contravariant, since the objects of
  $\mathcal{FK}_{\k}$ are \jznote{cochain} complexes (rather than chain
  complexes).}


First we fix some notation: for a pointed topological space $X$ we
denote by $\tilde{C}_{\ast}(X)$ the reduced singular chain complex of
$X$ with coefficients in $\k$ and by $\tilde{C}^{\ast}(X)$ the reduced
singular cochain complex (we denote without $\tilde{(-)}$ the
non-reduced chain/cochain complexes) and if $Y\subset X$ is a pointed
subspace, then $\tilde{C}_{\ast}(X,Y)$ and $\tilde{C}^{\ast}(X,Y)$ are
the relative (co)chains.  Consider an object of
\jznote{$\mathcal{AT}op_*$}, $(A,f_{A})$, and let
$A^{\leq r}=(f_{A})^{-1}(-\infty, r]$. Notice that the spaces
$A^{\leq r}$ are pointed (if non-void).  \ocnote{There is a filtration of
$C^{\ast}(A)$ defined by:
$$\tilde{C}^{\ast}(A)^{\leq -r}=
    \mathrm{im}\{\tilde{C}^{\ast}(A, A^{\leq r})\to
    \tilde{C}^{\ast}(A)\}$$ Thus the filtration up to $s\in\R$ of $\tilde{C}^{\ast}(A)$ consists of the cochains in $A$ that
vanish over the singular chains of $A^{\leq -s}\subset A$. It is
clear that the cochain differential preserves this filtration. Moreover, the filtration is increasing and
if $f\in \hom^{\leq r}_{\mathcal{AT}op_{\ast}}(A,B)$,  then $f$ pulls-back the cochains 
in $B$ that vanish over $B^{\leq a}$ to cochains in $A$ that vanish over $A^{\leq a-r}$ and, as a result, $C^{\ast}(f):\tilde{C}^{\ast}B\to \tilde{C}^{\ast}(A)$ shifts filtration by $r$.}   
Finally, we define the functor $\Phi$.  For each object $(A,f_{A})$ of
$\mathcal{AT}op_{\ast}$ we take $\Phi(A, f_{A})$ to consist of the
cochain complex $\tilde{C}^{\ast}(A)$ together with the filtration
$\{\tilde{C}^{\ast}(A)^{\leq r}\}$ defined above.  For a morphism
$u: (A,f_{A})\to (B, f_{B})$ we take $\Phi(u)=[\tilde{C}^{\ast}(u)]$
where $[-]$ represents the cochain-homotopy class of the respective
cochain morphism.

The definition of the morphisms in $\mathcal{AT}op_{\ast}$ implies
that $\Phi(u)$ is indeed a morphism in
$[H^{0}\mathcal{FK}_{\k}]_{\infty}$. Moreover, because we are using
everywhere reduced cochain complexes (and we work in the pointed
category), we have that $\Phi(\Delta)$ is exact in
$[H^{0}\mathcal{FK}_{\k}]_{\infty}$ for each of the triangles in
$\Delta_{\mathcal{AT}op_{\ast}}$. Further, the functor $\Phi$ also
interchanges the shift functors in the domain and the target.

In all cases, the weight $\bar{w}_{\mathcal{AT}op_{\ast}}$ is
well-defined as well as the associated fragmentation pseudo-metrics
$\underline{d}^{\mathcal{F}}(-,-)$ on the objects of
$\mathcal{AT}op_{\ast}$. \jznote{Roughly speaking, these fragmentation
  pseudo-metrics measure how much ``weight'' we need to obtain a given
  topological space via successive cone attachments of spaces in
  $\F$.}
\begin{rem} \label{rem:gen-top} (a) The choice of the class
  $\Delta_{\mathcal{AT}op_{\ast}}$ given above is quite restrictive
  with the consequence that the resulting pseudo-metrics are often
  infinite. One alternative is to enlarge this class to all triangles
  in $\mathcal{AT}op_{\ast}$ that are homotopy equivalent to those in
  the initial class through maps (and homotopies) of filtration $0$.

  (b) From some points of view, working in the {\em pointed} category
  of spaces endowed with an action functional is not natural. Other
  choices are possible, in particular some such that the translation
  functor $T$ more closely imitates dynamical stabilization.

  (c) The restriction of $\Phi$ to compact topological spaces admits
  an obvious lift to $H^{0}\mathcal{FK}_{\k}$. However, without such a
  restriction, such a lift does not seem to be available in full
  generality.
\end{rem}

\subsubsection{Metric spaces} \label{subsubsec:metr} The category
$\mathcal{M}etr_{0}$ that we will consider here has as objects
path-connected metric spaces $(X,d_{X})$ of finite diameter.  The
morphisms are Lipschitz maps. \zjnote{Recall that $\phi :X\to Y$ is a
  Lipschitz map if there exists a constant $c\in [0,\infty)$, called
  the Lipschitz constant of $\phi$, with the property that
  $d_{Y}(\phi(x),\phi(y))\leq c~ d_{X}(x,y)$ for all $x,y\in X$.}
\begin{rem} \label{rem:constr-metr} The finite diameter condition
  imposed here - indicated by the \pbnote{subscript} $_{0}$ -
  \pbnote{is necessary for some of} the constructions below.  The
  connectivity \pbnote{assumption} is more a matter of convenience.
\end{rem}

We will construct a functor as in (\ref{eq:funtor-spaces}) with one
main modification.  For convenience, we prefer defining a covariant
functor and thus our target category will not be a category of cochain
complexes but rather one of filtered {\em chain complexes} (the
passage from one to the other is formal, \pbnote{replacing $C^*$ by
  $C_{-*}$ and vice versa}). We will denote the category of filtered
chain complexes over $\k$ by $\mathcal{FK}'_{\k}$. This behaves just
as a usual dg-category except that the differential on the space of
morphisms is of degree $-1$. With this change, we will construct:
\jznote{\begin{equation}\label{eq:functor-metr}
  \Phi': \mathcal{M}etr_{0} \to [H_{0}\mathcal{FK}'_{\k}]_{\infty}
\end{equation}}
as well as related structures on $\mathcal{M}etr_{0}$, as at point A at
the beginning of the section (see also Remark \ref{rem:shift-et-al}).

We start by noting that there is an obvious increasing filtration of
the morphisms in $\mathcal{M}etr_{0}$
with
$$\Mor_{\mathcal{M}etr_{0}}^{\leq r}(X,Y)=\{ u: X\to Y \ | \
\mathrm{the\ Lipschitz\ constant\ of}\ u \ \mathrm{is}\ \leq
e^{r}\}~.~$$ It is immediate to see that this filtration is compatible
with composition. There is also a \zjnote{family of functors}
$\Sigma_{\mathcal{M}etr_{0}}:(\R, +)\to \mathcal{M}etr_{0}$ defined by
rescaling the metric, $\Sigma^{s}(A, d_{A}) = (A, e^{s} d_{A})$ \zjnote{and being the identity} on morphisms. As in the example in the previous
section, we next will define the translation functor
$T_{\mathcal{M}etr_{0}}$ and the class of triangles
$\Delta_{\mathcal{M}etr_{0}}$. The first step is to construct the
metric cone $C'A$ for an object $(A, d_{A})$ in our
class. Topologically, the cone $C'A$ will be this time the {\em
  unreduced} cone over $A$. Thus it is defined by
$C'A=A\times [0,1]/A\times \{0\}$.  To define the metrics $d_{C'A}$,
first let $D_{A}$ be the diameter of $A$.  We then put
\begin{equation}
d_{C'A}((x,t), (y,t'))= \frac{D_{A}}{2}|t-t'| + \min\{t,t'\}~d_{A}(x,y).
\end{equation}
It is immediate to see that this does indeed define a metric on
$C'A$. A similar construction is available to construct $T(A,
d_{A})$. Topologically, we will define first the - {\em non-reduced} -
suspension, $S'A$, as the topological quotient of
$A\times [-\frac{1}{2},\frac{1}{2}]$ with $A\times \{-1/2\}$
identified to a point $S$ and $A\times \{+1/2\}$ identified to a
different point $N$. We now define $d_{S'A}$ by
\begin{equation}
  d_{S'A}((x,t), (y,t'))=
  \frac{D_{A}}{2}|t-t'| +\min\left\{\frac{1}{2}-|t|,\frac{1}{2}-|t'|\right\}~d_{A}(x,y)
\end{equation}
and again it is immediate to see that this defines a metric on
$S'A$. We now put $T(A,d_{A})= (S'A, d_{S'A})$.  The next step is to
define the triangles in $\Delta_{\mathcal{M}etr_{0}}$. For this we
assume $u : (A,d_{A})\to (B,d_{B})$ is a morphism in our category and
we want to define the (non-reduced) cone of $u$,
$\mathrm{Cone}'(u)$. Topologically, this is, as usual,
$B\cup C'A/[\{x\}\times \{0\}\sim u(x) \ | \ x\in A]$. To define a
metric on $\mathrm{Cone}'(u)$ we notice first that given \pbnote{a
  map} $g:X\to Y$ and a pseudo-metric $d_{Y}$ on $Y$, there is a
pull-back pseudo-metric on $X$ given by
$g^{\ast}d_{Y} (a,b)= d_{Y}(g(a),g(b))$.  We now let
$A'= u(A)\subset B$ and we denote by $\bar{u}:A'\hookrightarrow B$ the
inclusion. Notice that $\mathrm{Cone}'(\bar{u})\subset C'B$.  Thus
$\mathrm{Cone}'(\bar{u})$ is endowed with a metric given by the
restriction of the metric $d_{C'B}$ on $C'B$. There are obvious
projections $\pi:\mathrm{Cone}'(u)\to \mathrm{Cone}'(\bar{u})$ and
$p: \mathrm{Cone}'(u)\to S'A$. Here $p$ collapses $B$ to the point $S$
in the suspension and sends $(x,t) \to (x,t-\frac{1}{2})$ for the
points $(x,t)\in C'A$. We now define
\zjr{$$d_{\mathrm{Cone}'(u)}:=\pi^{\ast}d_{C'B}+
p^{\ast}d_{S'A}~.~$$} Notice that, if $u$ is not injective and $B$ is
not a single point, then the two pseudo-metrics in the right term of
the equality are each degenerate. Nonetheless, $d_{\mathrm{Cone}'(u)}$
is non-degenerate. Finally, the class of triangles
$\Delta_{\mathcal{M}etr_{0}}$ consists of triangles:
$$A\xrightarrow{u} B\xrightarrow{i} \mathrm{Cone}'(u)\xrightarrow{p} S'A$$
where $i$ is the inclusion and $p$ is the projection above.

With this preparation, we can now define the functor $\Phi'$ from
(\ref{eq:functor-metr}).  Consider an object $(A,d_{A})$ in our
category and the associated singular complex $C_{\ast} (A)$.  This
chain complex is filtered as follows:
\jznote{$$C^{\leq r}_{k}(A)=\left\{\sum_{i }a_{i}
\sigma_{i} \ \bigg|\ a_{i}\in\k \ , \ \sigma_{i} \ \mathrm{a\ singular\
  simplex\ of\ diameter\ at\ most}\ e^{r}\right\}~.~$$} In other words, in
the expression above, $\sigma_{i}: \Delta^{k}\to A$ is a continuous
map with the standard $k$-simplex as domain and such that
$d_{A}(\sigma_{i}(x),\sigma_{i}(y))\leq e^{r}$ for any
$x,y\in \Delta^{k}$.  Consider the constant map $c : A \to \ast$. This
induces an obvious surjection $C_{\ast}\to C_{\ast}(\ast)$ and we
denote by $\bar{C}_{\ast}(A)$ the kernel of this map (this is
quasi-isomorphic to the reduced singular chain complex of $A$ -
because $A$ is connected - but is independent of the choice of
base-point). There is an induced filtration
$\bar{C}^{\leq r}_{\ast}(A)$.  We now put
$$\Phi'(A,d_{A})= \bar{C}_{\ast}(A)\ \mathrm{with\ the\ filtration}\
\{\bar{C}^{\leq r}_{\ast}(A)\}_{r} ~.~$$ Further, for a morphism
$u: (A,d_{A})\to (B,d_{B})$ we take $\Phi'(u)=[C_{\ast}(u)]$, the
chain homotopy class of the singular chain map $C_{\ast}(u)$
(restricted to the $\bar{C}(-)$ complexes).

It is easy to see that this $\Phi'$ is indeed a functor as desired and
that $\Phi'(\Delta)$ is exact for each triangle $\Delta$ as defined
above and, again, $\Phi'$ interchanges the shift functors in the
domain and target. In summary, the weight
$\overline{w}_{\mathcal{M}etr_{0}}$ is well-defined as well as the
quantities $\underline{\delta}^{\mathcal{F}}$ and the pseudo-metrics
associated to them.

\begin{rem}\label{rem:gen-metr}
  (a) Similarly to Remark \ref{rem:gen-top}, the definition of the
  triangles in $\Delta_{\mathcal{M}etr_{0}}$ is highly restrictive
  and, in this case, even the objects in our category are subject to a
  constraint - finiteness of the diameter - that might be a hindrance
  in applications. One way to apply the methods above to study spaces
  of infinite diameter is to consider triangles of the form
  $\Delta : A\to B\to C \to S'A$ where $A$ is of finite diameter such
  that $S'A$ admits a metric as above and analyze when $\Phi'(\Delta)$
  is of finite persistence weight in
  $[H_{0}\mathcal{FK}'_{\k}]_{\infty}$.

  (b) In studying metric spaces of infinite diameter by these methods,
  it is likely that the most appropriate structure that fits with the
  cone construction is that of {\em length structure}, in the sense of
  Gromov, as in Chapter 1, Section A in \cite{Pansu}. We will not
  further pursue this theme here.
\end{rem}

\subsubsection{Further remarks on topological examples.}

\

A. In the topological examples above - for instance in
$\mathcal{AT}op_{\ast}$ - it is natural to see what the quantities
$\underline{\delta}^{\mathcal{F}}(-)$ mean even for the flat weight
$w_{0}$, which associates to each exact triangle the value $1$. Of
course, in this case $\underline{\delta}^{\mathcal{F}}(A, B)$ simply
counts the minimal number of cone-attachments in the category
$\mathcal{AT}op_{\ast}$ that are needed to obtain $A$ out of the space
$B$ by attaching cones over spaces in the family $\mathcal{F}$ using
the family of triangles $\Delta_{\mathcal{AT}op_{\ast}}$. Given that
the weight is flat, the question is independent of filtrations and
shift functors and it reduces to the identical question in the
category of pointed spaces $\mathcal{T}op_{\ast}$. In the examples
below we will focus on this category and on
$\underline{\delta}^{\mathcal{F}}(A,\ast)$ which is one of the most
basic quantities involved.

It is useful to keep in mind that there are two more choices that are
essential in defining $\underline{\delta}^{\mathcal{F}}(A,\ast)$: the
choice of family $\mathcal{F}$ and the choice of the class of exact
triangles $\Delta_{\mathcal{T}op_{\ast}}$ - see also Remark
\ref{rem:gen-top} (a).

\begin{itemize}
\item[(i)] $\mathcal{F}=\{S^{0}, S^{1},\ldots, S^{k},\ldots\}$;
  $\Delta_{\mathcal{T}op_{\ast}}$ are the triangles
  $A\xrightarrow{u} B\to \mathrm{Cone}(u)\to SA$ as in
  (\ref{eq:extr-top}) (but omitting the action functionals).  In this
  case, $\underline{\delta}^{\mathcal{F}}(A,\ast)= k < \infty$ means
  that $A$ has the structure of a finite $CW$- complex with $k$ cells.
\item[(ii)] $\mathcal{F}=\{S^{0}, S^{1},\ldots, S^{k},\ldots\}$; we
  now take $\Delta_{\mathcal{T}op_{\ast}}$ to be the triangles
  homotopy equivalent to the triangles
  $A\xrightarrow{u} B\to \mathrm{Cone}(u)\to SA$ from
  (\ref{eq:extr-top}). In this case,
  $\underline{\delta}^{\mathcal{F}}(A,\ast)= k$ means that $A$ is
  homotopy equivalent to a $CW$-complex with $k$ cells.  This number
  is obviously a homotopy invariant. It is clearly bounded from below
  by the sum of the Betti numbers of $A$.
\item[(iii)] \zjr{$\mathcal{F}$ consists of all} pointed spaces with the homotopy type
  of $CW$-complexes; $\Delta_{\mathcal{T}op_{\ast}}$ are as at
  (ii). In this case, the definition
  $\underline{\delta}^{\mathcal{F}}(A,\ast)$ coincides with that of
  the cone-length, $\mathrm{Cl}(A)$, of $A$ (for a space $A$ with the
  homotopy type of a $CW$-complex).  Cone-length is a homotopical
  invariant which is of interest because it is bigger, but not by more
  than one, than the Lusternik-Schnirelmann category \cite{Cor94} which, in turn,
  provides a lower bound for the minimal number of critical points of
  smooth functions on manifolds. Incidentally, as noted by Smale
  \cite{Sm}, a version of the Lusternik-Schnirelmann category provides
  also a measure for the complexity of algorithms, see
  \cite{Co-Lu-Op-Ta} for more on this subject.
\item[(iv)] At this point we will change the underlying category and
  place ourselves in the pointed category of finite type,
  simply-connected {\em rational} spaces $\mathcal{T}op_{1}^{\Q}$
  (see~\cite{Hal-Fel-Th}). We take $\mathcal{F}$ to consist of finite
  wedges of rational spheres of dimension at least $2$.  The triangles
  $\Delta_{\mathcal{T}op^{\Q}_{\ast}}$ are as at (ii) (in the category
  of rational spaces) but we will also allow in
  $\Delta_{\mathcal{T}op^{\Q}_{\ast}}$ \zjnote{``formal'' triangles} of the
  form $S^{-1}F\to \ast \to F$ where $F\in \mathcal{F}$ (de-suspending
  is not possible in our category but we still want to have for a
  rational $2$-sphere, $S^{2}_{\Q}$,
  $\underline{\delta}^{\mathcal{F}}(S^{2}_{\Q}, \ast)=1$).  In this
  setting, it turns out \cite{Co-tr} that
  $$\underline{\delta}^{\mathcal{F}}(A,\ast)=\mathrm{Cl}(A)=\mathrm{nil}(A)~.~$$ 
  Both equalities here are non-trivial, the first because in the
  definition of $\mathrm{Cl}$ we are using cones over arbitrary
  (rational) spaces while in this example $\mathcal{F}$ consists of
  only wedges of spheres. For the second equality, $\mathrm{nil}(A)$
  is the minimal order of nilpotence of the augmentation ideal
  $\overline{\mathcal{A}}$ of a rational differential graded
  commutative algebra $\mathcal{A}$ representing $A$ (recall that by a
  celebrated result of Sullivan \cite{Sul}, the homotopy category of
  rational simply connected spaces is equivalent to the homotopy
  category of rational differential graded commutative algebras, the
  representative of a given space being given by the so-called $PL$-de
  Rham complex of $A$).
\end{itemize}

\

B. One of the difficulties of extracting a triangulated persistence
category from a topological category such as those considered in this
section is very basic and has to do with the difference between stable
and unstable homotopy. In essence, recall that if $\mathcal{C}$ is a
TPC, then the $0$-level category $\mathcal{C}_{0}$ is required to be
triangulated. However, in unstable settings, homotopy categories of
spaces are not triangulated.
\begin{itemize}
\item[(i)] An instructive example is a variant of our discussion
  concerning the category $\mathcal{M}etr_{0}$. In this case the
  morphisms $\Mor_{\mathcal{M}etr_{0}}(A,B)$ carry an obvious topology
  as well as a filtration, as described in \S\ref{subsubsec:metr}. We
  now can consider a new category, $\widetilde{\mathcal{M}}etr_{0}$,
  with the same objects as $ \mathcal{M}etr_{0}$ but with morphisms
  $\Mor_{\widetilde{\mathcal{M}}etr_{0}}(A,B)=S_{\ast}(\Mor_{\mathcal{M}etr_{0}}(A,B))$
  where $S_{\ast}(-)$ stands for cubical chain complexes. These
  morphisms carry an obvious filtration obtained by applying the
  cubical chains to the filtration of
  $\Mor_{\mathcal{M}etr_{0}}(A,B)$.  The composition in this category
  is given by applying cubical chains to the composition
  $\Mor_{\mathcal{M}etr_{0}}(B,C)\times
  \Mor_{\mathcal{M}etr_{0}}(A,B)\to \Mor_{\mathcal{M}etr_{0}}(A,C)$
  and composing with map
  $S_{\ast} (\Mor_{\mathcal{M}etr_{0}}(B,C))\otimes
  S_{\ast}(\Mor_{\mathcal{M}etr_{0}}(A,B))\to
  S_{\ast}(\Mor_{\mathcal{M}etr_{0}}(B,C)\times
  \Mor_{\mathcal{M}etr_{0}}(A,B))$ induced by taking products of
  cubes. It follows that $\widetilde{\mathcal{M}}etr_{0}$ is a
  filtered dg-category (in homological formalism).  Thus all the
  machinery in \S\ref{subsec:dg} is applicable in this case. Moreover,
  this category carries an obvious shift functor.  However,
  $[H_{0}\widetilde{\mathcal{M}}etr_{0}]_{\infty}$ is not triangulated
  and thus $\widetilde{\mathcal{M}}etr_{0}$ is not pre-triangulated
  (quite far from it).  Indeed,
  $\Mor_{[H_{0}\widetilde{\mathcal{M}}etr_{0}]_{\infty}}(A,B)$ is the
  free abelian group generated by the homotopy classes of Lipschitz
  maps from $A$ to $B$. As a result, the translation functor (which is
  in our case the topological suspension) is certainly not an
  isomorphism.

\item[(ii)] As mentioned before, at point B at the beginning of
  \S\ref{subsec:top-sp}, a way to bypass these issues is to introduce
  a sort of filtered Waldhausen category or a similar formalism and
  develop a machinery parallel to that of TPC's in this unstable
  context. The structure present in $\mathcal{AT}op_{\ast}$ and
  $\mathcal{M}etr_{0}$ suggests that such a construction is possible
  and will be relevant in these cases.

\item[(iii)] There is yet another approach to associate to each of
  $\mathcal{AT}op_{\ast}$ and $\mathcal{M}etr_{0}$ a triangulated
  persistence category that is more geometric in nature. This is based
  on moving from these categories to stable categories, where the
  underlying objects are the spectra obtained by stabilizing the
  objects of the original categories and the morphisms come with an
  appropriate filtration induced from the respective structures
  (action functionals or, respectively, metrics) on the initial
  objects.  This seems likely to work and to directly produce a TPC
  but we will not pursue the details at this time.
\end{itemize}

\subsection{Filtrations in Tamarkin's category}\label{subsec:Tam}
This section is devoted to an example of a triangulated persistence
category that comes from the filtration structure present in
Tamarkin's category. This category was originally defined in
\cite{Tam08}, based on singular supports of sheaves, and was used to
prove some non-displaceability results in symplectic geometry, as well
as other more recent results related to Hamiltonian dynamics (see
\cite{GKS12}).

\subsubsection{Background on Tamarkin's category} Let $X$ be a
manifold, and let $\D(\k_X)$ be the derived category of sheaves of
$\k$-modules over $X$. In particular, this is a triangulated
category. For any $A \in {\rm Obj}(\D(\k_X))$, due to microlocal sheaf
theory, as established in \cite{KS90}, one can define the singular
support of $A$, denoted by $SS(A)$, a conical (singular) subset of
$T^*X$. \jznote{We refer to Chapter V in \cite{KS90} for the precise
  definition of $SS(A)$ and a detailed study of its properties.} Now,
let $X = M \times \R$ where $M$ is a closed manifold, and denote by
$\tau$ the co-vector coordinate of $T^*\R$ in $T^*(M \times
\R)$. Consider the following full subcategory of
$\D(\k_{M \times \R})$, denoted by
$\D_{\{\tau \leq 0\}}(\k_{M \times \R})$, where
\[ {\rm Obj}(\D_{\{\tau \leq 0\}}(\k_{M \times \R})) = \left\{A \in
    {\rm Obj}(\D(\k_{M \times \R})) \, | \, SS(A) \subset \{\tau \leq
    0\}\right\}. \] If $A \to B \to C \to A[1]$ is an exact triangle
in $\D(\k_{M \times \R})$, then $SS(C) \subset SS(A) \cup SS(B)$. This
implies that $\D_{\{\tau \leq 0\}}(\k_{M \times \R})$ is a
triangulated subcategory of $\D_{\k_{M \times \R}}$. Tamarkin's
category is defined by
\begin{equation} \label{dfn-tam-cat}
\T(M) : = \D_{\{\tau \leq 0\}}(\k_{M \times \R})^{\perp, l}
\end{equation} 
where the ${\perp, l}$ denotes the left orthogonal complement of
$\D_{\{\tau \leq 0\}}(\k_{M \times \R})$ in $\D(\k_{M \times
  \R})$. Then $\T(M)$ is also a triangulated subcategory. By
definition, note that
$\T(M) \subset \D_{\{\tau \geq 0\}}(\k_{M \times \R})$. When
$M = \{\rm pt\}$, Tamarkin's category $\T(\{{\rm pt}\})$, together
with a constructibility condition, can be identified with the category
of persistence $\k$-modules (see A.1 in \cite{Zha20}).

\begin{rem} There exists a restricted version of Tamarkin's category
  denoted by $\T_V(M)$ where $V \subset T^*M$ is a closed subset (see
  Section 3.2 in \cite{Zha20}). This restricted Tamarkin category is
  useful to prove the non-displaceability of some subsets in $T^*M$
  (see \cite{AI20}). In this paper, we will only focus on $\T(M)$.
\end{rem}

One way to understand the definition (\ref{dfn-tam-cat}) is that $\T(M)$ is an admissible subcategory (see Definition 1.8 in \cite{Orl06}) in the sense that for any object $A$ in $\D(\k_{M \times \R})$, one can always split $A$ in the form of an exact triangle 
\jznote{
\begin{equation} \label{tamarkin-split}
B \to A \to C \to B[1]
\end{equation}}
in $\D(\k_{M \times \R})$, where $B \in \T(M)$ and $C \in \D_{\{\tau \leq 0\}}(\k_{M \times \R})$. In fact, this splitting can be achieved in a rather concrete manner, which involves an important operator called sheaf convolution on objects in $\D(\k_{M \times \R})$. Explicitly, for any two objects $A, B$ in $\D({\bf k}_{M \times \R})$, the sheaf convolution of $A$ and $B$ is defined by
\begin{equation} \label{dfn-sh-conv}
A \ast B : = \delta^{-1} Rs_! (\pi_1^{-1} A \otimes \pi_2^{-1} B),
\end{equation}
where $\pi_i: (M \times \R)^2 \to M \times \R$ are the projections to each factor of $M \times \R$, $s$ keeps the $M \times M$-part the same but adds up two inputs on the $\R$-factors, and $\delta$ is the diagonal embedding from $M$ to $M \times M$. For instance, $\k_{M \times [0, \infty)} * \k_{M \times [0,\infty)} = \k_{M \times [0, \infty)}$, where $\k_{V}$ for a closed subset $V$ denotes the constant sheaf with its support in $V$. Moreover, this operator is commutative and associative. An important characterization of an object in $\T(M)$ is that (see Proposition 2.1 in \cite{Tam08}), 
\begin{equation} \label{tam-char}
A \in {\rm Obj}(\T(M)) \,\,\,\,\mbox{if and only if} \,\,\,\, A * \k_{M \times [0,\infty)} = A
\end{equation}
which implies that (i) for any object $A$ in $\D(\k_{M \times \R})$, the sheaf convolutions $B: = A * \k_{M \times [0,\infty)}$ and $C: =A * \k_{M \times (0, \infty)}[1]$ provide the desired exact triangle for a splitting of $A$ in (\ref{tamarkin-split}); (ii) sheaf convolution is a well-defined operator on $\T(M)$. 

With the help of the sheaf convolution, the $\R$-component generates a filtration structure in $\T(M)$ in the following way. For any $r \in \R$, consider the map $T_r: M \times \R \to M \times \R$ defined by $(m, a) \mapsto (m, a+r)$. One can show that for any object $A$ in $\T(M)$, the induced object $(T_r)_*A = A * \k_{M \times [r, \infty)}$ (see Lemma 3.2 in \cite{Zha20}). In fact, $\{(T_r)_*\}_{r \in \R}$ defines an $\R$-family of functors on $\T(M)$. Moreover, if $r \leq s$, then by the restriction map $\k_{M \times [r, \infty)} \to \k_{M \times [s, \infty)}$, we have a canonical morphism $\tau_{r,s}(A): (T_{r})_*A \to (T_{s})_*A$. At this point, notice that for $r \leq s$, there does not exist non-zero morphism from $\k_{M \times [s, \infty)}$ to $\k_{M \times [r, \infty)}$, so the canonical map $\tau_{r,s}$ respects the partial order $\leq$ on $\R$. For any $r \leq s$, $\tau_{r,s}$ is viewed as a natural transformation from $(T_r)_*$ to $(T_s)_*$. Finally, we call an object $A$ in $\T(M)$ a $c$-torsion element if $\tau_{0,c}(A): A \to (T_c)_*A$ is zero. For instance, when $M = \{\rm pt\}$, the constant sheaf $\k_{[a,b)} \in \T(\{\rm pt\})$ with a finite interval $[a,b)$ is a $(b-a)$-torsion. 

We will end this subsection by a discussion on the hom-set in $\T(M)$. It is more convenient to consider derived hom, that is, ${\rm Rhom}_{\T(M)}(A,B)$ for any two objects $A,B$ in $\T(M)$. Lemma 3.3 in \cite{Zha20} (or (1) in Lemma 3.8 in \cite{Tam08}) provides a more explicit way to express such ${\rm Rhom}$, that is, 
\begin{equation} \label{tam-hom-set}
{\rm Rhom}_{\T(M)}(A,B) = {\rm Rhom}_{\R}(\k_{[0, \infty)}, R\pi_*{\mathcal Hom}^*(A,B)). 
\end{equation}
By taking the cohomology at degree $0$, we obtain ${\hom}_{\T(M)}(A,B)$ as a $\k$-module. Here, $\pi: M \times \R \to \R$ and ${\mathcal Hom}^*(\cdot, \cdot)$ is the right adjoint functor to the sheaf convolution (see Definition 3.1 in \cite{AI20}). The right-hand side of (\ref{tam-hom-set}) is relatively computable since they are all (complexes of) sheaves over $\R$ (cf.~A.2 in \cite{Zha20}). Moreover, by using the adjoint relation between ${\mathcal Hom}^*(\cdot, \cdot)$ and the sheaf convolution, one obtains a shifted version of (\ref{tam-hom-set}), that is,
\begin{equation} \label{tam-hom-set2}
{\rm Rhom}_{\T(M)}(A,(T_r)_*B) = {\rm Rhom}_{\R}(\k_{[0, \infty)}, (T_r)_*(R\pi_*{\mathcal Hom}^*(A,B))).
\end{equation}
Therefore, for any $r \leq s$, there exists a well-defined morphism 
\begin{equation} \label{tam-hom-transfer}
\iota^{A,B}_{r,s}: {\hom}_{\T(M)}(A,(T_r)_*B) \to {\hom}_{\T(M)}(A,(T_s)_*B),
\end{equation}
which is induced by the morphism $\tau_{r,s}(R\pi_*{\mathcal Hom}^*(A,B))$. Finally, we have a canonical isomorphism, 
\begin{equation} \label{tam-equiv}
T_r: {\hom}_{\T(M)}((T_r)_*A, (T_r)_*B) \simeq {\hom}_{\T(M)}(A,B)
\end{equation}
which is induced by the sheaf convolution with $\k_{M \times [r, \infty)}$. In particular, $T_r$ commutes with the morphism $\iota^{A,B}_{r,s}$ defined in (\ref{tam-hom-transfer}).

\subsubsection{Persistence category from Tamarkin's shift functors}
We have seen before that 
 Tamarkin's category is endowed with a shift functor.   We now discuss the persistence structure induced by this shift functor - see Remark \ref{rem:shifts-T} (d).

\begin{dfn} \label{dfn-P(M)} Given the category $\T(M)$ as before, define an enriched category denoted by ${\mathcal P}(M)$ as follows. The object set of ${\rm Obj}({\mathcal P}(M))$ is the same as ${\rm Obj}(\T(M))$, and the hom-set is defined by
\[ {\hom}_{{\mathcal P}(M)}(A,B) = \left\{\{{\hom}_{\T(M)}(A,(T_{r})_*B)\}_{r \in \R}, \{\iota^{A,B}_{r,s}\}_{r\leq s \in \R}\right\} \]
for any two objects $A,B$ in ${\mathcal P}(M)$, where $\iota^{A,B}_{r,s}$ is the morphism defined in (\ref{tam-hom-transfer}). \end{dfn}

\begin{remark} \jznote{Definition \ref{dfn-P(M)} can be regarded as a generalization of (\ref{per-mor}) in \S\ref{ssec-pm} since when $M = \{\rm pt\}$, Tamarkin's category $\mathcal T(\{\rm pt\})$ can be identified with the category of persistence $\k$-modules.} Also, Definition \ref{dfn-P(M)} fits with geometric examples. Indeed, recall a concrete computation of ${\hom}_{\T(M)}(A,B)$ when both $A$ and $B$ are sheaves coming from generating functions on $M$ (see Section 3.9 in 
\cite{Zha20}). In this case, \jznote{${\rm{hom}}_{{\mathcal P}(M)}(A,B)$} can be identified to a (Morse) persistence $\k$-module in the classical sense. \end{remark}

\begin{lemma} \label{lem-tamarkin-pc} The category ${\mathcal P}(M)$  from Definition \ref{dfn-P(M)},  is a persistence category. \end{lemma}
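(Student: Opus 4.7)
The plan is to verify directly that $\mathcal{P}(M)$ satisfies the two axioms of Definition \ref{dfn-pc}. For each ordered pair $A,B$ of objects I would define the functor $E_{A,B}:(\R,\leq)\to\mathrm{Vect}_\k$ on objects by $E_{A,B}(r)={\hom}_{\T(M)}(A,(T_r)_\ast B)$ and on the morphism $i_{r,s}$ (for $r\leq s$) by $E_{A,B}(i_{r,s})=\iota^{A,B}_{r,s}$ from (\ref{tam-hom-transfer}). That this is a functor rests on the fact that $\tau_{r,s}(-)$ is a natural transformation between the functors $(T_r)_\ast$ and $(T_s)_\ast$ satisfying $\tau_{r,r}=\mathds{1}$ and $\tau_{s,t}\circ\tau_{r,s}=\tau_{r,t}$; the latter identity is inherited from the composition of the restriction morphisms $\k_{M\times[r,\infty)}\to\k_{M\times[s,\infty)}\to\k_{M\times[t,\infty)}$ via sheaf convolution. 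With this, axiom (i) of Definition \ref{dfn-pc} is satisfied.

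Next I would define the composition. Given $f\in{\hom}_{\T(M)}(A,(T_r)_\ast B)$ and $g\in{\hom}_{\T(M)}(B,(T_s)_\ast C)$, the fact that $(T_r)_\ast$ is a functor on $\T(M)$ and that $(T_r)_\ast\circ(T_s)_\ast=(T_{r+s})_\ast$ (from $T_r\circ T_s=T_{r+s}$) allows us to set
\[
g\circ_{(r,s)} f \;:=\; (T_r)_\ast g\,\circ\, f \;\in\;{\hom}_{\T(M)}(A,(T_{r+s})_\ast C).
\]
Associativity and unitality of $\circ_{(r,s)}$ follow from the corresponding properties of composition in $\T(M)$ together with the functoriality and additivity of $(T_r)_\ast$.

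The main step, and the only one requiring genuine care, is the verification of axiom (ii): that the diagram (\ref{comp-nt}) commutes for all $r\leq r'$ and $s\leq s'$. Unwinding the definitions, the image of $(f,g)$ along the top-right path is $\tau_{r+s,r'+s'}(C)\circ(T_r)_\ast g\circ f$, while along the left-bottom path it is $(T_{r'})_\ast\tau_{s,s'}(C)\circ(T_{r'})_\ast g\circ\tau_{r,r'}(B)\circ f$. I would reduce equality of these two expressions to two facts: first, the naturality of $\tau_{r,r'}$ applied to the morphism $g:B\to(T_s)_\ast C$, which gives $(T_{r'})_\ast g\circ\tau_{r,r'}(B)=\tau_{r,r'}((T_s)_\ast C)\circ(T_r)_\ast g$; and second, the factorization
\[
\tau_{r+s,r'+s'}(C)\;=\;(T_{r'})_\ast\tau_{s,s'}(C)\,\circ\,\tau_{r,r'}((T_s)_\ast C),
\]
which follows from the monoidal compatibility of the family $\{(T_r)_\ast\}$ under composition and the transitivity $\tau_{a,b}\circ\tau_{c,a}=\tau_{c,b}$ obtained by applying $(T_{r'})_\ast$ to $\tau_{s,s'}$ and composing with $\tau_{r,r'}$ evaluated at $(T_s)_\ast C$. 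Substituting these two identities into the second expression recovers the first.

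The only real obstacle is the bookkeeping in this last step: one must be careful to distinguish $\tau_{r,r'}$ evaluated at $B$ versus at $(T_s)_\ast C$, and to recognize the decomposition of $\tau_{r+s,r'+s'}(C)$ as a composite in the two independent directions $r\mapsto r'$ and $s\mapsto s'$. Once these identifications are made using the naturality of $\tau$ and the strict monoidality $T_r\circ T_s=T_{r+s}$, both axioms of Definition \ref{dfn-pc} are satisfied, and $\mathcal{P}(M)$ is a persistence category. \qed
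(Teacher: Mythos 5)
Your proof is correct and follows essentially the same approach as the paper: both define the same composition $(f,g)\mapsto (T_r)_*g\circ f$ and verify the commutativity of diagram~(\ref{comp-nt}). The paper's proof simply asserts the equality $\bigl(T_{r'}(\iota^{B,C}_{s,s'}(g))\bigr)\circ\iota^{A,B}_{r,r'}(f)=\iota^{A,C}_{r+s,r'+s'}(T_r(g)\circ f)$ without further explanation, whereas you usefully unwind it via the naturality of $\tau_{r,r'}$ and the factorization of $\tau_{r+s,r'+s'}(C)$ into the two independent directions.
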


\begin{proof} Consider the functor $E_{A,B}: (\R, \leq) \to {\rm Vect}_{\k}$ by 
\[ E_{A,B}(r) (= {\hom}^r(A,B)) =  {\hom}_{\T(M)}(A,(T_{r})_*B), \]
and for the morphism $i_{r,s}$ when $r \leq s$, $E_{A,B}(i_{r,s}) = \iota^{A,B}_{r,s}$. Notice that the composition $E_{A,B}(r) \times E_{B,C}(s) \to E_{A,C}(r+s)$ is well-defined due to (\ref{tam-equiv}). Indeed, for any $f \in E_{A,B}(r)$ and $g \in {E_{B,C}(s)}$, the composition is defined by 
\[ (f, g) \mapsto T_r(g) \circ f \in {\hom}_{\T(M)}(A, (T_{r+s})_*C). \]
Then for any $r \leq r'$, $s\leq s'$, we have 
\[ \left(T_{r'}(\iota^{B,C}_{s,s'}(g))\right) \circ \iota^{A,B}_{r,r'}(f) = \iota^{A,C}_{r+s, r'+s'}(T_r(g) \circ f) \]
which completes the proof that ${\mathcal P}(M)$ is a persistence category. 
\end{proof}


We now list  some of the properties of the persistence category ${\mathcal P}(M)$. 
\begin{itemize} 
\item[(a)] The $0$-level category  ${\mathcal P}(M)_0$ has the same objects as ${\mathcal P}(M)$, but 
\[ {\hom}_{{\mathcal P}(M)_0}(A,B) = {\hom}_{\T(M)}(A, B). \]
We use the fact that $(T_0)_*= \mathds{1}$. Thus, ${\mathcal P}(M)_0 = \T(M)$. This category is triangulated as we have seen above.
\item[(b)] The $\infty$-level, ${\mathcal P}(M)_{\infty}$, has the same objects as ${\mathcal P}(M)$, but 
\[ {\hom}_{{\mathcal P}(M)_{\infty}}(A,B) = \varinjlim_{r \to \infty} {\hom}_{\T(M)}(A,(T_{r})_*B) \]
where the direct limit is taken via the map $\iota^{A,B}_{r,s}$. This limit category has been considered in (81) in \zjnote{Proposition 6.7 \cite{GS14}, where it is approached from the perspective of a categorical localization on torsion elements. This can be regarded as a special case of Proposition \ref{prop:verdier-loc} in \S \ref{subsubsec:localization}, where the localization is established for a general triangulated persistence category.} 
\item[(c)] On ${\mathcal P}(M)$, each $(T_r)_*$ is a persistence functor for any $r \in \R$, i.e., $(T_r)_* \in \mathcal P({\rm End}({\mathcal P}(M)))$, since $T_r$ commutes with $\tau_{r,s}$. 
\item[(d)] There exists a natural shift functor on ${\mathcal P}(M)$. Define $\Sigma: (\R, +) \to \mathcal P({\rm End}({\mathcal P}(M)))$ by $\Sigma(r) (= \Sigma^r) = (T_{-r})_*$. For any $r, s \in \R$ and $\eta_{r,s} \in {\hom}_{\R}(r,s)$, define 
\[ \Sigma(\eta_{r,s}) := \mathds{1}_{(T_{-r})_* \cdot}. \]
Then, for any object $A$ in $\mathcal P(M)$, 
\begin{align*}
\Sigma(\eta_{r,s})_A = \mathds{1}_{(T_{-r})_*A} & \in {\hom}_{\T(M)}((T_{-r})_*A, (T_{-r})_*A) \\
& = {\hom}_{\T(M)}((T_{-r})_* A, (T_{s-r})_*((T_{-s})_*A))\\
& = E_{(T_{-r})_* A, (T_{-s})_*A)}(s-r) = {\hom}^{s-r}((T_{-r})_*A, (T_{-s})_*A).
\end{align*}
In other words, $\Sigma(\eta_{r,s})_A$ is a natural transformation of shift $s-r$. In particular, the morphism $\eta^A_r = i_{-r,0}((\eta_{r,0})_A) \in {\hom}^0(\Sigma^rA, A)$ is well-defined for any $r \geq 0$. It is easy to check that $\eta^A_r = (\tau_{-r,0})(A)$. 
\item[(e)] The $r$-acyclic objects in ${\mathcal P}(M)$ are precisely the $r$-torsion elements in $\T(M)$. Indeed, by definition, an object $A$ in ${\mathcal P}(M)$ is $r$-acyclic if and only if $\eta^A_{r} = \tau_{-r,0}(A): (T_{-r})_*A \to A$ is the zero morphism, which coincides with the definition of an $r$-torsion element under the isomorphism (\ref{tam-equiv}). 
\item[(f)] Recall that for each $r$, ${\hom}^r(-, X) = {\hom}_{\T(M)}(-, (T_r)_*X)$. This is an exact functor due to (\ref{tam-hom-set2}) on ${\mathcal P}(M)_0 = \T(M)$.  Similarly, ${\hom}^r(X, -)$ is also an exact functor on ${\mathcal P}(M)_0 = \T(M)$.
\end{itemize}

\begin{lemma} \label{lemma-tor} For any $r \geq 0$ and any object $A$ in ${\mathcal P}(M)$, the morphism $\eta^A_r: (T_{-r})_*A \to A$ embeds into the following exact triangle 
\begin{equation} \label{ext-tor}
(T_{-r})_*A \xrightarrow{\eta^A_r} A \to K \to A 
\end{equation}
in $\T(M) = {\mathcal P}(M)_0$, where $K$ is $r$-acyclic. \end{lemma}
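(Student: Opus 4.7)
My plan is to construct the exact triangle at the sheaf level by convolving $A$ with the standard exact triangle on $\R$ associated to the decomposition of $[-r,\infty)$ into the open piece $[-r,0)$ and the closed piece $[0,\infty)$. Namely, I start from the exact triangle
\[
\k_{M\times [-r,0)} \longrightarrow \k_{M\times [-r,\infty)} \longrightarrow \k_{M\times [0,\infty)} \longrightarrow \k_{M\times [-r,0)}[1]
\]
in $\D(\k_{M\times \R})$, where the middle arrow is the restriction morphism. Convolving on the left with $A$ and using the two identifications
\[
A*\k_{M\times [0,\infty)}=A, \qquad A*\k_{M\times [-r,\infty)}=(T_{-r})_*A
\]
(the first is the characterization~\eqref{tam-char} of objects of $\T(M)$, the second is the description of $(T_{-r})_*$ in terms of sheaf convolution), the middle arrow becomes precisely $\eta^A_r=\tau_{-r,0}(A)$; a small compatibility check between the restriction morphism $\k_{M\times [-r,\infty)}\to \k_{M\times [0,\infty)}$ and the natural transformation $\tau_{-r,0}$ is needed here, but it follows directly from the definition of $\tau_{r,s}$ via convolution with the restriction maps of the constant sheaves $\k_{M\times [t,\infty)}$. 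Rotating yields the desired exact triangle
\[
(T_{-r})_*A \xrightarrow{\ \eta^A_r\ } A \longrightarrow K \longrightarrow T(T_{-r})_*A,
\]
with
\[
K \;=\; A* \k_{M\times [-r,0)}[1].
\]

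The remaining (and main) step is to prove that $K$ is $r$-acyclic. By property~(e) above, this amounts to showing that the canonical morphism $\tau_{0,r}(K): K\to (T_r)_*K$ vanishes in $\T(M)$. Since $\tau_{0,r}$ is natural with respect to sheaf convolution, this morphism equals $\mathds{1}_A * \tau_{0,r}(\k_{M\times [-r,0)}[1])$, so it suffices to prove the vanishing of $\tau_{0,r}(\k_{M\times [-r,0)})$ in $\D(\k_{M\times \R})$. My plan is to do this by applying the functor $-*\k_{M\times[r,\infty)}$ to the exact triangle above and using the standard identities $\k_{M\times[a,\infty)} * \k_{M\times [b,\infty)}=\k_{M\times[a+b,\infty)}$. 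This expresses $\tau_{0,r}(\k_{M\times[-r,0)})$ as the connecting morphism in an exact triangle whose second map is the restriction $\k_{M\times [0,\infty)}\to \k_{M\times [r,\infty)}$. That restriction is surjective (its cofiber is a shift of $\k_{M\times[0,r)}$), so the connecting morphism, and hence $\tau_{0,r}(\k_{M\times[-r,0)})$, vanishes.

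I expect the first step to be essentially bookkeeping with the identifications already recorded in the excerpt, and the real content to lie in the acyclicity argument for $K$. The main obstacle is keeping track of the several conventions at play (the direction of the restriction maps between the sheaves $\k_{M\times[t,\infty)}$, the identification of $(T_{-r})_*$ with convolution by $\k_{M\times [-r,\infty)}$, and the natural transformation $\tau_{r,s}$) and ensuring that the morphism obtained in the rotated triangle really coincides with $\eta^A_r$ as defined in~(d), rather than with some sign or shift of it. Once these identifications are pinned down, the computation $\k_{M\times[-r,0)}*\k_{M\times[r,\infty)}\simeq \k_{M\times[0,r)}[1]$ and the naturality of $\tau_{0,r}$ close the argument without further work.
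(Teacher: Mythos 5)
Your overall strategy is genuinely different from the paper's, and mostly sound. The paper invokes the torsion criterion of \cite[Lemma~6.3(ii)]{GS14}, which reduces the $r$-torsion property of the third term in the triangle to exhibiting a factorization $\alpha$ of $\tau_{0,r}(A)$ through $(T_{-r})_*A$, verified immediately with $\alpha=\mathds{1}_A$. You instead compute $K$ explicitly as $A*\k_{M\times[-r,0)}[1]$ via the closed/open decomposition triangle on $[-r,\infty)$, and use the compatibility of $\tau_{0,r}$ with convolution to reduce everything to the single vanishing $\tau_{0,r}(\k_{M\times[-r,0)})=0$. This is a more concrete computation that avoids the external criterion, which is a legitimate alternative. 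The construction of the triangle and the reduction step are both correct (and your triangle is written with the correct last term $T(T_{-r})_*A$, whereas the stated lemma~\eqref{ext-tor} has what looks like a typo there).

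The final step, however, has a real gap. The morphism $\tau_{0,r}(\k_{M\times[-r,0)})$ goes from $\k_{M\times[-r,0)}$ to $(T_r)_*\k_{M\times[-r,0)}=\k_{M\times[0,r)}$. Applying $-*\k_{M\times[r,\infty)}$ to the decomposition triangle gives
$\k_{M\times[0,r)}\to\k_{M\times[0,\infty)}\to\k_{M\times[r,\infty)}\to\k_{M\times[0,r)}[1]$
(note there is no extra shift: $\k_{M\times[-r,0)}*\k_{M\times[r,\infty)}=(T_r)_*\k_{M\times[-r,0)}=\k_{M\times[0,r)}$, not $\k_{M\times[0,r)}[1]$). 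The connecting morphism of this triangle has domain $\k_{M\times[r,\infty)}$ and codomain $\k_{M\times[0,r)}[1]$, which does not match the domain and codomain of $\tau_{0,r}(\k_{M\times[-r,0)})$; so the identification you propose does not hold. Independently, the inference ``the restriction is surjective, hence the connecting morphism vanishes'' is not valid: for a short exact sequence of sheaves the surjection onto the quotient is automatic, while the connecting map records the extension class and vanishes precisely when the sequence splits, which need not happen here. Fortunately your reduction can be closed by a much simpler observation: $\k_{M\times[-r,0)}$ and $\k_{M\times[0,r)}$ are both sheaves concentrated in degree $0$ with disjoint supports $M\times[-r,0)$ and $M\times[0,r)$, so $\hom_{\D(\k_{M\times\R})}(\k_{M\times[-r,0)},\k_{M\times[0,r)})$ equals the sheaf-theoretic $\hom$, which is $0$. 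With this replacement, your proof is correct.
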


\begin{proof} Since $\T(M)$ is a triangulated category, the morphism $\eta^A_r$ embeds into an exact triangle as (\ref{ext-tor}). By item (e) above, we need to show that $K$ is an $r$-torsion element. By (ii) in \zjnote{Lemma 6.3} in \cite{GS14} which provides a criterion to test an object in an exact triangle to be a torsion element, it suffices to verify that the following diagram is commutative, 
\[ \xymatrixcolsep{4pc} \xymatrix{
(T_{-r})_*A \ar[r]^-{\eta^A_r} \ar[d]_-{\tau_{0,r}((T_{-r})_*A)} & A \ar[d]^-{\tau_{0,r}(A)} \ar[ld]_-{\alpha} \\
A \ar[r]_-{T_r(\eta^A_r)} & (T_r)_*A} \]
for some morphism $\alpha$. Indeed, this is commutative by choosing $\alpha = \mathds{1}_A$ together with the functorial properties of $T_{-r}$. 
\end{proof}
\begin{remark} By the definition of an $r$-isomorphism defined earlier, Lemma \ref{lemma-tor} implies that the morphism $\eta^A_r \in {\hom}^0((T_r)_*A, A)$ is an $r$-isomorphism. On the other hand, Section 3.10 in \cite{Zha20} defines an interleaving relation between two objects in $\T(M)$, which is similar to $r$-isomorphism defined in the sense that $A$ and $(T_r)_*A$ are $r$-interleaved. \end{remark}
\begin{ex} Let $M = \{{\rm pt}\}$ and consider $\T(\{\rm pt\})$. For $A = \k_{[0, \infty)}$, we know that $(T_{-r})_*A = \k_{[-r, \infty)}$ for any $r \geq 0$. Then we have an exact triangle in $\T(\{\rm pt\})$,
\[ \k_{[-r, \infty)} \xrightarrow{\tau_{-r,0}(A)} \k_{[0, \infty)} \to \k_{[-r,0)}[1] \to \k_{[-r, \infty)}[1] \]
where as we have seen that $\k_{[-r,0)}[1]$ is an $r$-torsion element (so $r$-acyclic). Here, by definition, $\tau_{-r,0}(A)$ is the restriction map from $\k_{[-r, \infty)}$ to $\k_{[0, \infty)}$, and the exact triangle is from (2.6.33) in \cite{KS90}. \end{ex}
The properties at the points (a) and (d) above together with Lemmas \ref{lem-tamarkin-pc}, \ref{lemma-tor},  imply the consequence of main interest in this section.
\begin{cor} \label{thm-tam-tpc} The category ${\mathcal P}(M)$, as
  defined \zjnote{in Definition \ref{dfn-P(M)}}, is a triangulated persistence category. 
 \end{cor}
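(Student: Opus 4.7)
The plan is to verify the three conditions of Definition \ref{dfn-tpc} by assembling the pieces already established in items (a)--(f) just before the corollary, together with Lemmas \ref{lem-tamarkin-pc} and \ref{lemma-tor}. Since Lemma \ref{lem-tamarkin-pc} already shows that $\mathcal{P}(M)$ is a persistence category, and item (d) exhibits the shift functor $\Sigma:(\mathbb{R},+)\to \mathcal{P}\mathrm{End}(\mathcal{P}(M))$ defined by $\Sigma^r=(T_{-r})_*$, the task reduces to checking that the data $(\mathcal{P}(M),\Sigma)$ meets conditions (i)--(iii) of Definition \ref{dfn-tpc}.

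For condition (i), I would first invoke item (a), namely $\mathcal{P}(M)_0=\mathcal{T}(M)$, so that $\mathcal{P}(M)_0$ inherits the triangulated structure of Tamarkin's category recalled just after the definition of $\mathcal{T}(M)$ in (\ref{dfn-tam-cat}). Since $\mathcal{T}(M)$ is a triangulated subcategory of the derived category $\mathbf{D}(\mathbf{k}_{M\times\mathbb{R}})$, it is in particular additive, and the decomposition of morphism spaces with respect to direct sums described in Definition \ref{dfn-tpc}(i) is automatic from the $\mathrm{Rhom}$ description (\ref{tam-hom-set2}), which turns direct sums in either variable into direct sums of hom-spaces; the persistence maps $i_{r,s}=\iota_{r,s}^{A,B}$ from (\ref{tam-hom-transfer}) are induced componentwise by $\tau_{r,s}$ and are therefore compatible with these splittings.

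For condition (ii), the key point is that $\Sigma^r=(T_{-r})_*$ is a triangulated endofunctor of $\mathcal{T}(M)$. This is because $T_{-r}:M\times\mathbb{R}\to M\times\mathbb{R}$ is a diffeomorphism, so $(T_{-r})_*$ is an equivalence of derived categories and in particular preserves exact triangles and direct sums; it preserves the subcategory $\mathcal{T}(M)$ since the singular-support condition defining $\mathcal{T}(M)$ is invariant under translation in the $\mathbb{R}$-factor (equivalently, by the characterization (\ref{tam-char}), since $(T_{-r})_*$ commutes with convolution against $\mathbf{k}_{M\times[0,\infty)}$). The compatibility of the natural transformations $\eta_{r,s}=\tau_{-s,-r}$ with the additive structure follows from the functoriality of $(-)*\mathbf{k}_{M\times[-r,\infty)}$, which is additive. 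Also, the commutativity $T\Sigma^r=\Sigma^r T$ required in Remark \ref{rem:shifts-T}(b) is clear since the shift $[1]$ in $\mathbf{D}(\mathbf{k}_{M\times\mathbb{R}})$ commutes with $(T_{-r})_*$.

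Finally, condition (iii) is precisely the content of Lemma \ref{lemma-tor}: for $r\geq 0$, the morphism $\eta_r^A=\tau_{-r,0}(A)$ fits into an exact triangle in $\mathcal{T}(M)$ whose third term $K$ is $r$-torsion, which by item (e) is the same as being $r$-acyclic in $\mathcal{P}(M)$. I do not anticipate any real obstacle here, since all the technical content is already contained in the identification $\mathcal{P}(M)_0=\mathcal{T}(M)$, in Lemma \ref{lemma-tor}, and in the translation-invariance of the singular-support condition cutting out $\mathcal{T}(M)$; the only minor care point is to confirm that the natural transformations $\eta_{r,s}$ constructed in item (d) genuinely have shift $s-r$ in the persistence sense and commute with the triangulated structure, which is the routine verification indicated at the end of item (d).
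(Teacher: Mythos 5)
Your proposal is correct and follows essentially the same route as the paper, which simply observes that items (a) and (d), together with Lemmas \ref{lem-tamarkin-pc} and \ref{lemma-tor}, verify the three conditions of Definition \ref{dfn-tpc}; you have merely filled in the routine details (triangulated-ness of $(T_{-r})_*$, compatibility with direct sums, etc.) that the paper leaves implicit. One small slip worth flagging: you write $\eta_{r,s}=\tau_{-s,-r}$, but given the definition in item (d) ($\Sigma(\eta_{r,s})_A=\mathds{1}_{(T_{-r})_*A}$ viewed in $\hom^{s-r}((T_{-r})_*A,(T_{-s})_*A)$), the relevant structural morphism after applying $i_{s-r,\cdot}$ is $\tau_{-r,-s}(A)$ rather than $\tau_{-s,-r}$; this does not affect the substance of your argument.
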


\chapter{Triangulated persistence Fukaya categories} \label{s:tpfc}
In this section we apply the theory developed in Chapter
\ref{chap:Alg} to the case of Fukaya categories. The setup described
before applies naturally to this context: under (significant)
constraints the derived Fukaya category admits naturally a TPC
\zhnote{refinement}, and this setting is ideal to approach a variety
of quantitative questions typical for symplectic topology.

We begin in \S\ref{subsec-symp} with the statements of the main
symplectic applications in the paper, Theorems \ref{cor:Fuk-cat-var},
\ref{thm:appl-sympl}, and Corollary \ref{cor:gl-metric}.  To prove
these statements we first fix in~\S\ref{sec:FiltFuk} the basics of
filtered $A_{\infty}$-categories and associated \zhnote{TPCs,} and we
then discuss basic notions relative to filtered Floer theory. We
describe how to proceed from Floer chain complexes to the Fukaya
category.  However, for technical reasons the construction leads
\zhnote{only to} a {\em weakly} filtered $A_{\infty}$-category. In
\S\ref{sb:ffuk} we show that under certain restrictive conditions this
construction can be adjusted to obtain a \zhnote{genuinely} filtered
$A_{\infty}$-category. The main technical result of Chapter
\ref{s:tpfc} \zhnote{appears in} Theorem \ref{t:fil-fuk}. The model
for the Fukaya category that we construct in this case is based on
clusters of punctured disks.  While similar models have appeared
before in the literature we include enough details to justify the
control of filtrations. In section \S\ref{sec:all-symp} we prove the
statements from \S\ref{subsec-symp}. In particular, we construct the
metrics on the spaces of Lagrangians that were \zhnote{announced} in
the introduction of the paper.  The TPC formalism was inspired by
earlier work on Lagrangian cobordism and it is useful to see how
weighted triangles and operations with them appear geometrically in
the cobordism setting. This is discussed in \S\ref{sec:geom-TPC}
together with some other geometric illustrations of some of the
statements in \S\ref{subsec-symp}.

\section{Main symplectic topology applications} \label{subsec-symp}
Let
$(X, \omega = d \lambda)$ be a Liouville manifold (i.e.~an exact
symplectic manifold, with a prescribed primitive $\lambda$ of the
symplectic structure $\omega$, and such that $X$ is symplectically
convex at infinity with respect to these structures). We will 
work here with pairs $L = (\bar{L}, h_{L})$ consisting of a closed
oriented exact Lagrangian \zjr{submanifold} $\bar{L} \subset X$ equipped
with a function $h_L: \bar{L} \longrightarrow \mathbb{R}$ that is a
primitive of $\lambda|_{\bar{L}}$, i.e. \zhnote{$dh_{L} =
\lambda|_{\bar{L}}$}. We will refer to such a pair $L$ as a marked
Lagrangian submanifold and to $\bar{L}$ as its underlying Lagrangian.

\

Fix a collection of marked Lagrangians $\mathcal{X}$ in $X$. We assume
that $\mathcal{X}$ is closed under 
shifts of the primitives, namely if $L = (\bar{L}, h_L)$ is in
$\mathcal{X}$ then for every $r \in \mathbb{R}$ and
$k \in \mathbb{Z}$, the marked Lagrangian
$\Sigma^r L := (\bar{L}, h_L+r)$ is also in $\mathcal{X}$.  We will
also assume that our marked Lagrangians are graded - in a sense
recalled in \S\ref{sbsb:grading}. If we need to make the grading
explicit we write $L=(\bar{L},h_{L},\theta_{L})$ and we assume the
family $\mathcal{X}$ also closed with respect to translating the
grading $L[k]=(\bar{L},h_{L},\theta_{L}-k)$.

\

Denote by $\bar{\mathcal{X}} = \{ \bar{L} \mid L \in \mathcal{X}\}$
the collection of underlying Lagrangian submanifolds corresponding to
the marked Lagrangians in $\mathcal{X}$.  We will assume that the
family $\bar{\mathcal{X}}$ is finite and that its elements are in
\zhnote{general} position in the sense that any two distinct
Lagrangians $L', L'' \in \bar{\mathcal{X}}$ intersect transversely and
for every three distinct Lagrangians
$\bar{L}_0, \bar{L}_1, \bar{L}_2 \in \bar{\mathcal{X}}$ we have
\zhnote{$\bar{L}_0 \cap \bar{L}_1 \cap \bar{L}_2 = \emptyset$}.

\

\pbred{As earlier in the paper, algebraic considerations can be done
  over an arbitrary field $\k$.}  \zhnote{However, without additional
  assumptions on our Lagrangians, Floer theory works only over
  $\k = \Z_2$. We will therefore assume $\k = \Z_2$, but continue to
  denote the base field by $\k$, to indicate that under additional
  assumptions, our theory is expected to work over an arbitrary field
  $\k$.} The marked Lagrangians in $X$ are the objects of an
$A_{\infty}$-category, the Fukaya category $\fuk(X)$ of $X$,
constructed as in Seidel's book \cite{Seidel}.  The associated derived
Fukaya category is denoted by $D\fuk(X)$. Its objects are the
$A_{\infty}$-modules over $\fuk(X)$ that belong to the triangulated
completion of the Yoneda $A_{\infty}$-modules, $\mathcal{Y}(L)$, where
$L$ is a marked Lagrangian. \zhnote{We denote} by $\fuk(\mathcal{X})$
the $A_{\infty}$-subcategory of $\fuk(X)$ with objects the Lagrangians
in $\mathcal{X}$, \zhnote{and by}
$j_{\mathcal{X}}: \fuk(\mathcal{X}) \to \fuk(X)$ the inclusion.  There
are two Yoneda type modules associated to the elements of
$\mathcal{X}$: over the category $\fuk(X)$ and over the smaller
category $\fuk(\mathcal{X})$. The two are related by applying the
pull-back $j_{\mathcal{X}}^{\ast}(-)$ and thus will be generally
denoted by the same symbol.

We denote by $D\fuk(\mathcal{X})$ the associated derived category,
consisting this time of modules over $\fuk(\mathcal{X})$ that belong
to the triangulated completion of the Yoneda modules of the elements
of $\mathcal{X}$.  We emphasize that, with the terminology used in
this paper, a family \zhnote{$\mathcal{Z}$} of objects in a
triangulated category $\mathcal{C}$ is a system of generators of
$\mathcal{C}$ if the \zhnote{triangulated} envelope of
\zhnote{$\mathcal{Z}$} in $\mathcal{C}$ equals $\mathcal{C}$. In
particular, the Yoneda modules of the elements of $\mathcal{X}$ form a
system of generators of $D\fuk(\mathcal{X})$.

\

The following consequence of Theorem \ref{t:fil-fuk} is sufficiently
significant to formulate apart:
\begin{thm}\label{cor:Fuk-cat-var} There exists 
  a triangulated persistence category $\mathcal{C}\fuk(\mathcal{X})$,
  independent \zhnote{up to TPC equivalence} of the data used in its
  construction, such that:
\begin{itemize}
\item[(i)] For each $L,L'\in \mathcal{X}$ there is \zhnote{a
    canonical} isomorphism
  \zhnote{$\mor_{\mathcal{C}\fuk(\mathcal{X})_{\infty}}(L,L')\cong
    HF^0(L,L')$} where \zhnote{$HF^0(-,-)$} is Floer homology
  \pbred{in cohomological degree $0$.}
\item[(ii)] $\mathcal{C}\fuk(\mathcal{X})_{\infty}$ is triangulated
  equivalent to $D\fuk(\mathcal{X})$.
\item[(iii)] If the family $\mathcal{X}$ generates $D\fuk(X)$, then
  for each marked Lagrangian $N$ that intersects transversely the
  family $\mathcal{X}$, the pull-back
  $j_{\mathcal{X}}^{\ast}\mathcal{Y}(N)$ of $\mathcal{Y}(N)$ - the
  Yoneda module of $N$ \zjnote{over} $\fuk(X)$ (defined with a
  convenient choice of perturbation data) - \zhnote{is
    quasi-isomorphic to an object in}
  $\mathcal{C}\fuk(\mathcal{X})_{\infty}$.
\end{itemize}
\end{thm}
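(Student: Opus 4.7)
The plan has three strands, corresponding to the three claims, all resting on the technical construction of a genuinely filtered $A_\infty$-structure on $\fuk(\mathcal{X})$ (Theorem \ref{t:fil-fuk}) together with the general algebraic machinery from Chapter \ref{chap:Alg}.

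First I would fix auxiliary data (Floer and perturbation data, choice of primitives) and invoke Theorem \ref{t:fil-fuk} to obtain a genuinely filtered $A_\infty$-category $\fuk(\mathcal{X})$. Following Remark \ref{rem:extensions}(b), the category $\mathrm{Mod}_{\fuk(\mathcal{X})}$ of filtered $A_\infty$-modules is a filtered pre-triangulated dg-category, and I would define $\mathcal{C}\fuk(\mathcal{X})$ to be the full subcategory of $H^0\mathrm{Mod}_{\fuk(\mathcal{X})}$ whose objects are the filtered modules quasi-isomorphic to iterated filtered mapping cones built from shifted filtered Yoneda modules $\Sigma^r\mathcal{Y}(L)$, $L\in\mathcal{X}$, $r\in\mathbb{R}$. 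By Corollary \ref{cor:pre-tr} applied within this triangulated envelope, $\mathcal{C}\fuk(\mathcal{X})$ is a TPC, with shift functor induced by the action $\Sigma^r$ of $\mathbb{R}$ on the filtration. The independence up to TPC equivalence (Definition \ref{def:TPC equivalences}) will follow from a filtered analogue of the standard homological perturbation / continuation argument, producing a filtered $A_\infty$-quasi-equivalence between two models obtained from different data; this is the content of the uniqueness part of Theorem \ref{t:fil-fuk}, and the corresponding induced functor on modules is then a TPC-functor whose $0$-level is a triangulated equivalence, hence a TPC-equivalence by the standard criterion noted after Definition \ref{def:TPC equivalences}.

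For (i), the key identification $\mor_{\mathcal{C}\fuk(\mathcal{X})_\infty}(L,L')\cong HF^0(L,L')$ comes from computing morphisms between Yoneda modules. By construction, $\hom^{\leq r}_{\mathrm{Mod}_{\fuk(\mathcal{X})}}(\mathcal{Y}(L),\mathcal{Y}(L'))$ is a filtered complex whose $\infty$-level cohomology is canonically isomorphic to $HF^0(L,L')$ via the filtered Yoneda lemma, since taking the direct limit over $r$ recovers the ordinary (unfiltered) morphism space and the ordinary Yoneda lemma. For (ii), I would invoke the identification $\mathcal{C}_\infty = \mathcal{C}_0/\mathcal{AC}_0$ from Proposition \ref{prop:verdier-loc}: the $0$-level $\mathcal{C}\fuk(\mathcal{X})_0$ is the homotopy category of filtered modules generated by the $\Sigma^r\mathcal{Y}(L)$, its acyclic subcategory is generated by the filtered cones over the $\eta_r^{\mathcal{Y}(L)}$, and the Verdier quotient collapses exactly these, producing the ordinary derived category of $A_\infty$-modules generated by $\{\mathcal{Y}(L)\}_{L\in\mathcal{X}}$, which is $D\fuk(\mathcal{X})$ by definition.

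For (iii), assume $\mathcal{X}$ generates $D\fuk(X)$. Then $\mathcal{Y}(N)$, viewed as a module over $\fuk(X)$, admits an iterated cone decomposition with linearization consisting of objects from $\{\mathcal{Y}(L)\}_{L\in\mathcal{X}}$ in the unfiltered setting. Pulling this decomposition back along $j_{\mathcal{X}}$ yields an iterated cone decomposition of $j_{\mathcal{X}}^{\ast}\mathcal{Y}(N)$ in the unfiltered derived category. The content of (iii) is to lift this to the filtered setting: I would choose action-bounded representatives for each connecting morphism in the cone decomposition (using that the relevant Floer complexes are concentrated in a bounded window of action, thanks to exactness and the choice of primitives), and then perform each cone construction at the filtered level using the filtered cone of Definition \ref{dfn-fmc} with a sufficiently large $\lambda$. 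The resulting filtered twisted complex lies in $\mathcal{C}\fuk(\mathcal{X})$ by construction, and by the filtered Yoneda lemma it is quasi-isomorphic to $j_{\mathcal{X}}^{\ast}\mathcal{Y}(N)$ in $\mathcal{C}\fuk(\mathcal{X})_\infty$.

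The main obstacle is the uniqueness statement: different choices of Floer/perturbation data yield filtered $A_\infty$-categories that are quasi-equivalent after passing to $\infty$-level, but obtaining filtered quasi-equivalences with explicit, controlled shift bounds, and then verifying that the induced functor on derived categories of filtered modules is a TPC-equivalence in the sense of Definition \ref{def:TPC equivalences}, requires the filtered analogues of Hochschild cohomology and of continuation maps. As noted in the introduction, the resulting equivalences are not fully canonical, but only canonical up to an arbitrarily small shift, and managing this imprecision while still producing a well-defined TPC on objects is the technically delicate point. Everything else (checking the TPC axioms, the Verdier identification, the Yoneda-type identifications) is a routine transcription of the algebraic machinery of Chapter \ref{chap:Alg} into the filtered $A_\infty$-context, once the filtered Fukaya category is in hand.
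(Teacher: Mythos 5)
Your proposal mirrors the paper's proof almost step for step: the same definition of $\mathcal{C}\fuk(\mathcal{X})$ as (the homological category of) the pretriangulated closure of shifted Yoneda modules in $Fmod(\fuk(\mathcal{X};\mathscr{P}))$, the same appeal to Theorem~\ref{t:fil-fuk} and the system of comparison functors $\mathcal{F}^{\mathscr{P}_1,\mathscr{P}_0}$ for invariance under perturbation data, the same reduction of~(i) to the persistence Yoneda identification, and a filtered-lift strategy for~(iii) that is in spirit the one carried out in the paper.

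The one place where your argument is thinner than the paper's is~(ii). You assert that the Verdier quotient $\mathcal{C}_0/\mathcal{AC}_0$ ``produces the ordinary derived category,'' but this hides the essential-surjectivity step of the comparison functor $\mathcal{C}_\infty\to D\fuk(\mathcal{X})$: one must see that every unfiltered iterated cone on the $\mathcal{Y}(L)$ arises by forgetting the filtration on some genuinely filtered object of $\mathcal{C}\fuk(\mathcal{X})_0$. This is exactly what the $A_\infty$-version of Lemma~\ref{lemma-exist-fil-tc} provides, and the paper routes~(ii) through twisted complexes precisely to invoke it: $[H^0 Tw(\fuk(\mathcal{X};\mathscr{P}))]_\infty \cong H^0 Tw(\fuk_{uf}(\mathcal{X};\mathscr{P})) \cong D\fuk(\mathcal{X})$. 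Your parenthetical claim that the acyclic subcategory is ``generated by the filtered cones over $\eta_r^{\mathcal{Y}(L)}$'' is also imprecise (the acyclics consist of \emph{all} $r$-acyclic modules, not just these cones), though this does not affect the outcome once the twisted-complex comparison is in place. A related remark applies to~(iii): the paper does not lift the connecting morphisms of a cone decomposition one by one, but instead enlarges $\mathcal{X}$ to include $N$, extends the perturbation data coherently via the last part of Theorem~\ref{t:fil-fuk}, obtains a single unfiltered quasi-isomorphism $\phi\colon N\to C$ into an iterated cone on $\mathcal{X}$, and then uses the Lemma~\ref{lemma-exist-fil-tc} mechanism once to show (after shifting $N$) that $\phi$ is an $r$-isomorphism. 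Your per-morphism approach would require an induction that your sketch does not control; the single-morphism route is cleaner. None of this is a wrong idea — you clearly have the right tools in hand — but these are the two spots where the filtered-twisted-complex lemma is doing real work that your outline leaves implicit.
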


\zhnote{We call $\mathcal{C}\fuk(\mathcal{X})$ the {\em triangulated
    persistence Fukaya category} associated to $\mathcal{X}$}.

\medskip

\zjr{Here we emphasize that the construction of $\fuk(\mathcal{X})$
  always depends on some perturbation data $\mathscr{P}$, where more
  precisely $\fuk(\mathcal{X})$ is denoted by
  $\fuk(\mathcal{X}; \mathscr{P})$. Theorem \ref{t:fil-fuk} guarantees
  that there always exists perturbation data $\mathscr{P}$ such that
  $\fuk(\mathcal{X}; \mathscr{P})$ is a strict unital
  $A_{\infty}$-category,} \pbrev{together with filtered
  $A_{\infty}$-functors}
\zjr{$\mathcal{F}^{\mathscr{P}_1, \mathscr{P}_0} : \fuk(\mathcal{X};
  \mathscr{P}_0) \longrightarrow \fuk(\mathcal{X}; \mathscr{P}_1),$
  when changing the perturbation data from $\mathscr{P}_0$ to
  $\mathscr{P}_1$. An essential part of the proof of Theorem
  \ref{cor:Fuk-cat-var} (see \S\ref{subsubsec:proof-Cor}) shows that,
  the resulting triangulated persistence Fukaya category
  $\mathcal{C}\fuk(\mathcal{X}; \mathscr{P})$, constructed from
  $\fuk(\mathcal{X}; \mathscr{P})$, is in fact independent of the
  perturbation data up to} \pbrev{a TPC equivalence.} \zjr{This
  justifies the notation $\mathcal{C}\fuk(\mathcal{X})$ above}
\pbrev{without any reference to $\mathscr{P}$.}

\medskip

Point \zhnote{(ii)} of \zhnote{Theorem \ref{cor:Fuk-cat-var}} implies
that if $\mathcal{X}$ generates $D\fuk(X)$, then
$\mathcal{C}\fuk(\mathcal{X})_{\infty}$ is equivalent to
$D\fuk(X)$. Point \zhnote{(iii) of Theorem \ref{cor:Fuk-cat-var}}
gives a bit more information and shows that one can use measurements
in $\mathcal{C}\fuk(\mathcal{X})$ to study Lagrangians that do not
\zhnote{necessarily} belong to the finite family
$\bar{\mathcal{X}}$. Nonetheless, it remains that the requirement that
the family $\bar{\mathcal{X}}$ be finite is highly constraining.  It
is expected that this requirement can be dropped by using a more
involved construction in place of the one used in the proof of Theorem
\ref{t:fil-fuk}.

\begin{rem} \label{rem:eq-nat-not} As stated, Theorem
  \ref{cor:Fuk-cat-var} identifies $\mathcal{C}\fuk(\mathcal{X})$ up
  to TPC equivalence (see Definition \ref{def:TPC equivalences}) but,
  while this equivalence is expected to be canonical, our methods do
  not quite give that. \zhnote{Still}, the equivalences that appear
  here are not completely \zhnote{arbitrary. For example, their
    mapping on objects leaves the elements of $\mathcal{X}$ fixed.}
  \zhnote{S}ee Theorem \ref{t:fil-fuk} for more details.
\end{rem}

The next result in this section will be formulated in terms of this
TPC, $\mathcal{C}\fuk(\mathcal{X})$, and will involve a notion of
relative Gromov width that first appeared in \cite{Ba-Co} (see also
\cite{Bi-Co-Sh:LagrSh}). Assume that $L$, and $L'$ are two
Lagrangians, both possibly immersed.  We define
\zhnote{\begin{equation}\label{eq:ball-sq} \begin{array}{l} \delta(L;
      L')=\sup\left\{ \pi r^{2} \ \bigg| \begin{array}{l} \exists\ e:
          (B(r),\omega_{0})\to (X,\omega) \ \mathrm{symplectic\
            embedding}, \\ \mbox{such that
            $e^{-1}(L)= \R B(r), \ e(B(r))\cap
            L'=\emptyset$} \end{array} \right\},
\end{array}
\end{equation}}
where $(B(r),\omega_{0})$ is the standard closed ball of radius $r$
in $(\R^{2n}, \omega_{0})$ and
\pbred{$\R B(r)=(\R^{n} \times \{0\}) \cap B(r)$} is its real part. 
A related measurement reflects the ``quality'' of the intersection
points between $L$ and $L'$, relative to another subset. \zhnote{Assume} that $L$ and $L'$ \zhnote{intersect} transversely and \zhnote{let $A\subset X$ be a subset.} \zhnote{We define}: 

\zhnote{\begin{equation} \label{dfn-delta-cap}
\delta^{\cap}(L, L';A)=\sup\left\{ \pi r^{2} \ \Bigg|\ \begin{array}{l}  \forall x\in L\cap L',\,  \exists\  e_{x}: (B(r),\omega_{0})\to (X,\omega)  \ \mathrm{symp.\ emb.},  \\  \mbox{s.t.}\, e_{x}(0)=x,\ e_{x}^{-1}(L)= \R B(r), \ e_{x}^{-1}(L')=i \R B(r),\\
e_{x'}(B(r)) \cap e_{x''}(B(r)) = \emptyset \,\,\mbox{whenever $x' \neq x''$},\\
\mbox{and moreover} \, \forall x \in L\cap L', \, e_{x}(B(r))\cap A=\emptyset 
 \end{array} \right\} ~.~
\end{equation}}
\pbred{Here $i \mathbb{R} B(r) := (\{0\} \times \mathbb{R}^n) \cap B(r)$ is
  the ``imaginary'' part of the ball $B(r)$.}

We will also need the spectral distance between two marked Lagrangians
$L$ and $L'$. We assume that $L$ is Hamiltonian isotopic to $L'$. In
this case the Floer homology $HF(L,L')$ is isomorphic to the singular
homology $H_{\ast}(L;\k)$ of $L$ and there is a canonical class
$o_{L,L'}\in HF(L,L')$ corresponding to the fundamental class in
$H_{\ast}(L;\k)$. There is also a second class $pt_{L,L'}\in HF(L,L')$
that corresponds to the point class in $H_{\ast}(L;\k)$.  Assume
\pbred{further that} $L,L'\in \mathcal{X}$. In this case, given
\pbred{point~(i)} \zhnote{of Theorem \ref{cor:Fuk-cat-var}}, \pbred{we
  have:}
\pbred{$$HF^0(L,L')=\left\{[f] \in
    \mor_{{\mathcal{C}\fuk(\mathcal{X})}_{\infty}}(L,L')\,|\, f \in
    \mor_{\mathcal{C}\fuk(\mathcal{X})}(L,L')\right\}~.~$$}
\pbred{Therefore} these classes in $HF(L,L')$ have spectral numbers
$\sigma(-)$ as defined in (\ref{eq:spec}).  We \zhnote{define}
$$\sigma(L,L')=\sigma(o_{L,L'})-\sigma(pt_{L,L'})$$
\zhnote{We extend the definition of $\sigma$ to the case when $L'$ is
  not Hamiltonian isotopic to $L$ by setting $\sigma(L,L')=\infty$ in
  this case.}
\begin{rem}
  It is easily seen that this definition coincides with
  \pbred{previous versions of spectral invariants} introduced by
  Viterbo, Schwarz, \zhnote{Oh,} and later adjusted to the Lagrangian
  setting.
\end{rem}
 
Pick a family $\mathcal{F}\subset \mathcal{X}$ that is invariant with
respect to shift and translation. Fix an \zhnote{admissible}
\zhnote{perturbation} data $\mathscr{P}$ and an associated
\zhnote{triangulated} persistence category
$\mathcal{C}\fuk(\mathcal{X};\mathscr{P})$.  Consider the shift
invariant, persistence, fragmentation \zhnote{pseudo-}metric
$\widehat{\bar{d}^{\mathcal{F}}}(-,-)$ associated to the persistence
weight $\bar{w}$ on
$\mathcal{C}\fuk(\mathcal{X};\mathscr{P})_{\infty}$, as described in
\S\ref{subsubsec:prop-fr} and (\ref{eq:inv-shift-metr}).  Each such
pseudo-metric is defined on the objects of
$\mathcal{C}\fuk(\mathcal{X};\mathscr{P})$, which contain the Yoneda
modules of the Lagrangians in $\mathcal{X}$ but also additional
$A_{\infty}$-modules.
 
We now define a pseudo-metric on $\mathcal{X}$
by:
$$D^{\mathcal{F}}(L,L')=
\widehat{\bar{d}^{\mathcal{F}}}(\mathcal{Y}(L),\mathcal{Y}(L'))\ ,
L,L'\in\mathcal{X}$$

In case $\mathcal{F}=\{0\}$ we write $D(-,-)=D^{\{0\}}(-,-)$.  This is
an upper bound for all the other fragmentation metrics
$D^{\mathcal{F}}$.  There is a slight abuse in notation here because
the definition of $D^{\mathcal{F}}$ depends implicitly on the
perturbation data $\mathscr{P}$ but this will be resolved in the next
result.

\begin{thm}\label{thm:appl-sympl} \zhnote{Let
    $\mathcal F \subset \mathcal X$.} In the setting above, the
  pseudo-metrics $D^{\mathcal{F}}$ are independent of the perturbation
  data $\mathscr{P}$ used \zhnote{for} their
  definitions. \zhnote{Moreover:}
  \begin{itemize}
  \item[(i)] (spectrality) \zhnote{Assume that the Lagrangians in
      $\mathcal X$ are graded, then} for any $L, L'\in \mathcal{X}$ we
    have
    $$D(L,L')\leq 4\ \sigma (L,L')~.~$$
  \item[(ii)] (non-degeneracy) \zhnote{For all $L,L'\in \mathcal{X}$}, 
    $$\frac{\delta (L; L'\cup _{F\in\mathcal{F}} F)}{8}
    \leq D^{\mathcal{F}}(L,L')~.~$$
  \item[(iii)] (persistence of intersections) Assume that
    $L, L', N \in \mathcal{X}$, $L'\not \in \mathcal{F}$. If
    \pbrev{$$D^{\mathcal{F}}(L,L') < \frac{1}{16}\ \delta^{\cap}(N,
      L';\cup_{F\in\mathcal{F}}F)~,~$$} then
    $$\#(L\cap N)\geq \# (L'\cap N)~.~$$
  \item[(iv)] (finiteness) If the family $\mathcal{F}$ generates
    $D\fuk(X)$, then the pseudo-metric $D^{\mathcal{F}}$ is finite.
  \end{itemize}
\end{thm}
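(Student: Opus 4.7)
The plan is to first verify that $D^{\mathcal{F}}$ is independent of the perturbation data, and then to address the four claims in turn.

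The independence from $\mathscr{P}$ is a direct consequence of Theorem \ref{cor:Fuk-cat-var}: for any two admissible choices $\mathscr{P}_0$ and $\mathscr{P}_1$ one obtains a TPC-equivalence $\mathcal{E}: \mathcal{C}\fuk(\mathcal{X};\mathscr{P}_0) \to \mathcal{C}\fuk(\mathcal{X};\mathscr{P}_1)$ fixing the Yoneda modules of elements of $\mathcal{X}$. Being a TPC-equivalence in the sense of Definition \ref{def:TPC equivalences}, $\mathcal{E}$ commutes with the shift functor, sends strict exact triangles of weight $r$ to strict exact triangles of weight $r$, and hence descends to a triangulated equivalence on the $\infty$-levels which preserves the persistence weight $\bar{w}$ of Theorem \ref{thm:main-alg}. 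Consequently $\mathcal{E}$ preserves the whole family of pseudo-metrics $\widehat{\bar{d}^{\mathcal{F}}}$, so $D^{\mathcal{F}}(L,L')$ does not depend on $\mathscr{P}$.

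For (i), I assume $L$ is Hamiltonian isotopic to $L'$, since otherwise $\sigma(L,L')=\infty$ and the estimate is trivial. Using point (i) of Theorem \ref{cor:Fuk-cat-var} and the standard duality pairing in Lagrangian Floer cohomology, the classes $o_{L,L'}$, $pt_{L,L'}$ together with their $L',L$-counterparts yield morphisms $\phi\in\Mor(\mathcal{Y}(L),\mathcal{Y}(L'))$ and $\psi\in\Mor(\mathcal{Y}(L'),\mathcal{Y}(L))$ realized at shifts that can be chosen arbitrarily close to the spectral invariants of the corresponding classes, and whose compositions represent the identity classes modulo a shift of at most $\sigma(L,L')+\epsilon$. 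This is exactly a $(\sigma(L,L')+\epsilon)$-interleaving of $\mathcal{Y}(L)$ and $\mathcal{Y}(L')$ in the sense of Definition \ref{def:interleave}; letting $\epsilon\to 0$ and applying Corollary \ref{cor:int-metr} gives $D(L,L')\leq 4\,\widehat{d_{int}}(\mathcal{Y}(L),\mathcal{Y}(L'))\leq 4\sigma(L,L')$.

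Parts (ii) and (iii) rely on a common mechanism. Assuming $D^{\mathcal{F}}(L,L')<c$, Corollary \ref{eq:estimate3} converts this bound into a sequence of exact triangles in $\mathcal{C}\fuk(\mathcal{X})_0$, built out of $\mathcal{Y}(L')$ and objects of $\mathcal{F}$, whose terminal object is related to a shift of $\mathcal{Y}(L)$ by a $4c$-isomorphism. Testing this data against a suitable Floer-theoretic functor yields the geometric conclusions. For (ii), a symplectic ball $e:B(r)\hookrightarrow X$ witnessing $\delta:=\delta(L;L'\cup\bigcup_{F\in\mathcal{F}}F)$ supports, via a small Lagrangian disk inside $e(B(r))$ transverse to $e(\R B(r))$, a test functor that vanishes on $\mathcal{Y}(L')$ and on every $\mathcal{Y}(F)$ (each disjoint from $e(B(r))$) but produces a non-trivial class of filtration strictly below $\pi r^2$ on $\mathcal{Y}(L)$, since $L$ meets $e(\R B(r))$. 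The cone decomposition would then force this filtration to be at most $4c$, and combined with the symmetrization in the definition of $\widehat{\bar{d}^{\mathcal{F}}}$ this yields $c\geq\delta/8$. For (iii), the same reduction, now tested with $HF(N,-)$ and the balls centered at the points of $N\cap L'$ witnessing $\delta^{\cap}:=\delta^{\cap}(N,L';\bigcup F)$, produces a persistence morphism $HF(N,L')\to HF(N,L)$ that is injective on the generators localized in these balls once $c<\delta^{\cap}/16$; the extra factor of two relative to (ii) comes from the fact that left and right inverses of $r$-isomorphisms are only $2r$-isomorphisms (Corollary \ref{cor-2}). Transversality then gives $\#(L\cap N)\geq\dim HF(N,L)\geq\dim HF(N,L')=\#(L'\cap N)$. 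Finally, (iv) is essentially formal: when $\mathcal{F}$ generates $D\fuk(X)$, part (ii) of Theorem \ref{cor:Fuk-cat-var} implies it also generates $\mathcal{C}\fuk(\mathcal{X})_{\infty}$ as a triangulated category, and shift- and translation-invariance of $\mathcal{F}$ together with Remark \ref{rem:finite-metr}(a) applied in $\mathcal{C}\fuk(\mathcal{X})$ show that $\widehat{\bar{d}^{\mathcal{F}}}$, and therefore $D^{\mathcal{F}}$, takes only finite values.

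The main obstacle will be (ii) and (iii) with the stated numerical constants: one has to embed the classical symplectic-ball localization arguments of \cite{Bi-Co-Sh:LagrSh, Bi-Co:LagPict} into the weighted cone-decomposition calculus of $\mathcal{C}\fuk(\mathcal{X})$, and carefully propagate the filtration/energy estimates through the chain-level refinement of cone decompositions underlying Corollary \ref{eq:estimate3}. All other parts of the theorem follow essentially formally from the algebraic machinery of Chapter \ref{chap:Alg} together with Theorem \ref{cor:Fuk-cat-var}.
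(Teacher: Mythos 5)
The overall architecture of your proposal tracks the paper's proof: invariance follows from Theorem \ref{cor:Fuk-cat-var}'s comparison functors; (i) is an interleaving argument via the Yoneda embedding together with Corollary \ref{cor:int-metr}; (ii) and (iii) convert the weighted cone decomposition into a sequence of $\C_0$-triangles plus a controlled isomorphism via Corollary \ref{eq:estimate3}, and then apply ball-energy estimates; and (iv) reduces to triangulated generation of $\C\fuk(\mathcal{X})_\infty$ plus Remark \ref{rem:finite-metr}(a). Your treatments of the invariance statement, (i) and (iv) are essentially the paper's argument.

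For (ii) and (iii), however, the geometric core of your argument differs from the paper's and is not fully worked out. The paper does not introduce an auxiliary test Lagrangian disk inside the ball. Instead it exploits the fact that, in the cluster model of $\fuk(\mathcal{X};\mathscr{P})$, $CF(L,L)$ is the Morse complex $CM(\bar L)$ with its maximum $m$ giving the strict unit. Since $K = \mathrm{Cone}(\bar\phi)$ is $4r$-acyclic and $CF(L,L)\subset K(L)$ is a subcomplex with $d_K(m)=0$, the homotopy equation gives $d_K h(m)=m$. The explicit form of the differential of an iterated-cone module (Theorem 2.14 of \cite{Bi-Co-Sh:LagrSh}, reproduced in~\eqref{eq:aij}) then forces the existence of a cluster polygon through $m$ with boundary on $L$, $L'$ and the $F_i$'s, and the monotonicity lemma applied inside the ball $e'(B(s))$ produces the bound $4r \geq \pi s^2/2$, hence $r\geq \delta/8$. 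Your ``test functor via a Lagrangian disk'' sketch does not produce the factor $1/2$ from monotonicity, so as written your numerology lands at $\delta/4$, not $\delta/8$; you would have to supply that extra factor and verify that introducing an auxiliary disk is compatible with the Fukaya TPC framework (the paper deliberately avoids this by testing against the unit).

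Your explanation of (iii) has two concrete problems. First, the factor of $2$ separating the constants in (ii) and (iii) does \emph{not} come from left/right inverses of $r$-isomorphisms being $2r$-isomorphisms: both parts already absorb the interleaving cost into the ``$4r$'' appearing in their homotopy/acyclicity bounds. The difference comes from the monotonicity constant: in (ii) the polygon through the unit $m$ costs at least $\pi s^2/2$, whereas in (iii) the strips or polygons anchored at a transverse intersection point $x\in N\cap L'$ (with $L'$ lying along the imaginary axis of the ball) cost only $\pi s^2/4$, giving $4r<\pi s^2/4$, i.e.~$r<\delta^{\cap}/16$. Second, the conclusion $\#(L\cap N)\geq \#(L'\cap N)$ requires a \emph{chain-level} injection $CF(N,L')\hookrightarrow CF(N,L)$, which the paper obtains by showing that the composition $\Psi=p\circ v'\circ u'\circ i$ equals $\mathds{1}$ plus a map that strictly lowers filtration. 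Your formulation in terms of an injective persistence morphism $HF(N,L')\to HF(N,L)$ works at the homology level and would only bound $\dim HF(N,L')$, which is in general smaller than $\#(L'\cap N)$; this gap needs to be closed by descending to the chain level, as the paper does.
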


Compared to other metrics and measurements on spaces of Lagrangians
the key novelty here is that properties \zhnote{(i), (ii), (iii) and
  (iv)} \pbred{are valid for the same metric}.

\begin{rem}\label{rem:symplcom} (a) Point \zhnote{(i) of Theorem
    \ref{thm:appl-sympl}} shows that all the fragmentation
  pseudo-metrics $D^{\mathcal{F}}$ are dominated by the spectral
  metric. In previous results involving metrics on spaces of
  Lagrangians, such as those based on the shadows of cobordisms in
  \cite{Bi-Co-Sh:LagrSh}, the best one could do was to
  \zhnote{establish upper bounds on the metrics that are} generally
  much harder to \zhnote{estimate}, such as the Hofer
  distance. Further consequences of this point will be discussed in
  \S\ref{subs:proof}.

  (b) \zhnote{Point (ii) of Theorem \ref{thm:appl-sympl}} can be read
  as a typical non-squeezing type result: embeddings of large
  symplectic balls, as in the definition of $\delta$, are obstructed
  by $D^{\mathcal{F}}$. Conversely, this point implies that if
  $D^{\mathcal{F}}(L,L')=0$, then
  $L\subset L'\cup_{F\in \mathcal{F}}F$. As a result, suppose that we
  fix a second family $\mathcal{F}'\subset \mathcal{X}$, obtained
  through a small Hamiltonian perturbation of the elements of
  $\mathcal{F}$. One can then consider
  $D^{\mathcal{F},\mathcal{F}'}=\max\{ D^{\mathcal{F}},
  D^{\mathcal{F}'}\}$ as in (\ref{eq:mixing1}). This pseudo-metric is
  non-degenerate on $\bar{\mathcal{X}}$ in the sense that
  $D^{\mathcal{F},\mathcal{F}'}(L,L')=0$ iff $\bar{L}=\bar{L}'$ (in
  other words the two \zhnote{underlying} Lagrangians involved
  coincide\zhnote{; obviously,} the markings \zhnote{may}
  differ). This type of argument appeared first in
  \cite{Bi-Co-Sh:LagrSh}. Various forms of the inequality \zhnote{from
    point (ii)} appeared earlier in the literature\zhnote{,} in
  particular cases such as when \zhnote{$\mathcal{F}=\{0\}$} and the
  metric involved is the Hofer metric \zhnote{(see
    e.g.~\cite{Ba-Co})}.  However, it is useful to note that the
  pseudo-metrics $D^{\mathcal{F}}$ are, in general, smaller compared
  to the metrics in these earlier references. \zhnote{Note also that
    even for $\mathcal F = \{0\}$, the inequalities} obtained by
  combining \zhnote{(i) and (ii) appear new}.

  (c) \zhnote{We emphasize that in the point (iii)} of Theorem
  \ref{thm:appl-sympl} the two Lagrangians $L$ and $L'$ \zhnote{are
    allowed to} be very different. \zhnote{F}or instance they can be
  in different smooth isotopy classes or even have different
  homeomorphism types and still $D^{\mathcal{F}}(L, L')$ can be finite
  - this point is reinforced by the last part of the theorem.
  Therefore, this result shows a form of rigidity of Lagrangian
  intersections, for perturbations that are small in this metric
  $D^{\mathcal{F}}$, but that can be very big \zhnote{(infinite even)}
  in other metrics.  The result extends earlier persistence type
  statements in Morse and Floer theory \zhnote{(one of the earliest
    examples appearing in \cite{Co-Ra})} most of them expressed in
  terms of the Hofer distance that is much larger than
  $D^{\mathcal{F}}$.  Again, there is considerable interest to work
  with the algebraic metrics introduced here because for other
  metrics, such as the shadow metrics based on Lagrangian cobordism,
  the finiteness result at point (iv) is \zhnote{not known to hold}.

  (d) The fact that we have
  $j_{\mathcal{X}}^{\ast}\mathcal{Y}(N)\in {\rm Obj}
  (\mathcal{C}\fuk(\mathcal{X})_{\infty})$ for each marked Lagrangian
  $N$ that intersects transversely the elements of $\mathcal{X}$, as
  in Corollary \ref{cor:Fuk-cat-var}, implies that we can define a
  pseudo-metric on the space of all marked Lagrangians in $X$ by:

  $$\Delta ^{\mathcal{F}}(N,N')=\limsup_{\epsilon\to 0}
  \ \widehat{\bar{d}^{\mathcal{F}}}(
  j_{\mathcal{X}}^{\ast}\mathcal{Y}(N_{\epsilon}),
  j_{\mathcal{X}}^{\ast}\mathcal{Y}(N'_{\epsilon}))~.~$$ Where
  $N_{\epsilon}, N'_{\epsilon}$ are $\epsilon$-small \zhnote{(in the
    Hofer metric)} Hamiltonian perturbations of $N$, \zhnote{$N'$
    respectively}, that are both transverse to the elements of
  $\mathcal{X}$. This pseudo-metric is in general degenerate as it
  does not ``see'' differences between $N$ and $N'$ away from the
  elements of $\mathcal{X}$.

  (e) The constants providing the various bounds in Theorem
  \ref{thm:appl-sympl} are very rough and can be improved in some
  cases but we will not pursue these question here.
\end{rem}

\

We will prove a consequence of Theorem \ref{thm:appl-sympl} which is
deduced by studying how the pseudo-metrics $D^{\mathcal{F}}$ change
when the underlying set of marked Lagrangians $\mathcal{X}$ changes.

\

To state this consequence we need a global finiteness type assumption
on our Liouville manifold $(X,\omega)$.  To formulate it, we denote by
$\mathcal{L}ag(X)$ the set of exact, compact, graded, embedded
Lagrangians in $X$ and we denote by $\mathcal{L}ag(X)'$ the marked,
exact Lagrangians in $X$ (these are the elements of $\mathcal{L}ag(X)$
but with fixed primitives and grading choices).  As before, the Fukaya
category $D\fuk(X)$ is the derived category of the
\zhnote{$A_{\infty}$-category} with objects the elements in
$\mathcal{L}ag(X)'$.  The category $D\fuk(X)$ is constructed as in
\cite{Seidel}. In particular, the perturbation data only depends on
the elements in $\mathcal{L}ag(X)$, and not on the choices of
primitives and grading.

\begin{dfn}\label{def:der-finite} \zhnote{Let $(X, \omega)$ be a
    Liouville manifold.} The {\em Fukaya rank} of $(X,\omega)$,
  \pbred{$\rank\, \fuk(X,\omega)$}, is the minimal cardinality of a
  family of Lagrangians $\bar{\mathcal{F}}\subset \mathcal{L}ag(X)$
  such that the corresponding family of marked Lagrangians
  $\mathcal{F}\subset \mathcal{L}ag(X)'$, obtained from
  \pbred{$\bar{\mathcal{F}}$} by adding all possible
  \zhnote{translates of the objects in $\mathcal F$ (in terms of
    grading),} generates $D\fuk(X)$. \pbred{(Note that the primitives
    have no effect here since we are talking about generators in a
    non-filtered setting.)}
\end{dfn}

We emphasize here that ``generating'' has the meaning of
\zhnote{triangulated} generation, as everywhere else in this
paper. The \pbred{$\rank\, {\mathcal{D}}$} can be defined similarly
for any triangulated category $\mathcal{D}$.  The terminology is
justified by the fact that this quantity is an upper bound for the
rank of the Grothendieck group $K(\mathcal{D})$.

\

\begin{cor}\label{cor:gl-metric} Let $(X,\omega)$ be a Liouville manifold
  and assume that \pbred{$\rank\, \fuk(X,\omega)< \infty$}. Fix a
  family of generators $\mathcal{F}\subset \mathcal{L}ag(X)'$,
  invariant \zhnote{under shifts and translations}, and such that the
  corresponding family $\bar{\mathcal{F}}\subset \mathcal{L}ag(X)$
  obtained by forgetting the markings is finite and is in
  \zhnote{general} position \zhnote{(in the sense defined at the
    beginning of \S\ref{subsec-symp})}.

\zhnote{Then the} set $\mathcal{L}ag(X)$ carries a finite, 
pseudo-metric $\mathcal{D}^{\mathcal{F}}$ such that:
\begin{itemize}
\item[(i)] (spectrality) For any  $L, L'\in \mathcal{L}ag(X)$ we have 
$$\mathcal{D}^{\mathcal{F}}(L,L')\leq 4\ \sigma (L,L')~.~$$
\item[(ii)] (non-degeneracy) If $L,L'\in \mathcal{L}ag(X)$, then
$$\frac{\delta (L; L'\cup _{F\in\mathcal{F}} F)}{8}\leq \mathcal{D}^{\mathcal{F}}(L,L')~.~$$  
\item[(iii)] (persistence of intersections) Assume that $L, L', N \in \lag ag(X)$ \zjr{are in general position and} $L'\not \in \mathcal{F}$. If 
\zjr{\zhnote{$$\mathcal{D}^{\mathcal{F}}(L,L') < \frac{1}{16}\ \delta^{\cap}(N, L';\cup_{F\in\mathcal{F}}F)~,~$$}}
then $$\#(L\cap N)\geq \# (L'\cap N)~.~$$
\end{itemize}
In particular, if $\mathcal{F}'$ is another family obtained from $\mathcal{F}$ by generic Hamiltonian perturbations of the elements of $\bar{\mathcal{F}}$,
then 
$$\mathcal{D}^{\mathcal{F},\mathcal{F}'}=\max\{\mathcal{D}^{\mathcal{F}}, \mathcal{D}^{\mathcal{F}'}\}$$
is a \zhnote{finite and non-degenerate} metric on $\mathcal{L}ag(X)$ that satisfies the properties (i), (ii), (iii) above.

Thus, under the hypothesis \pbred{$\rank\, \fuk(X,\omega)<\infty$},
the set $\mathcal{L}ag(M)$ has a metric space structure with respect
to a metric satisfying the properties (i), (ii), (iii).
\end{cor}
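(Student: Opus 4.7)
The plan is to deduce Corollary \ref{cor:gl-metric} from Theorem \ref{thm:appl-sympl} by a localization-plus-limit procedure. Concretely, given any two Lagrangians $L, L' \in \mathcal{L}ag(X)$, first choose markings and gradings and small generic Hamiltonian perturbations $L_\epsilon, L'_\epsilon$ of $L, L'$ so that the finite family $\bar{\mathcal{X}}_\epsilon = \bar{\mathcal{F}} \cup \{\bar{L}_\epsilon, \bar{L}'_\epsilon\}$ is in general position in the sense of \S\ref{subsec-symp}; the marked version $\mathcal{X}_\epsilon$, closed under shifts and translations, then satisfies the hypotheses of Theorems \ref{cor:Fuk-cat-var} and \ref{thm:appl-sympl}. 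This yields a persistence fragmentation pseudo-metric $D^{\mathcal{F}}_{\mathcal{X}_\epsilon}$ on $\mathcal{X}_\epsilon$. Following Remark \ref{rem:symplcom} (d), I would define
\[
\mathcal{D}^{\mathcal{F}}(L, L') := \limsup_{\epsilon \to 0}\, D^{\mathcal{F}}_{\mathcal{X}_\epsilon}(L_\epsilon, L'_\epsilon),
\]
and verify that this is finite by part (iv) of Theorem \ref{thm:appl-sympl}, which applies because $\mathcal{F}$ is a generating family of $D\fuk(X)$.

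For the verification of properties, the main observation is that the upper bounds in parts (i)--(iii) of Theorem \ref{thm:appl-sympl} are controlled by geometric quantities that behave continuously (or semicontinuously) under $C^\infty$-small Hamiltonian perturbations. For (i), the spectral distance $\sigma(L_\epsilon, L'_\epsilon)$ is Hofer-continuous, and $\sigma(L_\epsilon, L'_\epsilon) \to \sigma(L, L')$ (or $\sigma = +\infty$ in both cases), so taking $\limsup$ preserves the inequality $D^{\mathcal{F}} \leq 4\sigma$. For (ii) and (iii), the quantities $\delta$ and $\delta^{\cap}$ are lower-semicontinuous with respect to small Hamiltonian perturbations of their arguments, so the inequalities persist after passing to the limit. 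Triangle inequality for three Lagrangians $L, L', L''$ follows by running the same construction with the single family $\mathcal{X}_\epsilon = \mathcal{F} \cup \{L_\epsilon, L'_\epsilon, L''_\epsilon\}$, on which the pseudo-metric property from Theorem \ref{thm:appl-sympl} holds directly.

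Finally, for the mixed metric $\mathcal{D}^{\mathcal{F}, \mathcal{F}'} = \max\{\mathcal{D}^{\mathcal{F}}, \mathcal{D}^{\mathcal{F}'}\}$, non-degeneracy follows from property (ii) as in Remark \ref{rem:symplcom} (b): if $\mathcal{D}^{\mathcal{F}, \mathcal{F}'}(L, L') = 0$, then both $\delta(L; L' \cup \cup_{F \in \mathcal{F}} F) = 0$ and $\delta(L; L' \cup \cup_{F' \in \mathcal{F}'} F') = 0$, forcing $\bar{L} \subset (\bar{L}' \cup \bar{\mathcal{F}}) \cap (\bar{L}' \cup \bar{\mathcal{F}}')$. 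Since $\mathcal{F}'$ is a sufficiently generic perturbation of $\mathcal{F}$, the intersection $\bar{\mathcal{F}} \cap \bar{\mathcal{F}}'$ is a lower-dimensional subset of $X$, so the connected closed Lagrangian $\bar{L}$ must lie in $\bar{L}'$, hence $\bar{L} = \bar{L}'$.

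The main obstacle, and the step that requires the most care, is showing that $\mathcal{D}^{\mathcal{F}}(L, L')$ is independent of the perturbation $(L_\epsilon, L'_\epsilon)$ chosen, and, more delicately, that its value does not depend on whether one computes it inside $\mathcal{X}_\epsilon = \mathcal{F} \cup \{L_\epsilon, L'_\epsilon\}$ or inside a larger family $\mathcal{X}_\epsilon' = \mathcal{F} \cup \{L_\epsilon, L'_\epsilon, N_1, \ldots, N_k\}$ (as is needed for the triangle inequality across three different local models). This requires a functoriality statement for the Fukaya TPCs: the inclusion $\fuk(\mathcal{X}_\epsilon) \hookrightarrow \fuk(\mathcal{X}_\epsilon')$ induces a TPC-functor $\mathcal{C}\fuk(\mathcal{X}_\epsilon)_\infty \to \mathcal{C}\fuk(\mathcal{X}_\epsilon')_\infty$ (coming from the construction in the proof of Theorem \ref{t:fil-fuk}), under which the Yoneda modules of $L_\epsilon, L'_\epsilon$ correspond. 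Since the fragmentation pseudo-metric $\widehat{\bar{d}^{\mathcal{F}}}$ is computed by the same combinatorial infimum in both categories but a priori the larger category admits more cone decompositions, one gets the inequality $D^{\mathcal{F}}_{\mathcal{X}_\epsilon'} \leq D^{\mathcal{F}}_{\mathcal{X}_\epsilon}$ for free, while the reverse inequality uses that $\mathcal{F}$ is already a generating family, so any cone decomposition over $\mathcal{F}$ in the larger TPC can be realized inside the smaller one up to arbitrarily small loss of weight. Establishing this compatibility carefully is what makes the globalization work.
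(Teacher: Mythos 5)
Your outline correctly identifies the key difficulties (the limit-over-perturbations construction, functoriality between Fukaya TPCs over different families, and semicontinuity of the geometric quantities in (i)--(iii)), but there is a decisive error in the functoriality step and, as a consequence, the definition of $\mathcal{D}^{\mathcal{F}}$ is incomplete.

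You claim that for $\mathcal{X}_\epsilon \subset \mathcal{X}'_\epsilon$ one gets $D^{\mathcal{F}}_{\mathcal{X}'_\epsilon} \leq D^{\mathcal{F}}_{\mathcal{X}_\epsilon}$ ``for free,'' on the grounds that the larger TPC admits more cone decompositions. This is the \emph{wrong direction}. The module-based pseudo-metric $D^{\mathcal{F}}_{\mathcal{X}}$ is monotonically \emph{increasing} in $\mathcal{X}$: the relevant TPC functor is the pull-back $J^*: \mathcal{C}\fuk(\mathcal{X}') \to \mathcal{C}\fuk(\mathcal{X})$, which goes from the \emph{larger} to the \emph{smaller} module category, and since TPC functors can only decrease fragmentation pseudo-metrics, one gets $D^{\mathcal{F}}_{\mathcal{X}}(L,L') \leq D^{\mathcal{F}}_{\mathcal{X}'}(L,L')$ (this is Lemma~\ref{lem:mono} in the paper). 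The opposite monotonicity, $\bar{D}^{\mathcal{F}}_{\mathcal{X}'} \leq \bar{D}^{\mathcal{F}}_{\mathcal{X}}$, holds for the twisted-complex-based pseudo-metric $\bar{D}^{\mathcal{F}}_{\mathcal{X}}$, via the push-forward $J_*: \mathcal{C}'\fuk(\mathcal{X}) \to \mathcal{C}'\fuk(\mathcal{X}')$. Intuitively, enlarging $\mathcal{X}$ means testing modules against more Lagrangians, which makes them harder to distinguish from different objects; more twisted complexes, by contrast, genuinely give more decompositions.

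Because of this, defining $\mathcal{D}^{\mathcal{F}}(L,L')$ as a $\limsup$ over the \emph{minimal} family $\mathcal{F}\cup\{L_\epsilon,L'_\epsilon\}$ does not produce a pseudo-metric: to prove the triangle inequality you must pass to $\mathcal{F}\cup\{L_\epsilon,L'_\epsilon,L''_\epsilon\}$, and since $D^{\mathcal{F}}_{\mathcal{X}}$ increases when $\mathcal{X}$ grows, the quantities $D^{\mathcal{F}}_{\mathcal{F}\cup\{L_\epsilon,L'_\epsilon,L''_\epsilon\}}(L_\epsilon,L'_\epsilon)$ on the right-hand side are \emph{not} controlled by your $\mathcal{D}^{\mathcal{F}}(L,L')$. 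Your attempted fix---that ``any cone decomposition over $\mathcal{F}$ in the larger TPC can be realized inside the smaller one up to arbitrarily small loss of weight''---is not justified and is not used in the paper. The actual definition takes $\mathcal{D}^{\mathcal{F}}(L,L') = \sup_{\mathcal{X}} \mathcal{D}^{\mathcal{F}}_{\mathcal{X}}(L,L')$, the supremum over all admissible finite families, so that distances in larger families are absorbed. Finiteness is then no longer automatic from Theorem~\ref{thm:appl-sympl}(iv); it is proved (Lemma~\ref{lem-finite-df}) by playing the two opposite monotonicities against each other together with the comparison $\tfrac{1}{4}\bar{D}^{\mathcal{F}}_{\mathcal{X}} \leq D^{\mathcal{F}}_{\mathcal{X}} \leq \bar{D}^{\mathcal{F}}_{\mathcal{X}}$ (Lemma~\ref{lem:comp-pseudoLag}): the decreasing $\bar{D}^{\mathcal{F}}_{\mathcal{X}}$, evaluated at the minimal family $\mathcal{F}$, provides the uniform upper bound. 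This interplay between the two TPC models is entirely absent from your proposal, and is essential for both finiteness and the triangle inequality.
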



\section{Filtrations in Floer homology and \zhnote{Fukaya categories}.}\label{sec:FiltFuk}

\subsection{Filtered $A_{\infty}$-categories and their associated
  TPC's} \label{sb:faitpc}

A filtered $A_{\infty}$-category $\mathcal{A}$ is an
$A_{\infty}$-category over a given base field $\k$, such that the
\zhnote{spaces of morphisms} $\Mor_{\mathcal{A}}(X,Y)$ between every two objects
$X,Y$ are filtered (with increasing filtrations) and {\em all} the
composition maps $\mu_d$, $d \geq 1$, respect the filtrations. We
endow $\Mor_{\mathcal{A}}(X,Y)$ with the differential $\mu_1$ and view
it as a filtered chain complex. We denote by
$\Mor_{\mathcal{A}}^s(X,Y)$, $s \in \mathbb{R}$, the level-$s$
filtration subcomplex of $\Mor_{\mathcal{A}}(X,Y)$. We refer the
reader to~\cite{Bi-Co-Sh:LagrSh} for more details on filtered
$A_{\infty}$-categories. Note however that in~\cite{Bi-Co-Sh:LagrSh}
the theory is developed for the more general case of {\em weakly}
filtered $A_{\infty}$-categories (the ``genuinely'' filtered case is
obtained from the weakly filtered one by assuming the so called
``discrepancies'' of $\mathcal{A}$, defined in~\cite{Bi-Co-Sh:LagrSh},
to vanish). Of course, the $A_{\infty}$-considerations here are very
similar to those for dg-categories in \S\ref{subsec:dg}.

For simplicity we will make three further assumptions on our filtered
$A_{\infty}$-categories. The first is that $\mathcal{A}$ is strictly
unital with the units lying in persistence level $0$. The second one
is that for every two objects $X, Y \in \Ob(\mathcal{A})$, the space
$\Mor_{\mathcal{A}}(X,Y)$ is finite dimensional over $\k$.
The third assumption is that $\mathcal{A}$ is complete with respect to
persistence shifts in the sense that we have a shift ``functor''
 which consists of a family of $A_{\infty}$-functors
$\Sigma = \{\Sigma^r: \mathcal{A} \longrightarrow \mathcal{A}, r \in
\mathbb{R}\}$ whose members satisfy the following conditions:
\begin{enumerate}
\item $\Sigma^r$ is strictly unital and the higher components
  $(\Sigma^r)_d$, $d \geq 2$, of $\Sigma^r$ all vanish.
\item $\Sigma^0 = \mathds{1}$, $\Sigma^s \circ \Sigma^t = \Sigma^{s+t}$.
\item We are given prescribed identifications
  $\Mor_{\mathcal{A}}^s(\Sigma^r X,Y) \cong
  \Mor_{\mathcal{A}}^{s+r}(X,Y)$ that are compatible with the
  inclusions
  $\Mor_{\mathcal{A}}^{\alpha}(X,Y) \subset
  \Mor_{\mathcal{A}}^{\beta}(X,Y)$ for $\alpha \leq \beta$. These
  identifications are considered as part of the structure of the shift
  functor $\Sigma$.
\end{enumerate}

The assumption that $\mathcal{A}$ is complete with respect to shifts
is merely a matter of convenience in the sense that it is not
essential to impose this condition in advance. Indeed any filtered
$A_{\infty}$-category (satisfying all the above assumptions except the
one on completeness with respect to shifts) can be completed with
respect to shifts by adding suitable objects that will play the role
of the shifted $\Sigma^r X$ objects and then defining the functors
$\Sigma^r$ accordingly. See again \S \ref{subsec:dg} for the similar
case of filtered dg-categories.

We will generally use homological conventions in the context of
\pbred{$A_{\infty}$-categories}, however for compatibility with the
literature we will generally use cohomological grading. Whenever this
is the case, we will denote the cohomological degrees by superscripts
(e.g.~$H^0$ will stand for the homology in cohomological degree $0$,
the units will be assumed to be in cohomological degree $0$ and so
on).

Given a filtered $A_{\infty}$-category $\mathcal{A}$ one can form the category
$Tw \mathcal{A}$ of twisted complexes over $\mathcal{A}$ which is
itself a filtered $A_{\infty}$-category (satisfying all the additional
assumptions mentioned earlier). This can be done by following the
construction in~\cite[Chapter I, Section~3(l)]{Seidel}
and extending the filtrations from $\mathcal{A}$ to $Tw \mathcal{A}$
in the obvious way. The construction of the filtered $Tw \mathcal{A}$
in the case of dg-categories has been worked out in detail
in~\S \ref{subsec:dg}, and the $A_{\infty}$-case is very
similar. There is a bit of abuse of notation in writing
$Tw \mathcal{A}$, since the latter category carries additional
structures (namely filtrations and shift functor) than the unfiltered
category of twisted complexes which is denoted in the literature by
the same notation $Tw \mathcal{A}$.

The filtered $A_{\infty}$-category $\mathcal{A}$ embeds into
$Tw \mathcal{A}$ in an obvious way, the embedding being a filtered
$A_{\infty}$-functor which is full and faithful (on the chain level).
Moreover, $Tw \mathcal{A}$ is pre-triangulated in the filtered sense
(which in particular means that it is closed under formation of
filtered mapping cones). It follows that the \zhnote{homological} category
$H^0(Tw \mathcal{A})$ is a TPC that contains the \zhnote{homological}
persistence category $H^0(\mathcal{A})$ of $\mathcal{A}$. 

Another TPC associated to the $A_{\infty}$-category $\mathcal{A}$ is
provided by the category $Fmod(\mathcal{A})$ of filtered
$A_{\infty}$-modules over $\mathcal{A}$. Weakly filtered modules are
\zhnote{defined} in \cite{Bi-Co-Sh:LagrSh} and the filtered
definitions correspond to all discrepancies being $0$. We will only
consider strictly unital modules here. There is a natural shift
functor on this category
$\Sigma: (\R,+) \to \mathrm{End}(Fmod(\mathcal{A}))$. Given
$r \in \mathbb{R}$ and a module $\mathcal{M} \in Fmod(\mathcal{A})$ we
define the filtered module $\Sigma^r \mathcal{M}$ by
$(\Sigma^r \mathcal{M})^{\leq \alpha}(N) = \mathcal{M}^{\alpha -
  r}(N)$, endowed with the same \zhnote{$\mu_d$}-operations as
$\mathcal{M}$.
\begin{rem}In the cases of interest to us, namely the Fukaya category,
  this shift functor on $Fmod(\mathcal{A})$ is compatible with a shift
  functor on \zhnote{$\mathcal{A}$}.
\end{rem}

The category $Fmod(\mathcal{A})$ is in fact a filtered dg-category in
the sense of \S\ref{subsec:dg} and it is pre-triangulated. Thus
$H^{0}(Fmod(\mathcal{A}))$ is a TPC\zhnote{, by Corollary
  \ref{cor:pre-tr}}.  Of more interest to us is a subcategory of
$Fmod(\mathcal{A})$. First notice that, because \zhnote{$\mathcal{A}$
  is filtered}, the Yoneda \zhnote{functor}
\pbred{$\mathcal{Y}:\mathcal{A}\to Fmod(\mathcal{A})$} is filtered
\pbred{too}. \zhnote{Moreover, our assumption of strict unitality of
  $\mathcal A$ implies that} \pbred{$\mathcal{Y}$} \zhnote{is
  homologically full and faithful.}  \zhnote{Furthermore,}
\pbred{there exists a canonical map}
\pbred{$$\lambda: \mathcal{M}(X)\to
  \Mor_{Fmod}(\mathcal{Y}(X),\mathcal{M})$$} \pbred{for all
  $X \in {\rm Obj}(\mathcal{A})$ and
  $\mathcal{M}\in {\rm Obj}(Fmod(\mathcal{A}))$, as defined
  in~\cite[Chapter~1, Section~(1l)]{Seidel}.} \pbred{Standard
  arguments show that $\lambda$ is a {\em filtered quasi-isomorphism}
  in the sense that it is filtered and induces an isomorphism between
  the persistence homologies of its domain and target filtered chain
  complexes.}

We consider \zhnote{now} the pre-triangulated closure
$\mathcal{A}^{\#}$ of the Yoneda modules and their shifts: this is a
\zhnote{full subcategory} of $Fmod(\mathcal{A})$ that has as objects
all the iterated cones, over filtration preserving morphisms, of
shifts of Yoneda modules (thus of modules of the form
$\Sigma^{r}\mathcal{Y}(X)$).

Finally, we denote by $\mathcal{A}^{\nabla}$ the smallest \zhnote{full
  subcategory} of $Fmod(\mathcal{A})$ that contains $\mathcal{A}^{\#}$
and all the modules (and all their shifts and translates) that are
$r$-quasi-isomorphic to objects in $\mathcal{A}^{\#}$ \zhnote{for
  some} $r\in [0,\infty)$.  \zhnote{Here, a} module $\mathcal{M}$ is
\zhnote{called} $r$-quasi-isomorphic to $\mathcal{N}$ if, in
\zhnote{$H^{0}(Fmod(\A))$}, there is an $r$-isomorphism
$\mathcal{M}\to \mathcal{N}$.
 
It is easy to see that $\mathcal{A}^{\nabla}$ remains
pre-triangulated, carries the shift functor induced from
$Fmod(\mathcal{A})$ and thus \zhnote{$H^{0}(\A^{\nabla})$} is a TPC.

\subsection{Persistence Floer homology} \label{sb:fil-fuk} 

\subsubsection{Filtered Floer complexes} \label{sbsb:FCF} Given a pair
of marked Lagrangians $L_0, L_1$ as above and a choice of Floer data
$\mathscr{D}_{L_0,L_1} = (H_{L_0,L_1}, J_{L_0, L_1})$ which consists
of a (possibly time-dependent) Hamiltonian function and a choice of a
compatible (time-dependent) almost complex structure we can form the
Floer complex $CF(L_0, L_1; \mathscr{D}_{L_0,L_1})$. This is a
$\mathbb{Z}_2$-graded chain complex \zhnote{(recall our Lagrangians are assumed to be oriented). It is} generated by the Hamiltonian
chords $x:[0,1] \longrightarrow X$ of $H_{L_0,L_1}$ with end points on
the two Lagrangians, namely $x(0) \in L_0$, $x(1) \in L_1$. For
simplicity we work here with coefficients in $\mathbb{Z}_2$.

Moreover, $CF(L_0, L_1; \mathscr{D}_{L_0,L_1})$ is a filtered chain
complex, where the filtration function is given by the action
functional. More precisely, if
$x \in CF(L_0, L_1; \mathscr{D}_{L_0,L_1})$ is a generator (i.e.~a
Hamiltonian chord), its action is defined by
\begin{equation} \label{symp-act-1} \mathcal A(x) := \int_0^1
  H_{L_0,L_1}(t, x(t))dt - \int_0^1 \lambda(\dot{x}(t)) dt +
  h_{L_1}(x(1)) - h_{L_0}(x(0)).
\end{equation}

\begin{rem} \label{r:L0L1-transverse} In case $L_0$ and $L_1$
  intersect transversely and $H_{L_0,L_1} \equiv 0$, the Hamiltonian
  chords $x$ that generate $CF(L_0, L_1; \mathscr{D}_{L_0, L_1})$ are
  just the intersection points $L_0 \cap L_1$ and the action reduces
  to
  \begin{equation} \label{eq:symp-act-2} \mathcal A(x) = 
    h_{L_1}(x) - h_{L_0}(x), \quad \forall \, x \in L_0 \cap L_1.
  \end{equation}
\end{rem}

\zhnote{Back} to the general case, the homology of the filtered chain
complex $CF(L_0, L_1; \mathscr{D}_{L_0,L_1})$ gives rise to
persistence Floer homology $HF(L_0, L_1; \mathscr{D}_{L_0,L_1})$ which
has the structure of a $\mathbb{Z}_2$-graded persistence module (over
the field $\mathbb{Z}_2$). As a vector space
$HF(L_0, L_1; \mathscr{D}_{L_0,L_1})$ is independent of the auxiliary
Floer data $\mathscr{D}_{L_0,L_1}$, however as a persistence module it
does depend on that choice. More precisely, the persistence module
structure of $HF(L_0, L_1; \mathscr{D}_{L_0,L_1})$ is independent of
the choice of the almost complex structure $J_{L_0,L_1}$ from
$\mathscr{D}_{L_0,L_1}$, however it depends strongly on the choice of
the Hamiltonian $H_{L_0, L_1}$.

\subsubsection{Grading} \label{sbsb:grading} While
$\mathbb{Z}_2$-grading is enough for \zhnote{our applications,} one
can obtain a $\mathbb{Z}$-graded theory if one makes additional
assumptions on $X$ and on the admissible class of Lagrangian
submanifolds. The simplest such conditions are the following: firstly,
we assume that $2c_1(X)=0$, where $c_1(X)$ stands for the 1'st Chern
class of the tangent bundle of $X$, viewed as a complex vector bundle
by endowing $X$ with any $\omega$-compatible almost complex structure
$J$. We now fix a nowhere vanishing quadratic complex $n$-form (where
$n = \dim_{\mathbb{C}} X$), namely a nowhere vanishing section
$\Theta$ of the bundle $\Omega^n(X, J)^{\otimes 2}$. The choice of
$\Theta$ gives rise to a global phase map
$\det^2_{\Theta}: \mathcal{L}(T(X)) \longrightarrow S^1$ defined on
the Lagrangian Grassmannian bundle $\mathcal{L}(T(X))$ of $X$
(see~\cite{Se:graded},~\cite[Chapter~2, Section~11j]{Seidel}).
\pbrev{Given a Lagrangian $\bar{L} \subset X$ denote by
  $s_{\bar{L}} : \bar{L} \longrightarrow \mathcal{L}(T(X))|_{\bar{L}}$
  its Gauss map. A Lagrangian $\bar{L}$ is said to admit a grading if
  $\det^2_{\Theta} \circ s_{\bar{L}} : \bar{L} \longrightarrow S^1$
  can be lifted to a function
  $\theta_{\bar{L}}: \bar{L} \longrightarrow \mathbb{R}$ and a choice
  of such a lift is called a grading on $\bar{L}$. In this case, by
  adding integral constants to $\theta_{\bar{L}}$ one obtains all
  possible gradings of $\bar{L}$.}

\pbrev{Gradability of Lagrangians can be rephrased in cohomological
  terms. The map
  $\det^2_{\Theta} \circ s_{\bar{L}} : \bar{L} \longrightarrow S^1$
  gives rise to a cohomology class $\mu_{\bar{L}} \in H^1(\bar{L})$
  which we call the Maslov class of $\bar{L}$. (There is a slight
  abuse of notation here since $\mu_{\bar{L}}$ actually depends on the
  homotopy class of $\Theta$.) A Lagrangian $\bar{L}$ admits a grading
  if and only if $\mu_{\bar{L}} = 0$.}

\pbrev{The relation between $\mu_{\bar{L}}$ and the more familiar
  Maslov index homomorphism
  $\mu_{X,\bar{L}}: H_2(X,\bar{L}) \longrightarrow \mathbb{Z}$ is that
  $\mu_{X,\bar{L}}(A) = \langle \mu_{\bar{L}}, \partial_* A \rangle$
  for every $A \in H_2(X, \bar{L})$, where
  $\partial_*: H_2(X,\bar{L}) \longrightarrow H_1(\bar{L})$ is the
  connecting homomorphism. Note also that if the map
  $H_1(\bar{L}) \longrightarrow H_1(X)$, induced by the inclusion
  $\bar{L} \subset X$, is trivial, then $\mu_{\bar{L}}$ is determined
  by $\mu_{X, \bar{L}}$ (hence in that case $\mu_{\bar{L}}$ is
  independent of the choice of $\Theta$). This is because
  $\mu_{X,\bar{L}} (j(B)) = 2 \langle c_1(X), B \rangle = 0$ for every
  $B \in H_2(B)$, where $j: H_2(X) \longrightarrow H_2(X,\bar{L})$ is
  the map induced by the inclusion. Therefore $\mu_{X,\bar{L}}$
  descends to $H_2(X, \bar{L}) / j(H_2(X)) \cong H_1(\bar{L})$.}

In the rest of the paper, we optionally allow for a $\Z$-graded
theory. Whenever this is wished, we will make the preceding
assumptions on $X$, fix the auxiliary structure $\Theta$, and consider
only marked Lagrangians $L$ \pbrev{that admit a grading (or
  equivalently $\mu_{\bar{L}} = 0$).} Moreover, we extend the notion
of a marked Lagrangian $L$ to include also a choice of a grading
denoted $\theta_L$, namely $L = (\bar{L}, h_L, \theta_L)$. However,
below we will mostly suppress the choice $\theta_L$ form the notation
since it will not be often explicitly used.

Given a pair of marked Lagrangians $L_0, L_1$, their grading
induces an absolute $\mathbb{Z}$-grading on $CF(L_0, L_1;
\mathscr{D}_{L_0,L_1})$, and therefore also on $HF(L_0, L_1;
\mathscr{D}_{L_0, L_1})$. The effect of translating the grading
functions on the Lagrangians is the following. Denote $L[k]
= (\bar{L}, h_L, \theta_L - k)$. Then, using cohomological and
homological grading respectively, we have: $$CF^i(L_0[k], L_1[l];
\mathscr{D}) \cong CF^{i+k-l}(L_0, L_1; \mathscr{D}), \quad
CF_j(L_0[k], L_1[l]; \mathscr{D}) \cong CF_{j+l-k}(L_0, L_1;
\mathscr{D}).$$

\subsection{Weakly filtered Fukaya categories} \label{sb:wfuk} The
above construction can be enhanced to an $A_{\infty}$-category called
the Fukaya category.

Fix a collection of marked Lagrangians $\mathcal{X}$ in $X$. We assume
that $\mathcal{X}$ is closed under grading translations and shifts of
the primitives, namely if $L = (\bar{L}, h_L, \theta_L)$ is in
$\mathcal{X}$ then for every $r \in \mathbb{R}$ and
$k \in \mathbb{Z}$, the marked Lagrangian
$\Sigma^r L[k] := (\bar{L}, h_L+r, \theta_L-k)$ is also in
$\mathcal{X}$. (Of course, in case $\mathcal{X}$ is not closed under
shifts and translations we can easily fix this by adding to
$\mathcal{X}$ all the shifts and translations of its objects.)

The Fukaya category $\fuk(\mathcal{X})$ associated to $\mathcal{X}$ is
an $A_{\infty}$-category whose objects are the element of
$\mathcal{X}$ and the \zjr{complex of} morphisms between a pair of objects from
$\mathcal{X}$ is the Floer complex of that pair. In order to set up
this $A_{\infty}$-category one has to choose for every pair of objects
$(L_0, L_1)$ from $\mathcal{X}$ a regular Floer datum
$\mathscr{D}_{L_0, L_1}$ and then extend this choice to a consistent
choice of regular perturbation data $\mathscr{P}_{\mathcal{X}}$, which
is defined for every tuple of Lagrangians $(L_0, \ldots, L_d)$,
$d \geq 1$, from the collection $\mathcal{X}$. (It is important that
both the Floer data as well as the perturbation data associated to a
tuple depend only on the underlying Lagrangians in that tuple, and not
on the choice of primitives or gradings on the Lagrangians in the
tuple.)

Once these choices are set, one defines
$$\Mor_{\fuk(\mathcal{X})}(L_0, L_1) := CF(L_0, L_1; \mathscr{D}_{L_0,L_1}),$$
endowed with the Floer differential $\mu_1$. The higher order
operations $\mu_d$ \zhnote{for $d \geq 2$} are multi-linear maps:
\begin{equation} \label{eq:mu-d} \mu_d: CF(L_0, L_1; \mathscr{D}_{L_0,
    L_1}) \otimes \cdots \otimes CF(L_{d-1}, L_d;
  \mathscr{D}_{L_{d-1}, L_d}) \longrightarrow CF(L_0, L_d;
  \mathscr{D}_{L_0,L_d})
\end{equation}
of cohomological degree $2-d$, which are defined for every tuple of
Lagrangians $L_0, \ldots, L_d$ from $\mathcal{X}$. They satisfy the
$A_{\infty}$-identities. The definition of $\mu_d$ goes by counting
Floer $(d+1)$-polygons in $X$ with boundary conditions on the
$L_i$'s. These polygons satisfy a perturbed Cauchy-Riemann equation
with perturbations prescribed by $\mathscr{P}_{\mathcal{X}}$.  Note
that the Fukaya category described above depends on the choice
$\mathscr{P}_{\mathcal{X}}$ of the perturbation data, hence should in
fact be denoted by
$\fuk(\mathcal{X};\mathscr{P}_{\mathcal{X}})$. However it is well
known that different choices of perturbation data lead to
quasi-equivalent categories~\cite[Chapter~2,
Section~10]{Seidel}

Taking filtrations into account, as already mentioned
in~\S\ref{sbsb:FCF}, the $\Mor$'s of this category are filtered chain
complexes. However, due to the perturbation data involved in defining
the higher order operations, the $\mu_d$-operations for $d \geq 2$ do
not preserve the action filtrations, but only preserves them up to an
error (that depends on $d$). Consequently the resulting
$A_{\infty}$-category is not filtered but only {\em weakly
  filtered}. Enhancing such a structure to a TPC, e.g.~along the lines
of the construction outlined in~\S\ref{sb:faitpc}, seems like a
non-trivial technical problem.

\section{Genuinely filtered Fukaya categories}
\label{sb:ffuk}

Here we outline a construction that gives rise to a genuinely filtered
Fukaya $A_{\infty}$-category. This however will require very
restrictive assumptions on the collection of objects $\mathcal{X}$,
and some adjustments in the definition of the operations $\mu_d$ for
certain tuples of Lagrangians.

Recall from the beginning of the section that we denote by
$\bar{\mathcal{X}} = \{ \bar{L} \mid L \in \mathcal{X}\}$ the
collection of underlying Lagrangian submanifolds corresponding to the
marked Lagrangians in $\mathcal{X}$. Recall also the assumptions on
$\bar{\mathcal{X}}$: $\bar{\mathcal{X}}$ is finite; every two distinct
Lagrangians $L', L'' \in \bar{\mathcal{X}}$ intersect transversely;
for every three distinct Lagrangians
$\bar{L}_0, \bar{L}_1, \bar{L}_2 \in \bar{\mathcal{X}}$ we have
\zhnote{$\bar{L}_0 \cap \bar{L}_1 \cap \bar{L}_2 = \emptyset$}. We
also continue to assume, as before, that $\mathcal{X}$ is closed under
shifts and translation in grading.

A more general approach, yielding genuinely filtered Fukaya
categories, \pbrev{has been developed by Ambrosioni~\cite{Amb:fil-fuk}
  after the first version of the current work had appeared.} This
approach does not impose any restrictions on the collection
$\mathcal{X}$, besides the assumption that all Lagrangians in
$\mathcal{X}$ are weakly exact (and possibly graded, if one wants a
graded theory). In particular no finiteness condition on
$\bar{\mathcal{X}}$ is needed in that work and no transversality
assumption is made for distinct elements of this collection. \pbrev{On
  the other hand, the invariance properties of the filtered Fukaya
  categories from~\cite{Amb:fil-fuk} are coarser than the ones
  provided by our approach (compare
  e.g.~Theorems~\ref{cor:Fuk-cat-var} and~\ref{t:fil-fuk})
  to~\cite[Theorem~B]{Amb:fil-fuk}).}

The construction outlined below is based on methods already
well-established in the literature and we will therefore only provide
a sketch of the construction omitting quite a few technical details
but emphasizing some points that are important in the control of
filtration aspects.

\subsection{Floer chain complexes redefined} \label{sbsb:fdata} We
begin by redefining the Floer chain complexes in a way which will
enable us to obtain a genuinely filtered Fukaya category.

\label{p:floer-data-1}
Let $L_0, L_1 \in \mathcal{X}$. Assume first that
$\bar{L}_0 \neq \bar{L}_1$ (hence they intersect transversely).  In
this case we fix a Floer datum $\mathscr{D}_{L_0,L_1}$ of the type
$(0,J_{L_0, L_1})$, i.e.~its Hamiltonian term will be identically
$0$. (Once again, the choice of $J_{L_0,L_1}$ is made such that it
depends only on the underlying Lagrangians $\bar{L}_0$, $\bar{L}_1$.)
We then define $CF(L_0, L_1; \mathscr{D}_{L_0, L_1})$ to be the
standard Floer complex associated to the pair $(\bar{L}_0, \bar{L}_1)$
using the Floer data $\mathscr{D}_{L_0, L_1}$ chosen above. The
grading is defined using the grading on the two marked Lagrangians
$L_0, L_1$. The filtration on $CF(L_0, L_1; \mathscr{D}_{L_0, L_1})$
is defined by using the action as a filtration function. Specifically,
if $x \in \bar{L}_0 \cap \bar{L}_1$ is a generator of
$CF(L_0, L_1; \mathscr{D}_{L_0, L_1})$, its action $\mathcal{A}(x)$ is
define by~\eqref{eq:symp-act-2}.

Assume now that $\bar{L}_0 = \bar{L}_1$ and denote by $\bar{L}$ this
common Lagrangian. In this case the Floer datum will be replaced by
a choice of a Morse datum (which we continue to denote by
$\mathscr{D}_{L_0,L_1}$), namely a pair
$(f_{\bar{L}}, (\cdot, \cdot)_{\bar{L}})$ of a Morse function
$f_{\bar{L}}: \bar{L} \longrightarrow \mathbb{R}$ and a Riemannian
metric $(\cdot, \cdot)_{\bar{L}}$ on the common underlying Lagrangian
$\bar{L}$. We will further assume that all the Morse functions
$f_{\bar{L}}$ have a unique (local) maximum (i.e.~a unique critical
point of index $n = \dim_{\mathbb{C}}X$). The purpose of this
assumption is to assure that the units in our Fukaya category will be
strict (rather than only homology units).

The Floer complex $CF(L_0, L_1; \mathscr{D}_{L_0, L_1})$ is defined to
be the Morse complex $CM(\bar{L})$ of $\bar{L}$, associated to the
Morse data
$\mathscr{D}_{L_0,L_1} = (f_{\bar{L}}, (\cdot, \cdot)_{\bar{L}})$. We
filter this chain complex in the following way. We set the filtration
level {\em for all} generators
$x \in CF(L_0, L_1; \mathscr{D}_{L_0, L_1})$ of this chain complex
(which are critical points of $f_{\bar{L}}$) to be the constant
$c \in \mathbb{R}$, where $c \equiv h_{L_1} - h_{L_0}$ is the
difference of the primitive functions of the two markings of the
Lagrangian $\bar{L}$. To keep notation uniform we continue to denote
the filtration level of $x$ by $\mathcal{A}(x)$ in the same way we have
done for action.

\subsection{Clusters of punctured disks} \label{sbsb:clusters-pdisks}
To define the $\mu_d$-operations we will use a hybrid model that
combines Floer polygons with gradient Morse trees. The maps defining
the $\mu_d$-operations will be called clusters of Floer polygons. This
approach is analogous to the cluster Floer homology theory initiated
by Cornea-Lalonde~\cite{Co-La} who also introduced the name
``clusters'' in this context. Further modifications and foundational
work on the subject has been done in~\cite{Charest-thesis,
  Charest}. The main difference between these works and what we will
be doing below is the following. The cluster homology
theory~\cite{Co-La} deals with a single Lagrangian in the presence of
pseudo-holomorphic disks. The ``clusters'' in that work consist of
Morse flow lines attached to pseudo-holomorphic disks. In contrast,
here we deal with Floer theory of many Lagrangians together (setting
up a Fukaya category), but in the absence of pseudo-holomorphic
disks. \pbrev{Note that~\cite{Amb:fil-fuk} too uses clusters of Floer
  polygons in a very similar way as below, and contains a detailed
  account of the subject.}

In order to describe cluster of Floer polygons, we need first to set
up their domains which we call clusters of punctured disks.

We begin with the notion of a {\em $k$-punctured disk}. By this is we
mean a Riemann surface $S_k$ which is obtained from the closed
$2$-disk $D \subset \mathbb{C}$ by removing $k \geq 1$ distinct
boundary points $z_1, \ldots, z_k \in \partial D$ ordered in the {\em
  clockwise} direction, together with the following additional
data. The points $z_i$ will be called punctures. Each puncture $z_i$
is declared to be either an {\em entry} puncture or an {\em exit}
puncture. We allow $S_k$ to have at most one exit puncture. We will
typically denote the entry punctures by a $+$ superscript (e.g.~$z^+$)
and the exit one by a $-$ superscript (e.g.~$z^{-}$). See
figure~\ref{f:punctured-disk}. Note that the boundary $\partial S_k$
consists of $k$ arcs which we typically denote by $C_1, \ldots, C_k$,
where the arc $C_j$ goes from $z_j$ to $z_{j+1}$ for
$1 \leq j \leq k-1$, in the clockwise direction, and $C_k$ goes from
$z_k$ to $z_1$.

For each punctured disk $S_k$ we fix a choice of strip-like ends along
each of its punctures, as in~\cite[Chapter~2,
Section~9]{Seidel}. These choices should be compatible
with splitting and gluing, as will be described later on.

\begin{figure}[h]
  \begin{center}
  \includegraphics[scale=0.60]{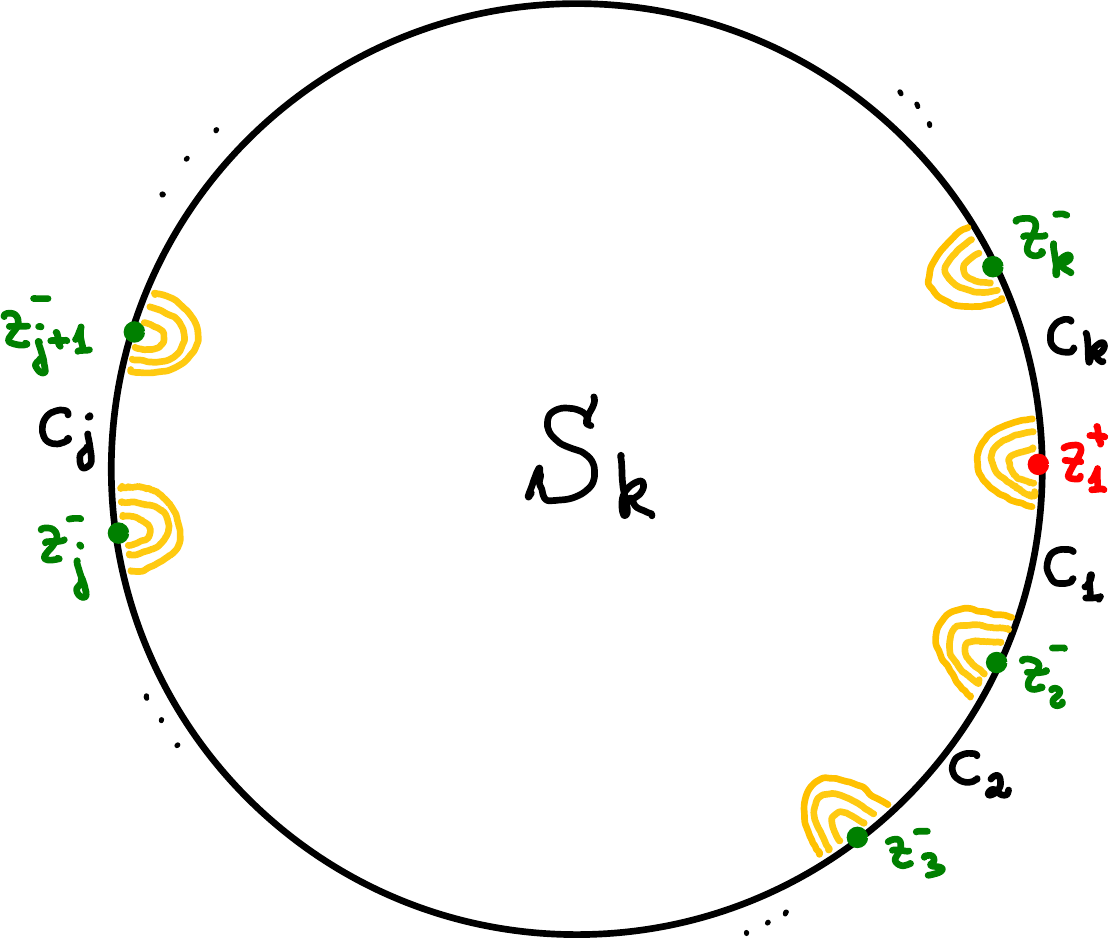} \qquad
  \includegraphics[scale=0.60]{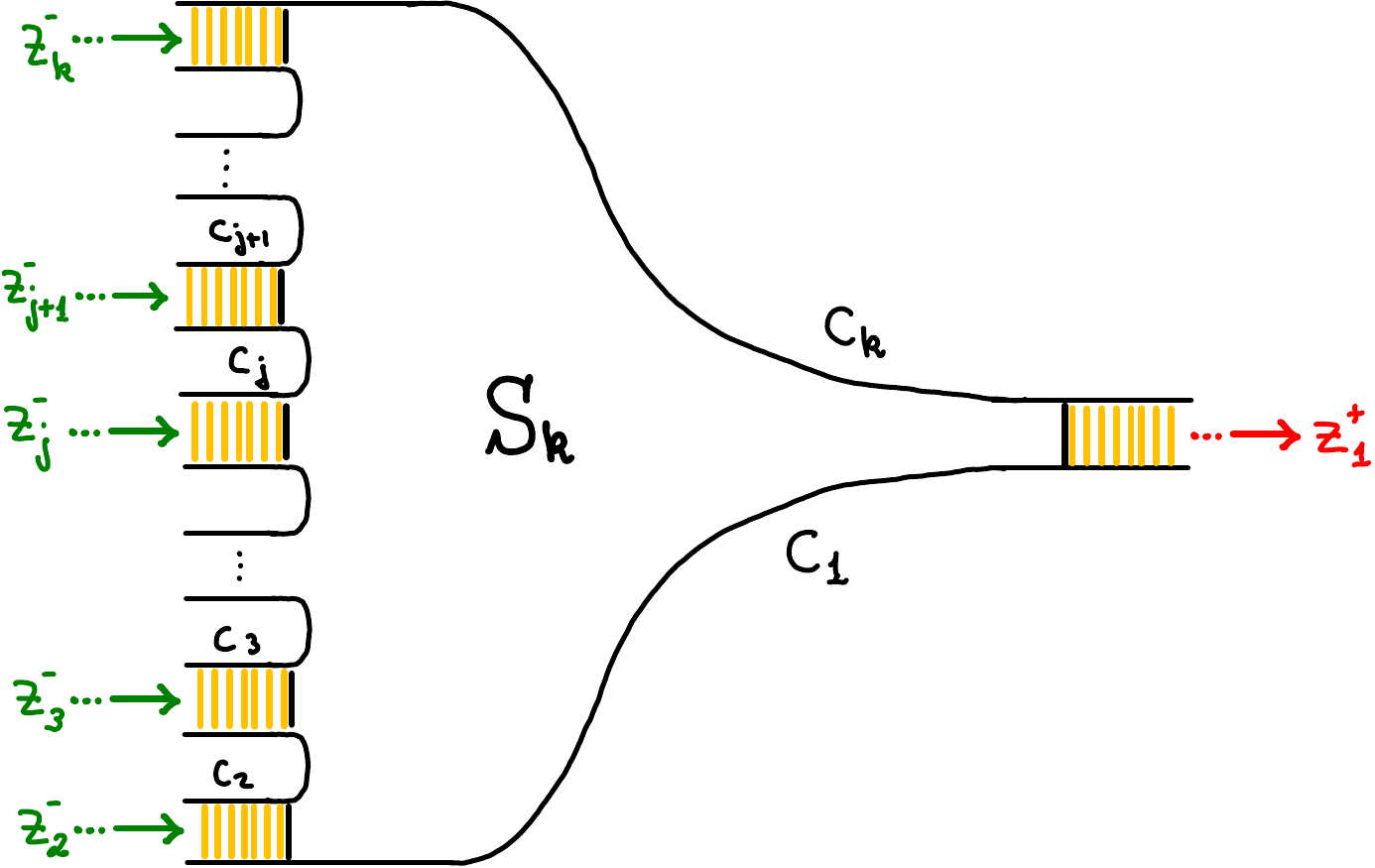}
  \end{center}
  \caption{On the left: a $k$-punctured disk $S_k$ with $k-1$ entries
    and one exit. The regions in yellow are the strip-like ends. On
    the right: a Riemann surface biholomorphic to a $k$-punctured
    disk, illustrating the strip-like ends modeled on
    $(-\infty, 0] \times [0,1]$ for the entries and
    $[0,\infty) \times [0,1]$ for the exit.}
  \label{f:punctured-disk}
\end{figure}

Next we need to consider certain types of trees which call
admissible. An admissible tree $T$ is a (connected) tree with a finite
number of edges and with the following properties and additional
structures. In what follows we will call all the end-vertices of $T$
by the name ``leaves'' (in particular, we will not distinguish between
a possible root of the tree and the other end-vertices, and just refer
to all of them by the name leaves). We assume that all the leaves have
valency $1$ and all the other vertices of $T$ (i.e.~the internal ones)
have valency $3$.  Moreover, the edges of $T$ are oriented and these
orientations satisfy the property that at every internal vertex
(which by assumption has valency $3$) there are precisely two incoming
edges and one outgoing edge. The leaves of the tree $T$ are divided
into two types: E and A (where ``E'' stands for Entry/Exit leaves and
``A'' for Attachment leaves). A leaf of type-E will be called an {\em
  entry leaf} if the orientation on the edge connected to it goes from
the leaf towards the rest of the tree. In the opposite case, i.e.~when
the orientation of that edge goes into the leaf we call it an {\em
  exit leaf}. The edges of $T$ that are not connected to type-E leaves
will be called {\em internal edges}. These consist of all edges that
are not connected to any leaf, as well as those edges that are
connected to leaves of type A. The other edges will be called external
edges.

The edges of the trees are labeled by intervals in $\mathbb{R}$ as
follows. The internal edges are labeled by intervals of the type
$[0,R]$ (with possibly different values of $R>0$ for different edges).
The edges that connect between a leaf of type $E$ and an internal
vertex are labeled either by $(-\infty, 0]$ or $[0, \infty)$ according
to whether that leaf is an entry or exit, respectively. If there is an
edge connecting two leaves of type $E$ (which happens if and only if
the tree consists of exactly these two leaves and one edge connecting
them) then this edge is labeled by the interval $(-\infty,
\infty)$. Finally, we also fix an isotopy class of a planar embeddings
for the tree $T$. Note that as a \zjr{result} this fixes a cyclic clockwise
order on the three edges connected to any given internal vertex. It
also gives a cyclic clockwise order to the leaves of the entire
tree. We illustrate a typical example of an admissible tree in
Figure~\ref{f:trees} below.

\begin{figure}[h]
  \begin{center}
  \includegraphics[scale=0.60]{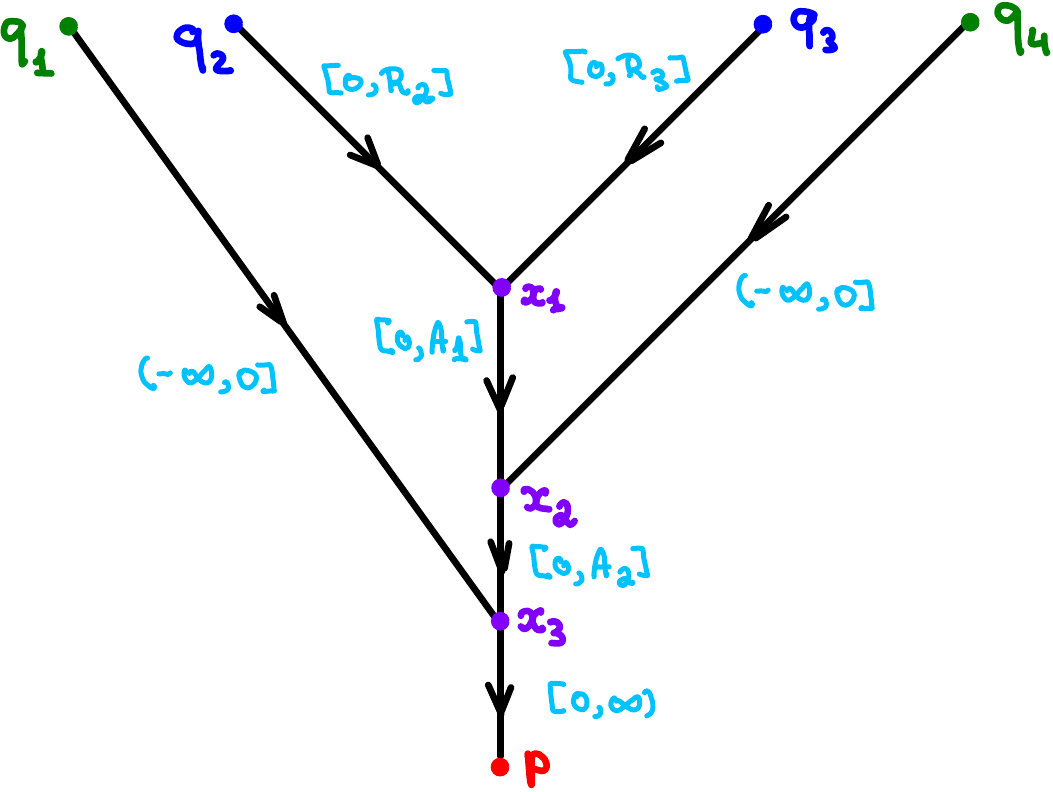}
  \end{center}
  \caption{An admissible tree. The interval labeling are in light
    blue.  The leaves $q_1, q_4$ (in green) are type-E entry leaves,
    and $p$ is a type-E exit leaf. The leaves $q_2, q_3$ (in blue) are
    of type-A. The other vertices, $x_1, x_2, x_3$ (in purple) are
    interior vertices. The overall (cyclic) clockwise order on the
    leaves of this tree is: $q_1, q_2, q_3, q_4, p$.}
  \label{f:trees}
\end{figure}

Having defined admissible trees we now fix once and for all on each
such tree $T$ a collection of {\em orientation preserving}
identifications $\sigma^{T}_e: e \longrightarrow I_e$ between each
edge $e$ and the interval $I_e$ labeling it. Of course, in case the
interval $I_e$ is of the type $[0,\infty)$, $(-\infty, 0]$ or
$(-\infty, \infty)$ (which happens when $e$ is connected to vertices
of the type $E$) then the vertices corresponding to $\pm \infty$ are
only asymptotically identified with $\pm \infty$.  Note that every
underlying tree has (infinitely) many different interval-labelings for
its edges (internal edges can be labeled by $[0,R]$ for different
values of $R>0$), leading to different admissible trees $T$. There is
an obvious parametrization of these different interval-labelings
(basically by choosing the parameter $R$ on each internal edge). We
require the identifications $\sigma^T$ to depend continuously on these
parameters.

We are now in position to introduce clusters of punctured disks.
These are built from a collection
$\mathcal{S} = \{S^{(1)}_{k_1}, \ldots, S^{(l)}_{k_l}\}$, $l \geq 0$,
of punctured disks and a collection of admissible trees
$\mathcal{T} = \{T_1, \ldots, T_r\}$, $r \geq 0$, which are attached
to the punctured disks in $\mathcal{S}$ at their leaves of type A. The
attachment of the trees is done as follows. Let $T \in \mathcal{T}$
and denote its leaves of type-A by $a_1, \ldots, a_{s_T} \in T$. For
each $1 \leq i \leq s_T$ we identify the point $a_i \in T$ with a
point lying on the boundary $\partial S^{(j)}_{k_j}$ of one of the
punctured disks from $\mathcal{S}$. Here $j = j(T,i)$ depends on the
tree $T$ and the index $i$ of the vertex $a_i$ that is being attached.
These attachments are subject to the following rule: each type-A leaf
of a given tree $T \in \mathcal{T}$ is attached to one, and only one,
punctured disk and no two type-A leaves of the same tree
$T \in \mathcal{T}$ are attached to the same punctured disk. There is
no type-A leaf from the trees in $\mathcal{T}$ that is left
unattached. We also require that, among the type-A leaves of all the
trees in $\mathcal{T}$, there are no two leaves that are attached to
the same point on the boundaries of the punctured disks.

We denote the space resulting from the above attachments by
\begin{equation} \label{eq:tilde-S-cluster} \cl =
  \Bigl(\bigcup_{q=1}^l S^{(q)}_{k_q} \Bigr) \cup_{A}
  \Bigl(\bigcup_{p=1}^r T_p\Bigr),
\end{equation}
where $\cup_A$ stands for the attachments described earlier. We will
denote the part of $\cl$ coming from the punctured disks (i.e.~the
leftward union in the right-hand side of~\eqref{eq:tilde-S-cluster})
by $\cl_{\srf}$ and the part coming from the trees (the rightward
union in the right-hand side of~\eqref{eq:tilde-S-cluster}) by
$\cl_{\tr}$.

We now impose further restrictions on the previously described
attachments. Consider the space obtained from $\cl$ by collapsing each
punctured disk $S_{k_q}^{(q)}$ from $\cl_{\srf}$ to a (different)
point:
\begin{equation} \label{eq:tilde-S-collapse} \widetilde{\cl} := \cl
  \Big/ \bigl(S^{(q)}_{k_q} \sim \text{point}_q, \; \forall q \bigr).
\end{equation}
We require that the attachments of the tree described above are done
in such a way that $\widetilde{\cl}$ is path-connected and moreover it
is a tree (hence in particular simply-connected). We do not require
this tree to be admissible.

Going back again to the space $\cl$ we note that it comes with a set
of distinguished points: the punctures of the disks $S^{(q)}_{k_q}$
together with the leaves of the trees $T \in \mathcal{T}$ that are of
type-E. We call these points {\em external points} and denote them by
$\cl_{\extp}$. The total number of external points of $\cl$ will be
called the {\em order} of $\cl$.

The external points $\cl_{\extp}$ are divided into two types: ``entry
points'' and ``exit points'', regardless of a point being a puncture
or a type-E leaf of some tree. We require that $\cl_{\extp}$ has
precisely one exit point (which can be either an exit leaf or an exit
puncture). We also require that $\cl_{\extp}$ has at least one entry
point. Finally, in case there are only two such points (i.e.~one entry
and one exit) we require that either $\cl$ is just a disk punctured at
$2$ points (with no trees attached) or that $\cl$ has no punctured
disks at all and it consists of just a tree with two vertices and one
edge connecting them.

The last requirement on $\cl$ is the following. Consider the tree
$\widetilde{\cl}$ defined in~\eqref{eq:tilde-S-collapse}. Note that by
construction, each edge of this tree is oriented (since the edges of
all $T \in \mathcal{T}$ are oriented). Moreover, this tree has a
distinguished vertex $P_{\text{exit}}$, namely the vertex that
corresponds either to the punctured disk $S_k \subset \cl_{\srf}$ that
contains the unique exit puncture, or to the unique exit leaf that
belongs to one of the trees of $\cl_{\tr}$. We require that the
orientation on the tree $\widetilde{\cl}$ has the property that given
any vertex $p \in \widetilde{\cl}$ there is a path from $p$ to the
distinguished vertex $P_{\text{exit}}$ that is compatible with the
orientation on $\widetilde{\cl}$. Figure~\ref{f:clusters} illustrates
two examples of clusters of punctured disks (the labeling by
Lagrangians $\bar{L}_i$ of the trees and the arcs in the
$\partial S_j$'s that appears in the picture should be ignored at the
moment - these will be explained later on).

\begin{figure}[h]
  \begin{center}
    \includegraphics[scale=0.60]{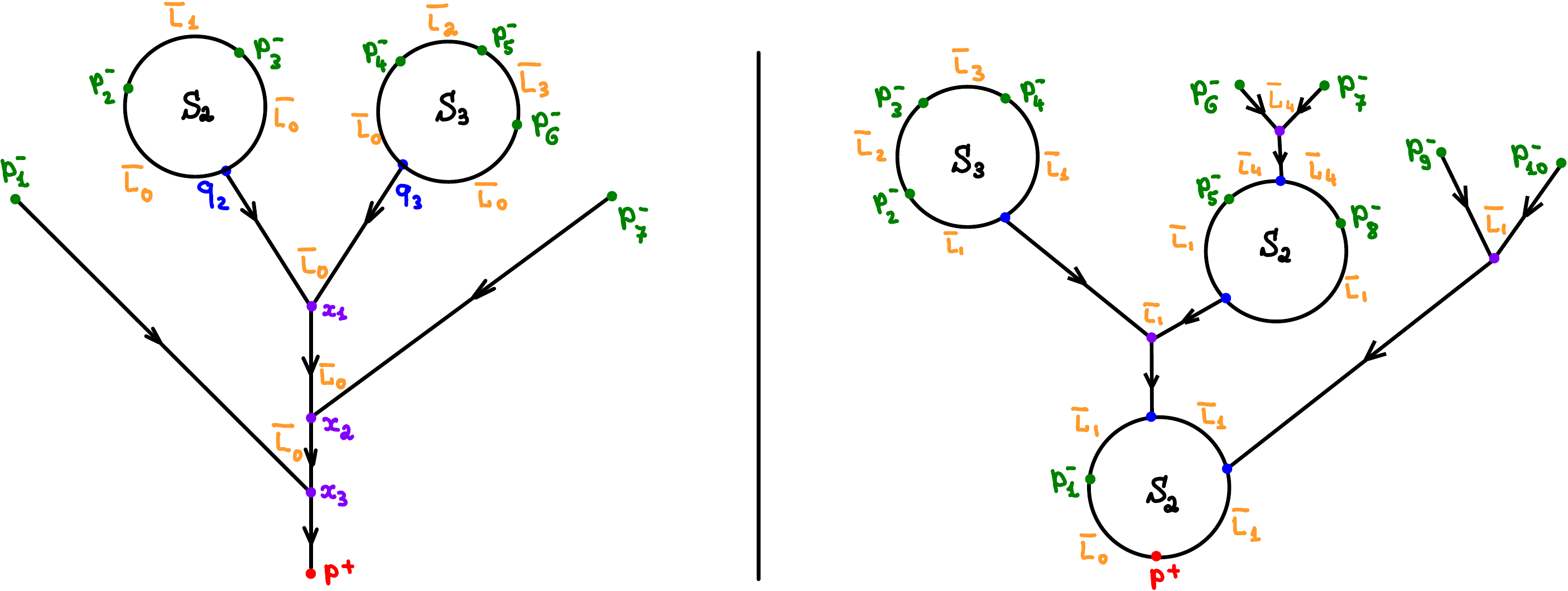}
  \end{center}
  \caption{Two examples of clusters of punctured disks. The interval
    labeling of the edges of the trees are omitted here. The overall
    clockwise cyclic ordering of the external points on the left
    cluster is $(p_1^{-}, \ldots, p_7^{-}, p^+)$ and on the right
    cluster $(p_1^{-}, \ldots, p_{10}^{-}, p^{+})$. The tuples
    describing the labeling by Lagrangians are
    $\bar{\mathcal{L}}_{\cl} = (\bar{L}_0, \bar{L}_0, \bar{L}_1,
    \bar{L}_0, \bar{L}_2, \bar{L}_3, \bar{L}_0, \bar{L}_0)$ for the
    left cluster and
    $\bar{\mathcal{L}}_{\cl} = (\bar{L}_0, \bar{L}_1, \bar{L}_2,
    \bar{L}_3, \bar{L}_1, \bar{L}_4, \bar{L}_4,\bar{L}_4, \bar{L}_1,
    \bar{L}_1, \bar{L}_1)$ for the cluster on the right.}
  \label{f:clusters}
\end{figure}

Let $\cl, \cl'$ be two \zhnote{spaces} obtained as above from two pairs of
collections $\mathcal{S}$, $\mathcal{T}$ and $\mathcal{S}'$,
$\mathcal{T}'$ of punctured disks and trees. We say that $\cl$ and
$\cl'$ are equivalent if there is a homeomorphism
$f: \cl \longrightarrow \cl'$ with the following properties. The map
$f$ maps $\cl_{\srf}$ biholomorphically to $\cl'_{\srf}$ and maps the
trees $\cl_{\tr}$ to $\cl'_{\tr}$ by an isomorphism of trees (i.e.~it
maps vertices to vertices and edges to edges). Moreover, $f$
intertwines all the other structures on $\cl_{\srf}, \cl_{\tr}$ with
those on $\cl'_{S}, \cl'_{\tr}$. This means, in particular, that entry
and exit punctures in $\cl'_{\srf}$ correspond under $f$ to the
punctures of the same type in $\cl_{\srf}$; the same goes for the
orientations on the edges of $\cl_{\tr}$, $\cl'_{\tr}$, the
interval-labeling, the identifications $\sigma_e^T$ and the classes of
planar embeddings of the trees.

An equivalence class of spaces $\cl$ as above (together will all the
structures accompanying it) will be called a {\em cluster of punctured
  disks}. However, we will often use this name also for a specific
representative $\cl$ within a given equivalence class.

For a cluster of punctured disks, say represented by $\cl$, the
orientation on the boundaries of the punctured disks in $\cl_{\srf}$
and the classes of planar embeddings of the trees in $\cl_{\tr}$
induce a preferred clockwise cyclic order on the set of external points
of $\cl$ (recall that the external points consists of the entry and
exit points, regardless of being type-E leaves of the trees or
punctures of the disks). Note that this ordering is preserved by the
homeomorphisms defining the equivalence between different
representatives $\cl$ of the same class.

In what follows it will be convenient to single out clusters of
punctured disks of the following type. A cluster of punctured disks
$\cl$ is called {\em simple} if it consists of a single punctured disk
without any trees attached. \label{pp:simple-cluster}

We turn now to decorated clusters of punctured disks. Let $\cl$ be a
cluster of punctured disks. By decoration of $\cl$ by elements of
$\bar{\mathcal{X}}$ we mean the following. We label each arc in the
boundaries of $\cl_{\srf}$ as well as each edge in the trees of
$\cl_{\tr}$ by an element of $\bar{\mathcal{X}}$. The labeling is
subject to the following restrictions. In each tree from $\cl_{\tr}$
all the edges are labeled by the same $\bar{L} \in \bar{\mathcal{X}}$
(alternatively, one can think of each tree $T \subset \cl_{\tr}$ as
being labeled by an element of $\bar{\mathcal{X}}$). The restriction
on the labeling for the $\cl_{\srf}$-part of $\cl$ is that in each
punctured disk $S_k$ from $\cl_{\srf}$ there are no two consecutive
arcs (i.e.~two arcs with one puncture between them) that are labeled
by the same element form $\bar{\mathcal{X}}$.

Once a cluster of punctured disks $\cl$ is decorated by elements of
$\bar{\mathcal{X}}$ we can form a tuple
$\bar{\mathcal{L}}_{\cl} = (\bar{L}_0, \ldots, \bar{L}_d)$ that
encodes its decoration, where $d+1 = \mid \cl_{\extp} \mid$ is the
order of $\cl$. The definition of $\bar{\mathcal{L}}_{\cl}$ goes as
follows. Denote by $p^{+}, p_1^{-}, \ldots, p_d^{-}$ the external
points of $\cl$, ordered as explained earlier, where $p^{+}$ is the
unique exit point and the $p^{-}_j$'s are all entry points. If
$p^{-}_j$ is a puncture of one of the disks $S_k \subset \cl_{\srf}$
we take $\bar{L}_j$ to be the Lagrangian that labels the arc on
$\partial S_k$ coming after the puncture (where ``after'' refers to
the clockwise orientation on $\partial S_k$). If the entry $p^{-}_j$
is a leaf of one of the trees $T \subset \cl_{\tr}$ then we take
$\bar{L}_j$ to be the Lagrangian that labels that tree. We define
$\bar{L}_0$ in the same \zjr{way}, according to whether $p^{+}$ is a
puncture or a leaf. Figure~\ref{f:clusters} shows two examples of
decorated clusters of punctured disks.

We will now reverse in some sense the decoration construction. Namely,
we fix a tuple $\bar{\mathcal{L}} = (\bar{L}_0, \ldots, \bar{L}_d)$ of
Lagrangians from $\bar{\mathcal{X}}$, and consider the space
$\clus(\bar{\mathcal{L}})$ of all possible decorated clusters of
punctured disks $\cl$ with
$\bar{\mathcal{L}}_{\cl} = \bar{\mathcal{L}}$. We call the elements of
this space {\em $\bar{\mathcal{L}}$-decorated clusters of punctured
  disks}. As before, the element of this space are equivalence classes
of the spaces $\cl$, rather than the spaces $\cl$ themselves. But it
will often be convenient to work with an actual representatives $\cl$
of a given class.

Clearly every decorated cluster $\cl$ belongs to a unique space
$\clus(\bar{\mathcal{L}})$ since the tuple $\bar{\mathcal{L}}_{\cl}$
is uniquely defined by $\cl$.
\begin{remark} \label{r:decorations} Let
  $\bar{\mathcal{L}} = (\bar{L}_0, \ldots, \bar{L}_d)$ be a tuple of
  Lagrangians from $\bar{\mathcal{X}}$.
  \begin{enumerate} 
  \item \label{i:Li-neq-Lj} If $\bar{\mathcal{L}}$ has the property
    that $\bar{L}_i \neq \bar{L}_j$ for every $i \neq j$, then every
    cluster of punctured disks $\cl$ that admit an
    $\bar{\mathcal{L}}$-decoration must be simple.
  \item The converse statement to point~(\ref{i:Li-neq-Lj}) above is
    obviously not true whenever $d \geq 3$. Namely, one can decorate a
    simple cluster of punctured disks $\cl$ by a tuple
    $\bar{\mathcal{L}}$ whose entries do have repetitions. However in
    such a case we must have: $\bar{L}_i \neq \bar{L}_{i+1}$ for every
    $0 \leq i \leq d$ (where the indexing is to be understood
    cyclically $\bmod(d+1)$).
  \item A tuple $\bar{\mathcal{L}}$ with
    $\bar{L}_i \neq \bar{L}_{i+1}$ for every $0 \leq i \leq d$ can
    decorate also non-simple clusters of punctured disks. However, if
    a non-simple $\cl$ is decorated by such an $\bar{\mathcal{L}}$
    then none of the trees in $\cl_{\tr}$ can have external leaves
    (which means that each leaf in $\cl_{\tr}$ is attached to some
    punctured disk in $\cl_{\srf}$).
  \end{enumerate}
\end{remark}



\subsection{Splitting and degeneration} \label{sbsb:splitting}
Given a tuple $\bar{\mathcal{L}} = (\bar{L}_0, \ldots, \bar{L}_d)$ of
Lagrangians from $\bar{\mathcal{X}}$, the space
$\clus(\bar{\mathcal{L}})$ of $\bar{\mathcal{L}}$-decorated clusters
of punctured disks has the structure of a smooth manifold, analogous
to the space of punctured disks from~\cite[Chapter~2,
Section~9]{Seidel}. This manifold admits a natural
partial compactification into a manifold with corners. The top
dimensional strata of its boundary correspond to several types of
degenerations of clusters of punctured disks which we briefly describe
below. Note that adding this boundary to $\clus(\bar{\mathcal{L}})$
will still not make a full compactification of this space (hence the
use of the words ``partial compactification''), however it will be
enough for the purpose of establishing the $A_{\infty}$-category
identities. Below we will call those degenerations that lead to
elements of this boundary ``admissible degenerations'' and their
limiting objects ``admissible degenerate clusters''.

\smallskip
\paragraph{{\em Splitting within punctured disks.}}
We begin with describing two variants of a degeneration that can occur
to one punctured disk moving in a family. A family of punctured disks
$S^t_k$ (here $k$ is the number of punctures and $t \in \mathbb{R}$ is
parametrizing the family) can degenerate (or split) into two punctured
disks $S'_{k'}$ and $S'_{k''}$, where $k'+k'' = k+2$.  The first
punctured disk $S'_{k'}$ ``inherits'' $k'-1$ of the entry punctures of
$S$ (placed in the same clockwise order as in $S$) and has one
additional exit puncture $z'_+$. The other component, $S''_{k''}$,
``inherits'' all the other $k-(k'-1) = k''-1$ punctures of $S$ (again,
in the same clockwise order) and has one additional entry puncture
$z''_{-}$. Note that at the moment we do allow $k'$ or $k''$ to have
the values $1$ or $2$. This is in contrast to the more standard
realizations of the Fukaya category, where each of the two disks in a
splitting are required to have at least $3$ punctures. However later
on, when viewing these disks as part of degenerate cluster more
restriction will be added in order to make such a degenerate
configuration an admissible one.

Reversely, the two punctured disks $S'_{k'}$ and $S''_{k''}$ can be
glued along the punctures $z'_{+}$, $z''_{-}$ into a family of
punctured disks $S^t_k$.

Depending on the context, in what follows we will sometimes view the
preceding degeneration differently. Namely, regard the two punctures
$z'_{+}$ , $z''_{-}$ as ``removable'' and view the degeneration of the
family $S^t_k$ as a splitting into two punctured disks $S'_{l'}$ and
$S''_{l''}$ attached one to the other at a point (which is not a
puncture) on their boundaries. Note that now we have $l'+l''=k$, and
similarly to the preceding case, we do allow $l'$ or $l''$ to have the
values $1$ and $2$.

Conversely, as before, the two punctured disks $S'_{l'}$ and
$S''_{l''}$ can be glued into a family of punctured disks $S^t_k$.

Analytically the two variants described above are the same, however
when taking decorations into account it is important to distinguish
between them. More precisely, if the punctured disks $S^t_k$ are
decorated by the Lagrangians $(\bar{N}_0, \ldots, \bar{N}_{k-1})$ the
first variant of splitting corresponds to two punctured disks with
decorations $(\bar{N}_r, \ldots, \bar{N}_s)$ and
$(\bar{N}_0, \ldots, \bar{N}_r, \bar{N}_s, \ldots, \bar{N}_{k-1})$,
where $1 \leq r < s \leq k-1$, $\bar{N}_r \neq \bar{N}_s$,
$s-r+1 = k'$, $k'' = k-(s-r)+1$.

The second variant (i.e.~where $S^t_k$ degenerates into $S'_{l'}$ and
$S''_{l''}$ attached at a common point, which is not a puncture, along
their boundaries) corresponds to the case when the decoration
$(\bar{N}_0, \ldots, \bar{N}_{k-1})$ has $\bar{N}_r = \bar{N}_s$ for
some non-consecutive indices, $r<s-1$, and the splitting yields the
decorations $(\bar{N}_r, \ldots, \bar{N}_{s-1})$ and
$(\bar{N}_0, \ldots, \bar{N}_r, \bar{N}_{s+1}, \ldots, \bar{N}_{k-1})$
on $S'_{l'}$ and $S''_{l''}$.

The two variants of splitting are illustrated in
Figure~\ref{f:splitting}.

\begin{figure}[h]
  \begin{center}
    \includegraphics[scale=0.60]{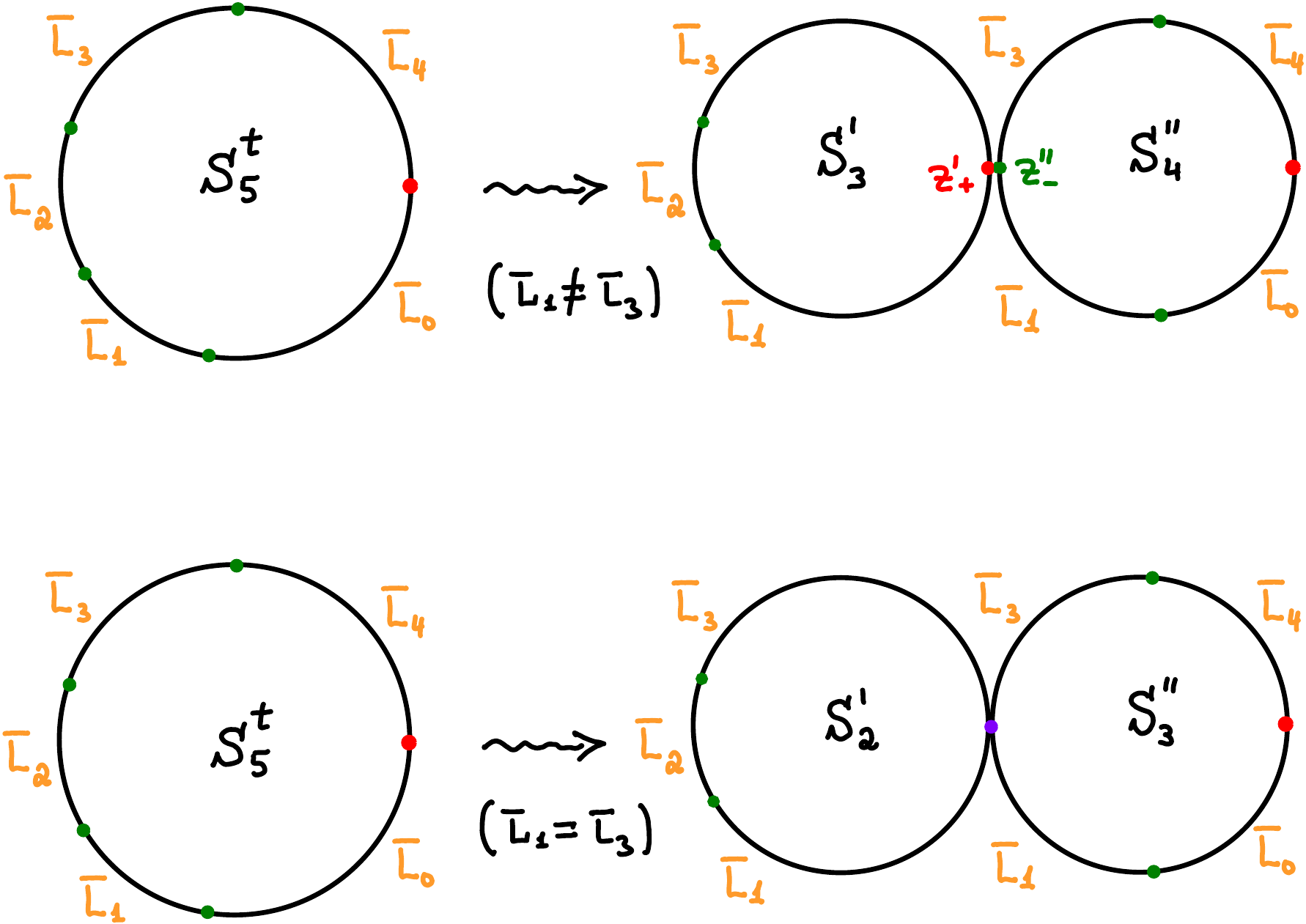}
  \end{center}
  \caption{Two variants of splitting of decorated punctured disks. The
    red and green points $z'_{+}$, $z''_{-}$ on the right-hand side of
    the upper figure $z'_{+}$, $z''_{-}$ are new punctures. The purple
    point on the right-hand side of the lower figure, where the two
    disks are attached, is {\em not} a puncture.}
  \label{f:splitting}
\end{figure}

We now turn to clusters of punctured disks and describe their {\em
  admissible degenerations}. Fix a tuple
$\bar{\mathcal{L}} = (\bar{L}_0, \ldots, \bar{L}_d)$ of Lagrangians. A
family $\Sigma^t$ of $\bar{\mathcal{L}}$-decorated clusters of
punctured disks can converge to a decorated degenerate cluster
$\Sigma^{\infty}$ of punctured disks (which strictly speaking, by our
definitions, might not be a genuine cluster of punctured disks). The
degeneration of $\Sigma^t$ into $\Sigma^{\infty}$ can be of several
types. The first type is when one (or more) of the punctured disks in
the clusters $\Sigma^t$ degenerates in the way described
earlier. Depending on the decoration $\bar{\mathcal{L}}$ one of the
two variants mentioned above, or both, can occur. There is one slight
exception to this rule. Namely, in both of the variants described
above we view the degeneration admissible only if each of the two
punctured disks formed by the splitting contain at least three
distinguished points. Here, by a distinguished point we mean either a
puncture, or a point attached to a tree, or (in case of the 2'nd
variant) the point of attachment to the other punctured disk in the
degenerate cluster.

Below we will describe another four types of admissible
degenerations. Before we go into this, a quick remark about the
decorations of the limit $\Sigma^{\infty}$ is in order.

Our conventions for decorations require the cluster to have an exit
point (according to which we label the first entry in the
decoration). However, the first variant of the degenerations described
above yields two punctured disks $S'$, $S''$, where one of them has a
(new) exit point and one of them has a (new) entry point. The apparent
problem is that one of these punctured disks might not have any exit
point, hence there might be an ambiguity regarding the order in which
we write its decoration. However, this ambiguity is fixed if we use
the following conventions. The limit $\Sigma^{\infty}$ is divided into
two components: the one that contains $S'$ and the one that contains
$S''$. The decorations are uniquely defined once we require that the
exit point of the limit $\Sigma^{\infty}$ corresponds to the $S''$
part. A similar thing applies also to clusters in which one of the
punctured disks degenerates according to the second variant described
earlier.

We now proceed to describe four additional types of admissible
degenerations.  \smallskip
\paragraph{{\em Splitting within trees.}}
Apart from degeneration of punctured disks in a cluster, there are yet
several other types of degeneration that can occur within a family
$\Sigma^t$ of clusters of punctured disks. Part of these has to do
with degeneration of trees $\Sigma^t_{\tr}$ of $\Sigma^t$ and another
part is related to how these trees are attached to
$\Sigma^t_{\srf}$.

\smallskip
\paragraph{{\em Shrinking of edges to a point.}}
The first type of degeneration within trees is when an interior edge
in one of the trees of $\Sigma^t_{\tr}$ shrinks to a point (this means
that also its interval labeling and parametrization shrink to a point
and a constant, respectively). The limit tree will now have one vertex
less and will inherit from $\cl^t_{\tr}$ all the other structures
(like the labeling of the other edges, the class of planar embedding
etc.). See Figure~\ref{f:shrinking}. Note however that the limit tree
will not be admissible (e.g. it might have vertices of valency $4$, or
a leaf of type-A that becomes identified with an interior vertex).

\begin{figure}[h]
  \begin{center}
    \includegraphics[scale=0.50]{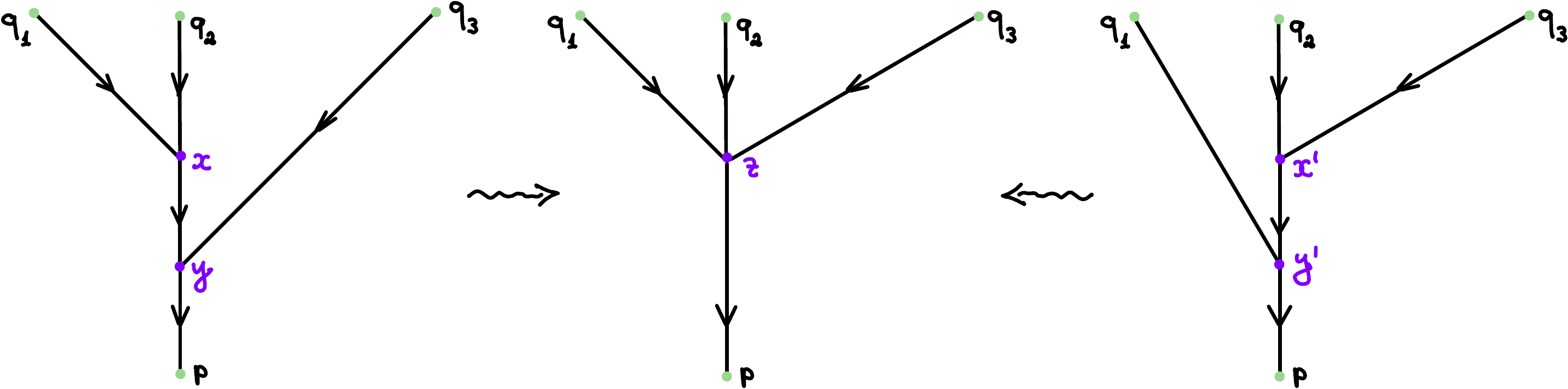} \ \\ \ \\
    \includegraphics[scale=0.70]{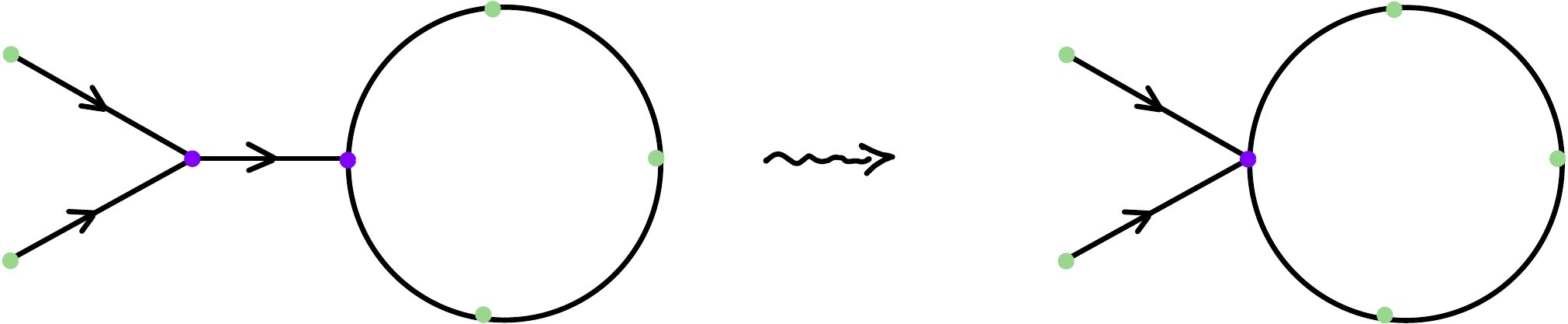} \ \\ \ \\
    \includegraphics[scale=0.55]{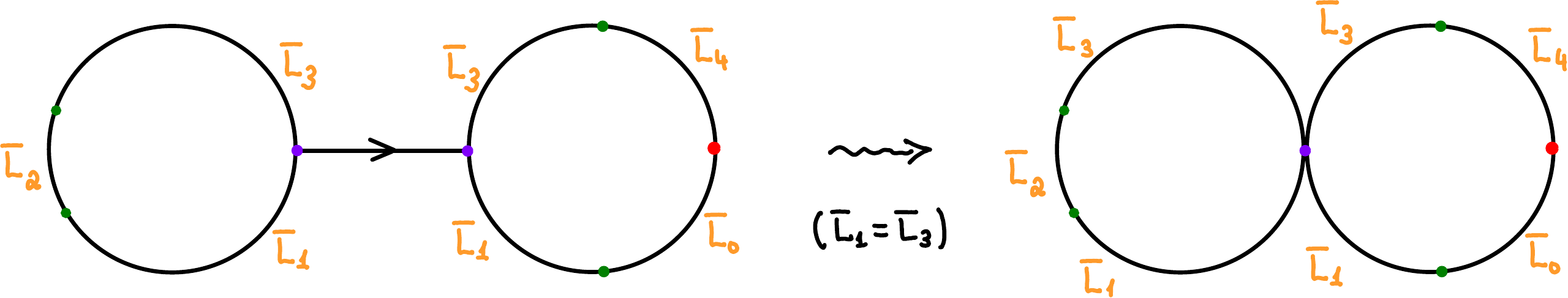}
  \end{center}
  \caption{Examples of degenerations where an edge of a tree shrinks
    to a point, leading to a limit (inadmissible) tree, possibly being
    part of a degenerate cluster.}
  \label{f:shrinking}
\end{figure}

\smallskip
\paragraph{{\em Edge breaking.}}
Another type of degeneration is when one of the interior edges $e$ in
a family of trees $T^t \subset \cl^t$ becomes of infinite length. We
view the limit of the $T^t$'s as a broken tree which consists of two
trees $T^{\infty}_1, T^{\infty}_2$. These two trees are obtained from
$T^{\infty}$ as follows. We delete the (interior of the) edge $e$ from
$T^t$ and obtain two connected components: the part of $T^t$ that
appears ``before'' the edge $e$ and the part that appears after that
edge (here ``before'' and ``after'' refer to the orientation on
$T^t$). Denote by $p_1$ the end-vertex of the first component
(i.e.~the entry vertex to the edge $e$) and by $p_2$ the new entry
vertex of second component (which corresponds to the exit vertex of
$e$). We now take the first component and add to it a new edge $e_1$
emanating from $p_1$. The result is the tree $T^{\infty}_1$. The other
vertex $q_1$ of $e_1$ will now be a type-E leaf of $T_1^{\infty}$ and
we regard it as an exit leaf. We label the edge $e_1$ by $[0,\infty)$,
and the rest of the edges are labeled by the limiting labels of $T^t$
as $t \longrightarrow \infty$. The definition of $T_2^{\infty}$ is
similar, only that now we add a new edge $e_2$ to the second component
(i.e.~the one coming ``after'' the deleted edge $e$) attached at
$p_2$. The resulting tree is $T_2^{\infty}$. The new vertex (which is
the entry to $e_2$) will be a type-E leaf of $T_2^{\infty}$. The edge
$e_2$ is labeled by $(-\infty, 0]$ and the rest of the edges are
labeled by the limiting labels of $T^t$ as $t \longrightarrow \infty$.
We refer to $T^{\infty}$ as a ``broken'' tree with components
$T_1^{\infty}$ and $T_2^{\infty}$. See Figure~\ref{f:breaking-1}.

We add the following restriction on edge breaking degenerations.  A
degeneration as described above is considered admissible only if none
of the trees $T_1^{\infty}$, $T_2^{\infty}$ is a tree with two
vertices both of which are type-E leaves, connected by one edge. All
other edge breaking degenerations are considered admissible.

\begin{figure}[h]
  \begin{center}
    \includegraphics[scale=0.60]{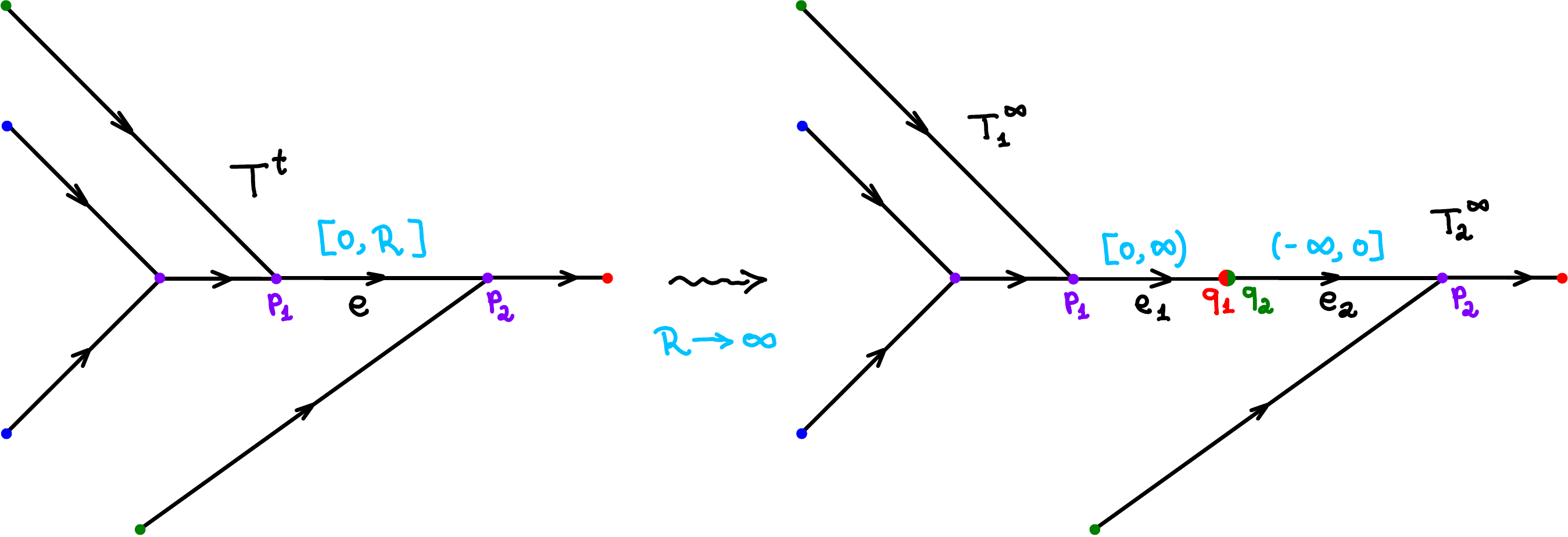}
  \end{center}
  \caption{Breaking along an edge of a tree, leading to a broken tree
    with two components.}
  \label{f:breaking-1}
\end{figure}

\smallskip \paragraph{{\em Collision of type-A leaves.}}  The last
type of admissible degeneration is when two type-A leaves (belonging
to two different trees) that lie on the boundary of the same punctured
disk $S_k \subset \cl_{\srf}$ collide. This means that two trees
$T', T'' \subset \cl_{\tr}$ are grafted (or joined) at two of their
type-A exit leaves. See Figure~\ref{f:collision}.

\begin{figure}[h]
  \begin{center}
    \includegraphics[scale=0.60]{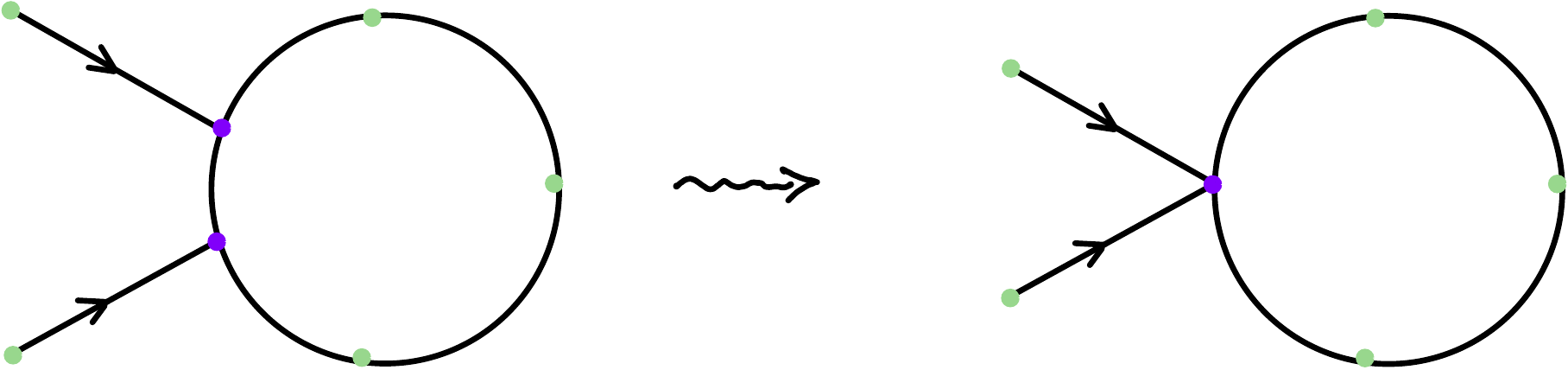}
  \end{center}
  \caption{Collision of points on $\partial S_k$ to which two
    different trees are attached.}
  \label{f:collision}
\end{figure}

\begin{rem} \label{r:degeneration}
  \begin{enumerate}
  \item The boundary of the compactification of the space
    $\clus(\bar{\mathcal{L}})$ can be described by the types of
    degeneration described above. The points of the top dimensional
    stratum of the boundary correspond to precisely one occurrence of
    degeneration as above. Of course, several instances of
    degeneration can occur simultaneously, but these instances
    correspond to the lower dimensional strata of the boundary of
    $\clus(\bar{\mathcal{L}})$. In particular, within generic
    $1$-dimensional families of cluster of punctured disks only one
    degeneration can occur at a given time.
  \item The converse to ``splitting and degeneration'' goes by the
    name gluing. Every degenerate configuration among the ones
    described above can be obtained as a limit of a family of clusters
    of punctured disks.
  \item Some of the limit configurations described above can occur as
    a result of {\em two different} degenerations. For example,
    collision of two type-A leaves of trees (along an arc of one of
    the punctured disks) leads to a configuration which is also the
    limit of the another family of clusters in which one type-A leaf
    of a tree shrinks to a point. See Figure~\ref{f:collision} versus
    the middle part of Figure~\ref{f:shrinking}.

    Similarly, the second variant of splitting withing a punctured
    disk (which leads to two punctured disks connected at
    ``non-puncture'' point along their boundaries) occurs also as a
    limit of clusters in which two punctured disks are connected by a
    tree with one edge and that edge shrinks to a point. See the lower
    part of Figure~\ref{f:splitting} versus the lower part of
    Figure~\ref{f:shrinking}.
    
    The same thing happens with shrinking of interior edges of
    trees. Namely, each of the (inadmissible) trees that occur after
    the shrinking of interior edges appears as a limit of a different
    family of trees in which an edge shrank to a point.  See the left
    and right-hand sides of the upper part of
    Figure~\ref{f:shrinking}.

    The fact that some limit configurations appear in pairs on the
    total boundary of the compactification of the $\clus$ spaces is
    important for showing that the $\mu_d$-operations in
    $\fuk(\bar{\mathcal{X}})$, as will be defined later, satisfy the
    $A_{\infty}$-identities. Indeed, when one considers $1$-parametric
    families of clusters of Floer polygons, with fixed entries and
    exit, some of the boundary points (that correspond to degeneration
    of the underlying clusters of punctured disks as described above)
    will appear in pairs and thus can be regarded as ``interior''
    points inside extended families of clusters.
  \item Recall that the boundary of the partial compactification of
    $\clus(\bar{\mathcal{L}})$ contains only admissible degenerate
    clusters. In particular, in the case of splitting of a punctured
    disks we required that the number of distinguished points on each
    component is $\geq 3$. Of course, a splitting in which one of the
    disks has only two distinguished points can occur. The reason we
    do not add such a configuration to the boundary
    $\clus(\bar{\mathcal{L}})$ is that disks with two marked points
    have a non-compact $1$-dimensional group of automorphisms
    (isomorphic to $\mathbb{R}$). This is referred to in the
    literature as an unstable marked curve.  The situation with the
    other inadmissible degenerate clusters, namely a broken tree with
    one component being a tree with one edge connecting between two
    type-E leaves, is similar. The latter component has an
    $\mathbb{R}$-action (acting by translation on the identifications
    between $(-\infty, \infty)$ and the edge of this tree).

    In practice, not including these unstable configurations to the
    boundary of $\clus(\bar{\mathcal{L}})$ will not cause any problems
    in showing that the $\mu_d$-operations satisfy the
    $A_{\infty}$-identities. The standard way to go about it in Floer
    and Morse theory is to compactify the space of clusters of {\em
      Floer polygons} in such a away that degenerations that
    correspond to the above unstable configurations are taken into
    account in the boundary of the latter spaces rather then in
    $\partial \clus(\bar{\mathcal{L}})$. In terms of the
    $A_{\infty}$-identities, these degenerations will contribute the
    terms in the identities that include $\mu_1$'s.
  \end{enumerate}
\end{rem}

\subsection{Perturbation data} \label{sbsb:prtdata} We assume that
Floer data has been chosen for every pair $L_0, L_1 \in \mathcal{X}$,
as described at the beginning of~\S\ref{sb:ffuk} on
page~\pageref{p:floer-data-1}. The perturbation data for a decorated
simple cluster (i.e.~a cluster consisting of precisely one punctured
disk and no trees) $S_{k}$ is of the same type as in the standard
theory, namely it consist of pairs $(K, J)$, where $K$ is a $1$-form
on $S_{k}$ with values in the space of compactly supported Hamiltonian
functions on the ambient manifold $X$. This $1$-form is assumed to be
compatible with the Floer data on each strip-like end of $S_{k}$ in
the sense that on these ends we have $K \equiv 0$. (Note that we are
dealing here with the case of one punctured disk without trees, which
means that the decoration $\bar{\mathcal{L}}$ is such that every two
consecutive Lagrangians in $\mathcal{L}$ have mutually transverse
underlying Lagrangians. Recall also that for pairs of distinct
underlying Lagrangians we have already set up the Floer data in
advance to have $0$ Hamiltonian terms.)  Moreover, we require $K$ to
have compact support inside the interior of $S_{k}$. The second
component of the Floer data is a family of $\omega$-compatible almost
complex structures $J = \{J_{z}\}$, that depends on $z \in S_{k}$ and
coincides on each strip-like end with the almost complex structures
chosen for the corresponding Floer data.

We now describe the perturbation data in the case of general decorated
clusters of punctured disks $\cl$. The perturbation data in this case
consists of two pieces of data. The first one is a choice of
perturbation data $(K,J)$ on each (decorated) punctured disk
$S_k \subset \cl_{\srf}$. The second one is a choice of Morse data on
each tree $T \subset \cl_{\tr}$ of the cluster. Recall that each such
tree $T$ corresponds to an underlying Lagrangian $\bar{L}$ that
appears in the decoration $\mathcal{L}$. Recall also that on each
Recall also that each edge $e$ of $T$ is parametrized by an interval
$I_e \subset \mathbb{R}$ (where the intervals for the internal edges
are closed of finite length and the ones corresponding to the edges
that touch the type-E leaves are semi-infinite). The Morse data for
$T$ is a choice of a family $(f_{\tau}, (\cdot, \cdot)_{\tau})$,
$\tau \in I_e$, for every edge $e$ of the tree, where for each $\tau$,
$f_{\tau} : \bar{L} \longrightarrow \mathbb{R}$ is a smooth function
and $(\cdot, \cdot)_{\tau}$ a Riemannian metric on $\bar{L}$. Here
$\bar{L}$ is the underlying Lagrangian corresponding to the tree $T$.
Moreover, we require that along the {\em ends} of the external edges
$e$ of $T$ (i.e.~the edges connected to the type-E leaves), the pair
$(f_{\tau}, (\cdot, \cdot)_{\tau})$ coincides with the Morse data
$(f_{\bar{L}}, (\cdot, \cdot)_{\bar{L}})$ associated to $\bar{L}$ that
has been fixed in advance. For example, if $e$ is an edge connected to
an entry leaf, then $I_e = (-\infty, 0]$, and the requirement is that
$(f_{\tau}, (\cdot, \cdot)_{\tau}) = (f_{\bar{L}}, (\cdot,
\cdot)_{\bar{L}})$ for $\tau \ll 0$. A similar choice of data is made
also in case there is an exit leaf (which is the case when
$\bar{L}_0 = \bar{L}_d$) only that now the edge $e$ connected to the
exit leaf is labeled by $I_e = [0, \infty)$.

There is only one slight exception to the above, namely when
$\bar{\mathcal{L}} = (\bar{L}, \bar{L})$. In this case the whole
cluster consists of only one tree (and no punctured disks). This tree
has two vertices and one edge $e$ connecting them, which is modeled on
the interval $I_e = (-\infty, \infty)$. The choice of Morse data here
will be the same Morse data $(f_{\bar{L}}, (\cdot, \cdot))$ chosen in
advance for $\bar{L}$ and it is required to be independent of the
parameter $\tau \in I_{e}$.

For every tuple $\bar{\mathcal{L}}$ we make a continuous choice of
perturbation data for all the clusters of punctured disks that are
parametrized by $\clus(\bar{\mathcal{L}})$. We denote such a choice by
$\mathscr{P}_{\bar{\mathcal{L}}}$ and denote by
$\mathscr{P} = \{\mathscr{P}_{\bar{\mathcal{L}}}\}$ the collection of
choices $\mathscr{P}_{\bar{\mathcal{L}}}$ made for all tuples
$\bar{\mathcal{L}}$ of any length. We refer to $\mathscr{P}$ as a
choice of perturbation data.

These choices of $\mathscr{P}$ \zjr{are} subject to being consistent with the
splitting, degeneration and gluing described
in~\S\ref{sbsb:splitting}. This is crucial in order to establish the
$A_{\infty}$-identities among the $\mu_d$-operations that will be
described next.

\subsubsection{The $\mu_d$-operations} \label{sbsb:mu-d} We now
proceed to the $A_{\infty}$-operations, taking also filtrations into
account. The definition of $\mu_d$, $d \geq 2$, is based on cluster of
Floer polygons which we now describe.

Let $\bar{\mathcal{L}} = (\bar{L}_0, \ldots, \bar{L}_d)$ be a tuple of
Lagrangians in $\bar{\mathcal{X}}$. An $\bar{\mathcal{L}}$-decorated
cluster of Floer polygons is a map $u: \cl \longrightarrow X$, whose
domain is an $\bar{\mathcal{L}}$-decorated cluster of punctured disks
$\cl$. The restriction $u|_{S_{k}}$ of $u$ to any of the punctured
disks $S_k \subset \cl_{\srf}$ is a Floer polygon, exactly as in the
standard theory of Fukaya categories~\cite[Chapter~2,
Section~9]{Seidel}. Namely, $u|_{S_{k}}$ satisfies the
(generalized) Floer equation associated to the perturbation data that
$\mathscr{P}_{\bar{\mathcal{L}}}$ assigns to $S_k$. The map $u|_{S_k}$
satisfies Lagrangian boundary conditions prescribed by the
decoration. The punctures of $S_k$ are sent by $u$ to intersection
points of pairs of Lagrangians, as prescribed by the decoration. In
addition, we assume that the energy $E(u|_{S_k})$ of $u|_{S_k}$ is
finite.

Next, the restriction $u|_{T}$ of $u$ to any of the trees
$T \subset \cl_{tr}$ should satisfy the {\em negative} gradient
equations corresponding to the Morse data specified along the edges of
the trees. The interval labeling $I_e$ on the edges $e$ of $T$ and the
identifications $\sigma_e^T: e \longrightarrow I_e$ are used in order
to endow each interval with a ``time-parameter'' for the negative
gradient trajectories. Finally, the type-E leaves of each tree $T$
from $\mathcal{T}$ are mapped by $u$ to critical points of the
functions $f_{\bar{L}}$, where $\bar{L}$ is the Lagrangian decorating
the tree $T$.

Given the choices of Floer data and perturbation data, the definition
of the $\mu_d$-operations, $d \geq 2$, is now done by counting
decorated clusters of Floer polygons with specified boundary
conditions and given entry/exit points. Specifically, let
$\mathcal{L} = (L_0, \ldots, L_d)$, $d \geq 1$, be a tuple of
Lagrangians from $\mathcal{X}$ and denote by
$\bar{\mathcal{L}} = (\bar{L}_0, \ldots, \bar{L}_d)$ the corresponding
tuple of underlying Lagrangians. Define
\begin{equation} \label{eq:mu-d}
  \begin{aligned}
    & \mu_d: CF(L_0, L_1) \otimes \cdots
      \otimes CF(L_{d-1}, L_d) \longrightarrow CF(L_0, L_d), \\
    & \mu_d(x_1, \ldots, x_d) := \sum_{y} \# \mathcal{M}(x_1,
      \ldots, x_d, y; \mathscr{P}) y.
  \end{aligned}
\end{equation}
Here we have abbreviated
$CF(L', L'') := CF(L', L''; \mathscr{D}_{L', L''})$ for any
$L', L'' \in \mathcal{X}$. The sum in the second line
of~\eqref{eq:mu-d} goes as follows: $y$ runs over all the generators
of $CF(L_0, L_d)$ of appropriate degree and
$\# \mathcal{M}(x_1, \ldots, x_d, y; \mathscr{P})$ stands for the
count (with values in $\mathbb{Z}_2$, or under \zhnote{additional} assumption in
$\k$) of the number of elements in the $0$-dimensional component of
the space $\mathcal{M}(x_1, \ldots, x_d, y; \mathscr{P})$ of
$\bar{\mathcal{L}}$-decorated clusters of Floer polygons with entry
points $x_1, \ldots, x_d$ and exit point $y$.

We denote by $\fuk(\mathcal{X}; \mathscr{P})$ the collection of
objects $\mathcal{X}$ together with the multilinear operations
$\mu_d$, $d \geq 1$, associated to the Floer and perturbation data
$\mathscr{P}$.  Notice that the perturbation data depends only on the
geometric part of the marked Lagrangians, \zhnote{namely $\bar{\mathcal{X}}$}.

\begin{thm} \label{t:fil-fuk} For every finite collection of
  Lagrangians $\bar{\mathcal{X}}$, satisfying the conditions from the
  beginning of~\S\ref{sb:faitpc}, there \zjr{exists} a (non-empty) space
  $\mathcal{B}(\bar{\mathcal{X}})$ of regular Floer and perturbation
  data, of the types described above, such that for every
  $\mathscr{P} \in \mathcal{B}(\bar{\mathcal{X}})$ the following
  holds:
  \begin{enumerate}
  \item $\fuk(\mathcal{X}; \mathscr{P})$, with the above
    $\mu_d$-operations, $d \geq 1$, is a strictly unital
    $A_{\infty}$-category. Moreover, with the filtrations defined
    above in~\S\ref{sbsb:fdata}, this $A_{\infty}$-category is
    genuinely filtered.
  \item If one forgets the filtrations, then
    $\fuk(\mathcal{X}; \mathscr{P})$ as defined above is
    quasi-equivalent to the subcategory of the standard Fukaya
    category (e.g.~as defined in~\cite{Seidel}) whose
    collection of objects is $\mathcal{X}$. \zhnote{This quasi-equivalence can be assumed to be the identity map on the set of objects $\mathcal{X}$.}
  \end{enumerate}
  Moreover, there exist filtered $A_{\infty}$-functors 
  $$\mathcal{F}^{\mathscr{P}_1, \mathscr{P}_0} : \fuk(\mathcal{X};
  \mathscr{P}_0) \longrightarrow \fuk(\mathcal{X}; \mathscr{P}_1),$$
  defined for every
  $\mathscr{P}_0, \mathscr{P}_1 \in \mathcal{B}(\bar{\mathcal{X}})$,
  with the following properties:
  \begin{enumerate}
  \item $\mathcal{F}^{\mathscr{P}_1, \mathscr{P}_0}$ is $A_1$-unital
    (see~\S\ref{sbsb:coh-sys} for the definition).
  \item $\mathcal{F}^{\mathscr{P}_1, \mathscr{P}_0}$ is a filtered
    quasi-equivalence.
  \item The action of $\mathcal{F}^{\mathscr{P}_1, \mathscr{P}_0}$ on
    objects is the identity map. (Recall that all the categories
    $\fuk(\mathcal{X}; \mathscr{P})$ have the same set of objects
    $\mathcal{X}$.)
  \item For every $L', L'' \in \mathcal{X}$, the maps
    $(\mathcal{F}_1^{\mathscr{P}_1, \mathscr{P}_0})_*: HF(L', L'';
    \mathscr{P}_0) \longrightarrow HF(L', L''; \mathscr{P}_1)$ induced
    by the first order components of
    $\mathcal{F}^{\mathscr{P}_1, \mathscr{P}_0}$ are the canonical
    continuation isomorphisms in Floer theory.
  \item $\mathcal{F}^{\mathscr{P}, \mathscr{P}} = \id$.
  \item
    $\mathcal{F}^{\mathscr{P}_2, \mathscr{P}_1} \circ
    \mathcal{F}^{\mathscr{P}_1, \mathscr{P}_0}$ is isomorphic to
    $\mathcal{F}^{\mathscr{P}_2, \mathscr{P}_0}$ in
    \zhnote{$H^0(\text{{ffun}}(\fuk(\mathcal{X}; \mathscr{P}_0),
    \fuk(\mathcal{X}; \mathscr{P}_2)))_0$}. Here,
    $\text{{ffun}}(\fuk(\mathcal{X}; \mathscr{P}_0), \fuk(\mathcal{X};
    \mathscr{P}_2))$ is the category of filtered $A_{\infty}$-functors
    from $\fuk(\mathcal{X}; \mathscr{P}_0)$ to
    $\fuk(\mathcal{X}; \mathscr{P}_2)$, $H^0(\text{{ffun}} (\cdots))$
    is the persistence homological category of $\text{{ffun}}$ in
    cohomological degree $0$, and $H^0(\text{{ffun}}(\cdots))_0$ is
    its \zhnote{$0$-level persistence subcategory}. In other words, there
    exists an $A_{\infty}$-natural transformation
    $T^{\mathscr{P}_2, \mathscr{P}_1, \mathscr{P}_0}:
    \mathcal{F}^{\mathscr{P}_2, \mathscr{P}_1} \circ
    \mathcal{F}^{\mathscr{P}_1, \mathscr{P}_0} \longrightarrow
    \mathcal{F}^{\mathscr{P}_2, \mathscr{P}_0}$ which preserves
    filtrations and is an isomorphism in the homological persistence
    category of filtered functors. Note that, in particular, this
    implies that
    $\mathcal{F}^{\mathscr{P}_0, \mathscr{P}_1} \circ
    \mathcal{F}^{\mathscr{P}_1, \mathscr{P}_0}$ is isomorphic to
    $\id_{\fuk(\mathcal{X}; \mathscr{P}_0)}$ in the respective
    homological persistence category.
  \end{enumerate}
  Furthermore, the choice of the assignment
  $\bar{\mathcal{X}} \longmapsto \mathcal{B}(\bar{\mathcal{X}})$ can
  be assumed to have the following property: if $\bar{\mathcal{X}}'$
  is another finite collection of Lagrangians with
  $\bar{\mathcal{X}}' \supset \bar{\mathcal{X}}$\zhnote{, that (similarly to
  $\bar{\mathcal{X}}$)} satisfies the conditions from the beginning
  of~\S\ref{sb:faitpc}, then
  $\mathcal{B}(\bar{\mathcal{X}}')|_{\bar{\mathcal{X}}} \subset
  \mathcal{B}(\bar{\mathcal{X}})$. Here,
  $\mathcal{B}(\bar{\mathcal{X}}')|_{\bar{\mathcal{X}}}$ stands for
  the restriction of the perturbation data from
  $\mathcal{B}(\bar{\mathcal{X}}')$ to the spaces of clusters of
  punctured disks decorated by the elements of $\bar{\mathcal{X}}$.
\end{thm}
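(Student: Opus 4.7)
The plan is to establish the theorem in two main stages: first the construction of the filtered $A_{\infty}$-category $\fuk(\mathcal{X}; \mathscr{P})$ for fixed perturbation data, and then the construction of the comparison functors. The backbone of both constructions is the moduli space $\mathcal{M}(x_1, \ldots, x_d, y; \mathscr{P})$ of clusters of Floer polygons already set up in \S\ref{sbsb:mu-d}, together with its compactification analyzed in \S\ref{sbsb:splitting}. Regularity of $\mathscr{P}$ (i.e.~the definition of $\mathcal{B}(\bar{\mathcal{X}})$) will follow from a standard Sard–Smale argument applied to the universal moduli space, using the fact that the Hamiltonian $1$-forms $K$ are supported in the interiors of the punctured disks (so that generic perturbations are available while leaving the prescribed Floer data on strip-like ends untouched) and that the Morse/metric data along the trees can be varied freely away from the type-E ends.

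The main new point to verify is that the resulting category is \emph{genuinely} (not merely weakly) filtered. I would establish this via an energy identity for each $u: \cl \to X$ in $\mathcal{M}(x_1, \ldots, x_d, y; \mathscr{P})$, of the shape
\begin{equation*}
\mathcal{A}(y) \;=\; \sum_{i=1}^{d} \mathcal{A}(x_i) \;-\; E(u) \;+\; R(u),
\end{equation*}
where $E(u) \geq 0$ is the geometric energy of the Floer-polygon parts and $R(u)$ collects the correction terms arising from $K$ along the surface parts and from $f_{\tau}$ along the tree parts. On each punctured disk, the Lagrangian boundary conditions are labeled by \emph{distinct} underlying Lagrangians on adjacent arcs (by the labeling rules of \S\ref{sbsb:clusters-pdisks}), so the Floer data has $H_{L_i, L_j} \equiv 0$, and $R$ reduces to the integral $\int_{S_k} u^*(dK)$ which we can force to vanish by choosing $K$ to be closed with compact support in the interior (a choice that is still generic enough to ensure transversality once the $J$-component is perturbed). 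On the tree parts, the Morse data is only relative to the common underlying Lagrangian $\bar L$, and by~\eqref{eq:symp-act-2} the filtration level assigned to Morse generators is the constant $h_{L_1}-h_{L_0}$, so negative gradient flow trivially preserves the filtration. Strict unitality is a consequence of the assumption that each $f_{\bar L}$ has a unique maximum, which provides the strict unit, together with the usual Morse-model calculation in the two-punctured/tree cluster with decoration $(\bar L, \bar L)$.

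Next I would prove the $A_{\infty}$-identities by analyzing the ends of $1$-dimensional components of $\mathcal{M}(x_1, \ldots, x_d, y; \mathscr{P})$. Each of the five degeneration types of \S\ref{sbsb:splitting} contributes a specific term: the two disk-splittings give the standard $\mu_k(\ldots, \mu_l(\ldots), \ldots)$ terms; shrinking of interior tree edges and collision of type-A leaves appear \emph{in pairs} on the boundary (see Remark \ref{r:degeneration}) and cancel; edge-breaking corresponds to an insertion of a $\mu_1$-Morse term along a tree. The unstable configurations (one-edge broken tree components, or disks with two distinguished points) are compensated, as usual, by allowing strip-breaking degenerations in the parametrized moduli spaces. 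Equivalence to the standard Fukaya category on the level of quasi-isomorphism classes of the $\Mor$-spaces is then immediate from the fact that the Floer chain complexes we use are the standard ones and Morse complexes of $\bar{L}$ compute $HF(\bar{L},\bar{L})$. Extending to a quasi-equivalence of $A_\infty$-categories (identity on objects) follows by constructing a comparison $A_\infty$-functor using a hybrid moduli space that interpolates between the cluster model and the model of~\cite{Seidel}.

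For the comparison functors $\mathcal{F}^{\mathscr{P}_1, \mathscr{P}_0}$, I would define them via counts of clusters of Floer polygons whose perturbation data depends on an auxiliary parameter $s \in \mathbb{R}$ along the unique exit strip-like end (or edge), interpolating smoothly from $\mathscr{P}_0$ at $s=-\infty$ to $\mathscr{P}_1$ at $s=+\infty$. The same energy identity as before, applied to this parametrized setting, ensures that $\mathcal{F}^{\mathscr{P}_1, \mathscr{P}_0}$ preserves filtrations. Setting the interpolation to be constant when $\mathscr{P}_0 = \mathscr{P}_1$ gives $\mathcal{F}^{\mathscr{P}, \mathscr{P}} = \id$. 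The first-order part recovers the standard continuation map by construction. The composition property is proven, in the familiar way, by constructing a filtered $A_\infty$-natural transformation $T^{\mathscr{P}_2, \mathscr{P}_1, \mathscr{P}_0}$ from a further parametric moduli space carrying a ``gluing-parameter'' $R \in [0, \infty]$: at $R = 0$ one obtains $\mathcal{F}^{\mathscr{P}_2, \mathscr{P}_0}$ (via a single interpolation) and at $R=\infty$ the composition $\mathcal{F}^{\mathscr{P}_2, \mathscr{P}_1} \circ \mathcal{F}^{\mathscr{P}_1, \mathscr{P}_0}$ (via broken configurations). Finally, the consistency $\mathcal{B}(\bar{\mathcal{X}}')|_{\bar{\mathcal{X}}} \subset \mathcal{B}(\bar{\mathcal{X}})$ is achieved by choosing the assignment $\bar{\mathcal{X}} \mapsto \mathcal{B}(\bar{\mathcal{X}})$ inductively, extending already-chosen data from smaller collections. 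The hardest step in practice is the genuine (rather than merely weak) filtration property: it forces the compactness of the interior support of $K$ to be made compatible with transversality via $J$-perturbations alone, which is where the assumption that the elements of $\bar{\mathcal{X}}$ meet pairwise transversely and triple-wise trivially is essential, so that no Hamiltonian perturbation is needed to achieve the basic regularity of intersection points.
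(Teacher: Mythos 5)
Your proposal correctly identifies the general structure (cluster moduli spaces, degeneration analysis, $A_\infty$-identities, comparison via parametrized moduli) and recovers several points in the paper's actual proof. However, there are two genuine gaps that bear on the two hardest parts of the theorem.

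First, the paper's central technical obstacle for genuine filtration preservation is \emph{not} the vanishing of the curvature term, but the tension between (a) having each $\mu_d$-operation preserve filtrations and (b) maintaining consistency of the perturbation data under gluing/splitting. Your proposal bypasses this by claiming the curvature term $R(u)$ can be killed by choosing $K$ closed with compact support; this is technically inaccurate, since the curvature 2-form involves a Poisson-bracket term in addition to $dK$, so closedness of $K$ does not make $R_K$ vanish. One could try forcing $K\equiv 0$, which does kill $R_K$, but the paper explicitly rejects this strategy (for transversality, for future extensibility to non-transverse configurations, and for compatibility with other structures) and instead allows non-trivial $K$. The price paid is the appearance of a genuinely nontrivial obstruction: if $\|K(\mathscr{P}_{d'+1})\| + \|K(\mathscr{P}_{d''+1})\|$ exceeds the bound needed for the glued $(d+1)$-punctured disks, the induction breaks. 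The paper resolves this via the auxiliary symplectic ball embeddings around the intersection points (with $K|_{\phi(B_j(R))}\equiv 0$, $J$ standard there), a monotonicity-type energy bound $E(u) \geq (d+1)CR^2$, and crucially the super-additive inequality system~\eqref{eq:ineq-alpha-d-1}--\eqref{eq:ineq-alpha-d-2}, which lets the allowable size of $K(\mathscr{P}_{d+1})$ \emph{grow} with $d$ at a rate that is both compatible with gluing and still keeps $A(u) > 0$. That mechanism is the technical heart of part~(1) of the theorem and it is absent from your proposal.

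Second, for the comparison functors your proposal defines $\mathcal{F}^{\mathscr{P}_1,\mathscr{P}_0}$ directly by counting parametrized clusters with perturbation data interpolating along a strip-like end, and asserts that the same energy identity gives filtration preservation. This is precisely what fails in general: an interpolating family between $\mathscr{P}_0$ and $\mathscr{P}_1$ need not satisfy the size constraints required for the area estimate, and the paper explicitly highlights (Remark~\ref{r:big-tot}) that one cannot build a single total category over all of $\mathcal{B}(\bar{\mathcal{X}})$ for exactly this reason. Instead, the paper constructs total categories $\fuk^{\text{tot}}(\mathcal{X};R,r)$ over the subspaces $\mathcal{B}(\bar{\mathcal{X}};R,r)$, builds filtered $A_2$-functors $\mathcal{P}r^i$ geometrically via continuation maps whose auxiliary data is carefully constrained so that areas stay positive, and then extends these to $A_\infty$-functors by an algebraic device: persistence Hochschild cohomology and the invariance/extension Lemmas~\ref{l:PHH-invariance} and~\ref{l:extending-func}, culminating in Proposition~\ref{p:atot-coh}. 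The natural transformations $T^{\mathscr{P}_2,\mathscr{P}_1,\mathscr{P}_0}$ and the $A_1$-unitality are also produced algebraically through this framework, not by a further parametric moduli space. Your direct geometric interpolation would need a separate argument to show the resulting functors preserve filtrations, and as stated it does not provide one.
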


\begin{rem}
  The system of functors
  $\{\mathcal{F}^{\mathscr{P}_1, \mathscr{P}_0}\}_{\mathscr{P}_1,
    \mathscr{P}_0 \in \mathcal{B}(\bar{\mathcal{X}})}$ and the natural
  transformations mentioned in Theorem~\ref{t:fil-fuk} depend on a
  variety of choices hence are not canonical in the strict sense of
  the word. The extent to which these structures are canonical will be
  briefly discussed  later in Remark~\ref{r:can-sys}. We will refer to this 
  system of functors as a {\em weakly coherent system} of comparison functors to emphasize that our construction does not produce canonical choices. 
\end{rem}

We proceed now to the proof of Theorem~\ref{t:fil-fuk}. The proof
presented below is by no means complete and should be viewed as an
outline only. We have left out quite a few technical details,
especially concerning the analysis underlying the proof. However,
these parts of the proof follow from rather standard and well
established ingredients in the analysis of Floer theory and Fukaya
categories. As mentioned earlier, a more general approach to genuinely
filtered Fukaya categories is worked out in a forthcoming paper by
Ambrosioni~\cite{Amb:fil-fuk} which will also contain a detailed proof
of the construction.

\subsection{Proof of Theorem~\ref{t:fil-fuk} - part
  1} \label{sbsb:prf-fil-fuk-1}

We will concentrate here on the second part of point~(1) of the
theorem (namely that $\fuk(\mathcal{X})$ can be made {\em genuinely
  filtered} for appropriate choices of Floer and perturbation
data). The first part of point~(1) as well as point~(2) are rather
known and have been addressed in the literature in various levels of
rigor. The proofs of the statements concerning the functors
$\mathcal{F}^{\mathscr{P}_1, \mathscr{P}_0}$ will be outlined
in~\S\ref{sbsb:coh-sys} below.

Throughout the proof we will sometimes abbreviate
$\fuk(\mathcal{X}; \mathscr{P})$ as $\fuk(\mathcal{X})$ in case
$\mathscr{P}$ is clear from the context.

Fix a tuple of Lagrangians
$\bar{\mathcal{L}} = (\bar{L}_0, \ldots, \bar{L}_d)$ and assume for
simplicity that $\bar{L}_i \neq \bar{L}_{j}$ for every $i \neq j$.
Fix a tuple of intersection points $x_1, \ldots, x_d, y$,
$x_i \in \bar{L}_{i-1} \cap \bar{L}_i$,
$y \in \bar{L}_0 \cap \bar{L}_d$. Let
$u \in \mathcal{M}(x_1, \ldots, x_d, y; \mathscr{P})$ be an
$\bar{\mathcal{L}}$-decorated cluster of Floer polygons. By our
simplifying assumptions the domain of $u$ must be a simple cluster,
namely just one punctured disk $S_{d+1}$. (See
Page~\pageref{pp:simple-cluster} and Remark~\ref{r:decorations}.)
Thus $u: S_{d+1} \longrightarrow X$ is a Floer polygon that sends the
punctures of $S_{d+1}$ to the points $x_1, \ldots, x_d, y$. For such a
map $u$ denote by $A(u) = \int_{S_{d+1}} u^* \omega$ the symplectic
area of $u$. We have
$$A(u) = \sum_{i=1}^d \mathcal A(x_i) - \mathcal A(y).$$ In order to
prove that the $\mu_d$-operation preserves filtrations we need to show
that for all $u$ as above we have $A(u) \geq 0$. (In fact we need to
show the latter inequality holds for all \zjr{clusters} of Floer polygons
$u$, not only the simple ones. However, as we will see below, the main
difficulty is for punctured disks, and the generalization to more
general clusters is straightforward.)

Before we go into the proof of the latter statement, let us explain
the difficulties underlying it. Obviously $A(u) \geq 0$ if we choose
the perturbation data $(K,J)$ with $K \equiv 0$, since then every
Floer polygon will be $J$-holomorphic hence of strictly positive
area. However, for a variety of reasons it seems better to allow for
non-trivial $1$-forms $K$ in the perturbation data, so we will not
assume $K \equiv 0$. One of the reasons for allowing non-trivial
perturbations is that it is easier to establish transversality for the
spaces of Floer clusters with this extra parameter at hand. Another
reason is that if one hopes to generalize the present approach to
cases when not all pairs of Lagrangians in $\bar{\mathcal{X}}$
intersect transversely then Hamiltonian perturbations would definitely
be needed. Other reasons have to do with compatibility of
$\fuk(\mathcal{X})$ with other structures such as the Floer (or
symplectic) homology of the ambient manifold and maps relating
$\fuk(\mathcal{X})$ to these invariants. These structures usually
require Hamiltonian perturbations.

As a second attempt, we may try to show that $A(u) \geq 0$ once we
take the $1$-forms $K$ in the perturbation data to be small enough.
At first sight this seems to work using a compactness argument as
follows. Indeed, if this were not the case, then we would have a
sequence of perturbation forms $K^{(l)}$ and a sequence
$u_l:S_{d+1}^{(l)} \longrightarrow X$ of corresponding Floer polygons
whose punctures go to a fixed set of intersection points
$x_1, \ldots, x_d, y$, such that $K^{(l)} \xrightarrow[C^1]{\quad} 0$
but $A(u_l) \leq 0$ for every $l$. (We may assume that all the $u_l$'s
run between the {\em same} set of intersection points
$x_1, \ldots, x_d, y$ because by our assumptions there is only a
finite number of possible intersections points associated to the tuple
$\bar{\mathcal{L}}$.) By compactness, passing to a subsequence of the
$u_l$'s we would then obtain a (possibly broken) limit polygon $u$
which is non-constant and genuinely $J$-holomorphic, yet with
$A(u) \leq 0$. A contradiction. Since, by assumption, the number of
possible $d+1$ tuples $\bar{\mathcal{L}}$ is finite, it follows that
if we take the perturbation data small enough, then for all tuples
$\bar{\mathcal{L}}$ of length $d+1$, with the properties from the
beginning of the proof, and all $\bar{\mathcal{L}}$-decorated Floer
polygons $u$ we have $A(u) > 0$. This easily extends also to decorated
clusters of Floer polygons that are not necessarily simple, as well as
to all decorations $\bar{\mathcal{L}}$ of {\em fixed} length $d+1$.
    
The problem with this argument is that without further elaboration it
might create difficulties with obtaining a consistent choice of
perturbation data $\mathscr{P}$. To explain this difficulty let us
rephrase the previous paragraph in more quantitative terms. From now
on we will use the following more detailed notation. We denote the
restriction of the perturbation data $\mathscr{P}$ to the space of all
$(d+1)$-punctured disks by $\mathscr{P}_{d+1}$ and write
$(K(\mathscr{P}_{d+1}), J(\mathscr{P}_ {d+1}))$ for the two components
of $\mathscr{P}_{d+1}$. Given an $\bar{\mathcal{L}}$-decorated
punctured disk $S_{d+1}$ we denote by
$(K(\mathscr{P}, S_{d+1}), J(\mathscr{P}, S_{d+1}))$ the restriction
of $\mathscr{P}$ to $S_{d+1}$.

The previous argument shows that there exist numbers
$\varepsilon_{d+1} > 0$, $d\geq 2$, such that if the perturbation
forms $K(\mathscr{P}_{d+1})$ satisfy
$\|K(\mathscr{P}_{d+1})\| \leq \varepsilon_{d+1}$ then for all tuples
$\mathcal{L}$ of length $d+1$ and all $\bar{\mathcal{L}}$-decorated
Floer polygons $u$, we have $A(u)> 0$. Here $\| - \|$ is a suitable
norm on the space of all perturbation $1$-forms $K_{d+1}$ (defined on
the space of all possible $(d+1)$-punctured disks $S_{d+1}$). The
value of $\|K_{d+1}\|$ involves the values of $K_{d+1}$ and its first
derivatives, both in the domain direction as well as in the direction
of the manifold $X$ (recall that the forms $K_{d+1}$ have values in
the space of compactly supported functions on $X$).

The problem that arises with the approach used so far has to do with
the consistency of $\mathscr{P}$ with respect to gluing/splitting.  A
standard way to construct consistent perturbation data is to construct
$K(\mathscr{P}_{d+1})$ (and the almost complex structures) by
induction over $d$ and make sure that at each induction step the newly
defined perturbation data is consistent with the data that have
already been defined at earlier stages, with respect to
gluing/splitting. Assume that $K(\mathscr{P}_{m+1})$ has already been
defined for all $m \leq d_0$ in such a way that
$\|K(\mathscr{P}_{m+1})\| \leq \varepsilon_{m+1}$ for all
$m \leq d_0$. Consider now $d = d_0 + 1$. Punctured disks of the type
$S_{d+1}$ can split into two punctured disks of the type $S_{d'+1}$
and $S_{d''+1}$ with $d'+d''=d+1$ (and $d', d'' \geq 2$). See
Figure~\ref{f:gluing-splitting}.
  
\begin{figure}[h]
  \includegraphics[scale=0.80]{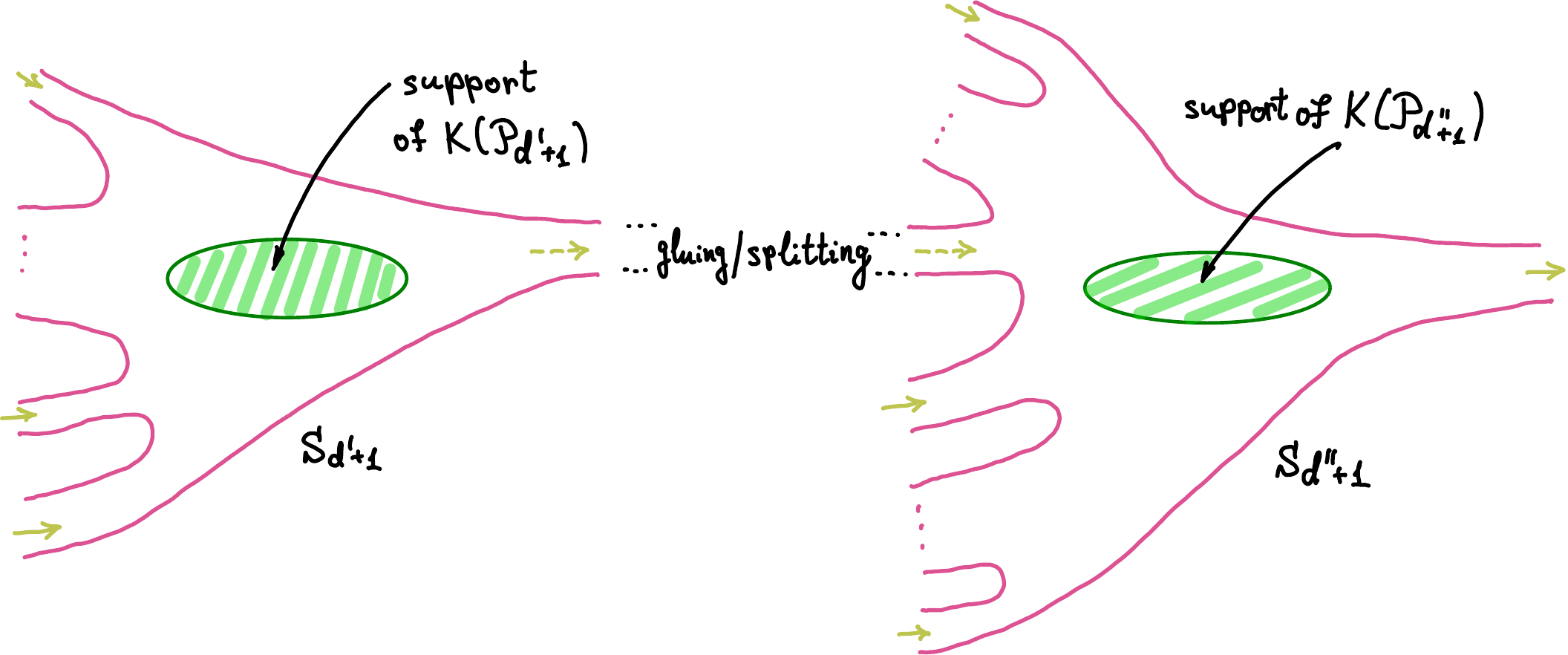} \centering
  \caption{Gluing/splitting of punctured disks. Two punctured disks of
    the types $S_{d'+1}$ and $S_{d''+1}$, together with their
    perturbation $1$-forms, are glued into a punctured disk $S_{d+1}$,
    where $d+1 = d'+d''$.}
  \label{f:gluing-splitting}
\end{figure}

If it so happens that
\zjr{\begin{equation}
  \varepsilon_{d+1} < \|K(\mathscr{P}_{d'+1})\| + \|
  K(\mathscr{P}_{d''+1})\|
\end{equation}}
then the induction step will produce perturbations forms
$K(\mathscr{P}_{d+1})$ that might not satisfy the condition
$\|K(\mathscr{P}_{d+1})\| \leq \varepsilon_{d+1}$ which is required in
order to have $A(u)> 0$ for all Floer polygons $u$. In other words,
the construction described above does not go through at the induction
step. Of course, the above argument shows that for every fixed $d$, we
can choose consistent perturbation data such that all the Floer
polygons involving no more than $d+1$ intersection points have
positive symplectic area. One could then easily modify this argument
to handle more general clusters and consequently show that all the
$\mu_k$-operation in $\fuk(\mathcal{X})$ with $k \leq d$ will preserve
filtrations. However, without any additional arguments, the above
fails to prove that there is even one consistent perturbation data
that will turn the $\mu_k$-operations into filtration preserving maps
{\em for all $k$}. We will now refine the above arguments, showing how
to achieve our goal by restricting further the type of perturbation
data.

\label{p:preserve-fil-1}
Recall that for all $\mathcal{L}$-decorated Floer polygons
$u:S_{d+1} \longrightarrow X$ we have the following energy-area
identity:
\begin{equation} \label{eq:energy-area} E(u) = A(u) - \int_{S_{d+1}}
  R_{K(\mathscr{P}, S_{d+1})} \circ u,
\end{equation}
where $E(u)$ is the energy of $u$ and $R_{K(\mathscr{P}, S_{d+1})}$ is
the curvature associated to the $1$-form $K(\mathscr{P}, S_{d+1})$.
Recall that $R_{K(\mathscr{P}, S_{d+1})}$ is a $2$-form on $S_{d+1}$
with values in the space of compactly supported functions on $X$. The
expression $R_{K(\mathscr{P}, S_{d+1})} \circ u$ is a real valued
$2$-form on $S_{d+1}$ which is obtained by composing the functions
prescribed by the values of $R_{K(\mathscr{P}, S_{d+1})}$ with the map
$u$. We refer the reader to~\cite[Chapter~II, Section~(8g)]{Seidel}
for more details on the definition of $R_{K(\mathscr{P}, S_{d+1})}$
and the identity~\eqref{eq:energy-area}.

An important point about the curvature form
$R_{K(\mathscr{P}, S_{d+1})}$ is that it can be made arbitrarily small
by choosing the perturbation form $K(\mathscr{P}, S_{d+1})$ to be
small enough in an appropriate $C^1$-norm $\| - \|$. As alluded to
above, $\|K(\mathscr{P}, S_{d+1})\|$ involves the sum of the
$L^1$-norms of $K(\mathscr{P}, S_{d+1})$ and its derivatives, both in
the direction of $S_{d+1}$ as well as in the direction of the manifold
$X$. (This norm can be viewed as a version of the Sobolev norm on
$W^{1,1}$.) We therefore have
$$\Bigr | \int_{S_{d+1}} R_{K(\mathscr{P}, S_{d+1})} \circ v
\; \Bigl | \leq C_{d+1} \|K(\mathscr{P}, S_{d+1})\|,$$ for all maps
$v: S_{d+1} \longrightarrow X$, where the constant $C_{d+1}$ depends
only on $d$ (and not on the specific surface $S_{d+1}$ or any other
parameter involved in the integrand). Define
\begin{equation} \label{eq:K-norm-sup}
  \begin{aligned}
    \|K(\mathscr{P}_{d+1})\| & := \sup \; \| K(\mathscr{P}, S_{d+1})\|, \\
    \nu(\mathscr{P}_{d+1}) & := C_{d+1} \|K(\mathscr{P}_{d+1})\|,
  \end{aligned}
\end{equation}
where the supremum in the first formula is taken over all
$\bar{\mathcal{L}}$-decorated punctured disks $S_{d+1}$. Note that it
is possible to choose the perturbation forms $K(\mathscr{P}, S_{d+1})$
such that they are all compactly supported inside the interior of each
punctured disk $S_{d+1}$ and moreover we can control these supports
such that the support of the entire family $K(\mathscr{P}_{d+1})$ is
compact. (This poses no problems to having consistency with
gluing/splitting.) This implies that the supremum that appears in the
definition of \zjr{$\|K(\mathscr{P}_{d+1})\|$} in~\eqref{eq:K-norm-sup} can
be assumed to be finite, and moreover can be made arbitrarily small by
an appropriate choice of $K(\mathscr{P}_{d+1})$.
  
With the above notation we now have
\begin{equation} \label{eq:A-nu} A(u) \geq E(u) -
  \nu(\mathscr{P}_{d+1})
\end{equation}
for all $\bar{\mathcal{L}}$-decorated Floer polygons $u$.

We now add further restrictions to the perturbation data
$\mathscr{P}$. Denote by
$\bar{\mathcal{X}} = \{\bar{L} \mid L \in \mathcal{X} \}$ the set of
all underlying Lagrangians from the collection $\mathcal{X}$. Denote
by $I = \{p_1, \ldots, p_N\}$ the set of all intersection points
between any two distinct Lagrangians
$\bar{L}, \bar{L}' \in \bar{\mathcal{X}}$. By our assumption on
$\bar{\mathcal{X}}$, $I$ this is a finite set. Moreover, every
$p \in I$ corresponds precisely to one pair of distinct Lagrangians
$\bar{L}, \bar{L}' \in \bar{\mathcal{X}}$ whose intersection contains
$p$.

Denote by $B^{2n}(R) \subset \mathbb{R}^{2n}$ the closed
$2n$-dimensional ball endowed with its standard symplectic structure,
where $2n = \dim_{\mathbb{R}} X$. We claim that there exists a
symplectic embedding of a disjoint union of $N$ balls of some radius
$R_{\bar{\mathcal{X}}}>0$ into $X$ \label{pp:radius}
\begin{equation} \label{eq:phi-embedd} \phi: \bigsqcup_{j=1}^N
  B^{2n}(R_{\bar{\mathcal{X}}}) \to X,
\end{equation}
with the following properties:
\begin{enumerate}
\item $\phi(0_j) = p_j$ for every $j$. Here and in what follows we
  denote by $B_j$ the $j$'th ball in the disjoint union
  in~\eqref{eq:phi-embedd} and by $0_j \in B_j$ the center of that
  ball.
\item If $p_j \in \bar{L}' \cap \bar{L}''$ with
  $\bar{L}', \bar{L}'' \in \bar{\mathcal{X}}$ distinct, then
  $$\text{either } \phi(\mathbb{R}B_j) \subset \bar{L}',
  \; \phi(i\mathbb{R}B_j) \subset \bar{L}'', \;\; \text{ or }
  \phi(\mathbb{R}B_j) \subset \bar{L}'', \; \phi(i\mathbb{R}B_j)
  \subset \bar{L}', $$ where
  $\mathbb{R}B_j := B_j \cap (\mathbb{R}^n \times \{0\})$ is the
  ``real'' part of the ball $B_j$ and
  $i\mathbb{R}B_j := B_j \cap (\{0\} \times \mathbb{R}^n)$ is its
  ``imaginary'' part.  Moreover,
  $(\phi|_{B_j})^{-1}(\bar{L}' \cup \bar{L}'') \subset \mathbb{R}B_j
  \cup i \mathbb{R} B_j$, and $\phi(B_j)$ is disjoint from all the
  Lagrangians in
  $\bar{\mathcal{X}} \setminus \{\bar{L}', \bar{L}''\}$.
\end{enumerate}

\label{pp:balls-R}
Clearly there exists an $R_{\bar{\mathcal{X}}}>0$ and an embedding
$\phi$ as above, and we fix both of them once and for all. Next we fix
$0<R<R_{\bar{\mathcal{X}}}$ and denote by $B_j(R) \subset B_j$ the
smaller ball of radius $R$. Fix another parameter $0 < r \leq R$ and
impose the following further restriction on the perturbation data
$\mathscr{P}_{d+1} = (K(\mathscr{P}_{d+1}), J(\mathscr{P}_{d+1}))$ for
all $d$:
\begin{enumerate}
\item \label{i:K-phi-R} $K|_{\phi(B_j(R))} \equiv 0$ for every $j$,
  i.e.~the values of the $1$-form $K$ (which are compactly supported
  functions on $X$) vanish over the image of the restriction of $\phi$
  to the smaller balls of radius $R$.
\item \label{i:J-phi-R} $J_z|_{\phi(B_j(R))} = \phi_*(J_{\text{std}})$
  for every $j$ and every $z \in S_{d+1}$, $d \geq 2$. Here
  $J_{\text{std}}$ is the standard complex structure on $B_j(R)$.
\end{enumerate}

Note that the above additional restrictions on $(K,J)$ do not
contradict any of our previous assumptions on $(K,J)$, and if we
temporarily ignore the size of the perturbation forms $K$, then the
above also do not pose any problems to the consistency of
$\mathscr{P}$ with respect to gluing/splitting.

We now claim that there is a {\em consistent} choice of perturbation
data $\mathscr{P}$ of the type describe above such that for all Floer
polygons $u$ we have $A(u)>0$.

To prove this consider an $\bar{\mathcal{L}}$-decorated Floer polygon
$u: S_{d+1} \longrightarrow X$ associated with our perturbation
data. Denote by $x_1, \ldots, x_d, y$ the intersection points to which
the punctures of $S_{d+1}$ are mapped to by $u$. By construction, $u$
is genuinely holomorphic over $\textnormal{image\,}(\phi)$ with
respect to an almost complex structure that is diffeomorphic along
that region to $J_{\text{std}}$ via $\phi$. Denote by
$B_{j_1}, \ldots, B_{j_d}, B_{j_{d+1}}$ the balls corresponding to the
intersection points $x_1, \ldots, x_d, y$ according to the
construction made earlier. Applying a version of the monotonicity
lemma (or alternatively a version of the Lelong inequality) to $u$
over $\phi(B_{j_k})$, $k=1, \ldots, d+1$, we obtain that there exists
a constant $C$ that does not depend on $d$ (nor on $u$ or on any other
parameter form the perturbation data) such that
\begin{equation} \label{eq:E-monotonicity-bound} E(u) \geq (d+1) CR^2.
\end{equation}

Putting this together with~\eqref{eq:A-nu} we obtain:
\begin{equation} \label{eq:A-nu-2} A(u) \geq (d+1)CR^2 -
  \nu(\mathscr{P}_{d+1}).
\end{equation}
Obviously, if we choose perturbation data $\mathscr{P}$ that are small
enough \zhnote{such} that
\begin{equation} \label{eq:nu-cr0} \nu(\mathscr{P}_{d+1}) <
  (d+1)Cr^2,
\end{equation}
then $A(u)>0$ for all Floer polygons $u$. (Recall that we fixed the
parameter $r$ such that $r \leq R$.) The main thing that needs to be
verified now is that the condition~\eqref{eq:nu-cr0} still enables a
choice of perturbation data that are consistent with
gluing/splitting. We address this point next.

We will choose the perturbation \zjr{data} $\mathscr{P}$ such that for all
$d$ we have $\nu(\mathscr{P}_{d+1}) < \alpha_{d+1} (d+1)C r^2$ for
some $0< \alpha_{d+1} \leq 1$. Recall that punctured disks of the type
$S_{d+1}$ can split into two punctured disks of the type $S_{d'+1}$
and $S_{d''+1}$ with $d'+d''=d+1$ and $d', d'' \geq 2$. (Of course,
splittings into more than two punctured disks is also possible,
however for the sake of obtaining a filtered $A_{\infty}$-category the
top strata of the boundary of the space of disks matters, and these
correspond to splitting into two punctured disks only.) And
vice-versa, by gluing two punctured disks of the type $S_{d'+1}$,
$S_{d''+1}$ we obtain punctured disks of the type
$S_{d'+d''}$. Therefore, in order to make it possible to construct the
perturbation data $\mathscr{P}_{d+1}$ by induction on $d$ as indicated
earlier, and to make them consistent with respect to gluing/splitting
we need to find a sequence of numbers $\alpha_{d+1}$, $d \geq 2$, that
satisfy the following set of inequalities:
\begin{equation} \label{eq:ineq-alpha-d-1}
  \begin{aligned}
    & \alpha_{d'+1} (d'+1)C r^2 + \alpha_{d''+1} (d''+1)C r^2
    \leq \alpha_{d'+d''} (d'+d'')C
    r^2 \; \; \; \forall \; d', d'' \geq 2, \\
    & 0 < \alpha_{d+1} \leq 1 \;\; \forall \; d \geq 2.
  \end{aligned}
\end{equation}
If such a sequence of numbers $\alpha_k$ does exist then we simply
construct the perturbation data $\mathscr{P}_{d+1}$ by induction on
$d$, where at each induction step we require that
$\nu(\mathscr{P}_{d+1}) < \alpha_{d+1} (d+1)C r^2$. The
inequalities~\eqref{eq:ineq-alpha-d-1} will then assure that the
induction step goes through without posing problems to consistency
with respect to gluing/splitting.

It remains to show that the inequalities in~\eqref{eq:ineq-alpha-d-1}
admit solutions. Setting $\beta_k := k \alpha_k$ for every $k \geq 3$,
the set of inequalities~\eqref{eq:ineq-alpha-d-1} can be simplified
to:
\begin{equation} \label{eq:ineq-alpha-d-2}
  \begin{aligned}
    & \beta_{d'+1} + \beta_{d''+1} \leq \beta_{d'+d''}
    \; \; \; \forall \; d', d'' \geq 2, \\
    & 0 < \beta_{k} \leq k \;\; \forall \; k \geq 3.
  \end{aligned}
\end{equation}
It is easy to see that this set of inequalities does have solutions.
For example, for the sequence $\beta_k := (3k-6)B$, $k \geq 3$, where
$B \leq \tfrac{1}{3}$, the first inequality
in~\eqref{eq:ineq-alpha-d-2} becomes an equality and the second
inequality is satisfied. One can also find sequences for which all
inequalities in~\eqref{eq:ineq-alpha-d-2} become strict. This can be
done as follows. Let $\beta'_k$ be any sequence for which the first
line in~\eqref{eq:ineq-alpha-d-2} is an equality and the second
inequality holds (e.g.~the preceding sequence $(3k-6)B$ with
$B \leq \tfrac{1}{3}$). Let
$\sigma: [3, \infty) \longrightarrow \mathbb{R}$ be a strictly
increasing function with $0 < \sigma(x) < 1$ for every $x$. Define now
$\beta_k := \sigma(k)\beta'_k$ for all $k\geq 3$. A straightforward
calculation shows that for this sequence all the inequalities
in~\eqref{eq:ineq-alpha-d-2} become strict.

To conclude the proof regarding the preservation of filtrations of the
$\mu_k$-operations we need to address also the case of non-simple
clusters of Floer polygons. The argument in this case is essentially
the same, and below we will only outline it in the case of a cluster
consisting of at most one Floer polygon with possibly several trees
attached to its boundary.

Consider first a tuple
$\bar{\mathcal{L}} = (\bar{L}_0, \ldots, \bar{L}_d)$ in which not all
the Lagrangians coincide.  Fix a tuple $x_1, \ldots, x_d, y$ of
generators, with
$x_i \in CF(L_{i-1}, L_i; \mathscr{D}_{L_{i-1}, L_i})$,
$y \in CF(L_0, L_d; \mathscr{D}_{L_0, L_d})$. (Recall that some of
these points are intersection points between the respective underlying
Lagrangians, and some are critical points of the Morse functions
prescribed by the Morse data). Let $\Sigma$ be an
$\bar{\mathcal{L}}$-decorated cluster of punctured disks which
consists of one punctured disk $S_{d'+1}$, where $d' \leq d$, and
several trees. Let $u: \Sigma \longrightarrow X$, be an
$\bar{\mathcal{L}}$-decorated cluster of Floer polygons with entry
points $x_1, \ldots, x_d$ and exit point $y$.  Denote by
$u':= u|_{S_{d'+1}}$ the restriction of $u$ to the underlying
punctured disk $S_{d'+1}$ (which is a genuine Floer polygon with
boundary conditions prescribed by a sub-tuple of $\bar{\mathcal{L}}$).

Recalling our filtration conventions for $CF(L', L'')$ in case
$\bar{L}' = \bar{L}''$ (see at the end of~\S\ref{sbsb:fdata}), a
simple calculation shows that
$$\sum_{i=1}^d \mathcal{A}(x_i) - \mathcal{A}(y) = A(u').$$
Now essentially the same argument as the one carried out earlier shows
that the perturbation data can be chosen such that $A(u') \geq 0$.

Let us also consider the case when all the Lagrangians in
$\bar{\mathcal{L}}$ coincide. In that case a cluster of Floer polygons
is just a collection of Morse trajectories modeled on a tree (without
any actual polygons). A simple calculation shows that in this case we
have:
$$\mathcal{A}(y) = \sum_{i=1}^d \mathcal{A}(x_i),$$
which implies the filtration is preserved by the operations $\mu_d$,
$d \geq 2$, also in case when the Lagrangians in $\bar{\mathcal{L}}$
all coincide.

\label{p:preserve-fil-2}
As mentioned above, these arguments easily generalize to more
complicated clusters, only that the notation becomes more involved.

\subsubsection*{Transversality.}

We will briefly address now the topic of transversality. In order to
show that $\fuk(\mathcal{X})$ is indeed an $A_{\infty}$-category
(e.g.~that $\mu_d$-operations satisfy the $A_{\infty}$-identities
etc.), one needs to choose perturbation data $\mathscr{P}$ that
satisfy various regularity properties. This would ensure that the
spaces of clusters of Floer polygons involved in the definition of the
$\mu_d$-operations are smooth manifolds and have other desirable
properties. Establishing the existence of regular perturbation data
(or other auxiliary data) usually goes by the name ``transversality'',
and is carried out via analytic techniques that have become standard
in Floer theory. The typical result in this context is that the set of
regular perturbation data is residual (in particular, dense) inside
the space of all consistent perturbation data. However, in our case we
work within a much more restricted space of perturbation data, as
described above (e.g.~specific choices of almost complex structures
near the intersection points between pairs of distinct Lagrangians,
perturbation forms that are compactly supported etc.). Formally
speaking, one would need to work out the transversality for our
choices of perturbation data. While this does not formally follow from
the general transversality theorems, it can still be achieved by
rather standard arguments.  For example, the fact that we restrict the
almost complex structures to be constant \zjr{on certain regions} does not pose any problems
(for achieving regularity) as long as the images of all the Floer
polygons pass through regions of $X$ in which we are allowed to
perturb the almost complex structures without any restrictions. The
same goes for the perturbation $1$-forms $K$. In a similar vein, the
fact that our perturbation forms $K$ must be chosen to be small enough
does not pose any transversality problem either. The only ingredient
that requires slightly different transversality arguments is the part
that uses the combination of Morse trees and Floer
polygons. Transversality for spaces of Morse trees, as well as Morse
trees mixed with holomorphic curves has been worked out in various
setups - see \cite{Fuk-Oh:zero-loop}, \cite{Charest-thesis},
\cite{Charest}.  While we do not provide here details for these
arguments, the case of clusters of Floer polygons follows from the
source space description for clusters that appears in
\cite{Charest-thesis} and standard regularity arguments, as outlined
above. Because all our Lagrangians are exact the actual regularity
arguments are much simpler compared to the ones developed in
\cite{Charest-thesis}, \cite{Charest}.

\subsubsection*{The spaces
  $\mathcal{B}(\bar{\mathcal{X}})$} \label{pp:space-B} So far the
proof above shows that there exist regular perturbation data
$\mathscr{P}$ that turn $\fuk(\mathcal{X}; \mathscr{P})$ into a
genuinely filtered, strictly unital, $A_{\infty}$-category.

We now define the space $\mathcal{B}(\bar{\mathcal{X}})$. Given
\zhnote{$r \leq R \leq R_{\bar{\mathcal{X}}}$} (recall that $R_{\bar{\mathcal{X}}}$ is
the radius of the balls in the embedding $\phi$), we
denote by $\mathcal{B}(\bar{\mathcal{X}}; R, r)$ the space of
consistent, regular perturbation data $\mathscr{P}$, as defined
earlier in the proof \zhnote{with the parameters $r$ and $R$}. The significance of the parameter $R$ appears in
conditions~\eqref{i:K-phi-R}-\eqref{i:J-phi-R} on
page~\pageref{i:K-phi-R} regarding the perturbation forms and almost
complex structures on $\cup_{j} \phi(B_j(R))$. The parameter $r$ plays
a role in the size of the perturbations in condition~\eqref{eq:nu-cr0}
on page~\pageref{eq:nu-cr0}.
%
To define $\mathcal{B}(\bar{\mathcal{X}})$ we specialize
$\mathcal{B}(\bar{\mathcal{X}}; R, r)$ to the case $r = R$, and take
the union over all $R \leq R_{\bar{\mathcal{X}}}$. More precisely
\begin{equation} \label{eq:BX} \mathcal{B}(\bar{\mathcal{X}}) :=
  \bigcup_{0<R\leq R_{\bar{\mathcal{X}}}} \mathcal{B}(\bar{\mathcal{X}};
  R, R).
\end{equation}

This completes the outline of the proof of the second part of the 1'st
statement of Theorem~\ref{t:fil-fuk}.

\Qed

\begin{rem} \label{r:B-spaces}
  \begin{enumerate}
  \item The reason for introducing the more general spaces
    $\mathcal{B}(\bar{\mathcal{X}}; R, r)$ (instead of working with
    $r=R$ all the time) will become clear in~\S\ref{sbsb:coh-sys} when
    we prove the results on the system of functors
    $\mathcal{F}^{\mathscr{P}_1, \mathscr{P}_0}$.
  \item \label{i:B-cap-B'} If $R' \neq R''$ then none of
    $\mathcal{B}(\bar{\mathcal{X}}; R', R')$ and
    $\mathcal{B}(\bar{\mathcal{X}}; R'', R'')$ is a subspace of the
    other. In fact, if $R'' \leq R'$ then we have:
    \begin{equation} \label{eq:B-cap-B'}
      \mathcal{B}(\bar{\mathcal{X}}; R', R') \cap
      \mathcal{B}(\bar{\mathcal{X}}; R'', R'') =
      \mathcal{B}(\bar{\mathcal{X}}; R', R'').
    \end{equation}
    Moreover, if $R'' \leq R'$ and $r'' \geq r'$, then:
    $\mathcal{B}(\bar{\mathcal{X}}; R', r') \subset
    \mathcal{B}(\bar{\mathcal{X}}; R'', r'')$.
  \item Obviously, the space $\mathcal{B}(\bar{\mathcal{X}})$ does not
    only depend on $\bar{\mathcal{X}}$ but also on the choice of the
    embedding $\phi$, and there does not seem to be any preferred
    choice in this respect.
  \end{enumerate}
\end{rem}

\subsection{Proof of Theorem~\ref{t:fil-fuk}, part
  2: coherent systems}
\label{sbsb:coh-sys}

Here we will follow Seidel's approach to invariance of Fukaya
categories based on coherent systems as \zjr{described} in~\cite[Chapter~II,
Section~10(a)]{Seidel}, but with several modifications needed to
accommodate the filtered setting.

Before we go on, we briefly explain what is meant by a coherent system
of {\em filtered} $A_{\infty}$-categories. This is the filtered
counterpart of the notion of coherent systems of
$A_{\infty}$-categories which was introduced in~\cite[Section~10a,
pages 133-135]{Seidel}. More specifically, let
$\{\mathcal{A}^i\}_{i \in \mathcal{I}}$ be a family of filtered
$A_{\infty}$-categories. A coherent systems consists of
$A_{\infty}$-functors
$\mathcal{F}^{i_1, i_0}: \mathcal{A}^{i_0} \longrightarrow
\mathcal{A}^{i_1}$, defined for all $i_0, i_1 \in \mathcal{I}$, and
with $\mathcal{F}^{i,i} = \id_{\mathcal{A}^i}$ for all $i$, as well as
natural transformations
$T^{i_2, i_1, i_0}: \mathcal{F}^{i_2, i_1} \circ \mathcal{F}^{i_1,
  i_0} \longrightarrow \mathcal{F}^{i_2, i_0}$ for all
$i_0, i_1, i_2 \in \mathcal{I}$.  The functors $\mathcal{F}^{i,j}$
will be called comparison (or transition) functors. The functors
$\mathcal{F}^{i,j}$ and natural transformations $T^{i,j,k}$ are
required to satisfy \zhnote{a list of} conditions explained in~\cite[Section~10a,
page 134]{Seidel}. \zhnote{In particular the comparison functors $\mathcal F^{i_1, i_0}$ are all quasi-equivalences and the $T^{i,j,k}$ are quasi-isomorphisms. Turning to the filtered case, we require the following additional conditions to hold.} All the functors $\mathcal{F}^{i_1, i_0}$ are
required to be filtered (i.e.~filtration preserving) and the
$T^{i_2, i_1, i_0}$ should be natural transformations of shift-$0$,
i.e.~$T^{i_2, i_1, i_0} \in \hom^{\leq
  0}_{\text{\em{ffun}}}(\mathcal{A}^{i_2},
\mathcal{A}^{i_0})(\mathcal{F}^{i_2, i_1} \circ \mathcal{F}^{i_1,
  i_0}, \mathcal{F}^{i_2, i_0})$. Here and in what follows
$\text{\em{ffun}}$ is the (filtered) $A_{\infty}$-category of filtered
functors and $\hom^{\leq 0}_{\text{\em{ffun}}}$ stands for the
morphisms in that category that do not shift filtration, namely the
natural transformations (between filtered functors) that preserve
filtrations. Furthermore, all the cohomological identities
from~\cite[Section~10a, page 134]{Seidel} between these natural
transformations should now hold in the $0$-categories
\zhnote{$(H^0(\text{\em{ffun}}(\mathcal{A}^{i'}, \mathcal{A}^{i''})))_0$}
(i.e.~persistence level $0$) of the persistence \zhnote{homological}
categories
$H^0(\text{\em{ffun}}(\mathcal{A}^{i'}, \mathcal{A}^{i''}))$. (In the
second to last formula, the $0$-superscript means \zhnote{cohomological} degree $0$ and the
$0$-subscript stands for the $0$ persistence level subcategory.)

We will sometime refer to $\{\mathcal{A}^i\}_{i \in \mathcal{I}}$ as a
family of $A_{\infty}$-categories over $\mathcal{I}$ and call
$\mathcal{I}$ the base of the family. Similarly, in case we have a
coherent system on $\{\mathcal{A}^i\}_{i \in \mathcal{I}}$ we will
call it a coherent system over $\mathcal{I}$.

One way to assemble a coherent system out of a family of
$A_{\infty}$-categories $\{\mathcal{A}^i\}_{i \in \mathcal{I}}$ is
first to try to embed all of them into one total $A_{\infty}$-category
$\mathcal{A}^i \subset \mathcal{A}^{\text{tot}}$ by quasi-equivalences
and then seek for suitable projection functors
$\mathcal{A}^{\text{tot}} \longrightarrow \mathcal{A}^{j}$ for all
$i$. The functors
$\mathcal{F}^{i_1, i_0}: \mathcal{A}^{i_0} \longrightarrow
\mathcal{A}^{i_1}$ participating in the coherent system will be then
defined by the composition of the inclusions
$\mathcal{A}^{i_0} \subset \mathcal{A}^{\text{tot}}$ with the
projections
$\mathcal{A}^{\text{tot}} \longrightarrow \mathcal{A}^{i_1}$. We will
soon adapt this scheme to the filtered framework. But first we need to
introduce some relevant notions.

We begin with $A_n$-categories and functors. An $A_n$-category
$\mathcal{A}$ is the same as an $A_{\infty}$-category with the
exception that now we \zhnote{have $\mu^{\mathcal{A}}_k$ only} for
$k=1, \ldots, n$. The $\mu^{\mathcal{A}}_k$'s are required to satisfy
the subset of the $A_{\infty}$-identities that \zhnote{involve} only the
$\mu^{\mathcal{A}}_k$'s with $1 \leq k \leq n$. In case the category
$\mathcal{A}$ is clear form the context we will sometimes omit the
superscript from $\mu^{\mathcal{A}}_k$.

Let $\mathcal{A}$ and $\mathcal{B}$ be two $A_n$-categories and
$m \leq n$. Similarly to $A_{\infty}$-functors we have $A_m$-functors
$\mathcal{F}: \mathcal{A} \longrightarrow \mathcal{B}$. They are
defined in the same way as $A_{\infty}$-functors, but now the higher
order components $\mathcal{F}_k$ of $\mathcal{F}$ are defined only for
$1 \leq k \leq m$. The $\mathcal{F}_k$'s, $\mu^{\mathcal{A}}_i$'s and
$\mu^{\mathcal{B}}_j$'s are required to satisfy the same identities as
for for $A_{\infty}$-functors that involve only $1 \leq k \leq m$,
$i,j \leq n$. $A_n$-functors can be composed in a similar way as
$A_{\infty}$-functors. Finally, the notions of pre-natural and natural
transformations between $A_{\infty}$-functors generalize to
\zhnote{$A_m$}-functors in a similar way.

Let $\mathcal{A}$ be an \zhnote{$A_n$}-category with $n \geq 3$.  Let
$X \in \Ob(\mathcal{A})$ and $2 \leq k \leq n$. An element
$e_X \in \hom_{\mathcal{A}}(X, X)$ is called a strict $A_k$-unit if it
satisfies the following conditions: $e_X$ is a cycle, for all
\zhnote{$X' \in \Ob(\mathcal{A})$ the} maps
\zhnote{$$\mu_2(-, e_X): \hom_{\mathcal{A}}(X',X) \longrightarrow
\hom_{\mathcal{A}}(X',X), \quad \mu_2(e_X,-):
\hom_{\mathcal{A}}(X,X') \longrightarrow \hom_{\mathcal{A}}(X,X')$$}
are the identity maps, and moreover
$\mu_j(-, \ldots, -, e_X, -, \ldots, -)=0$ for all $2 < j \leq n$.  An
$A_n$-category is called strictly $A_k$-unital if we are given (as
part of the structure) strict $A_k$-units $e_X$ for every object
$X \in \Ob(\mathcal{A})$. If an $A_{n}$-category is strictly
$A_n$-unital (i.e.~$k=n$) we will simply say that it is strictly
unital.

Let $\mathcal{A}$, $\mathcal{B}$ be \zhnote{two} $A_n$-categories which are \zhnote{both}
strictly $A_k$-unital. An $A_m$-functor
$\mathcal{F}: \mathcal{A} \longrightarrow \mathcal{B}$ is called
strictly $A_l$-unital \zhnote{for $l \leq m$} if $\mathcal{F}_1(e_X) = e_{\mathcal{F}(X)}$ for
all $X \in \Ob(\mathcal{A})$ and
$\mathcal{F}_i(-, \ldots, -, e_X, -, \ldots, -)=0$ for every
\zhnote{$2\leq i \leq l$}.

Similarly to strict units, \zhnote{we also} have the notion of strict
isomorphisms. Let $\mathcal{A}$ be an $A_n$-category ($3 \leq n$)
which is strictly $A_l$-unital ($l \leq n$), and denote its strict
units by $e_Z \in \hom_{\mathcal{A}}(Z,Z)$, $Z \in
\Ob(\mathcal{A})$. Let $X, Y \in \Ob(\mathcal{A})$ and
$u \in \hom_{\mathcal{A}}(X,Y)$ a cycle. We say that $u$ is a strict
$A_k$-isomorphism (where $k \leq n$) if there exists a cycle
$v \in \hom_{\mathcal{A}}(Y,X)$ such that
$$\mu_2(u,v) = e_X, \quad \mu_2(v,u) = e_Y,
\quad \mu_j(-, \ldots, -, u_X, -, \ldots, -) = 0, \, \forall \, 2 < j
\leq k.$$ In case $k=n$ we will simply say that $u$ is a strict
isomorphism.

\begin{rem} \label{r:An-Ainfty}
  \begin{enumerate}
  \item In what follows we will view $A_{\infty}$-categories as a
    special case of $A_n$-categories by allowing $n=\infty$ (of
    course, one needs to slightly adjust the definition since for
    $n=\infty$ the operations $\mu_k$ exist only for $1 \leq k < n$
    and not $1 \leq k \leq n$). The same remark applies also to
    $A_n$-functors, (pre-)natural transformations and strict units,
    and we will view their $A_{\infty}$-counterparts as a special case
    of the respective $A_n$-objects.
  \item $A_n$-categories are also $A_{n'}$-categories for all
    $1 \leq n' \leq n$, therefore we will sometimes reduce $n$ to the
    minimal value which is relevant in the context. A similar remark
    applies also to functors, (pre-)natural transformations and strict
    units.
  \item \label{i:red-func} Let
    $\mathcal{F}: \mathcal{A} \longrightarrow \mathcal{B}$ be an
    $A_n$-functor and $n' \leq n$. We denote by $\{\mathcal{F}\}_{n'}$
    the $A_{n'}$-functor obtained from $\mathcal{F}$ by ignoring the
    terms of order $> n'$ (i.e.~the terms $\mathcal{F}_{k}$ for
    $n' < k \leq n$). We call $\{\mathcal{F}\}_{n'}$ the
    $A_{n'}$-reduction of $\mathcal{F}$.
  \end{enumerate}
\end{rem}

Below we will be mainly interested in $A_2$-functors
$\mathcal{F}: \mathcal{A} \longrightarrow \mathcal{B}$ between two
$A_{3}$-categories. Unwrapping the above definitions, in this case
this means that
$\mathcal{F}_1: \hom_{\mathcal{A}} (X_0, X_1) \longrightarrow
\hom_{\mathcal{B}}(\mathcal{F}X_0, \mathcal{F}X_1)$ are chain maps and
that $\mathcal{F}_1$ respects composition of morphisms up to a chain
homotopy given by $\mathcal{F}_2$. In other words,
$\mathcal{F}_2: \hom_{\mathcal{A}} (X_0, X_1) \otimes
\hom_{\mathcal{A}} (X_1, X_2) \longrightarrow \hom_{\mathcal{A}} (X_0,
X_2)$ defines a chain homotopy between
$\mu_2\circ (\mathcal{F}_1 \otimes \mathcal{F}_1)$ and
$\mathcal{F}_1 \circ \mu_2$. Passing to homology, $\mathcal{F}$
induces a (non-unital) functor
$H(\mathcal{F}): H(\mathcal{A}) \longrightarrow H(\mathcal{B})$
between the homological categories $H(\mathcal{A})$ and
$H(\mathcal{B})$. Note that $H(\mathcal{A})$ and $H(\mathcal{B})$ are
genuine categories since $\mathcal{A}$ and $\mathcal{B}$ were assumed
to be $A_3$-categories.

Assuming $\mathcal{A}$ and $\mathcal{B}$ to be $A_2$-unital, some of
our $A_2$-functors $\mathcal{F}$ will be strictly $A_1$-unital, which
means that $\mathcal{F}_1(e_X) = e_{\mathcal{F}X}$ for all objects
$X$. In particular this implies that $\mathcal{F}$ is homologically
unital (i.e.~the functor $H(\mathcal{F})$ is unital).

We turn now to the filtered setting. Filtered $A_n$-categories,
$A_m$-functors and their pre-natural transformations are defined
precisely in the same way as their \zhnote{filtered} $A_{\infty}$-counterparts. Strict
$A_k$-units are required by definition to be in filtration level $0$,
and the same goes for strict isomorphisms.  (Below we will not attach
the adjective ``filtered'' to units/isomorphisms, implicitly assuming
that whenever we are in the filtered setting these elements are in
filtration level $0$.)

If $\mathcal{A}$ is a filtered $A_3$-category which is strictly
$A_2$-unital then $H(\mathcal{A})$ is a persistence category. If
$\mathcal{F}$ is a filtered $A_2$-functor which is $A_1$-unital then
$H(\mathcal{F})$ is a persistence functor.

Finally, we need the notion of a filtered quasi-equivalence. Let
$\mathcal{F}: \mathcal{A} \longrightarrow \mathcal{B}$ be an
$A_2$-functor between filtered $A_3$-categories. We call $\mathcal{F}$
a filtered quasi-equivalence if $\mathcal{F}$ is a filtered functor
and its homological functor
$H(\mathcal{F}): H(\mathcal{A}) \longrightarrow H(\mathcal{B})$ is an
equivalence of persistence categories.

Next, we need to introduce persistence Hochschild cohomology. Let
$\mathcal{A}$, $\mathcal{B}$ be $A_3$-categories which are strictly
$A_2$-unital.  Let $\mathcal{F}, \mathcal{G}$ be filtered
$A_2$-functors which are $A_1$-unital. Given
$X_0, \ldots X_r \in \Ob(\mathcal{A})$ we will abbreviate
$\mathcal{A}(X_0, \ldots, X_r) := \hom_{\mathcal{A}}(X_0, X_1) \otimes
\cdots \otimes \hom_{\mathcal{A}}(X_{r-1}, X_r)$ and view it as a
filtered chain complex in the standard way (and similarly for
$\mathcal{B}$). The persistence Hochschild cochain complex associated
to the above data is a cochain complex of persistence modules. The
$\alpha$-persistence level of this \zjr{cochain} complex in degree $r$ is:
\begin{equation} \label{eq:PCC} PCC^{r; \leq \alpha} (\mathcal{A},
  \mathcal{B}; \mathcal{F}, \mathcal{G}) := \prod_{X_0, \ldots, X_r}
  H^*\Bigl(\hom_{\k}^{\leq \alpha} \bigl(\mathcal{A}(X_0, \ldots,
  X_r), \, \mathcal{B}(\mathcal{F}X_0, \mathcal{G}X_r)\bigr)\Bigr),
\end{equation}
where the product runs over all $r$-tuples of objects in
$\mathcal{A}$. Here we view
$$\hom_{\k}\bigl(\mathcal{A}(X_0, \ldots, X_r), \,
\mathcal{B}(\mathcal{F}X_0, \mathcal{G}X_r)\bigr)$$ as a filtered
cochain complex, endowed with the standard differential which will be
denote below by $\delta$. The $\alpha$-level filtration on this
cochain complex, denoted here by $\hom_{\k}^{\leq \alpha}$ consists of
those (graded) homomorphisms that shift filtration by $\leq \alpha$.
Finally, $H^*$ stands for persistence cohomology (in all degrees).
Note that the spaces $PCC$ are in fact bigraded since the $\hom_{\k}$
term is graded in itself. Thus a more detailed description
of~\eqref{eq:PCC} would be to define $PCC^{r,s}$ with the term $H^*$
replaced by cohomology $H^s$ in degree $s$. However, whenever not
necessary we will ignore the $s$-degree.

The differential
$\partial_{PCC}: PCC^{r; \leq \alpha} (\mathcal{A}, \mathcal{B};
\mathcal{F}, \mathcal{G}) \longrightarrow PCC^{r+1; \leq \alpha}
(\mathcal{A}, \mathcal{B}; \mathcal{F}, \mathcal{G})$ is defined as
follows. Let
$T \in \hom_{\k}^{\leq \alpha} \bigl(\mathcal{A}(X_0, \ldots, X_r), \,
\mathcal{B}(\mathcal{F}X_0, \mathcal{G}X_r))$ be a $\delta$-cocycle
(i.e.~a chain map). Then $\partial_{CC}([T])$ is defined as the
$\delta$-cohomology class $[S]$ of the chain map
\zjr{ \begin{equation} \label{eq:del-PCC}
 \begin{aligned}
    S(a_1, \ldots, a_{r+1}) := & \epsilon' \mu_2(\mathcal{F}_1(a_1),
    T(a_2, \ldots, a_{r+1}))
    + \epsilon'' \mu_2(T(a_1, \ldots, a_r), \mathcal{G}_1(a_{r+1})) \\
    & + \epsilon \sum_{k=0}^r T(a_1, \ldots, \mu_2(a_{k+1},a_{k+2}),
    \ldots, a_{r+1}),
  \end{aligned}
\end{equation}}
where $\epsilon', \epsilon'', \epsilon = \pm 1$ are signs that depend on
the degrees of $T$, the $a_i$'s and $r$ - see~\cite[Chapter~I,
Section~(1f)]{Seidel}. Since we work over $\k = \mathbb{Z}_2$ these
will play no role in our considerations anyway.

The cohomology of $(PCC, \partial_{PCC})$ is a graded persistence
module which we call the persistence Hochschild cohomology of
$(\mathcal{A}, \mathcal{B})$ with respect to the functors
$\mathcal{F}, \mathcal{G}$. We denote it by
$PHH^{*; \leq \alpha}(\mathcal{A}, \mathcal{B}; \mathcal{F},
\mathcal{G}), \alpha \in \mathbb{R}$.

Apart from $PHH$ there is a persistence variant of the classical
Hochschild cohomology for the categories $H(\mathcal{A})$ and
$H(\mathcal{B})$ with respect to the homological functors
$[\mathcal{F}], [\mathcal{G}]$. This is defined in the same way as
in~\cite[Chapter~I, Section~(1f)]{Seidel}, only that we also take the
persistence structure into account. We will use the following
notation. We write
$$H^{\mathcal{A}}(X,Y):= H(\hom_{\mathcal{A}}(X,Y))$$ for the
persistence homology (in all degrees) of $\hom_{\mathcal{A}}(X,Y)$
with respect to $\mu_1^{\mathcal{A}}$, and similarly for $\mathcal{B}$.
Denote also
$$H^{\mathcal{A}}(X_0, \ldots, X_r) :=
H^{\mathcal{A}}(X_0, X_1) \otimes \cdots \otimes
H^{\mathcal{A}}(X_{r-1}, X_r),$$ viewed as a (graded) persistence
module. Denote $A := H(\mathcal{A})$, $B := H(\mathcal{B})$, the
homological categories of $A$, $B$, and by $F = [\mathcal{F}]$,
$G: = [\mathcal{G}]$ the homological functors corresponding to
$\mathcal{F}$ and $\mathcal{G}$. The cochain complex $P \overline{CC}$
for the Hochschild cohomology of $H(\mathcal{A})$ and $H(\mathcal{B})$
has the following \zjr{persistence module structure} in degree $r$:
\begin{equation} \label{eq:PbarCC} P\overline{CC}^{r; \leq \alpha} (A,
  B; F, G) := \prod_{X_0, \ldots, X_r} \Bigl(\hom_{\text{per}}^{\leq
    \alpha} \bigl(H^{\mathcal{A}}(X_0, \ldots, X_r), \,
  H^{\mathcal{B}}(\mathcal{F}X_0, \mathcal{G}X_r)\bigr)\Bigr),
\end{equation}
where $\alpha$ denotes the persistence level and
$\hom_{\text{per}}^{\leq \alpha}$ stands for (graded) homomorphisms
\zjr{from the persistence module $H^{\mathcal{A}}(X_0, \ldots, X_r)$
  to the persistence module
  $H^{\mathcal{B}}(\mathcal{F}X_0, \mathcal{G}X_r)[\alpha]$.}
\pbrev{Here, we are using the notation and conventions
  from~\S\ref{ssec-pm} (see~\eqref{per-mor}). Namely,
  $H^{\mathcal{B}}(\mathcal{F}X_0, \mathcal{G}X_r)[\alpha]$ stands for
  $H^{\mathcal{B}}(\mathcal{F}X_0, \mathcal{G}X_r)$ shifted by
  $\alpha$ in terms of the persistence parameter.} \zjr{The structure
  map from level $\hom_{\text{per}}^{\leq \alpha}$ to level
  $\hom_{\text{per}}^{\leq \beta}$, whenever $\alpha \leq \beta$, is
  given by composing with the structure map indexed by $\alpha, \beta$
  in the persistence module
  $H^{\mathcal{B}}(\mathcal{F}X_0, \mathcal{G}X_r)$.} The differential
$\partial_{\overline{CC}}$ has a similar expression
to~\eqref{eq:del-PCC}, and an explicit formula can be found
in~\cite[Chapter~I, Section~(1f), page~13]{Seidel}.  We denote the
resulting persistence cohomology by
$P\overline{HH}^{*; \leq \alpha} (A, B; F, G)$,
$\alpha \in \mathbb{R}$.

\begin{rem} \label{r:PHH}
  \begin{enumerate} 
  \item It is easy to see that the data required to define
    the second version, $P \overline{HH}$, of Hochschild cohomology
    (and in fact even $P \overline{CC}$) is entirely homological. In
    fact, this cohomology can be defined for every pair of persistence
    categories and a pair of persistence functors between them. In
    contrast, $PCC$ and $PHH$ seem to depend on some chain level
    information from $\mathcal{A}, \mathcal{B}$ and
    $\mathcal{F}, \mathcal{G}$.

    The two cochain complexes $PCC$ and $P \overline{CC}$ are related
    one to the other by a K\"{u}nneth-type short exact sequence of
    persistence modules:
    \begin{equation} \label{eq:per-Kunneth} 0 \longrightarrow
      E_{r-1}(H(\mathcal{A}), H(\mathcal{B})) \longrightarrow
      PCC^r(\mathcal{A}, \mathcal{B}; \mathcal{F}, \mathcal{G})
      \longrightarrow P \overline{CC}^r(A, B; F, G) \longrightarrow 0,
    \end{equation}
    where $E_{r-1}(-,-)$ is a derived functor of $\hom_{\text{per}}$
    (which is analogous to $Ext^1$) in the category of persistence
    modules. More specifically, when inserting the components of
    $X_0, \ldots, X_r$ from~\eqref{eq:PCC} and~\eqref{eq:PbarCC}
    into~\eqref{eq:per-Kunneth} the term $E_{r-1}$ involves only
    homologies of the type \zhnote{$H_*(\mathcal{A}(X_0, \ldots, X_r))$} and
    $H^{\mathcal{B}}_*(\mathcal{F}X_0, \mathcal{G}X_r)$ of total
    degree $r-1$. The former term \zhnote{$H_*(\mathcal{A}(X_0, \ldots, X_r))$}
    can also be related via short exact sequences (a persistence
    analog of the universal coefficients theorem) to
    $H_*^{\mathcal{A}}(X_0, X_1) \otimes \cdots \otimes
    H_*^{\mathcal{A}}(X_{r-1}, X_r)$ and $Tor_1$-like derived functors
    associated to tensor products of persistence modules. We refer the
    reader to~\cite[Section~8]{BuMi:homological-algebra-persistence}
    for the precise formulation of the K\"{u}nneth and universal
    coefficients theorems for persistence homology (see
    also~\cite{PoShSt:per-mod-op}).
  \item Let $3 \leq n \leq \infty$, $m \leq n$. Given two
    $A_n$-categories $\mathcal{A}$, $\mathcal{B}$ and two
    $A_m$-functors
    $\mathcal{F}, \mathcal{G}: \mathcal{A} \longrightarrow
    \mathcal{B}$, the filtered natural transformations
    $\hom_{\text{\em{ffun}}(\mathcal{A}, \mathcal{B})}(\mathcal{F},
    \mathcal{G})$ form a filtered chain complex (by filtered natural
    transformations we mean those that respect filtrations up to a
    bounded shift). The level-$\alpha$ filtration
    $\hom_{\text{\em{ffun}}(\mathcal{A}, \mathcal{B})}^{\leq \alpha}$
    consists of those natural transformations that shift filtration by
    $\leq \alpha$. Apart from this filtration, this space admits yet
    another filtration called the length filtration. This one is
    indexed by the natural numbers and is decreasing. Its \zhnote{$p$-level $F^p\hom_{\text{\em{ffun}}(\mathcal{A}, \mathcal{B})}^{\leq
      \alpha}(\mathcal{F}, \mathcal{G})$} is given by those natural
    transformations
    $T \in \hom_{\text{\em{ffun}}(\mathcal{A}, \mathcal{B})}^{\leq
      \alpha}(\mathcal{F}, \mathcal{G})$ with $T_0 = \cdots =
    T_p=0$. There is a spectral sequence of persistence modules
    associated to this filtration. A simple calculation shows that its
    1'st page, at persistence level $\alpha$, is given by
    $$E_{1}^{r, s; \leq \alpha} = PCC^{r,s; \leq \alpha}(\mathcal{A},
    \mathcal{B}; \mathcal{F}, \mathcal{G}).$$ Here we have used the
    second degree $s$ on $PCC$ as briefly explained
    earlier. See~\cite[Chapter~I, Section~(1f), page~13]{Seidel} for
    more details in the non-filtered case.  The filtration $F^p$ is
    bounded in the case of $A_m$-functors with $m$ finite. For
    $A_{\infty}$-functors it is not bounded and \zhnote{its associated spectral sequence} might not converge,
    however following~\cite{Seidel} we will use \zhnote{this sequence} as a tool for
    comparing the homologies of different $\hom_{\text{\em{ffun}}}$'s.
\end{enumerate}
\end{rem}

Let $\mathcal{A}$, $\mathcal{B}$, $\mathcal{C}$ be filtered
$A_n$-categories ($3 \leq n \leq \infty$). Let $2 \leq m \leq n$, and
denote by $\text{\em{nu-ffun}}(\mathcal{C}, \mathcal{A})$ the filtered
$A_m$-category of non-unital (or better \zhnote{said,} not necessarily unital)
filtered $A_m$-functors $\mathcal{C} \longrightarrow \mathcal{A}$ and
similarly \zjr{$\text{\em{nu-ffun}}(\mathcal{C}, \mathcal{B})$}. Let
$\mathcal{G}: \mathcal{A} \longrightarrow \mathcal{B}$ be a non-unital
filtered $A_m$-functor and denote by
$$\mathscr{L}_{\mathcal{G}}: \text{\em{nu-ffun}}(\mathcal{C}, \mathcal{A})
\longrightarrow \text{\em{nu-ffun}}(\mathcal{C}, \mathcal{B}),$$ the
functor induced by left-composition with $\mathcal{G}$. Note that this
is a (non-unital) filtered $A_m$-functor. An immediate consequence of
Remark~\ref{r:PHH} is the following.

\begin{lem}[c.f.~Lemma~1.7 in~\cite{Seidel}] \label{l:LG} If
  $\mathcal{G}$ is homologically full and faithful in the persistence
  sense, then so is $\mathscr{L}_{\mathcal{G}}$.
\end{lem}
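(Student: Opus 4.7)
The plan is to adapt Seidel's classical argument (Lemma 1.7 in \cite{Seidel}) to the filtered/persistence framework, using the tools collected in Remark~\ref{r:PHH}. First I would unpack the statement: I need to show that for every pair of non-unital filtered $A_m$-functors $\mathcal{F}_1, \mathcal{F}_2 : \mathcal{C} \longrightarrow \mathcal{A}$, the chain map
\[
\mathscr{L}_{\mathcal{G}} : \hom_{\text{\em{nu-ffun}}(\mathcal{C},\mathcal{A})}(\mathcal{F}_1, \mathcal{F}_2) \longrightarrow \hom_{\text{\em{nu-ffun}}(\mathcal{C},\mathcal{B})}(\mathcal{G}\circ\mathcal{F}_1, \mathcal{G}\circ\mathcal{F}_2)
\]
induces an isomorphism of persistence modules on the cohomologies of these filtered cochain complexes of pre-natural transformations.

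The main tool is the length filtration on $\hom_{\text{\em{ffun}}}$ and its associated spectral sequence of persistence modules recalled in Remark~\ref{r:PHH}(2). At persistence level $\alpha$, the $E_1$-page of this spectral sequence is the persistence Hochschild complex $PCC^{r,s;\leq \alpha}(\mathcal{C},\mathcal{A};\mathcal{F}_1,\mathcal{F}_2)$, and the same holds on the target side with $\mathcal{B}$ and $\mathcal{G}\circ\mathcal{F}_i$. Left-composition with $\mathcal{G}$ induces a morphism of spectral sequences, and it suffices to check that it is an isomorphism on the $E_1$-pages, since for finite $m$ the length filtration is bounded and the spectral sequences converge; by a standard comparison theorem the induced map on the targets is then an isomorphism of persistence modules.

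To verify the isomorphism at the $E_1$-level I would exploit the K\"unneth-type short exact sequence~\eqref{eq:per-Kunneth}:
\[
0 \longrightarrow E_{r-1} \longrightarrow PCC^{r} \longrightarrow P\overline{CC}^{r} \longrightarrow 0,
\]
which relates $PCC$ to the purely homological complex $P\overline{CC}$ built from the persistence homology categories. The hypothesis that $\mathcal{G}$ is homologically full and faithful in the persistence sense means precisely that $H(\mathcal{G})$ induces isomorphisms of persistence modules $H^{\mathcal{A}}(X,Y) \longrightarrow H^{\mathcal{B}}(\mathcal{G}X,\mathcal{G}Y)$. Consequently, composition with $\mathcal{G}$ induces isomorphisms on each factor
\[
\hom_{\text{per}}^{\leq\alpha}\bigl(H^{\mathcal{A}}(X_0,\ldots,X_r),\, H^{\mathcal{A}}(\mathcal{F}_1X_0,\mathcal{F}_2X_r)\bigr) \xrightarrow{\ \cong\ } \hom_{\text{per}}^{\leq\alpha}\bigl(H^{\mathcal{A}}(X_0,\ldots,X_r),\, H^{\mathcal{B}}(\mathcal{G}\mathcal{F}_1X_0,\mathcal{G}\mathcal{F}_2X_r)\bigr),
\]
whence $\mathscr{L}_{\mathcal{G}}$ induces an isomorphism on $P\overline{CC}^{r;\leq\alpha}$. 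The same observation applies to the derived functor term $E_{r-1}$, since this is built functorially from the very same persistence modules. An application of the five-lemma to the morphism of short exact sequences~\eqref{eq:per-Kunneth} then yields the desired isomorphism $PCC^{r;\leq\alpha}(\mathcal{C},\mathcal{A};\mathcal{F}_1,\mathcal{F}_2) \xrightarrow{\cong} PCC^{r;\leq\alpha}(\mathcal{C},\mathcal{B};\mathcal{G}\mathcal{F}_1,\mathcal{G}\mathcal{F}_2)$.

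The step I expect to require the most care is the spectral sequence comparison at the end, especially if one wants to extend the statement to $A_{\infty}$-functors ($m=\infty$), where the length filtration is unbounded and convergence is not automatic — exactly the subtlety flagged in Remark~\ref{r:PHH}(2). For $m$ finite the filtration is bounded, so convergence and the comparison theorem are standard; for $m=\infty$ one can either restrict to bounded truncations $\{\mathcal{F}\}_{m'}$ (cf.~Remark~\ref{r:An-Ainfty}(\ref{i:red-func})) and pass to the inverse limit, or argue degree by degree as in Seidel, noting that the formulas~\eqref{eq:del-PCC} for $\partial_{PCC}$ are compatible with composition by $\mathcal{G}$ and preserve the persistence structure because $\mathcal{G}$ is filtered. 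A further mild technical point is that the K\"unneth term $E_{r-1}$ is a derived functor on persistence modules; its exactness on isomorphisms is immediate, but one must check that the boundary maps in~\eqref{eq:per-Kunneth} are natural with respect to left composition by $\mathcal{G}$, which is a direct naturality check from the construction of the sequence.
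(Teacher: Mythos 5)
Your argument is correct and follows exactly the route the paper intends: the lemma is stated as an immediate consequence of Remark~\ref{r:PHH}, and you use precisely those tools (length filtration spectral sequence on $\hom_{\text{\em{nu-ffun}}}$, Künneth short exact sequence relating $PCC$ to $P\overline{CC}$, five lemma), which is the persistence version of Seidel's proof of his Lemma~1.7. One small typo: the first argument of $\hom_{\text{per}}^{\leq\alpha}$ in your display should be $H^{\mathcal{C}}(X_0,\ldots,X_r)$ rather than $H^{\mathcal{A}}(X_0,\ldots,X_r)$, since the $X_i$ are objects of $\mathcal{C}$.
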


The next lemma deals with invariance of persistence Hochschild
cohomology under filtered quasi-equivalences. It is a persistence
analog of a very special case of Lemma~2.6 from~\cite{Seidel}, with
several additional very strong assumptions made in order to
accommodate the persistence case.
\begin{lem} \label{l:PHH-invariance} Let $\mathcal{A}$ be a filtered
  $A_n$-category ($3 \leq n \leq \infty$) with strict $A_3$-units and
  $\widetilde{\mathcal{A}} \subset \mathcal{A}$ a full subcategory
  such that the inclusion
  $\mathcal{J}: \widetilde{\mathcal{A}} \longrightarrow \mathcal{A}$
  is a filtered quasi-equivalence. Suppose that
  $\mathcal{P}: \mathcal{A} \longrightarrow \widetilde{\mathcal{A}}$
  is a filtered $A_2$-functor which is $A_1$-unital and assume that
  $\mathcal{P} \circ \mathcal{J} = \id_{\widetilde{\mathcal{A}}}$ (as
  $A_2$-functors). Assume further that for every
  $X \in \Ob(\mathcal{A})$ we have a strict isomorphism
  $u_X \in \hom_{\mathcal{A}}(X, \mathcal{P}(X))$. Then the map
  induced by the restriction
  \begin{equation} \label{eq:rho-PHH} \rho: PHH(\mathcal{A},
    \widetilde{\mathcal{A}}; \mathcal{P}, \mathcal{P}) \longrightarrow
    PHH(\widetilde{\mathcal{A}}, \widetilde{\mathcal{A}};
    \id_{\widetilde{\mathcal{A}}}, \id_{\widetilde{\mathcal{A}}})
  \end{equation}
  is a (bigraded) isomorphism of persistence modules.
\end{lem}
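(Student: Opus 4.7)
My plan is to construct an explicit persistence cochain-level homotopy inverse $\sigma$ to the restriction $\rho$, so that $\rho \circ \sigma = \id$ strictly and $\sigma \circ \rho$ is chain homotopic to the identity via a filtration-preserving homotopy. The key technical ingredient will be the strict isomorphisms $u_X \in \hom_{\mathcal{A}}^{\leq 0}(X, \mathcal{P}(X))$, which, being in filtration level $0$, allow all constructions to respect the persistence structure. This follows the classical argument for invariance of Hochschild cohomology under quasi-equivalences (c.f.\ Seidel's Lemma~2.6) adapted to the filtered/persistence setting.

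The first step is to define $\sigma: PCC^r(\widetilde{\mathcal{A}}, \widetilde{\mathcal{A}}; \id, \id) \longrightarrow PCC^r(\mathcal{A}, \widetilde{\mathcal{A}}; \mathcal{P}, \mathcal{P})$ by push-forward along $\mathcal{P}_1$: for a cocycle $\widetilde{T}$ and $a_i \in \hom_{\mathcal{A}}(X_{i-1}, X_i)$, set
\[
\sigma(\widetilde{T})(a_1, \ldots, a_r) := \widetilde{T}(\mathcal{P}_1(a_1), \ldots, \mathcal{P}_1(a_r)).
\]
Because $\mathcal{P}$ is a filtered $A_2$-functor, $\mathcal{P}_1$ preserves filtrations, so $\sigma$ sends the persistence level $\leq \alpha$ to level $\leq \alpha$. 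A direct computation using the $A_2$-relations for $\mathcal{P}$ (and the $A_1$-unitality needed to handle the strict units inserted in the Hochschild differential) shows that $\sigma$ is a chain map up to terms controlled by the failure of $\mathcal{P}$ to be a strict $A_3$-functor; those error terms can be absorbed into the homotopy produced later. The identity $\rho \circ \sigma = \id$ is then immediate from the hypothesis $\mathcal{P} \circ \mathcal{J} = \id_{\widetilde{\mathcal{A}}}$, since $\mathcal{P}_1$ acts as the identity on morphisms internal to $\widetilde{\mathcal{A}}$.

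The second step is to produce a persistence chain homotopy $h$ with $\sigma \circ \rho - \id = \partial_{PCC} h + h \partial_{PCC}$, at every persistence level. For a cochain $T$ on $\mathcal{A}$ and a tuple $(a_1, \ldots, a_r)$ in $\mathcal{A}$, the difference $T(a_1,\ldots,a_r) - T(\mathcal{J}\mathcal{P}_1(a_1), \ldots, \mathcal{J}\mathcal{P}_1(a_r))$ is expressed as a telescoping sum by successively replacing $a_i$ with $\mathcal{J}\mathcal{P}_1(a_i)$; each step of this replacement can be written as a boundary using insertions of the strict isomorphisms $u_{X_i}$ and their inverses $v_{X_i} = u_{X_i}^{-1}$, whose existence is guaranteed by the hypothesis. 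Explicitly, one defines $h$ as a sum over positions $i$ of terms of the form
\[
h(T)(a_1,\ldots,a_r) = \sum_i \pm T(\ldots, a_{i-1}, u_{X_{i-1}}, \mathcal{J}\mathcal{P}_1(a_i), \ldots),
\]
mixed with comparable insertions of $v_{X_i}$; because $u_X, v_X$ lie in filtration $0$ and the $\mu_2^{\mathcal{A}}$-operations preserve filtrations, every term in $h(T)$ has shift bounded by the shift of $T$, so $h$ is a persistence chain map of the required type. The verification $\partial h + h \partial = \sigma\rho - \id$ is then a term-by-term calculation that uses the strict isomorphism relations $\mu_2(u_X, v_X) = e_X$, $\mu_2(v_X, u_X) = e_{\mathcal{P}(X)}$, the strict $A_3$-unitality to kill higher $\mu_k$ insertions of $e_X$, and the $A_2$-functor relations for $\mathcal{P}$.

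The main obstacle is precisely this cochain-homotopy construction: one must carefully track the signs and the ordering of insertions of $u_X$ versus $v_X$, and ensure that the hypotheses (strict $A_3$-units, strict isomorphisms $u_X$, $A_1$-unitality of $\mathcal{P}$, equality $\mathcal{P} \circ \mathcal{J} = \id$) are collectively strong enough to cancel all higher $A_n$ error terms that would otherwise obstruct $h$ from being defined as an honest cochain homotopy. Once this is established, the bidegree is preserved by construction (both $\sigma$ and $h$ respect the internal grading of the hom complexes), and the statement that $\rho$ induces a bigraded isomorphism of persistence modules on $PHH$ follows by passing to cohomology level by level in $\alpha$.
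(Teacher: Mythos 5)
Your approach is essentially the one the paper intends. The paper actually omits the proof of Lemma~\ref{l:PHH-invariance} and refers to Seidel's Lemma~2.6, adding only that the role of the strict isomorphisms $u_X$ is to ``facilitate the definition of certain chain maps and chain homotopies'' in the persistence setting, and that one only needs the statement for the single functor $\mathcal{P}$. Your decomposition into (i) a section $\sigma$ defined by precomposition with $\mathcal{P}_1$, (ii) the strict identity $\rho\circ\sigma=\id$ coming from $\mathcal{P}\circ\mathcal{J}=\id_{\widetilde{\mathcal{A}}}$, and (iii) a telescoping chain homotopy for $\sigma\circ\rho\simeq\id$ built from insertions of $u_X$ and their inverses, is precisely that blueprint, and you correctly identify the filtration level~$0$ of $u_X$ as what keeps everything in the persistence framework.

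Two points you should tighten. First, your stated formula for $h$ has an arity/degree mismatch: a chain homotopy for the degree-$(+1)$ differential on $PCC^\bullet$ must lower Hochschild degree by one, so if $T$ has Hochschild degree $r$ then $h(T)$ must be a cochain of degree $r-1$, i.e.\ it should be evaluated on $(r-1)$-tuples; the version you wrote feeds $T$ an $r$-tuple with an extra $u_X$ inserted, which does not land in the right $PCC$-group. In the standard telescoping construction the insertion of $u_X$ (or $v_X$) is combined with a reduction of the tuple length, so that $h$ genuinely drops degree. Second, your remark that $\sigma$ is ``a chain map up to the failure of $\mathcal{P}$ to be a strict $A_3$-functor'' is the right instinct but the wrong framing, since $\mathcal{P}$ is only an $A_2$-functor and no higher structure is assumed. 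The correct statement is simpler: because $PCC^{r;\leq\alpha}$ is already the $\delta$-cohomology of the space of level-$\leq\alpha$ maps and $\partial_{PCC}$ acts on $\delta$-cohomology classes, one only needs that the two representatives of $\partial_{PCC}(\sigma\widetilde{T})$ and $\sigma(\partial_{PCC}\widetilde{T})$ differ by a $\delta$-coboundary. That coboundary is exactly $\delta H$ where $H(a_1,\ldots,a_{r+1})=\sum_k\pm\widetilde{T}(\mathcal{P}_1a_1,\ldots,\mathcal{P}_2(a_k,a_{k+1}),\ldots,\mathcal{P}_1a_{r+1})$, using the $A_2$-functor relation for $\mathcal{P}_2$ together with the $\delta$-cocycle property of $\widetilde{T}$ (the cross-terms cancel because $\mathcal{P}_1$ is a chain map). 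So $\sigma$ is already an honest $\partial_{PCC}$-chain map, and nothing needs to be ``absorbed into the homotopy later.''
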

We omit the proof since it is very similar to proof of Lemma~2.6
from~\cite{Seidel}. The role of the strict isomorphisms $u_X$ is to
facilitate the definition of certain chain maps and chain homotopies
that appear in the original proof to the framework of $PCC$ and
$PHH$. Note also that the analogous Lemma~2.6 in~\cite{Seidel} is
stated for any two functors, however here we only need it for the
functor $\mathcal{P}$ which simplifies things further.

We now get to extending filtered $A_n$-functors from a subcategory to
a larger one. The following Lemma is a persistence analog, this time
of a special case of Lemma~1.10 from~\cite{Seidel}, and again with
several additional assumptions.
\begin{lem} \label{l:extending-func} Let $\mathcal{A}$, $\mathcal{B}$
  be filtered $A_n$-categories ($3 \leq n \leq \infty$) and
  $\widetilde{\mathcal{A}} \subset \mathcal{A}$ a full
  subcategory. Let
  $\mathcal{P}: \mathcal{A} \longrightarrow \mathcal{B}$ be a filtered
  $A_2$-functor and denote by
  $\widetilde{\mathcal{P}} := \mathcal{P}|_{\widetilde{\mathcal{A}}}$
  its restriction to $\widetilde{\mathcal{A}}$. Assume that the
  map
  $$\rho: PHH^r(\mathcal{A}, \mathcal{B}; \mathcal{P}, \mathcal{P})
  \longrightarrow PHH^r(\widetilde{\mathcal{A}}, \mathcal{B};
  \widetilde{\mathcal{P}}, \widetilde{\mathcal{P}})$$ induced by the
  restriction is an isomorphism of persistence modules for every $r$.
  Then every filtered $A_n$-functor
  $\widetilde{\mathcal{Q}}:\widetilde{\mathcal{A}} \longrightarrow
  \mathcal{B}$ with
  $\{\widetilde{\mathcal{Q}}\}_2 = \widetilde{\mathcal{P}}$ can be
  extended to a filtered $A_n$-functor
  $\mathcal{Q}: \mathcal{A} \longrightarrow \mathcal{B}$ with
  $\{\mathcal{Q}\}_2 = \mathcal{P}$. (See point~(\ref{i:red-func}) of
  Remark~\ref{r:An-Ainfty}.)
\end{lem}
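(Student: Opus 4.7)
The plan is to extend the functor order by order, following the classical inductive scheme of Seidel's Lemma 1.10 but tracking the filtration throughout. The induction hypothesis is that for some $2 \leq m < n$ we have constructed a filtered $A_m$-functor $\mathcal{Q}^{(m)}: \mathcal{A} \longrightarrow \mathcal{B}$ with $\{\mathcal{Q}^{(m)}\}_2 = \mathcal{P}$ and with $\mathcal{Q}^{(m)}_k|_{\widetilde{\mathcal{A}}} = \widetilde{\mathcal{Q}}_k$ for all $k \leq m$. The base case $m=2$ is $\mathcal{Q}^{(2)} := \mathcal{P}$, whose restriction is $\widetilde{\mathcal{P}} = \{\widetilde{\mathcal{Q}}\}_2$ by hypothesis.

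For the induction step, I would first choose a filtered multilinear extension $\mathcal{Q}_{m+1}^{\text{ext}}$ of $\widetilde{\mathcal{Q}}_{m+1}$ from $\widetilde{\mathcal{A}}$ to $\mathcal{A}$: on tuples of objects all lying in $\widetilde{\mathcal{A}}$ use $\widetilde{\mathcal{Q}}_{m+1}$, on all other tuples set the value to zero. This preserves filtrations trivially. Next, I would measure the failure of the $A_\infty$-functor identity at order $m+1$ for $\mathcal{Q}^{(m)}$ augmented by $\mathcal{Q}_{m+1}^{\text{ext}}$: the left-over terms assemble into an element
\[
E_{m+1} \in PCC^{m+1,\,2-(m+1);\,\leq 0}(\mathcal{A}, \mathcal{B}; \mathcal{P}, \mathcal{P}).
\]
A standard bookkeeping argument using the $A_\infty$-identities at all orders $\leq m$ shows $\partial_{PCC} E_{m+1} = 0$. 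Because $\widetilde{\mathcal{Q}}$ is a genuine $A_n$-functor, its restriction satisfies the identity at order $m+1$ exactly, so $E_{m+1}|_{\widetilde{\mathcal{A}}} = 0$ on the nose (not merely up to cohomology).

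Thus $E_{m+1}$ is a $\partial_{PCC}$-cocycle in the kernel of the restriction map of cochain complexes
\[
\rho_{CC}:\; PCC^{*}(\mathcal{A}, \mathcal{B}; \mathcal{P}, \mathcal{P}) \longrightarrow PCC^{*}(\widetilde{\mathcal{A}}, \mathcal{B}; \widetilde{\mathcal{P}}, \widetilde{\mathcal{P}}).
\]
The chain map $\rho_{CC}$ is surjective with respect to the persistence filtration (since $\widetilde{\mathcal{A}} \subset \mathcal{A}$ is full, any filtered multilinear map on $\widetilde{\mathcal{A}}$ lifts by extending by zero), so we get a short exact sequence of persistence cochain complexes
\[
0 \longrightarrow K^{*} \longrightarrow PCC^{*}(\mathcal{A}, \mathcal{B}; \mathcal{P}, \mathcal{P}) \xrightarrow{\;\rho_{CC}\;} PCC^{*}(\widetilde{\mathcal{A}}, \mathcal{B}; \widetilde{\mathcal{P}}, \widetilde{\mathcal{P}}) \longrightarrow 0.
\]
The associated long exact sequence in persistence cohomology, together with the hypothesis that $\rho$ is an isomorphism of persistence modules in every degree, forces $H^{*}(K) = 0$ at every persistence level. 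Consequently $E_{m+1}$ is a coboundary inside $K$: one can write $E_{m+1} = \partial_{PCC}(R_{m+1})$ with $R_{m+1} \in K^{m}$, i.e.\ with $R_{m+1}|_{\widetilde{\mathcal{A}}} = 0$ and $R_{m+1}$ of shift $\leq 0$. Setting $\mathcal{Q}_{m+1} := \mathcal{Q}_{m+1}^{\text{ext}} \pm R_{m+1}$ with the appropriate sign convention produces a filtered component that satisfies the order $(m+1)$ $A_\infty$-identity and still restricts to $\widetilde{\mathcal{Q}}_{m+1}$. Iterating up to order $n$ (or ad infinitum if $n=\infty$) yields the desired $\mathcal{Q}$.

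The technical heart of the argument, and the place I expect the most delicate bookkeeping, is verifying that the splitting in the long exact sequence behaves well persistence-level by persistence-level: one must ensure that the primitive $R_{m+1}$ realizing the coboundary can be chosen {\em of shift zero} (filtration preserving), and not merely of some bounded shift. This is precisely what forces the hypothesis that $\rho$ is an isomorphism of persistence modules in every degree, rather than only of their limits, and it is the point at which the argument genuinely differs from Seidel's unfiltered version.
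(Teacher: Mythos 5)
Your overall strategy is the right one and matches what the paper refers to: the inductive scheme of Seidel's Lemma~1.10, with the persistence levels tracked throughout. You correctly identify the kernel $K^{*}$ of the restriction $\rho_{CC}$, correctly observe that it is a $\partial_{PCC}$-subcomplex, correctly run the long exact sequence level-by-level in the persistence parameter to conclude $H^{*}(K^{\leq \alpha})=0$ for every $\alpha$ (in particular $\alpha = 0$), and correctly flag that this is precisely what forces the primitive to be filtration-preserving and to vanish on $\widetilde{\mathcal{A}}$. That is indeed the ``straightforward modification'' the paper has in mind.

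The final step of your induction is wrong, however, and the error is already present at the unfiltered (Seidel) level, not specific to persistence. The primitive you produce lives in $K^{m} \subset PCC^{m}$, so it is a class of $m$-multilinear maps, while $\mathcal{Q}_{m+1}^{\text{ext}}$ is an $(m+1)$-multilinear component; the expression $\mathcal{Q}_{m+1}^{\text{ext}} \pm R_{m+1}$ mixes Hochschild lengths and does not define a component of the functor. More to the point, the order-$(m+1)$ $A_{\infty}$-identity reads $\delta(\mathcal{Q}_{m+1}) + E_{m+1}^{\text{lo}} = 0$ and is a statement about length-$(m+1)$ cochains; adding a length-$m$ term to $\mathcal{Q}_{m+1}$ leaves it untouched. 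The genuine obstruction to solving for $\mathcal{Q}_{m+1}$ with the required restriction is the class of $E_{m+1}$ in the $\delta$-cohomology $K^{m+1} \subset PCC^{m+1}$, and the $PHH$-isomorphism hypothesis does not directly annihilate that class — it only makes it $\partial_{PCC}$-exact, which is strictly weaker.

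The missing step is the one Seidel elides: choose a $\delta$-closed chain-level representative of $R_{m+1}$ (still of shift $\leq 0$ and vanishing on $\widetilde{\mathcal{A}}$-tuples, both of which you have secured via the level-$0$ argument) and use it to modify the \emph{previous} component, $\mathcal{Q}_m \mapsto \mathcal{Q}_m - R_{m+1}$. Because $R_{m+1}$ is $\delta$-closed this does not disturb the order-$m$ identity, and because $R_{m+1}|_{\widetilde{\mathcal{A}}}=0$ the restriction to $\widetilde{\mathcal{A}}$ is preserved. After this modification the new obstruction class in $PCC^{m+1}$ becomes $[E_{m+1}] - \partial_{PCC}[R_{m+1}] = 0$, and since the $\delta$-complex splits as $CC = K_{CC} \oplus \widetilde{CC}$ (tuples with some object outside $\widetilde{\mathcal{A}}$, versus tuples inside), the new chain-level obstruction is $\delta$-exact with a primitive lying in $K_{CC}^{m+1;\leq 0}$. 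Subtracting that primitive from $\mathcal{Q}_{m+1}^{\text{ext}}$ yields the desired filtered $\mathcal{Q}_{m+1}$ restricting to $\widetilde{\mathcal{Q}}_{m+1}$. With this correction the induction closes and the rest of your argument stands.
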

We omit the proof again since it is very similar to the one indicated
in~\cite{Seidel} for Lemma~1.10, with straightforward modifications
needed for the persistence setting.

We are now ready to assemble a coherent system of filtered
$A_{\infty}$-categories out of a family of categories that are all
included into one total category. Let
$\{\mathcal{A}^i\}_{i \in \mathcal{I}}$ be a family of filtered,
strictly unital $A_{\infty}$-categories, over $\mathcal{I}$. Suppose
there is a filtered strictly unital $A_{\infty}$-category
$\mathcal{A}^{\text{tot}}$ such that for every $i \in \mathcal{I}$,
$\mathcal{A}^i$ is a full subcategory of
$\mathcal{A}^{\text{tot}}$. Denote by
$\mathcal{J}^i: \mathcal{A}^i \longrightarrow
\mathcal{A}^{\text{tot}}$ the inclusion functor and assume that
$\mathcal{J}^i$ is a filtered
quasi-equivalence. \label{p:tot-over-base} We will refer to a category
$\mathcal{A}^{\text{tot}}$ as above, together with the inclusion
functors $\mathcal{J}^i$, as a (filtered, strictly unital) {\em total
  $A_{\infty}$-category over $\mathcal{I}$}.

Assume that for every $i \in \mathcal{I}$ there is a filtered
$A_2$-functor
$\mathcal{P}r^i: \mathcal{A}^{\text{tot}} \longrightarrow
\mathcal{A}^i$ which is strictly $A_1$-unital and such that
$\mathcal{P}r^i \circ \mathcal{J}^i = \id_{\mathcal{A}^i}$ as
$A_2$-functors. Assume further that the following holds for every
$i \in \mathcal{I}$: for every $X \in \Ob(\mathcal{A}^{\text{tot}})$
there exists a strict isomorphism
$u_X^i \in \hom_{\mathcal{A}^{\text{tot}}}(X, \mathcal{P}r^i(X))$.

\begin{prop} \label{p:atot-coh} Under the above assumptions each of
  the $A_2$-functors $\mathcal{P}r^i$, $i \in \mathcal{I}$ can be
  extended to a filtered $A_1$-unital $A_{\infty}$-functor
  $\mathcal{Q}^i: \mathcal{A}^{\text{tot}} \longrightarrow
  \mathcal{A}^i$ (i.e.~$\{\mathcal{Q}^i\}_2 = \mathcal{P}r^i$).
  Moreover, the functors
  $\mathcal{F}^{j,i} := \mathcal{Q}^j \circ \mathcal{J}^i:
  \mathcal{A}^i \longrightarrow \mathcal{A}^j$, $i, j \in \mathcal{I}$
  are filtered, $A_1$-unital, and form a \zhnote{coherent system of
 filtered} $A_{\infty}$-categories over $\mathcal{I}$. The filtered natural
  transformations
  $T^{i_2, i_1, i_0}: \mathcal{F}^{i_2, i_1} \circ \mathcal{F}^{i_1,
    i_0} \longrightarrow \mathcal{F}^{i_2, i_0}$ will be described in
  the proof.
\end{prop}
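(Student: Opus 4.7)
The plan is to deduce the extension of $\mathcal{P}r^i$ to a full filtered $A_{\infty}$-functor $\mathcal{Q}^i$ by combining Lemmas \ref{l:PHH-invariance} and \ref{l:extending-func}, and then to construct the coherence data $T^{i_2,i_1,i_0}$ by an obstruction-theoretic argument based on the given strict isomorphisms $u^i_X$.

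First I would apply Lemma \ref{l:PHH-invariance} with $\mathcal{A} = \mathcal{A}^{\text{tot}}$, $\widetilde{\mathcal{A}} = \mathcal{A}^i$ (viewed through $\mathcal{J}^i$), and $\mathcal{P} = \mathcal{P}r^i$. The hypotheses match exactly: $\mathcal{J}^i$ is a filtered quasi-equivalence by assumption, $\mathcal{P}r^i\circ\mathcal{J}^i = \id_{\mathcal{A}^i}$ as $A_2$-functors, and the strict isomorphisms $u_X^i$ are prescribed. The lemma yields that the restriction map
\[
\rho: PHH(\mathcal{A}^{\text{tot}}, \mathcal{A}^i; \mathcal{P}r^i, \mathcal{P}r^i) \longrightarrow PHH(\mathcal{A}^i, \mathcal{A}^i; \id, \id)
\]
is an isomorphism of persistence modules in every degree. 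I would then feed this into Lemma \ref{l:extending-func} with $\mathcal{B} = \mathcal{A}^i$ and $\widetilde{\mathcal{Q}} = \id_{\mathcal{A}^i}$, whose $A_2$-reduction is $\widetilde{\mathcal{P}} = \mathcal{P}r^i|_{\mathcal{A}^i} = \id_{\mathcal{A}^i}$. The lemma produces a filtered $A_{\infty}$-functor $\mathcal{Q}^i : \mathcal{A}^{\text{tot}} \longrightarrow \mathcal{A}^i$ with $\{\mathcal{Q}^i\}_2 = \mathcal{P}r^i$, and the extension can be arranged inductively so that the higher components $\mathcal{Q}^i_k$ vanish when all inputs lie in $\mathcal{A}^i$, giving the literal equality $\mathcal{Q}^i \circ \mathcal{J}^i = \id_{\mathcal{A}^i}$. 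Strict $A_1$-unitality of $\mathcal{Q}^i$ is inherited from $\mathcal{P}r^i$, and filtration preservation is preserved at each step of the extension.

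Given the $\mathcal{Q}^i$'s, set $\mathcal{F}^{j,i} := \mathcal{Q}^j \circ \mathcal{J}^i$; this is a composition of filtered $A_1$-unital functors hence is itself filtered and $A_1$-unital, and $\mathcal{F}^{i,i} = \id_{\mathcal{A}^i}$ by the previous paragraph. For the natural transformations, the key input is the family $\{u^{i_1}_X\}_{X \in \Ob(\mathcal{A}^{\text{tot}})}$. These assemble into a $0$-th order (object-level) pre-natural transformation from $\id_{\mathcal{A}^{\text{tot}}}$ to $\mathcal{J}^{i_1} \circ \mathcal{Q}^{i_1}$ sitting in filtration level $0$. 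Using the same obstruction theory underlying Lemma \ref{l:extending-func}, but applied to the relevant $PCC$-cocycle measuring the failure of $u^{i_1}$ to be closed, I would extend $\{u^{i_1}_X\}$ to a filtered $A_{\infty}$-natural transformation $U^{i_1} : \id_{\mathcal{A}^{\text{tot}}} \longrightarrow \mathcal{J}^{i_1} \circ \mathcal{Q}^{i_1}$ which is a quasi-isomorphism in the $0$-persistence level. Defining
\[
T^{i_2,i_1,i_0} := \mathcal{Q}^{i_2} \ast U^{i_1} \ast \mathcal{J}^{i_0} : \mathcal{Q}^{i_2} \circ \mathcal{J}^{i_1} \circ \mathcal{Q}^{i_1} \circ \mathcal{J}^{i_0} \longrightarrow \mathcal{Q}^{i_2} \circ \mathcal{J}^{i_0}
\]
produces filtered natural transformations of shift $0$ that are isomorphisms in $H^0(\text{{ffun}})_0$.

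The main obstacle I anticipate is verifying the coherence identities among the $T^{i_2,i_1,i_0}$'s (in particular the $T^{i,i,i} = \id$ and associativity conditions in the $0$-level persistence category of filtered functors), and arranging the extensions $\mathcal{Q}^i$ and $U^i$ compatibly so that these identities hold strictly where required. The obstructions to coherence live in appropriate $PHH^{\ast}$-groups, and by iterated applications of Lemma \ref{l:PHH-invariance} (using the assumption that each $\mathcal{J}^i$ is a filtered quasi-equivalence) together with $A_1$-unitality, these obstructions vanish in the $0$-level persistence category, allowing a stepwise modification of the $T^{i_2,i_1,i_0}$'s to achieve coherence. The technical challenge is essentially bookkeeping: tracking filtrations through iterated obstruction arguments and ensuring compatibility with the strict unitality conditions in the persistence setting, in close analogy with \cite[Chapter II, Section 10a]{Seidel}.
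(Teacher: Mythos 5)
Your first step — extending $\mathcal{P}r^i$ to $\mathcal{Q}^i$ via Lemmas~\ref{l:PHH-invariance} and~\ref{l:extending-func} — matches the paper's route exactly; this part is correct. The construction of the natural transformations $T^{i_2,i_1,i_0}$, however, goes a genuinely different way from the paper and contains a concrete problem.

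\textbf{The direction error.} You set $U^{i_1}: \id_{\mathcal{A}^{\text{tot}}} \longrightarrow \mathcal{J}^{i_1} \circ \mathcal{Q}^{i_1}$ (consistent with the $u^{i_1}_X$ pointing from $X$ to $\mathcal{P}r^{i_1}(X)$), and then claim that $\mathcal{Q}^{i_2} \ast U^{i_1} \ast \mathcal{J}^{i_0}$ goes from $\mathcal{Q}^{i_2}\circ\mathcal{J}^{i_1}\circ\mathcal{Q}^{i_1}\circ\mathcal{J}^{i_0}$ to $\mathcal{Q}^{i_2}\circ\mathcal{J}^{i_0}$. It does not: whiskering $U^{i_1}$ preserves the source and target, so the resulting transformation goes the \emph{other} way, $\mathcal{F}^{i_2,i_0}\to \mathcal{F}^{i_2,i_1}\circ\mathcal{F}^{i_1,i_0}$, which is the reverse of what the proposition requires. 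You would need a cycle representing the opposite direction $\mathcal{J}^{i_1}\circ\mathcal{Q}^{i_1}\to \id_{\mathcal{A}^{\text{tot}}}$; the strict isomorphisms $u^{i}_X$ have inverses but you would still need an argument that the inverse family assembles into a pre-natural transformation and extends.

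\textbf{The methodological gap.} More fundamentally, your plan to extend $\{u^{i_1}_X\}$ to a full $A_{\infty}$-natural transformation ``by obstruction theory'' is not what the paper does and is not supported by the lemmas available in the paper. Lemma~\ref{l:extending-func} is an extension result for \emph{functors}, not for natural transformations, and the paper develops no separate $PHH$-based obstruction machinery for extending a family of closed degree-$0$ morphisms to a full $A_{\infty}$-natural transformation. The paper sidesteps this entirely by invoking Lemma~\ref{l:LG}: since $\mathcal{Q}^i$ is homologically full and faithful in the persistence sense, so is left-composition $\mathscr{L}_{\mathcal{Q}^i}$, hence $H(\mathscr{L}_{\mathcal{Q}^i})$ is an \emph{isomorphism} of persistence modules on the relevant $\hom$'s. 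One then \emph{lifts} the identity class $[\id]\in H(\hom_{\text{ffun}}(\mathcal{Q}^i,\mathcal{Q}^i))$ through this isomorphism to obtain a cycle $S^i: \mathcal{J}^i\circ\mathcal{Q}^i\to \id_{\mathcal{A}^{\text{tot}}}$ directly, with the correct direction, and sets $T^{i_2,i_1,i_0} := \mathscr{L}_{\mathcal{Q}^{i_2}}(\mathscr{R}_{\mathcal{J}^{i_0}}(S^{i_1}))$. No term-by-term obstruction argument is needed, and the coherence identities become tractable precisely because $S^i$ is characterized by its image under $H(\mathscr{L}_{\mathcal{Q}^i})$. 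If you wish to salvage your route, you would have to (a) reverse the direction as noted, and (b) actually set up and run the obstruction theory for extending a pre-natural transformation in the filtered setting, neither of which the paper provides tools for; the Lemma~\ref{l:LG} route is both simpler and already available.
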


\begin{proof}
  The restriction of the $A_2$-functors $\mathcal{P}r^i$ to the
  subcategory $\mathcal{A}^i \subset \mathcal{A}^{\text{tot}}$ is the
  $A_2$-reduction of the identity $A_{\infty}$-functor on
  $\mathcal{A}^i$. Using Lemmas~\ref{l:PHH-invariance}
  and~\ref{l:extending-func} we can extend $\mathcal{P}r^i$ to the
  desired $A_{\infty}$-functor $\mathcal{Q}^i$.
  
  The construction of the natural transformations
  $T^{i_2, i_1, i_0}: \mathcal{F}^{i_2, i_1} \circ \mathcal{F}^{i_1,
    i_0} \longrightarrow \mathcal{F}^{i_2, i_0}$ follows the same
  scheme as in~\cite[Chapter~II, Section~(10a), Page~134]{Seidel}.
  
  Consider the homological functor induced from left-composition with
  $\mathcal{Q}^i$, viewed as a persistence functor:
  \begin{equation} \label{eq:LQi-1} H(\mathscr{L}_{\mathcal{Q}^i}):
    H(\text{\em{ffun}}(\mathcal{A}^{\text{tot}},
    \mathcal{A}^{\text{tot}})) \longrightarrow
    H(\text{\em{ffun}}(\mathcal{A}^{\text{tot}}, \mathcal{A}^i))
  \end{equation}
  Recall that $\mathcal{Q}^i$ is a filtered quasi-equivalence and in
  particular homologically full and faithful in the persistence
  sense. By Lemma~\ref{l:LG} the action of the functor
  $H(\mathscr{L}_{\mathcal{Q}^i})$ on morphisms is an isomorphism of
  persistence modules:
  \begin{equation} \label{eq:LQi-2} H(\mathscr{L}_{\mathcal{Q}^i}):
    \hom_{H(\text{\em{ffun}}(\mathcal{A}^{\text{tot}},
      \mathcal{A}^{\text{tot}}))}(\mathcal{J}^i \circ \mathcal{Q}^i,
    \id_{\mathcal{A}^{\text{tot}}}) \longrightarrow
    \hom_{H(\text{\em{ffun}}(\mathcal{A}^{\text{tot}},
      \mathcal{A}^i))}(\mathcal{Q}^i, \mathcal{Q}^i).
  \end{equation}
  Let
  $S^i \in \hom^{\leq 0}_{\text{\em{ffun}}(\mathcal{A}^{\text{tot}},
    \mathcal{A}^{\text{tot}})}(\mathcal{J}^i \circ \mathcal{Q}^i,
  \id_{\mathcal{A}^{\text{tot}}})$ be a cycle whose homology class
  $[S^i]$ is sent by $H(\mathscr{L}_{\mathcal{Q}^i})$ to
  $[\id] \in H(\hom_{\text{\em{ffun}}(\mathcal{A}^{\text{tot}},
    \mathcal{A}^i)}(\mathcal{Q}^i, \mathcal{Q}^i))$.  Having defined
  $S^i$ as above for all $i \in \mathcal{I}$, we define
  \begin{equation} \label{eq:Ti2i1i0} T^{i_2, i_1, i_0} :=
    \mathscr{L}_{\mathcal{Q}^{i_2}}(\mathscr{R}_{\mathcal{J}^{i_0}}(S^{i_1})),
  \end{equation}
  where $\mathscr{R}_{\mathcal{J}^{i_0}}$ is the functor
  $\text{\em{ffun}}(\mathcal{A}^{\text{tot}},
  \mathcal{A}^{\text{tot}}) \longrightarrow
  \text{\em{ffun}}(\mathcal{A}^{i_0}, \mathcal{A}^{\text{tot}})$
  induced from right-composition with $\mathcal{J}^{i_0}$.
\end{proof}

\subsection{Proof of Theorem~\ref{t:fil-fuk}, part
  3: coherent systems of Fukaya categories} \label{sbsb:coh-sys-fuk}
In order to apply the algebraic statements from~\S\ref{sbsb:coh-sys},
particularly Proposition~\ref{p:atot-coh}, to the case of Fukaya
categories we need two additional ingredients coming from geometry.
First, we need to construct a filtered total Fukaya category
$\fuk^{\text{tot}}(\mathcal{X})$ that contains all the Fukaya
categories $\fuk(\mathcal{X}; i)$, constructed via various
perturbation data $i$, as quasi-equivalent subcategories of
$\fuk^{\text{tot}}(\mathcal{X})$. The second ingredient is to
construct filtered $A_2$-functors
$\mathcal{P}r^i: \fuk^{\text{tot}}(\mathcal{X}) \longrightarrow
\fuk(\mathcal{X}; i)$ that are left inverses of the inclusions
$\fuk(\mathcal{X}; i) \subset \fuk^{\text{tot}}(\mathcal{X})$.

We begin with the first ingredient, namely the construction of
$\fuk^{\text{tot}}$. To this end, recall the spaces of perturbation
data $\mathcal{B}(\bar{\mathcal{X}}; R, r)$ from
page~\pageref{pp:space-B}. We fix the symplectic embedding $\phi$ and
the parameters $R_{\bar{\mathcal{X}}}$ \zhnote{and
  $r\leq R \leq R_{\bar{\mathcal{X}}}$}. We will outline now a
construction of a filtered strictly unital total $A_{\infty}$-category
over $\mathcal{B}(\bar{\mathcal{X}}; R, r)$. (See
page~\pageref{p:tot-over-base} for the meaning of a total category
over a base.) More specifically, we will construct a filtered strictly
unital $A_{\infty}$-category $\fuk^{\text{tot}}(\mathcal{X}; R, r)$
with the following property.  For every
$i \in \mathcal{B}(\bar{\mathcal{X}}; R, r)$, the filtered Fukaya
category $\fuk(\mathcal{X}; i)$ is a full subcategory of
\pbrev{$\fuk^{\text{tot}}(\mathcal{X}; R, r)$} and the inclusion
\zjr{$\mathcal{J}^i: \fuk(\mathcal{X}; i) \longrightarrow
  \fuk^{\text{tot}}(\mathcal{X}; R, r)$} is a filtered
quasi-equivalence.

The construction of $\fuk^{\text{tot}}(\bar{\mathcal{X}}; R, r)$
follows similar steps to the construction introduced
in~\cite[Chapter~II, Section~10(a), pages 134-5]{Seidel}, with some
significant modifications necessary to accommodate the filtered
setting. The objects of $\fuk^{\text{tot}}(\bar{\mathcal{X}}; R, r)$
are pairs $(L,i)$, where $L \in \mathcal{X}$ and
$i \in \mathcal{B}(\bar{\mathcal{X}}; R, r)$ is a choice of admissible
perturbation data. The morphism space between $(L_0, i_0)$ and
$(L_1, i_1))$ is defined to be the Floer complex
$CF(L_0, L_1; \mathscr{D}_{(L_0,i_0), (L_1, i_1)})$ where the Floer
datum $\mathscr{D}$ is defined as in our earlier construction of
filtered Fukaya categories with the only restriction that if $i_0=i_1$
then the Floer datum $\mathscr{D}_{(L_0,i_0), (L_1, i_1)}$ should agree with
that of $(L_0, L_1)$ in the category $\fuk(\mathcal{X}; i)$.  Another
important point is that we require the Floer data for pairs of the
type $((L,i), (L,j))$ to continue to be of the same type as in our
construction of filtered Fukaya categories. Namely, we take here a
pair of a Morse function and a Riemannian metric, such that the Morse
function has a unique local maximum (i.e.~a unique critical point of
index $n = \dim_{\mathbb{C}}X$).

The next step in the construction of the total category is to choose
consistent perturbation data
$\mathscr{P}^{\text{tot}} = \mathscr{P}^{\text{tot}}(\mathcal{X}; R,
r)$ with the restriction analogous to the one imposed on the Floer
data. Namely, whenever we have a cluster of punctured disks decorated
entirely by pairs $(L_k, i)$ with the same $i$, then the value of
$\mathscr{P}^{\text{tot}}$ on such a cluster coincides with the one
prescribed by the perturbation data $i$. Apart from that there will be
one important difference to \zhnote{the} way we have defined the perturbation data
for each $\fuk(\mathcal{X}; i)$, which we now describe.  Given a tuple
$(i_0, \ldots, i_d)$ with
$i_k \in \mathcal{B}(\bar{\mathcal{X}}; R, r)$ we define the following
two quantities:
\begin{equation} \label{eq:Ri-ri}
  \begin{aligned}
    & R^{(i_0, \ldots, i_d)} := \sup \{ \widetilde{R} \mid
    \widetilde{R} \leq R_{\bar{\mathcal{X}}}, \text{ and } i_k \in
    \mathcal{B}(\bar{\mathcal{X}}; \widetilde{R},\widetilde{R}) \,
    \forall \, 0 \leq k \leq d\}, \\
    & r^{(i_0, \ldots, i_d)} :=\inf \{\widetilde{r} \mid 0 \leq
    \widetilde{r}, \text{ and } i_k \in \mathcal{B}(\bar{\mathcal{X}};
    \widetilde{r}, \widetilde{r}) \, \forall \, 0 \leq k \leq d\}.
  \end{aligned}
\end{equation}
In other words, $R^{(i_0, \ldots, i_d)}$ measures the supremal radius
of the sub-balls in the embedding $\phi$ on which the almost complex
structures in all the perturbation data $i_k$ are standard. The other
quantity $r^{(i_0, \ldots, i_d)}$ measures the infimal upper bound on
the perturbation $1$-forms in all the perturbation data $i_k$.  Note
that since $i_k \in \mathcal{B}(\bar{\mathcal{X}}; R,r)$ for all $k$,
we have
\begin{equation} \label{eq:R-leq-Ri-etc} r^{(i_0, \ldots, i_d)} \leq r
  \leq R \leq R^{(i_0, \ldots, i_d)}.
\end{equation}

Turning to the definition of the perturbation data
$\mathscr{P}^{\text{tot}}$, we require it to satisfy the following
conditions. Let $S_{d+1}$ be $(d+1)$-punctured disk, decorated by the
tuple $((L_0, i_0), \ldots, (L_d, i_d))$, and denote by $(K,J)$ the
value of the perturbation data \zhnote{on} $S_{d+1}$. Recall the symplectic
embedding $\phi$ from~\eqref{eq:phi-embedd} and the balls $B_j$ (see
page~\pageref{pp:balls-R}). We require that:
\begin{enumerate} \label{pp:JK-tot}
\item $K|_{\phi(B_j(R^{(i_0, \ldots, i_d)}))} \equiv 0$ for every
  $j$. Here we have denoted by
  $B_j(R^{(i_0, \ldots, i_d)}) \subset B_j$ the smaller ball of radius
  $R^{(i_0, \ldots, i_d)}$.
\item
  $J_z|_{\phi(B_j(R^{(i_0, \ldots, r_d)}))} = \phi_*(J_{\text{std}})$
  for every $j$ and every $z \in S_{d+1}$.
\end{enumerate}
In addition to the above two conditions we also require that
\begin{equation} \label{eq:nu-P-tot}
  \nu(\mathscr{P}^{\text{tot}}((L_0, i_0), \ldots, (L_d, i_d))) <
  C(d+1)(r^{(i_0, \ldots, i_d)})^2.
\end{equation}
Here $C$ is the constant from~\eqref{eq:E-monotonicity-bound}, and
similarly to~\eqref{eq:K-norm-sup}
$$\nu(\mathscr{P}^{\text{tot}}((L_0,i_0), \ldots, (L_d,i_d)))
:= C_{d+1} \sup \|K(\mathscr{P}^{\text{tot}}, S_{d+1})|,$$ where now
the supremum goes over all \zhnote{$(d+1)$}-punctured disks $S_{d+1}$ that are
decorated by $((L_0,i_0), \ldots, (L_d, i_d))$. Finally, the above
requirements extend to clusters of punctured disks in a similar way.

We claim that there exists a consistent choice of perturbation data
$\mathscr{P}^{\text{tot}}$ satisfying the above conditions. \zhnote{The proof of this is similar to the way} we have proved the same statement
for the perturbation data
$i \in \mathcal{B}(\bar{\mathcal{X}}; R, r)$.

With a choice of perturbation data as above one can define an
$A_{\infty}$-category in the same way we defined our earlier Fukaya
categories. We denote this category by
$\fuk^{\text{tot}}(\bar{\mathcal{X}}; R, r)$ or sometimes by
$\fuk^{\text{tot}}(\bar{\mathcal{X}}; \mathscr{P}^{\text{tot}})$ when
we want to emphasize the choice of the perturbation data
$\mathscr{P}^{\text{tot}}$ used to define it.

We claim that $\fuk^{\text{tot}}(\bar{\mathcal{X}}; R, r)$ is a
filtered and strictly unital $A_{\infty}$-category. This follows by
the same arguments we used for $\fuk(\mathcal{X}; i)$,
$i \in \mathcal{B}(\bar{\mathcal{X}}; R, r)$, with minor
modifications. The important points are that the analogues of
inequalities~\eqref{eq:E-monotonicity-bound},~\eqref{eq:A-nu-2}
and~\eqref{eq:nu-cr0} will continue to hold with $R$ and $r$ replaced
by $R^{(i_0, \ldots, i_d)}$ and $r^{(i_0, \ldots, i_d)}$ respectively,
and $\nu(\mathscr{P}_{d+1})$ by
$\nu(\mathscr{P}^{\text{tot}}((L_0,i_0), \ldots, (L_d,i_d)))$.  This
completes the outline of the construction of the category
$\fuk^{\text{tot}}(\bar{\mathcal{X}}; R, r)$.

For every $i \in \mathcal{B}(\bar{\mathcal{X}}; R, r)$ there is an
obvious inclusion
$\mathcal{J}^i: \fuk(\mathcal{X}; i) \longrightarrow
\fuk^{\text{tot}}(\bar{\mathcal{X}}; R, r)$. Clearly this functor is
filtered and we claim that it is a filtered quasi-equivalence. To see
the latter statement, first note that by construction $\mathcal{J}^i$
is full and faithful. Now, any object
$(L,k) \in \Ob(\fuk^{\text{tot}}(\bar{\mathcal{X}}; R, r))$, is
isomorphic to $(L,i)$ via an isomorphism $u$ that lies in
$\hom^{\leq 0}$ of $\fuk^{\text{tot}}$. This follows from Morse
theory, since the $\hom$ between $(L,k)$ and $(L,i)$ is the Morse
complex of $L$ with respect to a Morse function with a unique local
maximum. This shows that $\mathcal{J}^i$ is a filtered
quasi-equivalence.

Next, for every $i \in \mathcal{B}(\bar{\mathcal{X}}; R, r)$ we
construct a filtered $A_2$-functor
$\mathcal{P}r^i: \fuk^{\text{tot}}(\mathcal{X}; R, r) \longrightarrow
\fuk(\mathcal{X}; i)$, which is strictly $A_1$-unital and such that
$\mathcal{P}r^i \circ \mathcal{J}^i = \id_{\fuk(\mathcal{X}; i)}$.

Let $i \in \mathcal{B}(\bar{\mathcal{X}}; R, r)$. The construction of
$\mathcal{P}r^i$ goes as follows. Let $(L,k)$ be an object of
$\fuk^{\text{tot}}(\mathcal{X}; R, r)$, where $L \in \mathcal{X}$,
$k \in \mathcal{B}(\bar{\mathcal{X}}; R, r)$. We define
$\mathcal{P}r^i((L,k)) = L$.

Next, we define the 1'st order part $\mathcal{P}r^i_1$ of
$\mathcal{P}r^i$ on morphisms. This can be done by means of {\em Floer
  continuation maps}. Specifically, we need to define a filtration
preserving chain map
\begin{equation} \label{eq:Pi-1-a}
  \mathcal{P}r_1^i: CF((L', k'), (L'', k''); \mathscr{D}_{(L', k'),
    (L'', k'')}) \longrightarrow CF(L',L''; \mathscr{D}_{L',L'',
    i})
\end{equation}
for every $L', L'' \in \mathcal{X}$,
$k', k'' \in \mathcal{B}(\bar{\mathcal{X}}; R, r)$, where
$\mathscr{D}_{(L', k'), (L'', k'')}$ is the Floer data for the pair
$((L', k'), (L'', k''))$ and $\mathscr{D}_{L',L'', i}$ is the one \zhnote{used} in
$\fuk(\mathcal{X}; i)$. Assume first that $\bar{L}' \neq \bar{L}''$
(which means that $\bar{L}' \pitchfork \bar{L}''$. By construction,
the Hamiltonian terms in both Floer data
$\mathscr{D}_{(L', k'), (L'', k'')}$ and $\mathscr{D}_{L',L'', i}$ are
identically $0$ (so the two Floer data may differ only in their almost
complex structures). Denote by
$J(\mathscr{P}^{\text{tot}};(L',k', L'',k''))$ the almost complex
structure of $\mathscr{D}_{(L', k'), (L'', k'')}$ and by
$J(i; L', L'')$ the one corresponding to $\mathscr{D}_{L',L'', i}$.
Fix a generic homotopy $J_s^{\text{cont}}$, $s \in [0,1]$, between
these two almost complex structures (of the type admissible in
$\mathcal{B}(\bar{\mathcal{X}}; R, r)$). We assume that
$J_s^{\text{cont}}$ coincides with
$J(\mathscr{P}^{\text{tot}};(L',k', L'',k''))$ near $s=0$ and with
$J(i; L', L'')$ near $s=1$. \zjr{Extend} this homotopy to
$s \in \mathbb{R}$ by keeping it constant with respect to the
$s$-parameter outside of $[0,1]$. Recall that, by construction, both
Floer data $\mathscr{D}_{(L', k'), (L'', k'')}$ and
$\mathscr{D}_{L',L'', i}$ have $0$ Hamiltonian terms, hence
$(0, J_s^{\text{cont}})$ defines a homotopy between the \zhnote{latter two}
Floer data. Standard Floer theory associates to this homotopy a
filtration preserving quasi-isomorphism as claimed in~\eqref{eq:Pi-1-a}
which is called {\em the Floer continuation map}. That
$\mathcal{P}r^i_1$ preserves filtrations follows from standard
arguments in Floer theory (using the assumption that the Hamiltonian
terms in the preceding homotopy of Floer data are $0$ for all times
$s$).

For further use, we will add one more restriction on the definition of
the continuation maps $\mathcal{P}r^i_1$. In case the two objects
$(L',k')$ and $(L'',k'')$ correspond to the \zhnote{same} perturbation data $i$,
i.e.~$k'=k''=i$, we will take the homotopy $J^{\text{cont}}_s$ to be
constant. As a result the continuation map
$$\mathcal{P}r^i_1: CF((L', i), (L'', i); \mathscr{D}_{(L', i), (L'', i)})
\longrightarrow CF(L',L''; \mathscr{D}_{L',L'', i})$$ for such pairs
will be the identity. 

We now briefly address the case when $\bar{L}'=\bar{L}''$.  Recall
that in this case the Floer data on each of
$\mathscr{D}_{(L', k'), (L'', k'')}$, $ \mathscr{D}_{L',L'', i}$,
consists of a Morse function \zhnote{and a} Riemannian
metric on $\bar{L}'$. In our model the corresponding $CF$'s are just
the Morse complexes on $\bar{L}'$ associated to these data. The map
$\mathcal{P}r^i_1$ is now defined by means of \zhnote{standard Morse}
homology theory - it is just the continuation map between the two
Morse complexes. The fact that $\mathcal{P}r^i_1$ is filtration
preserving is automatic since, by definition, both Morse complexes
$CF((L', k'), (L'', k''); \mathscr{D}_{(L', k'), (L'', k'')})$ and
$CF(L',L''; \mathscr{D}_{L',L'', i})$ are concentrated at the same
filtration level (which is a constant that depends on the difference
between the primitives of the Liouville forms on $L'$ and $L''$).

Similarly to the case $\bar{L}' \neq \bar{L}''$, here too, we can
arrange $\mathcal{P}r^i_1$ to be the identity map whenever
$k'=k''=i$. This can be done by taking the homotopy between the two
Morse data to be constant\zhnote{, and the} resulting Morse continuation map will
then be the identity.

Next we define the 2'nd order part of $\mathcal{P}r^i$. This will be a
map
\begin{equation} \label{eq:Pi-2-a} \mathcal{P}r^i_2: CF((L_0, k_0), (L_1,
  k_1); \mathscr{D}_{\text{tot}}) \otimes CF((L_1, k_1), (L_2, k_2);
  \mathscr{D}_{\text{tot}}) \longrightarrow CF(L_0, L_2;
  \mathscr{D}_i)
\end{equation}
of cohomological degree $-1$. Here we have written
$\mathscr{D}_{\text{tot}}$ and $\mathscr{D}_i$ for the Floer data (for
the corresponding pairs of Lagrangians) in the categories
$\fuk^{\text{tot}}(\mathcal{X}; R, r)$ and $\fuk(\mathcal{X}; i)$
respectively.

Assume for simplicity that $\bar{L}_0$, $\bar{L}_1$, $\bar{L}_2$ are
all distinct. To define~\eqref{eq:Pi-2-a} we will need to introduce
first some new spaces of Floer-type polygons. Recall the homotopy
$J_s^{\text{cont}}$ from the definition of $\mathcal{P}r^i_1$
above. Below we will need a more precise notation and we will denote
it from now by
$J_{s,t}^{\text{cont}}((L',k'), (L'',k''), (L',L'',i))$\zhnote{, where
$s \in \mathbb{R}$, $t \in [0,1]$}. (Recall that all our almost
complex structures are possibly time-dependent, and we denote here by
$t$ the time parameter.)

Denote by $S_3$ the $3$-punctured disk. Recall that $S_3$ has two
``entry'' strip-like ends $St^{-}_{0,1}$, $St^{-}_{1,2}$ and one
``exit'' strip-like end $St_{0,2}^+$. We order them in the clockwise
direction, $St^{-}_{0,1}$, $St^{-}_{1,2}$, $St_{0,2}^+$, according to
the punctures they correspond to. We denote by $(s,t)$ the coordinates
on each of these strip-like ends. Thus
$(s,t) \in (-\infty, 0] \times [0,1]$ for $St^{-}_{0,1}$,
$St^{-}_{1,2}$ and and $(s,t) \in [0, \infty) \times [0,1]$ for
$St_{0,2}^+$. We also fix a smooth positive decreasing function
$A:(0,\delta) \longrightarrow \mathbb{R}$ in a small neighborhood of
$0$ with $A(\tau) \longrightarrow \infty$ as $\tau \to 0^+$.

Consider now a $1$-parametric family
\zhnote{$\mathscr{P}^{\tau}(\mathcal{P}r) = (K^{\tau}(\mathcal{P}r),
J^{\tau}(\mathcal{P}r)$, $\tau \in (0,1))$} of perturbation data on the
$3$-punctured disk $S_3$. We will write
\zhnote{$\mathscr{P}_z^{\tau}(\mathcal{P}r) = (K_z^{\tau}(\mathcal{P}r),
J_z^{\tau}(\mathcal{P}r))$} for the value of the perturbation data at the
point $z \in S_3$.  We will require the family
$\mathscr{P}^{\tau}(\mathcal{P}r)$, $\tau \in (0,1)$, to satisfy the
following conditions:
\begin{enumerate}
\item \label{i:St01} When $0 < \tau < \delta$, \zhnote{for} every point in
  $St_{0,1}^-$ with coordinates $(s,t) \in (-\infty, 0] \times [0,1]$
  we have
  $$J_{s,t}^{\tau}(\mathcal{P}r) =
  J_{s+A(\tau)+1,t}^{\text{cont}}((L_0,k_0), (L_1,k_1), (L_0,
  L_1,i)).$$ In other words, when \zhnote{$s \in [-A(\tau)-1, -A(\tau)]$,}
  $J_{s,-}^{\tau}(\mathcal{P}r)$ coincides with the continuation
  homotopy $J^{\text{cont}}$ after a suitable shift in the
  $s$-parameter. Note that $J_{s+A(\tau)+1,t}^{\text{cont}}$ has been
  defined for all $s \in \mathbb{R}$.
\item \label{i:St12}When $0 < \tau < \delta$, \zhnote{for} every point in
  $St_{1,2}^-$ with coordinates $(s,t) \in (-\infty, 0] \times [0,1]$
  we have
  $$J_{s,t}^{\tau}(\mathcal{P}r) =
  J_{s+A(\tau)+1,t}^{\text{cont}}((L_1,k_1), (L_2,k_2), (L_1,
  L_2,i)).$$
\item When $0 < \tau < \delta$, we require the perturbation data
  $\mathscr{P}^{\tau}(\mathcal{P}r)$ to coincide with the one assigned
  by the perturbation data $i$ to the triple $(L_0, L_1, L_2)$ along
  $S_3 \setminus (St_{0,1}^{-} \cup St_{1,2}^{-})$. Note that this
  requirement is compatible with the previous two conditions.
\item \label{i:St02} When $1-\delta < \tau < 1$, \zhnote{for} every point
  in $St_{0,2}^+$ with coordinates
  $(s,t) \in [0, \infty) \times [0,1]$ we have
  $$J_{s,t}^{\tau}(\mathcal{P}r) =
  J_{s-A(1-\tau),t}^{\text{cont}}((L_0,k_0), (L_2,k_2),
  (L_0,L_2,i)).$$
\item When $1-\delta < \tau < 1$, we require the perturbation data
  $\mathscr{P}^{\tau}(\mathcal{P}r)$ to coincide with the one assigned
  by $\mathscr{P}^{tot}$ to the triple
  $((L_0, k_0), (L_1, k_1), (L_2, k_2))$ along
  $S_3 \setminus St_{0,2}^+$.  Again, this requirement is compatible
  with the previous one.
\item For every $\tau \in [\delta, 1-\delta]$, the perturbation data
  $\mathscr{P}^{\tau}(\mathcal{P}r)$ coincides with the Floer datum of
  the pair $((L_0, k_0), (L_1, k_1))$ (as assigned by
  $\mathscr{P}^{\text{tot}}$) along $St_{0,1}^-$ outside some compact
  subset. We require the analogous condition to hold also with respect
  to $((L_1, k_1), (L_2, k_2))$ along $St_{1,2}^-$.
\item For every $\tau \in [\delta, 1-\delta]$, the perturbation data
  $\mathscr{P}^{\tau}(\mathcal{P}r)$ coincides with the Floer datum of
  the pair $(L_0, L_2)$, as assigned by the perturbation data $i$,
  along $St_{0,2}^+$ outside some compact subset.
\item \label{i:P-tau} For all $\tau \in (0,1)$, the almost complex
  structures $J^{\tau}(\mathcal{P}r)$ and perturbation forms
  $K^{\tau}(\mathcal{P}r)$ from $\mathscr{P}^{\tau}(\mathcal{P}r)$ are
  all of the types and sizes admissible in the construction of
  $\mathscr{P}^{\text{tot}}$. In particular they should satisfy the
  inequality~\eqref{eq:nu-P-tot} (for $d+1=3$) and the two conditions
  on $K$ and $J$ that are listed before that inequality on
  page~\pageref{pp:JK-tot}.
\item \label{i:P-const} In case the three pairs $(L_0,k_0)$,
  $(L_1,k_1)$, $(L_2,k_2)$ all correspond to the perturbation data
  $i$, i.e.~$k_0=k_1=k_2$ we take the family
  $\{\mathscr{P}^{\tau}(\mathcal{P}r)\}$ to be constant with respect
  to $\tau$, and moreover to coincide with the perturbation data
  $i$. Note that this is compatible with the rest of the conditions
  above, since earlier we required each of the homotopies
  $J_s^{\text{cont}}$ that appear in
  points~\eqref{i:St01},~\eqref{i:St12} and~\eqref{i:St02} above to be
  constant (with respect to $s$).
\end{enumerate}

Fix three intersection points $x \in \bar{L}_0 \cap \bar{L}_1$,
$y \in \bar{L}_1 \cap \bar{L}_2$, $w \in \bar{L}_0 \cap \bar{L}_2$.
Denote by
$\mathcal{M}^{\mathcal{P}r_2}(x, y, w;
\{\mathscr{P}^{\tau}(\mathcal{P}r)\})$ the space of all pairs
$(\eta, u)$, where $\eta \in (0,1)$ and $u:S_3 \longrightarrow X$
solves the generalized Floer equation with respect to the perturbation
data $\mathscr{P}^{\eta}(\mathcal{P}r)$, with \zjr{Lagrangian} boundary
conditions prescribed by $\bar{L}_0, \bar{L}_1, \bar{L}_2$ and with
asymptotics at the ends being $x$, $y$ and $w$.

By choosing the family $\{\mathscr{P}^{\tau}(\mathcal{P}r)\}$ to be
generic we have that
$\mathcal{M}^{\mathcal{P}r_2}(x, y, w;
\{\mathscr{P}^{\tau}(\mathcal{P}r)\})$ is a smooth manifold of
dimension $d(x,y,w) = |w|'-|x|'-|y|'+1$, where $|\,\cdot\,|'$ denotes
cohomological degree. Moreover, standard arguments show that if
$d(x,y,z)=0$ then
$\mathcal{M}^{\mathcal{P}r_2}(x, y, z;
\{\mathscr{P}^{\tau}(\mathcal{P}r)\})$ is compact, hence consists of
finitely many points. We then define
\begin{equation} \label{eq:Pi-2-b} \mathcal{P}r^i_2(x,y) : = \sum_{z}
  \# \mathcal{M}^{\mathcal{P}r_2}(x,y,z;
  \{\mathscr{P}^{\tau}(\mathcal{P}r)\}) z,
\end{equation}
where $z$ runs over all points in $\bar{L}_0 \cap \bar{L}_2$ with
$|z|' = |x|'+|y|'-1$ (hence $d(x,y,z)=0$). As before,
\zhnote{$\# \mathcal{M}^{\mathcal{P}r_2}(-)$} is the count (with values in
$\mathbb{Z}_2$, or under additional assumptions in $\k$) of the number
of elements in the above space.

We claim that $\mathcal{P}r^i_1$ and $\mathcal{P}r^i_2$ form together
an $A_2$-functor. This amounts to showing that $\mathcal{P}r^i_2$
satisfies the following identity:
\begin{equation} \label{eq:Pi-2-c}
  \begin{aligned}
    \mathcal{P}r^i_1(\mu^{\text{tot}}_2(x,y)) -
    \mu^{(i)}_2(\mathcal{P}r^i_1(x), \mathcal{P}r^i_1(y)) = &
    \epsilon_1
    \mu_1^{(i)} \mathcal{P}r^i_2(x,y) \\
    + & \epsilon_2 \mathcal{P}r^i_2(\mu_1^{\text{tot}}(x),y) +
    \epsilon_3 \mathcal{P}r^i_2(x, \mu_1^{\text{tot}}(y)),
  \end{aligned}
\end{equation}
where $\epsilon_1, \epsilon_2, \epsilon_3 = \pm 1$ are signs that
depend on the degrees of $x$, $y$ and $z$ (but as we work with
$\k = \mathbb{Z}_2$ the precise \zhnote{value} of these signs is irrelevant).
Here we have denoted by $\mu^{\text{tot}}$ the $A_{\infty}$-operations
in the category $\fuk^{\text{tot}}(\mathcal{X}; R, r)$ and by
$\mu^{(i)}$ those from the category $\fuk(\mathcal{X}; i)$.

The proof of~\eqref{eq:Pi-2-c} is based on \zjr{standard arguments} in Floer
theory. Fix $w$ such that $d(x,y,w)=1$. Then
$\mathcal{M}^{\mathcal{P}r_2}(x,y,w;
\{\mathscr{P}^{\tau}(\mathcal{P}r)\})$ is a $1$-dimensional smooth
manifold. Its compactification
$\overline{\mathcal{M}}^{\mathcal{P}r_2}(x,y,w)$ is a compact
$1$-dimensional smooth manifold with boundary. The boundary points of
$\partial \overline{\mathcal{M}}^{\mathcal{P}r_2}(x,y,w)$ \zjr{consist} of
five types of broken trajectories, as depicted in
Figure~\ref{f:Pr}. The first two types correspond to $\tau \to 1^{-}$
and $\tau \to 0^{+}$ respectively, and the number of occurrences of
each of them equals to the coefficient of $w$ in the first and second
terms on the left-hand side of~\eqref{eq:Pi-2-c}, respectively. The
other three types of broken trajectories, occurring at instances of
time $0 < \tau_0 < 1$, correspond to standard breaking along
strip-like ends. The coefficients of $w$ in each of the terms on the
right-hand side of~\eqref{eq:Pi-2-c} equals to the number of
occurrences of each of these three broken trajectories, respectively.
The identity~\eqref{eq:Pi-2-c} now follows (with appropriate sings
$\epsilon_i$) since the signed number of boundary points in
$\overline{\mathcal{M}}^{\mathcal{P}r_2}(x,y,w)$ must be $0$.  This
concludes the construction of the $A_2$-functor $\mathcal{P}r^i$ for
every $i \in \mathcal{B}(\bar{\mathcal{X}}; R, r)$.

\begin{figure}[h]
  \begin{center}
  \includegraphics[scale=0.65]{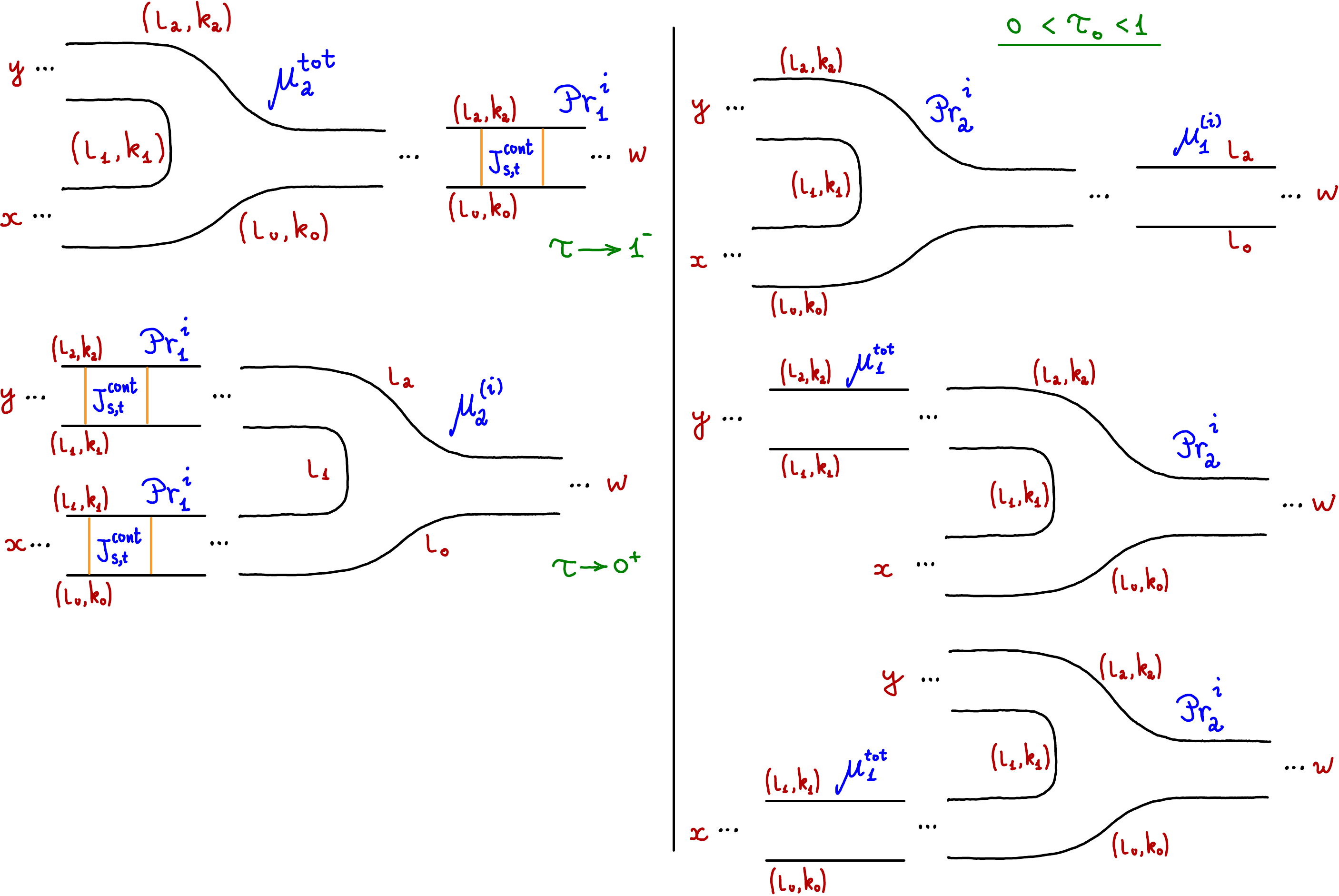}
  \end{center}
  \caption{The five possible types of boundary points of
    $\partial \overline{\mathcal{M}}^{\mathcal{P}r_2}(x,y,w)$.}
  \label{f:Pr}
\end{figure}

We now claim that $\mathcal{P}r^i$ \zjr{preserves} filtrations.  We have
already proved earlier that $\mathcal{P}r^i_1$ is filtration
preserving, so it remains to deal with the 2'nd component
$\mathcal{P}r^i_2$ of $\mathcal{P}r^i$. The fact that
$\mathcal{P}r^i_2$ is filtration preserving follows from
condition~\eqref{i:P-tau} on page~\pageref{i:P-tau}. Indeed, due to
inequality~\eqref{eq:nu-cr0}, the proof from
pages~\pageref{p:preserve-fil-1}--\pageref{p:preserve-fil-2} extends
with minor modifications to show that for every
$(\eta, u) \in \mathcal{M}^{\mathcal{P}r_2}(x, y, z;
\{\mathscr{P}^{\tau}(\mathcal{P}r)\})$ we have $A(u) > 0$ (where
$A(u)$ is the symplectic area of $u$).

We will soon apply Proposition~\ref{p:atot-coh} with
$\mathcal{A}^i = \fuk(\mathcal{X}; i)$,
$i \in \mathcal{I} := \mathcal{B}(\bar{\mathcal{X}}; R, r)$,
$\mathcal{A}^{\text{tot}} = \fuk^{\text{tot}}(\mathcal{X}; R,r)$. In
order to do so we still need to show several other properties of the
$A_2$-functors $\mathcal{P}r^i$, as required by
Proposition~\ref{p:atot-coh}.

The first one is that $\mathcal{P}r^i$ is strictly $A_1$-unital. This
follows immediately from Morse theory, since the continuation maps in
Morse theory send the unique local maximum \zhnote{of} the Morse function on
$(L,k)$ to the corresponding one for $(L,i)$. Therefore
$\mathcal{P}r^i_1$ sends strict units to strict units.

The next property is that
$\mathcal{P}r^i \circ \mathcal{J}^i = \id_{\fuk(\mathcal{X}; i)}$ as
$A_2$-functors. This will follow form the following two statements:
\begin{enumerate}
\item For all pairs of the type $((L',i), (L'',i))$ we have
  $\mathcal{P}r^i_1 = \id$.
\item For any three objects of the type $(L_0,i)$, $(L_1,i)$,
  $(L_2,i)$ we must have $\mathcal{P}r^i_2 = 0$.
\end{enumerate}
The first statement has already been proved earlier.  The second one
follows from condition~\eqref{i:P-const} in the definition of the
family $\{\mathcal{P}^{\tau}(\mathcal{P}r)\}$, namely the requirement
that this family is constant (in $\tau$) and coincides with $i$ for
all $\tau$. This implies that $\mathcal{M}^{\mathcal{P}r_2}(x,y,z)$ is
the same space as the one defining the operation $\mu_2$ (in the
category $\fuk(\mathcal{X};i)$). A transversality/dimension argument
now shows that whenever $|z|' = |x|'+|y|'-1$ we have
$\mathcal{M}^{\mathcal{P}r_2}(x,y,z) = \emptyset$. This proves the
second statement and concludes the proof that
$\mathcal{P}r^i \circ \mathcal{J}^i = \id_{\fuk(\mathcal{X}; i)}$ as
$A_2$-functors.

Finally, we claim that the $A_2$-functor $\mathcal{P}r^i$ has the
property described just before the statement of
Proposition~\ref{p:atot-coh}. Namely, for every object $(L,k)$ of
$\fuk^{\text{tot}}(\mathcal{X}; R,r)$ there exists a strict
isomorphism
$u^i_{(L,k)} \in \hom^{\leq 0}_{\fuk^{\text{tot}}(\mathcal{X};
  R,r)}((L,k), (L,i))$. Indeed, we can take $u^i_{(L,k)}$ to be the
unique critical point of index $n = \dim_{\mathbb{C}}X$ for the Morse
function in the Floer datum of the pair $((L,k), (L,i))$. Standard
arguments in Morse theory then show that $u^i_{(L,k)}$ is a strict
isomorphism.

We are now in position to apply Proposition~\ref{p:atot-coh} with
$\mathcal{A}^i = \fuk(\mathcal{X}; i)$,
$i \in \mathcal{I} := \mathcal{B}(\bar{\mathcal{X}}; R, r)$,
$\mathcal{A}^{\text{tot}} = \fuk^{\text{tot}}(\mathcal{X}; R,r)$.  By
that proposition we obtain the structure of a coherent system of
filtered $A_{\infty}$-categories on the family
$\{\fuk(\mathcal{X}; i)\}_{i \in \mathcal{B}(\bar{\mathcal{X}}; R,
  r)}$. Note that this holds for every $r \leq R$, hence in particular
also for $r=R$. Note also that
$\mathcal{B}(\bar{\mathcal{X}}; R, r) \subset
\mathcal{B}(\bar{\mathcal{X}}; R, R)$ and we can arrange our choices
(of $\mathscr{P}^{\text{tot}}$) such that the coherent system over the
larger base $\mathcal{B}(\bar{\mathcal{X}}; R, R)$ restricts to the
one over the smaller base $\mathcal{B}(\bar{\mathcal{X}}; R, r)$.

The above construction gives us many coherent systems. \zhnote{Namely,} one
coherent systems over each base
$\mathcal{B}(\bar{\mathcal{X}}; R, R)$, for all
$0<R \leq R_{\bar{\mathcal{X}}}$. We denote the comparison functors of
the coherent system over $\mathcal{B}(\bar{\mathcal{X}}; R, R)$ by
$\mathcal{F}^{j,i}_{R}: \fuk(\mathcal{X}; i) \longrightarrow
\fuk(\mathcal{X};j)$ for every
$i, j \in \mathcal{B}(\bar{\mathcal{X}}; R, R)$. Similarly, we denote
by $T_R^{i_2, i_1, i_0}$ the natural transformations of this system
(relating $\mathcal{F}^{i_2,i_1}_{R} \circ \mathcal{F}^{i_1,i_0}_{R}$
to $\mathcal{F}^{i_2,i_0}_{R}$).

\begin{rem} \label{r:big-tot} Ideally we would have liked to construct
  one total category
  \zhnote{$\fuk^{\rm tot}(\mathcal{X}; \mathcal{B}(\bar{\mathcal{X}}))$} over the entire
  base $\mathcal{B}(\bar{\mathcal{X}})$. Unfortunately this is not so
  straightforward to achieve, at least \zhnote{not with} our construction of the
  total categories. The difficulty has to do with establishing a set
  of perturbation data $\mathscr{P}^{\text{tot}}$ (over the entire of
  $\mathcal{B}(\bar{\mathcal{X}})$ and in fact even over subspaces of
  it of the form
  $\mathcal{B}(\bar{\mathcal{X}}; R_1,R_1) \cup
  \mathcal{B}(\bar{\mathcal{X}}; R_2,R_2)$) that is both consistent
  (with respect to splitting/gluing) and at the same time also yields
  filtration preserving operations $\mu_d$.
  
  To understand better the difficulty, consider the case of
  $\mathcal{B}(\bar{\mathcal{X}}; R_1,R_1) \cup
  \mathcal{B}(\bar{\mathcal{X}}; R_2,R_2)$, where
  $R_2 < R_1 \leq R_{\bar{\mathcal{X}}}$. Let $(i_0, i_1, i_2, i_3)$
  be a tuple of perturbation data with
  \begin{equation*}
    \begin{aligned}
      & i_1, i_3 \in \mathcal{B}(\bar{\mathcal{X}}; R_1,R_1) \cap
        \mathcal{B}(\bar{\mathcal{X}}; R_2,R_2) =
        \mathcal{B}(\bar{\mathcal{X}}; R_1,R_2), \\
      & i_2 \in \mathcal{B}(\bar{\mathcal{X}}; R_1,R_1) \setminus
        \mathcal{B}(\bar{\mathcal{X}}; R_2,R_2), \quad 
        i_0 \in \mathcal{B}(\bar{\mathcal{X}}; R_2,R_2) \setminus
        \mathcal{B}(\bar{\mathcal{X}}; R_1,R_1).
    \end{aligned}
  \end{equation*}
  For some choices of $i_0, i_1, i_2, i_3$ we might have:
  \begin{equation} \label{eq:different-Rs}
    \begin{aligned}
      & R^{(i_0, i_1, i_2, i_3)} = R_2, & R^{(i_1, i_2, i_3)} = R_1, \ \ \ 
      & R^{(i_0, i_1, i_3)} = R_2, \\
      & r^{(i_0, i_1, i_2, i_3)} = R_1, & r^{(i_1, i_2, i_3)} = R_1, \ \ \ 
      & r^{(i_0, i_1, i_3)} = R_2,
    \end{aligned}
  \end{equation}
  where $R^{(\, \cdots)}$ and $r^{(\,\cdots)}$ are defined
  in~\eqref{eq:Ri-ri}. Now recall that in order to obtain a choice of
  perturbation data that preserves filtrations we used
  inequalities~\eqref{eq:R-leq-Ri-etc} and~\eqref{eq:nu-P-tot}. Since
  $R_2 < R_1$, in our case
  \zhnote{we have $R^{(i_0, i_1, i_2, i_3)} < r^{(i_0, i_1, i_2, i_3)}$,} so
  inequality~\eqref{eq:R-leq-Ri-etc} does not hold. There are also
  problems regarding the consistency of the perturbation data with
  respect to splitting and gluing. Indeed, a Floer polygon labeled by
  $((L_0, i_0), (L_1, i_1), (L_2, i_2), (L_3, i_3))$ can split into
  two polygons labeled by $((L_1,i_1), (L_2, i_2), (L_3, i_3))$ and by
  $((L_0, i_0), (L_1, i_1), (L_3, i_3))$. And vice-versa, pairs of
  polygons of the latter type can be glued into polygons of the former
  type (assuming obvious matching assumptions). In our case we have
  $r^{(i_1, i_2, i_3)} > r^{(i_0, i_1, i_3)}$ so in order to achieve
  the consistency of the perturbation data and at the same time also
  have filtration preserving $\mu_d$'s we would need to \zhnote{decrease} the
  size of the perturbation forms \zhnote{of}
  $\mathscr{P}^{\text{tot}}((L_1,i_1), (L_2, i_2), (L_3, i_3))$ to at
  most $r^{(i_0, i_1, i_3)}$. In turn, this might not be compatible
  with the size of the perturbation forms of the data $i_2$ (which may
  be of size $R_1$). Similar problems arise also with the behavior of
  the measurement $R^{(i_0, \ldots, i_d)}$ with respect to
  splitting/gluing when the indices $i_k$ are spread in spaces
  $\mathcal{B}(\bar{\mathcal{X}}; R,R)$ with different values of $R$.

  Summing up, while our construction for total categories works over
  bases like $\mathcal{B}(\bar{\mathcal{X}}; R,R)$\zhnote{, it is} not clear if
  the construction can be extended over a base of the type
  $\mathcal{B}(\bar{\mathcal{X}}; R_1,R_1) \cup
  \mathcal{B}(\bar{\mathcal{X}}; R_2,R_2)$.

  In view of the above, instead of constructing a total category over
  $\mathcal{B}(\bar{\mathcal{X}})$ we will simply try to extend the
  various coherent systems (coming from the total categories) over
  each $\mathcal{B}(\bar{\mathcal{X}}; R,R)$ to coherent systems over
  unions of such subspaces.  \Qed
\end{rem}

We proceed now with the extension of the system of comparison functors
to spaces beyond the type $\mathcal{B}(\bar{\mathcal{X}};
R,R)$. Recall that if $R' \leq R \leq R_{\bar{\mathcal{X}}}$ then
$\mathcal{B}(\bar{\mathcal{X}}; R,R) \cap
\mathcal{B}(\bar{\mathcal{X}}; R',R') = \mathcal{B}(\bar{\mathcal{X}};
R,R')$. We claim that the construction of the total categories above
and the coherent systems resulting from them can be made such that the
following holds for every $R' \leq R \leq R_{\bar{\mathcal{X}}}$: the
two coherent systems, the one over
$\mathcal{B}(\bar{\mathcal{X}}; R,R)$ and the one over
$\mathcal{B}(\bar{\mathcal{X}}; R',R')$, coincide over the overlap
$\mathcal{B}(\bar{\mathcal{X}}; R,R')$. In other words, the functors
$\mathcal{F}^{j,i}_{R}$ and $\mathcal{F}^{j,i}_{R'}$ coincide for
every $i,j \in \mathcal{B}(\bar{\mathcal{X}}; R,R')$, and similarly
for the natural transformations $T_R^{i_2, i_1, i_0}$,
$T_{R'}^{i_2, i_1, i_0}$.

The proof of this is based on three steps. The first one is that we
can choose the perturbation data for the total categories in such a
way that the categories $\fuk^{\text{tot}}(\mathcal{X}; R,R)$ and
$\fuk^{\text{tot}}(\mathcal{X}; R',R')$ coincide along the overlaps
$\mathcal{B}(\bar{\mathcal{X}}; R,R')$ (and are equal along that
overlap to $\fuk^{\text{tot}}(\mathcal{X}; R,R')$). The way we \zhnote{chose}
the perturbation data $\mathscr{P}^{\text{tot}}$ makes this possible
since in case
$i_0, \ldots, i_d \in \mathcal{B}(\bar{\mathcal{X}}; R,R) \cap
\mathcal{B}(\bar{\mathcal{X}}; R',R')$ then the definitions of the
parameters $R^{(i_0, \ldots, i_d)}$ and $r^{(i_0, \ldots, i_d)}$ is
independent of whether we view the indices $i_k$ as elements of
$\mathcal{B}(\bar{\mathcal{X}}; R,R)$ or of
$\mathcal{B}(\bar{\mathcal{X}}; R',R')$.

The second step has to do with the $A_2$-functors $\mathcal{P}r^i$. To
keep track of the domain of these functors, we will temporarily add a
subscript $R$ to their notation:
$\mathcal{P}r_R^i: \fuk^{\text{tot}}(\mathcal{X}; R,R) \longrightarrow
\fuk(\mathcal{X}; i)$, $i \in \mathcal{B}(\bar{\mathcal{X}}; R, R)$.
We also have
$\mathcal{P}r_{R'}^i: \fuk^{\text{tot}}(\mathcal{X}; R,R)
\longrightarrow \fuk(\mathcal{X}; i)$,
$i \in \mathcal{B}(\bar{\mathcal{X}}; R', R')$. Recall that
$\mathcal{B}(\bar{\mathcal{X}}; R,R) \cap
\mathcal{B}(\bar{\mathcal{X}}; R',R') = \mathcal{B}(\bar{\mathcal{X}};
R,R')$ and that $\fuk^{\text{tot}}(\mathcal{X}; R,R')$ is a
subcategory of both $\fuk^{\text{tot}}(\mathcal{X}; R,R)$ and
$\mathcal{B}(\bar{\mathcal{X}}; R', R')$, which are the domains of
$\mathcal{P}r^i_R$ and $\mathcal{P}r^i_{R'}$, respectively. We need to
show that for every $i \in \mathcal{B}(\bar{\mathcal{X}}; R,R')$ the
two $A_2$-functors $\mathcal{P}r^i_R$ and $\mathcal{P}r^i_{R'}$
coincide when restricted to the subcategory
$\fuk^{\text{tot}}(\mathcal{X}; R,R')$ of their respective domains. \zhnote{This} can be achieved by arranging the $1$-parametric families of
\zhnote{the} perturbation data
$\{\mathcal{P}^{\tau}(\mathcal{P}r)\}_{\tau \in (0,1)}$ for
$\mathcal{B}(\bar{\mathcal{X}}; R,R)$ and
$\mathcal{B}(\bar{\mathcal{X}}; R',R')$ to coincide over their
intersection.

The third and last step concerns the extensions of the preceding
$A_2$-functors to $A_{\infty}$-functors as stated in
Proposition~\ref{p:atot-coh}. Here we need to show that the
$A_{\infty}$-extension \zhnote{$\mathcal{Q}_R^i$} of $\mathcal{P}r^i_R$ can be arranged
to coincide with the $A_{\infty}$-extensions \zhnote{$\mathcal{Q}_{R'}^i$} of
$\mathcal{P}r^i_{R'}$ over the common subcategory of their domains
$\fuk^{\text{tot}}(\mathcal{X}; R,R')$ whenever
$i \in \mathcal{B}(\bar{\mathcal{X}}; R,R')$.

This can be proved by means of the algebraic
Lemmas~\ref{l:PHH-invariance},~\ref{l:extending-func} and
Proposition~\ref{p:atot-coh}. Note that for
$i \in \mathcal{B}(\bar{\mathcal{X}}; R,R')$ the inclusion of
$\fuk(\mathcal{X}; i)$ into any of the categories
$\fuk^{\text{tot}}(\mathcal{X}; R,R')$,
$\fuk^{\text{tot}}(\mathcal{X}; R,R)$ and
$\fuk^{\text{tot}}(\mathcal{X}; R',R')$ is a filtered
quasi-equivalence.


\pbrev{We now claim that the following four maps
\begin{equation*} \label{eq:PHH-fuktot}
  \begin{aligned}
    & PHH\bigl(\fuk^{\text{tot}}(R,R), \fuk(i); \mathcal{P}r_R^i,
    \mathcal{P}r_R^i\bigr) \longrightarrow
    PHH(\fuk(i), \fuk(i); \id, \id), \\
    & PHH\bigl(\fuk^{\text{tot}}(R',R'), \fuk(i); \mathcal{P}r_{R'}^i,
    \mathcal{P}r_{R'}^i\bigr) \longrightarrow
    PHH(\fuk(i), \fuk(i); \id, \id), \\
    & PHH\bigl(\fuk^{\text{tot}}(R,R), \fuk(i); \mathcal{P}r_{R}^i,
    \mathcal{P}r_{R}^i\bigr) \longrightarrow
    PHH\bigl(\fuk^{\text{tot}}(R,R'), \fuk(i); \mathcal{P}r_{R,R'}^i,
    \mathcal{P}r_{R,R'}^i\bigr), \\
    & PHH\bigl(\fuk^{\text{tot}}(R',R'), \fuk(i); \mathcal{P}r_{R'}^i,
    \mathcal{P}r_{R'}^i\bigr) \longrightarrow
    PHH\bigl(\fuk^{\text{tot}}(R,R'), \fuk(i); \mathcal{P}r_{R,R'}^i,
    \mathcal{P}r_{R,R'}^i\bigr),
  \end{aligned}
\end{equation*}
} \pbrev{induced by the obvious restrictions, are all isomorphisms of
  persistence modules. Here we have omitted "$\mathcal{X}$" from all
  the $\fuk$-categories in an attempt to keep the formulas short.  The
  $A_2$-functor
  $\mathcal{P}r_{R,R'}^i : \fuk^{\text{tot}}(R,R') \longrightarrow
  \fuk(i)$ appearing in third and forth maps above is just
  $\mathcal{P}r_{R,R'}^i :=
  \mathcal{P}r^i_R|_{\fuk^{\text{tot}}(R,R')} =
  \mathcal{P}r^i_{R'}|_{\fuk^{\text{tot}}(R,R')}$.}

\pbrev{Indeed, that the first two maps are isomorphisms follows from
  Lemma~\ref{l:PHH-invariance}.} \pbrev{For the third map, note that
  by Lemma~\ref{l:PHH-invariance}, for every $R' \leq R$ the map
  \begin{equation} \label{eq:PHH-PRR'}
    PHH\bigl(\fuk^{\text{tot}}(R,R'), \fuk(i); \mathcal{P}r_{R,R'}^i,
    \mathcal{P}r_{R,R'}^i\bigr) \longrightarrow PHH(\fuk(i), \fuk(i);
    \id, \id)
  \end{equation}
  (also induced by restriction) is an isomorphism of persistence
  modules. Now, the first map (among the above four) factors as a
  composition of the third map and the map
  in~\eqref{eq:PHH-PRR'}. Since the latter map and the first map are
  both isomorphisms it follows that the same holds for the third
  map. The proof that the fourth map is an isomorphisms is similar.}

\pbrev{We proceed} now by extending
$\mathcal{P}r^i_R|_{\fuk^{\text{tot}}(\mathcal{X}; R,R')} =
\mathcal{P}r^i_{R'}|_{\fuk^{\text{tot}}(\mathcal{X}; R,R')}$ to an
$A_{\infty}$-functor
$$\mathcal{Q}^i_{R,R'}: \fuk^{\text{tot}}(\mathcal{X}; R,R')
\longrightarrow \fuk(\mathcal{X}; i).$$ Next we extend
$\mathcal{Q}^i_{R,R'}$ twice more: once to an $A_{\infty}$-functor
$\mathcal{Q}^i_R: \fuk^{\text{tot}}(\mathcal{X}; R,R) \longrightarrow
\fuk(\mathcal{X}; i)$ and another time to an $A_{\infty}$-functor
$\mathcal{Q}^i_{R'}: \fuk^{\text{tot}}(\mathcal{X}; R',R')
\longrightarrow \fuk(\mathcal{X}; i)$. This concludes the third step.

Finally, \pbrev{it is possible to} show that the systems of natural
transformations $T_R^{i_2, i_1, i_0}$ and $T_{R'}^{i_2, i_1, i_0}$ can
be chosen to agree over the intersection
$\mathcal{B}(\bar{\mathcal{X}}; R,R) \cap
\mathcal{B}(\bar{\mathcal{X}}; R',R')$. This can be done by arguments
similar to above (based on the construction of the natural
transformations from the proof of Proposition~\ref{p:atot-coh}).

Recall that the comparison functors $\mathcal{F}_R^{j,i}$ in the
coherent system over $\mathcal{B}(\bar{\mathcal{X}}; R,R)$ are given
by $\mathcal{F}_R^{j,i} = \mathcal{Q}_R^{j} \circ \mathcal{J}_R^i$,
where
$\mathcal{J}_R^i : \fuk(\mathcal{X}; i) \longrightarrow
\fuk^{\text{tot}}(\mathcal{X}; R,R)$ is the inclusion functor. The
comparison functors over $\mathcal{B}(\bar{\mathcal{X}}; R',R')$ and
$\mathcal{B}(\bar{\mathcal{X}}; R,R')$ have essentially the same
expressions.

Before we proceed, let us summarize what we have proven so far.
\begin{prop} \label{p:coh-sys-w-ovlp} The space of perturbation data
  $\mathcal{B}(\bar{\mathcal{X}})$ can be written as union
  $$\mathcal{B}(\bar{\mathcal{X}}) = \bigcup_{R \leq
    R_{\bar{\mathcal{X}}}} \mathcal{B}(\bar{\mathcal{X}}; R,R)$$ of
  mutually overlapping subspaces
  $\mathcal{B}(\bar{\mathcal{X}}; R,R)$. The overlap between any two
  of these subspaces is given by
  $\mathcal{B}(\bar{\mathcal{X}}; R,R) \cap
  \mathcal{B}(\bar{\mathcal{X}}; R',R') =
  \mathcal{B}(\bar{\mathcal{X}}; R,R')$ for every $R' \leq R$.

  The family of Fukaya categories
  $\{\fuk(\mathcal{X}; i)\}_{i \in \mathcal{B}(\bar{\mathcal{X}};
    R,R)}$ over each subspace $\mathcal{B}(\bar{\mathcal{X}};R,R)$ can
  be endowed with a coherent system of comparison
  $A_{\infty}$-functors that are filtered quasi-equivalences and are
  strictly $A_1$-unital.  Moreover, for every $R, R'$,
  there exist coherent systems over $\mathcal{B}(\bar{\mathcal{X}}; R,R)$ and
  over $\mathcal{B}(\bar{\mathcal{X}}; R',R')$  that agree along the overlap
  $\mathcal{B}(\bar{\mathcal{X}}; R,R) \cap
  \mathcal{B}(\bar{\mathcal{X}}; R',R')$.

  Finally, all the comparison functors in the various coherent
  systems above act as the identity \zhnote{maps} on the objects of the
  respective categories. The first order terms of the comparison
  functors induce the canonical continuation maps in Floer homology.
\end{prop}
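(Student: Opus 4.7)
The plan is to assemble the proposition from the pieces already constructed in the preceding subsection, organizing them so that the set-theoretic decomposition, the existence of a coherent system on each piece, and the compatibility across overlaps appear as three distinct steps.

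First I would dispense with the set-theoretic statement: by the definition in~\eqref{eq:BX} we have $\mathcal{B}(\bar{\mathcal{X}}) = \bigcup_{R \leq R_{\bar{\mathcal{X}}}} \mathcal{B}(\bar{\mathcal{X}}; R,R)$, and the overlap identity $\mathcal{B}(\bar{\mathcal{X}}; R,R) \cap \mathcal{B}(\bar{\mathcal{X}}; R',R') = \mathcal{B}(\bar{\mathcal{X}}; R,R')$ (for $R' \leq R$) is precisely Remark~\ref{r:B-spaces}(\ref{i:B-cap-B'}). This is immediate from the definitions of the parameters $R$ and $r$ controlling the embedding $\phi$ and the size of the perturbation forms.

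Next, for each fixed $R \leq R_{\bar{\mathcal{X}}}$ I would construct the coherent system over $\mathcal{B}(\bar{\mathcal{X}}; R,R)$ by assembling the three ingredients built earlier: (i) the total filtered strictly unital $A_{\infty}$-category $\fuk^{\text{tot}}(\mathcal{X}; R,R)$, whose existence requires verifying that the conditions on $K$ and $J$ (including the bound~\eqref{eq:nu-P-tot} and inequality~\eqref{eq:R-leq-Ri-etc}) admit a consistent choice of perturbation data $\mathscr{P}^{\text{tot}}$, in exactly the same way as in the proof outlined in~\S\ref{sbsb:prf-fil-fuk-1}; (ii) the inclusions $\mathcal{J}^i: \fuk(\mathcal{X};i) \hookrightarrow \fuk^{\text{tot}}(\mathcal{X};R,R)$, which are full and faithful by construction and are filtered quasi-equivalences because every object $(L,k)$ is strictly $0$-isomorphic to $(L,i)$ via the unique top Morse critical point; (iii) the $A_2$-functors $\mathcal{P}r^i$ defined by Floer/Morse continuation with the constraints on the homotopies $J_s^{\text{cont}}$ and on the $1$-parameter family $\{\mathscr{P}^\tau(\mathcal{P}r)\}$ ensuring $\mathcal{P}r^i \circ \mathcal{J}^i = \id$ as $A_2$-functors, strict $A_1$-unitality via the top Morse critical point, and filtration preservation of $\mathcal{P}r_2^i$ via an area-energy argument analogous to the one on pages~\pageref{p:preserve-fil-1}--\pageref{p:preserve-fil-2}. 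With these ingredients in hand, I would apply Proposition~\ref{p:atot-coh} to extend each $\mathcal{P}r^i$ to a filtered $A_1$-unital $A_\infty$-functor $\mathcal{Q}^i$ and then set $\mathcal{F}^{j,i}_R := \mathcal{Q}^j \circ \mathcal{J}^i$, with natural transformations $T_R^{i_2,i_1,i_0}$ given by the formula~\eqref{eq:Ti2i1i0}. This produces the coherent system.

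The main obstacle is the compatibility claim: producing systems over $\mathcal{B}(\bar{\mathcal{X}}; R,R)$ and $\mathcal{B}(\bar{\mathcal{X}}; R',R')$ that literally agree on their overlap $\mathcal{B}(\bar{\mathcal{X}}; R,R')$. I would proceed in three sub-steps, exactly along the lines already sketched. First, choose the perturbation data $\mathscr{P}^{\text{tot}}$ for the two total categories so that they coincide on tuples $(i_0,\ldots,i_d)$ with all $i_k \in \mathcal{B}(\bar{\mathcal{X}}; R,R')$; this is possible because the parameters $R^{(i_0,\ldots,i_d)}$ and $r^{(i_0,\ldots,i_d)}$ defined by~\eqref{eq:Ri-ri} do not depend on which ambient space one views the $i_k$'s in. Second, arrange the $1$-parameter family $\{\mathscr{P}^\tau(\mathcal{P}r)\}$ defining $\mathcal{P}r^i_R$ and $\mathcal{P}r^i_{R'}$ to coincide on the common sub-domain $\fuk^{\text{tot}}(\mathcal{X}; R,R')$ for every $i \in \mathcal{B}(\bar{\mathcal{X}}; R,R')$. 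Third, and most delicate, extend the common $A_2$-functor $\mathcal{P}r^i_R|_{\fuk^{\text{tot}}(R,R')} = \mathcal{P}r^i_{R'}|_{\fuk^{\text{tot}}(R,R')}$ to a single $A_\infty$-functor $\mathcal{Q}^i_{R,R'}$ on $\fuk^{\text{tot}}(\mathcal{X}; R,R')$, and then to $A_\infty$-functors $\mathcal{Q}^i_R$ and $\mathcal{Q}^i_{R'}$ on the two larger total categories; this is made possible by the isomorphisms of persistence Hochschild cohomologies spelled out after~\eqref{eq:PHH-PRR'}, which follow from Lemma~\ref{l:PHH-invariance} applied to the inclusions $\fuk(i) \hookrightarrow \fuk^{\text{tot}}(R,R')$, $\fuk^{\text{tot}}(R,R') \hookrightarrow \fuk^{\text{tot}}(R,R)$, and $\fuk^{\text{tot}}(R,R') \hookrightarrow \fuk^{\text{tot}}(R',R')$, combined with Lemma~\ref{l:extending-func}. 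An analogous argument, again based on the formula~\eqref{eq:Ti2i1i0}, shows that the natural transformations $T_R^{i_2,i_1,i_0}$ and $T_{R'}^{i_2,i_1,i_0}$ can be chosen to match on overlaps.

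Finally, the statements that the comparison functors are the identity on objects and that their first-order terms induce the canonical continuation maps in Floer homology are immediate from the construction: $\mathcal{J}^i$ sends $L$ to $(L,i)$, $\mathcal{Q}^i$ sends every $(L,k)$ back to $L$, and the first-order piece of $\mathcal{Q}^i$ is the Floer/Morse continuation map by the very definition of $\mathcal{P}r^i_1$ in~\eqref{eq:Pi-1-a}. I expect the serious technical work to be concentrated entirely in the third sub-step above (matching $A_\infty$-extensions across overlaps), while the rest is mostly bookkeeping over the constructions of~\S\ref{sbsb:coh-sys-fuk}.
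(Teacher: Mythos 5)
Your proposal is correct and follows essentially the same route as the paper. The paper presents this proposition explicitly as a summary of the preceding construction in \S\ref{sbsb:coh-sys-fuk} ("let us summarize what we have proven so far"), and your three-step reorganization — the set-theoretic decomposition via \eqref{eq:BX} and Remark~\ref{r:B-spaces}, the per-$R$ coherent system obtained by feeding $\fuk^{\text{tot}}(\mathcal{X}; R,R)$, the inclusions $\mathcal{J}^i$, and the $A_2$-functors $\mathcal{P}r^i$ into Proposition~\ref{p:atot-coh}, and the three-sub-step overlap matching via the $PHH$ isomorphisms from Lemmas~\ref{l:PHH-invariance} and~\ref{l:extending-func} — matches the paper's argument point for point, including where the real technical weight lies (the $A_\infty$-extension matching across overlaps).
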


The structure of the subspaces $\mathcal{B}(\bar{\mathcal{X}}; R,R)$
and their overlaps is schematically depicted in
Figure~\ref{f:coh-ovl-1}.

\begin{figure}[h]
  \begin{center}
  \includegraphics[scale=0.80]{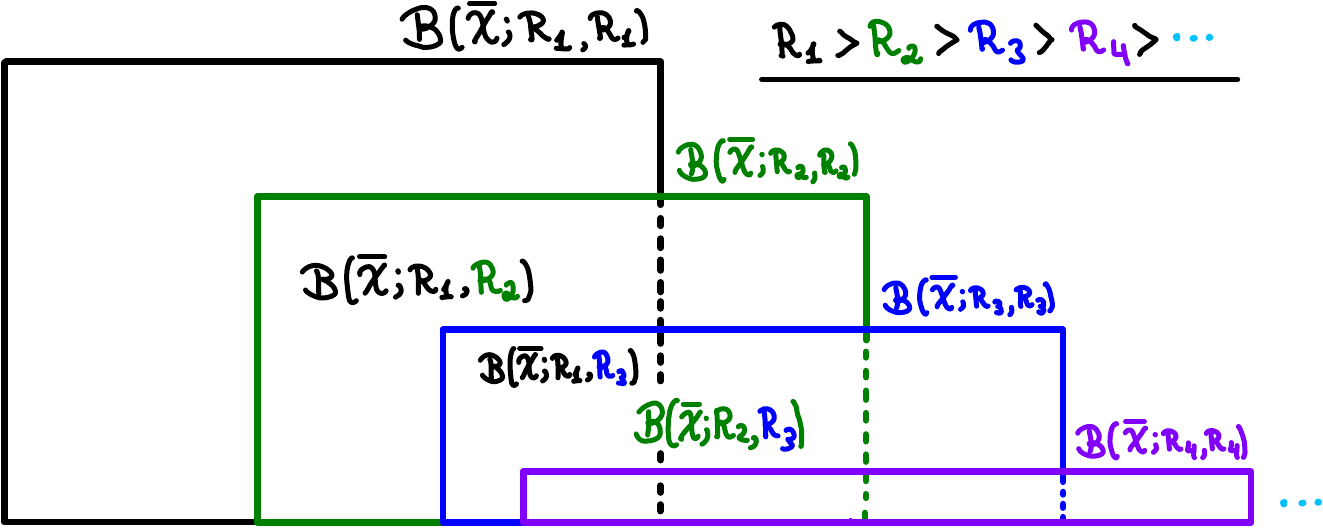}
  \end{center}
  \caption{The subspaces $\mathcal{B}(\bar{\mathcal{X}}; R,R)$ \zhnote{and their
   overlaps.}}
  \label{f:coh-ovl-1}
\end{figure}

\begin{rem} \label{r:coh-ovl}
  The type of structure described in
  Proposition~\ref{p:coh-sys-w-ovlp} can be defined abstractly for any
  collection of $A_{\infty}$-categories (filtered or not)
  $\{\mathcal{A}^i\}_{i \in \mathcal{B}}$ over a base
  $\mathcal{B}$. An appropriate name for such a structure could be
  {\em a collection of coherent systems with overlaps}. We will not
  pursue this direction anymore in this paper and proceed now with our
  Fukaya categories.
\end{rem}


We are ready now to describe the functors
$\mathcal{F}^{\mathscr{P}_1, \mathscr{P}_0}$ claimed in
Theorem~\ref{t:fil-fuk}. To simplify the notation we continue to
denote perturbation data by indices like $i$ instead of $\mathscr{P}$
and the comparison functors will be denote $\mathcal{F}^{j,i}$ instead
of $\mathcal{F}^{\mathscr{P}_1, \mathscr{P}_0}$. Note that the
construction below is purely formal and can be applied to any family
of $A_{\infty}$-categories endowed with a collection of coherent
systems with overlaps (as in Remark~\ref{r:coh-ovl}).

For every $R', R'' \leq R_{\bar{\mathcal{X}}}$ we choose a base point
$$l(R', R'') \in \mathcal{B}(\bar{\mathcal{X}};\max\{R', R''\},
\min\{R', R''\}).$$ Given $i \in \mathcal{B}(\bar{\mathcal{X}})$
define
$R^{(i)} := \sup \{ \widetilde{R} \mid \widetilde{R} \leq
R_{\bar{\mathcal{X}}}, \text{ and } i \in
\mathcal{B}(\bar{\mathcal{X}}; \widetilde{R},\widetilde{R})\}$, which
is a special case of the first parameter in~\eqref{eq:Ri-ri}.

Let $i, j \in \mathcal{B}(\bar{\mathcal{X}})$. If $i=j$ define
$\mathcal{F}^{i,j} = \id_{\fuk(\mathcal{X}; i)}$. If $i \neq j$,
set $l_{i,j}:= l(R^{(i)}, R^{(j)})$ and define
\begin{equation} \label{eq:cmpr-fun}
  \mathcal{F}^{j,i} :=
  \mathcal{F}_{R^{(j)}}^{j, l_{{i,j}}} \circ
  \mathcal{F}_{R^{(i)}}^{l_{{i,j}}, i}: \fuk(\mathcal{X}; i)
  \longrightarrow \fuk(\mathcal{X}; j).
\end{equation}
It is easy to see that the functors $\mathcal{F}^{j, i}$ satisfy all
the properties claimed in Theorem~\ref{t:fil-fuk}.

\begin{rem} \label{r:can-sys} The system of functors
  $\mathcal{F}^{j,i}$ defined by~\eqref{eq:cmpr-fun} is not
  canonical. The construction uses many different choices at different
  stages. However, one can show that the dependence on the choices of
  the base points $l_{R', R''}$ is somewhat controlled. If one
  replaces the base points $l_{R',R''}$ by a different set of choice
  $l'_{R',R''}$ then the resulting system of comparison functors
  $\mathcal{F}'^{j,i}$ will be naturally quasi-isomorphic to the
  system $\mathcal{F}^{j,i}$ by natural quasi-isomorphisms that
  preserve filtrations.

  The applications in this paper do not require the
  comparison functors or the natural transformations between them to
  be canonical. In fact, in what follows we just need to know that the
  different Fukaya categories $\fuk(\mathcal{X}; i)$ are filtered
  quasi-equivalent.
\end{rem}

It remains to address the last statement of Theorem~\ref{t:fil-fuk}
concerning the collection of Lagrangians $\bar{\mathcal{X}}'$. This
follows immediately from our construction.  Indeed, let
$\bar{\mathcal{X}}'$ be a collection of Lagrangians with
$\bar{\mathcal{X}}' \supset \bar{\mathcal{X}}$ and assume that
$\bar{\mathcal{X}}'$ satisfies all the conditions from the beginning
of~\S\ref{sb:faitpc}. We can use the same symplectic embedding $\phi$
from~\eqref{eq:phi-embedd} but with a smaller radius
$R_{\bar{\mathcal{X}}'} \leq R_{\bar{\mathcal{X}}}$. Clearly
$\mathcal{B}(\bar{\mathcal{X}}')|_{\bar{\mathcal{X}}} \subset
\mathcal{B}(\bar{\mathcal{X}})$.

This completes the  proof of Theorem~\ref{t:fil-fuk}.
\Qed

\section{Proofs of the main symplectic applications}\label{sec:all-symp}
The first subsection is dedicated to the proofs of Theorems  \ref{cor:Fuk-cat-var} and \ref{thm:appl-sympl} and the second to the proof of Corollary \ref{cor:gl-metric}.

\subsection{Proofs of Theorems \ref{cor:Fuk-cat-var} and \ref{thm:appl-sympl}}\label{subs:proof}
We \zhnote{begin with} Theorem \ref{cor:Fuk-cat-var}\zhnote{, and then pursue with the proof of Theorem \ref{thm:appl-sympl} in \S\ref{proof-thm:appl-sympl}} on page \pageref{proof-thm:appl-sympl}. 

\subsubsection{Proof of Theorem \ref{cor:Fuk-cat-var}}\label{subsubsec:proof-Cor}
Recall from Theorem \ref{t:fil-fuk} that  there are choices of Floer data and
  perturbation data $\mathscr{P}$ such that the resulting Fukaya category, $\fuk(\mathcal{X}; \mathscr{P})$, is filtered, strictly unital, and, without filtrations, it is quasi-equivalent to the subcategory $\fuk(\mathcal{X})$
 of $\fuk(X)$ whose  collection of objects is $\mathcal{X}$ \zhnote{(see \S\ref{subsec-symp} for the notation)}.

 We will apply the discussion in \S\ref{sb:faitpc} to the filtered $A_{\infty}$-category $\fuk(\mathcal{X}; \mathscr{P})$.  The first step is to discuss the shift functor. There is an obvious shift functor on the 
 category $\fuk(\mathcal{X}; \mathscr{P})$ which acts on objects by 
  $\Sigma^r L = (\bar{L}, h_L+r, \theta_{L})$ \zhnote{(see \S\ref{subsec-symp})}. This action on objects 
  induces an $A_{\infty}$-shift functor on  $\fuk(\mathcal{X}; \mathscr{P})$ because the perturbation
  data $\mathscr{P}$ only depends on the geometric part of the marked Lagrangians. It is easy to see 
  that this shift functor on $\fuk(\mathcal{X}; \mathscr{P})$ is compatible with the shift functor
  defined on $Fmod(\fuk(\mathcal{X}; \mathscr{P}))$ as in \S\ref{sb:faitpc}:
 for a module $\mathcal{M} \in Fmod(\mathcal{A})$ the filtered module
  $\Sigma^r \mathcal{M}$ is defined by
  $(\Sigma^r \mathcal{M})^{\leq \alpha}(N) = \mathcal{M}^{\alpha -
    r}(N)$,  with the same \zhnote{$\mu_d$}-operations as $\mathcal{M}$.
  Indeed,
  $$\Sigma^{r}(\mathcal{Y}(L))=\mathcal{Y}(\Sigma^{r} L).$$
 
 We now define: 
 \zhnote{$$\C\fuk(\mathcal{X};\mathscr{P})=H^{0}(\fuk(\mathcal{X}; \mathscr{P})^{\nabla})~.~$$}
 Recall that $\fuk(\mathcal{X}; \mathscr{P})^{\nabla}$ is constructed by first considering the 
 category $\fuk(\mathcal{X}; \mathscr{P})^{\#}$ consisting of the triangulated completion 
 of the Yoneda modules of the elements in $\mathcal{X}$. Triangles are understood here to be of the form
 $\mathcal{M}\stackrel{f}{\longrightarrow} \mathcal{N}\to {\rm Cone\/}(f)$ where  the cone construction is in the sense of filtered $A_{\infty}$-modules and the morphism $f$ preserves filtration. 
 
The category $\fuk(\mathcal{X}; \mathscr{P})^{\nabla}$ is the \zhnote{full subcategory} of $Fmod(\fuk(\mathcal{X}; \mathscr{P}))$ \zhnote{that contains} $\fuk(\mathcal{X}; \mathscr{P})^{\#}$ as well as all the modules in $Fmod(\fuk(\mathcal{X}; \mathscr{P}))$ \zhnote{together with} all their shifts and translates that are $r$-quasi-isomorphic to the objects in $\fuk(\mathcal{X}; \mathscr{P})^{\#}$, for some $r\geq 0$.  From the discussion above concerning the shift functor it results immediately that $\C\fuk(\mathcal{X};\mathscr{P})$ is indeed a TPC.

\zhnote{\begin{remark} More explicitly, to obtain $\fuk(\mathcal{X}; \mathscr{P})^{\nabla}$ from $\fuk(\mathcal{X}; \mathscr{P})^{\#}$, we first add $r$-acyclic modules for all $r \geq 0$ to the Yoneda modules with all the possible shifts and translates; then we take the triangulated completion of all of them. \end{remark} }

We now start the proof by discussing the independence of $\mathcal{C}\fuk(\mathcal{X};\mathscr{P})$ of the perturbation data $\mathscr{P}$, up to TPC equivalence. \zhnote{The argument is a direct consequence
of the system of comparison functors for the categories $\fuk(\mathcal{X};\mathscr{P})$, as given in Theorem \ref{t:fil-fuk}. Indeed, for two choices
of admissible perturbation data $\mathscr{P}_{1}$ and $\mathscr{P}_{2}$,}
we have  a filtered  functor $$\mathcal{F}^{1,2}:\fuk(\mathcal{X}, \mathscr{P}_{1})\to \fuk(\mathcal{X}, \mathscr{P}_{2})~.~$$ 
The existence of the natural transformations that compare the compositions
$\mathcal{F}^{1,2}\circ \mathcal{F}^{2,1}$, $\mathcal{F}^{2,1}\circ \mathcal{F}^{1,2}$ with the \zhnote{respective} identities implies
that the associated homological functor is full and faithful in \zhnote{the sense} 
that it induces an isomorphism \zhnote{of persistence modules $H(\mathcal{F}^{1,2}): H^{0}(\mor^{\leq r}(X,Y))\to H^{0}(\mor^{\leq r}(X, Y))$ for $r \in \R$ and
every} two objects $X,Y$ of $\fuk(\mathcal{X}, \mathscr{P}_{1})$. As in the unfiltered case, a  consequence of the existence of these functors
and the natural \zhnote{transformation} relating them, is that the pull-back of filtered modules $[\mathcal{F}^{1,2}]^{\ast}: Fmod(\fuk(\mathcal{X}, \mathscr{P}_{2}))\to Fmod(\fuk(\mathcal{X}, \mathscr{P}_{1}))$ sends 
each Yoneda modules $\mathcal{Y}_{\mathscr{P}_{2}}(L)$ to a module 
$0$-quasi-isomorphic to $\mathcal{Y}_{\mathscr{P}_{1}}(L)$ (in the sense that the two modules are related by a morphism that induces a $0$-isomorphism in the homological category). The standard properties of the pullback  of 
\zhnote{$A_{\infty}$-modules} imply that $[\mathcal{F}^{1,2}]^{\ast}$ respects \zhnote{the triangulated structure with respect} to 
$0$-weight triangles as well as shift functors.  We deduce that this pull-back
also sends $r$-isomorphisms to $r$-isomorphisms and thus that it sends 
$\fuk(\mathcal{X},\mathscr{P}_{2})^{\nabla}$ to \zjr{$\fuk(\mathcal{X},\mathscr{P}_{1})^{\nabla}$} and, finally, by using the relevant natural transformations, that $H[\mathcal{F}^{1,2}]^{\ast}$ is an equivalence of 
TPCs.

From now on, we will denote the resulting TPC by $\mathcal{C}\fuk(\mathcal{X})$ \zhnote{in place} of $\mathcal{C}\fuk(\mathcal{X};\mathscr{P})$, except if at risk of confusion.

We now continue with the \zhnote{points (i), (ii) , (iii)} of Theorem \ref{cor:Fuk-cat-var}.
 The first point is immediate because $\Mor_{\C\fuk(\mathcal{X})}(L,L')\cong HF(L,L')$ as persistence modules, with the persistence structure on $HF(L,L')$ as described in \S\ref{sb:fil-fuk}.  
  
 The second point of the Theorem claims that the $\infty$-level of
 $\C\fuk(\mathcal{X})$ is equivalent to $D\fuk(\mathcal{X})$.  The
 argument is the following. First, it is well-known \cite{Seidel} that
 an alternative, equivalent, model for $D\fuk(\mathcal{X})$ is given
 by the homological category of twisted complexes,
 \zhnote{$H^{0}(Tw(\fuk(\mathcal{X})))$}. Here $\fuk(\mathcal{X})$ is
 any $A_{\infty}$-category that represents the Fukaya category with
 objects the Lagrangians in $\mathcal{X}$. Thus we can take in its
 place $\fuk_{uf}(\mathcal{X}; \mathscr{P}))$ where the notation
 $(-)_{uf}$ means that we neglect the filtration.  A variant of Lemma
 \ref{lemma-exist-fil-tc} \zhnote{applies also to} our
 $A_{\infty}$-setting and it implies that each twisted complex in
 $Tw(\fuk_{uf}(\mathcal{X}; \mathscr{P}))$ can be viewed as a filtered
 twisted complex in $Tw(\fuk(\mathcal{X}; \mathscr{P}))$ whose
 filtration is forgotten.  Filtered twisted complexes are discussed in
 \S\ref{sb:faitpc} \zhnote{(and in more detail in the $dg$-case, in
   \S\ref{subsec:dg})}.  By passing to \zhnote{homology} this means
 that
 $[H^{0}Tw(\fuk(\mathcal{X}; \mathscr{P}))]_{\infty}\cong
 H^{0}Tw(\fuk_{uf}(\mathcal{X}; \mathscr{P}))\cong
 D\fuk(\mathcal{X})$. In turn $Tw((\fuk(\mathcal{X}; \mathscr{P}))$ is
 easily seen to be equivalent to $\fuk(\mathcal{X}; \mathscr{P})^{\#}$
 as TPC categories. Thus,
 $[\fuk(\mathcal{X}; \mathscr{P})^{\nabla}]_{\infty}\cong
 [\fuk(\mathcal{X}; \mathscr{P})^{\#}]_{\infty} \cong
 [H^{0}Tw(\fuk(\mathcal{X}; \mathscr{P}))]_{\infty}\cong
 D\fuk(\mathcal{X})$

We now \zhnote{turn} to the third point of Theorem \ref{cor:Fuk-cat-var}. 
The argument here makes again essential use of \zhnote{the systems} of comparison functors provided \zhnote{by Theorem \ref{t:fil-fuk}}, which allows us to extend perturbation
data from one set $\mathcal{X}$ to a larger one, $\mathcal{X}'$.

Let $N$
be a marked Lagrangian that \zhnote{is in a general position with respect to} the family \zhnote{$\bar{\mathcal{X}}$}. We add $N$\zhnote{, as well as all its shifts and translates,} to the family $\mathcal{X}$ obtaining this way $\mathcal{X}'$. By Theorem  \ref{t:fil-fuk}, and the invariance of $\mathcal{C}\fuk(\mathcal{X})$ relative to perturbation data, we can now assume that the perturbation \zhnote{data}
$\mathscr{P}$ extends to a new \zhnote{one} $\mathscr{P}'$  that defines the  filtered $A_{\infty}$- category $\fuk(\mathcal{X}',\mathscr{P}')$. Finally, we can 
extend $\mathscr{P}'$ to a perturbation \zhnote{data} that defines an $A_{\infty}$-category \zhnote{which is equivalent to $\fuk(X)$ if filtrations are forgotten}. 
At this point it is easier to pursue the argument using twisted complexes.
As before these will be of two types:  filtered\zhnote{, those from} $Tw(\fuk(\mathcal{X}; \mathscr{P}))$
and unfiltered,  belonging to $Tw(\fuk_{uf}(\mathcal{X}'; \mathscr{P}'))$.   
The assumption that $\mathcal{X}$ generates $D\fuk(X)$ implies that there
is  an unfiltered quasi-isomorphism of twisted complexes $\phi: N\to C$
where $C$ is a twisted module only involving elements of $\mathcal{X}$.
Both $N$ and $C$ have \zhnote{structures} of filtered twisted complexes but the morphism $\phi$, \zhnote{a priori}, does not see the filtration.

However, using a reasoning similar to Lemma \ref{lemma-exist-fil-tc},
we deduce that (after possibly shifting up $N$) we can view $\phi$ as
a filtration preserving morphism in
$\Mor_{Tw(\fuk(\mathcal{X}'; \mathscr{P}'))}$. The mapping cone $K$ of
$\phi$ is acyclic as a twisted module (forgetting the filtration)
because $\phi$ is a quasi-isomorphism. In other words, the identity
$1_{K}$ of $K$ is homologous to $0$ in
\zjr{$\Mor_{Tw(\fuk_{un}(\mathcal{X}'; \mathscr{P}'))}(K,K)$}. Using again a
reasoning similar to Lemma \ref{lemma-exist-fil-tc}, we deduce that
$1_{K}$ vanishes in some
\zhnote{$$H^0\left(\Mor^{\leq r}_{Tw(\fuk(\mathcal{X}';
      \mathscr{P}'))}(K,K)\right)$$} for some $r\geq 0$.  This means
that $\phi$ is an $r$-isomorphism.  We can now reformulate the result
in terms of modules and we deduce that $j^{\ast}\mathcal{Y}(N)$ is
$r$-isomorphic to a filtered module \zhnote{from}
$\fuk(\mathcal{X},\mathscr{P})^{\#}$\zhnote{, where}
$j:\fuk(\mathcal{X},\mathscr{P})\to \fuk(\mathcal{X}',\mathscr{P}')$
is the inclusion. \pbred{This concludes the proof.} \qed

\subsubsection{Proof of Theorem
  \ref{thm:appl-sympl}} \label{proof-thm:appl-sympl}
\pbred{Theorem~\ref{t:fil-fuk} implies that} \zhnote{for every two
  admissible perturbation data} $\mathscr{P}_{1}$ and
$\mathscr{P}_{2}$ there exists a functor
$$[\mathcal{F}^{1,2}]^{\ast}: \fuk(\mathcal{X},\mathscr{P}_{2})^{\nabla}
\to \fuk(\mathcal{X},\mathscr{P}_{1})^{\nabla}$$
that induces an equivalence of TPCs in homology \zhnote{and is the identity map on the objects from $\mathcal{X}$.} 
Given that \zhnote{$\mathcal{C}\fuk(\mathcal{X};\mathcal{P}_{i})=H^0(\fuk(\mathcal{X},\mathscr{P}_{i})^{\nabla})$}, this means that the 
pseudo-metric $D^{\mathcal{F}}$ defined on $\mathcal{X}$ using the TPC structure $\mathcal{C}\fuk(\mathcal{X};\mathscr{P}_{2})$ is greater or equal than the pseudo-metric defined using $\mathcal{C}\fuk(\mathcal{X};\mathscr{P}_{1})$. By using the functor $\mathcal{F}^{2,1}$ we deduce the opposite inequality and we conclude that $D^{\mathcal{F}}$ is independent of the 
perturbation data used to define it.

(i) The first inequality relates the spectral distance $\sigma(L,L')$ to the 
simplest fragmentation metric, $D(-,-)=D^{\{0\}}(-,-)$. The proof is based on a simple consequence of the properties of the Yoneda embedding.

\zhnote{Let $L, L' \in \mathcal{X}$. Without loss of generality, we may assume that $\bar{L}$ and $\bar{L'}$ are Hamiltonian isotopic (otherwise $\sigma(L, L') = +\infty$). Recall also that all Lagrangians in $\mathcal{X}$ are assumed to be graded.} Standard Floer theory shows \zhnote{that there} is a canonical
class $a=o_{L,L'}\in HF(L,L')$ obtained as the image of the fundamental 
class  $[L]\in H_{n}(L,\k)$ through the PSS 
morphism $H_{\ast}(L,\k)\to HF(L,L')$. There is a similar class $b=o_{L',L}\in HF(L',L)$ and it is a simple consequence of the properties of the Yoneda embedding that these two classes have the property
that $a\ast b = [L]$ and $b\ast a= [L']$. Here $\ast$ is \zhnote{the product induced} in homology by the $A_{\infty}$-composition $\mu_{2}$,
and $[L]$ and $[L']$ are the respective fundamental classes.

We start with an argument in which we neglect filtration
issues. Suppose that there are two classes $a\in HF(L,L')$ and
$b\in HF(L',L)$ such that
$a\ast b = [e_{L,L}]=[L]\in HF(L,L)$\zhnote{, where} $e_{L,L}$ is the
unit in $CF(L,L)$. For coherence, we work here in homological notation
(even if to keep track of signs it would be preferable to use
\zhnote{cohomological} notation as in \cite{Seidel}), \zhnote{so}
$e_{L,L}$ corresponds to the maximum of the Morse function
$f_{\bar{L}}$ as in \S\ref{sbsb:fdata}. This means that there are
Floer cycles $\alpha\in CF(L,L')$, $\beta\in CF(L',L)$ such that
$\mu_{2}(\alpha,\beta)$ is homologous to $e_{L,L}$. However, recall
\zhnote{that $CF(L,L)$} is the Morse complex of the function
$f_{\bar{L}}$. Thus, for degree reasons we obtain
$\mu_{2}(\alpha,\beta)=e_{L,L}$. As explained in the construction of
$\fuk(\mathcal{X},\mathscr{P})$, $e_{L,L}$ is a strict unit.

\zhnote{For every $L_0, L_1 \in \mathcal{X}$ and any cycle
  $u\in CF(L_0,L_1)$, we have} a morphism of Yoneda modules
\zhnote{$\phi^{u}:\mathcal{Y}(L_0)\to \mathcal{Y}(L_1)$} defined
by $$\phi^{u}_{k}(-,-,\ldots, x)=\mu_{k+1}(-,-,\ldots,x,u)~.~$$
Returning to $\alpha$ and $\beta$ above we have that
$\phi^{\mu_{2}(\alpha,\beta)}=
\phi^{e_{L,L}}=\mathds{1}_{\mathcal{Y}(L)}$.  The composition
$\phi^{\beta}\circ \phi^{\alpha}$ is not \zhnote{necessarily} equal to
the morphism $\phi^{\mu_{2}(\alpha,\beta)}$, but they are
\zhnote{homologous as} elements of
$\Mor_{mod}(\mathcal{Y}(L), \mathcal{Y}(L))$. \zhnote{(See}
\cite{Seidel} for the explicit formulas for \zhnote{the composition of
  pre-module homomorphisms and for the differential $\mu_{1}^{mod}$ on
  $\Mor_{mod}(-,-)$.)}  We rewrite here this differential (neglecting
signs) for a pre-module morphism $t:\mathcal{M}\to\mathcal{N}$, with
components
$$t_{k}: CF(L_{1},L_{2})\otimes \ldots \otimes
CF(L_{k-1},L_{k})\otimes \mathcal{M}(L_{k})\to \mathcal{N}(L_{1})~.~$$
We have that:
\pbrev{
\begin{equation}
  \begin{aligned}
    (\mu_{1}^{mod}t)_{m}(-, \ldots, -,x) = & \sum_{r+s=m+1}
    \mu_{r}^{\mathcal{N}}(- ,-, \ldots, t_{s}(-, -,\ldots,x)) + \\ + &
    \sum_{l+j=m+1} t_{l}( -, -, \ldots,
    \mu_{j}^{\mathcal{M}}(-,-,\ldots, x)) + \\ & + \sum_{k+g=m+1}
    t_{k}(-,\ldots, \mu_{g}(-,\ldots, -), -,\ldots, x),
  \end{aligned}
\end{equation}
}
where $\mu_{g}$ is the operation in the $A_{\infty}$-category, and
\pbred{$\mu_r^{\mathcal{M}}$ and $\mu_j^{\mathcal{N}}$} are the
respective module operations. Consider the pre-module morphism
$T:\mathcal{Y}(L)\to \mathcal{Y}(L)$ with the $k$-th component given
by:
$$T_{k}(-, -,\dots , -, x)=\mu_{k+2}(-,-, \ldots, -, x, \alpha,\beta)~.~$$
Using the fact that $\mu_{2}(\alpha,\beta)$ is a strict unit and the
$A_{\infty}$-relations, it is easy to see that
$\mu_{1}^{mod}(T)=\mathds{1}_{\mathcal{Y}(L)}-\phi^{\beta}\circ
\phi^{\alpha}$.

In our case, the $A_{\infty}$-category $\fuk(\mathcal{X},\mathscr{P})$
is filtered and, assuming that \zhnote{$\alpha \in CF^{\leq r}(L,L')$}
and \zhnote{$\beta \in CF^{\leq s}(L,L')$}, our discussion above shows
that in the TPC $\mathcal{C}\fuk(\mathcal{X})$ we have \zhnote{maps:}

$$\Sigma^{r+s}\mathcal{Y}(L)
\stackrel{\Sigma^{r}\bar{\phi}^{\alpha}}{\longrightarrow}
\Sigma^{s}\mathcal{Y}(L')\stackrel{\bar{\phi}^{\beta}}{\longrightarrow}
\mathcal{Y}(L)$$ that compose to the ``shift'' map $\eta_{r+s}$.  Here
$\bar{\phi}^{\beta}:\Sigma^{s}\mathcal{Y}(L')\to \mathcal{Y}(L)$ is
induced by $\phi^{\beta}$ and
$\bar{\phi}^{\alpha}:\Sigma^{r}\mathcal{Y}(L)\to \mathcal{Y}(L')$ is
induced by $\phi^{\alpha}$.  The key ingredient here is that the
homotopy $T$ only shifts filtration by at most $r+s$.

Returning to our classes $a=[\alpha]$ and $b=[\beta]$ we now use
$b\ast a = [e_{L',L'}]$ and obtain the existence of a similar diagram:
$$\Sigma^{r+s}\mathcal{Y}(L')
\stackrel{\Sigma^{s}\bar{\phi}^{\beta}}{\longrightarrow}
\Sigma^{r}\mathcal{Y}(L)\stackrel{\bar{\phi}^{\alpha}}{\longrightarrow}
\mathcal{Y}(L')~.~$$

We immediately deduce from this that $\mathcal{Y}(L)$ and
$\mathcal{Y}(L')$ are \zhnote{$(r+s)$-interleaved} in the sense of
Definition \ref{def:interleave} in the TPC
$\mathcal{C}\fuk(\mathcal{X})$ (one can do better, for instance use
$\max\{r,s\}$, but we will \zhnote{not try} to improve the estimates).
This means by Corollary \ref{cor:int-metr} that $D(L,L')\leq 4(r+s)$.
By reviewing the definition of spectral invariants (\ref{eq:spec})
this implies that
$D(L,L')\leq 4( \sigma( o_{L,L'})+\sigma(o_{L',L}))$.  The inequality
claimed at the first point of the Theorem now follows because
$HF(L,L')$ and $HF(L',L)$ are related by duality and
$\sigma(o_{L',L})=-\sigma(pt_{L,L'})$.

\begin{rem}\label{rem:spec-Y}
  (a) The argument at the center of this proof has appeared before in
  the literature \zhnote{(in less generality, involving only $\mu_{2}$
    and $\mu_{3}$ operations, instead of the full
    $A_{\infty}$-structure)} first in \cite{AS-spectral-21}, as well
  as in \cite{Bi-Co:spectr}. All these arguments are based on
  \zhnote{properties of the Yoneda embedding from \cite{Seidel}.}
  \zhnote{These were} adjusted to the weakly-filtered setting in
  \cite{Bi-Co-Sh:LagrSh}. We can achieve a greater degree of
  generality here because we are able to work with \zhnote{genuinely}
  filtered $A_{\infty}$-categories \zhnote{which moreover are strictly
    unital.}

  (b) The fact that the modules $\mathcal{Y}(L)$ and $\mathcal{Y}(L')$
  are interleaved \zhnote{by} an order controlled by $\sigma(L,L')$
  has numerous consequences because it implies that persistence
  modules of the form $HF(N,L)$ and $HF(N,L')$ are also interleaved
  \zhnote{by} the same order, for all $N\in \mathcal{X}$. Viewing this
  through the prism of the \zhnote{bottleneck} metric we easily deduce
  relations between various measurements of interest such as boundary
  depth, spectral range and so forth, associated to the complexes
  $CF(N,L)$ and $CF(N,L')$.
\end{rem}

We pursue with the proof of the \zhnote{point (ii)} in Theorem
\ref{thm:appl-sympl}.  \zhnote{Assume} that
$D^{\mathcal{F}}(L,L') <r$. \zhnote{Then there} exists a sequence of
exact triangles in $[\C\fuk(\mathcal{X})]_{\infty}$ as below:
\begin{equation}\label{eq:iterated-tr10}\xymatrixcolsep{1pc}
  \xymatrix{
    0 \ar[rr] &  &  Y_{1}\ar@{-->}[ldd]  \ar[r] &\ldots
    \ar[r]& Y_{i} \ar[rr] &  &  Y_{i+1}\ar@{-->}[ldd]  \ar[r]
    &\ldots \ar[r]&Y_{n-1} \ar[rr] &   &L \ar@{-->}[ldd]  &\\
    &         \Delta_{1}                  &  & & &  \Delta_{i+1}                          & &  &  &    \Delta_{n}             \\
    & F_{1}\ar[luu] &  & & &F_{i+1}\ar[luu] &  &  & &F_{n}\ar[luu] }
\end{equation}
Here all the terms $F_{i}$ are in $\mathcal{F}$ except for the $j$-th
term which is of the form \zhnote{$T^{-1}\Sigma^{\alpha}L'$ for some
  $\alpha \in \R$} (we write here $N$ instead of $\mathcal{Y}(N)$ for
the elements of $\mathcal{X}$). \zhnote{See (\ref{eq:delta-fr}).}
Moreover, \zhnote{by assumption we have}
$\sum_{i} \bar{w}(\Delta_{i})\leq r$.  We now \zhnote{appeal} to
\zhnote{the argument from} the proof of Corollary \ref{eq:estimate3}
to associate to the sequence of exact triangles above another sequence
of triangles

\begin{equation}\label{eq:ex-tr-C10}\xymatrixcolsep{1pc} \xymatrix{
    0 \ar[rr] &  &  \bar{Y}_{1}\ar@{-->}[ldd]
    \ar[r] &\ldots  \ar[r]& \bar{Y}_{i} \ar[rr] &  &
    \bar{Y}_{i+1}\ar@{-->}[ldd]  \ar[r] &\ldots \ar[r]&\bar{Y}_{n-1}
    \ar[rr] &   &\bar{Y}_{n} \ar@{-->}[ldd]  &\\
    &         \Delta_{1}'                  &  & & &  \Delta_{i+1}'                          & &  &  &    \Delta_{n}'             \\
    & F'_{1}\ar[luu] &  & & &F'_{i+1}\ar[luu] &  &  & &F'_{n}\ar[luu] }
\end{equation}
this time all exact in $[\C\fuk(\mathcal{X})]_{0}$ together with a
$2r$-isomorphism $ L\to \bar{Y}_{n}$.  Here the $F'_{i}$'s are shifts
of the previous corresponding \zhnote{objects from
  (\ref{eq:iterated-tr10}). Given} that $\mathcal{F}$ is closed
\zhnote{under} shifts we have $F'_{i}\in\mathcal{F}$ except for the
$j$-th term that is still a shift of $T^{-1}L'$. By possibly shifting
$\bar{Y}_{n}$ down, we have a $4r$-isomorphism
$\phi :\bar{Y}_{n}\to L$.

Each exact triangle $\Delta_{i}$ is the image of a cone-type triangle
in the category of filtered modules:
\pbrev{$$F'_{i+1} \stackrel{\varphi_{i}}{\longrightarrow} \bar{Y}_{i}
  \to {\rm Cone\/}(\varphi_{i})= \bar{Y}_{i+1}$$} and there is a
module map $\bar{\phi}:\bar{Y}_{n}\to L$ such that
$K={\rm Cone\/}(\bar{\phi})$ is $4r$-acyclic. In particular, $K(L)$ is
\zhnote{a} $4r$-acyclic \zhnote{filtered chain complex}: the identity
of \zhnote{this chain} complex is homotopic to zero through a homotopy
$h$ of shift at most $4r$. The complex $K(L)$, viewed as a vector
space, is the \pbrev{following sum:} \pbrev{
  \begin{equation} \label{eq:KL}
    \begin{aligned}
      K(L) & = CF(L,L) \oplus \bigoplus_{i=1}^n CF(L,F'_i)[2] \\
      & = CF(L,L) \oplus \bigoplus_{i=1}^{j-1} CF(L,F'_i)[2] \oplus
      CF(L,L')[1] \oplus \bigoplus_{i=j+1}^n CF(L,F'_i)[2].
    \end{aligned}
\end{equation}
}
%
Denote the maximum of the Morse function $f_{\bar{L}}:\bar{L}\to \R$
by $m$.  We now make the following two assumptions:
\begin{itemize}
\item[(i)] \zhnote{There} exists an embedding of a standard symplectic
  ball $e':B(s)\to X$ such that $e'(0)=m$, $(e')^{-1}(L)=\R B(s)$ and
  $e'(B(s))\cap (L'\cup \cup_{F\in\mathcal{F}}F)=\emptyset$.
\item[(ii)] \zhnote{The} almost complex structures on $X$ that are
  part of the perturbation data $\mathscr{P}$, pull back to the
  standard almost complex \zjnote{structure} on $B(s)$.
\end{itemize} 
These assumptions are not restrictive because, as shown at the
beginning of the proof, the pseudo-metric \zhnote{$D^{\mathcal F}$} is
independent of the perturbation data used in its definition. Thus we
can pick perturbation data $\mathscr{P}$ adapted to the embedding $e'$
in the sense that the two \zhnote{points (i), (ii)} above are
satisfied, and the sequences of exact triangles
(\ref{eq:iterated-tr10}) and (\ref{eq:ex-tr-C10}) still exist in this
case, with the properties claimed.

Returning to the complex $K(L)$, denote \zhnote{by} $d_{K}$ its
differential and consider the equation:
$$d_{K} h (m) + h d_{K} (m)=m~.~$$
Now $CF(L,L)$ is a subcomplex \zhnote{of} $K(L)$ and thus
$d_{K}(m)=0$. As a result $d_{K}h(m)=m$, \pbrev{hence $m$ is a boundary in
the complex $(K(L), d_K)$.}

\pbrev{On the other hand, since $K$ is an iterated cone of
  $A_{\infty}$-Yoneda modules, the differential $d_K$ has a particular
  shape with respect to the splitting from~\eqref{eq:KL}. This has
  been worked out in detail in~\cite[Section~2.6]{Bi-Co-Sh:LagrSh} (in
  particular, see Theorem~2.14 in that paper) and the relevant
  ingredients are as follows. The differential $d_K$ can be described
  by a matrix $(a_{i,j})_{0 \leq i, j \leq n}$, where:} \pbrev{
\begin{enumerate}
\item $a_{i,j} = 0$ for $i>j$ (i.e.~the matrix $(a_{i,j})$ is upper
  triangular).
\item $a_{i,j}: CF(L, F'_j) \longrightarrow CF(L, F'_i)$ for
  $1 \leq i \leq j \leq n$.
\item $a_{0,j}: CF(L, F'_j) \longrightarrow CF(L,L)$ for $j \geq 1$.
\item $a_{0,0}: CF(L, L) \longrightarrow CF(L, L)$ is the Floer
  differential on $CF(L,L)$.
\end{enumerate}
Here we have omitted reference to the grading on the $F_j'$s and $L$.
Moreover, for $j \geq 1$, the maps $a_{0,j}$ can be written as
follows.} \pbrrvv{There exist} \pbrev{Floer chains
$c_{q,p} \in CF(F'_q, F'_p)$ for every $q>p>0$ and
$c_{q,0} \in CF(F'_q,L)$ for all $q \geq 1$, all at action levels
$\leq 0$, such that:} \pbrev{
\begin{equation} \label{eq:aij} a_{0,j}(-) = \sum_{2 \leq d,\,
    \underline{k}} \mu_{d}(-, c_{k_{d}, k_{d-1}}, \ldots, c_{k_2,0}),
\end{equation}
where $\underline{k}=(k_2, \ldots, k_d)$ runs over all partitions
$0 < k_2 < \cdots < k_{d-1} < k_d=j$ and $\mu_d$ is the $d$-order
operation in the Fukaya category $\fuk(\mathcal{X})$.}

\pbrev{Since $m$ is the maximum of the Morse function, and $m$ is a
  boundary in the complex $(K(L), d_K)$, it follows
  from~\eqref{eq:aij} that there is a $J$-holomorphic polygon with one
  edge on $L$ and the others on the $F'_{i}$'s and possibly $L'$
  (recall that $F'_j = T^{-1}L'$), that goes through
  $m$. (See~\cite[Section 5.1]{Bi-Co-Sh:LagrSh} and in particular
  pages~91-92 in that paper for a detailed proof of a very similar
  statement.)} The area of each such polygon is at least
$\pi s^{2}/2$.  This means that \pbrev{the chain homotopy} $h$
increases filtration by at least $\pi s^{2}/2$ and thus
$$4r \geq \frac{\pi s^{2}}{2}$$ which shows the claim.
  
\
 
We pursue with the proof of the \zhnote{point (iii)} of Theorem
\ref{thm:appl-sympl}.  We will again make use of the fact that the
pseudo-metric $D^{\mathcal{F}}$ is independent of the perturbation
data $\mathscr{P}$.
 
Let \zhnote{$\delta=\delta^{\cap}(N, L';\mathcal{F})$}.  For each
intersection point $x\in N \cap L'$ fix standard ball embeddings
$e_{x}: B(s)\to X$ with $e^{-1}(N)=\R B(s)$, $e^{-1}(L')=i\R B(s)$,
$e(0)=x$, such that all these embeddings are disjoint from the family
$\mathcal{F}$ and additionally $\pi s^{2} =\delta-\epsilon$ for
\zhnote{a small} $\epsilon$. We may assume that the almost complex
structures that are part of the perturbation data $\mathscr{P}$
pullback to the standard almost complex structure through the
embeddings $e_{x}$. This implies that if a Floer type strip, or
polygon, has an input or an output at a point $N\cap L'$ and has
boundary on $N$, $L'$ and any other elements of the family
$\mathcal{F}$, then its energy is at least
$\delta'=\pi s^{2}/4
-\epsilon'=\frac{\delta-\epsilon}{4}-\epsilon'$. Here the small
$\epsilon'$ has to do with making the Hamiltonian \zhnote{or
  (1-forms)} part of the perturbation data small enough.

Assume now, as in the statement, that
\zjr{$D^{\mathcal{F}}(L,L') < r < \delta /16$}.  Then there exists a sequence
of \zhnote{exact triangles} in $\C\fuk(\mathcal{X})_{0}$ as in
(\ref{eq:ex-tr-C10}) and a $2r$-isomorphism $\psi: L\to
\bar{Y}_{n}$. By Lemma \ref{lem:ineq-inter} this means that
\zjr{$d_{int}(L, Y_{n})\leq 2r$}. In particular, there exist maps
$u:\Sigma^{r}\bar{Y}_{n}\to L$, $v:\Sigma^{r}L\to \bar{Y}_{n}$ such
that $v\circ \Sigma^{r}u=\eta_{2r}$.  Therefore, given that
$N\in \mathcal{X}$, we obtain maps of filtered complexes
$$ \bar{Y}_{n}(N)\stackrel{u'}{\longrightarrow}
CF(N,L)\stackrel{v'}{\longrightarrow} \bar{Y}_{n}(N)$$ each of shift
at most \zjr{$2r$}, and whose composition is chain homotopic to the identity
through a homotopy $h$ that \zhnote{shifts} filtration by at most
\zjr{$4r$}.
  
\zhnote{Consider} the differential of the complex $\bar{Y}_{n}(N)$.
As a vector space, the complex $\bar{Y}_{n}(N)$ is a sum of the form
$CF(N,F'_{1})\oplus \ldots \oplus CF(N,L')\oplus CF(N,F'_{n})$ and the
differential is represented by clustered polygons with boundaries on
$N$, $L'$ and the elements of the family $\mathcal{F}$.
 
\zhnote{Consider} the composition $\Psi=p\circ v'\circ u'\circ i$,
where $i:CF(N,L') \to \bar{Y}_{n}(N)$ is the inclusion and
$p:\bar{Y}_{n}(N)\to CF(N,L')$ is the projection - in both cases as
vector spaces. We \zhnote{claim that $\Psi$ is injective. This would
  imply} that $\dim_{\k} CF(N,L')\leq \dim_{\k} CF(N,L)$ and
\zhnote{prove the statement at point (iii) of the Theorem
  \ref{thm:appl-sympl}.}

\zhnote{To show the injectivity of $\Psi$, we} inspect the formula
$dh+hd = \mathds{1} -v'\circ u'$ and recall that the differential $d$, when
restricted to $CF(N,L')$, drops the filtration by at least
$\delta'=\frac{\delta-\epsilon}{4}-\epsilon'$ while $h$ raises
filtration by at most \ocnote{$4r < \delta'$} (when $\epsilon$ and $\epsilon'$
are small enough). In other words $p(hd(x))$ for each element
$x\in CF(N,L')$ is of strictly lower filtration than $x$.  Similarly,
$p(dh(x))$ is also of lower filtration than $x$.  As a result we
deduce that $\Psi$ can be \zhnote{written as $\mathds{1}\,+$ a map that
  strictly lowers the filtration level. It follows that $\Psi$ is
  injective.}
 
\
  
Finally, we discuss the last point in Theorem \ref{thm:appl-sympl}. We
\zhnote{assume} that $\mathcal{F}$ generates the usual
\zhnote{(i.e.~without persistence structure) derived} Fukaya category
$D\fuk(X)$ and we want to show that in this case the pseudo-metric
$D^{\mathcal{F}}$ is finite.  From Theorem \ref{cor:Fuk-cat-var}
\zhnote{(iii)} we deduce that $\mathcal{F}$ generates
$\C\fuk(\mathcal{X})_{\infty}$.  This implies that any object $A$ in
$\C\fuk(\mathcal{X})$ is $r$-isomorphic for some some $r$ to an object
that can be written as an \zhnote{iterated} cone with triangles of
weight $0$, of the form in (\ref{eq:ex-tr-C10}). This means that
$D^{\mathcal{F}}(A,0)<\infty$ and implies the last claim in the
statement of the theorem.  \qed


\subsection{Pseudo-metrics on $\mathcal{L}ag(X)$ and proof of
  Corollary \ref{cor:gl-metric}}

The purpose of this subsection is to use the results from
\S\ref{subsec-symp} to construct the family of fragmentation metrics
on the space $\mathcal{L}ag(X)$ of all \zhnote{closed exact, graded,
  Lagrangians} in $(X,\omega)$ and prove Corollary
\ref{cor:gl-metric}.

\

We assume the setting in Corollary~\ref{cor:gl-metric}, \pbred{in
  particular that $\rank \, \fuk(X,\omega)<\infty$}.  Thus $D\fuk(X)$
admits a finite set of triangular generators in the sense that there
is a family of triangular generators $\mathcal{F}$ such that the
corresponding family $\bar{\mathcal{F}}\subset \mathcal{L}ag(X)$,
obtained by forgetting the grading and the choices of primitives, is
finite. We now fix such a family \zhnote{$\mathcal F$} of generators
and assume that $\mathcal{F}$ is invariant to shifts and translation
and that the Lagrangians in $\bar{\mathcal{F}}$ are in
\zhnote{general} position (each two Lagrangians intersect
\zhnote{transversely} and there are no triple intersection points).

The proof of Corollary  \ref{cor:gl-metric} is a consequence of Theorem \ref{thm:appl-sympl} together with the invariance \zhnote{properties} of  the Fukaya TPC constructed earlier in this section and is contained in the sub-sections below.

\subsubsection{The Fukaya TPC revisited}\label{subsubsec:fuk-revTPC}
Pick  $\mathcal{X}\subset \mathcal{L}ag(X)'$ a shift and translation invariant family \zhnote{that contains $\mathcal{F}$}. As before, we denote by $\bar{\mathcal{X}}\subset \mathcal{L}ag(X)$ the corresponding 
family of Lagrangians after forgetting the choices of primitives and grading.
We assume that $\bar{\mathcal{X}}$ is finite and that its elements are in \zhnote{general} position. 

We recall the filtered Fukaya category 
$\fuk(\mathcal{X};\mathscr{P})$, where $\mathscr{P}$ is a choice of perturbation data, as in Theorem \ref{t:fil-fuk} - see also \S\ref{subsubsec:proof-Cor}. We already know that any two such categories, defined \zhnote{using two admissible} perturbation data $\mathscr{P}$ and $\mathscr{P}'$
are filtered quasi-equivalent in the sense that there are filtered $A_{\infty}$-functors $\fuk(\mathcal{X};\mathscr{P})\to \fuk(\mathcal{X};\mathscr{P}')$ that are the identity on objects and induce a (filtered) equivalence of the  homological persistence categories. 

As discussed in \S\ref{sb:faitpc}, there are two TPCs that one can associate to  \zhnote{$\fuk(\mathcal{X};\mathscr{P})$}. The first is  $\mathcal{C}\fuk(\mathcal{X})$ as in 
Corollary \ref{cor:Fuk-cat-var}. This is obtained by considering the filtered
modules $Fmod(\fuk(\mathcal{X};\mathscr{P}))$ over $\fuk(\mathcal{X};\mathcal{P})$. The category $\mathcal{C}\fuk(\mathcal{X})$ is the \zhnote{homological} category $$\mathcal{C}\fuk(\mathcal{X})= H^{0}[\fuk(\mathcal{X};\mathscr{P})^{\nabla}]$$ where $\fuk(\mathcal{X};\mathscr{P})^{\nabla}$ is the \zhnote{smallest} triangulated \zhnote{(with respect to weight-$0$ triangles) full} subcategory of $Fmod(\fuk(\mathcal{X};\mathscr{P}))$ that contains the Yoneda modules $\mathcal{Y}(L)$ with $L\in \mathcal{X}$ and is closed \zhnote{under $r$-isomorphisms} for all $r$ in the sense that if $j:M\to M'$ is an
$r$-isomorphism of modules, and $M\in \fuk(\mathcal{X};\mathscr{P})^{\nabla}$, then $M'\in   \fuk(\mathcal{X};\mathscr{P})^{\nabla}$. Possibly more concretely, each object in this category
is $r$-isomorphic, for some $r$, to a weight $0$ iterated cone of Yoneda modules.
Two such triangulated persistence categories, defined for different choices
of perturbation data are \zhnote{TPC-equivalent} (see also Remark \ref{rem:eq-nat-not}) and thus we drop
the reference to the perturbation data from the notation.

The second type of TPC will be denoted by $\mathcal{C}'\fuk(\mathcal{X})$  and is defined by
\begin{equation} \label{second-type}
\mathcal{C}'\fuk(\mathcal{X})=H^{0}[Tw(\fuk(\mathcal{X};\mathscr{P}))]
\end{equation}
where $Tw(\fuk(\mathcal{X};\mathscr{P}))$ is the category of filtered twisted complexes constructed from $\fuk(\mathcal{X};\mathscr{P})$, see  
 \S\ref{sb:faitpc}. In this case \zhnote{too} we can drop the reference to the choices of 
 perturbation data as any \zhnote{two such} choices produce equivalent TPCs (again these equivalences are not entirely canonical but this will not have any impact on our 
 further arguments).

There are filtered functors:
$$\Theta: Tw(\fuk(\mathcal{X};\mathscr{P}))\longrightarrow Fmod[Tw(\fuk(\mathcal{X};\mathscr{P}))] \longrightarrow Fmod(\fuk(\mathcal{X};\mathscr{P})) $$
where the first arrow is the Yoneda embedding and the second is pullback over 
the natural inclusion $\fuk(\mathcal{X};\mathscr{P})\to Tw(\fuk(\mathcal{X};\mathscr{P}))$. The composition $\Theta$ is a homologically full and faithful embedding, and it induces a full and faithful embedding of TPCs. The image
of $\Theta$ lands inside $\fuk(\mathcal{X})^{\nabla}$ (actually inside the 
category denoted by $\fuk(\mathcal{X},\mathscr{P})^{\#}$ in \S\ref{sb:faitpc}) and thus we have 
an inclusion of TPCs:
$$\bar{\Theta}: \mathcal{C}'\fuk(\mathcal{X})\hookrightarrow \mathcal{C}\fuk(\mathcal{X})$$
By \zhnote{contrast} to the unfiltered case, $\bar{\Theta}$ is not an equivalence of TPCs because,  to have an equivalence of TPCs  each object in $\mathcal{C}\fuk(\mathcal{X})$ needs to be $0$-isomorphic to some object in $\mathcal{C}'\fuk(\mathcal{X})$ \zhnote{(see Definition \ref{def:TPC equivalences})} and, \zhnote{{\em a priori}},  this might not happen
(each object in $\mathcal{C}\fuk(\mathcal{X})$ is $r$-isomorphic to some object in $\mathcal{C}'\fuk(\mathcal{X})$ but possibly for $r>0$).

\subsubsection{Fragmentation metrics as in Theorem \ref{thm:appl-sympl}}
We now focus on the pseudo-metric $D^{\mathcal{F}}$ as in Theorem  \ref{thm:appl-sympl}.
This is constructed by the general procedure described in the algebraic part of this paper, by using
the persistence triangular weight on  $\mathcal{C}\fuk(\mathcal{X})_{\infty}$. Thus, this is 
a shift invariant fragmentation pseudo-metric of the type $\widehat{\bar{d}^{\mathcal{F}}}(-,-)$, as 
described in \S\ref{subsubsec:prop-fr}, restricted to $\mathcal{X}$. To emphasize the relation of this pseudo-metric to the set $\mathcal{X}$ we will denote it by $D^{\mathcal{F}}_{\mathcal{X}}$. 
Because the pseudo-metric is shift invariant it descends to $\bar{\mathcal{X}}$.
As shown in Theorem \ref{thm:appl-sympl} this pseudo-metric is independent \zhnote{of} the choice of perturbation data $\mathscr{P}$.

There is a second possibility to construct a fragmentation type pseudo metric on $\bar{\mathcal{X}}$ and this is to apply the exact same construction to the triangulated persistence category $\mathcal{C}'\fuk(\mathcal{X})$ \zhnote{from (\ref{second-type})}. The resulting pseudo-metric, defined again on $\mathcal{X}$, will be denoted by $\bar{D}_{\mathcal{X}}^{\mathcal{F}}$.  It is again independent of the perturbation \zhnote{data} used in its definition. If one is only interested in comparing objects in $\mathcal{X}$,
this pseudo-metric is much more approachable from a computational point of view because 
$\mathcal{C}'\fuk(\mathcal{X})$ has fewer objects than  $\mathcal{C}\fuk(\mathcal{X})$.

There is a simple relation between the two pseudo-metrics discussed above.

\begin{lem}\label{lem:comp-pseudoLag} For any  $L,L'\in \bar{\mathcal{X}}$  the pseudo-metrics $\bar{D}_{\mathcal{X}}^{\mathcal{F}}$ and $D_{\mathcal{X}}^{\mathcal{F}}$ satisfy the inequalities:
$$ \frac{1}{4} \ \bar{D}_{\mathcal{X}}^{\mathcal{F}}(L,L')\ \leq \  D_{\mathcal{X}}^{\mathcal{F}}(L,L') \  \leq \ \bar{D}_{\mathcal{X}}^{\mathcal{F}}(L,L') $$
\end{lem}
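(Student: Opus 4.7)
The argument rests on the TPC-embedding $\bar{\Theta}: \mathcal{C}'\fuk(\mathcal{X}) \hookrightarrow \mathcal{C}\fuk(\mathcal{X})$ introduced in \S\ref{subsubsec:fuk-revTPC}: it is filtered, full and faithful as a persistence functor, commutes with the shift functor and with $T$, and acts as the identity on the Yoneda modules of the elements of $\mathcal{X}$. For the easier inequality $D_{\mathcal{X}}^{\mathcal{F}}(L,L') \leq \bar{D}_{\mathcal{X}}^{\mathcal{F}}(L,L')$, the plan is to push any cone decomposition of $\mathcal{Y}(L)$ in $\mathcal{C}'\fuk(\mathcal{X})_\infty$ (with linearization in $\mathcal{F} \cup \{T^{-1}\mathcal{Y}(L')\}$ up to shifts) forward via $\bar{\Theta}_\infty$. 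Since $\bar{w}$ is defined by infimizing over strict exact triangles and the target TPC contains strictly more objects than the image of $\bar{\Theta}$, the $\bar{w}$-weight can only decrease under $\bar{\Theta}_\infty$. Summing over the triangles of the decomposition and symmetrizing yields the right inequality.

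For the harder inequality $\bar{D}_{\mathcal{X}}^{\mathcal{F}}(L,L') \leq 4 D_{\mathcal{X}}^{\mathcal{F}}(L,L')$, the plan is to convert a cheap decomposition in $\mathcal{C}\fuk(\mathcal{X})_\infty$ into one in $\mathcal{C}'\fuk(\mathcal{X})_\infty$, losing at most a factor of $4$ in weight. Fix $\epsilon > 0$ and start from a cone decomposition $\eta_\infty$ of $\mathcal{Y}(L)$ in $\mathcal{C}\fuk(\mathcal{X})_\infty$ of total $\bar{w}$-weight at most $r := D_{\mathcal{X}}^{\mathcal{F}}(L,L') + \epsilon$, with linearization involving shifts of elements of $\mathcal{F}$ and one entry of the form $T^{-1}\Sigma^{\alpha}\mathcal{Y}(L')$. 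I would then apply the construction in the proof of Lemma \ref{lem:metric-D} (compare also Corollary \ref{eq:estimate3}) to extract from $\eta_\infty$ a sequence $\bar{\eta}$ of $\mathcal{C}\fuk(\mathcal{X})_0$-exact triangles whose linearization is obtained from that of $\eta_\infty$ by further shifts, together with a $2r$-isomorphism $\phi: \mathcal{Y}(L) \to \bar{Y}_n$ in $\mathcal{C}\fuk(\mathcal{X})_0$.

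Since every member of the linearization of $\bar{\eta}$ is a shift of a Yoneda module of an element of $\mathcal{X}$ and hence lies in the essential image of $\bar{\Theta}$, inductive application of triangulated functoriality produces a sequence $\tilde{\eta}$ of $\mathcal{C}'\fuk(\mathcal{X})_0$-exact triangles with $\bar{\Theta}(\tilde{\eta})$ $0$-isomorphic to $\bar{\eta}$; moreover, the map $\phi$ lifts by full-and-faithfulness to $\tilde{\phi}: \mathcal{Y}(L) \to \widetilde{Y}_n$ in $\mathcal{C}'\fuk(\mathcal{X})_0$ whose cone is $2r$-acyclic (since $\bar{\Theta}$, being a filtered full and faithful persistence functor, reflects $r$-acyclicity), so $\tilde{\phi}$ is a $2r$-isomorphism. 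By Proposition \ref{prop-r-iso} and Corollary \ref{cor-2} this $\tilde{\phi}$ admits a right $2r$-inverse $\tilde{\psi}: \Sigma^{2r}\widetilde{Y}_n \to \mathcal{Y}(L)$, which is itself a $4r$-isomorphism. I would then append to $\tilde{\eta}$ the exact triangle in $\mathcal{C}'\fuk(\mathcal{X})_\infty$ of the form $0 \to \widetilde{Y}_n \to \mathcal{Y}(L) \to 0$, with middle map the class of $\tilde{\psi}$. By Example \ref{ex:sequences3}, the $\bar{w}$-weight of this triangle is at most $4r$. The resulting cone decomposition of $\mathcal{Y}(L)$ has total weight at most $4r$ and linearization in $\mathcal{F} \cup \{T^{-1}\mathcal{Y}(L')\}$ (up to shift), with the additional cap $0 \in \mathcal{F}$. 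This gives $\widehat{\bar{\delta}^{\mathcal{F}}}$ in $\mathcal{C}'\fuk(\mathcal{X})$ at most $4r$; running the symmetric argument with the roles of $L$ and $L'$ swapped and letting $\epsilon \to 0$ yields the left inequality.

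The main obstacle is precisely the constant $4 = 2 \cdot 2$: the first factor of $2$ arises from Lemma \ref{lem:metric-D} when extracting a $2r$-isomorphism out of a weight-$r$ decomposition in $\mathcal{C}_\infty$, while the second comes from Corollary \ref{cor-2}, according to which right inverses of $2r$-isomorphisms are only guaranteed to be $4r$-isomorphisms. As discussed in Remark \ref{lem: sequence}, both factors appear essentially unavoidable in the present algebraic framework. The only other delicate point is the coherent lifting of $\bar{\eta}$ and $\phi$ across $\bar{\Theta}$, which however follows directly from $\bar{\Theta}$ being a filtered, full and faithful embedding of TPCs that commutes with the triangulated and persistence structures.
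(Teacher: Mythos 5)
Your proposal is correct and follows the same strategy as the paper: the right inequality is the direct consequence of the TPC inclusion $\bar{\Theta}:\mathcal{C}'\fuk(\mathcal{X})\hookrightarrow\mathcal{C}\fuk(\mathcal{X})$, and the left inequality is deduced from the $2r$-isomorphism extracted via the argument of Lemma \ref{lem:metric-D}, lifted through $\bar{\Theta}$ by full-and-faithfulness and closed up with a weight-$\leq 4r$ triangle. The only cosmetic difference is that you append a new triangle $0\to\widetilde{Y}_n\to\mathcal{Y}(L)\to 0$ using Example \ref{ex:sequences3}, whereas the paper transforms the last triangle of the decomposition into one of weight $\leq 4r$; both yield the stated constant.
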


\begin{proof}
  Indeed, the inequality on the right is obvious in view of the TPC
  inclusion $\bar{\Theta}$.  The inequality on the left follows from
  the argument in Lemma \ref{lem:metric-D}. Indeed this argument shows
  that if we have a cone decomposition of $\mathcal{Y}(L)$ of total
  weight $r$ in $\mathcal{C}\fuk(\mathcal{X})$, with linearization
  consisting of Yoneda modules of elements in $\mathcal{F}$ together
  with one term which is (a shift/translate) of $\mathcal{Y}(L')$,
  then there is another cone-decomposition of $\mathcal{Y}(L)$ with a
  linearization of the same type, and such that all triangles are of
  weight $0$ except the last triangle which is of weight $\leq
  4r$. But this means that all these triangles can be assumed to
  belong to $\mathcal{C}'\fuk(\mathcal{X})$. This is clear for the
  weight $0$ triangles because the image of $\Theta$ contains the
  Yoneda modules, $\mathcal{Y}(L)$, $L\in \mathcal{X}$ and is closed
  with respect to taking cones over filtration preserving maps. The
  last triangle corresponds to a strict exact triangle of weight $4r$
  with the $4r$-isomorphism on the third term of the form
  $\phi: M_{n-1}\to \mathcal{Y}(L)$ where $M_{n-1}$ is an object of
  $\mathcal{C}'\fuk(\mathcal{X})$.  Because $\bar{\Theta}$ is full and
  faithful we have $\phi \in \mor_{\mathcal{C}'\fuk(\mathcal{X})}$.
  From this argument it is easy to deduce the inequality on the left
  in the statement of the Lemma which concludes the proof.
\end{proof}

\subsubsection{Changing the set $\mathcal{X}$}
Assume that $\mathcal{X}'$ is another family of marked Lagrangians,
\zhnote{which is} shift and translation invariant, and such that
$\mathcal{X}\subset \mathcal{X}'$.  Additionally, we assume that the
family $\bar{\mathcal{X}}'$ is in general position.

\begin{lem}\label{lem:mono} Under the assumptions above, and for any
  two $L,L'\in \bar{\mathcal{X}}$ we have
  $$ \bar{D}^{\mathcal{F}}_{\mathcal{X}}(L,L')\geq
  \bar{D}^{\mathcal{F}}_{\mathcal{X}'}(L,L') \ , \
  D^{\mathcal{F}}_{\mathcal{X}}(L,L')\leq
  D^{\mathcal{F}}_{\mathcal{X}'}(L,L') ~.~$$
\end{lem}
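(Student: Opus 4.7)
The plan is to derive both inequalities from the naturality of the two TPC constructions with respect to enlarging the family of Lagrangians, applied to the same underlying perturbation data. The first step is to use the last clause of Theorem~\ref{t:fil-fuk} to choose admissible perturbation data $\mathscr{P}'$ for $\mathcal{X}'$ whose restriction $\mathscr{P}:=\mathscr{P}'|_{\bar{\mathcal{X}}}$ is admissible for $\mathcal{X}$. This gives a canonical inclusion of filtered $A_\infty$-categories $j:\fuk(\mathcal{X};\mathscr{P})\hookrightarrow \fuk(\mathcal{X}';\mathscr{P}')$ which is filtration preserving, strictly unital, and the identity on common objects. Since both $\bar{D}^{\mathcal{F}}$ and $D^{\mathcal{F}}$ are independent of the perturbation data (as recalled at the start of the proof of Theorem~\ref{thm:appl-sympl}), this choice does not affect the metric values.

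For the first inequality, the inclusion $j$ extends to a filtered, fully faithful TPC embedding $Tw(\fuk(\mathcal{X};\mathscr{P}))\hookrightarrow Tw(\fuk(\mathcal{X}';\mathscr{P}'))$ and therefore to a full embedding $\mathcal{C}'\fuk(\mathcal{X})\hookrightarrow \mathcal{C}'\fuk(\mathcal{X}')$ of TPCs that sends each Yoneda object for $L\in\mathcal{X}$ to the corresponding one for $L\in\mathcal{X}'$. In particular, any iterated cone-decomposition in $\mathcal{C}'\fuk(\mathcal{X})_{\infty}$ computing $\bar{\delta}^{\mathcal{F}}_{\mathcal{X}}(\mathcal{Y}(L),\mathcal{Y}(L'))$ gives, verbatim, a cone decomposition in $\mathcal{C}'\fuk(\mathcal{X}')_{\infty}$ of the same total weight whose linearization still lies in $\mathcal{F}\cup\{T^{-1}\mathcal{Y}(L')\}$, proving $\bar{D}^{\mathcal{F}}_{\mathcal{X}'}(L,L')\leq \bar{D}^{\mathcal{F}}_{\mathcal{X}}(L,L')$.

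For the second inequality, the relevant functor goes the other way: pullback of filtered modules
\[
j^{*}: Fmod\bigl(\fuk(\mathcal{X}';\mathscr{P}')\bigr) \longrightarrow Fmod\bigl(\fuk(\mathcal{X};\mathscr{P})\bigr).
\]
This functor is filtration preserving, commutes with the shift functor $\Sigma^{r}$, and sends filtered mapping cones to filtered mapping cones; hence it carries strict exact triangles to strict exact triangles of the same weight and $r$-acyclic modules to $r$-acyclic modules. Moreover, for any $F\in\mathcal{X}$ one has $j^{*}\mathcal{Y}_{\mathcal{X}'}(F)=\mathcal{Y}_{\mathcal{X}}(F)$. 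Consequently $j^{*}$ restricts to a TPC functor $\mathcal{C}\fuk(\mathcal{X}')\to \mathcal{C}\fuk(\mathcal{X})$ that induces a functor on $\infty$-levels, so any iterated cone-decomposition in $\mathcal{C}\fuk(\mathcal{X}')_{\infty}$ with linearization in $\mathcal{F}\cup\{T^{-1}\mathcal{Y}_{\mathcal{X}'}(L')\}$ pulls back to a decomposition in $\mathcal{C}\fuk(\mathcal{X})_{\infty}$ of the same total persistence weight with linearization in $\mathcal{F}\cup\{T^{-1}\mathcal{Y}_{\mathcal{X}}(L')\}$. Taking infima and symmetrizing yields $D^{\mathcal{F}}_{\mathcal{X}}(L,L')\leq D^{\mathcal{F}}_{\mathcal{X}'}(L,L')$.

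The main technical point is that pullback preserves the \emph{weight} of strict exact triangles (as opposed to merely sending exact triangles to exact ones): this requires that $j^{*}$ preserves $r$-isomorphisms, which in turn follows from the fact that if $K$ is $r$-acyclic in $Fmod(\fuk(\mathcal{X}';\mathscr{P}'))$, then $\mathds{1}_{j^{*}K}=j^{*}(\mathds{1}_{K})$ is null-homotopic through a homotopy of shift $\leq r$, so $j^{*}K$ is $r$-acyclic. The infimum defining $\bar{w}$ in Definition~\ref{dfn-extri-inf} then can only decrease under $j^{*}$, which is exactly what the argument requires.
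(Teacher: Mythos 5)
Your proposal follows essentially the same route as the paper: for the first inequality use the push-forward on filtered twisted complexes $\mathcal{C}'\fuk(\mathcal{X})\hookrightarrow\mathcal{C}'\fuk(\mathcal{X}')$, and for the second use the pullback $j^{*}$ on filtered $A_{\infty}$-modules. The choice of compatible perturbation data via Theorem~\ref{t:fil-fuk} and the invocation of perturbation-independence are also the same.

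There is one step you skip too quickly, and it is exactly the one the paper singles out: you assert ``consequently $j^{*}$ restricts to a TPC functor $\mathcal{C}\fuk(\mathcal{X}')\to\mathcal{C}\fuk(\mathcal{X})$'', having checked that $j^{*}$ preserves shifts, cones, $r$-acyclics, and Yoneda modules of Lagrangians that already live in $\mathcal{X}$. None of those observations, however, tells you that $j^{*}\mathcal{Y}_{\mathcal{X}'}(N)$ lands inside $\mathcal{C}\fuk(\mathcal{X})$ for the new Lagrangians $N\in\mathcal{X}'\setminus\mathcal{X}$, and hence that $j^{*}$ of an arbitrary object of $\mathcal{C}\fuk(\mathcal{X}')$ (all of whose intermediate $Y_i$'s and auxiliary $C'$'s can a priori involve such $N$'s) stays inside $\mathcal{C}\fuk(\mathcal{X})$. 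This is precisely where the standing hypothesis that $\mathcal{F}\subset\mathcal{X}$ generates $D\fuk(X)$ must be used: every $\mathcal{Y}(N)$, $N\in\mathcal{X}'$, is $r$-isomorphic in $\mathcal{C}\fuk(\mathcal{X}')$ to a $0$-weight iterated cone of Yoneda modules of elements of $\mathcal{F}$, so its pullback is $r$-isomorphic to the corresponding iterated cone in $Fmod(\fuk(\mathcal{X};\mathscr{P}))$ and therefore lies in $\mathcal{C}\fuk(\mathcal{X})$. Adding this one line makes your argument complete and equivalent to the paper's.
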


\begin{proof} We consider the category
  $\fuk(\mathcal{X}',\mathscr{P}')$ and we notice that the restriction
  of $\mathscr{P}'$ to $\mathcal{X}$ provides an allowable choice of
  perturbation data - see Theorem \ref{t:fil-fuk}. Thus, there is an
  $A_{\infty}$ filtered inclusion:
  $$\fuk(\mathcal{X},\mathscr{P}') \longrightarrow
  \fuk(\mathcal{X}',\mathscr{P})~.~$$ This inclusion induces a
  pull-back TPC functor:
  $$qJ^{\ast}: \mathcal{C}\fuk(\mathcal{X}')
  \to \mathcal{C}\fuk(\mathcal{X})~.~$$ This is well-defined because
  $\mathcal{F}\subset \mathcal{X}\subset\mathcal{X}'$ is a system of
  triangular generators for $D\fuk(X)$ which means that, in
  particular, any Yoneda module $\mathcal{Y}(L)$, $L\in \mathcal{X}'$,
  is $r$-isomorphic (in $\mathcal{C}\fuk(\mathcal{X}')$) for some
  $r\geq 0$ to a $0$-weight iterated cone of Yoneda modules of
  elements from $\mathcal{F}$. This means that the pull-back of
  $\mathcal{Y}(L)$ to $Fmod(\fuk(\mathcal{X},\mathscr{P}'))$ is an
  object of $\mathcal{C}\fuk(\mathcal{X})$.

  The same $A_{\infty}$ inclusion also induces a push-forward TPC
  functor:
  $$ J_{\ast} : \mathcal{C}'\fuk(\mathcal{X}) \to
  \mathcal{C}'\fuk(\mathcal{X}')$$ which is induced by the natural
  inclusion of twisted complexes.
 
  As discussed before, our invariance statements imply that the
  pseudo-metrics $\bar{D}^{\mathcal{F}}_{\mathcal{X}}$ and
  $D^{\mathcal{F}}_{\mathcal{X}}$ do not depend on the choice of
  perturbation data.  As a result, the fact that $J_{\ast}$ is a TPC
  functor implies the first inequality in the Lemma and the fact that
  $J^{\ast}$ is a TPC functor implies the second inequality.
\end{proof}

\subsubsection{ The pseudo-metric $\mathcal{D}^{\mathcal{F}}$ from
  Corollary \ref{cor:gl-metric}}\label{subsubsec:tr-ineq-gl}
The construction of $\mathcal{D}^{\mathcal{F}}$ proceeds in two steps.

The first is to consider again a family $\mathcal{X}$ as in the
subsections above as well as two elements
\zhnote{$L,L'\in \mathcal{L}ag(X)$}. We define:

\begin{equation}\label{metr:gen1}
  \mathcal{D}^{\mathcal{F}}_{\mathcal{X}}(L,L')=\limsup_{\epsilon
    \to 0} D^{\mathcal{F}}_{\mathcal{X}\cup
    \{L_{\epsilon},L'_{\epsilon}\}}(L_{\epsilon},L'_{\epsilon})
\end{equation}
Here $L_{\epsilon}$ and $L'_{\epsilon}$ are Hamiltonian deformations
of, respectively, $L$ and $L'$ through Hamiltonians of Hofer norm at
most equal to $\epsilon \geq 0$, such that the family
$\mathcal{X}\cup\{L_{\epsilon}, L'_{\epsilon}\}$ is allowable for the
definition of the Fukaya categories
$\fuk(\mathcal{X}\cup\{L_{\epsilon},L'_{\epsilon}\}; \mathscr{P})$.
The $\limsup$ is taken over all possible choices of such perturbations
and making $\epsilon$ go to $0$.

The second step is to put:
$$\mathcal{D}^{\mathcal{F}}(L,L')=\sup_{\mathcal{X}}
\mathcal{D}_{\mathcal{X}}^{\mathcal{F}}(L,L')$$ It is clear the
$\mathcal{D}^{\mathcal{F}}$ is symmetric. \zhnote{We will see blow, in
  Lemma \ref{lem-finite-df} that $D^{\mathcal F}$ is finite. However,
  before we get to that, we have: }

\begin{lem}
  With the definition above $\mathcal{D}^{\mathcal{F}}$ satisfies the
  triangle inequality.
\end{lem}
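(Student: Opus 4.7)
The plan is to reduce the triangle inequality for $\mathcal{D}^{\mathcal{F}}$ to the triangle inequality of the finite-family pseudo-metrics $D^{\mathcal{F}}_{\mathcal{X}'}$ provided by Theorem~\ref{thm:appl-sympl}, combined with the monotonicity of Lemma~\ref{lem:mono}, and then to propagate this inequality through the two nested limiting operations that enter the definition of $\mathcal{D}^{\mathcal{F}}$.

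Since $\mathcal{D}^{\mathcal{F}}(-,-) = \sup_{\mathcal{X}} \mathcal{D}^{\mathcal{F}}_{\mathcal{X}}(-,-)$, it suffices to prove, for every admissible family $\mathcal{X}$, the inequality
\[
\mathcal{D}^{\mathcal{F}}_{\mathcal{X}}(L,L'') \;\leq\; \mathcal{D}^{\mathcal{F}}(L,L') + \mathcal{D}^{\mathcal{F}}(L',L'').
\]
Fix $\mathcal{X}$, and take admissible perturbations $L_\epsilon, L''_\epsilon$ of Hofer norm at most $\epsilon$ with $\mathcal{X}\cup\{L_\epsilon,L''_\epsilon\}$ in general position; these are precisely the perturbations entering the $\limsup$ defining the left-hand side. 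The key step is to intercalate $L'$: by a standard genericity argument I would produce an admissible perturbation $L'_\epsilon$ of $L'$ of Hofer norm at most $\epsilon$ such that $\mathcal{X}_\epsilon := \mathcal{X}\cup\{L_\epsilon,L'_\epsilon,L''_\epsilon\}$ is also in general position. Applying Lemma~\ref{lem:mono} and the triangle inequality of the pseudo-metric $D^{\mathcal{F}}_{\mathcal{X}_\epsilon}$ then gives
\[
D^{\mathcal{F}}_{\mathcal{X}\cup\{L_\epsilon,L''_\epsilon\}}(L_\epsilon,L''_\epsilon) \;\leq\; D^{\mathcal{F}}_{\mathcal{X}_\epsilon}(L_\epsilon,L'_\epsilon) + D^{\mathcal{F}}_{\mathcal{X}_\epsilon}(L'_\epsilon,L''_\epsilon).
\]
The two rewritings $\mathcal{X}_\epsilon = (\mathcal{X}\cup\{L''_\epsilon\})\cup\{L_\epsilon,L'_\epsilon\} = (\mathcal{X}\cup\{L_\epsilon\})\cup\{L'_\epsilon,L''_\epsilon\}$ then identify the two summands on the right as instances of the quantities whose $\limsup$ (over perturbations) defines $\mathcal{D}^{\mathcal{F}}_{\mathcal{X}\cup\{L''_\epsilon\}}(L,L')$ and $\mathcal{D}^{\mathcal{F}}_{\mathcal{X}\cup\{L_\epsilon\}}(L',L'')$, respectively, each of which is bounded above by $\mathcal{D}^{\mathcal{F}}(L,L')$ and $\mathcal{D}^{\mathcal{F}}(L',L'')$ via the defining supremum over admissible families.

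The hard part will be the passage to the $\limsup$ on the left-hand side, because the auxiliary families $\mathcal{X}\cup\{L''_\epsilon\}$ and $\mathcal{X}\cup\{L_\epsilon\}$ themselves depend on $\epsilon$, so the Hofer-thresholds below which the defining $\limsup$'s yield effective per-term bounds of the form $D^{\mathcal{F}}_{(-)} \leq \mathcal{D}^{\mathcal{F}}_{(-)} + \eta/2$ are not a priori uniform in $\epsilon$. I would resolve this via a diagonal argument exploiting two pieces of flexibility available to us: the intermediate perturbation $L'_\epsilon$ is not constrained by the outer $\limsup$ and can be taken with Hofer norm as small as desired; and the spectrality bound of Theorem~\ref{thm:appl-sympl}(i), together with $D^{\mathcal{F}} \leq D$, provides an estimate of the form $D^{\mathcal{F}}_{\mathcal{X}'}(L_\epsilon, L_{\epsilon'}) \leq 4\sigma(L_\epsilon, L_{\epsilon'}) = O(\epsilon+\epsilon')$ for any two small perturbations of the same Lagrangian contained in an admissible family $\mathcal{X}'$, which allows the approximation by ``very small'' surrogate perturbations to be absorbed into an error going to zero with $\epsilon$. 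Together with the subadditivity of $\limsup$ over sums, this upgrades the above pointwise inequality to $\mathcal{D}^{\mathcal{F}}_{\mathcal{X}}(L,L'') \leq \mathcal{D}^{\mathcal{F}}(L,L') + \mathcal{D}^{\mathcal{F}}(L',L'') + \eta$ for every $\eta>0$; letting $\eta\to 0$ and then taking $\sup_{\mathcal{X}}$ on the left completes the proof.
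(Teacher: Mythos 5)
Your proposal follows essentially the same route as the paper: intercalate a perturbation of $L'$, apply Lemma~\ref{lem:mono} and the per-family triangle inequality of Theorem~\ref{thm:appl-sympl}, and then navigate the nested $\limsup$'s by exploiting that small Hamiltonian perturbations move points by $O(\epsilon)$ in the fragmentation pseudo-metrics. You correctly identify the key difficulty --- that the auxiliary families $\mathcal{X}\cup\{L''_\epsilon\}$ and $\mathcal{X}\cup\{L_\epsilon\}$ vary with $\epsilon$, so the effective thresholds for the defining $\limsup$'s are not a priori uniform --- which is precisely what drives the paper's proof.

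One place where the paper's execution is more subtle than your sketch suggests: to convert the inequality established for $D^{\mathcal{F}}_{\mathcal{X}\cup\{L_n,L''_{m_0}\}}(L_n,L''_{m_0})$ with $n$ large into a bound on $D^{\mathcal{F}}_{\mathcal{X}\cup\{L_{m_0},L''_{m_0}\}}(L_{m_0},L''_{m_0})$ (with the fixed index $m_0$ on both slots), the paper needs a second application of the triangle inequality together with the key observation that its intermediate estimate in fact holds over any auxiliary family $\mathcal{X}'\supset\mathcal{X}$, with threshold $N_{m_0,\mathcal{X}'}$ depending on $\mathcal{X}'$; it then specializes to $\mathcal{X}'=\mathcal{X}\cup\{L_{m_0}\}$. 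Your ``diagonal argument'' would need to implement exactly this bookkeeping (fix $m_0$, make the estimate family-uniform, then specialize), so the proposal is sound but leaves the decisive quantifier-juggling implicit. Also, the paper directly cites that fragmentation pseudo-metrics are bounded by the Hofer norm, whereas you route this through the spectrality estimate of Theorem~\ref{thm:appl-sympl}(i) combined with spectral $\leq$ Hofer; this costs an extra factor of $4$ but is immaterial since the error tends to zero.
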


\begin{proof} We fix three Lagrangians in $\lag ag(X)$, $L,L', L''$.
  Fix also a family $\mathcal{X}$ as above. Consider $L_{n}, L''_{n}$
  in $\lag ag(X)$ such that the family
  $\mathcal{X}\cup \cup_{n,m}\{L_{n}, L''_{m}\}$ is in
  \zhnote{general} position in our usual sense: any couple of
  Lagrangian in the \zhnote{family} intersect transversely and there
  are no triple intersection points (this choice is possible as there
  are only countable many transversality type constraints). We also
  assume:
\begin{itemize}
\item[-] $d_{H}(L, L_{n})\leq \frac{1}{n}$ and
  $d_{H}(L'', L''_{n})\leq \frac{1}{n}$ where $d_{H}(-,-)$ is the
  Hofer distance.
\item[-]
  $\lim_{n\to \infty} D^{\mathcal{F}}_{\mathcal{X}\cup \{L_{n},
    L''_{n}\}}(L_{n},L''_{n})=\mathcal{D}^{\mathcal{F}}_{\mathcal{X}}(L,L'')~.~$
\end{itemize}

\zhnote{The lemma would follow if we prove that for every $\delta > 0$}
\begin{equation}\label{eq:tr-ineq1}
  \mathcal{D}^{\mathcal{F}}_{\mathcal{X}}(L,L'')
  \leq \mathcal{D}^{\mathcal{F}}(L,L') +
  \mathcal{D}^{\mathcal{F}}(L', L'') + 4\delta.
\end{equation}
To show (\ref{eq:tr-ineq1}) we pick a sequence of Lagrangians $L'_{k}$
such that the family
$\mathcal{X}\cup\cup_{n, k,m}\{L_{n}, L'_{k}, L''_{m}\}$ is in
\zhnote{general} position and $d_{H}(L',L'_{n})\leq \frac{1}{n}$. For
any $m,n,k$ we have the inequalities:
$$D^{\mathcal{F}}_{\mathcal{X}\cup \{L_{n},L''_{m}\}}(L_{n}, L''_{m})\leq 
D^{\mathcal{F}}_{\mathcal{X}\cup \{L_{n}, L'_{k},L''_{m}\}}(L_{n},
L''_{m})\leq D^{\mathcal{F}}_{\mathcal{X}\cup \{L_{n},L'_{k},
  L''_{m}\}}(L_{n}, L'_{k})+D^{\mathcal{F}}_{\mathcal{X}\cup \{L_{n},
  L'_{k}, L''_{m}\}}(L'_{k}, L''_{m})$$ The first inequality comes
from Lemma \ref{lem:mono} and the second is the triangle inequality
for the fragmentation pseudo-metric $D^{\mathcal{F}}_{-}$.  We will
estimate separately the two terms on the \zhnote{right-hand side of}
this inequality.

Fix a natural number $m_{0}$.  We can find $N_{m_{0}}\geq m_{0}$
\zhnote{such} that for $n, k\geq N_{m_{0}}$ we have
$$D^{\mathcal{F}}_{\mathcal{X}\cup
  \{ L_{n}, L'_{k}, L''_{m_{0}}\}}(L_{n},L'_{k})\leq
\mathcal{D}^{\mathcal{F}}_{\mathcal{X}\cup \{L''_{m_{0}}\}}(L, L')
+\delta\leq \mathcal{D}^{\mathcal{F}}(L,L')+\delta~.~$$ Thus, for
$n,k\geq N_{m_{0}}$ we have:
$$D^{\mathcal{F}}_{\mathcal{X}\cup \{L_{n},L''_{m_{0}}\}}(L_{n}, L''_{m_{0}})
\leq \mathcal{D}^{\mathcal{F}}(L,L')+\delta
+D^{\mathcal{F}}_{\mathcal{X}\cup \{L_{n}, L'_{k},
  L''_{m_{0}}\}}(L'_{k}, L''_{m_{0}}),$$ and \zhnote{it remains to}
estimate the rightmost term.  \zhnote{Using Lemma \ref{lem:mono}
  again} and the triangle inequality\zhnote{, we have:}
\begin{eqnarray*}
  D^{\mathcal{F}}_{\mathcal{X}\cup \{L_{n}, L'_{k},
  L''_{m_{0}}\}}(L'_{k}, L''_{m_{0}})\leq 
  D^{\mathcal{F}}_{\mathcal{X}\cup \{L_{n}, L'_{k},
  L''_{m_{0}}, L''_{m}\}}(L'_{k}, L''_{m_{0}}
  )\leq  \\
  \leq 
  D^{\mathcal{F}}_{\mathcal{X}\cup \{L_{n}, L'_{k}, L''_{m_{0}},
  L''_{m}\}}(L'_{k}, L''_{m}) +
  D^{\mathcal{F}}_{\mathcal{X}\cup \{L_{n}, L'_{k},
  L''_{m_{0}}, L''_{m}\}}(L''_{m}, L''_{m_{0}})~.~                                                                                          
\end{eqnarray*}
All our fragmentation pseudo-metrics are bounded from above by the
Hofer norm and thus we have
$D^{\mathcal{F}}_{\mathcal{X}\cup \{L_{n}, L'_{k}, L''_{m_{0}},
  L''_{m}\}}(L''_{m}, L''_{m_{0}})\leq \frac{2}{m_{0}}$ as soon as
$m\geq m_{0}$.  We now consider $n\geq N_{m_{0}}$ and we take $k, m$
sufficiently big such that we have:
$$D^{\mathcal{F}}_{\mathcal{X}\cup \{L_{n}, L'_{k},
  L''_{m_{0}}, L''_{m}\}}(L'_{k}, L''_{m})\leq
\mathcal{D}^{\mathcal{F}}_{\mathcal{X}\cup \{L_{n},
  L''_{m_{0}}\}}(L',L'')+\delta\leq
\mathcal{D}^{\mathcal{F}}(L',L'')+\delta~.~$$ Putting things together,
for our fixed (arbitrary) $m_{0}$ and any $n\geq N_{m_{0}}$ we have:
\begin{equation}\label{eq:tr0-in}
  D^{\mathcal{F}}_{\mathcal{X}\cup \{L_{n},L''_{m_{0}}\}}(L_{n}, L''_{m_{0}})
  \leq 
  \mathcal{D}^{\mathcal{F}}(L,L')+\delta +
  \mathcal{D}^{\mathcal{F}}(L',L'')+\delta + \frac{2}{m_{0}}~.~
\end{equation}
An important remark is that inequality (\ref{eq:tr0-in}) applies to
any fixed $m_{0}$ and any set $\mathcal{X}'$ that contains
$\mathcal{X}$ (and is such that the family
$\mathcal{X}'\cup \cup_{n,k,m}\{L_{n},L'_{k}, L''_{m}\}$ is in
\zhnote{general} position) as the argument above applies ad-literam to
this situation. In this case the number $N_{m_{0}}$ depends on $m_{0}$
but also, implicitly, on $\mathcal{X}'$. To make this dependence
explicit we will denote it by $N_{m_{0},\mathcal{X}'}$.

For any $n$, we have the triangle inequality:
$$D^{\mathcal{F}}_{\mathcal{X}\cup \{L_{m_{0}},
  L_{n}, L''_{m_{0}}\}}(L_{m_{0}}, L''_{m_{0}}) \leq
D^{\mathcal{F}}_{\mathcal{X}\cup \{L_{m_{0}}, L_{n},
  L''_{m_{0}}\}}(L_{n}, L''_{m_{0}}) +
D^{\mathcal{F}}_{\mathcal{X}\cup \{L_{m_{0}}, L_{n},
  L''_{m_{0}}\}}(L_{m_{0}}, L_{n })~.~$$ Assuming $n\geq m_{0}$, the
second term on the \zhnote{right-hand side} is bounded from above by
$\frac{2}{m_{0}}$ and thus, \zhnote{using Lemma} \ref{lem:mono}, we
deduce:
$$D^{\mathcal{F}}_{\mathcal{X}\cup \{L_{m_{0}}, L''_{m_{0}}\}}
(L_{m_{0}}, L''_{m_{0}})\leq D^{\mathcal{F}}_{\mathcal{X}\cup
  \{L_{m_{0}}, L_{n}, L''_{m_{0}}\}}(L_{m_{0}}, L''_{m_{0}})\leq
D^{\mathcal{F}}_{\mathcal{X}\cup \{L_{m_{0}}, L_{n},
  L''_{m_{0}}\}}(L_{n}, L''_{m_{0}})+ \frac{2}{m_{0}}~.~$$ We now
apply (\ref{eq:tr0-in}) to
$\mathcal{X}'=\mathcal{X}\cup \{L_{m_{0}}\}$.  We deduce \zhnote{that}
for $n\geq N_{m_{0}, \mathcal{X}'}$:
$$D^{\mathcal{F}}_{\mathcal{X}\cup \{L_{m_{0}}, L_{n}, L''_{m_{0}}\}}(L_{n}, L''_{m_{0}})
\leq \mathcal{D}^{\mathcal{F}}(L,L')+ \mathcal{D}^{\mathcal{F}}(L',L'')+2\delta + \frac{2}{m_{0}}~.~$$
\zhnote{The last inequality} implies:
$$D^{\mathcal{F}}_{\mathcal{X}\cup \{L_{m_{0}}, L''_{m_{0}}\}}(L_{m_{0}}, L''_{m_{0}})\leq
 \mathcal{D}^{\mathcal{F}}(L,L')+ \mathcal{D}^{\mathcal{F}}(L',L'')+2\delta +\frac{4}{m_{0}}~.~$$
As this is true for an arbitrary choice of $m_{0}$ we deduce  inequality (\ref{eq:tr-ineq1}) and this
concludes the proof.

\end{proof}
\subsubsection{Properties of $\mathcal{D}^{\mathcal{F}}$}
We know from \S\ref{subsubsec:tr-ineq-gl} that $\mathcal{D}^{\mathcal{F}}$
is a pseudo-metric. In this subsection we will show that $\mathcal{D}^{\mathcal{F}}$ satisfies the other properties claimed in Corollary \ref{cor:gl-metric}. The properties \zhnote{(i), (ii), and (iii)}, are in fact immediate consequences of 
the properties of the pseudo-metrics $D^{\mathcal{F}}_{\mathcal{X}}$ that 
appear in Theorem \ref{thm:appl-sympl}. Indeed, the estimates
in this theorem do not depend on the set $\mathcal{X}$, and this  easily implies the corresponding properties for $\mathcal{D}^{\mathcal{F}}$.
A more delicate property is \zhnote{the following one.}
\begin{lem} \label{lem-finite-df}
The pseudo-metric $\mathcal{D}^{\mathcal{F}}$ is finite. 
\end{lem}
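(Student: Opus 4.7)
The plan is to exploit the opposing monotonicities of $D^{\mathcal{F}}_{\mathcal{X}}$ and $\bar{D}^{\mathcal{F}}_{\mathcal{X}}$ under enlargement of $\mathcal{X}$ (Lemma~\ref{lem:mono}) together with the sandwich relation of Lemma~\ref{lem:comp-pseudoLag}, and to use point~(i) of Theorem~\ref{thm:appl-sympl} as a Lipschitz-type control via the Hofer norm.

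First, I would fix once and for all a pair of small Hamiltonian perturbations $L_0, L_0'$ of $L, L'$ at Hofer distance at most some $\epsilon_0>0$, chosen so that the shift and translation invariant family $\mathcal{Y}$ generated by $\mathcal{F}\cup\{L_0, L_0'\}$ is admissible, i.e.~its underlying geometric collection $\bar{\mathcal{Y}}$ is finite and in general position. Since $\mathcal{F}\subset\mathcal{Y}$ is a family of triangular generators of $D\fuk(X)$, the reasoning of the proof of point~(iv) of Theorem~\ref{thm:appl-sympl}, carried out in $\mathcal{C}'\fuk(\mathcal{Y})_{\infty} = D\fuk(\mathcal{Y})$ instead of $\mathcal{C}\fuk(\mathcal{Y})_{\infty}$, shows that the quantity $C_0 := \bar{D}^{\mathcal{F}}_{\mathcal{Y}}(L_0, L_0')$ is finite; alternatively one can simply apply Lemma~\ref{lem:comp-pseudoLag} to the already finite $D^{\mathcal{F}}_{\mathcal{Y}}(L_0, L_0')$.

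Next, for any admissible family $\mathcal{X}$ and any allowable perturbations $L_{\epsilon}, L'_{\epsilon}$ entering the definition \eqref{metr:gen1}, I would consider the enlarged family $\mathcal{X}'_{\epsilon} := \mathcal{X} \cup \mathcal{Y} \cup \{L_{\epsilon}, L'_{\epsilon}\}$. The monotonicity of $D^{\mathcal{F}}$ in Lemma~\ref{lem:mono} gives
$$D^{\mathcal{F}}_{\mathcal{X}\cup\{L_{\epsilon}, L'_{\epsilon}\}}(L_{\epsilon}, L'_{\epsilon}) \leq D^{\mathcal{F}}_{\mathcal{X}'_{\epsilon}}(L_{\epsilon}, L'_{\epsilon}),$$
and after splitting the right-hand side with the triangle inequality via the reference pair $L_0, L_0'$, the two outer terms $D^{\mathcal{F}}_{\mathcal{X}'_{\epsilon}}(L_{\epsilon}, L_0)$ and $D^{\mathcal{F}}_{\mathcal{X}'_{\epsilon}}(L'_0, L'_{\epsilon})$ are each controlled by $4(\epsilon+\epsilon_0)$ via point~(i) of Theorem~\ref{thm:appl-sympl} combined with the standard inequality $\sigma \leq d_H$. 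The middle term $D^{\mathcal{F}}_{\mathcal{X}'_{\epsilon}}(L_0, L_0')$ is controlled using the sandwich $D^{\mathcal{F}} \leq \bar{D}^{\mathcal{F}}$ of Lemma~\ref{lem:comp-pseudoLag} and the \emph{opposite} direction of monotonicity from Lemma~\ref{lem:mono}, namely $\bar{D}^{\mathcal{F}}_{\mathcal{X}'_{\epsilon}} \leq \bar{D}^{\mathcal{F}}_{\mathcal{Y}}$, yielding the bound $C_0$. Summing these contributions gives the uniform estimate
$$D^{\mathcal{F}}_{\mathcal{X}\cup\{L_{\epsilon}, L'_{\epsilon}\}}(L_{\epsilon}, L'_{\epsilon}) \leq 8(\epsilon + \epsilon_0) + C_0,$$
valid for every $\mathcal{X}$ and every admissible perturbation. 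Taking $\limsup_{\epsilon \to 0}$ and then $\sup_{\mathcal{X}}$ yields $\mathcal{D}^{\mathcal{F}}(L, L') \leq 8\epsilon_0 + C_0 < \infty$.

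The main obstacle, which is also the conceptual heart of the argument, is to orchestrate the two opposite directions of monotonicity simultaneously: $D^{\mathcal{F}}_{\mathcal{X}}$ grows with $\mathcal{X}$, while $\bar{D}^{\mathcal{F}}_{\mathcal{X}}$ shrinks, so one cannot directly dominate $D^{\mathcal{F}}_{\mathcal{X}}(L_0, L_0')$ by restricting to the smaller reference family $\mathcal{Y}$, but one can dominate $\bar{D}^{\mathcal{F}}_{\mathcal{X}}(L_0, L_0')$ this way. The inequality $D^{\mathcal{F}} \leq \bar{D}^{\mathcal{F}}$ of Lemma~\ref{lem:comp-pseudoLag} is precisely the bridge that converts a uniform upper bound on $\bar{D}$ over a small family into a uniform upper bound on $D$ over an arbitrarily large family, and once this is set up the triangle inequality combined with the spectrality estimate of Theorem~\ref{thm:appl-sympl}(i) reduces the general case to the fixed reference pair.
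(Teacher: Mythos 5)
Your proposal is correct and follows the same underlying mechanism as the paper's own proof: it relies on the sandwich inequality $D^{\mathcal{F}}\le \bar D^{\mathcal{F}}\le 4D^{\mathcal{F}}$ of Lemma~\ref{lem:comp-pseudoLag}, the opposite monotonicities of $D^{\mathcal{F}}_{\mathcal{X}}$ and $\bar D^{\mathcal{F}}_{\mathcal{X}}$ from Lemma~\ref{lem:mono}, a fixed reference perturbation, the triangle inequality, and Hofer-norm control. The only notable difference is organizational: the paper first reduces, via the chain $\bar D^{\mathcal{F}}_{\mathcal{F}\cup\{L_{\epsilon},L'_{\epsilon}\}}\ge\bar D^{\mathcal{F}}_{\mathcal{X}\cup\{L_{\epsilon},L'_{\epsilon}\}}\ge D^{\mathcal{F}}_{\mathcal{X}\cup\{L_{\epsilon},L'_{\epsilon}\}}$, to the minimal family $\mathcal{F}\cup\{L_{\epsilon},L'_{\epsilon}\}$ and then further to a single Lagrangian against $0$, whereas you work directly in an enlarged family $\mathcal{X}\cup\mathcal{Y}\cup\{L_{\epsilon},L'_{\epsilon}\}$ and apply the triangle inequality there; both require, implicitly, generic admissibility of these auxiliary unions (arrangeable by a countable-genericity argument as in \S\ref{subsubsec:tr-ineq-gl}).
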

\begin{proof}
This property follows by applying repeatedly   Lemmas \ref{lem:comp-pseudoLag} and \ref{lem:mono}. Fix  
$L,L'\in \lag ag (X)$, a family $\mathcal{X}$ as before, and perturbations
$L_{\epsilon}, L'_{\epsilon}$ such that $\mathcal{X}\cup\{L_{\epsilon}, L'_{\epsilon}\}$ is \zhnote{in general position} and $L_{\epsilon}$, $L'_{\epsilon}$ are
$\epsilon$-close to $L$ and $L'$ \zhnote{respectively,} in \zhnote{the} Hofer norm.
We have:
\begin{eqnarray*}
 \bar{D}_{\mathcal{F}\cup \{L_{\epsilon}, L'_{\epsilon}\}}^{\mathcal{F}}(L_{\epsilon}, L'_{\epsilon})
\geq \bar{D}^{\mathcal{F}}_{\mathcal{X}\cup \{L_{\epsilon}, L'_{\epsilon}\}}(L_{\epsilon},L'_{\epsilon})\geq 
D^{\mathcal{F}}_{\mathcal{X}\cup \{L_{\epsilon}, L'_{\epsilon}\}}(L_{\epsilon},L'_{\epsilon}) ~.~
\end{eqnarray*}


Thus the argument reduces to showing  that $\bar{D}_{\mathcal{F}\cup \{L_{\epsilon}, L'_{\epsilon}\}}^{\mathcal{F}}(L_{\epsilon}, L'_{\epsilon})$ has a uniform upper bound when $\epsilon \to 0$.
It is immediate to see that it is enough to find such a bound for 
$$\bar{D}^{\mathcal{F}}_{\mathcal{F}\cup\{L_{\epsilon}\}}(L_{\epsilon}, 0)~.~$$

We fix some $\epsilon_{0}$ and write for $\epsilon \leq \epsilon_{0}$:
\begin{eqnarray*}
\frac{1}{4}\bar{D}^{\mathcal{F}}_{\mathcal{F}\cup \{L_{\epsilon}\}}(L_{\epsilon}, 0)\leq
D^{\mathcal{F}}_{\mathcal{F}\cup\{L_{\epsilon}\}}(L_{\epsilon},0)\leq
D^{\mathcal{F}}_{\mathcal{F}\cup\{L_{\epsilon}, L_{\epsilon_{0}}\}}(L_{\epsilon},0) 
\leq
\bar{D}^{\mathcal{F}}_{\mathcal{F}\cup\{L_{\epsilon},L_{\epsilon_{0}}\}}(L_{\epsilon}, 0) \leq \\ 
\leq \bar{D}^{\mathcal{F}}_{\mathcal{F}\cup 
\{L_{\epsilon},L_{\epsilon_{0}}\}}(L_{\epsilon_{0}}, 0) + \bar{D}^{\mathcal{F}}_{\mathcal{F}\cup\{L_{\epsilon},L_{\epsilon_{0}}\}}(L_{\epsilon_{0}}, L_{\epsilon})\leq \\
\leq \bar{D}^{\mathcal{F}}_{\mathcal{F}\cup 
\{L_{\epsilon},L_{\epsilon_{0}}\}}(L_{\epsilon_{0}}, 0) +2\epsilon_{0} \leq \\ \leq  \bar{D}^{\mathcal{F}}_{\mathcal{F}\cup 
\{L_{\epsilon_{0}}\}}(L_{\epsilon_{0}}, 0) +2\epsilon_{0}~.~
\end{eqnarray*}
The first three inequalities on the first line come from Lemmas \ref{lem:comp-pseudoLag} and \ref{lem:mono}, and, the fourth from the triangle inequality. The next inequality is implied by the upper bound given by the Hofer norm. Finally, the 
last inequality comes from Lemma \ref{lem:mono}.
\end{proof}

\zhnote{The following result concludes the proof of Corollary \ref{cor:gl-metric}.} 

\begin{lem}If $\mathcal{F}'$ is a generic perturbation of $\mathcal{F}$
(in the sense that each element of $\mathcal{F'}$ is a small \zhnote{Hamiltonian perturbation} of a corresponding element in $\mathcal{F}$ and the union
$\mathcal{F}\cup \mathcal{F}'$ is in \zhnote{general} position), then
$$\mathcal{D}^{\mathcal{F},\mathcal{F}'}= \max \{\mathcal{D}^{\mathcal{F}},\mathcal{D}^{\mathcal{F}'}\}$$ is non-degenerate. 
\end{lem}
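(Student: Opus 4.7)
The plan is to combine the non-degeneracy inequality~(ii) from Corollary~\ref{cor:gl-metric} with the finiteness forced by the general position of $\bar{\mathcal{F}} \cup \bar{\mathcal{F}}'$. Suppose $\mathcal{D}^{\mathcal{F},\mathcal{F}'}(L,L') = 0$ for some $L, L' \in \mathcal{L}ag(X)$. By the definition of the max this forces both $\mathcal{D}^{\mathcal{F}}(L,L') = 0$ and $\mathcal{D}^{\mathcal{F}'}(L,L') = 0$, so inequality~(ii) yields
\begin{equation*}
\delta\Bigl(L;\, L' \cup \bigcup_{F \in \mathcal{F}} F\Bigr) = 0 = \delta\Bigl(L;\, L' \cup \bigcup_{F' \in \mathcal{F}'} F'\Bigr).
\end{equation*}

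The key elementary observation is that $\delta(L; A) = 0$ forces $L \subseteq A$ whenever $A \subset X$ is closed: any $p \in L \setminus A$ admits a Weinstein/Darboux chart around $p$ in which $L$ is the real part, and by taking the chart small enough its image can be arranged to be disjoint from the closed set $A$, producing a strictly positive contribution to the supremum defining~$\delta$. Noting that $\bigcup_{F \in \mathcal{F}} F = \bigcup_{\bar F \in \bar{\mathcal{F}}} \bar F$ (shifts and grading translations do not affect the underlying Lagrangian), and similarly for $\mathcal{F}'$, both sets on the right of each $\delta$ above are closed; hence intersecting the two inclusions of $L$ gives
\begin{equation*}
L \setminus L' \;\subseteq\; \Bigl(\bigcup_{\bar F \in \bar{\mathcal{F}}} \bar F\Bigr) \cap \Bigl(\bigcup_{\bar F' \in \bar{\mathcal{F}}'} \bar F'\Bigr) \;=\; \bigcup_{\bar F, \bar F'} \bar F \cap \bar F'.
\end{equation*}

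The last step uses the general position assumption on $\bar{\mathcal{F}} \cup \bar{\mathcal{F}}'$ together with the fact that a genuine perturbation produces $\bar F \neq \bar F'$ for every pair $(\bar F, \bar F') \in \bar{\mathcal{F}} \times \bar{\mathcal{F}}'$: each $\bar F \cap \bar F'$ is then a finite set of transverse intersection points, and since $\bar{\mathcal{F}}$ and $\bar{\mathcal{F}}'$ are finite the right-hand side above is a finite subset of $X$. If $L \neq L'$, then by the symmetry of $\mathcal{D}^{\mathcal{F},\mathcal{F}'}$ we may swap the roles of $L$ and $L'$ to assume $L \not\subseteq L'$; in that case $L \setminus L'$ is a nonempty open subset of the positive-dimensional smooth manifold $L$ (since $L'$ is closed in $X$), hence uncountable, contradicting its inclusion in a finite set. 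Therefore $L \subseteq L'$, and running the symmetric argument gives $L = L'$.

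I do not anticipate any substantive obstacle in this argument; the only mildly nontrivial step is the implication $\delta(L; A) = 0 \Rightarrow L \subseteq A$, which reduces immediately to the Weinstein neighborhood theorem. Everything else is bookkeeping: translating the two vanishings supplied by inequality~(ii) into two set-theoretic inclusions, intersecting them, and invoking general position to see the result is finite while a proper open subset of a positive-dimensional manifold is uncountable.
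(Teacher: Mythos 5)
Your argument is correct and follows essentially the same route as the paper: invoke point~(ii) of Corollary~\ref{cor:gl-metric} twice, translate the two vanishings of $\delta$ into two inclusions of $L$ via the Weinstein/Darboux observation, and use general position of $\bar{\mathcal{F}} \cup \bar{\mathcal{F}}'$ to conclude. The only difference is that the paper's final deduction ("we deduce that $L=L'$") is left implicit, whereas you spell it out by intersecting the two inclusions into the finite set $\bigcup \bar F \cap \bar F'$ and concluding by the cardinality mismatch with a nonempty open subset of a positive-dimensional manifold — a clean way of filling in what the paper glosses over.
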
 

\begin{proof} The statement follows from the \zhnote{point (ii) of Corollary \ref{cor:gl-metric}}, that 
\zhnote{has} already \zhnote{been} shown. From this point we deduce that $\mathcal{D}^{\mathcal{F},\mathcal{F}'}(L,L')=0$ implies 
$$\delta (L; L'\cup _{F\in\mathcal{F}} F)=0 = \delta (L; L'\cup _{F'\in\mathcal{F}'} F')~.~$$
The definition of $\delta(-;-)$ is in (\ref{eq:ball-sq}). \zhnote{The first of the last two equalities} means that there is no
standard symplectic ball of any positive radius with its real part on $L$ and that is  disjoint from $L'\cup \cup_{F\in\mathcal{F}}F$. \zjnote{It follows} that $L\subset L'\cup_{F\in\mathcal{F}}F$. Of course the metric is symmetric, so we also have that $L'\subset L\cup \cup_{F\in\mathcal{F}}F$. Given that the same relations are valid for $\mathcal{F}'$, and that $\mathcal{F}'$ and $\mathcal{F}$ are in general position, we deduce that $L=L'$. 
\end{proof}


\newcommand{\xx}{\Gamma}
\newcommand{\shdex}{\mathcal{S}_{\text{e}}}
\newcommand{\shdin}{\mathcal{S}}
\newcommand{\out}{\text{out}}
\newcommand{\err}{\sigma}

\section{The geometry behind TPCs} \label{sec:geom-TPC}

This section illustrates geometrically some of the TPC
machinery. \pbrev{It contains two subsections. In~\S\ref{sb:lcob} we
  explain how the theory of Lagrangian cobordism provides a concrete
  representation for the algebraic structures that are formalized in
  the language of TPCs.}  \pbrev{Some of the material presented
  in~\S\ref{sb:lcob} is based on the theory developed
  in~\cite{Bi-Co:fuk-cob, Bi-Co-Sh:LagrSh}, which is technically
  involved. Below however we have tried to avoid technicalities as
  much as possible in order to put the geometric ideas at the focus,
  at the expense of skipping some details. Most
  of these can be found in the above references.}

\pbrev{In~\S\ref{subsec:estim} we work out some examples where the
  estimates in Theorem~\ref{thm:appl-sympl} can be made concrete.}

\subsection{Lagrangian cobordism and weighted
  triangles} \label{sb:lcob} The theory of Lagrangian cobordism
exhibits in a geometric way several key notions that are fundamental
for the algebraic theory of TPCs.  The purpose of this section is to
provide some geometric interpretations of these notions - in
particular of weighted exact triangles - by using certain natural
symplectic measurements associated to Lagrangian cobordisms.  The
geometric weights coming from geometry (such as those reflecting the
shadows of cobordisms) are bigger than the algebraic weights discussed
before in this paper.  The difficulty with using them in practice is
that they depend on constructing specific cobordisms.

\subsubsection{Background on Lagrangian
  cobordism} \label{sbsb:cobs-background} Let
\zjr{$(X, \omega = d\lambda)$} be a Liouville manifold, endowed with a
given Liouville form $\lambda$. We endow $\mathbb{R}^2$ with the
Liouville form $\lambda_{\mathbb{R}^2} = x dy$ and its associated
standard symplectic structure
$\omega_{\mathbb{R}^2} = d\lambda_{\mathbb{R}^2} = dx \wedge dy$. Let
$\widetilde{X} := \mathbb{R}^2 \times X$, endowed with the Liouville
form $\widetilde{\lambda} = \lambda_{\mathbb{R}^2} \oplus \lambda$ and
the symplectic structure
$\widetilde{\omega} = d(\widetilde{\lambda}) = \omega_{\mathbb{R}^2}
\oplus \omega$. We denote by
$\pi: \mathbb{R}^2 \times X \longrightarrow X$ the projection.

Below we will assume known the notion of Lagrangian cobordism, as
developed in~\cite{Bi-Co:cob1, Bi-Co:fuk-cob}. For simplicity we will
consider only {\em negative-ended} cobordisms
$V \subset \mathbb{R}^2 \times X$, which means that $V$ has only
negative ends. Moreover, all the cobordisms considered below will be
assumed to be exact with respect to the Liouville form
$\widetilde{\lambda}$ and endowed with a given primitive
$f_{V}: V \longrightarrow \mathbb{R}$ of
$\widetilde{\lambda}|_V$. Denote by $L_1, \ldots, L_k \subset X$ the
Lagrangians corresponding to the ends of $V$ and by
$\ell_1, \ldots, \ell_k \subset \mathbb{R}^2$ the negative horizontal
rays of $V$ so that $V$ coincides at $-\infty$ with
$(\ell_1 \times L_1) \coprod \cdots \coprod (\ell_k \times L_k)$.  We
remark that we adopt here the conventions from~\cite{Bi-Co:fuk-cob}
regarding the ends of $V$, namely we always assume that the $j$'th ray
$\ell_j$ lies on the horizontal line $\{ y = j \}$. Also, we allow
some of the Lagrangians $L_j$ to be void.

Note that $\lambda_{\mathbb{R}^2}|_{\ell_i} = 0$ hence
$f_{V}|_{\ell_i \times L_i}$ is constant in the $\ell_i$ direction for
all $i$. Therefore the Lagrangians $L_i \subset X$ are $\lambda$-exact
and $f_{V}$ induces well-defined primitives
$f_{L_i}: L_i \longrightarrow \mathbb{R}$ of $\lambda|_{L_i}$ for each
$i$, namely $f_{L_i}(p) := f_{V}(z_0,p)$ for every $p \in L_i$, where
$z_0$ is any point on $\ell_i$.

\subsubsection{Weakly filtered Fukaya categories and
  cobordism} \label{sbsb:cobs-categs} As constructed
in~\cite{Bi-Co-Sh:LagrSh, Bi-Co:spectr}, there is a weakly filtered
Fukaya $A_{\infty}$-category $W\fuk(X)$ of $\lambda$-exact Lagrangians
with objects being exact Lagrangians $L \subset X$ endowed with a
primitive $f_L:L \longrightarrow \mathbb{R}$ of $\lambda|_L$. The
notation is ad-hoc here to distinguish this category from the filtered
versions constructed in \S \ref{sb:ffuk}.

\begin{rem}\label{rem:wfilt-v-filt} The filtered  $A_{\infty}$ categories
  $\fuk(\mathcal{X};\mathscr{P})$ from \S\ref{sb:ffuk} - in Theorem
  \ref{t:fil-fuk} - are constructed under more restrictive assumptions
  compared to $W\fuk(X)$ as they are associated to only a finite
  number of geometric objects $\bar{\mathcal{X}}$.  However, it is
  easy to see that by choosing the perturbation data required to define
  the weakly filtered category $W\fuk(X)$ in such a way that it
  extends the data $\mathscr{P}$ we have an embedding (of weakly
  filtered $A_{\infty}$-categories):
  $$ \fuk(\mathcal{X};\mathscr{P})\to W\fuk(X)~.~$$
\end{rem}

There is also a weakly filtered Fukaya
$A_{\infty}$-category of cobordisms $W\fuk(\mathbb{R}^2 \times X)$ with
objects being negative-ended exact cobordisms
$V \subset \mathbb{R}^2 \times X$ endowed with a primitive $f_V$ of
$\widetilde{\lambda}|_V$ as above\footnote{For technical reasons one
  needs to enlarge the set of objects in $\fuk(\mathbb{R}^2 \times X)$
  to contain also objects of the type $\gamma \times L$, where
  $\gamma \subset \mathbb{R}^2$ is a curve which outside of a compact
  set is either vertical or coincides with horizontal ends with
  $y$-value being $l \pm \tfrac{1}{10}$, where $l \in
  \mathbb{Z}$.}. We also have the dg-categories of weakly filtered
$A_{\infty}$-modules over each of the previous Fukaya categories,
which we denote by \zjr{$\text{mod}_{W\fuk(X)}$} and
\zjr{$\text{mod}_{W\fuk(\mathbb{R}^2 \times X)}$} respectively.

Below we will mostly concentrate on the chain complexes associated to
various Lagrangians and modules, ignoring the higher order
$A_{\infty}$-operations, and these are genuinely filtered. Thus, in
this discussion the fact that the above categories are only weakly
filtered rather than genuinely filtered will not a play an important
role.

Let $\mathcal{Y}: W\fuk(X) \longrightarrow \text{mod}_{W\fuk(X)}$ be
the Yoneda embedding (in the framework of weakly filtered
$A_{\infty}$-categories) and
$W\fuk(X)^{\nabla} \subset \text{mod}_{W\fuk(X)}$ the triangulated
closure of the image of $\mathcal{Y}$. We denote by
$\mathcal{C} = \mathcal{P}H(\fuk(X)^{\nabla})$ the persistence
homological category associated to $\fuk(X)^{\nabla}$.  This is not a
TPC due to the difference between ``filtered'' vs.~``weakly filtered''
but, with this distinction kept in mind, its properties mimic closely
those of a TPC. To understand the difference, while in a TPC the
composition of two morphisms $f$ of shift $r$ and $g$ of shift $s$ is
a morphism $f\circ g$ of shift $r+s$, in the weakly filtered case, the
composition $f\circ g$ is of shift $r+s+\epsilon^{\mu_{2}}$ where the
error term $\mu_{2}$ is part of the structural data associated to the
{\em weakly} filtered structure of $W\fuk(X)$.

By a slight abuse of notation we will
denote the Yoneda module $\mathcal{Y}(L)$ of a Lagrangian
$L \in \text{Obj}(W\fuk(X))$ also by $L$.

There is also a Yoneda embedding
$W\fuk(\mathbb{R}^2 \times X) \longrightarrow
\text{mod}_{W\fuk(\mathbb{R}^2 \times X)}$ and we will typically denote
the Yoneda modules corresponding to cobordisms by calligraphic
letters, e.g.~the Yoneda module corresponding to
$V \in \text{Obj}(\fuk(\mathbb{R}^2 \times X))$ will be denoted by
$\mathcal{V}$.

Under additional assumptions on $X$, on the Lagrangians taken as the
objects of $W\fuk(X)$ and the \zjr{Lagrangian} cobordisms of
$\fuk(\mathbb{R}^2 \times X)$, one can set up a graded theory,
endowing the morphisms in $\fuk(X)$ and $\fuk(\mathbb{R}^2 \times X)$
with a $\mathbb{Z}$-grading and the categories with
grading-translation functors. See~\cite{Seidel} for the case of
$W\fuk(X)$ and~\cite{Hensel:stab-cond} for grading in the framework of
cobordisms. In what follows we will not explicitly work in a graded
setting, but whenever possible we will indicate how grading fits in
various constructions.

\subsubsection{Iterated cones associated to
  cobordisms} \label{sbsb:cobs-icone} Let
$\gamma \subset \mathbb{R}^2$ be an oriented\footnote{The orientation
  on $\gamma$ is necessary in order to set up a graded Floer theory,
  and also in order to work with coefficients over rings of
  characteristic $\neq 2$. Here we work with
  $\mathbb{Z}_2$-coefficients, therefore if one wants to ignore the
  grading then the orientation of $\gamma$ becomes irrelevant.}  plane
curve which is the image of a proper embedding of $\mathbb{R}$ into
$\mathbb{R}^2$. Viewing $\gamma \subset \mathbb{R}^2$ as an exact
Lagrangian we fix a primitive $f_{\gamma}$ of
$\lambda_{\mathbb{R}^2}|_{\gamma}$. Given an exact Lagrangian
$L \subset X$, consider the exact Lagrangian
$\gamma \times L \subset \mathbb{R}^2 \times X$ and take
$f_{\gamma, L}: \gamma \times L \longrightarrow \mathbb{R}$,
$f_{\gamma, L}(z, p) = f_{\gamma}(z) + f_{L}(p)$ for the primitive of
$\widetilde{\lambda}|_{\gamma \times L}$. From now on we will make the
following additional assumptions on $\gamma$. The ends of $\gamma$
will be assumed to coincide with a pair of rays $\ell'$, $\ell''$ each
of which is allowed to be either horizontal or vertical. Moreover in
the case of a horizontal ray, the ray is assumed to have
$y$-coordinate which is in $\mathbb{Z} \pm \tfrac{1}{10}$ and in the
case of vertical rays we assume the rays have $x$-coordinate being
$0$.

Below we will mainly work with the following two types of such
curves. The first one is
$\gamma^{\uparrow} = \{ x = 0\} \subset \mathbb{R}^2$ (i.e. the
$x$-axis with its standard orientation) and we take
$f_{\gamma^{\uparrow}} \equiv 0$. Then for every exact Lagrangian
$L \subset X$ we can identify $f_{\gamma^{\uparrow},L}$ with $f_L$ in
the obvious way.

The second type is the curve $\gamma_{i,j}$, where $i\leq j$ are two
integers, depicted in Figure~\ref{f:curves} and oriented by going from
the lower horizontal end to the upper horizontal end. Note that by
taking $\gamma_{i,j}$ close enough to dotted polygonal curve in
Figure~\ref{f:curves} we can assume that
$\lambda_{\mathbb{R}^2}|_{\gamma_{i,j}}$ is very close to $0$. We fix
the primitive $f_{\gamma_{i,j}}$ to be the one that vanishes on the
vertical part of $\gamma_{i,j}$.
\begin{figure}[htbp]
   \begin{center}
     \includegraphics[scale=0.63]{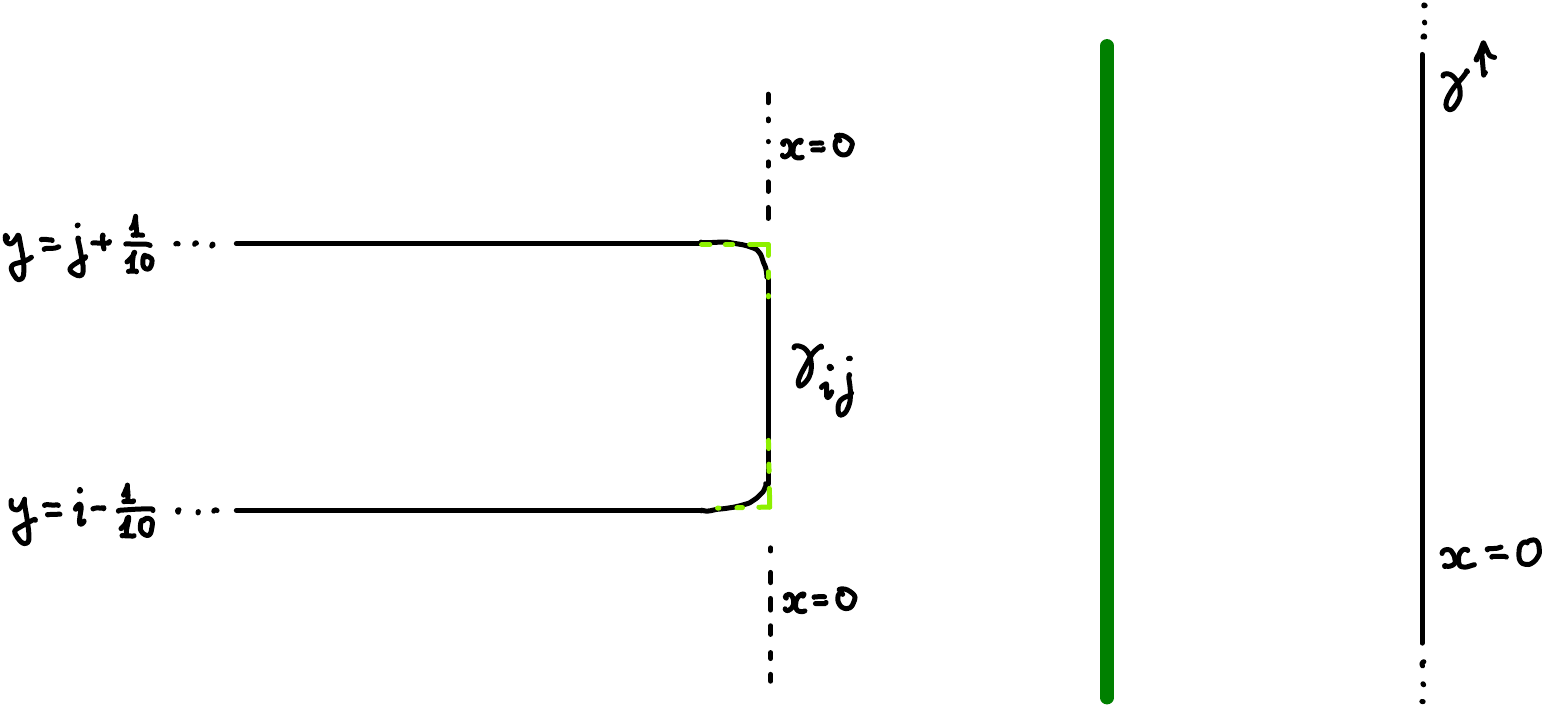}
   \end{center}
   \caption{The curves $\gamma_{i,j}$ and $\gamma^{\uparrow}$.}
   \label{f:curves}
\end{figure}

Let $\gamma \subset \mathbb{R}^2$ and $f_{\gamma}$ be as above.
Following~\cite[Section~4.2]{Bi-Co:fuk-cob},
\cite[Section~3.6]{Bi-Co-Sh:LagrSh} there is a (weakly) filtered
$A_{\infty}$-functor, called an inclusion functor,
$\mathcal{I}_{\gamma}: W\fuk(X) \longrightarrow W\fuk(\mathbb{R}^2
\times X)$ which sends the object $L \in \text{Obj}(W\fuk(X))$ to
$\gamma \times L \in \text{Obj}(\fuk(\mathbb{R}^2 \times X))$.
\pbrev{The first order component of $\mathcal{I}_{\gamma}$ is a chain
  map
  $(\mathcal{I}_{\gamma})_1: CF(N_0, N_1) \longrightarrow CF(\gamma
  \times N_0, \gamma \times N_1)$, defined for all exact Lagrangians
  $N_0$, $N_1$, which induces an isomorphism in homology. Note that
  since $\gamma \times N_0$ and $\gamma \times N_1$ do not intersect
  transversely (unless $N_0 \cap N_1 = \emptyset$) we need to use here
  Floer data with non-trivial Hamiltonians that also involve a
  component in the $\mathbb{R}^2$-direction. We skip these details
  here and refer the reader to~\cite[Section~4.2]{Bi-Co:fuk-cob}
  and~\cite[Pages~68-69]{Bi-Co-Sh:LagrSh} for the precise details. The
  higher order components $(\mathcal{I}_{\gamma})_d$, $d \geq 2$, of
  $\mathcal{I}_{\gamma}$ are defined to be $0$.}

Let $V \subset \mathbb{R}^2 \times X$ be a Lagrangian
cobordism. Denote by $\mathcal{V}$ the (weakly) filtered Yoneda module
of $V$ and consider the pull-back module
$\mathcal{I}_{\gamma}^* \mathcal{V}$.  Note that for every exact
Lagrangian $N \subset X$ we have
$$\mathcal{I}_{\gamma}^* \mathcal{V} (N) = CF(\gamma \times N, V)$$
as {\em filtered chain complexes}. (The filtrations are induced by
$f_V$, $f_{\gamma,N}$ and by the Floer data in case it is not
trivial.)

Assume that the ends of $V$ are $L_1, \ldots, L_k$ and moreover that
$V$ is cylindrical over \pbrev{$(-\infty, \delta] \times \mathbb{R}$}
for some $\delta>0$. (This can always be achieved by a suitable
translation along the $x$-axis.)  Fix $1 \leq i \leq k-1$ and consider
the curve $\gamma_{i,i+1}$ and the pull-back (weakly) filtered module
$\mathcal{I}_{\gamma_{i,i+1}}^*\mathcal{V}$.  \pbrev{The cobordism $V$
  gives rise to a module homomorphism
  $$\xx_{V, \gamma_{i,i+1}}:
  L_{i+1} \longrightarrow L_i$$ which preserves action filtrations and
  such that
  \begin{equation} \label{eq:cone-Gamma_i-1}
    \mathcal{I}_{\gamma_{i,i+1}}^*\mathcal{V} =
    T^{d_i}\text{cone}(L_{i+1} \xrightarrow{\xx_{V, \gamma_{i, i+1}}}
    L_i)
  \end{equation}
  as (weakly) filtered modules. Here $T$ stands for the
  grading-translation functor and the amount $d_i \in \mathbb{Z}$ by
  which we translate depends only on $i$. We will be more precise
  about the values of $d_i$ later on.
  \label{pp:refs-mod-map}
  The references for the construction of the map
  $\xx_{V, \gamma_{i,i+1}}$ are the following. For the unfiltered case
  see:~\cite[Section~4.4]{Bi-Co:fuk-cob} and Proposition~4.4.1 in that
  paper. The map $\xx_{V, \gamma_{i,i+1}}$ is constructed on page~1805
  of that paper, where it is denoted $\phi_j$; see also
  Proposition~4.4.3 in that paper. The weakly filtered case is treated
  in~\cite{Bi-Co-Sh:LagrSh}; see Proposition~3.5 and its proof in that
  paper, pages 73-76. More relevant background material on inclusion
  functors and iterated cones can be found in Sections~3.6 and~3.7 of
  that paper. Note that here we are working in a strictly filtered
  setting (which is a special case of the weakly filtered case) and
  this simplifies many of the arguments from~\cite{Bi-Co-Sh:LagrSh}.
  In addition to these references we provide below
  in~\S\ref{sbsb:cone-map} an outline of the construction of
  $\xx_{V, \gamma_{i,i+1}}$, avoiding technicalities.}

The map $\xx_{V, \gamma_{i,i+1}}$ is canonically defined by $V$ and
$\gamma_{i,i+1}$, up to a boundary \pbrev{in the chain complex}
$\hom^{\leq 0}_{\text{mod}_{\fuk(M)}}(L_{i+1}, L_i)$. Therefore it
gives rise to a well-defined morphism in the homological persistence
category
$\mathcal{C}_0 = H\bigl(\hom^{\leq 0}_{\text{mod}_{\fuk(M)}}(L_{i+1},
L_i)\bigr)$ which by abuse of notation we still denote by
$\xx_{V, \gamma_{i,i+1}} \in \hom_{\mathcal{C}_0}(L_{i+1}, L_i)$.


The above can be generalized to several consecutive ends in a row as
follows. Fix $1 \leq i \leq j \leq k$.
\begin{figure}[htbp]
  \begin{center}
    \includegraphics[scale=0.63]{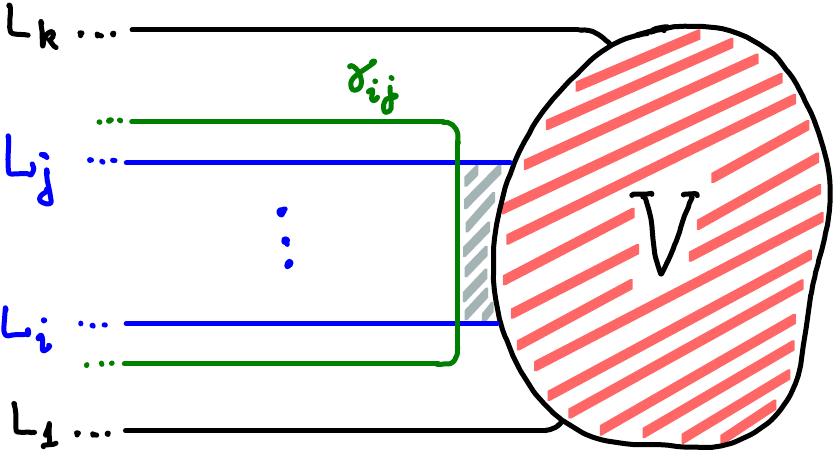}
  \end{center}
  \caption{The module $\mathcal{I}_{\gamma_{i,j}}^* \mathcal{V}$.}
  \label{f:map-ij}
\end{figure}
The pull-back module
$\mathcal{I}_{\gamma_{i,j}}^* \mathcal{V}$ can be identified with an
iterated cone of the type:
\begin{equation} \label{eq:icone-V} \mathcal{I}_{\gamma_{i,j}}^*
  \mathcal{V} = \text{cone}\Bigl(L_j \longrightarrow
  \text{cone}\bigl(L_{j-1} \longrightarrow \ldots \longrightarrow
  \text{cone}(L_{i+1} \longrightarrow L_i) \ldots \bigr)\Bigr),
\end{equation}
where similarly to the case $\xx_{V, \gamma_{i,i+1}}$, all the maps in
the iterated cone are module homomorphism that preserve filtrations.
See figure~\ref{f:map-ij}. \pbrev{The references given above for the
  construction of $\xx_{V, \gamma_{i,i+1}}$ are relevant also for the
  construction of~\eqref{eq:icone-V}.}
  
Note that there are some grading-translations in~\eqref{eq:icone-V}
which we have ignored. We will be more precise about this point later
on when we consider iterated cones involving three objects.

\begin{remnonum}
  If $V$ has $k$ ends then for every $i\leq 1$ and $k \leq l$ we have
  $\mathcal{I}_{\gamma_{i,l}}^*\mathcal{V} =
  \mathcal{I}_{\gamma^{\uparrow}}^*\mathcal{V}$.
\end{remnonum}

Finally, fix $1 \leq i \leq l < j \leq k$, and consider the two
modules $\mathcal{I}_{\gamma_{i,l}}^*\mathcal{V}$ and
$\mathcal{I}_{\gamma_{l+1, j}}^*\mathcal {V}$. There is a module
homomorphism
$\xx_{V, \gamma_{i,l}, \gamma_{l+1, j}}:
\mathcal{I}_{\gamma_{l+1,j}}^*\mathcal {V} \longrightarrow
\mathcal{I}_{\gamma_{i,l}}^*\mathcal{V}$ which preserves filtrations.
Note that we have
$$\mathcal{I}^*_{\gamma_{i,j}} \mathcal{V} =
T^{d_{i,l,j}}\text{cone}(\mathcal{I}^*_{\gamma_{l+1,j}}\mathcal{V}
\xrightarrow{\xx_{V, \gamma_{i,l}, \gamma_{l+1,j}}}
\mathcal{I}^*_{\gamma_{i,l}} \mathcal{V}),$$ for some
$d_{i,l,j} \in \mathbb{Z}$. See Figure~\ref{f:maps-ilj}. \pbrev{While
  the construction of $\xx_{V, \gamma_{i,l}, \gamma_{l+1, j}}$ does
  not explicitly appear in the references mentioned
  after~\eqref{eq:cone-Gamma_i-1} on page~\pageref{pp:refs-mod-map},
  it can be easily deduced from the material in those papers. See
  also~\S\ref{sbsb:cone-map} below.}

\begin{figure}[htbp]
   \begin{center}
     \includegraphics[scale=0.63]{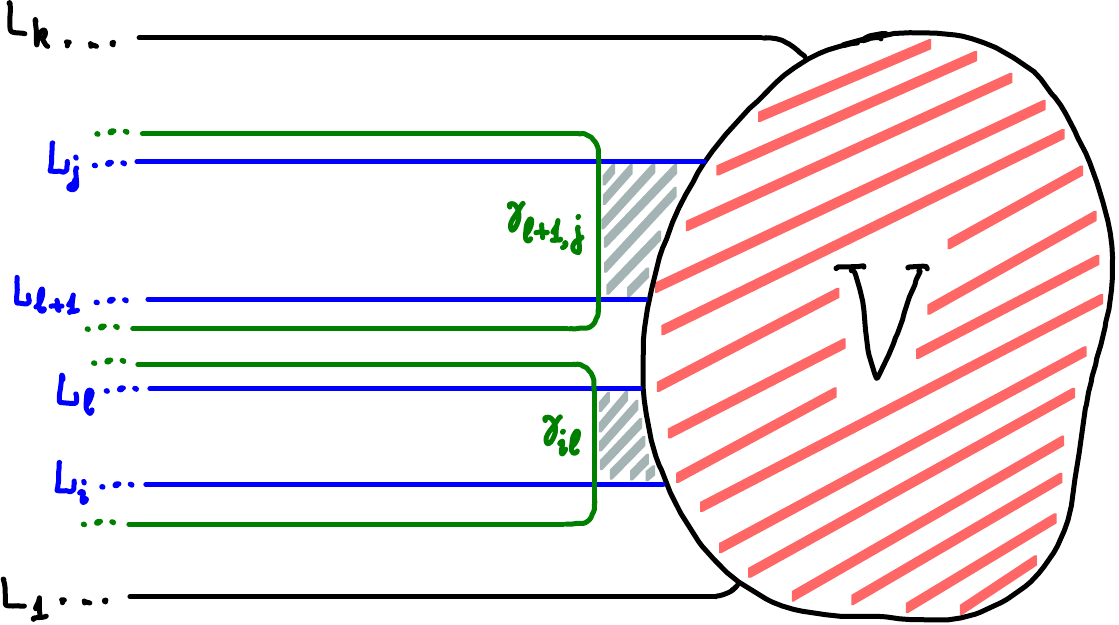}
   \end{center}
   \caption{The modules $\mathcal{I}_{\gamma_{l+1,j}}^* \mathcal{V}$
     and $\mathcal{I}_{\gamma_{i,l}}^* \mathcal{V}$.}
   \label{f:maps-ilj}
\end{figure}

\subsubsection{Module maps induced by
  cobordisms} \label{sbsb:cone-map} \pbrev{The purpose of this section
  is to outline the constructions of the module maps
  $\xx_{V, \gamma_{i,i+1}}: L_{i+1} \longrightarrow L_i$ and
  $\xx_{V, \gamma_{i,l}, \gamma_{l+1, j}}:
  \mathcal{I}_{\gamma_{l+1,j}}^*\mathcal {V} \longrightarrow
  \mathcal{I}_{\gamma_{i,l}}^*\mathcal{V}$
  from~\S\ref{sbsb:cobs-icone}.  We will not give a fully rigorous
  account of the subject here, in an attempt to avoid technicalities
  as much as possible. Full details can be found in the references
  given after~\eqref{eq:cone-Gamma_i-1} on
  page~\pageref{pp:refs-mod-map}.}

\pbrev{We begin with the map
  $\xx_{V, \gamma_{i,i+1}}: L_{i+1} \longrightarrow L_i$. We will
  first explain how to construct the first order component
  $(\xx_{V, \gamma_{i,i+1}})_1$ of this map.}

\pbrev{Consider the pullback module
  $\mathcal{M} := \mathcal{I}_{\gamma_{i,i+1}}^*\mathcal{V}$ and
  Figure~\ref{f:map-ii1}. For every exact Lagrangian $N$ we have the
  following equalities of vector spaces:
  \begin{equation} \label{eq:mod-I-gii1} \mathcal{M}(N) =
    CF(\gamma_{i,i+1} \times N, V) = CF(N, L_{i+1}) \oplus CF(N, L_i).
  \end{equation}
  In terms of Figure~\ref{f:map-ii1} the first summand corresponds to
  the intersection points $N \cap L_{i+1}$ lying above the point $P$
  and the second summand to the intersections $N \cap L_i$ lying above
  $Q$. For the sake of the illustration we have made here several
  simplifying assumption (which cannot really be made in
  general). Namely that $N$ intersects both $L_i$ and $L_{i+1}$
  transversely and that we can take the Floer and perturbation data
  for $W\fuk(\mathbb{R}^2 \times X)$ and $W\fuk(X)$ to have $0$
  Hamiltonian terms.}

\pbrev{Next we consider the differential $\mu_1^{\mathcal{M}}$ of this
  module. Again, for simplicity assume that the almost complex
  structure $J$ in the Floer data for $CF(\gamma_{i,i+1} \times N, V)$
  is chosen such that the projection
  $\pi: \mathbb{R}^2 \times X \longrightarrow \mathbb{R}^2$ is
  $(J,i)$-holomorphic, where $i$ is the standard complex structure on
  $\mathbb{R}^2 \cong \mathbb{C}$. To describe $\mu_1^{\mathcal{M}}$
  we need to consider Floer strips contributing to the differential of
  $CF(\gamma_{i,i+1} \times N, V)$. Apriori these are of four types:
  \begin{enumerate}
  \item[$(PP)$] Strips going from intersection points above $P$ to
    points above $P$.
  \item[$(QQ)$] Strips going from intersection points above $Q$ to
    points above $Q$.
  \item[$(PQ)$] Strips going from intersection points above $P$ to
    points above $Q$.
  \item[$(QP)$] Strips going from intersection points above $Q$ to
    points above $P$.
  \end{enumerate}
}

\pbrev{Our assumptions on $J$ and the Hamiltonians in the Floer data
  imply that all the $PP$ and $QQ$ strips have constant projection to
  $\mathbb{R}^2$ hence completely lie in $\{P\} \times X$ and
  $\{Q\} \times X$, respectively. Moreover, when viewed as strips in
  $X$, they are in 1-1 correspondence with the Floer strips that
  contribute to the differentials on $CF(N, L_{i+1})$ and
  $CF(N, L_i)$, respectively.}

\pbrev{Standard arguments based on complex analysis in the plane
  (e.g.~the open mapping theorem as used
  in~\cite[Section~4]{Bi-Co:cob1},~\cite[Sections~3-4]{Bi-Co:fuk-cob})
  imply that there are no Floer strips of type $(QP)$. However strips
  of type $(PQ)$ may definitely exist and we write their contribution
  to the Floer complex as a linear map:
  $\phi_1: CF(N,L_{i+1}) \longrightarrow CF(N,L_i)$ which is based on
  counting strips emanating from an intersection point above $P$ to an
  intersection point above $Q$.}

\begin{figure}[htbp]
  \begin{center}
    \includegraphics[scale=0.63]{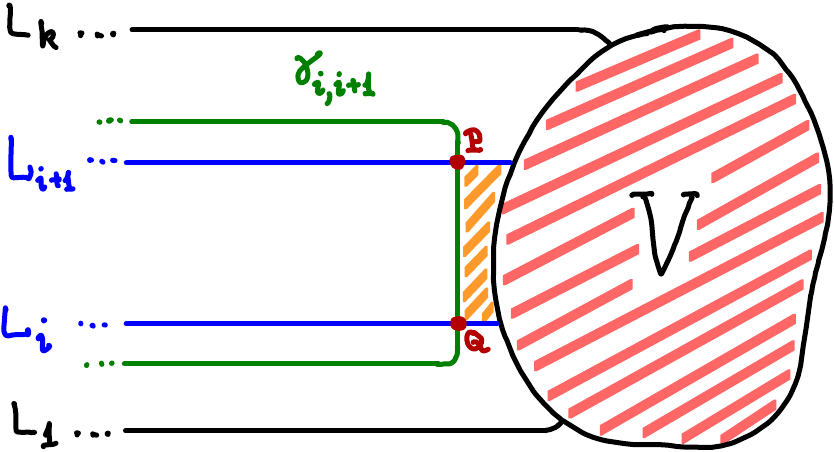}
  \end{center}
  \caption{The module map
    $\xx_{V, \gamma_{i,i+1}}: L_{i+1} \longrightarrow L_i$. The
    projection to $\mathbb{R}^2$ of the strips of type $(PQ)$ is
    depicted in orange.}
  \label{f:map-ii1}
\end{figure}

\pbrev{Summing up, the differential $\mu_1^{\mathcal{M}}$ can be
  written as:
\begin{equation} \label{eq:mod-I-gii1-diff} \mu_1^{\mathcal{M}}(x^P,
  x^Q) = \bigl( \mu_1(x^P), \mu_1(x^Q) + \phi_1(x^P) \bigr).
\end{equation}
Here $x^P$, $x^Q$ are intersection points corresponding to the first
and second summands in~\eqref{eq:mod-I-gii1} and $\mu_1$ stands for
the Floer differentials coming from $W\fuk(X)$.  The fact that
$\mu_1^{\mathcal{M}}$ is a differential implies that $\phi_1$ is a
chain map and moreover that
$\mathcal{M}(N) = \text{cone}(L_{i+1} \xrightarrow{\phi_1} L_i)$ as
chain complexes.} \pbrev{We define the first order component of
$\xx_{V, \gamma_{i,i+1}}$ to be
$(\xx_{V, \gamma_{i,i+1}})_1 := \phi_1$.}

\pbrev{The construction of the higher order components of
  $\xx_{V, \gamma_{i,i+1}}$ is similar, though technically more
  involved. To construct the maps $(\xx_{V, \gamma_{i,i+1}})_d$,
  $d \geq 2$, we need to analyze the $\mu_d$-operations of the module
  $\mathcal{M}$. Fix $d$ exact Lagrangians $N_0, \ldots, N_{d-1}$ in
  $X$ and $\underline{y}=(y_1, \ldots, y_{d-1})$ with
  $y_k \in CF(N_{k-1}, N_k)$. Since $\mathcal{I}_{\gamma_{i,i+1}}$ has
  vanishing higher order components we have
  \begin{equation} \label{eq:mu-d-M} \mu_d^{\mathcal{M}}(\underline{y},
    (x^P, x^Q)) = \mu^{\mathbb{R}^2 \times X}_d \bigl(
    (\mathcal{I}_{\gamma_{i,i+1}})_1(y_1), \ldots,
    (\mathcal{I}_{\gamma_{i,i+1}})_1(y_{d-1}), (x^P,x^Q) \bigr),
  \end{equation}
  where $\mu^{\mathbb{R}^2 \times X}_d$ is the $d$-order
  $A_{\infty}$-operation in $W\fuk(\mathbb{R}^2 \times X)$ and
  $x^P \in CF(N_0, L_{i+1}) ,x^Q \in CF(N_0,L_i)$ are as before.  }

\pbrev{The right-hand side of~\eqref{eq:mu-d-M} counts Floer
  $d$-polygons with ``edges'' on the Lagrangians
  $\gamma_{i,i+1} \times N_0, \ldots, \gamma_{i,i+1} \times N_{d-1},
  V$.  By similar arguments to the ones used for $d=1$ one shows that
  there are no Floer polygons with entry points in
  $(\mathcal{I}_{\gamma_{i,i+1}})_1(y_1), \ldots,
  (\mathcal{I}_{\gamma_{i,i+1}})_1(y_{d-1}), x_Q$ and exit points
  lying above $P$. Consequently~\eqref{eq:mu-d-M} has the shape
  \begin{equation} \label{eq:mu-d-M-2}
    \mu_d^{\mathcal{M}}(\underline{y}, (x^P, x^Q)) = \Bigl(
    \mu_d^{\mathbb{R}^2 \times X}
    \bigl((\mathcal{I}_{\gamma_{i,i+1}})_1(\underline{y}), x^P \bigr),
    \mu_d^{\mathbb{R}^2 \times
      X}\bigr((\mathcal{I}_{\gamma_{i,i+1}})_1(\underline{y}), x^Q
    \bigl) + \phi_d\bigr(\underline{y}, x^P\bigl) \Bigr),
  \end{equation}
  with respect to the splitting
  $\mathcal{M}(N_0) = CF(N_0, L_{i+1}) \oplus CF(N_0,L_i)$ used
  before.  Here $(\mathcal{I}_{\gamma_{i,i+1}})_1(\underline{y})$
  stands for
  $((\mathcal{I}_{\gamma_{i,i+1}})_1(y_1), \ldots,
  (\mathcal{I}_{\gamma_{i,i+1}})_1(y_{d-1}))$.  Thus
  $\phi_d(\underline{y}, x^P)$ counts Floer polygons in
  $\mathbb{R}^2 \times X$ with entry points
  $(\mathcal{I}_{\gamma_{i,i+1}})_1(\underline{y})$, $x^P$ and an exit
  point over $Q$. The $d$-order component
  $(\xx_{V, \gamma_{i,i+1}})_d$ of the desired map
  $\xx_{V, \gamma_{i,i+1}}$ is the multilinear map $\phi_d$.}

\pbrev{It remains to explain why
  $\mathcal{M} = \mathcal{I}_{\gamma_{i,i+1}}^* \mathcal{V}$ is the
  mapping cone (in the $A_{\infty}$-sense) of the map
  $\xx_{V, \gamma_{i,i+1}}$. Consider the other two terms on the
  right-hand side of~\eqref{eq:mu-d-M-2}, namely
  $\mu^{\mathbb{R}^2 \times
    X}_d((\mathcal{I}_{\gamma_{i,i+1}})_1(\underline{y}), x^P)$ and
  $\mu_d^{\mathbb{R}^2 \times
    X}((\mathcal{I}_{\gamma_{i,i+1}})_1(\underline{y}), x^Q)$. These
  two terms can be identified with
  $\mu^{\mathbb{R}^2 \times
    X}_d((\mathcal{I}_{\gamma_{i+1,i+1}})_1(\underline{y}), x^P)$ and
  $\mu_d^{\mathbb{R}^2 \times
    X}((\mathcal{I}_{\gamma_{i,i}})_1(\underline{y}), x^Q)$,
  respectively (note we are using now the curves $\gamma_{i+1,i+1}$
  and $\gamma_{i,i}$, and not $\gamma_{i,i+1}$). In other words the
  preceding two expressions can be identified with the
  $\mu_d$-operations of the pullback modules
  $\mathcal{I}_{\gamma_{i+1,i+1}}^*\mathcal{V}$ and
  $\mathcal{I}_{\gamma_{i,i}}^*\mathcal{V}$ applied to
  $(\underline{y}, x^P)$ and $(\underline{y}, x^Q)$, respectively.}

\pbrev{Now, it follows from~\cite[Section~4.2]{Bi-Co:fuk-cob} that the
  pullback modules $\mathcal{I}_{\gamma_{i+1,i+1}}^*\mathcal{V}$ and
  $\mathcal{I}_{\gamma_{i,i}}^*\mathcal{V}$ are quasi-isomorphic to
  the Yoneda modules of $L_{i+1}$ and $L_i$, respectively. Therefore,
  up to grading-translation we obtain
  $\mathcal{M} = \text{cone}(L_{i+1} \xrightarrow{\phi} L_i)$, where
  $\phi = \{\phi_d\}_{d \geq 1}$, which
  proves~\eqref{eq:cone-Gamma_i-1}. This concludes a rough outline of
  the construction of the map $\xx_{V, \gamma_{i,i+1}}$. }

\pbrev{The definition of the maps
  $\xx_{V, \gamma_{i,l}, \gamma_{l+1, j}}$ is similar to the above and
  we will just go over the main points of the construction. Put
  $\mathcal{Q} = \mathcal{I}_{\gamma_{i,j}}^*\mathcal{V}$ and consider
  Figure~\ref{f:maps-ilj2}. It is not hard to show that for every
  exact Lagrangians $N$ we have
  \begin{equation} \label{eq:Q-mod-split} \mathcal{Q}(N) =
    \mathcal{I}_{\gamma_{l+1,j}}^*\mathcal{V}(N) \oplus
    \mathcal{I}_{\gamma_{i, l}}^*\mathcal{V}(N)
  \end{equation}
  as vector spaces. Elements of the first summand can be written as
  $\underline{x}^P = (x^{P_j}, \ldots, x^{P_{l+1}})$ with
  $x^{P_j} \in CF(N, L_k)$ viewed as lying above the point $P_k$ in
  Figure~\ref{f:maps-ilj2}. Similarly, elements of the second summand
  of~\eqref{eq:Q-mod-split} can be written as
  $\underline{x}^Q = (x^{Q_i}, \ldots, x^{Q_l})$. The differential
  $\mu_1^{\mathcal{Q}}$ of this module turns out to have the following
  shape:
  \begin{equation} \label{eq:Q-mod-split-2}
    \mu_1^{\mathcal{Q}}(\underline{x}^P, \underline{x}^Q) = \Bigl(
    \mu_1^{\mathcal{I}_{\gamma_{l+1,j}}^*\mathcal{V}}(\underline{x}^P),
    \mu_1^{\mathcal{I}_{\gamma_{i, l}}^*\mathcal{V}}(\underline{x}^Q) +
    \psi_1(\underline{x}^P) \Bigr),
  \end{equation}
  where
  $\psi_1: \mathcal{I}_{\gamma_{l+1,j}}^*\mathcal{V}(N)
  \longrightarrow \mathcal{I}_{\gamma_{i, l}}^*\mathcal{V}(N)$ is a
  linear map. The reason for this is similar to what has been
  explained earlier for the module
  $\mathcal{M} = \mathcal{I}_{\gamma_{i,i+1}}^*\mathcal{V}$. Namely,
  there cannot be any Floer strips connecting $Q$-type points with
  $P$-type points.  The term $\psi_1(\underline{x}^P)$ counts Floer
  strips (in $\mathbb{R}^2 \times X$), with one boundary on
  $\gamma_{i,j} \times N$ and the other boundary on $V$, emanating
  from any entry $x^{P_k}$ of $\underline{x}^P$ and going to some
  entry $x^{Q_m}$ of $\underline{x}^Q$. In terms of
  Figure~\ref{f:maps-ilj2} the projection of e.g.~the strips that go
  from $x^{P_{l+1}}$ to $x^{Q_l}$ are depicted in light blue. The
  projection of the strips corresponding to
  $\mu_1^{\mathcal{I}_{\gamma_{l+1,j}}^*\mathcal{V}}$ are in orange
  and those corresponding to
  $\mu_1^{\mathcal{I}_{\gamma_{i, l}}^*\mathcal{V}}$ are in purple.  }

\pbrev{The first order component of the desired map
  $\xx_{V, \gamma_{i,l}, \gamma_{l+1, j}}$ is defined to be the map
  $\psi_1$. The construction of the higher order components of
  $\xx_{V, \gamma_{i,l}, \gamma_{l+1, j}}$ is analogous to the case of
  $\xx_{V, \gamma_{i,i+1}}$ discussed earlier.
}

\begin{figure}[htbp]
  \begin{center}
    \includegraphics[scale=0.63]{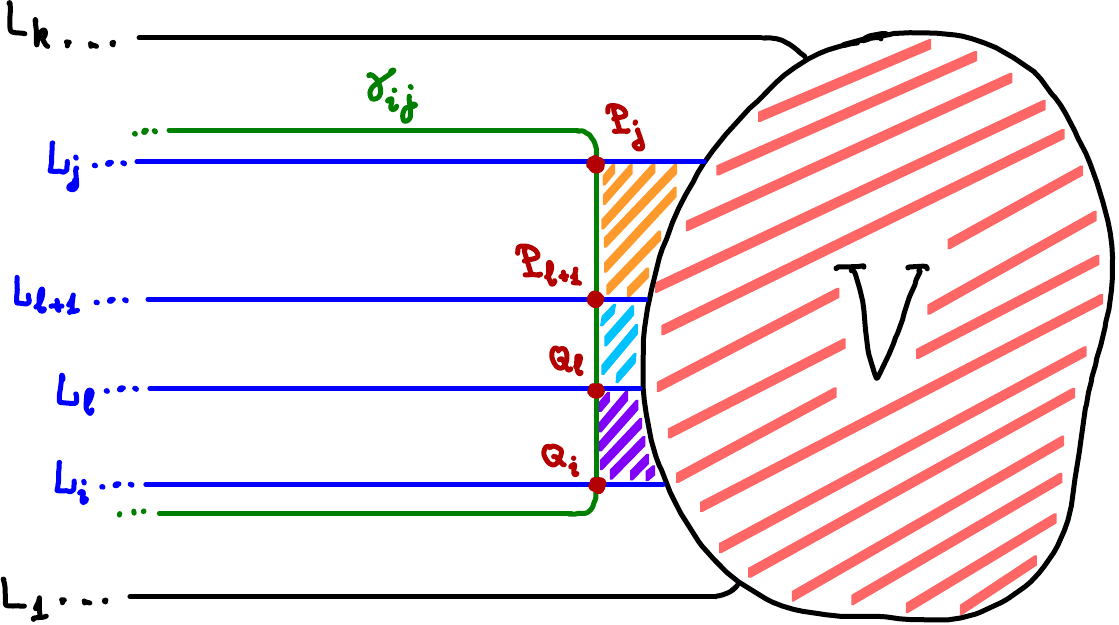}
  \end{center}
  \caption{The module map $\xx_{V, \gamma_{i,l}, \gamma_{l+1, j}}:
  \mathcal{I}_{\gamma_{l+1,j}}^*\mathcal {V} \longrightarrow
  \mathcal{I}_{\gamma_{i,l}}^*\mathcal{V}$.}
  \label{f:maps-ilj2}
\end{figure}

\subsubsection{Shadow of cobordisms} \label{sbsb:cobs-shadow} \pbrev{
  Given a Lagrangian cobordism $V \subset \mathbb{R}^2 \times X$ we
  define its {\em outline}~\cite{Co-Sh} as:
\begin{equation} \label{eq:out} \out(V) := \mathbb{R}^2 \setminus
  \mathcal{U},
\end{equation}
where $\mathcal{U} \subset \mathbb{R}^2 \setminus \pi(V)$ is the union
of all the {\em unbounded} connected components of
$\mathbb{R}^2 \setminus \pi(V)$.} \pbrev{An important measurement
associated to $V$ is its {\em shadow}~\cite{Bi-Co-Sh:LagrSh, Co-Sh},
$\shdin(V)$:
\begin{equation} \label{eq:shadow} \shdin(V) := \text{Area}
  \bigl(\out(V)\bigr).
\end{equation}
Note that $\out(V) \subset \mathbb{R}^2$ is a measurable set hence
$\shdin(V)$ is well-defined.  The shadow plays a central role in
defining cobordism-related metrics on spaces of
Lagrangians~\cite{Bi-Co-Sh:LagrSh}.}

\pbrev{For the purpose of this section it would be easier to work with
  a slightly different variant of the shadow, which we call the {\em
    exterior shadow}, that we introduce now. Fix a rectangle
  $Q \subset \mathbb{R}^2$ which is large enough so that
  $\out(V) \setminus Q$ consists of only horizontal rays and denote
  $$\out_Q(V) = Q \cap \out(V).$$ Define the {\em exterior shadow}
  of $V$ to be:
  \begin{equation} \label{eq:shadow-e}
    \begin{aligned}
      \shdex(V) = \inf \bigl\{A \mid \; & \exists \; \text{a smooth
        embedding} \; \varphi: B \longrightarrow \mathbb{R}^2,
      \; \text{with} \\
      & \textnormal{image\,}(\varphi) \supset \out_Q(V) \; \text{and}
      \; \text{Area(image}(\varphi)) \geq A \bigr\}.
    \end{aligned}
  \end{equation}
  Here $B \subset \mathbb{R}^2$ stands for the closed $2$-dimensional
  unit disk. It is not hard to see that $\shdex(V)$ is independent of
  the choice of the rectangle $Q$.}

\pbrev{Obviously we have $\shdex(V) \geq \shdin(V)$ (this is because
  $\shdin(V) = \text{Area}(\out_Q(V))$). But in fact, we actually have
  $\shdex(V) = \shdin(V)$. Since such a statement has not been proved \ocnote{in full} 
  in the literature (though see~\cite[page~33]{Co-Sh} for a
  related partial argument) we include in~\S\ref{sbsb:shd-ex-in} below
  a sketch of a proof showing that $\shdex(V) = \shdin(V)$.}

\pbrev{Previous papers on the subject used the shadow rather than the
  exterior shadow. However, for the rest of this section, whose
  purpose is mainly \ocnote{illustrative}, we opted for the exterior shadow
  since it is more intuitive to work with.}

\subsubsection{$r$-acyclic objects} \label{sbsb:cob-r-acyc} Let
$V \subset \mathbb{R}^2 \times X$ be a cobordism with ends
$L_1, \ldots, L_k$ such that $V$ is cylindrical over
\zjr{$(-\infty, \delta] \times \mathbb{R}$} for some $\delta>0$.  Let
$\shdex(V)$ be the \pbrev{exterior} shadow of $V$ and \pbrev{denote by
  $\err$ the area} of the region to the right of $\gamma^{\uparrow}$
enclosed between $\gamma^{\uparrow}$ and the projection to
$\mathbb{R}^2$ of the non-cylindrical part of $V$. See
Figure~\ref{f:cob-1}.

\begin{figure}[htbp]
   \begin{center}
     \includegraphics[scale=0.63]{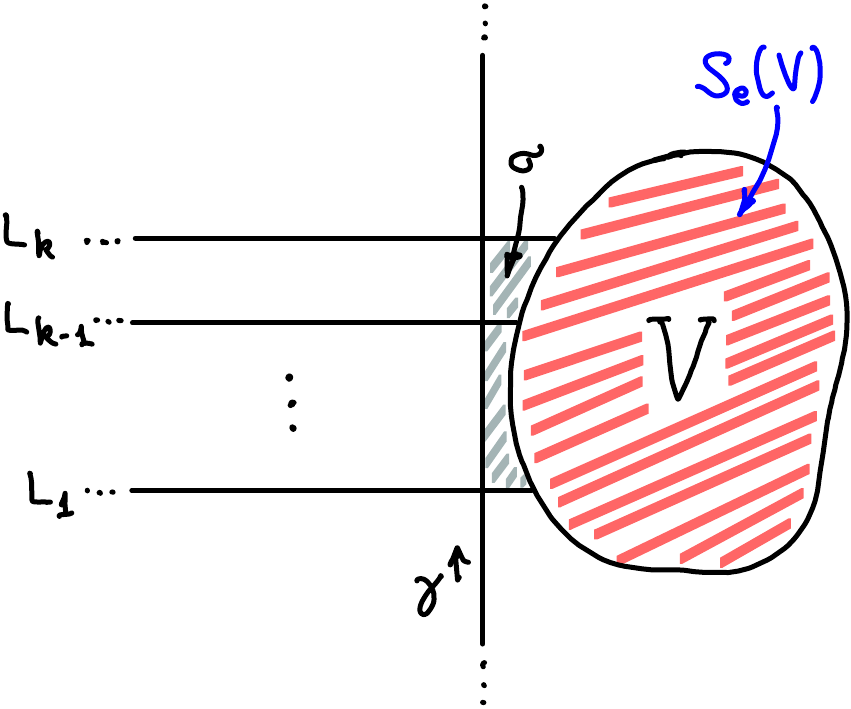}
   \end{center}
   \caption{The projection of a cobordism to $\mathbb{R}^2$,
     \pbrev{its (exterior) shadow $\shdex(V)$ and the area $\err$.}}
   \label{f:cob-1}
\end{figure}

The module
$\mathcal{I}_{\gamma^{\uparrow}}^*\mathcal{V} =
\mathcal{I}_{\gamma_{1,k}}^*\mathcal{V}$ (which also has the
description~\eqref{eq:icone-V} with $i=1$, $j=k$) is $r$-acyclic,
where \pbrev{$r := \shdex(V) + \err$.}  This can be easily seen from
the fact that $V$ can be Hamiltonian isotoped to a cobordism $W$ which
is disjoint from $\gamma^{\uparrow} \times X$ via a compactly
supported Hamiltonian isotopy whose Hofer length is $\leq r$. Standard
Floer theory then implies that
$\mathcal{I}_{\gamma^{\uparrow}}^*\mathcal{V}$ is acyclic of boundary
depth $\leq r$. In the terminology used in this paper this means that
the object $\mathcal{I}_{\gamma^{\uparrow}}^*\mathcal{V}$ is
$r$-acyclic.

\begin{remark} \label{r:cobs-e} The area \pbrev{summand $\err$ that}
  adds to the \pbrev{exterior shadow} of $V$ in the quantity $r$ can
  be made arbitrarily small at the expense of applying appropriate
  shifts to each of the ends $L_i$ of $V$. One way to do this is to
  replace the curve $\gamma^{\uparrow}$ by a curve
  $\gamma_{V}^{\uparrow}$ that coincides with $\gamma^{\uparrow}$
  outside a compact subset and such that $\gamma_V^{\uparrow}$
  approximates the shape of the projection of the non-cylindrical part
  of $V$ in such a way that \pbrev{the area $\err'$ enclosed} between
  $\gamma^{\uparrow}_V$ and $\pi(V)$ is small. See
  Figure~\ref{f:approx}. One can apply a similar modification to the
  curves $\gamma_{i,j}$. Note that, in contrast to
  $f_{\gamma^{\uparrow}}$, the primitive $f_{\gamma_V^{\uparrow}}$ can
  no longer be assumed to be $0$ (a similar remark applies to the
  primitives of the modifications of $\gamma_{i,j}$). As a result the
  cone decompositions~\eqref{eq:icone-V} associated to the pullback
  modules $\mathcal{I}^*_{\gamma_{i,j}}$ will have the same shape but
  each of the Lagrangians $L_i, \ldots, L_j$ will gain a different
  shift in action. Note that this will also result in ``tighter''
  weighted exact triangles than the ones we obtain below, in the sense
  of weights and various shifts on the objects forming these
  triangles.

  \begin{figure}[htbp]
   \begin{center}
     \includegraphics[scale=0.63]{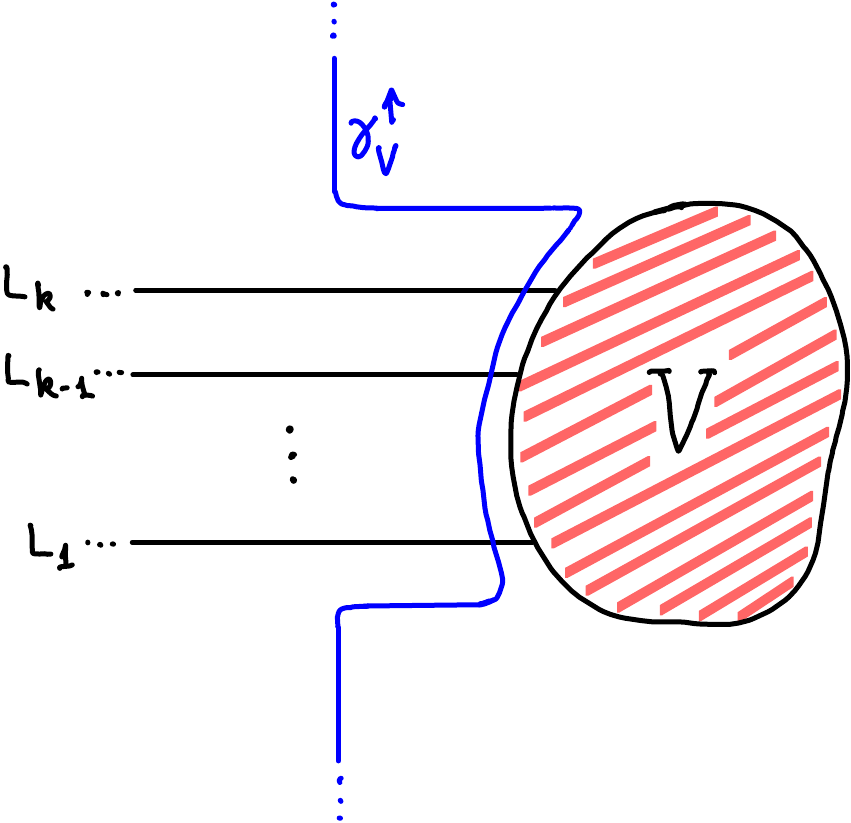}
   \end{center}
   \caption{Replacing $\gamma^{\uparrow}$ by a curve
     $\gamma_V^{\uparrow}$ better approximating  the shape of $V$.}
   \label{f:approx}
\end{figure}

To simplify the exposition, below we will not make these modifications
and stick to the curves $\gamma^{\uparrow}$ and $\gamma_{i,j}$ as
defined above, at the \pbrev{expense of $\err$ not being} necessarily
small and the weights of the triangles not being necessarily optimal.
\end{remark}

\subsubsection{$r$-isomorphisms} \label{sbsb:cob-r-iso} We begin by
visualizing the canonical map $\eta^{L}_r: \Sigma^r L \to L$, where
$L \subset X$ is an exact Lagrangian. Consider the curve
$\gamma \subset \mathbb{R}^2$ depicted in Figure~\ref{f:eta-r}, and
let $r$ be the area enclosed between $\gamma$ and
$\gamma^{\uparrow}$. Let $f_{\gamma}$ be the unique primitive of
$\lambda_{\mathbb{R}^2}|_{\gamma}$ that vanishes along the lower end
of $\gamma$. Note that $f_{\gamma} \equiv r$ along the upper end of
$\gamma$.  Let $V = L \times \gamma$ \zjr{and set} $f_{V} := f_{\gamma,L}$.
Therefore, the primitives induced by $V$ on its ends are as follows:
the primitive on the lower end coincides with $f_L$, while the
primitive on the upper end coincides with $f_L + r$. In other words,
the cobordism $V$ has ends $L$ and $\Sigma^r L$. Moreover, the map
$\xx_{V, \gamma_{1,2}}: \Sigma^r L \to L$ induced by $V$ and
$\gamma_{1,2}$ is precisely $\eta^L_r$.

\begin{figure}[htbp]
   \begin{center}
     \includegraphics[scale=0.63]{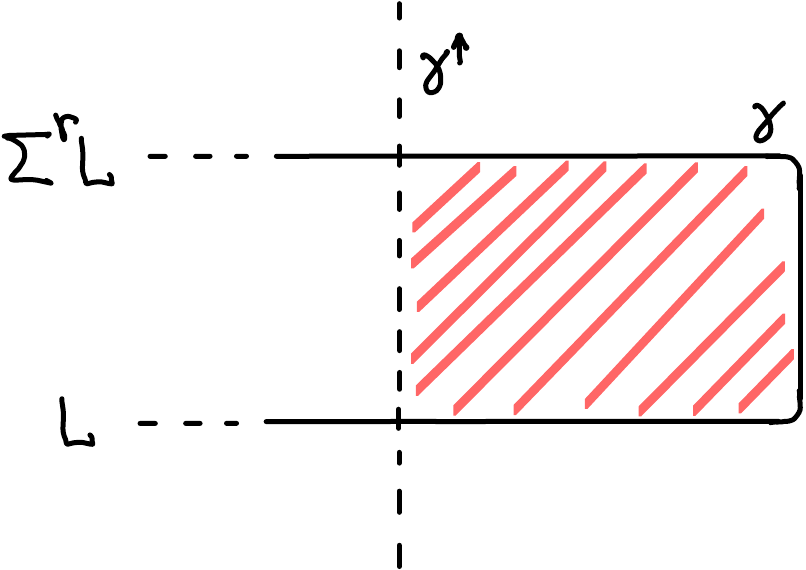}
   \end{center}
   \caption{The cobordism inducing $\eta^L_r: \Sigma^r L \to L$.}
   \label{f:eta-r}
\end{figure}

Another source of geometric $r$-isomorphisms comes from Hamiltonian
isotopies. Let $\phi_t^{H}$, $t \in [0,1]$ be a Hamiltonian isotopy
and let $L \subset X$ be an exact Lagrangian. The Lagrangian
suspension construction gives rise to an exact Lagrangian cobordism
between $L$ and $\phi_1^H(L)$. After bending the ends of that
cobordism to become negative one obtains a Lagrangian cobordism with
negative ends being $L$ and $\phi_1^H(L)$. See
Figure~\ref{f:suspension}. The primitive $f_V$ on $V$ is uniquely
defined by the requirement that $f_V$ coincides with $f_L$ on the
lower end of $V$. The \pbrev{exterior shadow} $\shdex(V)$ of this
cobordism equals the Hofer length of the isotopy
$\{\phi_t^H\}_{t \in [0,1]}$, and we obtain an $r$-isomorphism
$\phi_1^H(L) \to L$.

\begin{figure}[htbp]
   \begin{center}
     \includegraphics[scale=0.63]{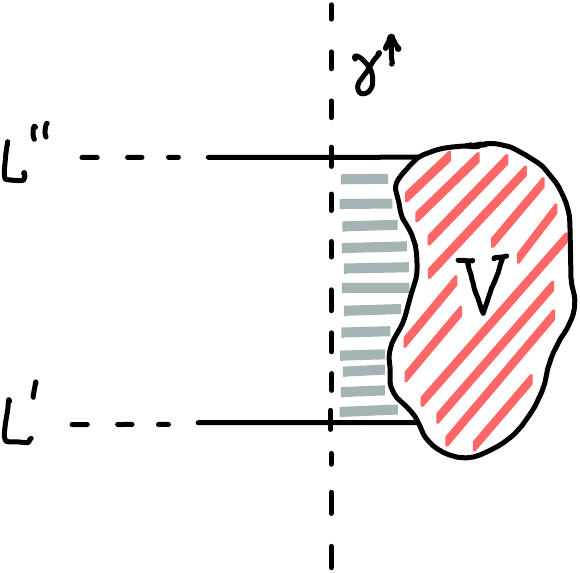}
   \end{center}
   \caption{The Lagrangian suspension of a Hamiltonian isotopy, after
     bending the ends.}
   \label{f:suspension}
\end{figure}

More generally, let $V$ be a Lagrangian cobordism with ends
$L_1, \ldots, L_k$. \pbrev{Let $r = \shdex(V) + \err$ as above.}  Fix
$1 \leq l < k$. As explained above we have, up to an overall
translation in grading,
$$\mathcal{I}^*_{\gamma^{\uparrow}} \mathcal{V} = 
\mathcal{I}^*_{\gamma_{1,k}} \mathcal{V} =
\text{cone}(\mathcal{I}^*_{\gamma_{l+1,k}}\mathcal{V}
\xrightarrow{\xx_{V, \gamma_{1,l}, \gamma_{l+1,k}}}
\mathcal{I}^*_{\gamma_{1,l}} \mathcal{V}),$$ and since
$\mathcal{I}^*_{\gamma^{\uparrow}} \mathcal{V}$ is $r$-acyclic the map
$\xx_{V, \gamma_{1,l}, \gamma_{l+1,k}}:
\mathcal{I}^*_{\gamma_{l+1,k}}\mathcal{V} \longrightarrow
\mathcal{I}^*_{\gamma_{1,l}} \mathcal{V}$ is an $r$-isomorphism.

\subsubsection{Weighted exact triangles} \label{sbsb:cobs-exact-t}

Let $V \subset \mathbb{R}^2 \times X$ be a Lagrangian cobordism with
ends $L_1, \ldots, L_k$. Let $V' \subset \mathbb{R}^2 \times X$ be the
cobordism obtained from $V$ by bending the upper end $L_k$ clockwise
around $V$ so that it goes beyond the end $L_1$ as in
Figure~\ref{f:bend-1}. To obtain a cobordism according to our
conventions, we need to further shift $V'$ upwards by one so that its
lower end has $y$-coordinate $1$ (instead of $0$). Clearly $V'$ is
also exact and $\shdex(V') = \shdex(V)$. We fix the primitive $f_{V'}$
for $V'$ to be the unique one that coincides with $f_V$ on the ends
$L_1, \ldots, L_{k-1}$. A simple calculation shows that $f_{V'}$
induces on the lowest end of $V'$ the primitive $f_{L_{k}} - r$, where
\pbrev{$r = \shdex(V) + \err + \epsilon$.} (Here $\epsilon$ can be
assumed to be arbitrarily small. It can be estimated from above by the
area enclosed between the bent end corresponding to $\ell_k$, the
projection to $\mathbb{R}^2$ of the non-cylindrical part of $V$,
$\ell_1$ and $\gamma^{\uparrow}$. See Figure~\ref{f:bend-1}.)

\begin{figure}[htbp]
   \begin{center}
     \includegraphics[scale=0.63]{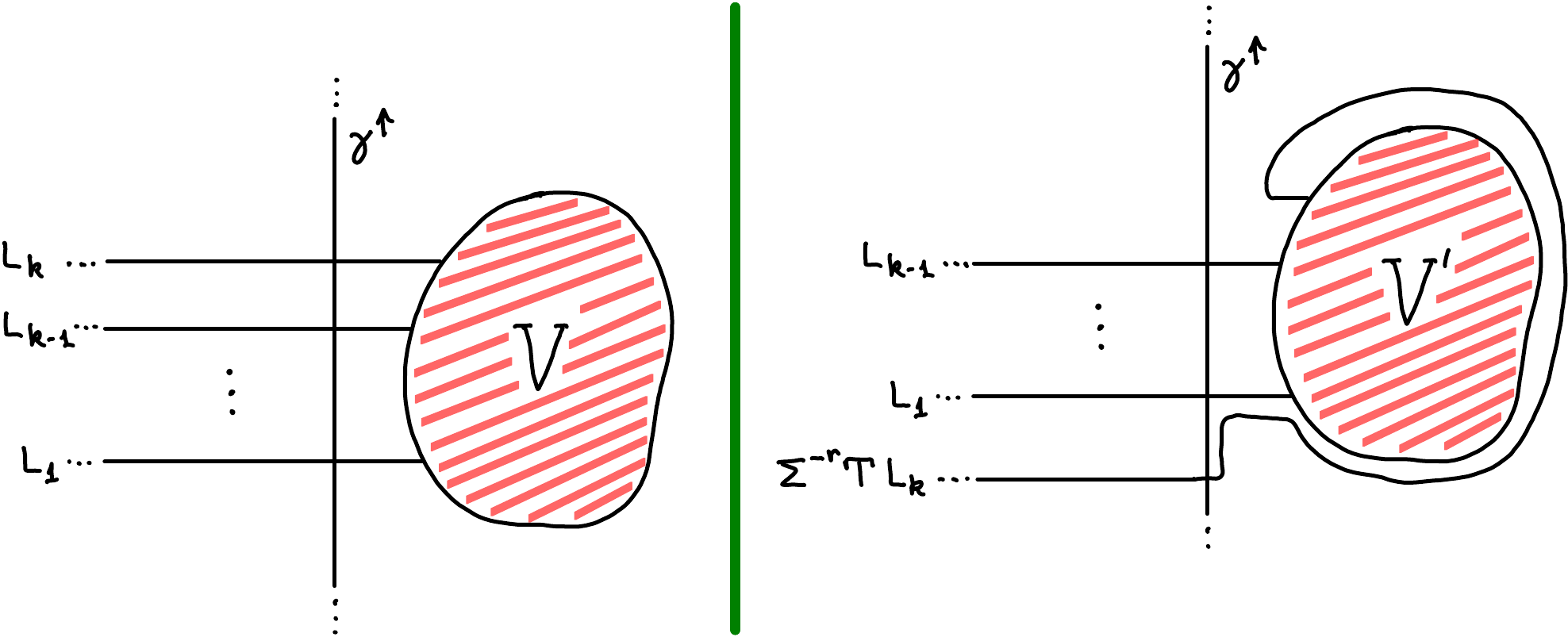}
   \end{center}
   \caption{Bending the upper end $L_k$ of a cobordism $V$.}
   \label{f:bend-1}
\end{figure}

Taking into account grading (in case $V$ is graded in the sense of
Floer theory), one can easily see that the grading on the lowest end
of $V'$ is translated by $1$ in comparison to $L_k$. Summing up, the
above procedure transforms a cobordism $V$ with ends
$L_1, \ldots, L_k$ into a cobordism $V'$ with ends
$\Sigma^{-r}TL_k, L_1, \ldots, L_{k-1}$.

Similarly, one can take $V$ and bend its lowest end $L_1$
counterclockwise around $V$ and obtain a new cobordism $V''$ with ends
$L_2, \ldots, L_k, \Sigma^r T^{-1}L_1$ and with
$\shdex(V'') = \shdex(V)$.

We are now in position to describe geometrically weighted exact
triangles.  Let $V$ be a cobordism with three ends, which for
compatibility with Definition~\ref{dfn-set} we denote by $C, B, A$
(going from the lowest end upward). See Figure~\ref{f:tr-abc}.

\begin{figure}[htbp]
   \begin{center}
     \includegraphics[scale=0.63]{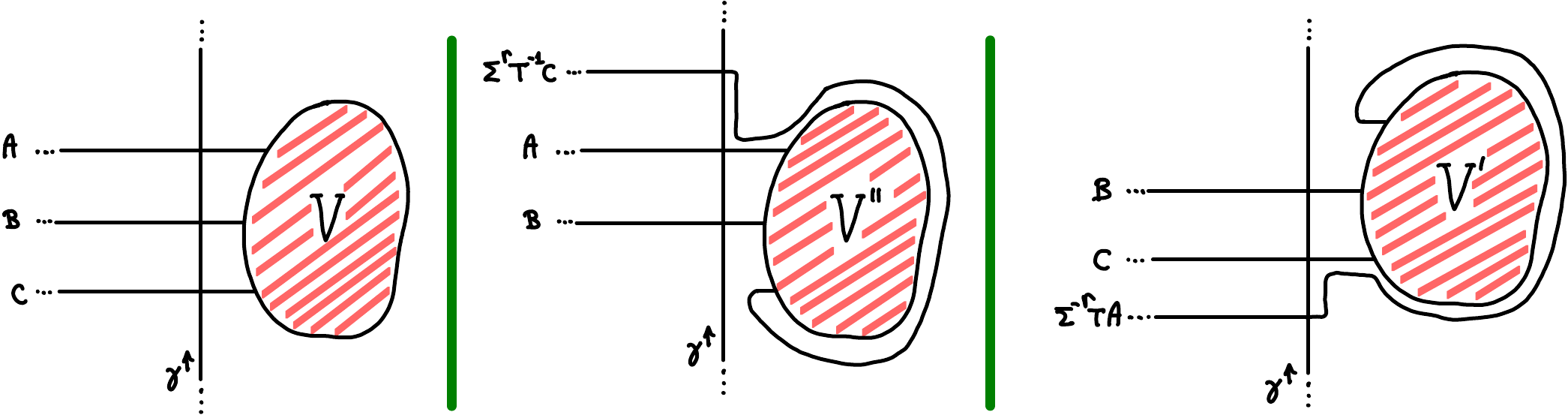}
   \end{center}
   \caption{A cobordism leading to an exact triangle of weight $r$.}
   \label{f:tr-abc}
\end{figure}

\pbrev{Let $r = \shdex(V) + \err + \epsilon$.} Put
\zjr{$$\bar{u} := \xx_{V, \gamma_{2,3}} : A \to B, \quad \bar{v} :=
  \xx_{V, \gamma_{1,2}}: B \to C.$$} Consider also the
counterclockwise rotation $V''$ of $V$ whose ends are
$B, A, \Sigma^r T^{-1}C$. Let
$$\bar{w} := \Sigma^{-r} T \xx_{V'', \gamma_{2,3}} =
\xx_{V', \gamma_{1,2}} : C \to \Sigma^{-r}T A.$$ We claim that
\begin{equation} \label{eq:ex-tr-cob-1} A \xrightarrow{\bar{u}} B
  \xrightarrow{\bar{v}} C \xrightarrow{\bar{w}} \Sigma^{-r}TA
\end{equation}
is a strict exact triangle of weight $r$.  This triangle is based on
the genuinely exact triangle from $\C_0$:
\begin{equation} \label{eq:ex-tr-cob-2} A \xrightarrow{u} B
  \xrightarrow{v} C' \xrightarrow{w} TA,
\end{equation}
where $u = \bar{u}$,
$C' = \mathcal{I}^*_{\gamma_{2,3}}V = \text{cone}(A \xrightarrow{u}
B)$, $v: B \to C'$ is the standard inclusion and $w: C' \to TA$ the
standard projection. The $r$-isomorphism $\phi: C' \to C$ and its
right $r$-inverse $\psi: \Sigma^rC \to C'$ are as follows:
$$\phi = \xx_{V, \gamma_{1,1}, \gamma_{2,3}}, \quad
\psi = T\xx_{V'', \gamma_{1,2}, \gamma_{3,3}}: \Sigma^r C\to T
\mathcal{I}_{\gamma_{1,2}}^*V'' = C'.$$ Note that
$\mathcal{I}_{\gamma_{1,2}}^*V'' = T^{-1}\mathcal{I}_{\gamma_{2,3}}^*V
= T^{-1}C' = T^{-1}\text{cone}(A \xrightarrow{u} B)$.

The fact that $\psi$ is a right $r$-inverse to $\phi$ and that these
maps fit into the diagram~\eqref{dfn-set-2} follows from standard
arguments in Floer theory. Note that these statements do not hold on
the chain level, but only in $\C_0$.
\subsubsection{Rotation of triangles} \label{sbsb:cobs-rot} Let $V$ be
a cobordism with three ends $C, B, A$ as in~\S\ref{sbsb:cobs-exact-t}
and consider the exact triangle~\eqref{eq:ex-tr-cob-1} of weight
$r$. Let $V'$ be the clockwise rotation of $V$, with ends
$B, C, \Sigma^{-r}TA$. The exact triangle associated to $V'$ is
\begin{equation} \label{eq:ex-tr-cob-3} B \xrightarrow{\bar{v}} C
  \xrightarrow{\bar{w}} \Sigma^{-r}TA \xrightarrow{\bar{u}'}
  \Sigma^{-r-\epsilon'} TB, 
\end{equation}
where $\epsilon'$ can be assumed to be arbitrarily small.  It is not
hard to see that in $\C_{\infty}$ (up to identifying objects with
their shifts and ignoring signs in the maps) the exact
triangle~\eqref{eq:ex-tr-cob-3} is precisely the rotation of the exact
triangle corresponding to~\eqref{eq:ex-tr-cob-1} in $\C_{\infty}$.

The above shows that rotation of weighted exact triangles coming from
cobordisms with three ends preserves weights (up to an arbitrarily
small error). Interestingly this is sharper than the case in a general
TPC, described in Proposition~\ref{prop-rot}, where the weight of a
rotated triangle doubles. See also Remark~\ref{r:rotation-weight}.

\subsubsection{Weighted octahedral property} \label{sbsb:cobs-oct}

The weighted octahedral formula from Proposition~\ref{prop-w-oct}
admits too a geometric interpretation in the realm of cobordisms. We
will not give the details of this construction here. Instead we will
briefly explain the cobordism counterpart of cone refinement and why
it behaves additively with respect to weights, as described
algebraically in Proposition~\ref{prop-cone-ref}. Note that weighted
cone-refinement is one of the main corollaries of the weighted
octahedral property.

For simplicity we focus here on the case described in
Example~\ref{ex:crefine} and ignore the grading-translation $T$.
Assume that we have two cobordisms $V$ with ends $X, B, A$ and $U$
with ends $A, F, E$.

These cobordisms induce two weighted exact triangles of weights
\pbrev{$r = \shdex(V) + \err_V + \epsilon$ and
  $s = \shdex(U) + \err_U + \epsilon$.} By gluing the two cobordisms
along the ends corresponding to $A$ we obtain a new cobordism $W$ with
four ends $X, B, F, E$. See Figure~\ref{f:glue-cob}. By the previous
discussion this exhibits $X$ as an iterated cone with linearization
$(B, F, E)$ which corresponds precisely to the algebraic cone
refinement of $X$ with linearization $(B,A)$ by $A$ with linearization
$(F, E)$.

Clearly the \pbrev{exterior shadow} $\shdex(W)$ of $W$ equals to
$\shdex(U) + \shdex(V)$, and the total weight of the cone
decomposition of $X$ associated to $W$ is $r+s$. Note again, that by
the procedure explained in Remark~\ref{r:cobs-e} one can make the
errors $e_V$, $e_U$ small at the expense of applying some shifts to
the elements of the linearization.
\begin{figure}[htbp]
   \begin{center}
     \includegraphics[scale=0.63]{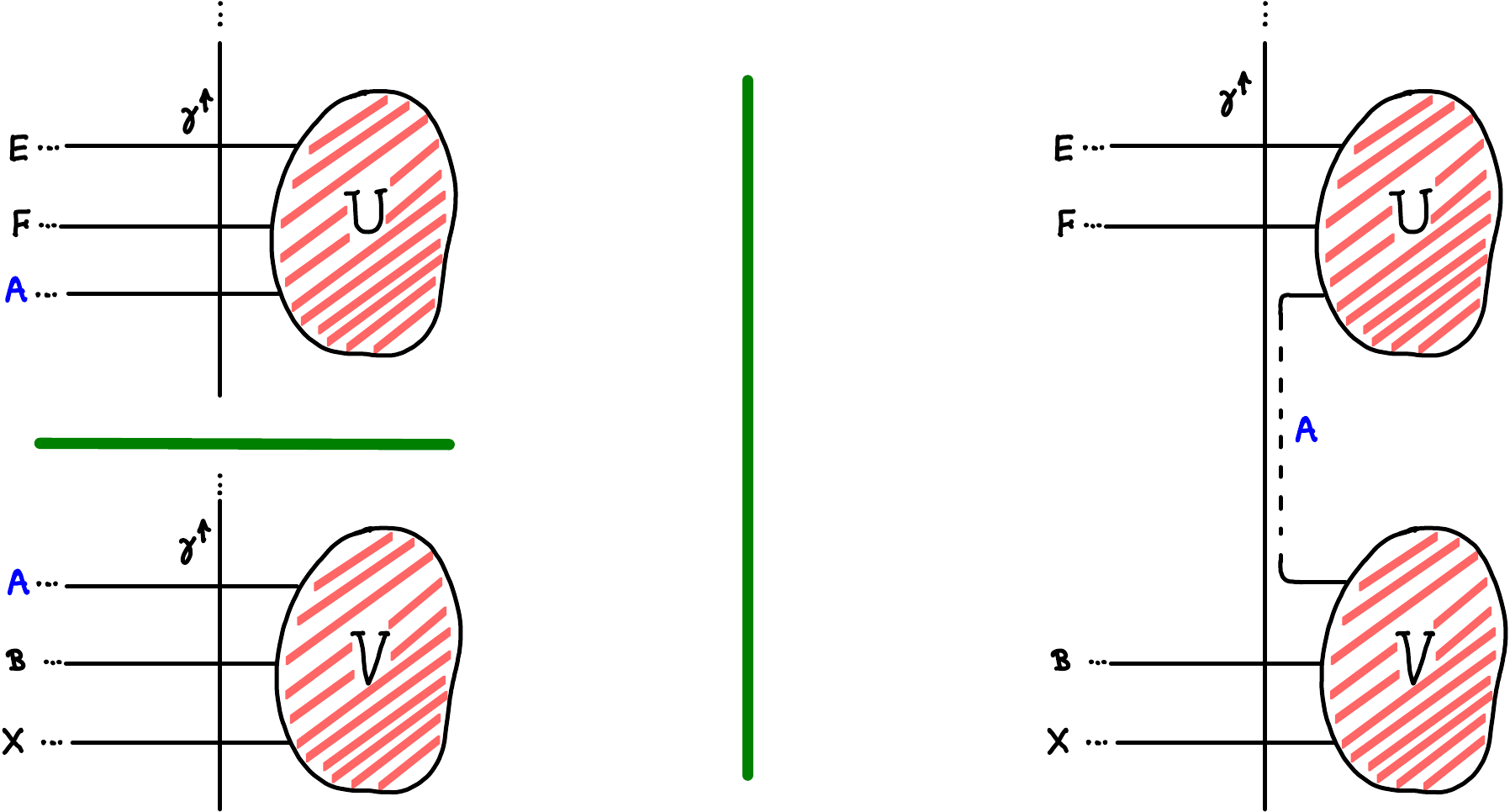}
   \end{center}
   \caption{Cone refinement via gluing cobordisms along two ends.}
   \label{f:glue-cob}
\end{figure}
\begin{rem}
  It is well known that Lagrangian cobordisms gives rise to a category
  with objects Lagrangian submanifolds and with morphisms certain
  Lagrangian cobordisms (see \cite{Bi-Co:fuk-cob}). Combined with the
  discussion above, it is natural to wonder whether by taking into
  account also the shadows of cobordisms this category is naturally
  endowed with a TPC structure. The difficulty to achieve this is that
  one needs to have a triangulated structure that serves as the level
  $0$-part of this expected TPC.  Achieving geometrically such a
  triangulated structure is delicate as it requires including immersed
  Lagrangians and cobordisms in the construction. To further produce a
  TPC structure, this construction needs to be combined with control
  of cobordism shadows which makes the whole machinery even more
  complex. Only partial results in this direction are available at the
  moment, as in \cite{Bi-Co:LagPict}.
\end{rem}

\subsubsection{Exterior shadow equals shadow} \label{sbsb:shd-ex-in}
\pbrev{Here we \ocnote{sketch} a proof showing that
  $\shdex(V) = \shdin(V)$. } \pbrev{Throughout the proof below we will
  assume that $V$ is connected and that the number of} \pbrrvv{its
  ends} \pbrev{is $l \geq 1$. (We do not consider in this paper
  cobordisms with no ends anyway.)}

\pbrev{As at the beginning of~\ref{sbsb:cobs-shadow}, fix an open
  rectangle $Q \subset \mathbb{R}^2$ such that $\out(V) \setminus Q$
  consists of only horizontal rays. Denote by
  $C_1, \ldots, C_l \subset \mathbb{R}^2$ the connected components of
  $\mathbb{R}^2 \setminus \out(V)$, ordered in counterclockwise order
  (e.g.~with respect to a large enough circle surrounding $Q$). Set
  also $C_{Q,i} := C_i \cap Q$, $i=1, \ldots, l$.  }

\pbrev{We first claim that each $C_i$ is simply connected. Indeed,
  this follows easily from the fact that an open subset $\mathbb{R}^2$
  is simply connected if and only if every connected component of its
  complement is unbounded. It follows from the definition of $\out(V)$
  that every connected component of $\mathbb{R}^2 \setminus C_i$ is
  unbounded, hence $C_i$ is simply connected.}

\pbrev{Since the $C_i$'s are simply connected then the same holds also
  for the sets $C_{Q,i}$, $i=1, \ldots, l$. It follows (e.g.~by
  uniformization) that each of the open sets $C_{Q,i}$ is
  diffeomorphic to an open disk. Furthermore, by the Greene-Shiohama
  theorem~\cite{Gre-Shi:non-compact} it follows that each $C_{Q,i}$ is
  in fact symplectomorphic to an open disk
  $\textnormal{Int\,} B^2(R_i)$ of some radius $R_i$, endowed with its
  standard symplectic structure. Fix such symplectomorphisms
  $\psi: B^2(R_i) \longrightarrow C_{Q,i}$ for all $i$.}

\pbrev{Assume for the moment that $l \geq 2$ (note that $l$ is
  precisely the number of ends of $V$). Reduce the radii $R_i$
  slightly to $R'_i = R_i - \delta$ for small $\delta>0$ and consider
  the corresponding domains $C'_{Q,i} = \psi(B^2(R'_i))$. Next,
  connect the boundary of $C'_{Q,i}$ to the boundary of $C'_{Q,i+1}$
  by a small strip $J_i$ that intersects $\out_Q(V)$ only along the
  areas where $\out_Q(v)$ consists solely of horizontal rays. If we
  smoothly (not symplectically) parametrize $J_i$ as
  $[-\epsilon, \epsilon] \times [0,1]$ we just embed $J_i$ in
  $\mathbb{R}^2$ in such a way that:
\begin{enumerate}
\item The area of $J_i$ is very small and $J_i \subset Q$.
\item $[-\epsilon, \epsilon] \times \{0\}$ is mapped to
  $\partial C'_{Q,i}$ near one of the horizontal rays, say $E_i$, of
  $\out_Q(V)$ that lies near $\partial C'_{Q,i}$.
\item $[-\epsilon, \epsilon] \times \{1\}$ is mapped to
  $\partial C'_{Q_{i+1}}$ near the same horizontal ray $E_i$ we have
  just used in~(1) above.
\item The rest of $J_i$ intersects $\out(V)$ only along $E_i$.
\end{enumerate}
We can think of the outcome of connecting $C'_{Q,i}$ to $C'_{Q,i+1}$
with $J_i$ as boundary connected sum of the closures of the domain
$C'_{Q,i}$ and $C'_{Q,i+1}$.  }

\pbrev{We perform the above construction for all $1 \leq i \leq l-1$
  and finally we connect $C'_{Q,l}$ back to $C'_{Q,1}$ in a similar
  manner, keeping the counterclockwise direction.}

\pbrev{Denote by $C'_Q$ the union of all the domains $C'_{Q,i}$
  together with the connecting small strips $J_i$. The outcome
  $C'_Q \subset Q$ is a domain diffeomorphic to an annulus. Its inner
  boundary encircles a domain $F$ which is diffeomorphic to the
  $2$-dimensional disk $B$ and $\out(V) \setminus F$ consists of only
  horizontal rays. Moreover, by taking the parameter $\delta$ small
  enough we can assume that
  $\text{Area}(Q \setminus F) = \text{Area}(Q) - \text{Area}(F)$ is
  arbitrarily close to
  $\text{Area}(Q) - \text{Area}(\out_Q(V)) = \text{Area}(Q) -
  \shdin(V)$. It follows that $\text{Area}(F)$ is arbitrarily close to
  $\shdin(V)$ and at the same time
  $$\ocnote{\text{Area}(F)} \geq \shdex(V) \geq \shdin(V).$$ This concludes the proof under
  the assumption that $l \geq 2$.}

\pbrev{The case $l=1$ is very similar, only that now we have just one
  domain $C'_{Q,1}$ and we form the annulus $C'_Q$ by just gluing the
  small strip $J$ to connect two portions of the boundary of the same
  domain $C'_{Q,1}$.  \Qed }


\subsection{Some explicit estimates}\label{subsec:estim}
We will  illustrate here the statements in
 Theorem \ref{thm:appl-sympl}.

Our base manifold will be denoted here by $W$ and it is the plumbing  of \zhnote{two copies of} disk cotangent \zhnote{bundles} $D^{\ast}S^{1}$ of $S^{1}$ as in
Figure \ref{f:Plumbing0}. The family $\mathcal{F}$ has two elements $F_{1}$ and $F_{2}$
as in this figure. They intersect in the single point $P$.
 \begin{figure}[h]
    \includegraphics[scale=0.70]{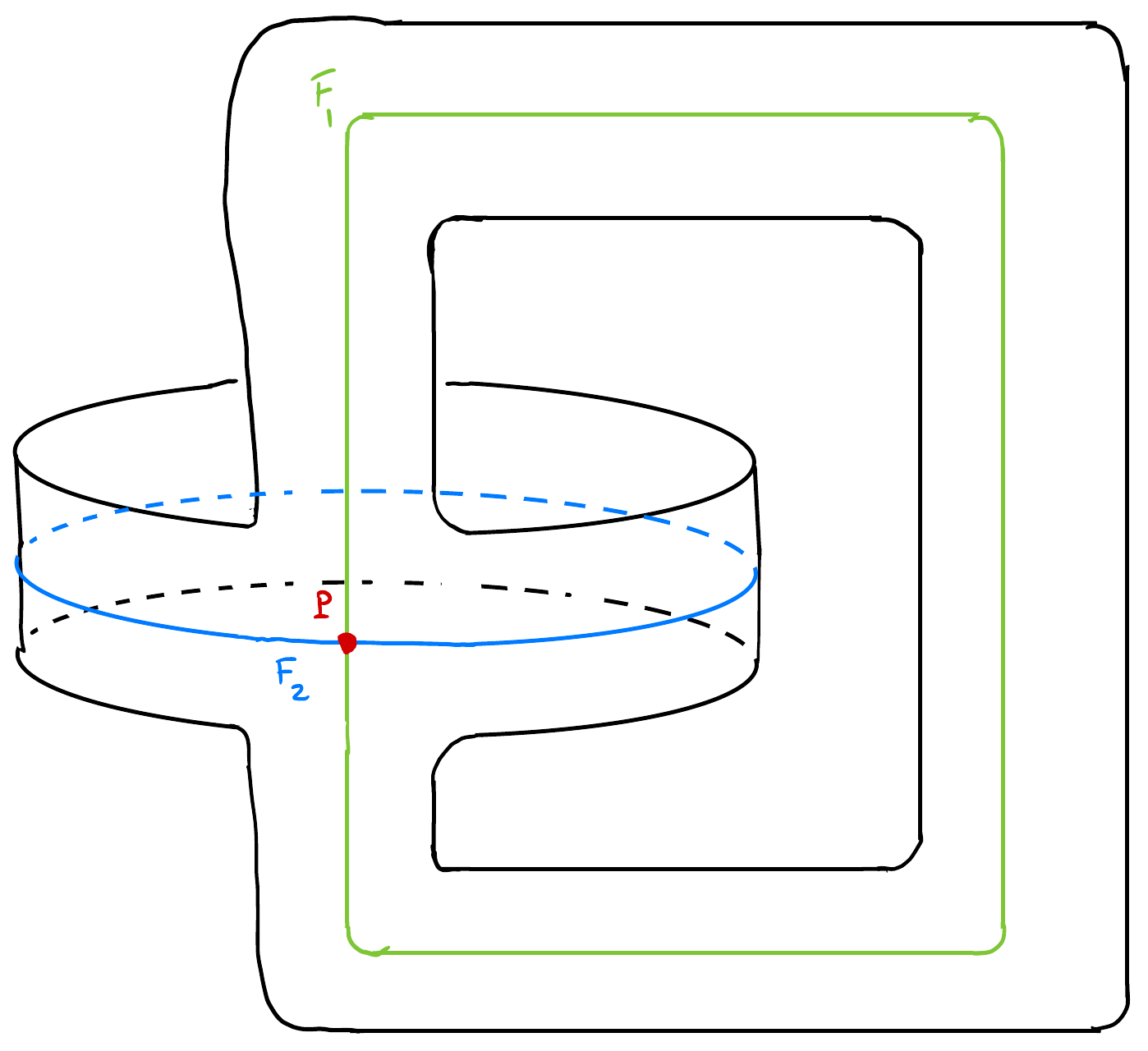} \centering
    \caption{The manifold $W$ and the Lagrangians $F_{1}$ and $F_{2}$.}
    \label{f:Plumbing0}
  \end{figure}
The primitives on both $F_{1}$ and $F_{2}$ are the functions identically equal to $0$.
The family $\mathcal{X}$ consists of $F_{1}$, $F_{2}$ and the Lagrangians $Y$, $Z$, $X$ and $N$ from 
Figures \ref{f:Plumbing1} and \ref{f:Plumbing2}. 
 \begin{figure}[h]
    \includegraphics[scale=0.70]{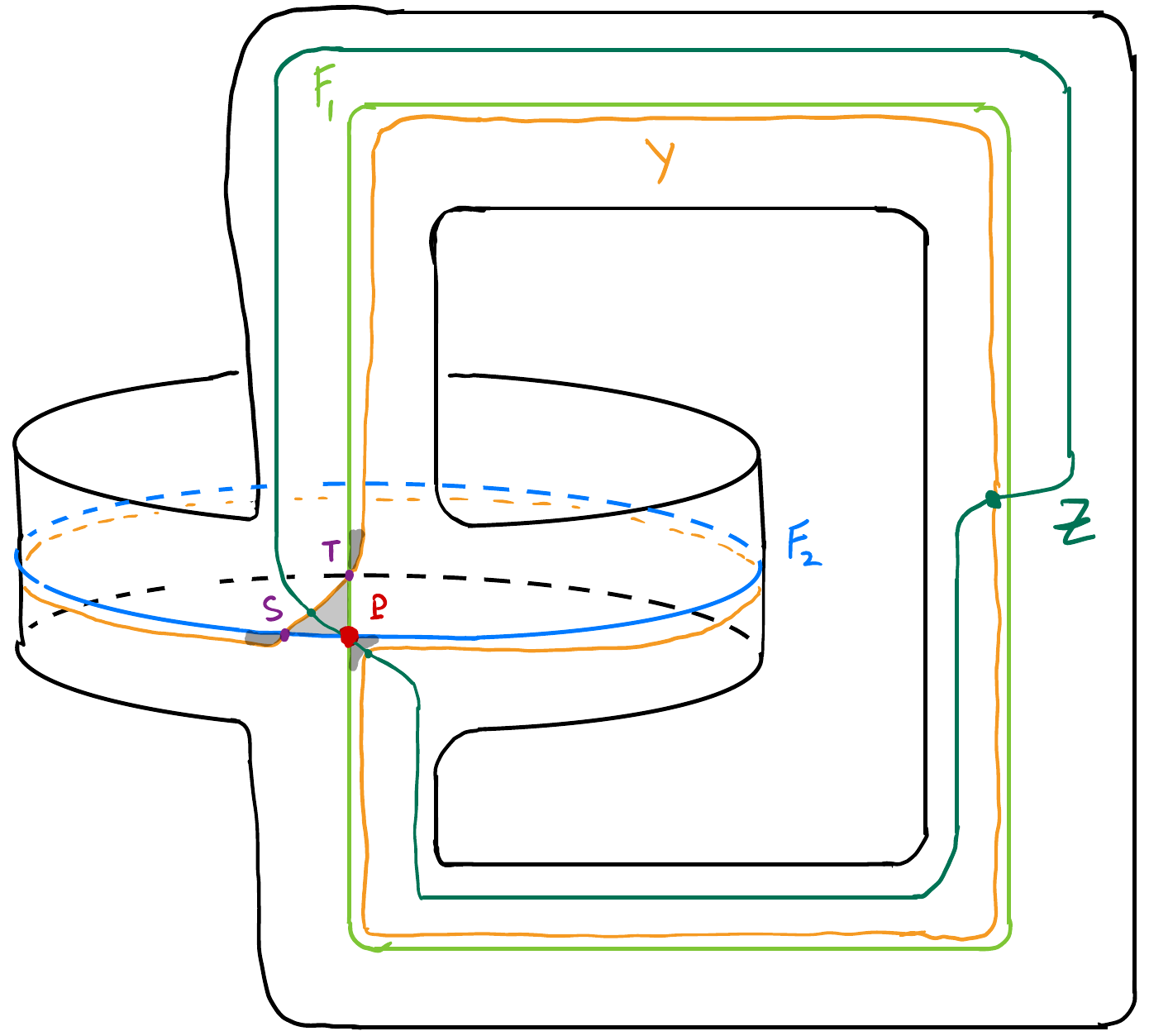} \centering
    \caption{The Lagrangian $Y$ obtained as a small perturbation of the surgery $F_{2}\#_{P}F_{1}$.}
    \label{f:Plumbing1}
  \end{figure}
The Lagrangian $Y$ is constructed from the surgery
$F_{2}\# F_{1}$ at the point $P$ (with a small handle) followed by a small Hamiltonian perturbation. It is easy to see that  for $Y$ to be exact we need for the ``small'' gray triangle $STP$ to have equal area \zhnote{as} the 
``large'' triangle with the same vertices (only the corners of \zhnote{the second} triangle are greyed in the figure). We will
denote the area of \zhnote{these triangles} by $A_{Y}$.
\begin{figure}[h]
    \includegraphics[scale=0.70]{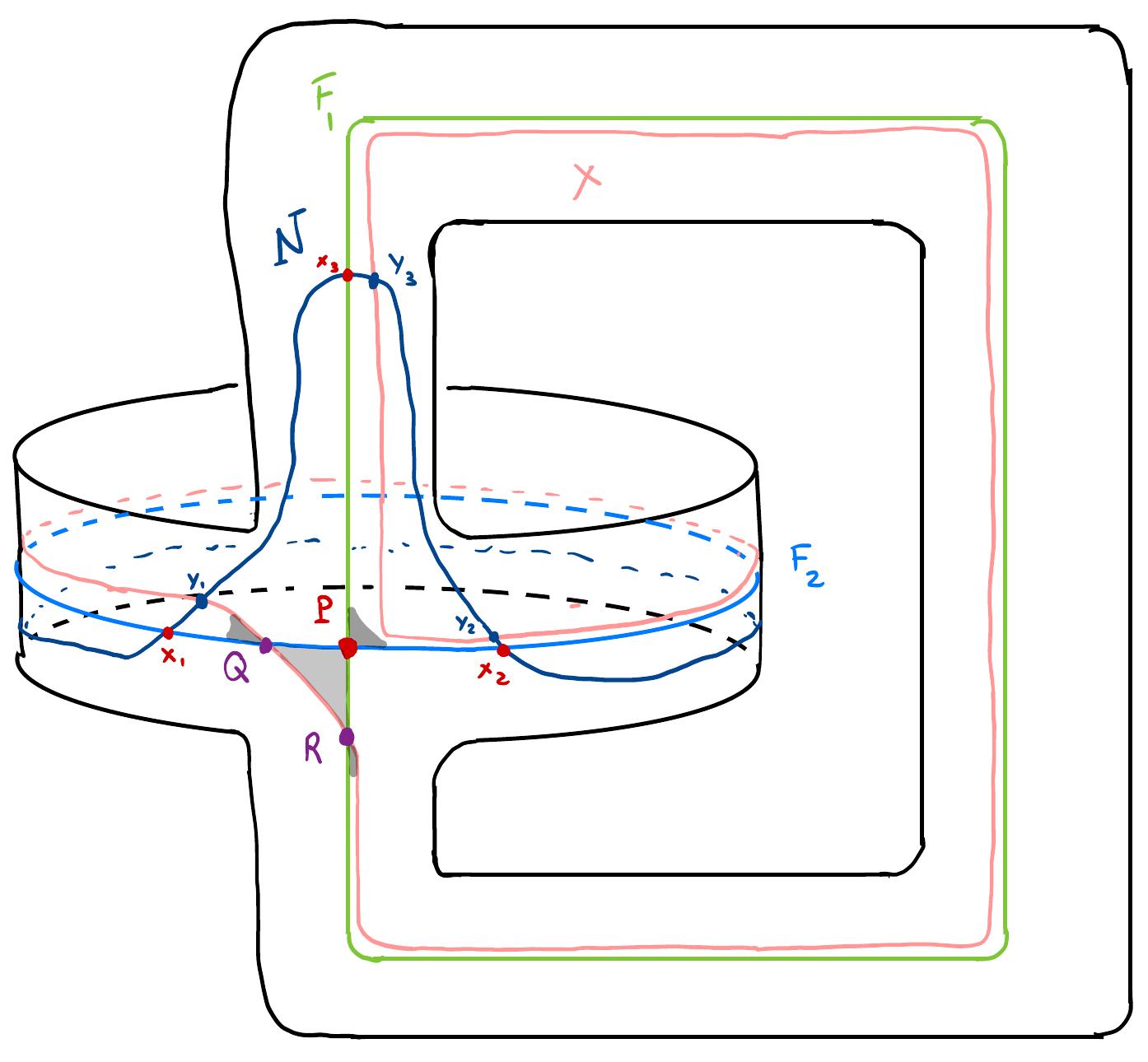} \centering
    \caption{The Lagrangian $X$ obtained as a small perturbation of the surgery $F_{1}\#_{P}F_{2}$
    and the Lagrangian $N$, which is a large Hamiltonian perturbation of $F_{2}$.}
    \label{f:Plumbing2}
  \end{figure}
  Similarly, the Lagrangian $X$ is constructed from the surgery
  $F_{1}\# F_{2}$ at the point $P$ (again with a small handle),
  followed by a small Hamiltonian perturbation. Again the ``small''
  gray triangle $QRP$ has the same area as the ``large'' triangle with
  the same vertices and we denote this area by $A_{X}$.  The
  Lagrangian $N$ is obtained from $F_{2}$ by a Hamiltonian
  perturbation that is large - \zhnote{its Hofer distance equals the}
  area of the strip comprised in between $N$ and $F_{2}$ \zhnote{and
    the} points $x_{1}$ and $x_{2}$ (there are two such strips but
  they have both the same area). We will denote this area by $A_{N}$.
  Similarly, the Lagrangian $Z$ is obtained from $F_{1}$ by a large
  Hamiltonian perturbation.

The first obvious remark is that $X$ and $Y$ are not smoothly isotopic
because homologically $[X]=[F_{2}]-[F_{1}]$ and $[Y]=[F_{2}]+[F_{1}]$
\zhnote{and these are not equal in} $H_{1}(X,\Z)$.

We are interested in the distance $D^{\mathcal{F}}$. 
\begin{lem}\label{lem:example1} We have the inequalities:
  \begin{equation}\label{eq:ineq-simple}
    \frac{A_{X}}{4}\leq D^{\mathcal{F}}(X,0) \leq  2 A_{X}
    \ , \ \frac{A_{Y}}{4}\leq D^{\mathcal{F}}(Y,0) \leq 2 A_{Y} ~,~
  \end{equation}  
  \begin{equation}\label{eq:ineq-simple2}
    \frac{\max\ \{A_{Y}, A_{X}\}}{4}
    \leq D^{\mathcal{F}}(X,Y)\leq 2A_{X}+2A_{Y}~.~
  \end{equation}
\end{lem}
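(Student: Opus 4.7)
The plan is to prove the upper bounds using Biran-Cornea's Lagrangian surgery cobordism and then prove the lower bounds by invoking the non-degeneracy estimate from Theorem~\ref{thm:appl-sympl}(ii) applied to explicit ball embeddings in the triangular regions near the surgery point $P$.

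First I would treat the upper bound $D^{\mathcal{F}}(X,0) \leq 2A_X$. The Lagrangian $X$ is a small Hamiltonian perturbation of the exact surgery $F_1 \#_P F_2$, and the surgery construction provides an exact Lagrangian cobordism $V \subset \mathbb{R}^2 \times W$ with three ends $F_1, F_2, X$, whose exterior shadow $\shdex(V)$ can be made arbitrarily close to $2A_X$ (the factor of two accounts for the two equal-area triangles $STP$ and the large one with the same vertices, whose balanced areas ensure exactness of $X$). By the discussion in \S\ref{sbsb:cobs-exact-t}, this cobordism produces a strict exact triangle $F_1 \to F_2 \to X \to \Sigma^{-r}TF_1$ of weight $r \leq 2A_X + \epsilon$ in $\mathcal{C}\fuk(\mathcal{X})_{\infty}$, giving an iterated cone decomposition of $X$ with linearization in $\mathcal{F}$ of total weight $\leq 2A_X$, hence $\bar{\delta}^{\mathcal{F}}(X,0) \leq 2A_X$. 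The reverse direction $\bar{\delta}^{\mathcal{F}}(0,X) \leq 2A_X$ is obtained by rotating the same cobordism as in \S\ref{sbsb:cobs-rot}. The estimate for $Y$ is analogous, and the upper bound $D^{\mathcal{F}}(X,Y) \leq 2A_X + 2A_Y$ then follows from the triangle inequality applied to $X$, $0$, $Y$.

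For the lower bounds I would apply the non-squeezing inequality of Theorem~\ref{thm:appl-sympl}(ii), which gives $D^{\mathcal{F}}(L,L') \geq \delta(L; L' \cup \bigcup_{F \in \mathcal{F}}F)/8$. To estimate $\delta(X; F_1 \cup F_2)$, observe that near $P$ the Lagrangian $X$ together with $F_1 \cup F_2$ bounds the two triangular regions $QRP$ (and the small gray counterpart $STP$), each of area $A_X$, lying in two specific quadrants around $P$. One can construct a standard symplectic ball embedding $e: B(r) \to W$ with $e^{-1}(X) = \mathbb{R} B(r)$ and $e(B(r)) \cap (F_1 \cup F_2) = \emptyset$, with $\pi r^2$ approaching $2A_X$ from below, by placing the ball so that it fills both triangular regions. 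This yields $\delta(X; F_1 \cup F_2) \geq 2A_X$ and therefore $D^{\mathcal{F}}(X,0) \geq A_X/4$; the same argument with $X$ replaced by $Y$ produces the corresponding bound.

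Finally, for the lower bound on $D^{\mathcal{F}}(X,Y)$, the key geometric observation is that the two surgeries $F_1\#_P F_2$ and $F_2 \#_P F_1$ produce smoothings in opposite pairs of quadrants at $P$; consequently the triangular regions that determine $A_X$ lie in the complement of a neighborhood of $Y$, and vice versa. Thus the ball embeddings constructed above for $\delta(X; F_1 \cup F_2)$ are automatically disjoint from $Y$, giving $\delta(X; Y \cup F_1 \cup F_2) \geq 2A_X$; interchanging the roles of $X$ and $Y$ gives the symmetric estimate, and together with Theorem~\ref{thm:appl-sympl}(ii) this produces $D^{\mathcal{F}}(X,Y) \geq \max\{A_X, A_Y\}/4$. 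The main technical obstacle will be verifying that the surgery cobordism can genuinely be built with exterior shadow arbitrarily close to $2A_X$ (rather than a larger constant multiple) and that the ball embedding achieves the claimed area $2A_X$; both depend on carefully coordinating the Biran-Cornea surgery profile with the two balancing triangles, but this is essentially local near $P$ and reduces to a concrete picture in $\mathbb{R}^4$ plumbed from $D^*S^1$'s.
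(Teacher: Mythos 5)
Your approach to the lower bounds (for both $D^{\mathcal{F}}(X,0)$ and $D^{\mathcal{F}}(X,Y)$) is essentially identical to the paper's: apply Theorem~\ref{thm:appl-sympl}(ii) after observing that $\delta(X; F_1\cup F_2)=2A_X$ and, because the surgeries $F_1\#_PF_2$ and $F_2\#_PF_1$ smooth opposite pairs of quadrants, also $\delta(X; Y\cup F_1\cup F_2)\geq 2A_X$. The triangle-inequality upper bound for $D^{\mathcal{F}}(X,Y)$ is likewise the paper's route.

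Where you genuinely diverge is the upper bound $D^{\mathcal{F}}(X,0)\leq 2A_X$. The paper does not build a cobordism: it works purely algebraically with the Yoneda module $K_1=\mathrm{cone}(F_1\xrightarrow{\;P\;}F_2)$ and the explicit maps $\psi=\mu_2(-,R):K_1\to X$, $\phi=\mu_2(-,Q):X\to K_1$. Fixing the primitive on $X$ to vanish at $Q$ forces the action level of $R$ to be $A_X$, so the compositions $\Sigma^{A_X}K_1\to X\to K_1$ agree with $\eta_{A_X}$. By Lemma~\ref{lem:inv-rs}(iv) these are $2A_X$-isomorphisms, yielding a weight-$2A_X$ strict exact triangle, and the second half-distance comes from $\mathrm{cone}(\phi)$ being $2A_X$-acyclic. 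This is direct, lives entirely in the filtered module category, and the constant $2A_X$ falls out immediately from the action levels of $Q$ and $R$. Your route instead passes through the Lagrangian surgery cobordism and its exterior shadow (\S\ref{sbsb:cobs-exact-t}--\ref{sbsb:cobs-rot}); this is conceptually attractive and closer to the geometric intuition, but buys you additional verification burden: (a) the cobordism machinery of \S\ref{sb:lcob} is developed in the weakly filtered category $W\fuk(X)$, so you need the bridge of Remark~\ref{rem:wfilt-v-filt} to transfer the weighted triangle into $\mathcal{C}\fuk(\mathcal{X})_{\infty}$; and (b) one has to actually show the concatenated surgery-plus-suspension cobordism can be built with exterior shadow (plus the error term $\err$ in \S\ref{sbsb:cob-r-acyc}) arbitrarily close to $2A_X$, which needs a careful Hofer-length estimate for the perturbation that balances the two area-$A_X$ triangles. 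You flag this yourself as ``the main technical obstacle''; the paper's argument sidesteps it entirely because the $2A_X$ is read off from the filtration levels of the intersection points. Both routes are valid in spirit, but the paper's is tighter and self-contained within the algebraic framework already set up.
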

 
\begin{proof} We first show the upper bounds in
  (\ref{eq:ineq-simple}).  The cases of $X$ and $Y$ are perfectly
  similar and we focus on $X$. For this we consider the cone of the
  map $F_{1}\stackrel{P}{\longrightarrow} F_{2}\to K_{1}$ (constructed
  in terms of $A_{\infty}$- Yoneda modules). \zhnote{We claim that}
  \pbred{the} module $K_{1}$ \zhnote{can be mapped} to the Yoneda
  module of $X$ \zhnote{by a quasi-isomorphism}. The simplest way to
  see this geometrically is the following: interpret the module
  $K_{1}$ as the Yoneda module of a marked immersed Lagrangian with
  one marked self-intersection point (marked in the order
  $(F_{1},F_{2})$). This type of \zhnote{Lagrangians} is discussed in
  \cite{Bi-Co:LagPict} for instance. The map we are looking for is of
  the form $\psi=\mu_{2}(-, R): K_{1}\to X$ with $R$ the intersection
  point in Figure \ref{f:Plumbing2}. Of course, once we ``guess'' this
  morphism, we can write it purely algebraically.  It is easy to see
  that this is a quasi-isomorphism.  For instance, applying it to the
  Lagrangian $N$ in the picture it sends \zhnote{$x_{i}$ to $y_{i}$
    for $i = 1,2.$} Moreover, there is also a quasi-isomorphism
  $\phi=\mu_{2}(-, Q): X\to K_{1}$ which is a quasi-inverse of the
  first (on $N$ it is an actual inverse). We can fix the primitive on
  $X$ \zhnote{that vanishes} at the point $Q$, and thus the primitive
  on $X$ has value $A_{X}$ at $R$.  In the terminology of the paper,
  \zhnote{we have maps of filtered modules}
$$\Sigma^{A_{X}} K_{1}\to X\to K_{1}$$ \zhnote{whose composition agrees} with the map $\eta_{A_{X}}$ (in other words, the composition is the identity if the  filtration is neglected but once the filtration is taken into account, it shifts the filtration by $A_{X}$)\zhnote{. We} also have the similar \zhnote{identity} in the opposite direction. 

By applying the same argument as in the second part of Lemma \ref{lem:ineq-inter} we deduce that $\phi$ and $\psi$ are $2 A_{X}$-isomorphisms which implies an inequality for the  half-distance $\bar{\delta}^{\mathcal{F}}(X,0)\leq 2 A_{X}$.  The other inequality, for the second half distance, is easy to produce using the fact that the cone of $\phi : X\to K_{1}$ is  $2 A_{X}$-acyclic  and this implies our upper bound. 

For the lower bound notice that $\delta(X; F_{1}\cup F_{2})= 2 A_{X}$ and thus the lower bound follows from Theorem \ref{thm:appl-sympl} (ii) (here $\delta (-;-)$ is the relative Gromov width as in \S\ref{subsec-symp}).

Clearly, in a perfectly similar way we also have $D^{\mathcal{F}}(Y,0)\leq 2A_{Y}$
and thus $$D^{\mathcal{F}}(X,Y)\leq 2 A_{X} + 2 A_{Y}$$
which is the upper bound in (\ref{eq:ineq-simple2}).

\begin{rem}The first part of the argument is very similar to the one relating the spectral distance to the distance $D^{\mathcal{F}}$. Indeed, one can think about the two points $R$ and $Q$ as representing the point class and the fundamental class  in $HF(X,K_{1})$ and then the first point of Theorem \ref{thm:appl-sympl}  implies $D^{\mathcal{F}}(X,K_{1})\leq 4 A_{X}$ which means that 
$D^{\mathcal{F}}(X,0)\leq 4 A_{X}$ because $D^{\mathcal{F}}(K_{1},0)=0$. It is very likely that we actually have $D^{\mathcal{F}}(X,0)=A_{X}$ and $D^{\mathcal{F}}(Y,0)=A_{Y}$. 
\end{rem}

\pbrev{Finally, we discuss} the lower \zjr{bound}
in~\eqref{eq:ineq-simple2}. For this purpose we will use here
point~(ii) of Theorem~\ref{thm:appl-sympl}. \pbrev{It can be easily
  shown that $\delta(X;Y\cup F_{1}\cup F_{2}) \geq 2A_{X}$.}  Thus we
get from point~(ii) of Theorem~\ref{thm:appl-sympl} that
$D^{\mathcal{F}}(X,Y)\geq \frac{A_{X}}{4}$ as claimed.  By symmetry we
also get $D^{\mathcal{F}}(X,Y)\geq \frac{A_{Y}}{4}$.

\end{proof}

  \begin{rem}
    \pbrev{There is an alternative (and possibly more interesting)
      argument which however gives a slightly weaker inequality than
      the left-hand side of~\eqref{eq:ineq-simple2}. Namely it implies
      that:
      \begin{equation}\label{eq:ineq-simple3}
        \frac{\max\ \{A_{Y}, A_{X}\}}{8}
        \leq D^{\mathcal{F}}(X,Y).
      \end{equation}
    }

    \pbrev{ This argument is based on the point~(iii) of
      Theorem~\ref{thm:appl-sympl} and goes as follows.  Consider the
      Lagrangian $Z$ - in Figure \ref{f:Plumbing1}. It has three
      intersection points with $Y$ and only one with $X$.  By the
      point (iii) of Theorem \ref{thm:appl-sympl}, we have
      $D^{\mathcal{F}}(X,Y)\geq \frac{1}{16}\delta^{\cap}(Z,Y;
      F_{1}\cup F_{2})$, where $\delta^{\cap}$ is the quantity defined
      in~\eqref{dfn-delta-cap}. So this time we need to estimate the
      number $\delta^{\cap}(Z,Y;F_{1},F_{2})$. For this estimate it is
      useful to assume that $Z$ cuts the triangle $STP$ in two pieces
      of equal area. In this case we have that
      $\delta^{\cap}(Z,Y; F_{1}\cup F_{2})=2A_{Y}$ and we deduce
      $D^{\mathcal{F}}(X,Y)\geq \frac{A_{Y}}{8}$. The inequality
      involving $X$ follows in the same way, by choosing a deformation
      $Z'$ of $F_{1}$ that this time intersects $Y$ in a single point
      and $X$ in three points.
      }
\end{rem}

%
\bibliography{biblio_tpc}

\begin{thebibliography}{10}

\bibitem{Amb:fil-fuk}
Giovanni Ambrosioni.
\newblock Filtered {F}ukaya categories.
\newblock Preprint \url{https://arxiv.org/pdf/2306.13600}, 2023.

\bibitem{AI20}
Tomohiro Asano and Yuichi Ike.
\newblock Persistence-like distance on {T}amarkin's category and symplectic
  displacement energy.
\newblock {\em J. Symp. Geom}, 18(3):613--649, 2020.

\bibitem{Bar94}
Serguei Barannikov.
\newblock The framed {M}orse complex and its invariants.
\newblock In {\em Singularities and bifurcations}, volume~21 of {\em Adv.
  Soviet Math.}, pages 93--115. Amer. Math. Soc., Providence, RI, 1994.

\bibitem{Ba-Co}
Jean-Fran\c{c}ois Barraud and Octav Cornea.
\newblock Lagrangian intersections and the {S}erre spectral sequence.
\newblock {\em Annals of Mathematics}, 166:657--722, 2007.

\bibitem{BL15}
Ulrich Bauer and Michael Lesnick.
\newblock Induced matchings and the algebraic stability of persistence
  barcodes.
\newblock {\em J. Comput. Geom.}, 6(2):162--191, 2015.

\bibitem{Bi-Co:cob1}
Paul Biran and Octav Cornea.
\newblock Lagrangian cobordism. {I}.
\newblock {\em J. Amer. Math. Soc.}, 26(2):295--340, 2013.

\bibitem{Bi-Co:fuk-cob}
Paul Biran and Octav Cornea.
\newblock Lagrangian cobordism and {F}ukaya categories.
\newblock {\em Geom. Funct. Anal.}, 24(6):1731--1830, 2014.

\bibitem{Bi-Co:spectr}
Paul Biran and Octav Cornea.
\newblock Bounds on the {L}agrangian spectral metric in cotangent bundles.
\newblock {\em Comment. Math. Helv.}, 96(4):631--691, 2021.

\bibitem{Bi-Co:LagPict}
Paul Biran and Octav Cornea.
\newblock A {L}agrangian pictionary.
\newblock {\em Kyoto J. Math.}, 61(2):399--493, 2021.

\bibitem{Bi-Co-Sh:LagrSh}
Paul Biran, Octav Cornea, and Egor Shelukhin.
\newblock {L}agrangian shadows and triangulated categories.
\newblock {\em Ast\'{e}risque}, (426):128, 2021.

\bibitem{Bi-Co-Zh:101}
Paul Biran, Octav Cornea, and Jun Zhang.
\newblock Triangulation and persistence: Algebra 101, 2021.
\newblock Preprint \url{https://arxiv.org/pdf/2104.12258}.

\bibitem{BK91}
Alexey~I. Bondal and Mikhail~M. Kapranov.
\newblock Enhanced triangulated categories.
\newblock {\em Mathematics of the USSR-Sbornik}, 70(1):93, 1991.

\bibitem{BuMi:homological-algebra-persistence}
Peter Bubenik and Nikola Mili\'{c}evi\'{c}.
\newblock Homological algebra for persistence modules.
\newblock {\em Found. Comput. Math.}, 21(5):1233--1278, 2021.

\bibitem{Car-top-09}
Gunnar Carlsson.
\newblock Topology and data.
\newblock {\em Bull. Amer. Math. Soc. (N.S.)}, 46(2):255--308, 2009.

\bibitem{Charest-thesis}
Fran\c{c}ois Charest.
\newblock {\em Source {S}paces and {P}erturbations for {C}luster {C}omplexes}.
\newblock ProQuest LLC, Ann Arbor, MI, 2012.
\newblock Thesis (Ph.D.)--Universite de Montreal (Canada).

\bibitem{Charest}
Fran\c{c}ois Charest and Chris Woodward.
\newblock Floer trajectories and stabilizing divisors.
\newblock {\em J. Fixed Point Theory Appl.}, 19(2):1165--1236, 2017.

\bibitem{CdeSGO16}
Fr\'{e}d\'{e}ric Chazal, Vin de~Silva, Marc Glisse, and Steve Oudot.
\newblock {\em The structure and stability of persistence modules}.
\newblock SpringerBriefs in Mathematics. Springer, [Cham], 2016.

\bibitem{C-SEH-stab-07}
David Cohen-Steiner, Herbert Edelsbrunner, and John Harer.
\newblock Stability of persistence diagrams.
\newblock {\em Discrete Comput. Geom.}, 37(1):103--120, 2007.

\bibitem{Cor94}
Octav Cornea.
\newblock Cone-length and lusternik-schnirelmann category.
\newblock {\em Topology}, 33:95--111, 1994.

\bibitem{Co-tr}
Octav Cornea.
\newblock There is just one rational cone-length.
\newblock {\em Trans. Amer. Math. Soc.}, 344(2):835--848, 1994.

\bibitem{Co-La}
Octav Cornea and Fran\c{c}ois Lalonde.
\newblock Cluster homology: an overview of the construction and results.
\newblock {\em Electron. Res. Announc. Amer. Math. Soc.}, 12:1--12, 2006.

\bibitem{Co-Lu-Op-Ta}
Octav Cornea, Gregory Lupton, John Oprea, and Daniel Tanr\'{e}.
\newblock {\em Lusternik-{S}chnirelmann category}, volume 103 of {\em
  Mathematical Surveys and Monographs}.
\newblock American Mathematical Society, Providence, RI, 2003.

\bibitem{Co-Ra}
Octav Cornea and Andrew Ranicki.
\newblock Rigidity and gluing for {M}orse and {N}ovikov complexes.
\newblock {\em J. Eur. Math. Soc.}, 5(4):343--394, 2003.

\bibitem{Co-Sh}
Octav Cornea and Egor Shelukhin.
\newblock Lagrangian cobordism and metric invariants.
\newblock {\em Journal of Differential Geometry}, 112:1--45, 2019.

\bibitem{C-B15}
William Crawley-Boevey.
\newblock Decomposition of pointwise finite-dimensional persistence modules.
\newblock {\em J. Algebra Appl.}, 14(5):1550066, 8, 2015.

\bibitem{Kon}
George Dimitrov, Fabian Haiden, Ludmil Katzarkov, and Maxim Kontsevich.
\newblock {\em Dynamical systems and categories}, volume 621 of {\em Contemp.
  Math.}, pages 133--170.
\newblock Amer. Math. Soc., Providence, RI, 2014.

\bibitem{Dri04}
Vladimir Drinfeld.
\newblock D{G} quotients of {DG} categories.
\newblock {\em J. Algebra}, 272(2):643--691, 2004.

\bibitem{Ede14}
Herbert Edelsbrunner.
\newblock {\em A short course in computational geometry and topology}.
\newblock SpringerBriefs in Applied Sciences and Technology. Springer, Cham,
  2014.

\bibitem{ELZ-top-00}
Herbert Edelsbrunner, David Letscher, and Afra Zomorodian.
\newblock Topological persistence and simplification.
\newblock In {\em 41st {A}nnual {S}ymposium on {F}oundations of {C}omputer
  {S}cience ({R}edondo {B}each, {CA}, 2000)}, pages 454--463. IEEE Comput. Soc.
  Press, Los Alamitos, CA, 2000.

\bibitem{Fa-Si}
Yu-Wei Fan and Simion Filip.
\newblock Asymptotic shifting numbers in triangulated categories.
\newblock arXiv: 2008.06159.

\bibitem{Hal-Fel-Th}
Yves F\'{e}lix, Stephen Halperin, and Jean-Claude Thomas.
\newblock {\em Rational homotopy theory}, volume 205 of {\em Graduate Texts in
  Mathematics}.
\newblock Springer-Verlag, New York, 2001.

\bibitem{Fuk-Oh:zero-loop}
Kenji Fukaya and Yong-Geun Oh.
\newblock Zero-loop open strings in the cotangent bundle and {M}orse homotopy.
\newblock {\em Asian J. Math.}, 1(1):96--180, 1997.

\bibitem{Ghr-bar-08}
Robert Ghrist.
\newblock Barcodes: the persistent topology of data.
\newblock {\em Bull. Amer. Math. Soc. (N.S.)}, 45(1):61--75, 2008.

\bibitem{Gre-Shi:non-compact}
Robert~E. Greene and Katsuhiro Shiohama.
\newblock Diffeomorphisms and volume-preserving embeddings of noncompact
  manifolds.
\newblock {\em Trans. Amer. Math. Soc.}, 255:403--414, 1979.

\bibitem{Pansu}
Misha Gromov.
\newblock {\em Metric structures for {R}iemannian and non-{R}iemannian spaces}.
\newblock Modern Birkh\"{a}user Classics. Birkh\"{a}user Boston, Inc., Boston,
  MA, english edition, 2007.
\newblock Based on the 1981 French original, With appendices by M. Katz, P.
  Pansu and S. Semmes, Translated from the French by Sean Michael Bates.

\bibitem{GKS12}
St\'{e}phane Guillermou, Masaki Kashiwara, and Pierre Schapira.
\newblock Sheaf quantization of {H}amiltonian isotopies and applications to
  nondisplaceability problems.
\newblock {\em Duke Math. J.}, 161(2):201--245, 2012.

\bibitem{GS14}
St\'{e}phane Guillermou and Pierre Schapira.
\newblock Microlocal theory of sheaves and {T}amarkin's non displaceability
  theorem.
\newblock In {\em Homological mirror symmetry and tropical geometry}, volume~15
  of {\em Lect. Notes Unione Mat. Ital.}, pages 43--85. Springer, Cham, 2014.

\bibitem{Hensel:stab-cond}
Felix Hensel.
\newblock Stability conditions and {L}agrangian cobordisms.
\newblock {\em J. Symplectic Geom.}, 18(2):463--536, 2020.

\bibitem{Huyb:FMT}
Daniel Huybrechts.
\newblock {\em Fourier-{M}ukai transforms in algebraic geometry}.
\newblock Oxford Mathematical Monographs. The Clarendon Press, Oxford
  University Press, Oxford, 2006.

\bibitem{KS90}
Masaki Kashiwara and Pierre Schapira.
\newblock {\em Sheaves on manifolds}, volume 292 of {\em Grundlehren der
  Mathematischen Wissenschaften [Fundamental Principles of Mathematical
  Sciences]}.
\newblock Springer-Verlag, Berlin, 1990.
\newblock With a chapter in French by Christian Houzel.

\bibitem{AS-spectral-21}
Asaf Kislev and Egor Shelukhin.
\newblock Bounds on spectral norms and barcodes.
\newblock {\em Geom. Topol.}, 25(7):3257--3350, 2021.

\bibitem{Les-mult-15}
Michael Lesnick.
\newblock The theory of the interleaving distance on multidimensional
  persistence modules.
\newblock {\em Found. Comput. Math.}, 15(3):613--650, 2015.

\bibitem{Neeman-tribook}
Amnon Neeman.
\newblock {\em Triangulated categories}, volume 148 of {\em Annals of
  Mathematics Studies}.
\newblock Princeton University Press, Princeton, NJ, 2001.

\bibitem{Orl06}
Dimitri Orlov.
\newblock Triangulated categories of singularities, and equivalences between
  {L}andau-{G}inzburg models.
\newblock {\em Mat. Sb.}, 197(12):117--132, 2006.

\bibitem{Or}
Dmitri Orlov.
\newblock Remarks on generators and dimensions of triangulated categories.
\newblock {\em Mosc. Math. J.}, 9(1):153--159, back matter, 2009.

\bibitem{Oud-quiver-15}
Steve~Y. Oudot.
\newblock {\em Persistence theory: from quiver representations to data
  analysis}, volume 209 of {\em Mathematical Surveys and Monographs}.
\newblock American Mathematical Society, Providence, RI, 2015.

\bibitem{PRSZ20}
Leonid Polterovich, Daniel Rosen, Karina Samvelyan, and Jun Zhang.
\newblock {\em Topological persistence in geometry and analysis}, volume~74 of
  {\em University Lecture Series}.
\newblock AMS, Providence, RI, 2020.

\bibitem{PS-pers-16}
Leonid Polterovich and Egor Shelukhin.
\newblock Autonomous {H}amiltonian flows, {H}ofer's geometry and persistence
  modules.
\newblock {\em Selecta Math. (N.S.)}, 22(1):227--296, 2016.

\bibitem{PoShSt:per-mod-op}
Leonid Polterovich, Egor Shelukhin, and Vuka\v{s}in Stojisavljevi\'{c}.
\newblock Persistence modules with operators in {M}orse and {F}loer theory.
\newblock {\em Mosc. Math. J.}, 17(4):757--786, 2017.

\bibitem{Pup62}
Dieter Puppe.
\newblock On the formal structure of stable homotopy theory.
\newblock {\em Colloquium on algebraic topology, {A}arhus {U}niversitet
  {M}atematisk {I}nstitute}, pages 65--71, 1962.

\bibitem{Sco20}
Luis Scoccola.
\newblock {\em Locally persistent categories and metric properties of
  interleaving distances}.
\newblock PhD Thesis, The University of Western Ontario, 2020.

\bibitem{Se:graded}
Paul Seidel.
\newblock Graded {L}agrangian submanifolds.
\newblock {\em Bull. Soc. Math. France}, 128(1):103--149, 2000.

\bibitem{Seidel}
Paul Seidel.
\newblock {\em Fukaya categories and {P}icard-{L}efschetz theory}.
\newblock Zurich Lectures in Advanced Mathematics. European Mathematical
  Society (EMS), Z\"{u}rich, 2008.

\bibitem{Sm}
Steve Smale.
\newblock On the topology of algorithms. {I}.
\newblock {\em J. Complexity}, 3(2):81--89, 1987.

\bibitem{Sul}
Dennis Sullivan.
\newblock Infinitesimal computations in topology.
\newblock {\em Inst. Hautes \'{E}tudes Sci. Publ. Math.}, (47):269--331 (1978),
  1977.

\bibitem{Tam08}
Dmitry Tamarkin.
\newblock Microlocal condition for non-displaceability.
\newblock In {\em Algebraic and analytic microlocal analysis}, volume 269 of
  {\em Springer Proc. Math. Stat.}, pages 99--223. Springer, Cham, 2018.

\bibitem{Ush11}
Michael Usher.
\newblock Boundary depth in {F}loer theory and its applications to
  {H}amiltonian dynamics and coisotropic submanifolds.
\newblock {\em Israel J. Math.}, 184:1--57, 2011.

\bibitem{Ush13}
Michael Usher.
\newblock Hofer's metrics and boundary depth.
\newblock {\em Ann. Sci. \'{E}c. Norm. Sup\'{e}r. (4)}, 46(1):57--128 (2013),
  2013.

\bibitem{UZ16}
Michael Usher and Jun Zhang.
\newblock Persistent homology and {F}loer-{N}ovikov theory.
\newblock {\em Geom. Topol.}, 20(6):3333--3430, 2016.

\bibitem{Ver77}
Jean-Louis Verdier.
\newblock Cat\'{e}gories d\'{e}riv\'{e}es: quelques r\'{e}sultats (\'{e}tat 0).
\newblock In {\em Cohomologie \'{e}tale}, volume 569 of {\em Lecture Notes in
  Math.}, pages 262--311. Springer, Berlin, 1977.

\bibitem{Weibel}
Charles~A. Weibel.
\newblock {\em An Introduction to Homological Algebra}.
\newblock Cambridge Studies in Advanced Mathematics. Cambridge University
  Press, 1994.

\bibitem{Wei-what-11}
Shmuel Weinberger.
\newblock What is{$\ldots$}persistent homology?
\newblock {\em Notices Amer. Math. Soc.}, 58(1):36--39, 2011.

\bibitem{Zha20}
Jun Zhang.
\newblock {\em Quantitative Tamarkin Theory}.
\newblock CRM Short Courses; Virtual Series on Symplectic Geometry. Springer
  International Publishing, 2020.

\bibitem{ZC05}
Afra Zomorodian and Gunnar Carlsson.
\newblock Computing persistent homology.
\newblock {\em Discrete Comput. Geom.}, 33(2):249--274, 2005.

\end{thebibliography}
\end{document}